\newtheorem{theorem}{Theorem}[section]
\newtheorem{lemma}[theorem]{Lemma}
\newtheorem{proposition}[theorem]{Proposition}
\newtheorem{corollary}[theorem]{Corollary}
\newtheorem{exAux}[theorem]{Example}
\newtheorem{Def}[theorem]{Definition}
\newenvironment{definition}{\begin{Def} \rm}{\end{Def}}
\newtheorem{Note}[theorem]{Note}
\newenvironment{note}{\begin{Note} \rm}{\end{Note}}
\newtheorem{Problem}[theorem]{Problem}
\newtheorem{Rem}[theorem]{Remark}
\newenvironment{remark}{\begin{Rem} \rm}{\end{Rem}}
\newtheorem{Not}[theorem]{Notation}
\newenvironment{notation}{\begin{Not} \rm}{\end{Not}}
\newtheorem{Conj}[theorem]{Conjecture}
\newtheorem{Ass}[theorem]{Assumption}
\newenvironment{proof}{\medskip\noindent{\bf Proof.\ }}{\qed\medskip}
\newenvironment{proofof}[1]{\medskip\noindent{\bf Proof  of {#1}.\ 
}}{\qed\medskip}
\newcommand{\qed}{\hfill\mbox{$\Box$\qquad\qquad}}
\newcommand{\F}{\mathbb{F}}
\newcommand{\vphi}{\varphi}
\renewcommand{\th}{\theta}
\newcommand{\Hq}{\hat{H}_q}
\newcommand{\I}{\mathbb{I}}
\newcommand{\Z}{\mathbb{Z}}
\newcommand{\A}{\mathbb{A}}
\newcommand{\B}{\mathbb{B}}
\newcommand{\C}{\mathbb{C}}
\newcommand{\V}{\mathcal{V}}
\newcommand{\W}{\mathcal{W}}
\newcommand{\ve}{\varepsilon}
\newif\ifDRAFT
\begin{document}

\title{The universal DAHA of type $(C_1^\vee,C_1)$ and \\
Leonard pairs of $q$-Racah type}

\author{Kazumasa Nomura and Paul Terwilliger}
\date{}
\maketitle

{
\small

\begin{quote}
\begin{center}
{\bf Abstract}
\end{center}
A Leonard pair is a pair of diagonalizable linear transformations
of a finite-dimensional vector space, 
each of which acts in an irreducible tridiagonal fashion on an
eigenbasis for the other one.
Let $\F$ denote an algebraically closed field,
and fix a nonzero $q \in \F$ that is not a root of unity.
The universal double affine Hecke algebra (DAHA) $\Hq$ of type 
$(C_1^\vee,C_1)$ is the associative $\F$-algebra defined by
generators $\{t_i^{\pm 1}\}_{i=0}^3$ and relations
(i) $t_it_i^{-1}=t_i^{-1}t_i=1$;
(ii) $t_i+t_i^{-1}$ is central;
(iii) $t_0t_1t_2t_3 = q^{-1}$.
We consider the elements $X=t_3t_0$ and $Y=t_0t_1$ of $\Hq$.
Let $\V$ denote a finite-dimensional irreducible $\Hq$-module
on which each of $X$, $Y$ is diagonalizable and $t_0$ has
two distinct eigenvalues.
Then $\V$ is a direct sum of the two eigenspaces of $t_0$.
We show that the pair $X+X^{-1}$, $Y+Y^{-1}$ acts on each eigenspace 
as a Leonard pair, 
and each of these Leonard pairs falls into a class said to have $q$-Racah type.
Thus from $\V$ we obtain a pair of Leonard pairs of $q$-Racah type.
It is known that a Leonard pair of $q$-Racah type is determined up to isomorphism by
a parameter sequence $(a,b,c,d)$ called its Huang data.
Given a pair of Leonard pairs of $q$-Racah type, we find
necessary and sufficient conditions on their Huang data for that pair to come from 
the above construction.
\end{quote}
}

\section{Introduction}
\label{sec:intro}

Throughout the paper $\F$ denotes an algebraically closed field.
Fix a nonzero $q \in \F$ that is not a root of unity.
An $\F$-algebra is meant to be associative and have a $1$.

We begin by recalling the notion of a Leonard pair.
We use the following terms.
A square matrix is said to be {\em tridiagonal} 
whenever each nonzero entry lies on either the diagonal,
the subdiagonal, or the superdiagonal.
A tridiagonal matrix is said to be {\em irreducible}
whenever each entry on the subdiagonal is nonzero and each entry on the
superdiagonal is nonzero.

\begin{definition} {\rm (See \cite[Definition 1.1]{T:Leonard}.)}  
\label{def:LP}     \samepage
\ifDRAFT {\rm def:LP}. \fi
Let $V$ denote a vector space over $\F$ with finite positive dimension.
By a {\em Leonard pair} on $V$ we mean an ordered pair of $\F$-linear transformations
$A: V \to V$ and $A^* : V \to V$ that satisfy (i) and (ii) below:
\begin{itemize}
\item[\rm (i)]
there exists a basis for $V$ with respect to which the matrix representing $A$
is irreducible tridiagonal and the matrix representing $A^*$ is diagonal;
\item[\rm (ii)]
there exists a basis for $V$ with respect to which the matrix representing $A^*$
is irreducible tridiagonal and the matrix representing $A$ is diagonal.
\end{itemize}
We say that $A,A^*$ is {\em over $\F$}.
By the {\em diameter} of $A,A^*$ we mean $\dim V-1$.
\end{definition}

\begin{note}
According to a common notational convention, 
$A^*$ denotes the conjugate-trans\-pose of $A$.
We are not using this convention.
In a Leonard pair $A,A^*$ the $\F$-linear transformations $A$ and $A^*$ are arbitrary 
subject to (i) and (ii) above.
\end{note}

We refer the reader to \cite{Cur:SpinLP, Hu,  NT:both, T:Leonard, T:survey, TV}
for background on Leonard pairs.

We recall the notion of an isomorphism of Leonard pairs.
Let $V$ (resp. $V'$) denote a vector space over $\F$ with finite positive dimension, and 
let $A,A^*$ (resp.\ $A',A^{*\prime}$) denote a Leonard pair on $V$ (resp. $V'$).
By an {\em isomorphism of Leonard pairs} from $A,A^*$ to $A',A^{*\prime}$
we mean an $\F$-linear bijection $f: V \to V'$ such that both
$f A = A' f$ and $f A^* = A^{*\prime} f$.

We recall some facts about the eigenvalues of a Leonard pair.
We use the following terms.
Let $V$ denote a vector space over $\F$ with finite positive dimension
and let $A: V \to V$ denote an $\F$-linear transformation.
Then $A$ is said to be {\em diagonalizable} whenever $V$ is spanned by
the eigenspaces of $A$.
We say $A$ is {\em multiplicity-free} whenever $A$ is diagonalizable
and each eigenspace of $A$ has dimension one.
Let $A,A^*$ denote a Leonard pair on $V$.
Then each of $A,A^*$ is multiplicity-free (see \cite[Lemma 1.3]{T:Leonard}).
Let $\{\th_r\}_{r=0}^d$ denote an ordering of the eigenvalues of $A$.
For $0 \leq r \leq d$ let $0 \neq v_r \in V$ denote an
eigenvector of $A$ associated with $\th_r$.
Observe that $\{v_r\}_{r=0}^d$ is a basis for $V$.
The ordering $\{\th_r\}_{r=0}^d$ is said to be {\em standard}
whenever the basis $\{v_r\}_{r=0}^d$ satisfies Definition \ref{def:LP}(ii).
A standard ordering of the eigenvalues of $A^*$ is similarly defined.
Let $\{\th_r\}_{r=0}^d$ denote a standard ordering of the eigenvalues of $A$.
Then the ordering $\{\th_{d-r}\}_{r=0}^d$ is also standard and no further
ordering is standard. A similar result applies to $A^*$. 

\begin{definition}    \label{def:qRacah}     \samepage
\ifDRAFT {\rm def:qRacah}. \fi
Let $d \geq 0$ denote an integer and let $\{\th_r\}_{r=0}^d$ denote a
sequence of scalars in $\F$.
The sequence $\{\th_r\}_{r=0}^d$ is said to be {\em $q$-Racah}
whenever there exists a nonzero $\alpha \in \F$ such that
$\th_r = \alpha q^{2r-d} + \alpha^{-1} q^{d-2r}$ for $0 \leq r \leq d$.
The scalar $\alpha$ is uniquely determined by $\{\th_r\}_{r=0}^d$
if $d \geq 1$, and determined up to inverse if $d=0$.
We call $\alpha$ the {\em parameter} of the $q$-Racah sequence.
\end{definition}

For a $q$-Racah sequence $\{\th_r\}_{r=0}^d$ with parameter $\alpha$,
the inverted sequence $\{\th_{d-r}\}_{r=0}^d$ is $q$-Racah 
with parameter $\alpha^{-1}$.

\begin{definition}  \label{def:LPqRacah}   \samepage
\ifDRAFT {\rm def:LPqRacah}. \fi
Let $A,A^*$ denote a Leonard pair over $\F$ with diameter $d$.
Then $A,A^*$ is said to have {\em $q$-Racah type} whenever
for each of $A,A^*$ 
a standard ordering of the eigenvalues forms a $q$-Racah sequence.
\end{definition} 

Referring to Definition \ref{def:LPqRacah},
assume that $A,A^*$ has $q$-Racah type.
Let $\{\th_r\}_{r=0}^d$ (resp.\ $\{\th^*_r\}_{r=0}^d$) 
denote a standard ordering of the eigenvalues of $A$ (resp.\ $A^*$).
Let $a$ (resp.\ $b$) denote the parameter of the $q$-Racah 
sequence $\{\th_r\}_{r=0}^d$ (resp.\ $\{\th^*_r\}_{r=0}^d$).
It is known that $A,A^*$ is determined up to isomorphism by
$a$, $b$, $d$ and one more nonzero scalar $c \in \F$.
The sequence $(a,b,c,d)$ is called a {\em Huang data} of $A,A^*$.
The scalar $c$ is defined up to inverse if $d \geq 1$,
and arbitrary if $d=0$
(see Section \ref{sec:pre}).
For a Huang data $(a,b,c,d)$ of $A,A^*$,
each of $(a^{\pm 1}, b^{\pm 1}, c^{\pm 1}, d)$ is a Huang data of $A,A^*$.
Moreover $A,A^*$ has no further Huang data, provided that $d \geq 1$.

\medskip

Next we recall the universal double affine Hecke algebra $\Hq$.
The double affine Hecke algebra (DAHA) was introduced by Cherednik \cite{Chered}.
The DAHA of type $(C_1^\vee,C_1)$ was studied by
Macdonald \cite[Ch.\ 6]{Mac},
Noumi-Stokman \cite{Noumi},
Sahi  \cite{Sahi1, Sahi2},
Koornwinder  \cite{Koor1, Koor2}, 
and  Ito-Terwilliger \cite{IT:DAHA}.
The algebra $\Hq$ was introduced by the second author 
as a generalization of the DAHA of type $(C_1^\vee, C_1)$.
We now recall the definition of $\Hq$.
For notational convenience define $\I = \{0,1,2,3\}$.

\begin{definition} {\rm (See \cite[Definition 3.1]{T:AWDAHA}.)}  \label{def:Hq}  \samepage
\ifDRAFT {\rm def:Hq}. \fi
Let $\Hq$ denote the $\F$-algebra defined by 
generators $\{t^{\pm 1}_i\}_{i \in \I}$ and relations
\begin{align} 
 & t_it_i^{-1}=t_i^{-1}t_i=1  && i \in \I,            \label{eq:defHq1}
\\
 & \text{$t_i+t_i^{-1}$ is central} && i \in \I,      \label{eq:defHq2}
\\
 & t_0t_1t_2t_3 = q^{-1}.                             \label{eq:defHq3}
\end{align}
The algebra $\Hq$ is called the {\em universal DAHA of type $(C_1^\vee,C_1)$}.
\end{definition}

Referring to Definition \ref{def:Hq},
for $i \in \I$ define 
\[
   T_i = t_i + t_i^{-1}.
\]
Note that $T_i$ is central in $\Hq$.

\begin{definition}   \label{def:param}
\ifDRAFT {\rm def:param.} \fi
Let $\V$ denote a finite-dimensional irreducible $\Hq$-module.
By Schur's lemma each $T_i$ acts on $\V$ as a scalar.
Write this scalar as $k_i + k_i^{-1}$ with $0 \neq k_i \in \F$.
Thus
\begin{equation}          \label{eq:ki}
   T_i = k_i + k_i^{-1}   \qquad \text{ on $\V$}.
\end{equation}
We refer to $\{k_i\}_{i \in \I}$ as a {\em parameter sequence} of $\V$.
\end{definition}

Referring to Definition \ref{def:param}, 
note that each $k_i$ is defined up to inverse.
So each of the $16$ sequences $\{k_i^{\pm 1}\}_{i \in \I}$ is a parameter sequence of $\V$,
and $\V$ has no further parameter sequence.
Observe by \eqref{eq:ki} that $(t_i-k_i)(t_i-k_i^{-1})\V=0$.
Thus the eigenvalues of $t_i$ are among $k_i$, $k_i^{-1}$.

We consider the following elements of $\Hq$:
\begin{align}
 X &= t_3t_0, & Y &= t_0t_1, &
 \A &= Y+Y^{-1}, & \B &= X+X^{-1}.                  \label{eq:defXYAB}
\end{align}
It is known that each of $\A$, $\B$ commutes with $t_0$ (see Lemma \ref{lem:titj}).
By an {\em XD} (resp.\ {\em YD}) {\em $\Hq$-module} we mean a finite-dimensional
irreducible $\Hq$-module on which $X$ (resp.\ $Y$) is diagonalizable.
An $\Hq$-module is said to be {\em feasible} whenever
(i) it is both XD and YD;
(ii) $t_0$ has two distinct eigenvalues.
Let $\V$ denote a feasible $\Hq$-module with parameter sequence $\{k_i\}_{i \in \I}$.
Then $k_0^2 \neq 1$. Moreover $t_0$ is diagonalizable on $\V$ with eigenvalues $k_0$ and $k_0^{-1}$.
Observe that $\V$ is a direct sum of the two eigenspaces of $t_0$, and each eigenspace
is invariant under $\A$, $\B$.
We remark that $t_0$ does not commute with $X$, $Y$,
and so the eigenspaces of $t_0$  may not  be invariant under $X$, $Y$.

We now state our first main result.

\begin{theorem}   \label{thm:main1}   \samepage
\ifDRAFT {\rm thm:main1}. \fi
Let $\V$ denote a feasible $\Hq$-module.
Then $\A,\B$ act on each eigenspace of $t_0$ as a Leonard pair of $q$-Racah type.
\end{theorem}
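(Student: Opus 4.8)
The plan is to check Definition~\ref{def:LP} and Definition~\ref{def:LPqRacah} directly for $\A,\B$ restricted to a fixed eigenspace $W$ of $t_0$; write $d=\dim W-1$. First I would record what is already in place. Since $\A$ and $\B$ each commute with $t_0$ (Lemma~\ref{lem:titj}) and $W$ is an eigenspace of $t_0$, both $\A$ and $\B$ leave $W$ invariant, so the restrictions $\A|_W$ and $\B|_W$ are defined. Because $\V$ is YD (resp.\ XD), $Y$ (resp.\ $X$) is diagonalizable on $\V$, hence so is $\A=Y+Y^{-1}$ (resp.\ $\B=X+X^{-1}$); a diagonalizable operator restricts to a diagonalizable operator on any invariant subspace, so $\A|_W$ and $\B|_W$ are diagonalizable. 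This settles diagonalizability and reduces the theorem to producing the two tridiagonal/diagonal bases of Definition~\ref{def:LP} together with the $q$-Racah eigenvalue condition of Definition~\ref{def:qRacah}.

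The main algebraic input I would establish is that $\A$ and $\B$ satisfy a pair of Askey--Wilson (tridiagonal) relations in $\Hq$, each cubic in $\A$ and $\B$ with $\beta=q^2+q^{-2}$. To derive them I would adjoin a third element obtained by symmetrizing the product $XY$ and use the defining relations \eqref{eq:defHq1}--\eqref{eq:defHq3} together with the centrality of each $T_i$ to show that this element, $\A$, and $\B$ satisfy the defining relations of the universal Askey--Wilson algebra; eliminating the third element then yields the two tridiagonal relations, whose coefficients are expressions in $q$ and the central scalars $T_i$. Restricting to $W$, where each $T_i$ and $t_0$ act as the known scalars coming from the parameter sequence $\{k_i\}_{i\in\I}$, gives honest tridiagonal relations for $\A|_W,\B|_W$. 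By the standard theory of tridiagonal relations with $\beta=q^2+q^{-2}$, there are orderings of the eigenspaces of $\A|_W$ and of $\B|_W$ for which each operator raises or lowers the other's grading by at most one step, and the corresponding eigenvalue sequences have the form $a+b\,q^{2r}+c\,q^{-2r}$. To obtain the $q$-Racah condition I would then compute the eigenvalues of $X$ and $Y$ on $\V$ explicitly from $\{k_i\}_{i\in\I}$ and check that they form geometric progressions in $q^2$; this forces the additive constant to vanish and identifies the eigenvalue sequences of $\A|_W$ and $\B|_W$ as $q$-Racah sequences, giving $q$-Racah type in the sense of Definition~\ref{def:LPqRacah}.

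The step I expect to be the main obstacle is promoting this tridiagonal structure to an actual Leonard pair: one must show that the eigenspaces of $\A|_W$ and of $\B|_W$ are all one-dimensional and that the tridiagonal representations are irreducible (nonzero sub- and super-diagonal entries). For the first point it suffices to prove that $\B|_W$ is multiplicity-free, since the tridiagonal relations then force the same for $\A|_W$; I would read the multiplicities off the explicit eigenvalues computed above, using feasibility (two distinct $t_0$-eigenvalues together with the XD and YD properties) to exclude the coincidences among the symmetrized eigenvalues $x+x^{-1}$ that would otherwise create multiplicities. For the second point I would show that any subspace $U\subseteq W$ invariant under both $\A|_W$ and $\B|_W$ satisfies $\Hq U\cap W=U$; since irreducibility of $\V$ forces $\Hq U=\V$ whenever $U\neq 0$, this gives $U=W$, so no proper nonzero common invariant subspace exists. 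The delicate part here is that $t_0$ does not commute with $X,Y$ and the eigenspaces of $t_0$ are not $X$- or $Y$-invariant, so this reduction cannot be made naively; it requires a careful analysis of how the $t_i$ move vectors between the two $t_0$-eigenspaces in the decomposition $\V=W\oplus W'$. Once multiplicity-freeness and irreducibility are in hand, Definition~\ref{def:LP} holds on $W$, the $q$-Racah orderings found above are the standard ones, and Definition~\ref{def:LPqRacah} completes the proof of Theorem~\ref{thm:main1}.
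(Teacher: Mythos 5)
Your strategy is genuinely different from the paper's, and its first half is sound. The paper never invokes tridiagonal-pair theory: it explicitly constructs, for each eigenspace $\V(k_0^{\pm 1})$, a basis of $\B$-eigenvectors built from bond subspaces (intersections of the spaces $\V_X(\mu)+\V_X(\mu^{-1})$ with the $t_0$-eigenspaces, via the projections $F^{\pm}$ and the elements $G_0,G_2$), verifies by direct computation that $\A$ is irreducible tridiagonal on that basis with $q$-Racah diagonal for $\B$ (Proposition~\ref{prop:ABaction}), and then gets the second half of Definition~\ref{def:LP} by twisting with the automorphism $\sigma$ of Lemma~\ref{lem:sigma}, which fixes $t_0$ and swaps $\A,\B$ (Lemma~\ref{lem:XDYD}). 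Your replacement for all this is legitimate in outline: the relations of Lemma~\ref{lem:ABC}, with $\C=t_0t_2+(t_0t_2)^{-1}$ (which commutes with $t_0$ by Lemma~\ref{lem:titj}(ii), hence preserves $W$), do become Askey--Wilson relations with scalar right-hand sides on $W$, and eliminating $\C$ yields tridiagonal relations for $\A|_W,\B|_W$ with $\beta=q^2+q^{-2}$; one pleasant feature is that this route would not need the automorphism $\sigma$ at all, since the tridiagonal relations treat $\A$ and $\B$ symmetrically.

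The genuine gap is at the step you yourself flag as the main obstacle, and your proposed fix is circular rather than merely incomplete. You want to show that any subspace $U\subseteq W$ invariant under $\A|_W,\B|_W$ satisfies $\Hq U\cap W=U$. But since $\V$ is an irreducible $\Hq$-module, $\Hq U=\V$ and hence $\Hq U\cap W=W$ for \emph{every} nonzero $U$; so the identity you propose to verify is literally equivalent to the conclusion $U=W$ and cannot serve as an intermediate step. No mechanism is offered for it, and the mechanism is precisely the hard content of the theorem: in the paper it is the chain Lemma~\ref{lem:F+v}, Lemma~\ref{lem:t1v}, Lemma~\ref{lem:F+t1F+v}, Lemma~\ref{lem:AF+vAF-v}, culminating in $\A F^{+}v=\alpha F^{+}v+\beta F^{+}G_2v+\gamma F^{+}G_2G_0v$ with $\beta\gamma\neq 0$, whose nonvanishing rests on Corollary~\ref{cor:R9} and the path structure of the $X$-diagram (Lemma~\ref{lem:diagram}); this is what makes the tridiagonal matrix in Lemma~\ref{lem:Aaction} irreducible. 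A second, smaller inaccuracy sits in your multiplicity-freeness step: the coincidences $\mu+\mu^{-1}=\nu+\nu^{-1}$ are \emph{not} excluded by feasibility --- they genuinely occur at every single bond ($\nu=\mu^{-1}$). What one proves instead is that the two-dimensional space $\V_X(\mu)+\V_X(\mu^{-1})$ meets each of $\V(k_0)$, $\V(k_0^{-1})$ in dimension exactly one (Lemma~\ref{lem:WF+WF-W}), which requires showing $t_0$ does not preserve $\V_X(\mu)$ at a single bond (Lemma~\ref{lem:t0t0inv}). Until these computations, or equivalents, are supplied, your argument establishes only that $\A|_W,\B|_W$ are diagonalizable and satisfy tridiagonal relations, which is strictly weaker than being a Leonard pair.
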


Let $\V$ denote a feasible $\Hq$-module.
By Theorem \ref{thm:main1} we obtain a pair of Leonard pairs of $q$-Racah type.
In order to describe how these Leonard pairs
are related, we use the following notion.
Let $A,A^*$ denote a Leonard pair on $V$ and let 
$A',A^{*\prime}$ denote a Leonard pair on $V'$.
We say that these Leonard pairs are {\em linked} whenever 
the direct sum $V \oplus V'$ supports a feasible $\Hq$-module structure such that
$V$, $V'$ are the eigenspaces of $t_0$ and
\begin{align}
 \A|_{V} &= A, &  \B|_{V} &= A^*, & 
 \A|_{V'} &= A', & \B|_{V'} &= A^{*\prime}.       \label{eq:linked}
\end{align}
We now state our second main result.

\begin{theorem}  \label{thm:main} \samepage
\ifDRAFT {\rm thm:main}. \fi
Suppose we are given two Leonard pairs $A,A^*$ and 
$A',A^{*\prime}$ over $\F$ that have $q$-Racah type.
Then these Leonard pairs are linked if and only if 
there exist a Huang data $(a,b,c,d)$ of $A,A^*$ 
and a Huang data $(a',b',c',d')$ of $A',A^{*\prime}$ that satisfy
one of {\rm (i)--(vii)} below:
\[
\begin{array}{r|c|c|c|c|ccc}
\text{\rm Case} & d'-d & a'/a & b'/b & c'/c &  & \text{\rm Inequalities} &  
\\ \hline 
\rule{0mm}{4.3mm}
\text{\rm (i)} & -2 & 1 & 1 & 1
\\
\rule{0mm}{4mm}
\text{\rm (ii)} & -1 & q & q & q & \quad a^2 \neq q^{-2d} & \quad  b^2 \neq q^{-2d}
\\
\rule{0mm}{4mm}
\text{\rm (iii)} & 0 & q^2 & 1 & 1 & & \quad b^2 \neq q^{\pm 2d} & \quad a^2 \neq q^{-2}
\\
\rule{0mm}{4mm}
\text{\rm (iv)} & 0 & 1 & q^2 & 1  & \quad a^2 \neq q^{\pm 2d} & & \quad b^2 \neq q^{-2}
\\
\rule{0mm}{4mm}
\text{\rm (v)} & 0 & 1 & 1 & q^2 & \quad a^2 \neq q^{\pm 2d} & \quad b^2 \neq q^{\pm 2d } & \quad c^2 \neq q^{-2} 
\\
\rule{0mm}{4mm}
\text{\rm (vi)} & 1 & q^{-1} & q^{-1} & q^{-1} & \quad a^2 \neq q^{-2d} & \quad b^2 \neq q^{-2d}
\\
\rule{0mm}{4mm}
\text{\rm (vii)} & 2 & 1 & 1 & 1
\end{array}
\]
\end{theorem}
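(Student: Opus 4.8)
The plan is to convert the geometric relation \emph{linked} into explicit relations among the entries of the two Huang data and then to verify those relations by a finite case analysis. The main input is a set of closed formulas, extracted from the proof of Theorem~\ref{thm:main1}, that express a Huang data of the Leonard pair $\A,\B$ on an eigenspace of $t_0$ in terms of the module parameters. Thus I would first establish a lemma of the following shape: if $\V$ is a feasible $\Hq$-module with parameter sequence $\{k_i\}_{i \in \I}$, and $V$ is the eigenspace of $t_0$ for the eigenvalue $k_0$ with $\dim V = d+1$, then $\A,\B$ act on $V$ as a Leonard pair of $q$-Racah type admitting a Huang data $(a,b,c,d)$ whose entries are given explicitly in terms of $q^{d}$ and $k_0,k_1,k_2,k_3$. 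The feature that drives everything is that, on passing to the other eigenspace $V'$ (eigenvalue $k_0^{-1}$, $\dim V' = d'+1$), the only inputs that change are the $t_0$-eigenvalue, which inverts $k_0$, and the diameter, which becomes $d'$; the central scalars $k_1,k_2,k_3$ are common to $V$ and $V'$.

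For the forward implication I would fix a feasible $\V$ witnessing the linkage as in \eqref{eq:linked} and use irreducibility to control the two diameters. The structural fact I would lean on (from the module analysis preceding this theorem) is that the eigenvalues of $Y$ on all of $\V$, and separately those of $X$, are organized into a single $q$-string, of which the $q$-Racah orderings of the eigenvalues of $\A=Y+Y^{-1}$ on $V$ and on $V'$ (resp.\ of $\B=X+X^{-1}$) cut out two sub-strings. Because both sub-strings lie inside one ambient $q$-string, their centres and scales differ by integer powers of $q$, whence $a'/a$ and $b'/b$ are powers of $q$; the third ratio $c'/c$, recording the relative position of the two eigenspaces, is a power of $q$ for the same reason. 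The further fact that the two sub-strings must fit together compatibly with the two-term grading of $\V$ by the $t_0$-eigenvalue forces $|d'-d| \le 2$. Substituting the lemma's formulas and running through the finitely many admissible fittings then yields exactly the seven rows. The inequalities attached to each row are the non-degeneracy conditions under which the given configuration is realized: conditions such as $a^2 \neq q^{\pm 2d}$ rule out the cases where an endpoint of the relevant $Y$-string equals $\pm 1$, where the map $y \mapsto y+y^{-1}$ ramifies and the multiplicity-free and irreducibility structure would fail, while the two-distinct-eigenvalue hypothesis on $t_0$ rules out $k_0^2=1$. Throughout I would freely use the up-to-inverse ambiguities in each $k_i$ and in each of $a,b,c$, recorded after Definitions~\ref{def:param} and~\ref{def:LPqRacah}, to normalize the ratios to the displayed powers of $q$.

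For the converse I would reverse the computation. Starting from Huang data satisfying one of (i)--(vii), I would solve the lemma's formulas for a candidate parameter sequence $\{k_i\}_{i \in \I}$ and a pair of dimensions $d+1$, $d'+1$, and then invoke the existence and uniqueness of a feasible $\Hq$-module with prescribed parameter sequence and $t_0$-eigenspace dimensions (from Section~\ref{sec:pre} and the sections following it) to construct $\V = V \oplus V'$. In each case the attached inequalities are precisely what is needed for this candidate to be feasible, i.e.\ irreducible with $X,Y$ diagonalizable and with $t_0$ having two distinct eigenvalues $k_0,k_0^{-1}$. Uniqueness then identifies the Leonard pairs that $\A,\B$ induce on $V$ and $V'$ with the given $A,A^*$ and $A',A^{*\prime}$, which is the linkage asserted in \eqref{eq:linked}.

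The main obstacle is the forward case analysis together with the exact determination of the inequalities. Turning the single global statement ``$V$ and $V'$ are the $t_0$-eigenspaces of one irreducible feasible module'' into the seven explicit rows requires careful bookkeeping of the several up-to-inverse ambiguities --- in the $k_i$, in $a,b,c$, and in the two standard orderings of the eigenvalues of $\A$ and of $\B$ --- and the inequalities must be sharp enough that the necessary and the sufficient directions meet. The specific exponents should fall out cleanly: all ratios equal to $1$ when $d'-d=\pm 2$, all equal to $q^{\mp 1}$ when $d'-d=\pm 1$, and a single ratio equal to $q^2$ when $d'=d$, these being the only ways the two $q$-strings can jointly fill the ambient $q$-string of $\A$ (and of $\B$). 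Verifying that the seven cases are exhaustive and mutually consistent, rather than deriving any one of them, is where the real work lies.
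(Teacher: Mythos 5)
Your overall architecture does coincide with the paper's: a lemma expressing a Huang data of $\A,\B$ on each $t_0$-eigenspace in terms of $\{k_i\}_{i\in\I}$ (this is Proposition \ref{prop:Huangdata}), a forward case analysis keyed to the finitely many module types, and a converse obtained by building a module from the prescribed data. But there are two genuine gaps. First, your claim that the Huang-data formulas can be ``extracted from the proof of Theorem~\ref{thm:main1}'' fails for the third parameter: the eigenvalue ($q$-string) analysis in that proof determines $a$, $b$, $d$, but $c$ is invisible to eigenvalue data --- it is defined through the split sequences of Lemma \ref{lem:qRacah}, not through the spectra of $\A$ and $\B$. The paper needs separate machinery to pin $c$ down: the equitable Askey--Wilson relations (Lemmas \ref{lem:Z4AW}, \ref{lem:Z4AW2}) combined with the $\Hq$-identities of Lemma \ref{lem:ABC}, which yield the explicit formula for $c+c^{-1}$ in Lemma \ref{lem:L3Ld3}. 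Without some such computation, the $c'/c$ column of your table, and in particular case (v) with its inequality $c^2\neq q^{-2}$, cannot be derived.

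Second, and more seriously, your converse rests on ``invoking the existence and uniqueness of a feasible $\Hq$-module with prescribed parameter sequence and $t_0$-eigenspace dimensions.'' Uniqueness is indeed available (Note \ref{note:iso}), but there is no existence theorem to invoke: existence is precisely what must be proved, and in the paper it occupies an entire section (Proposition \ref{prop:const} in Section \ref{sec:const}). One must define candidate $X$-eigenvalues $\{\mu_r\}_{r=0}^n$, write down explicit formulas for the action of $\{t_i\}_{i\in\I}$ on a basis, verify the defining relations \eqref{eq:defHq1}--\eqref{eq:defHq3}, prove irreducibility, and then check --- using exactly the inequalities in the table, via Corollary \ref{cor:Ydiagonalizable} and the argument of Lemma \ref{lem:k0not1} --- that the resulting module is XD, YD, and that $t_0$ has two distinct eigenvalues. (After that one still needs Lemma \ref{lem:qRacahunique}, that Leonard pairs with a common Huang data are isomorphic, to identify the induced Leonard pairs with the given $A,A^*$ and $A',A^{*\prime}$ and transport the module structure to $V\oplus V'$; you do gesture at this step.) As written, the converse half of your proposal assumes away the hardest part of the theorem.
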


\begin{remark}    \label{rem:thmmain}
\ifDRAFT {\rm rem:thmmain}. \fi 
Referring to Theorem \ref{thm:main},
in each of (ii)--(vi) there appear some inequalities.
For each of these inequalities
we explain the role in our construction of a feasible $\Hq$-module.
The inequalities in the first column are needed to make $Y$ diagonalizable.
The inequalities in the second column are needed to
make $X$ diagonalizable.
The inequalities in the third column
are needed to make $t_0$ have two distinct eigenvalues.
\end{remark}

\begin{remark}    \label{rem:deleted}
\ifDRAFT {\rm rem:deleted}. \fi
Referring to Theorem \ref{thm:main},
for $d\geq 2$ we have $a^2 \not=q^{-2}$ and
$b^2 \not=q^{-2}$ by Lemma \ref{lem:condabc}(i), so these
inequalities can be deleted from the table.
\end{remark}

\begin{remark}    \label{rem:exchange}
\ifDRAFT {\rm rem:exchange}. \fi
Suppose we exchange our two Leonard pairs in Theorem \ref{thm:main}.
In the conditions (i)--(vii), 
the Huang data $(a,b,c,d)$ and $(a',b',c',d')$ are exchanged.
After this exchange, the conditions (i), (ii), (vi), (vii) become the original conditions
(vii), (vi), (ii), (i) respectively.
Concerning the conditions (iii)--(v), we replace each of $a$, $b$, $c$, $a'$, $b'$, $c'$ with its inverse
after the above exchange.
Then the conditions (iii)--(v) become the original conditions (iii)--(v) respectively.
\end{remark}

\smallskip

In a moment we will summarize the proof of Theorems \ref{thm:main1} and \ref{thm:main}.
Prior to that we explain the significance of the cases
that show up in Theorem \ref{thm:main}.
Let $\V$ denote an XD $\Hq$-module.
For $\mu \in \F$, let the subspace $\V_X(\mu)$ consist of the vectors $v \in \V$
such that $Xv=\mu v$.
Thus $\V_X(\mu)$ is nonzero if and only if $\mu$ is an eigenvalue of $X$,
and in this case $\V_X(\mu)$ is the corresponding eigenspace.
As we will see in Lemmas \ref{lem:t0t2inv} and \ref{lem:Hqinv}, for $\mu$, $\nu \in \F$
with $\mu \nu = 1$  (resp.\ $\mu\nu=q^{-2}$)
the subspace  $\V_X(\mu) + \V_X(\nu)$ is invariant under $t_0$, $t_3$
(resp.\ $t_1$, $t_2$).
Motivated by this, we consider the following diagram.

Let $\mu$, $\nu \in \F$.
We say $\mu$, $\nu$ are {\em $1$-adjacent} (resp. {\em $q$-adjacent})
whenever their product is $1$ (resp.\ $q^{-2}$).
For a subset $\cal M$ of $\F$ we define the {\em diagram} of $\cal M$,
that has vertex set $\cal M$, and $\mu$, $\nu \in {\cal M}$ are connected
by a single bond (resp.\ double bond) whenever $\mu$, $\nu$ are $1$-adjacent 
(resp. $q$-adjacent).
If $\mu=\nu$ then the single bond (resp.\ double bond) becomes a single loop
(resp.\ double loop).
The {\em reduced diagram} of $\cal M$ is obtained by deleting all the loops in the
diagram of $\cal M$.

Let $\V$ denote an XD $\Hq$-module.
By the {\em $X$-diagram} of $\V$ we mean the diagram of $\cal M$,
where $\cal M$ is the set of eigenvalues of $X$ on $\V$.
As we will see in Lemma \ref{lem:diagram}, the reduced $X$-diagram is a path.
So the reduced $X$-diagram has one of the following types:
\begin{align}
 \textup{\sf DS}: & \qquad\qquad
 \begin{xy}
  \ar @{=} (-10,0) *\cir<2pt>{};
            (0,0)  *\cir<2pt>{}="B"
  \ar @{-} "B" ; (10,0)  *\cir<2pt>{}="C"
  \ar @{=} "C" ; (20,0) *\cir<2pt>{}="D"
  \ar @{-} "D" ; (30,0) *{\cdots\cdots}="E"
  \ar @{=} "E" ; (40,0)  *\cir<2pt>{}="F"
  \ar @{-} "F" ; (50,0)  *\cir<2pt>{};
 \end{xy}                                                                \notag
\\
 \textup{\sf DD}:
 &  \qquad \qquad
 \begin{xy}
  \ar @{=} (-10,0)  *\cir<2pt>{}; 
            (0,0)   *\cir<2pt>{}="B"
  \ar @{-} "B" ; (10,0)  *\cir<2pt>{}="C"
  \ar @{=} "C" ; (20,0)  *\cir<2pt>{}="D"
  \ar @{-} "D" ; (30,0) *{\cdots\cdots}="E"
  \ar @{-} "E" ; (40,0)  *\cir<2pt>{}="F"
  \ar @{=} "F" ; (50,0)  *\cir<2pt>{};
 \end{xy}                                                                \label{eq:diag}
\\
 \textup{\sf SS}:
 & \qquad\qquad
 \begin{xy}
  \ar @{-} (-10,0)  *\cir<2pt>{}; 
            (0,0)   *\cir<2pt>{}="B"
  \ar @{=} "B" ; (10,0)  *\cir<2pt>{}="C"
  \ar @{-} "C" ; (20,0) *\cir<2pt>{}="D"
  \ar @{=} "D" ; (30,0) *{\cdots\cdots}="E"
  \ar @{=} "E" ; (40,0)  *\cir<2pt>{}="F"
  \ar @{-} "F" ; (50,0)   *\cir<2pt>{};
 \end{xy}                                                                   \notag
\end{align}
If the above diagram has only one vertex, we interpret it to be {\sf DS}. 
For each of \eqref{eq:diag} we choose an ordering of the eigenvalues 
$\{\mu_r\}_{r=0}^n$ of $X$ as follows:
\begin{align}
 \textup{\sf DS:} & \qquad\qquad
 \begin{xy}
  \ar @{=} (-10,0) *++!D{\mu_0} *\cir<2pt>{}; 
            (0,0)   *++!D{\mu_1} *\cir<2pt>{}="B"
  \ar @{-} "B" ; (10,0) *++!D{\mu_2} *\cir<2pt>{}="C"
  \ar @{=} "C" ; (20,0) *++!D{\mu_3} *\cir<2pt>{}="D"
  \ar @{-} "D" ; (30,0) *{\cdots\cdots}="E"
  \ar @{=} "E" ; (40,0) *++!D{\mu_{n-1}} *\cir<2pt>{}="F"
  \ar @{-} "F" ; (50,0) *++!D{\mu_n}  *\cir<2pt>{};
 \end{xy}
 & \qquad                                               \notag
\\
 \textup{\sf DD:}
 &  \qquad \qquad
 \begin{xy}
  \ar @{=} (-10,0) *++!D{\mu_0} *\cir<2pt>{}; 
            (0,0)   *++!D{\mu_1} *\cir<2pt>{}="B"
  \ar @{-} "B" ; (10,0) *++!D{\mu_2} *\cir<2pt>{}="C"
  \ar @{=} "C" ; (20,0) *++!D{\mu_3} *\cir<2pt>{}="D"
  \ar @{-} "D" ; (30,0) *{\cdots\cdots}="E"
  \ar @{-} "E" ; (40,0) *++!D{\mu_{n-1}} *\cir<2pt>{}="F"
  \ar @{=} "F" ; (50,0) *++!D{\mu_n}  *\cir<2pt>{};
 \end{xy}                                                         \label{eq:standard}
\\
 \textup{\sf SS:}
 & \qquad\qquad
 \begin{xy}
  \ar @{-} (-10,0) *++!D{\mu_0} *\cir<2pt>{}; 
            (0,0)   *++!D{\mu_1} *\cir<2pt>{}="B"
  \ar @{=} "B" ; (10,0) *++!D{\mu_2} *\cir<2pt>{}="C"
  \ar @{-} "C" ; (20,0) *++!D{\mu_3} *\cir<2pt>{}="D"
  \ar @{=} "D" ; (30,0) *{\cdots\cdots}="E"
  \ar @{=} "E" ; (40,0) *++!D{\mu_{n-1}} *\cir<2pt>{}="F"
  \ar @{-} "F" ; (50,0) *++!D{\mu_n}  *\cir<2pt>{};
 \end{xy}                                                                 \notag
\end{align}
It turns out that each eigenspace of $X$ has dimension one (see Proposition \ref{prop:mfree}).
For each end-vertex $\mu$ of the diagram,
we consider the action of $\{t_i\}_{i \in \I}$ on $\V_X(\mu)$.
As we will see in Lemma \ref{lem:endvertex},
\begin{equation}                              \label{eq:end}
\begin{array}{c|c|c}
\text{Case}
 & \text{$\V_X(\mu_0)$ is invariant under} 
 & \text{$\V_X(\mu_n)$ is invariant under}
\\ \hline
\text{\sf DS} \rule{0mm}{4mm}  
  & t_0,\qquad t_3 & t_1,\qquad t_2 \\
\text{\sf DD}  & t_0,\qquad t_3 & t_0,\qquad t_3 \\
\text{\sf SS}  & t_1,\qquad t_2 & t_1,\qquad t_2
\end{array}
\end{equation}
For each $t_i$, consider the action of $t_i$ on $\V$.
For this action any two distinct eigenvalues are reciprocals.
For the reduced $X$-diagram {\sf DD} one of the following cases 
occurs (see Lemma \ref{lem:typesDD}):
\begin{equation}                 \label{eq:DDaDDb}
\begin{array}{c|c|c}
 \text{Case} 
 & 
   \begin{array}{c}
     \text{Eigenvalues of $t_0$ on} \\
     \text{$\V_X(\mu_0)$ and $\V_X(\mu_n)$}
   \end{array}
 &
   \begin{array}{cc}
     \text{Eigenvalues of $t_3$ on} \\
     \text{$\V_X(\mu_0)$ and $\V_X(\mu_n)$}
   \end{array}
\\ \hline
\text{\sf DDa} \rule{0mm}{4mm}
  & \text{same} & \text{reciprocals} \\
\text{\sf DDb} & \text{reciprocals} & \text{same}
\end{array}
\end{equation}
Similarly, for the reduced $X$-diagram {\sf SS} one of the following cases 
occurs (see Lemma \ref{lem:typesSS}):
\begin{equation}                  \label{eq:SSaSSb}
\begin{array}{c|c|c}
 \text{Case} 
 & 
   \begin{array}{c}
     \text{Eigenvalues of $t_1$ on} \\
     \text{$\V_X(\mu_0)$ and $\V_X(\mu_n)$}
   \end{array}
 &
   \begin{array}{cc}
     \text{Eigenvalues of $t_2$ on} \\
     \text{$\V_X(\mu_0)$ and $\V_X(\mu_n)$}
   \end{array}
\\ \hline
\text{\sf SSa} \rule{0mm}{4mm}
 & \text{same} & \text{reciprocals} \\
\text{\sf SSb} & \text{reciprocals} & \text{same}
\end{array}
\end{equation}
We refer to each case \textup{\sf DS},
\textup{\sf DDa}, \textup{\sf DDb}, \textup{\sf SSa}, \textup{\sf SSb}
as the {\em $X$-type of $\V$}.
An XD $\Hq$-module is determined up to isomorphism by its dimension, its parameter sequence, and
its $X$-type (see Note \ref{note:iso}).
The cases (i)--(v) in Theorem \ref{thm:main} correspond to the $X$-types as follows:
\begin{equation}           \label{eq:corres}
\begin{array}{ccccc}
 \text{(i)} & \text{(ii)} & \text{(iii)} & \text{(iv)} & \text{(v)}
\\ \hline
 \textup{\sf DDa} & \textup{\sf DS} & \textup{\sf SSa} & \textup{\sf DDb} & \textup{\sf SSb}
\end{array}
\end{equation}
The cases (vi) and (vii) are reduced to (ii) and (i) by exchanging our two Leonard pairs
(see Remark \ref{rem:exchange}).
 
Recall that $\V$ has $16$ parameter sequences $\{k_i^{\pm 1}\}_{i \in \I}$.
In view of \eqref{eq:end}--\eqref{eq:SSaSSb} we adopt the following convention
for most of the paper:
\begin{equation}     \label{eq:rule0}
\begin{array}{c|c}
\text{Case} & \text{Rule}
\\ \hline
\text{\sf DS} \rule{0mm}{6.5mm} &
\begin{array}{l}
\text{$k_0$ (resp.\ $k_3$) is the eigenvalue of $t_0$ (resp.\ $t_3$) on $\V_X(\mu_0)$} \\
\text{$k_1$ (resp.\ $k_2$) is the eigenvalue of $t_1$ (resp.\ $t_2$) on $\V_X(\mu_n)$}
\end{array}
\\ \hline
\text{\sf DD} \rule{0mm}{4.2mm} & 
\begin{array}{l}
\text{$k_0$ (resp.\ $k_3$) is the  eigenvalue of $t_0$ (resp.\ $t_3$) on $\V_X(\mu_0)$}
\end{array}
\\ \hline
\text{\sf SS} \rule{0mm}{4.2mm} &
\begin{array}{l}
\text{$k_1$ (resp. $k_2$) is the eigenvalue of $t_1$ (resp.\ $t_2$) on $\V_X(\mu_0)$}
\end{array}
\end{array}
\end{equation}
Under this convention, the following equation holds (see Lemma \ref{lem:typekr0}):
\begin{equation}                           \label{eq:equation}
\begin{array}{c|c}
\text{\rm $X$-type of $\V$}  & \text{\rm Equation}
\\ \hline 
\textup{\sf DS}  & k_0 k_1 k_2 k_3 = q^{-n-1}   \rule{0mm}{3ex}
\\
 \textup{\sf DDa} &   k_0^2 = q^{-n-1}   \rule{0mm}{2.5ex}
\\
 \textup{\sf DDb} &   k_3^2 = q^{-n-1}  \rule{0mm}{2.5ex}
\\
 \textup{\sf SSa} &   k_1^2 = q^{-n-1}   \rule{0mm}{2.5ex}
\\
 \textup{\sf SSb} &   k_2^2 = q^{-n-1}  \rule{0mm}{2.5ex}
\end{array}
\end{equation}

Below we summarize our proof of the main theorems.
Until starting the proof summary of Theorem \ref{thm:main},
the following notation is in effect.
Let $\V$ denote an XD $\Hq$-module with dimension $n+1$, $n \geq 1$.
We consider the reduced $X$-diagram of $\V$.
Let $\{\mu_r\}_{r=0}^n$ denote the eigenvalues of $X$ on $\V$, as shown in \eqref{eq:standard}.
Choose a parameter sequence $\{k_i\}_{i \in \I}$ of $\V$ that satisfies \eqref{eq:rule0}.
Assume that $k_0 \neq k_0^{-1}$.
Let $\V(k_0)$ denote the subspace of $\V$ consisting of $v \in \V$
such that $t_0 v = k_0 v$.
The subspace $\V(k_0^{-1})$ is similarly defined.

Towards Theorem \ref{thm:main1},
we make the following observations (A)--(C).

(A)
The eigenvalues  $\{\mu_r\}_{r=0}^n$ are determined by
$\{k_i\}_{i \in \I}$, using \eqref{eq:rule0} and the shape of the diagram.

(B)
We indicated earlier that for each single bond $\mu$, $\nu$ 
the subspace $\V_X(\mu) + \V_X(\nu)$ is invariant under $t_0$.
It turns out that the intersections of this subspace with
$\V(k_0)$ and $\V(k_0^{-1})$ each have dimension one.
Call these intersections {\em bond subspaces}.
For an endvertex $\mu$ that is incident to a double bond,
by \eqref{eq:end} $\V_X(\mu)$ is invariant under $t_0$, 
so it is contained in one of $\V(k_0)$, $\V(k_0^{-1})$.
Call $\V_X(\mu)$ a {\em bond subspace}.
Note that each of $\V(k_0)$, $\V(k_0^{-1})$ is a direct sum of its bond subspaces.
Also note that $t_0$ has only one eigenvalue on $\V$
if and only if $n=1$ and $\V$ has $X$-type \text{\sf DDa}.

(C)
For each vertex $\mu$
the subspace $\V_X(\mu)$ is an eigenspace of $X$ with eigenvalue $\mu$.
Since $\B = X + X^{-1}$,
$\V_X(\mu)$ is invariant under $\B$ with eigenvalue $\mu + \mu^{-1}$.
For each single bond $\mu$, $\nu$
we have $\mu \nu = 1$, so on $\V_X(\mu)$ and $\V_X(\nu)$
the eigenvalue of $\B$ is the same.
Therefore $\B$ acts on $\V_X(\mu) + \V_X(\nu)$ as $\mu + \nu$ times the identity.

We summarize our proof of Theorem \ref{thm:main1}.
Assume that $\V$ is feasible.
First consider the action of $\A,\B$ on $\V(k_0)$.
We pick a nonzero vector in each bond subspace of $\V(k_0)$.
By (B) these vectors form a basis for $\V(k_0)$.
We order these vectors along with the ordering $\{\mu_r\}_{r=0}^n$, and denote them 
by $\{w_r\}_{r=0}^d$.
By (C) the vectors $\{w_r\}_{r=0}^d$ are eigenvectors of $\B$.
By (A) the corresponding eigenvalues are represented in terms of $\{k_i\}_{i \in \I}$,
and we find that these eigenvalues form a $q$-Racah sequence.
We indicated earlier that for each single (resp.\ double) bond $\mu$, $\nu$
the subspace $\V_X(\mu) + \V_X(\nu)$ is invariant under $t_0$ (resp.\ $t_1$).
By this we see that the matrix representing $\A$ with respect to $\{w_r\}_{r=0}^d$
is tridiagonal, and it turns out that this tridiagonal matrix is irreducible.
We have described the action of $\A,\B$ on $\V(k_0)$,
and the action of $\A,\B$ on $\V(k_0^{-1})$ is similar.
So far for each of $\V(k_0^{\pm 1})$  there exists a basis
with respect to which the matrix representing
$\A$ is irreducible tridiagonal and the matrix representing $\B$ is diagonal
whose diagonal entries form a $q$-Racah sequence.
There is an automorphism $\sigma$ of $\Hq$ that fixes $t_0$
and swaps $\A$, $\B$.
Applying the above fact to the twisted $\Hq$-module $\V^\sigma$,
we find that for each of $\V(k_0^{\pm 1})$ there exists a basis with respect to which the matrix
representing $\B$ is irreducible tridiagonal and the matrix representing $\A$ is diagonal
whose diagonal entries form a $q$-Racah sequence.
Therefore $\A,\B$ act on each $\V(k_0^{\pm 1})$ as a Leonard pair of $q$-Racah type.

Towards Theorem \ref{thm:main}, 
we do the following (D)--(G).

(D)
We construct a certain basis $\{u_r\}_{r=0}^n$ for $\V$ such that
for $1 \leq r \leq n$ we have
$Y^e u_{r-1} \in \F (u_{r-1} - u_r)$,
where $e=1$ if $\mu_{r-1}$, $\mu_r$ are $1$-adjacent,
and $e=-1$  if $\mu_{r-1}$, $\mu_r$ are $q$-adjacent.

(E)
We obtain the action of $X^{\pm 1}$ and $Y^{\pm 1}$ on the basis $\{u_r\}_{r=0}^n$.
The results show that
with respect to the basis $\{u_r\}_{r=0}^n$ the matrices representing
$X^{\pm 1}$,  $\B$ are lower tridiagonal,
and the matrices representing   $Y^{\pm 1}$, $\A$ are upper tridiagonal.

(F)
We obtain some inequalities for $\{k_i\}_{i \in \I}$ 
from the facts that $X$ is diagonalizable on $\V$,
$\{\mu_r\}_{r=0}^n$ are mutually distinct,  and
each $\V_X(\mu_r)$ has dimension one (see Lemma \ref{lem:typekr}).
Also, using the  basis $\{u_r\}_{r=0}^n$ we 
obtain necessary and sufficient conditions on $\{k_i\}_{i \in \I}$ for $Y$ to be
diagonalizable on $\V$ (see Corollary \ref{cor:Ydiagonalizable}).

(G)
Assume that $\V$ is feasible, and 
consider the two Leonard pairs of $q$-Racah type  from Theorem \ref{thm:main1}.
We represent a Huang data of each Leonard pair in terms of $\{k_i\}_{i \in \I}$
as follows.
First consider the Leonard pair $\A,\B$ on $\V(k_0)$.
Using the basis $\{u_r\}_{r=0}^n$ from (D) we construct a basis for $\V(k_0)$ 
with respect to which the matrix representing $\A$ (resp.\ $\B$) is lower bidiagonal 
(resp.\ upper bidiagonal).
By construction the diagonal entries of this matrix give an ordering of the
eigenvalues of $\A$ (resp.\ $\B$) on $\V(k_0)$,
and it turns out (see Lemma \ref{lem:LBUB}) that this ordering
is standard.
This standard ordering forms a $q$-Racah sequence.
Let $a$ (resp.\ $b$) denote the parameter of this $q$-Racah sequence.
We represent $a$ (resp.\ $b$) in terms of $\{k_i\}_{i \in \I}$.
We remark that the above standard ordering of the eigenvalues of $\B$ coincides with the one
from the proof summary of Theorem \ref{thm:main1}
(see Proposition \ref{prop:ABaction} and Corollary \ref{cor:Beigen}).
We define a certain nonzero $c \in \F$ in terms of $\{k_i\}_{i \in \I}$.
Using the equitable Askey-Wilson relations (see Lemma \ref{lem:Z4AW}),
we show that $(a,b,c,d)$ is a Huang data of the Leonard pair $\A,\B$ on $\V(k_0)$.
So far, we have represented a Huang data of the Leonard pair $\A,\B$ 
on $\V(k_0)$ in terms of $\{k_i\}_{i \in \I}$.
We similarly represent a Huang data of the Leonard pair $\A,\B$ on $\V(k_0^{-1})$
in terms of $\{k_i\}_{i \in \I}$.

We now summarize our proof of  Theorem \ref{thm:main}.
First consider the  ``only if''  direction.
Suppose we are given two Leonard pairs $A,A^*$ on $V$ and
$A',A^{*\prime}$ on $V'$ that have $q$-Racah type.
Assume that these Leonard pairs are linked.
So we have a feasible $\Hq$-module structure on $\V := V \oplus V'$ such that
$V$, $V'$ are the eigenspaces of $t_0$ and \eqref{eq:linked} holds.
Let $\{k_i\}_{i \in \I}$ denote a parameter sequence of $\V$ that
satisfies \eqref{eq:rule0}.
First assume that $V=\V(k_0)$ and $V' = \V(k_0^{-1})$.
In (G) we described a Huang data $(a,b,c,d)$ of $\A,\B$ on $\V(k_0)$ and a Huang data
$(a',b',c',d')$ of  $\A,\B$ on $\V(k_0^{-1})$.
These Huang data are displayed in Proposition \ref{prop:Huangdata}.
For these Huang data the value of $d'-d$ and the ratios   $a'/a$, $b'/b$, $c'/c$ are
as follows.
\[
\begin{array}{c|c|c|c|c}
\text{$X$-type of $\V$} & d'-d & a'/a & b'/b & c'/c
\\ \hline
\textup{\sf DS} & -1 & q & q & q
\\
\textup{\sf DDa} & -2 & 1 & 1 & 1
\\
\textup{\sf DDb} & 0 & 1 & q^2 & 1
\\
\textup{\sf SSa} & 0 & q^2 & 1 & 1
\\
\textup{\sf SSb} & 0 & 1 & 1 & q^2
\end{array}
\]
Using (F), \eqref{eq:equation} and $k_0^2 \neq 1$ we get the following inequalities:
\[
\begin{array}{c|ccc}
\text{\rm $X$-type of $\V$} & & \text{\rm inequalities} 
\\ \hline
\textup{\sf DS} & a^2 \neq q^{-2d} & b^2 \neq q^{-2d}   \rule{0mm}{2.8ex}
\\
\textup{\sf DDa}
\\
\textup{\sf DDb} & a^2 \neq q^{\pm 2d} & & b^2 \neq q^{-2}
\\
\textup{\sf SSa} & & b^2 \neq q^{\pm 2d} & a^2 \neq q^{-2}
\\
\textup{\sf SSb} & a^2 \neq q^{\pm 2d} & b^2 \neq q^{\pm 2d} & c^2 \neq q^{-2}
\end{array}
\]
Now we find that  the following case in Theorem \ref{thm:main} occurs:
\[
\begin{array}{c|ccccc}
 \textup{\rm $X$-type of $\V$} & 
 \textup{\sf DS} & \textup{\sf DDa} & \textup{\sf DDb} & \textup{\sf SSa} &  \textup{\sf SSb}
\\ \hline
 \text{Case} & \text{(ii)} & \text{(i)} & \text{(iv)} & \text{(iii)} & \text{(v)}    \rule{0mm}{2.5ex}
\end{array}
\]
For the case $V=\V(k_0^{-1})$ and $V' = \V(k_0)$ the argument is similar.

Next consider the ``if'' direction.
Suppose we are given two Leonard pairs $A,A^*$ on $V$ and $A',A^{*\prime}$ on $V'$
that have $q$-Racah type.
Assume that there exist a Huang data $(a,b,c,d)$ of $A,A^*$ and a Huang data $(a',b',c',d')$ of $A',A^{*\prime}$
that satisfy one of the conditions (i)--(vii).
We may assume that the Huang data satisfy one of (i)--(v) by exchanging our two Leonard pairs
if necessary.
For each case (i)--(v),
we define scalars $\{k_i\}_{i \in \I}$ as follows:
\begin{equation}                   \label{eq:defki0}
\begin{array}{c|ccccc}
\text{Case} & k_0 & k_1 & k_2 & k_3 
\\ \hline
\rule{0mm}{4.5mm}
\text{\rm (i)}  & q^{-d} &  a & c & b
\\
\rule{0mm}{4mm}
\text{\rm (ii)}  
 & (abcq^{1-d})^{1/2} & aq^{-d}k_0^{-1} & cq^{-d}k_0^{-1} & bq^{-d}k_0^{-1}
\\
\rule{0mm}{4mm}
\text{\rm (iii)}  & aq & q^{-d-1} & b & c
\\
\rule{0mm}{4mm}
\text{\rm (iv)}  &  bq & c & a & q^{-d-1}
\\
\text{\rm (v)}  & cq & b & q^{-d-1} & a
\end{array}
\end{equation}
In case (ii), take any one of the square roots of $a b c q^{1-d}$
for the value of $k_0$.
We construct an XD $\Hq$-module $\V$ with dimension $d+d'+1$
and parameter sequence $\{k_i\}_{i \in \I}$ in the following way.
Set $n=d+d'+1$ and let $\V$ denote a vector space over $\F$ with basis $\{v_r\}_{r=0}^n$.
We define scalars $\{\mu_r\}_{r=0}^n$ as follows:
\[
\begin{array}{c|c}
\text{Case} & \text{Definition of $\mu_r$} 
\\ \hline
\text{(i), (ii), (iv)} 
& \mu_r = k_0 k_3 q^r \;\; \text{ if $r$ is even},  
\qquad \mu_r = \big( k_0 k_3 q^{r+1} \big)^{-1} \;\;  \text{ if $r$ is odd}
\\
\text{(iii), (v)}
& \mu_r = \big( k_1 k_2 q^{r+1} \big)^{-1} \;\; \text{ if $r$ is even},
\qquad \mu_r = k_1 k_2 q^r \;\; \text{ if $r$ is odd}
\end{array}
\]
We find that the reduced diagram of $\{\mu_r\}_{r=0}^n$ is 
as in  \eqref{eq:standard} with
\[
\begin{array}{ccccc}
 \text{(i)} & \text{(ii)} & \text{(iii)} & \text{(iv)} & \text{(v)}
\\ \hline
 \textup{\sf DD} & \textup{\sf DS} & \textup{\sf SS} & \textup{\sf DD} & \textup{\sf SS}
\end{array}
\]
Using this diagram
we define the action of $\{t_i\}_{i \in \I}$ on $\{v_r\}_{r=0}^n$ as follows.
For $0 \leq r \leq n-1$ such that $\mu_r$, $\mu_{r+1}$ are $1$-adjacent
(resp.\ $q$-adjacent),
we define the action of $t_0$, $t_3$ (resp.\ $t_1$, $t_2$)
on $\F v_r + \F v_{r+1}$ by \eqref{eq:t0vr}--\eqref{eq:t3vr+1}
(resp.\eqref{eq:t1vr}--\eqref{eq:t2vr+1}).
The remaining actions are defined by \eqref{eq:tiv0vn}.
It turns out that these actions give an $\Hq$-module structure of $\V$.
Moreover,
this $\Hq$-module $\V$ is XD and has $X$-type
\begin{equation}           \label{eq:corres2}
\begin{array}{ccccc}
 \text{(i)} & \text{(ii)} & \text{(iii)} & \text{(iv)} & \text{(v)}
\\ \hline
 \textup{\sf DDa} & \textup{\sf DS} & \textup{\sf SSa} & \textup{\sf DDb} & \textup{\sf SSb}
\end{array}
\end{equation}
The inequalities in Theorem \ref{thm:main} yield
some inequalities for $\{k_i\}_{i \in \I}$ (see Lemma \ref{lem:restrictions}).
By these inequalities we find that $\{k_i\}_{i \in \I}$ satisfy the conditions in (F)
that make $Y$ diagonalizable on $\V$,
so $\V$ is YD.
We also find that $t_0$ has two distinct eigenvalues on $\V$, so $\V$ is feasible.
By Theorem \ref{thm:main1} $\A,\B$ act on each eigenspace of $t_0$
as a Leonard pair of $q$-Racah type.
These Leonard pairs have Huang data that are displayed in Proposition \ref{prop:Huangdata}.
Using \eqref{eq:defki0} we find that these Huang data coincide with $(a,b,c,d)$ and $(a',b',c',d')$.
Thus the Leonard pair $\A,\B$ on $\V(k_0)$ (resp.\ $\V(k_0^{-1})$)
and the Leonard pair $A,A^*$ (resp.\ $A', A^{* \prime}$)
have a common Huang data.
So the Leonard pair $\A,\B$ on $\V(k_0)$ (resp.\ $\V(k_0^{-1})$)
and the Leonard pair $A,A^*$ on $V$ (resp.\ $A', A^{* \prime}$ on $V'$) are isomorphic.
Let $f : \V(k_0) \to V$ (resp.\ $f' : \V(k_0^{-1}) \to V'$)
denote the corresponding isomorphism of Leonard pairs.
Consider the $\F$-linear bijection $f \oplus f' : \V = \V(k_0) + \V(k_0^{-1}) \to V \oplus V'$. 
We define an $\Hq$-module structure on $V \oplus V'$ so that 
$f \oplus f'$ is an isomorphism of $\Hq$-modules.
By construction the $\Hq$-module $V \oplus V'$ is feasible,
the subspaces $V$ and $V'$ are the eigenspaces of $t_0$,  and \eqref{eq:linked} holds.
Thus the Leonard pairs $A,A^*$ and $A',A^{*\prime}$ are linked.

The paper is organized as follows.
In Section \ref{sec:pre} we recall some materials concerning Leonard pairs.
In Section \ref{sec:auto} we recall some basic facts about $\Hq$.
In Sections \ref{sec:G0G2} and \ref{sec:G0G2action} we consider certain elements of $\Hq$ 
and study their properties.
In Section \ref{sec:diagram} we study the $X$-diagram.
In Section \ref{sec:mfree} we show that $X$ is multiplicity-free on an XD $\Hq$-module.
In Section \ref{sec:types} we study the cases {\sf DS}, {\sf DDa}, {\sf DDb}, {\sf SSa}, {\sf SSb},
and we do (A).
In Section \ref{sec:structure} we investigate the structure of an XD $\Hq$-module.
In Section \ref{sec:eigent0} we investigate the eigenspaces of $t_0$,
and we do (B).
In Section \ref{sec:proofmain1} we prove Theorem \ref{thm:main1}.
In Section \ref{sec:restrictions} we obtain some (in)equalities for $\{k_i\}_{i \in \I}$.
In Sections \ref{sec:ur} we do (D).
In Section \ref{sec:eigenY} we obtain the action of $Y^{\pm 1}$ on the basis $\{u_r\}_{r=0}^n$,
and we do (F).
In Section \ref{sec:tiaction} we obtain the action of $\{t_i\}_{i \in \I}$ on the basis $\{u_r\}_{r=0}^n$.
In Section \ref{sec:Xaction} we obtain the action of $X^{\pm 1}$  on the basis $\{u_r\}_{r=0}^n$.
In Sections \ref{sec:basisVpm}--\ref{sec:Huang} we do (G).
In Section \ref{sec:onlyif} we prove the ``only if'' direction of Theorem \ref{thm:main}.
In Sections \ref{sec:const} and \ref{sec:if} we prove the ``if'' direction
of Theorem \ref{thm:main}.

\section{Preliminaries for Leonard pairs}
\label{sec:pre}

In this section we recall some materials concerning Leonard pairs.
We first recall the notion of a parameter array,
and next recall the notion of a Huang data.

Fix an integer $d \geq 0$.
Let $M$ denote a $(d+1) \times (d+1)$ matrix with all entries in $\F$.
We index the rows and columns by  $0,1,\ldots,d$.
Let $V$ denote a vector space over $\F$ with basis $\{v_r\}_{r=0}^d$, 
and consider a linear transformation $A: V \to V$.
We say {\em $M$ represents $A$ with respect to $\{v_r\}_{r=0}^d$}
whenever $A v_s = \sum_{r=0}^d M_{r,s} v_r$ for $0 \leq s \leq d$.

\begin{lemma} {\rm (See \cite[Theorem 3.2]{T:Leonard}.)}     \label{lem:split}   \samepage
\ifDRAFT {\rm lem:split}. \fi
Let $V$ denote a vector space over $\F$ with dimension $d+1$,
and let $A,A^*$ denote a Leonard pair on $V$.
Let $\{\th_r\}_{r=0}^d$ (resp.\ $\{\th^*_r\}_{r=0}^d$) 
denote a standard ordering of the eigenvalues of $A$ (resp.\ $A^*$).
Then there exists a basis for $V$ with respect to which
the matrices representing $A,A^*$ are
\begin{align*} 
A &: \;\; 
 \begin{pmatrix}
  \th_0  & & & & & \text{\bf 0} \\
  1 & \th_1  \\
   & 1 & \th_2 \\
   & & \cdot & \cdot \\
   & & & \cdot & \cdot \\
  \text{\bf 0}  & & & & 1 & \th_{d}
 \end{pmatrix},
& 
A^*  &: \;\;
 \begin{pmatrix}
  \th^*_0 & \vphi_1 & & & & \text{\bf 0} \\
     & \th^*_1 & \vphi_2  \\
   &  & \th^*_2 & \cdot  \\
   & &    & \cdot & \cdot \\
   & & &    & \cdot & \vphi_d \\
  \text{\bf 0} & & & &  & \th^*_d
 \end{pmatrix}
\end{align*}
for some scalars $\{\vphi_r\}_{r=1}^d$ in $\F$.
The sequence $\{\vphi_r\}_{r=1}^d$ is uniquely determined by the ordering
$(\{\th_r\}_{r=0}^d, \{\th^*_r\}_{r=0}^d)$.
Moreover $\vphi_r \neq 0$ for $1 \leq r \leq d$.
\end{lemma}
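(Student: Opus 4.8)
The plan is to establish the lemma through the \emph{split decomposition} of $V$ relative to the two standard orderings, and then to read the asserted bidiagonal matrices off a basis adapted to this decomposition. First I would record the tridiagonal relations forced by standardness. Let $E_r$ (resp.\ $E_r^*$) denote the primitive idempotent of $A$ (resp.\ $A^*$) for $\theta_r$ (resp.\ $\theta^*_r$); each of $A,A^*$ being multiplicity-free, every $E_r V$ and $E_r^* V$ is one-dimensional. Because $\{\theta^*_r\}_{r=0}^d$ is standard, $A$ is irreducible tridiagonal in an $A^*$-eigenbasis, which reads $E_i^* A E_j^* = 0$ for $|i-j|>1$; dually, since $\{\theta_r\}_{r=0}^d$ is standard, $E_i A^* E_j = 0$ for $|i-j|>1$.

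Next I would set up the flags. For $0 \le i \le d$ put
\[
  M_i = E_0^* V + \cdots + E_i^* V, \qquad N_i = E_i V + \cdots + E_d V,
\]
so $\dim M_i = i+1$ and $\dim N_i = d-i+1$, and define $U_i = M_i \cap N_i$. The tridiagonal relations give $A M_i \subseteq M_{i+1}$ and $A^* N_i \subseteq N_{i-1}$. The central claim is that $V = M_i \oplus N_{i+1}$ for $0 \le i \le d-1$; a short diagram chase then yields
\[
  V = U_0 \oplus U_1 \oplus \cdots \oplus U_d, \qquad \dim U_i = 1,
\]
with $M_i = U_0 \oplus \cdots \oplus U_i$ and $N_i = U_i \oplus \cdots \oplus U_d$. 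Granting the decomposition, the raising and lowering relations follow directly: for $u \in U_i$, expanding $u$ in the $A$-eigenbasis shows $(A-\theta_i I)u \in N_{i+1}$, while $A M_i \subseteq M_{i+1}$ gives $(A-\theta_i I)u \in M_{i+1}$, so $(A - \theta_i I)U_i \subseteq M_{i+1}\cap N_{i+1} = U_{i+1}$; dually $(A^* - \theta^*_i I)U_i \subseteq U_{i-1}$. I expect the directness of these sums to be the main obstacle, since it is exactly here that the \emph{irreducibility} (nonvanishing of the off-diagonal entries) of the tridiagonal relations is used; this is the heart of the split decomposition of \cite[Theorem 3.2]{T:Leonard}.

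With the decomposition in hand I would build the basis by raising. Choose $0 \neq u_0 \in U_0$ and set $u_i = (A - \theta_{i-1}I)u_{i-1}$ for $1 \le i \le d$. Each raising map $U_{i-1} \to U_i$ is a bijection (see the next paragraph), so $u_i \in U_i$ is nonzero and $\{u_r\}_{r=0}^d$ is a basis. By construction $A u_i = \theta_i u_i + u_{i+1}$ for $i<d$ and $A u_d = \theta_d u_d$, which is precisely the lower bidiagonal matrix with unit subdiagonal. Since $(A^* - \theta^*_i I)U_i \subseteq U_{i-1}$ and $\dim U_{i-1} = 1$, there are scalars $\varphi_i$ with $A^* u_i = \theta^*_i u_i + \varphi_i u_{i-1}$ for $1 \le i \le d$ and $A^* u_0 = \theta^*_0 u_0$, giving the upper bidiagonal matrix. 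For uniqueness, the only freedom is the scalar in $u_0$; replacing $u_0$ by $\lambda u_0$ scales every $u_i$ by $\lambda$, leaving the matrix of $A^*$ unchanged, so each $\varphi_i$ depends only on the ordering $(\{\theta_r\}_{r=0}^d,\{\theta^*_r\}_{r=0}^d)$.

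Finally I would prove $\varphi_i \neq 0$ using irreducibility. A Leonard pair is irreducible: a subspace invariant under $A^*$ is a sum of $A^*$-eigenspaces, and if it is also $A$-invariant the nonvanishing off-diagonal tridiagonal entries force it to be $0$ or $V$. For the bijectivity used above, if some raising map vanished, i.e.\ $(A-\theta_i I)U_i = 0$, then $M_i = U_0 \oplus \cdots \oplus U_i$ would be invariant under both $A$ and $A^*$ while $0 \neq M_i \neq V$, a contradiction. Likewise, if $\varphi_j = 0$ for some $1 \le j \le d$, then $A^*$ acts as $\theta^*_j I$ on $U_j$, so $N_j = U_j \oplus \cdots \oplus U_d$ is invariant under both $A$ and $A^*$ with $0 \neq N_j \neq V$, again a contradiction. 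Hence every raising map is a bijection and every $\varphi_i$ is nonzero, completing the proof.
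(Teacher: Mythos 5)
First, a point of reference: the paper does not prove this lemma at all --- it is imported verbatim from \cite[Theorem 3.2]{T:Leonard} --- so the only meaningful comparison is with the proof in that reference, and your split-decomposition strategy is indeed the mechanism used there. Granting your ``central claim'' $V=M_i\oplus N_{i+1}$ ($0\leq i\leq d-1$), the rest of your argument is correct and complete: the raising/lowering inclusions $(A-\theta_i I)U_i\subseteq U_{i+1}$ and $(A^*-\theta^*_i I)U_i\subseteq U_{i-1}$, the construction $u_i=(A-\theta_{i-1}I)u_{i-1}$, the uniqueness of $\{\vphi_r\}_{r=1}^d$ (any qualifying basis must start with a $\theta^*_0$-eigenvector of $A^*$ and is propagated by the maps $A-\theta_r I$, hence is a scalar multiple of yours), and the two irreducibility arguments for nonvanishing are all sound.

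The genuine gap is that the central claim is never proved: you flag it as ``the main obstacle'' and appeal to its being ``the heart of the split decomposition of \cite[Theorem 3.2]{T:Leonard}'' --- but that theorem is exactly the statement you are trying to prove, so the appeal is circular, and every later step (directness, $\dim U_i=1$, bijectivity of the raising maps, $\vphi_j\neq 0$) rests on it. Nor does it follow from irreducibility by a soft invariance argument: $M_i+N_{i+1}$ is not visibly invariant under $A$ or under $A^*$, and while irreducibility does give the spanning statement $V=U_0+\cdots+U_d$ (this sum is nonzero and invariant under both maps), it does not give directness. Here is one way to close the gap. Since $\dim M_i+\dim N_{i+1}=d+1$, it suffices to show $M_i\cap N_{i+1}=0$. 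Let $\{v^*_j\}_{j=0}^d$ denote the $A^*$-eigenbasis in the standard order, so that the matrix of $A$ in this basis is irreducible tridiagonal, and set $p_i(x)=\prod_{k=i+1}^d(x-\theta_k)$. Then $p_i(A)$ vanishes on $N_{i+1}$, since $p_i(A)=\sum_j p_i(\theta_j)E_j$ and $p_i(\theta_k)=0$ for $k\geq i+1$. Now take $0\neq v\in M_i$ and write $v=\sum_{j=0}^m c_j v^*_j$ with $c_m\neq 0$ and $m\leq i$. Because $A$ has nonzero subdiagonal entries with respect to $\{v^*_j\}$, for any monic polynomial $f$ of degree $e$ with $m+e\leq d$ the vector $f(A)v$ has a nonzero component along $v^*_{m+e}$; applying this with $f=p_i$ and $e=d-i$ (note $m+e\leq d$) gives $p_i(A)v\neq 0$, whence $v\notin N_{i+1}$. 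This proves the claim, and with it your write-up becomes a complete and essentially faithful reconstruction of the proof of \cite[Theorem 3.2]{T:Leonard}.
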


With reference to Lemma \ref{lem:split}
we refer to $\{\vphi_r\}_{r=1}^d$ as the {\em first split sequence} of $A,A^*$
associated with the ordering $(\{\th_r\}_{r=0}^d, \{\th^*_r\}_{r=0}^d)$.
By the {\em second split sequence} of $A,A^*$ associated with
the ordering $(\{\th_r\}_{r=0}^d, \{\th^*_r\}_{r=0}^d)$ we mean the first split sequence
of $A,A^*$ associate with the ordering
 $(\{\th_{d-r}\}_{r=0}^d, \{\th^*_r\}_{r=0}^d)$.

\begin{definition}    \label{def:parray}    \samepage
\ifDRAFT {\rm def:parray}. \fi
Let $A,A^*$ denote a Leonard pair over $\F$.
By a {\em parameter array} of $A,A^*$ we mean a sequence
\begin{equation}   \label{eq:parray}
  (\{\th_r\}_{r=0}^d, \{\th^*_r\}_{r=0}^d, \{\vphi_r\}_{r=1}^d, \{\phi_r\}_{r=1}^d),
\end{equation}
such that $\{\th_r\}_{r=0}^d$ is a standard ordering of the eigenvalues of $A$,
$\{\th^*_r\}_{r=0}^d$ is a standard ordering of the eigenvalues of $A^*$,
and $\{\vphi_r\}_{r=1}^d$ (resp.\ $\{\phi_r\}_{r=1}^d$) is the corresponding
first split sequence (resp.\ second split sequence) of $A,A^*$.
\end{definition}

Let $A,A^*$ denote a Leonard pair over $\F$ with diameter $d$,
and let \eqref{eq:parray} denote a parameter array of $A,A^*$.
Then by \cite[Theorem 1.11]{T:Leonard} the parameter arrays of $A,A^*$ are
as follows:
\begin{align*}
 & (\{\th_{r}\}_{r=0}^d,  \{\th^*_{r}\}_{r=0}^d,  \{\vphi_{r}\}_{r=1}^d,  \{\phi_{r}\}_{r=1}^d), 
\\
 & (\{\th_{r}\}_{r=0}^d,  \{\th^*_{d-r}\}_{r=0}^d,  \{\phi_{d-r+1}\}_{r=1}^d,  \{\vphi_{d-r+1}\}_{r=1}^d),
\\
 &  (\{\th_{d-r}\}_{r=0}^d,  \{\th^*_{r}\}_{r=0}^d,  \{\phi_{r}\}_{r=1}^d,  \{\vphi_{r}\}_{r=1}^d),
\\
 & (\{\th_{d-r}\}_{r=0}^d, \{\th^*_{d-r}\}_{r=0}^d,  \{\vphi_{d-r+1}\}_{r=1}^d,  \{\phi_{d-r+1}\}_{r=1}^d).
\end{align*}

\begin{lemma}  {\rm (See \cite[Theorem 1.9]{T:Leonard}.)}
\label{lem:characterize}   \samepage
\ifDRAFT {\rm lem:characterize}. \fi
Consider two Leonard pairs over $\F$.
Assume that these Leonard pairs have a common parameter array.
Then these Leonard pairs are isomorphic.
\end{lemma}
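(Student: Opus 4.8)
The plan is to reduce everything to the split decomposition of Lemma \ref{lem:split}, which already does the heavy lifting. Denote the two Leonard pairs by $A,A^*$ on $V$ and $A',A^{*\prime}$ on $V'$, and let $(\{\th_r\}_{r=0}^d, \{\th^*_r\}_{r=0}^d, \{\vphi_r\}_{r=1}^d, \{\phi_r\}_{r=1}^d)$ be the common parameter array. In particular $\{\th_r\}_{r=0}^d$ is a standard ordering of the eigenvalues of both $A$ and $A'$, $\{\th^*_r\}_{r=0}^d$ is a standard ordering of the eigenvalues of both $A^*$ and $A^{*\prime}$, and $\{\vphi_r\}_{r=1}^d$ is the first split sequence of each pair relative to these orderings. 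Since the eigenvalue sequences have $d+1$ terms, we get $\dim V = \dim V' = d+1$.

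First I would apply Lemma \ref{lem:split} to $A,A^*$ using the standard orderings $\{\th_r\}_{r=0}^d$, $\{\th^*_r\}_{r=0}^d$. This produces a basis $\{v_r\}_{r=0}^d$ of $V$ for which $A$ is represented by the lower bidiagonal matrix with diagonal $\{\th_r\}_{r=0}^d$ and all subdiagonal entries $1$, and $A^*$ is represented by the upper bidiagonal matrix with diagonal $\{\th^*_r\}_{r=0}^d$ and superdiagonal $\{\vphi_r\}_{r=1}^d$. The uniqueness clause of Lemma \ref{lem:split} guarantees that the superdiagonal entries are exactly the first split sequence attached to the chosen orderings, so by Definition \ref{def:parray} they are the $\{\vphi_r\}_{r=1}^d$ of the parameter array. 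Repeating verbatim for $A',A^{*\prime}$ yields a basis $\{v'_r\}_{r=0}^d$ of $V'$ in which $A'$ and $A^{*\prime}$ are represented by the same two bidiagonal matrices, because the defining data $\{\th_r\}_{r=0}^d$, $\{\th^*_r\}_{r=0}^d$, $\{\vphi_r\}_{r=1}^d$ coincide by hypothesis.

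Next I would define $f : V \to V'$ to be the $\F$-linear bijection sending $v_r \mapsto v'_r$ for $0 \leq r \leq d$. Since $A$ and $A'$ are represented by identical matrices in the bases $\{v_r\}_{r=0}^d$ and $\{v'_r\}_{r=0}^d$, we get $fA = A'f$; likewise the identical representations of $A^*$ and $A^{*\prime}$ give $fA^* = A^{*\prime}f$. Hence $f$ is an isomorphism of Leonard pairs, which is what is wanted. I would also note that the second split sequence $\{\phi_r\}_{r=1}^d$ plays no role in this direction: the first split sequence alone pins down the two bidiagonal matrices via Lemma \ref{lem:split}, so equality of first split sequences across the two pairs already forces isomorphism.

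The only point requiring care --- and the closest thing to an obstacle --- is the bookkeeping about standard orderings: one must check that the orderings named in the common parameter array can genuinely be fed simultaneously as the standard orderings into Lemma \ref{lem:split} for \emph{both} pairs, and that the first split sequence of Definition \ref{def:parray} is literally the superdiagonal sequence output by Lemma \ref{lem:split} rather than some reindexed variant. Both facts are immediate from the definitions, so once the split bases are in hand the argument is a transport of structure and involves no computation.
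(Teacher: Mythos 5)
Your proof is correct. The paper itself gives no argument for this lemma---it simply cites \cite[Theorem 1.9]{T:Leonard}---so there is no internal proof to compare against; your split-basis transport argument is precisely the standard derivation of that cited result, and every step you use (the existence and uniqueness of the superdiagonal sequence in Lemma \ref{lem:split}, the identification of that sequence with the first split sequence of Definition \ref{def:parray}, and the dimension count $\dim V=\dim V'=d+1$ from multiplicity-freeness) is available verbatim in the paper. Your closing observation that the second split sequence $\{\phi_r\}_{r=1}^d$ is not needed is also accurate: equality of the eigenvalue orderings and first split sequences alone forces the two pairs to be represented by identical bidiagonal matrices, so the isomorphism follows by transport of structure.
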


A square matrix is said to be {\em upper bidiagonal}
whenever each nonzero entry lies on the diagonal or the superdiagonal,
and {\em lower bidiagonal} whenever its transpose is upper bidiagonal.

\begin{lemma}   {\rm (See  \cite[Corollary 7.6]{T:LBUB}.)}     \samepage
\label{lem:LBUB} 
\ifDRAFT {\rm lem:LBUB}. \fi
Let $V$ denote a vector space over $\F$ with dimension $d+1$,
and let $A,A^*$ denote a Leonard pair on $V$.
Assume that there exists a basis for $V$ with respect to which the matrix
representing $A$ (resp.\ $A^*$) is lower bidiagonal (resp.\ upper bidiagonal)
with $(r,r)$-entry $\th_r$ (resp.\ $\th^*_r$) for $0 \leq r \leq d$.
Then $\{\th_r\}_{r=0}^d$ (resp.\ $\{\th^*_r\}_{r=0}^d$) is a standard ordering
of the eigenvalues of $A$ (resp.\ $A^*$).
\end{lemma}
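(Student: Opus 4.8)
The plan is to prove the two assertions one at a time; I will show that $\{\th^*_r\}_{r=0}^d$ is a standard ordering of the eigenvalues of $A^*$, and the statement for $A$ follows by the symmetric argument (interchange the roles of $A$ and $A^*$, and replace the ascending flag used below by the descending flag $\mathrm{span}\{v_i,\dots,v_d\}$, which is $A$-invariant and satisfies $A^*\,\mathrm{span}\{v_i,\dots,v_d\}\subseteq\mathrm{span}\{v_{i-1},\dots,v_d\}$). First observe that since the matrix representing $A^*$ is upper triangular, its eigenvalues are exactly the diagonal entries $\{\th^*_r\}_{r=0}^d$, and these are mutually distinct because every Leonard pair is multiplicity-free. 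So $\{\th^*_r\}_{r=0}^d$ is already an ordering of the eigenvalues, and only standardness needs to be verified. I note that I will not use that the off-diagonal entries of the bidiagonal matrices are nonzero.

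First I would extract a flag. Write $\{v_r\}_{r=0}^d$ for the given basis and set $U_i=\mathrm{span}\{v_0,\dots,v_i\}$. Since $A^*$ is upper bidiagonal, $A^*U_i\subseteq U_i$, and the restriction $A^*|_{U_i}$ is upper triangular with diagonal $\th^*_0,\dots,\th^*_i$; as $A^*$ is diagonalizable and $U_i$ is $A^*$-invariant, this forces $U_i=E^*_0V+\cdots+E^*_iV$, where $E^*_l$ denotes the primitive idempotent of $A^*$ for the eigenvalue $\th^*_l$. On the other hand, because $A$ is lower bidiagonal we have $AU_j\subseteq U_{j+1}$. Combining these, $AE^*_jV\subseteq AU_j\subseteq U_{j+1}=E^*_0V+\cdots+E^*_{j+1}V$, and applying $E^*_i$ gives $E^*_iAE^*_j=0$ whenever $i\ge j+2$. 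In other words, with respect to an eigenbasis of $A^*$ taken in the order $\{\th^*_r\}_{r=0}^d$, the matrix representing $A$ is upper Hessenberg.

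The crux is then a short combinatorial lemma. Because $A,A^*$ is a Leonard pair, there is a standard ordering of the eigenvalues of $A^*$ with respect to whose eigenbasis $A$ is irreducible tridiagonal; our ordering $\{\th^*_r\}_{r=0}^d$ is some permutation $\pi$ of that one. I claim that a $\pi$-reordering of an irreducible tridiagonal matrix which is upper Hessenberg must have $\pi$ equal to the identity or to the reversal $r\mapsto d-r$. Indeed, irreducibility makes the $(\pi(i),\pi(j))$-entry nonzero whenever $|\pi(i)-\pi(j)|=1$, so the Hessenberg condition ($i\ge j+2\Rightarrow$ that entry is $0$) forces, for each $p$, that the consecutive values $\pi^{-1}(p)$ and $\pi^{-1}(p+1)$ differ by at most $1$; being a permutation of $\{0,\dots,d\}$, the sequence $\pi^{-1}(0),\dots,\pi^{-1}(d)$ then has all consecutive differences equal to $\pm1$ and hence is monotone, so $\pi^{-1}$, and thus $\pi$, is the identity or the reversal. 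Since both the chosen standard ordering and its reversal are standard (and these are the only two), $\{\th^*_r\}_{r=0}^d$ is standard.

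I expect the main obstacle to be precisely this last step. The bidiagonal hypothesis is asymmetric and, as the flag computation shows, yields only the one-sided vanishing $E^*_iAE^*_j=0$ for $i\ge j+2$, i.e.\ upper Hessenberg, not the full tridiagonal condition. One therefore genuinely has to invoke that $A,A^*$ is \emph{already} a Leonard pair, so that the matrix of $A$ in some standard $A^*$-order is irreducible tridiagonal, and then the combinatorial lemma does the work of pinning our ordering to that standard one up to reversal. The analogous computation for $\{\th_r\}_{r=0}^d$ uses the descending flag mentioned above to obtain the lower Hessenberg condition $E_iA^*E_j=0$ for $j\ge i+2$, after which the same combinatorial conclusion applies.
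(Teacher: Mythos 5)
Your proof is correct; note, though, that the paper offers no internal proof to compare against --- Lemma \ref{lem:LBUB} is imported from the literature via the citation \cite[Corollary~7.6]{T:LBUB}, where it arises from the theory of the LB--UB canonical form of a Leonard pair. Your argument is a valid self-contained replacement, and all three of its ingredients check out. First, the ascending flag $U_i=\mathrm{span}\{v_0,\dots,v_i\}$ is $A^*$-invariant, the restriction $A^*|_{U_i}$ is triangular with the distinct scalars $\th^*_0,\dots,\th^*_i$ on its diagonal and inherits diagonalizability from $A^*$ (its minimal polynomial divides that of $A^*$), so indeed $U_i=E^*_0V+\cdots+E^*_iV$, where $E^*_l$ is the primitive idempotent for $\th^*_l$. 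Second, since $A$ is lower bidiagonal, $AU_j\subseteq U_{j+1}$, whence $E^*_iAE^*_j=0$ for $i\ge j+2$: in an eigenbasis of $A^*$ ordered as $\{\th^*_r\}_{r=0}^d$ the matrix of $A$ is upper Hessenberg. Third, writing $\th^*_r=\eta_{\pi(r)}$, where $\{\eta_p\}_{p=0}^d$ is a standard ordering (so the matrix $T$ of $A$ in the corresponding eigenbasis is irreducible tridiagonal) and $\pi$ is a permutation, the Hessenberg condition $T_{\pi(i),\pi(j)}=0$ for $i\ge j+2$ together with $T_{pq}\neq 0$ for $|p-q|=1$ forces $|\pi^{-1}(p)-\pi^{-1}(p+1)|=1$ for all $p$; injectivity of $\pi^{-1}$ rules out any sign change in these consecutive differences, so $\pi$ is the identity or the reversal, and both of those orderings are standard. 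The symmetric argument with the descending flag handles $\{\th_r\}_{r=0}^d$. Two features are worth highlighting: you correctly refrain from assuming the off-diagonal entries of the bidiagonal matrices are nonzero (the hypothesis does not grant this), and your use of the Leonard-pair hypothesis is genuinely needed, since bidiagonality alone yields only the one-sided Hessenberg vanishing. Compared with invoking the cited canonical-form machinery, your route is elementary and self-contained, at the cost of redoing by hand the rigidity that the LB--UB theory packages once and for all.
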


\begin{lemma}  {\rm (See \cite[Corollary 6.4]{Hu}.)}  \label{lem:qRacah} \samepage
\ifDRAFT {\rm lem:qRacah}. \fi
Let $A,A^*$ denote a Leonard pair over $\F$ with diameter $d$, 
and let \eqref{eq:parray} denote a parameter array of $A,A^*$.
Assume that $A,A^*$ has $q$-Racah type, and
let $a$, $b$ denote nonzero scalars in $\F$ such that 
\begin{align}    \label{eq:thrthsr}
 \th_r  &= a q^{2r-d} + a^{-1} q^{d-2r},  &
 \th^*_r &= b q^{2r-d} + b^{-1}q^{d-2r} && (0 \leq r \leq d). 
\end{align}
Then there exists a nonzero $c \in \F$ such that 
{\small
\begin{align}
  \vphi_r &= a^{-1}b^{-1} q^{d+1}(q^r-q^{-r})(q^{r-d-1}-q^{d-r+1})
                        (q^{-r}-a b c q^{r-d-1})(q^{-r}-a b c^{-1} q^{r-d-1}),    \label{eq:vphir}
\\
  \phi_ r &= a b^{-1} q^{d+1}(q^r-q^{-r})(q^{r-d-1}-q^{d-r+1})
               (q^{-r}-a^{-1} b c q^{r-d-1})(q^{-r}-a^{-1} b c^{-1} q^{r-d-1})   \label{eq:phir}
\end{align}
}
for $1 \leq r \leq d$.
Moreover $c$ is unique up to inverse, provided that $d \geq 1$.
\end{lemma}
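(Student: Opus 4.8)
The plan is to reduce the statement to the standard description of the parameter array of a Leonard pair, and then to carry out an explicit $q$-evaluation. By Lemma~\ref{lem:split} the first split sequence $\{\vphi_r\}_{r=1}^d$ exists, has all entries nonzero, and is uniquely determined by the ordering $(\{\th_r\}_{r=0}^d,\{\th^*_r\}_{r=0}^d)$; the same holds for the second split sequence $\{\phi_r\}_{r=1}^d$. I would then invoke from \cite{T:Leonard} the affine relations satisfied by the entries of the parameter array \eqref{eq:parray}, namely
\[
\vphi_r = \phi_1 \sum_{h=0}^{r-1}\frac{\th_h-\th_{d-h}}{\th_0-\th_d} + (\th^*_r-\th^*_0)(\th_{r-1}-\th_d),
\qquad
\phi_r = \vphi_1 \sum_{h=0}^{r-1}\frac{\th_h-\th_{d-h}}{\th_0-\th_d} + (\th^*_r-\th^*_0)(\th_{d-r+1}-\th_0)
\]
for $1\le r\le d$. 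These show that the whole parameter array is determined by the two eigenvalue sequences together with the single scalar $\vphi_1$ (since $\phi_1=\vphi_1-(\th^*_1-\th^*_0)(\th_0-\th_d)$). So it suffices to produce a nonzero $c$ matching the prescribed value of $\vphi_1$, and then to verify that the closed forms \eqref{eq:vphir}, \eqref{eq:phir} agree with the affine relations for every $r$.

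To define $c$, I would evaluate \eqref{eq:vphir} at $r=1$: the $c$-dependent factor expands as $(q^{-1}-abcq^{-d})(q^{-1}-abc^{-1}q^{-d}) = q^{-2}-abq^{-d-1}(c+c^{-1})+a^2b^2q^{-2d}$, which is affine in $c+c^{-1}$ with nonzero coefficient $-abq^{-d-1}$. Since $q$ is not a root of unity and $d\ge 1$, the prefactor $a^{-1}b^{-1}q^{d+1}(q-q^{-1})(q^{-d}-q^d)$ is nonzero, so the equation equating $\vphi_1$ to this product determines $c+c^{-1}$ uniquely, whence $c$ is a root of $t^2-(c+c^{-1})t+1=0$. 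Both roots are nonzero with product $1$, so a nonzero $c$ exists and is unique up to inversion; this is exactly where the hypothesis $d\ge 1$ is used (for $d=0$ there are no split sequences and $c$ is unconstrained). I note moreover that the two $c$-factors in \eqref{eq:vphir}, and likewise in \eqref{eq:phir}, are interchanged by $c\mapsto c^{-1}$, so each right-hand side depends on $c$ only through $c+c^{-1}$, consistent with the uniqueness just established.

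It then remains to verify, for this $c$, that \eqref{eq:vphir} and \eqref{eq:phir} reproduce the affine relations for all $r$. Using \eqref{eq:thrthsr} one has $\th_h-\th_{d-h}=(a-a^{-1})(q^{2h-d}-q^{d-2h})$ and $\th_0-\th_d=(a-a^{-1})(q^{-d}-q^d)$, so the sum $\sum_{h=0}^{r-1}\frac{\th_h-\th_{d-h}}{\th_0-\th_d}$ collapses to a ratio of geometric series with a closed form in $q,d,r$, while $\th^*_r-\th^*_0$ and $\th_{r-1}-\th_d$ factor into terms of the shape $q^i-q^{-i}$. Both the affine expression for $\vphi_r$ and the expansion of \eqref{eq:vphir} are affine in $c+c^{-1}$, so it suffices to match, for each $r$, the coefficient of $c+c^{-1}$ and the constant term, each of which reduces to an identity among these geometric sums; the argument for $\phi_r$ is identical with the appropriate substitution of $a$. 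The main obstacle is precisely this algebraic reconciliation: evaluating the telescoping sum in closed form and confirming that the single-parameter affine expression factors into the quartic product displayed in \eqref{eq:vphir}, \eqref{eq:phir}. Once this computation is carried out, both the existence and the uniqueness-up-to-inverse of $c$ follow, and the lemma is proved.
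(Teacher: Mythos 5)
The paper never proves this lemma: it is imported wholesale from \cite[Corollary 6.4]{Hu}, so there is no internal argument to compare your proposal against — what you have written is, in effect, a reconstruction of the proof of the cited result. Your skeleton is sound and is essentially the route the literature takes. By \cite[Theorem 1.9]{T:Leonard} the parameter array is affinely determined by the two eigenvalue sequences together with the single scalar $\vphi_1$ (your identity $\phi_1=\vphi_1-(\th^*_1-\th^*_0)(\th_0-\th_d)$ is exactly the $r=1$ case of the second affine relation), so once one knows that the right-hand sides of \eqref{eq:vphir}, \eqref{eq:phir} satisfy the two affine relations identically in $a,b,c$, matching at $r=1$ propagates to all $r$: first the $\phi_r$ agree, hence $\phi_1$ agrees, hence the $\vphi_r$ agree. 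Your handling of $c$ is also correct: the right-hand side of \eqref{eq:vphir} at $r=1$ is affine in $c+c^{-1}$ with total coefficient $-(q-q^{-1})(q^{-d}-q^{d})\neq 0$ (here $d\geq 1$ and $q$ not a root of unity are used), so $c+c^{-1}$ is pinned down uniquely, a nonzero root of $t^2-(c+c^{-1})t+1$ exists since the roots have product $1$, and any two admissible values of $c$ are reciprocals because both right-hand sides depend on $c$ only through $c+c^{-1}$. This yields existence and uniqueness up to inverse, and your observation that the $d=0$ case is vacuous matches the paper's remark following the lemma.

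The one genuine caveat is that the step you yourself call ``the main obstacle'' — verifying that the closed forms \eqref{eq:vphir}, \eqref{eq:phir} satisfy the affine relations for every $r$ — is set up but not carried out, and it is the actual mathematical content of the lemma. It is true and purely mechanical: both sides are affine in $c+c^{-1}$, and the sum $\sum_{h=0}^{r-1}(\th_h-\th_{d-h})/(\th_0-\th_d)$ collapses (using $\th_h-\th_{d-h}=(a-a^{-1})(q^{2h-d}-q^{d-2h})$) to the closed form $(q^r-q^{-r})(q^{d-r+1}-q^{r-d-1})\big/\bigl((q-q^{-1})(q^{d}-q^{-d})\bigr)$, after which the coefficient of $c+c^{-1}$ and the constant term can each be matched by elementary manipulation of $q$-powers. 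So your plan goes through, but a complete write-up must execute this computation; at that point you will have re-derived \cite[Corollary 6.4]{Hu}, which is presumably why the authors cite it rather than prove it.
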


With reference to Lemma \ref{lem:qRacah},
for the case $d=0$
the conditions \eqref{eq:vphir} and \eqref{eq:phir} become vacuous,
so any nonzero $c \in \F$ satisfies these conditions.
For the case $d \geq 1$,
the scalars $a$, $b$, $c$ are determined up to inverse of $c$ by
the ordering $(\{\th_r\}_{r=0}^d, \{\th^*_r\}_{r=0}^d)$,
since the parameter array \eqref{eq:parray} is determined by 
the ordering $(\{\th_r\}_{r=0}^d, \{\th^*_r\}_{r=0}^d)$.

\begin{definition}   \label{def:Huangdata}   \samepage
\ifDRAFT {\rm def:Huangdata}. \fi
Let $A,A^*$ denote a Leonard pair over $\F$ with diameter $d$ that has $q$-Racah type,
and let \eqref{eq:parray} denote a parameter array of $A,A^*$.
Let $a$, $b$, $c$ denote nonzero scalars in $\F$ that satisfy
\eqref{eq:thrthsr}--\eqref{eq:phir}.
Then the sequence $(a,b,c,d)$ is called a {\em Huang data} of $A,A^*$ corresponding
to the ordering $(\{\th_r\}_{r=0}^d, \{\th^*_r\}_{r=0}^d)$.
\end{definition}

With reference to Definition \ref{def:Huangdata}, assume $d \geq 1$.
Let $(a,b,c,d)$ denote a Huang data of $A,A^*$ corresponding to the ordering
$(\{\th_r\}_{r=0}^d, \{\th^*_r\}_{r=0}^d)$.
Then by \cite[Lemma 8.1]{Hu} for each standard ordering of the eigenvalues of $A,A^*$
the corresponding Huang data of $A,A^*$ are as follows:
\[
 \begin{array}{c|c}
   \text{\rm Standard ordering} & \text{\rm Huang data}
\\ \hline
  (\{\th_r\}_{r=0}^d, \{\th^*_r\}_{r=0}^d) & (a,b,c^{\pm 1} ,d)           \rule{0mm}{2.8ex}
\\
    (\{\th_r\}_{r=0}^d, \{\th^*_{d-r}\}_{r=0}^d) & (a,b^{-1},c^{\pm 1} ,d)          \rule{0mm}{2.8ex}
\\
  (\{\th_{d-r}\}_{r=0}^d, \{\th^*_r\}_{r=0}^d) & (a^{-1} , b, c^{\pm 1} ,d)          \rule{0mm}{2.8ex}
\\
  (\{\th_{d-r}\}_{r=0}^d, \{\th^*_{d-r}\}_{r=0}^d) & (a^{-1} , b^{-1}, c^{\pm 1} ,d)          \rule{0mm}{2.8ex}
 \end{array}
\]

\begin{lemma}   \label{lem:qRacahunique}
\ifDRAFT {\rm lem:qRacahunique}. \fi
Consider two Leonard pairs over $\F$ that have $q$-Racah type.
Assume that these Leonard pairs have a common Huang data.
Then these Leonard pairs are isomorphic.
\end{lemma}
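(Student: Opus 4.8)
The plan is to reduce the statement to Lemma \ref{lem:characterize}, which asserts that two Leonard pairs with a common parameter array are isomorphic. So the goal becomes: from the hypothesis that the two Leonard pairs $A,A^*$ and $A',A^{*\prime}$ share a Huang data $(a,b,c,d)$, produce a parameter array that is a parameter array of \emph{both}. The key observation driving this is that a Huang data, together with $q$, completely determines a parameter array through the explicit formulas of Lemma \ref{lem:qRacah}. Thus the Huang data carries exactly the information encoded in a parameter array, and matching Huang data should force matching parameter arrays.

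Concretely, I would argue as follows. Since $A,A^*$ has $q$-Racah type with Huang data $(a,b,c,d)$, by Definition \ref{def:Huangdata} this data corresponds to a standard ordering $(\{\th_r\}_{r=0}^d,\{\th^*_r\}_{r=0}^d)$ of the eigenvalues whose first and second split sequences $\{\vphi_r\}_{r=1}^d$, $\{\phi_r\}_{r=1}^d$ satisfy \eqref{eq:thrthsr}--\eqref{eq:phir}. The point is that the right-hand sides of \eqref{eq:thrthsr}, \eqref{eq:vphir}, \eqref{eq:phir} depend only on $a$, $b$, $c$, $d$ and the fixed scalar $q$. Hence from the quadruple $(a,b,c,d)$ one reads off a single, explicitly determined sequence $(\{\th_r\}_{r=0}^d,\{\th^*_r\}_{r=0}^d,\{\vphi_r\}_{r=1}^d,\{\phi_r\}_{r=1}^d)$, and this sequence is a parameter array of $A,A^*$. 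Applying the identical reasoning to $A',A^{*\prime}$, whose Huang data is the same quadruple $(a,b,c,d)$, yields the \emph{same} explicitly determined sequence as a parameter array of $A',A^{*\prime}$. Therefore $A,A^*$ and $A',A^{*\prime}$ have a common parameter array, and Lemma \ref{lem:characterize} gives the desired isomorphism.

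I would treat the degenerate case $d=0$ with a separate remark: there the split sequences are empty and $c$ is arbitrary, but the formula \eqref{eq:thrthsr} still forces $\th_0 = a + a^{-1}$ and $\th^*_0 = b + b^{-1}$, so both pairs have the one-term parameter array $(\{\th_0\},\{\th^*_0\})$ determined by $a$ and $b$; Lemma \ref{lem:characterize} again applies. The only place requiring care—and the main conceptual obstacle—is the bookkeeping about orderings: the notion of Huang data in Definition \ref{def:Huangdata} is tied to a particular standard ordering of the eigenvalues, so one must check that the common value $(a,b,c,d)$ truly produces the same ordered parameter array for both pairs rather than two arrays related by a permutation of the eigenvalue labels. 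This is exactly guaranteed by the fact that \eqref{eq:thrthsr}--\eqref{eq:phir} are honest equalities expressing each $\th_r$, $\th^*_r$, $\vphi_r$, $\phi_r$ as a fixed function of $r$ and the data, so no ambiguity in the ordering survives. Everything else is a direct citation of the earlier lemmas, and no further computation is needed.
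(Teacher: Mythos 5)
Your proposal is correct and follows essentially the same route as the paper's own proof: the Huang data $(a,b,c,d)$ determines a parameter array of each Leonard pair via \eqref{eq:thrthsr}--\eqref{eq:phir}, so the two pairs share a common parameter array and Lemma \ref{lem:characterize} yields the isomorphism. Your extra remarks on the case $d=0$ and on the ordering bookkeeping are sound elaborations of details the paper leaves implicit.
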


\begin{proof}
Let $(a,b,c,d)$ denote a common Huang data of the two Leonard pairs.
By \eqref{eq:thrthsr}--\eqref{eq:phir} the Huang data $(a,b,c,d)$ determines a parameter array
of each Leonard pair.
Thus these Leonard pairs have a common parameter array.
By this and Lemma \ref{lem:characterize} these Leonard pairs are isomorphic.
\end{proof}

We mention a lemma for later use.

\begin{lemma}  {\rm (See \cite[Lemmas 7.2, 7.3]{Hu}.)} \label{lem:condabc} \samepage
\ifDRAFT {\rm lem:condabc}. \fi
Let $a,b,c$ denote nonzero scalars in $\F$.
Then the sequence $(a,b,c,d)$ is a Huang data of a Leonard pair over $\F$ of
$q$-Racah type if and only if 
the following {\rm (i)} and {\rm (ii)} hold:
\begin{itemize}
\item[\rm (i)]
neither of $a^2$, $b^2$ is among
$q^{2d-2},q^{2d-4},\ldots,q^{2-2d}$;
\item[\rm (ii)]
none of $abc$, $a^{-1}bc$, $ab^{-1}c$, $abc^{-1}$ is among
$q^{d-1},q^{d-3},\ldots,q^{1-d}$.
\end{itemize}
\end{lemma}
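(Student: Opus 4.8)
The plan is to reduce both directions to the fundamental theorem of Leonard pairs \cite[Theorem 1.9]{T:Leonard}, which characterizes the sequences \eqref{eq:parray} that arise as parameter arrays: besides a shape (three-term recurrence) condition on $\{\th_r\}_{r=0}^d$ and $\{\th^*_r\}_{r=0}^d$, one needs (PA1) the eigenvalues $\{\th_r\}_{r=0}^d$ distinct and $\{\th^*_r\}_{r=0}^d$ distinct, and (PA2) $\vphi_r \neq 0$ and $\phi_r \neq 0$ for $1 \leq r \leq d$. By Definition \ref{def:Huangdata} and Lemma \ref{lem:qRacah}, a sequence $(a,b,c,d)$ is a Huang data of a $q$-Racah Leonard pair precisely when the four sequences given by \eqref{eq:thrthsr}, \eqref{eq:vphir}, \eqref{eq:phir} form such a parameter array. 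So the real task is to translate (PA1) and (PA2) into the stated conditions (i) and (ii), and to check that the shape and linking relations hold automatically for $q$-Racah data.

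I would first treat (PA1). Writing $\th_r = aq^{2r-d}+a^{-1}q^{d-2r}$, a direct factorization gives, for $0 \leq r < s \leq d$, $\th_r - \th_s = q^{-d}(q^{2r}-q^{2s})(a - a^{-1}q^{2d-2r-2s})$. Since $q$ is not a root of unity, $q^{2r} \neq q^{2s}$, so $\th_r = \th_s$ if and only if $a^2 = q^{2d-2(r+s)}$; as $r+s$ runs over $1,2,\ldots,2d-1$ the exponent $2d-2(r+s)$ runs over $\{2d-2, 2d-4, \ldots, 2-2d\}$. Hence $\{\th_r\}_{r=0}^d$ is multiplicity-free exactly when $a^2$ avoids this set, and likewise for $b$ and $\{\th^*_r\}_{r=0}^d$; this is condition (i). In the necessity direction distinctness is forced because each of $A,A^*$ is multiplicity-free (Lemma 1.3 of \cite{T:Leonard}); in the sufficiency direction it is what (PA1) demands.

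Next (PA2). From \eqref{eq:vphir} the factors $q^r - q^{-r}$ and $q^{r-d-1}-q^{d-r+1}$ are nonzero for $1 \leq r \leq d$ since $q$ is not a root of unity, so $\vphi_r = 0$ if and only if $abc = q^{d+1-2r}$ or $abc^{-1} = q^{d+1-2r}$; here $d+1-2r$ runs over $\{d-1, d-3, \ldots, 1-d\}$. Thus all $\vphi_r \neq 0$ iff neither $abc$ nor $abc^{-1}$ lies in $S := \{q^{d-1}, q^{d-3}, \ldots, q^{1-d}\}$, and similarly from \eqref{eq:phir} all $\phi_r \neq 0$ iff neither $a^{-1}bc$ nor $a^{-1}bc^{-1}$ lies in $S$. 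The one point needing care is reconciling these four products with the four listed in (ii): the set $S$ is invariant under $x \mapsto x^{-1}$ (its exponents are symmetric about $0$), and $(a^{-1}bc^{-1})^{-1} = ab^{-1}c$, so ``$a^{-1}bc^{-1} \notin S$'' is equivalent to ``$ab^{-1}c \notin S$''. Hence $\{abc, abc^{-1}, a^{-1}bc, a^{-1}bc^{-1}\} \cap S = \varnothing$ is the same as $\{abc, a^{-1}bc, ab^{-1}c, abc^{-1}\} \cap S = \varnothing$, which is condition (ii). For necessity, $\vphi_r, \phi_r \neq 0$ comes from Lemma \ref{lem:split}; for sufficiency it is what (PA2) demands.

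It remains, for the sufficiency direction only, to confirm the shape condition and the two relations that express $\vphi_r, \phi_r$ through the eigenvalue sequences. The shape condition is immediate because $\th_r, \th^*_r$ both satisfy $\th_{r-1} - (q^2+q^{-2})\th_r + \th_{r+1} = 0$, giving the common parameter $\beta = q^2+q^{-2}$. The two linking relations are Laurent-polynomial identities in $a,b,c$, and I expect verifying them to be the main obstacle: one can either grind them out directly, or argue more cheaply that they hold identically because they already hold on the Zariski-dense locus of parameters that genuinely arise from $q$-Racah Leonard pairs (e.g.\ those coming from $q$-Racah polynomials), via Lemma \ref{lem:qRacah} together with \cite[Theorem 1.9]{T:Leonard}. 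Once all conditions are verified, \cite[Theorem 1.9]{T:Leonard} produces a Leonard pair whose standard orderings realize $(a,b,c,d)$ as a Huang data, which finishes the ``if'' direction; Lemma \ref{lem:qRacahunique} then shows the pair is determined up to isomorphism.
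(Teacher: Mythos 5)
First, a point of comparison: the paper does not prove this lemma at all --- it is quoted from \cite[Lemmas 7.2, 7.3]{Hu} --- so there is no internal proof to measure you against. Your plan, reducing both directions to the classification of parameter arrays in \cite[Theorem 1.9]{T:Leonard}, is the natural and standard route (essentially the one in the cited reference). Your necessity direction is complete and correct: the factorization $\th_r-\th_s=q^{-d}(q^{2r}-q^{2s})(a-a^{-1}q^{2d-2r-2s})$ correctly converts multiplicity-freeness of $A$, $A^*$ into condition (i); the factors in \eqref{eq:vphir}, \eqref{eq:phir} correctly convert the nonvanishing of the split sequences (Lemma \ref{lem:split}) into condition (ii); and your observation that the set $S=\{q^{d-1},q^{d-3},\ldots,q^{1-d}\}$ is invariant under inversion, so that $a^{-1}bc^{-1}\notin S$ is equivalent to $ab^{-1}c\notin S$, is exactly the point needed to reconcile the four products $abc$, $abc^{-1}$, $a^{-1}bc$, $a^{-1}bc^{-1}$ coming from \eqref{eq:vphir}, \eqref{eq:phir} with the four listed in (ii).

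The gap is in the sufficiency direction, at the step you yourself flag as the main obstacle. Besides the distinctness, nonvanishing, and shape conditions, \cite[Theorem 1.9]{T:Leonard} requires the two identities expressing $\vphi_r$ and $\phi_r$ in terms of the eigenvalue sequences, and you verify neither. The direct computation is genuinely routine --- for instance, from $\th_h-\th_{d-h}=q^{-d}(q^{2h}-q^{2d-2h})(a-a^{-1})$ one gets $\sum_{h=0}^{i-1}(\th_h-\th_{d-h})/(\th_0-\th_d)=(q^{2i}-1)(1-q^{2d+2-2i})/\bigl((q^2-1)(1-q^{2d})\bigr)$, independent of $a$, after which both sides of each identity are explicit Laurent polynomials in $a,b,c$ --- but it is not carried out. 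Your proposed shortcut via Zariski density is not self-contained as stated: it requires a Zariski-dense set of triples $(a,b,c)$ already known to be Huang data of genuine $q$-Racah Leonard pairs, whereas the tools quoted in this paper (Lemma \ref{lem:qRacah} and \cite[Theorem 1.9]{T:Leonard}) only extract Huang data from Leonard pairs; they do not produce Leonard pairs with prescribed $(a,b,c)$, which is precisely the statement being proven. To make the density argument non-circular you would need to import an independent construction (e.g.\ the explicit tridiagonal matrices attached to $q$-Racah polynomials) together with an argument that the resulting Huang data sweep out a Zariski-dense subset of $(\F^\times)^3$; that is substantive external input, not a cheap remark. As written, the ``if'' direction is therefore incomplete.
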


\section{Basic facts about $\Hq$}
\label{sec:auto}

In this section we collect some basic facts about $\Hq$.
For more background information we refer the reader to \cite{T:AWDAHA}.
The following two lemmas are immediate from Definition \ref{def:Hq}.

\begin{lemma}   \label{lem:Z4pre}  \samepage
\ifDRAFT {\rm lem:Z4pre}. \fi
In the algebra $\Hq$ the scalar $q^{-1}$ is equal to each of
\begin{align*}
 & t_0t_1t_2t_3, && t_1t_2t_3t_0, && t_2t_3t_0t_1, && t_3t_0t_1t_2.
\end{align*}
\end{lemma}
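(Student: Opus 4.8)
The plan is to obtain the four expressions as cyclic shifts of one another, each shift realized by a conjugation that fixes the scalar $q^{-1}$. The first expression $t_0t_1t_2t_3$ equals $q^{-1}$ by definition, namely relation \eqref{eq:defHq3}, so nothing is needed there.

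For the remaining three, the key observation is that the scalar $q^{-1}$ is central in $\Hq$ (being a scalar multiple of the identity), hence invariant under conjugation by any invertible element. First I would conjugate relation \eqref{eq:defHq3} by $t_0$: using \eqref{eq:defHq1} we have $t_0^{-1}(t_0t_1t_2t_3)t_0 = t_1t_2t_3t_0$, while $t_0^{-1}q^{-1}t_0 = q^{-1}$, and therefore $t_1t_2t_3t_0 = q^{-1}$. Equivalently one may left-multiply \eqref{eq:defHq3} by $t_0^{-1}$ to get $t_1t_2t_3 = q^{-1}t_0^{-1}$, then right-multiply by $t_0$. Iterating, I would conjugate the identity $t_1t_2t_3t_0 = q^{-1}$ by $t_1$ to obtain $t_2t_3t_0t_1 = q^{-1}$, and then conjugate $t_2t_3t_0t_1 = q^{-1}$ by $t_2$ to obtain $t_3t_0t_1t_2 = q^{-1}$. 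This accounts for all four claimed expressions.

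There is no genuine obstacle here: the argument uses only the invertibility of the generators from \eqref{eq:defHq1} together with the centrality of the scalar $q^{-1}$, and in particular it does not invoke the centrality relation \eqref{eq:defHq2} at all. This is precisely why the statement may be described as immediate from Definition \ref{def:Hq}.
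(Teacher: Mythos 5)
Your proof is correct and matches the paper's intent: the paper offers no explicit proof, calling the lemma immediate from Definition \ref{def:Hq}, and the conjugation (cyclic shift) argument you give, using only \eqref{eq:defHq1}, \eqref{eq:defHq3}, and the fact that the scalar $q^{-1}$ commutes with everything, is precisely that immediate argument. Your observation that \eqref{eq:defHq2} is never needed is also accurate.
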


\begin{lemma}  \label{lem:Z4} \samepage
\ifDRAFT {\rm lem:Z4}. \fi
There exists an automorphism $\varrho$ of $\Hq$ that sends
\[
  t_0 \mapsto t_1 \mapsto t_2 \mapsto t_3 \mapsto t_0.
\]
\end{lemma}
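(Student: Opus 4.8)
The plan is to construct $\varrho$ by prescribing its action on the generators and then invoking the universal property of a presentation by generators and relations: a map on generators extends to an algebra endomorphism of $\Hq$ as soon as the images satisfy the defining relations \eqref{eq:defHq1}--\eqref{eq:defHq3}. Concretely, I would set $\varrho(t_i) = t_{i+1}$ and $\varrho(t_i^{-1}) = t_{i+1}^{-1}$ for $i \in \I$, with indices read modulo $4$, so that $t_0 \mapsto t_1 \mapsto t_2 \mapsto t_3 \mapsto t_0$. It then remains to verify the three families of relations for these images, and afterwards to check bijectivity.

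First I would dispose of relations \eqref{eq:defHq1} and \eqref{eq:defHq2}, which are immediate. For \eqref{eq:defHq1}, the images satisfy $t_{i+1}t_{i+1}^{-1} = t_{i+1}^{-1}t_{i+1} = 1$, which is just \eqref{eq:defHq1} at the index $i+1$. For \eqref{eq:defHq2}, the image of $t_i + t_i^{-1}$ is $t_{i+1} + t_{i+1}^{-1}$, central by \eqref{eq:defHq2} at the index $i+1$. Neither requires any computation beyond reindexing.

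The key step is relation \eqref{eq:defHq3}. Under $\varrho$ the product $t_0t_1t_2t_3$ is sent to $t_1t_2t_3t_0$, and I must confirm this equals $q^{-1}$. This is precisely one of the cyclically shifted equalities recorded in Lemma \ref{lem:Z4pre}, so the relation is preserved and $\varrho$ extends to an algebra endomorphism of $\Hq$. For bijectivity I would observe that $\varrho^4$ fixes every generator, since the indices are read modulo $4$; hence $\varrho^4 = \mathrm{id}$, so $\varrho$ is invertible with inverse $\varrho^3$ and is therefore an automorphism.

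The only substantive content is the verification of \eqref{eq:defHq3}, and this has already been isolated as Lemma \ref{lem:Z4pre}; everything else is routine reindexing. In this sense there is no real obstacle, which is exactly why Lemma \ref{lem:Z4pre} was proved first: it encodes the cyclic invariance of the single non-symmetric relation \eqref{eq:defHq3} that makes $\varrho$ well defined.
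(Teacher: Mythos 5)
Your proof is correct and is exactly the argument the paper has in mind: the paper states Lemma \ref{lem:Z4} as ``immediate from Definition \ref{def:Hq},'' with Lemma \ref{lem:Z4pre} recorded just beforehand precisely so that the cyclic shift of relation \eqref{eq:defHq3} (namely $t_1t_2t_3t_0=q^{-1}$) is available. Your write-up simply makes explicit the universal-property argument and the bijectivity via $\varrho^4=\mathrm{id}$, so there is nothing to add.
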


\begin{lemma}   \label{lem:sigma} \samepage
\ifDRAFT {\rm lem:sigma}. \fi
There exists an automorphism $\sigma$ of $\Hq$ that sends
\begin{align*}
 t_0 &\mapsto t_0, &
 t_1 &\mapsto t_0^{-1}t_3t_0, &
 t_2 &\mapsto t_1t_2t_1^{-1}, &
 t_3 &\mapsto t_1.
\end{align*}
\end{lemma}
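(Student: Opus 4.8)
The plan is to invoke the universal property of the presentation in Definition \ref{def:Hq}: to define an algebra homomorphism out of $\Hq$ it suffices to assign to each generator $t_i^{\pm 1}$ an element of the target algebra so that the defining relations \eqref{eq:defHq1}--\eqref{eq:defHq3} are satisfied. First I would check that the proposed images are invertible, with evident inverses $t_0^{-1}$, $t_0^{-1}t_3^{-1}t_0$, $t_1 t_2^{-1} t_1^{-1}$, $t_1^{-1}$ for $\sigma(t_0),\sigma(t_1),\sigma(t_2),\sigma(t_3)$ respectively; this handles relation \eqref{eq:defHq1}.

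For relation \eqref{eq:defHq2} I would use that each $T_i = t_i + t_i^{-1}$ is central, so conjugation by any element fixes it. One computes $\sigma(t_0)+\sigma(t_0)^{-1} = T_0$, $\sigma(t_1)+\sigma(t_1)^{-1} = t_0^{-1} T_3 t_0 = T_3$, $\sigma(t_2)+\sigma(t_2)^{-1} = t_1 T_2 t_1^{-1} = T_2$, and $\sigma(t_3)+\sigma(t_3)^{-1} = T_1$, so each image of $t_i+t_i^{-1}$ is a central element, as required. For relation \eqref{eq:defHq3} the product telescopes:
\[
 \sigma(t_0)\sigma(t_1)\sigma(t_2)\sigma(t_3)
   = t_0\,(t_0^{-1}t_3 t_0)\,(t_1 t_2 t_1^{-1})\,t_1
   = t_3 t_0 t_1 t_2 = q^{-1},
\]
where the last equality is Lemma \ref{lem:Z4pre}. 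Hence the assignment extends to an algebra endomorphism $\sigma$ of $\Hq$.

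To promote $\sigma$ to an automorphism I would exhibit a two-sided inverse endomorphism $\tau$, defined on generators by $\tau(t_0)=t_0$, $\tau(t_1)=t_3$, $\tau(t_2)=t_3^{-1}t_2 t_3$, $\tau(t_3)=t_0 t_1 t_0^{-1}$ (obtained by solving the formulas for $\sigma(t_i)$ in terms of $t_i$). The same three verifications show that $\tau$ respects \eqref{eq:defHq1}--\eqref{eq:defHq3}, so $\tau$ is an endomorphism of $\Hq$. Since an endomorphism of $\Hq$ is determined by its action on the generators, it then suffices to check that $\sigma\tau$ and $\tau\sigma$ fix each $t_i$; this is a short direct computation, for example $\tau\sigma(t_1)=\tau(t_0^{-1}t_3 t_0)=t_0^{-1}(t_0 t_1 t_0^{-1})t_0=t_1$ and $\sigma\tau(t_3)=\sigma(t_0 t_1 t_0^{-1})=t_0(t_0^{-1}t_3 t_0)t_0^{-1}=t_3$, with the remaining cases handled the same way. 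I expect no deep obstacle: the only points requiring care are the use of the centrality of the $T_i$ in relation \eqref{eq:defHq2}, the cyclic identity of Lemma \ref{lem:Z4pre} in relation \eqref{eq:defHq3}, and the bookkeeping confirming that $\sigma\tau$ and $\tau\sigma$ return each generator unchanged.
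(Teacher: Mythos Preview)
Your proposal is correct and is precisely the routine verification the paper alludes to with ``This is routinely checked, or see \cite[Lemma 4.2]{IT:DAHA}.'' You have simply written out the details the paper omits, and all the computations (including the inverse $\tau$ and the check that $\sigma\tau$, $\tau\sigma$ fix each generator) are accurate.
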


\begin{proof}
This is routinely checked, or see \cite[Lemma 4.2]{IT:DAHA}.
\end{proof}

The following lemmas are routinely checked.

\begin{lemma}  \label{lem:Z4XY}  \samepage
\ifDRAFT {\rm lem:Z4XY}. \fi
The automorphism $\varrho$ from Lemma {\rm \ref{lem:Z4}} sends
\[
  X \mapsto Y \mapsto q^{-1}X^{-1} \mapsto q^{-1}Y^{-1} \mapsto X.
\]
\end{lemma}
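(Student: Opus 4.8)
The final statement to prove is Lemma \ref{lem:Z4XY}, which asserts that the automorphism $\varrho$ cyclically permutes $X$, $Y$, $q^{-1}X^{-1}$, $q^{-1}Y^{-1}$.

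\medskip

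The plan is to compute directly, since the lemma is declared to be ``routinely checked.'' Recall $\varrho$ sends $t_0 \mapsto t_1 \mapsto t_2 \mapsto t_3 \mapsto t_0$, and recall the definitions $X = t_3 t_0$ and $Y = t_0 t_1$ from \eqref{eq:defXYAB}. First I would apply $\varrho$ to $X$: we have $\varrho(X) = \varrho(t_3 t_0) = \varrho(t_3)\varrho(t_0) = t_0 t_1 = Y$, which establishes $X \mapsto Y$ immediately. Next, apply $\varrho$ to $Y$: we get $\varrho(Y) = \varrho(t_0 t_1) = \varrho(t_0)\varrho(t_1) = t_1 t_2$. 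The task is then to identify $t_1 t_2$ with $q^{-1} X^{-1}$.

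\medskip

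The one genuine step is recognizing $t_1 t_2 = q^{-1} X^{-1}$. Here I would invoke Lemma \ref{lem:Z4pre}, which records that $q^{-1} = t_2 t_3 t_0 t_1$. From this, $q^{-1} X^{-1} = t_2 t_3 t_0 t_1 (t_3 t_0)^{-1} = t_2 t_3 t_0 t_1 t_0^{-1} t_3^{-1}$, and I would aim to simplify this to $t_1 t_2$; it is cleaner to instead verify the equivalent identity $X (t_1 t_2) = q^{-1}$, i.e.\ $t_3 t_0 t_1 t_2 = q^{-1}$, which is exactly one of the four cyclic expressions for $q^{-1}$ listed in Lemma \ref{lem:Z4pre}. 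Hence $t_1 t_2 = X^{-1} q^{-1} = q^{-1} X^{-1}$ (as $q^{-1}$ is a central scalar), giving $\varrho(Y) = q^{-1} X^{-1}$. The remaining two arrows follow by applying $\varrho$ to the relations already obtained: applying $\varrho$ to $\varrho(X) = Y$ twice more, and using that $\varrho(q^{-1}) = q^{-1}$ since $q$ is a scalar and $\varrho$ is an $\F$-algebra automorphism, yields $\varrho(q^{-1}X^{-1}) = q^{-1}\varrho(X)^{-1} = q^{-1}Y^{-1}$ and $\varrho(q^{-1}Y^{-1}) = q^{-1}\varrho(Y)^{-1} = q^{-1}(q^{-1}X^{-1})^{-1} = q^{-1} \cdot q X = X$, closing the cycle.

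\medskip

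There is no real obstacle here: the main content is the bookkeeping identity $t_3 t_0 t_1 t_2 = q^{-1}$ from Lemma \ref{lem:Z4pre}, together with the observation that $\varrho$ fixes the central scalar $q^{-1}$ and respects inverses. The only point requiring mild care is keeping track of the order of factors when passing between $X^{-1}$ and $t_1 t_2$, and confirming centrality of $q^{-1}$ so that it commutes freely past $X^{-1}$.
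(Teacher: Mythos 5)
Your proof is correct and is exactly the routine verification the paper intends: the paper offers no written proof (it declares the lemma ``routinely checked''), and your computation via $\varrho(t_3t_0)=t_0t_1$, the identity $t_3t_0t_1t_2=q^{-1}$ from Lemma \ref{lem:Z4pre}, and the fact that the automorphism fixes scalars and respects inverses is the standard way to check it. No gaps.
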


\begin{lemma}  \label{lem:sigmaXY}  \samepage
\ifDRAFT {\rm lem:sigmaXY}. \fi
The automorphism $\sigma$ from Lemma {\rm \ref{lem:sigma}} sends
\begin{align*}
  X &\mapsto t_0^{-1} Y t_0,   &
  Y &\mapsto X,  &
  \A &\mapsto \B, &
  \B &\mapsto \A,
\\
 T_0 &\mapsto T_0, &
 T_1 &\mapsto T_3, &
 T_2 &\mapsto T_2, &
 T_3 &\mapsto T_1.
\end{align*}
\end{lemma}

We collect some identities in $\Hq$.

\begin{lemma}  {\rm (See \cite[Corollary 6.2]{T:AWDAHA}.)}    \label{lem:titj}  \samepage
\ifDRAFT {\rm lem:titj}. \fi
For $i,j \in \I$ the following hold:
\begin{itemize}
\item[\rm (i)]
$t_it_j + (t_it_j)^{-1} = t_jt_i + (t_jt_i)^{-1}$;
\item[\rm (ii)]
$t_it_j + (t_it_j)^{-1}$ commutes with each of $t_i$, $t_j$.
\end{itemize}
\end{lemma}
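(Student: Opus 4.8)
The plan is to derive both parts purely from the centrality relation \eqref{eq:defHq2}; relation \eqref{eq:defHq3} plays no role. Throughout I write $S = t_i t_j + (t_i t_j)^{-1}$ and recall that $(t_i t_j)^{-1} = t_j^{-1} t_i^{-1}$.

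First I would prove (i). Since $T_j = t_j + t_j^{-1}$ is central, it commutes with $t_i$, and expanding $t_i T_j = T_j t_i$ gives
\[
 t_i t_j + t_i t_j^{-1} = t_j t_i + t_j^{-1} t_i,
\]
so that $t_i t_j - t_j t_i = t_j^{-1} t_i - t_i t_j^{-1}$. Likewise, since $T_i$ is central it commutes with $t_j^{-1}$, and expanding $t_j^{-1} T_i = T_i t_j^{-1}$ gives
\[
 t_j^{-1} t_i + t_j^{-1} t_i^{-1} = t_i t_j^{-1} + t_i^{-1} t_j^{-1},
\]
whence $t_i t_j^{-1} - t_j^{-1} t_i = t_j^{-1} t_i^{-1} - t_i^{-1} t_j^{-1} = (t_i t_j)^{-1} - (t_j t_i)^{-1}$. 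Combining the two displayed consequences yields $t_i t_j - t_j t_i = (t_j t_i)^{-1} - (t_i t_j)^{-1}$, which rearranges to the desired identity $t_i t_j + (t_i t_j)^{-1} = t_j t_i + (t_j t_i)^{-1}$.

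With (i) in hand, (ii) follows by a one-line conjugation. To see that $S$ commutes with $t_i$, I would compute
\[
 t_i^{-1} S\, t_i = t_j t_i + t_i^{-1} t_j^{-1} = t_j t_i + (t_j t_i)^{-1},
\]
which equals $S$ by part (i); hence $S t_i = t_i S$. For commutation with $t_j$ I would instead use the form $S = t_j t_i + (t_j t_i)^{-1}$ supplied by (i) and conjugate by $t_j$ to get $t_j^{-1} S\, t_j = t_i t_j + (t_i t_j)^{-1} = S$, so that $S t_j = t_j S$.

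The argument involves no genuine obstacle; the only point requiring care is part (i), where one must combine the two centrality relations and their inverse forms with the correct signs, matching the commutator of $t_i t_j$ against the commutator of $(t_i t_j)^{-1}$. Once that bookkeeping is done correctly, part (ii) is immediate from the symmetry afforded by (i).
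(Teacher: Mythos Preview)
Your proof is correct. Note that the paper does not supply its own proof of this lemma; it merely cites \cite[Corollary~6.2]{T:AWDAHA}. Your argument, deriving both parts directly from the centrality of $T_i$ and $T_j$ via two commutator identities and a conjugation, is a clean self-contained verification that fits entirely within the defining relations of $\Hq$ and could stand in for the citation.
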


\begin{lemma}   \label{lem:Xt0}   \samepage
\ifDRAFT {\rm lem:Xt0}. \fi
We have
\begin{align}
  X t_0 - t_0 X^{-1} &= X T_0 - T_3,        \label{eq:Xt0}  \\
  q X t_2 - q^{-1} t_2 X^{-1} 
        &=  T_1 - q^{-1} X^{-1} T_2.   \label{eq:Xt2}
\end{align}
\end{lemma}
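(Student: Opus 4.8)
The plan is to verify both identities by direct computation in $\Hq$, expanding $X = t_3 t_0$ and $X^{-1} = t_0^{-1} t_3^{-1}$ and then using the defining relations together with Lemma~\ref{lem:titj}. Identity \eqref{eq:Xt0} needs nothing beyond $t_0 t_0^{-1} = 1$, whereas \eqref{eq:Xt2} requires the cyclic form of the relation $t_0t_1t_2t_3=q^{-1}$ and one commutation relation.

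First I would dispatch \eqref{eq:Xt0}. On the left-hand side $X t_0 = t_3 t_0^2$ and $t_0 X^{-1} = t_0 t_0^{-1} t_3^{-1} = t_3^{-1}$, so the left-hand side equals $t_3 t_0^2 - t_3^{-1}$. On the right-hand side $X T_0 = t_3 t_0 (t_0 + t_0^{-1}) = t_3 t_0^2 + t_3$, hence $X T_0 - T_3 = t_3 t_0^2 + t_3 - (t_3 + t_3^{-1}) = t_3 t_0^2 - t_3^{-1}$. Since both sides equal $t_3 t_0^2 - t_3^{-1}$, the identity \eqref{eq:Xt0} follows.

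For \eqref{eq:Xt2} I would first invoke Lemma~\ref{lem:Z4pre}, which gives $t_3 t_0 t_1 t_2 = q^{-1}$, i.e.\ $X t_1 t_2 = q^{-1}$. Thus $t_1 t_2 = q^{-1} X^{-1}$, equivalently $X^{-1} = q\, t_1 t_2$ and $X = q^{-1} t_2^{-1} t_1^{-1}$. Substituting these into the left-hand side yields $q X t_2 = t_2^{-1} t_1^{-1} t_2$ and $q^{-1} t_2 X^{-1} = t_2 t_1 t_2$. Substituting into the right-hand side gives $q^{-1} X^{-1} T_2 = t_1 t_2 (t_2 + t_2^{-1}) = t_1 t_2^2 + t_1$, so the right-hand side becomes $T_1 - t_1 t_2^2 - t_1 = t_1^{-1} - t_1 t_2^2$. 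Hence \eqref{eq:Xt2} reduces to the single identity $t_2^{-1} t_1^{-1} t_2 - t_2 t_1 t_2 = t_1^{-1} - t_1 t_2^2$.

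Finally I would recognize this reduced identity, rewritten as $t_1 t_2^2 + t_2^{-1} t_1^{-1} t_2 = t_2 t_1 t_2 + t_1^{-1}$, as exactly the assertion that $t_1 t_2 + (t_1 t_2)^{-1}$ commutes with $t_2$ (using $(t_1 t_2)^{-1} = t_2^{-1} t_1^{-1}$); this is Lemma~\ref{lem:titj}(ii) with $i=1$, $j=2$. This completes the proof. The computations are entirely routine, so there is no genuine obstacle; the only points requiring care are the bookkeeping of inverses when passing from $X^{\pm 1}$ to $t_1 t_2$ via the cyclic relation, and the observation that the leftover expression is precisely the commutation supplied by Lemma~\ref{lem:titj}(ii).
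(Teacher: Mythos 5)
Your proposal is correct; both identities check out. For \eqref{eq:Xt0} you and the paper do essentially the same thing: a direct expansion using $X=t_3t_0$, $T_0=t_0+t_0^{-1}$, $T_3=t_3+t_3^{-1}$, with no other input. For \eqref{eq:Xt2}, however, your route is genuinely different from the paper's. The paper does not reprove anything: it applies the automorphism $\varrho^2$ from Lemma \ref{lem:Z4} (which sends $t_0\mapsto t_2$, $t_3\mapsto t_1$, $T_0 \mapsto T_2$, $T_3 \mapsto T_1$, and $X\mapsto q^{-1}X^{-1}$ by Lemma \ref{lem:Z4XY}) to both sides of the already-established \eqref{eq:Xt0}, obtaining $q^{-1}X^{-1}t_2 - qt_2X = q^{-1}X^{-1}T_2 - T_1$, and then conjugates by $X$ to get \eqref{eq:Xt2}. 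You instead prove \eqref{eq:Xt2} from scratch: using Lemma \ref{lem:Z4pre} to write $X^{-1}=q\,t_1t_2$ and $X=q^{-1}t_2^{-1}t_1^{-1}$, you reduce the claim to $t_1t_2^2 + t_2^{-1}t_1^{-1}t_2 = t_2t_1t_2 + t_1^{-1}$, which is precisely the statement of Lemma \ref{lem:titj}(ii) with $i=1$, $j=2$ (the element $t_1t_2+(t_1t_2)^{-1}$ commutes with $t_2$); this identification is correct. The trade-off: the paper's argument is shorter and makes transparent that \eqref{eq:Xt2} is just the image of \eqref{eq:Xt0} under the $\mathbb{Z}_4$ symmetry of $\Hq$, so nothing new needs to be verified; your argument avoids the automorphism machinery of Lemmas \ref{lem:Z4} and \ref{lem:Z4XY} entirely, at the cost of invoking the cyclic form of the product relation and the commutation supplied by Lemma \ref{lem:titj}(ii).
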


\begin{proof}
To verify \eqref{eq:Xt0}, eliminate $T_0$ (resp.\ $T_3$) using $T_0=t_0+t_0^{-1}$ (resp.\ $T_3=t_3+t_3^{-1}$)
and simplify the result using  $t_3 = X t_0^{-1}$.
To obtain \eqref{eq:Xt2}, apply $\varrho^2$ to each side of \eqref{eq:Xt0} and use Lemma \ref{lem:Z4XY} to find
\[
   q^{-1} X^{-1} t_2 - q t_2 X = q^{-1} X^{-1} T_2 - T_1.
\]
In this equation, multiply each term on the left by $X$ and on the right by $X^{-1}$.
This yields \eqref{eq:Xt2}.
\end{proof}

\section{The elements $G_i$}
\label{sec:G0G2}

We consider the following elements of $\Hq$ that will help us investigate $\Hq$-modules.

\begin{definition}        \label{def:G0G2}
\ifDRAFT {\rm def:G0G2}. \fi
Define
\begin{align*} 
 G_0 &= t_0 - t_3 t_0 t_3^{-1},   &
 G_1 &= t_1 - t_0 t_1 t_0^{-1},   \\
 G_2 &= t_2 - t_1 t_2 t_1^{-1},   &
 G_3 &= t_3 - t_2 t_3 t_2^{-1}.
\end{align*}
\end{definition}

In this and the next section, we describe the $G_i$.
We focus on how $G_0$, $G_2$ interact with $X$;
similar results describe how $G_1$, $G_3$ interact with $Y$.
Our results are summarized as follows.
Let $\V$ denote an XD $\Hq$-module with parameter sequence $\{k_i\}_{i \in \I}$.
We show that each of $G_0^2$, $G_2^2$ acts on $\V$ as a Laurent polynomial in $X$.
In particular $G_0^2$, $G_2^2$ act on each $X$-eigenspace in $\V$ as a scalar multiple
of the identity.
We compute these scalars in terms of $\{k_i\}_{i \in \I}$.

\begin{lemma} \label{lem:R4} \samepage
\ifDRAFT {\rm lem:R4}. \fi
We have
\begin{align*}
 X G_0 &= G_0 X^{-1}, &
 X^{-1}G_0 &= G_0 X,  \\
 X G_2 &= q^{-2} G_2 X^{-1}, &
 X^{-1}G_2 &= q^2 G_2 X.
\end{align*}
\end{lemma}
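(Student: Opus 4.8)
The plan is to establish the four relations in Lemma \ref{lem:R4} by direct computation in $\Hq$, using the definitions $X=t_3t_0$, $G_0=t_0-t_3t_0t_3^{-1}$, $G_2=t_2-t_1t_2t_1^{-1}$, together with the centrality of the $T_i$ and the relation $t_0t_1t_2t_3=q^{-1}$. Since the last two relations in the lemma are inverses of each other (multiply $XG_0=G_0X^{-1}$ on both sides, or invert), and similarly for $G_2$, I really only need to prove $XG_0=G_0X^{-1}$ and $XG_2=q^{-2}G_2X^{-1}$; the companions $X^{-1}G_0=G_0X$ and $X^{-1}G_2=q^2G_2X$ follow formally.

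First I would handle $G_0$. Writing $G_0=t_0-t_3t_0t_3^{-1}$ and $X=t_3t_0$, I would compute $XG_0=t_3t_0(t_0-t_3t_0t_3^{-1})=t_3t_0^2-t_3t_0t_3t_0t_3^{-1}$, and on the other side $G_0X^{-1}=(t_0-t_3t_0t_3^{-1})t_0^{-1}t_3^{-1}=t_0t_0^{-1}t_3^{-1}-t_3t_0t_3^{-1}t_0^{-1}t_3^{-1}$. The goal is to show these agree. The cleanest route is to use Lemma \ref{lem:titj}(i) with $i=3,j=0$, namely $t_3t_0+(t_3t_0)^{-1}=t_0t_3+(t_0t_3)^{-1}$, i.e.\ $X+X^{-1}=t_0t_3+(t_0t_3)^{-1}$, so $t_0t_3$ has the same trace-like invariant as $X$; and Lemma \ref{lem:titj}(ii), which says $X+X^{-1}=\B$ commutes with both $t_0$ and $t_3$. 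The key identity to exploit is that conjugating $t_0$ by $t_3$ is controlled by these commuting relations: since $\B=X+X^{-1}$ commutes with $t_3$, one can rewrite $t_3t_0t_3^{-1}$ in terms of $X$, $X^{-1}$ and $t_0$. I expect the cleaner derivation uses $T_0=t_0+t_0^{-1}$ being central to replace $t_0^{-1}$ by $T_0-t_0$ and $t_3^{-1}$ by $T_3-t_3$, reducing everything to a polynomial identity in $t_0,t_3$ that collapses after applying $t_3t_0t_3t_0=\B t_3t_0 - 1$ (from $\B=t_3t_0+(t_3t_0)^{-1}$).

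For $G_2$, the efficient approach is to avoid repeating the computation and instead transport the $G_0$ result by the automorphism $\varrho$ of Lemma \ref{lem:Z4}. Applying $\varrho^2$ to $G_0$ sends $t_0\mapsto t_2$ and $t_3\mapsto t_1$, hence $\varrho^2(G_0)=t_2-t_1t_2t_1^{-1}=G_2$; and by Lemma \ref{lem:Z4XY}, $\varrho^2(X)=q^{-1}X^{-1}$ and $\varrho^2(X^{-1})=qX$. Applying $\varrho^2$ to the established relation $XG_0=G_0X^{-1}$ therefore gives $q^{-1}X^{-1}G_2=G_2qX$, i.e.\ $X^{-1}G_2=q^2G_2X$, which is exactly the fourth relation; inverting yields $XG_2=q^{-2}G_2X^{-1}$. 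This $\varrho^2$-symmetry trick is what makes the $G_2$ case essentially free once $G_0$ is done.

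The main obstacle I anticipate is the bookkeeping in the $G_0$ computation: getting $t_3t_0t_3^{-1}$ under control requires carefully combining the two parts of Lemma \ref{lem:titj} and the centrality of $T_0,T_3$, and it is easy to mishandle inverses when clearing $t_3^{-1}$ and $t_0^{-1}$. The cleanest presentation will likely multiply out $X G_0 - G_0 X^{-1}$ and show the difference is zero by substituting $t_3 t_0 t_3 t_0 = \B\, t_3 t_0 - 1$ and its analogue, rather than trying to simplify each side independently; verifying that all cross terms cancel is the only genuinely delicate step, and everything after the $G_0$ relation is obtained for free by symmetry.
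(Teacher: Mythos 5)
Your proposal is correct and follows essentially the same route as the paper: the first relation $XG_0=G_0X^{-1}$ is reduced by direct computation to the commutation identity $t_0t_3+(t_0t_3)^{-1}=t_3t_0+(t_3t_0)^{-1}$ of Lemma \ref{lem:titj}(i), and the $G_2$ relations are then obtained for free by applying $\varrho^2$ and Lemma \ref{lem:Z4XY}, with the remaining relations following by formal multiplication by $X^{\pm 1}$. The only difference is cosmetic bookkeeping: the paper verifies the identity in the form $G_0t_3 - X^{-1}G_0X^{-1}t_3 = t_0t_3+(t_0t_3)^{-1}-t_3t_0-(t_3t_0)^{-1}$ before clearing $t_3$, whereas you expand $XG_0-G_0X^{-1}$ directly, which works equally well.
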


\begin{proof}
Using $X=t_3t_0$ and $G_0=t_0-t_3t_0t_3^{-1}$ one verifies
\[
  G_0 t_3 - X^{-1}G_0X^{-1} t_3 = t_0t_3 + (t_0t_3)^{-1} - t_3t_0 - (t_3t_0)^{-1}.
\]
In this equation, the right-hand side is zero by Lemma \ref{lem:titj}(i).
So $G_0t_3 = X^{-1}G_0X^{-1}t_3$.
In this equation, multiply each side on the left by X and on the right by $t_3^{-1}$
to get $X G_0=G_0 X^{-1}$.
In this equation, multiply each side on the left by $X^{-1}$ and on the right by $X$
to get $X^{-1}G_0=G_0X$.
In this equation, apply $\varrho^2$ to each side and simplify the result using Lemma \ref{lem:Z4XY}
to get $X G_2=q^{-2} G_2 X^{-1}$.
In this equation, multiply each side on the left by $X^{-1}$
and on the right by $X$ to get $X^{-1}G_2 = q^2 G_2 X$.
\end{proof}

\begin{lemma} \label{lem:R11} \samepage
\ifDRAFT {\rm lem:R11}. \fi
We have
\begin{align}
 G_0 &= t_0(1-X^{-2}) + T_3 X^{-1} - T_0,     \label{eq:R11no1} \\
 G_0 &= t_3(X^{-1}-X) + T_3X - T_0,           \label{eq:R11no2} \\
 G_2 &= t_2(1-q^2X^2) + q T_1X-T_2,           \label{eq:R11no3} \\
 G_2 &= t_1(qX-q^{-1}X^{-1}) + q^{-1} T_1X^{-1} - T_2.
                                              \label{eq:R11no4}
\end{align}
\end{lemma}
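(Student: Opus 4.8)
The plan is to derive each of the four identities \eqref{eq:R11no1}--\eqref{eq:R11no4} by purely formal manipulation inside $\Hq$, using only the defining relations, the abbreviation $X = t_3 t_0$, the definitions of $G_0$, $G_2$ from Definition \ref{def:G0G2}, and the centrality of the $T_i$. The two identities for $G_0$ come first; the two for $G_2$ are then obtained by transporting the $G_0$ identities through the automorphism $\varrho^2$ exactly as in the proof of Lemma \ref{lem:R4}.

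First I would establish \eqref{eq:R11no2}. Starting from $G_0 = t_0 - t_3 t_0 t_3^{-1}$, the key is to rewrite $t_3 t_0 t_3^{-1}$ using $X = t_3 t_0$, so $t_3 t_0 t_3^{-1} = X t_3^{-1}$. Next I would eliminate $t_3^{-1}$ via the relation $t_3^{-1} = T_3 - t_3$ (from $T_3 = t_3 + t_3^{-1}$), giving $X t_3^{-1} = X(T_3 - t_3) = T_3 X - X t_3$, where $T_3$ passes through $X$ freely by centrality. Using $X = t_3 t_0$ once more yields $X t_3 = t_3 t_0 t_3$; and writing $t_0 = T_0 - t_0^{-1}$ and recognizing $t_3 t_0^{-1} = t_3 (t_3^{-1} X^{-1})^{?}$ — more directly, I would note $t_3 X = t_3 t_3 t_0$ is not what is wanted, so instead I would aim the substitutions so that the combination $t_3(X^{-1} - X)$ emerges. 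Concretely, the cleanest route is: $G_0 = t_0 - X t_3^{-1}$, then replace $t_0$ by $t_3^{-1} X$ (since $X = t_3 t_0 \Rightarrow t_0 = t_3^{-1} X$) to get $G_0 = t_3^{-1} X - X t_3^{-1}$, and finally substitute $t_3^{-1} = T_3 - t_3$ into both occurrences, letting the central $T_3$ commute past $X$ and collecting terms, which produces $t_3(X^{-1} - X) + T_3 X - T_0$ after also using $T_0 = t_0 + t_0^{-1}$ to absorb the leftover scalar. This is the one step where care with the order of factors matters most.

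With \eqref{eq:R11no2} in hand, \eqref{eq:R11no1} follows by a short conversion: I would use Lemma \ref{lem:R4}, specifically $X G_0 = G_0 X^{-1}$ (equivalently the relations relating $t_3$ and $t_0$ through $X$), to re-express the term $t_3(X^{-1} - X)$ in \eqref{eq:R11no2} in terms of $t_0$. Since $t_3 = X t_0^{-1}$, substituting gives $t_3(X^{-1}-X) = X t_0^{-1} X^{-1} - X t_0^{-1} X = \cdots$, and applying the commutation relation between $t_0^{-1}$ and $X$ (derivable from Lemma \ref{lem:titj}(i) applied to $t_3, t_0$, just as in the proof of Lemma \ref{lem:R4}) collapses this to $t_0(1 - X^{-2})$ up to the central correction terms $T_3 X^{-1} - T_0$. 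Rather than chase this, the safest presentation is to verify \eqref{eq:R11no1} independently by the same substitution pattern as above but solving for the $t_0(1 - X^{-2})$ form directly, and then remark that \eqref{eq:R11no1} and \eqref{eq:R11no2} are consistent via $t_3 = X t_0^{-1}$.

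Finally, for \eqref{eq:R11no3} and \eqref{eq:R11no4} I would apply $\varrho^2$ to \eqref{eq:R11no1} and \eqref{eq:R11no2} respectively. By Lemma \ref{lem:Z4XY}, $\varrho^2$ sends $X \mapsto q^{-1} X^{-1}$, it sends $G_0 \mapsto G_2$ (from $\varrho: t_0 \mapsto t_1 \mapsto t_2$ one checks $\varrho^2(G_0) = \varrho^2(t_0 - t_3 t_0 t_3^{-1}) = t_2 - t_1 t_2 t_1^{-1} = G_2$), and it sends $T_0 \mapsto T_2$, $T_3 \mapsto T_1$. Substituting $X \mapsto q^{-1} X^{-1}$ into \eqref{eq:R11no1} and simplifying the resulting powers of $q$ and $X$ yields \eqref{eq:R11no3}, while doing the same to \eqref{eq:R11no2} yields \eqref{eq:R11no4}; the factors of $q$ in the stated right-hand sides (the $q^2$, $q$, $q^{-1}$) are exactly the bookkeeping from $X^{-2} \mapsto q^2 X^2$, $X^{-1} \mapsto q X$, and so on. I expect the main obstacle to be \eqref{eq:R11no2}: keeping the noncommuting factors $t_3$, $t_0$, $X$, $X^{-1}$ in the correct order while converting between $t_0$ and $t_3$ via $X = t_3 t_0$, and correctly invoking Lemma \ref{lem:titj}(i) to kill the unwanted cross terms. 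Once that identity is pinned down, the remaining three are essentially bookkeeping and symmetry.
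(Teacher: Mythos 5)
Your overall plan is sound, and it differs from the paper's in how the first two identities are obtained. The paper bootstraps from Lemma \ref{lem:Xt0}: it multiplies \eqref{eq:Xt0} on the right by $X^{-1}$ and uses $Xt_0X^{-1}=t_0-G_0$ to get \eqref{eq:R11no1}, then eliminates $t_0$ via $t_0=T_3X-t_3X$ to get \eqref{eq:R11no2}. You instead work directly from the definitional identity $G_0=t_3^{-1}X-Xt_3^{-1}$, proving \eqref{eq:R11no2} first; this is a legitimate, self-contained alternative that bypasses Lemma \ref{lem:Xt0} entirely. Your treatment of \eqref{eq:R11no3} and \eqref{eq:R11no4} via $\varrho^2$ is exactly the paper's, and your bookkeeping of $\varrho^2$ (namely $X\mapsto q^{-1}X^{-1}$, $G_0\mapsto G_2$, $T_0\mapsto T_2$, $T_3\mapsto T_1$) is correct, including which of the two $G_0$-identities maps to which $G_2$-identity.

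However, the step you call ``the cleanest route'' is wrong as written. Substituting $t_3^{-1}=T_3-t_3$ into \emph{both} occurrences in $G_0=t_3^{-1}X-Xt_3^{-1}$ makes the central $T_3$ terms cancel, leaving $G_0=Xt_3-t_3X$: a true identity, but one with no $T_3$, no $T_0$, and no $X^{-1}$, so no amount of collecting terms turns it into the right-hand side of \eqref{eq:R11no2}. The repair uses exactly the ingredients you name, assembled differently: substitute $t_3^{-1}=T_3-t_3$ in the \emph{first} occurrence only, giving $G_0=T_3X-t_3X-Xt_3^{-1}$, and then handle the last term by conjugating $T_0$, namely $Xt_3^{-1}=t_3t_0t_3^{-1}=t_3(T_0-t_0^{-1})t_3^{-1}=T_0-t_3X^{-1}$; substituting yields $G_0=t_3(X^{-1}-X)+T_3X-T_0$, which is \eqref{eq:R11no2}. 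In particular, Lemma \ref{lem:titj}(i), which you anticipate needing ``to kill the unwanted cross terms,'' is not needed at all: centrality of $T_0$ and $T_3$ suffices. The same remark rescues your fallback for \eqref{eq:R11no1}: both \eqref{eq:R11no1} and \eqref{eq:R11no2} reduce, after expanding $X=t_3t_0$ and $T_3=t_3+t_3^{-1}$, to the single identity $G_0=t_3t_0^{-1}t_3^{-1}-t_0^{-1}$, which is just the statement $t_3T_0t_3^{-1}=T_0$. So your independent verification of \eqref{eq:R11no1} goes through and is cleaner than the conversion via Lemma \ref{lem:R4} that you sketch first (and that sketch, which trails off unresolved, should be deleted from a final write-up).
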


\begin{proof}
In \eqref{eq:Xt0},
multiply each side on the right by $X^{-1}$
and use $Xt_0X^{-1}=t_0-G_0$ to get \eqref{eq:R11no1}.
In \eqref{eq:R11no1},
eliminate $t_0$ using $t_0 = T_3 X - t_3 X$ to get \eqref{eq:R11no2}.
Concerning  \eqref{eq:R11no3},  apply $\varrho^2$ to
\eqref{eq:R11no1} and use Lemma \ref{lem:Z4XY}.
The line \eqref{eq:R11no4} is similarly obtained from \eqref{eq:R11no2}.
\end{proof}

\begin{proposition} \label{prop:R7} \samepage
\ifDRAFT {\rm prop:R7}. \fi
We have
\begin{align}
 G_0^2 
 &= (X+X^{-1})^2 - (X+X^{-1})T_0T_3 + T_0^2+T_3^2 - 4, \label{eq:G02} \\
 G_2^2 
 &= (qX+q^{-1}X^{-1})^2 - (qX+q^{-1}X^{-1})T_1T_2+T_1^2+T_2^2-4.   \label{eq:G22}
\end{align}
\end{proposition}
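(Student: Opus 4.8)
The plan is to establish \eqref{eq:G02} by a direct computation inside $\Hq$, and then to deduce \eqref{eq:G22} from \eqref{eq:G02} by applying the automorphism $\varrho^2$ from Lemma \ref{lem:Z4}, exactly as \eqref{eq:R11no3} was obtained from \eqref{eq:R11no1} in the proof of Lemma \ref{lem:R11}. The tools available for the first part are the two linear expressions \eqref{eq:R11no1}, \eqref{eq:R11no2} for $G_0$, the commutation rules $XG_0=G_0X^{-1}$, $X^{-1}G_0=G_0X$ of Lemma \ref{lem:R4}, the centrality of $T_0$ and $T_3$, and the quadratic relations $t_0^2=T_0t_0-1$, $t_3^2=T_3t_3-1$ (immediate from $t_i+t_i^{-1}=T_i$).

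For \eqref{eq:G02} I would write $G_0^2$ as the product of \eqref{eq:R11no2} (as left factor) with \eqref{eq:R11no1} (as right factor),
\[
 G_0^2=\bigl(t_3(X^{-1}-X)+T_3X-T_0\bigr)\bigl(t_0(1-X^{-2})+T_3X^{-1}-T_0\bigr).
\]
The motivation for pairing these two forms is that the product of their ``scalar'' parts already produces the central part of the answer: since $T_0,T_3$ are central,
\[
 (T_3X-T_0)(T_3X^{-1}-T_0)=T_0^2+T_3^2-(X+X^{-1})T_0T_3,
\]
which is precisely the subexpression $-(X+X^{-1})T_0T_3+T_0^2+T_3^2$ of \eqref{eq:G02}. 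It then remains to show that the three products involving a generator factor, namely $t_3(X^{-1}-X)\cdot t_0(1-X^{-2})$ together with the two mixed products $t_3(X^{-1}-X)(T_3X^{-1}-T_0)$ and $(T_3X-T_0)t_0(1-X^{-2})$, collapse to $(X+X^{-1})^2-4$.

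To carry out this reduction I would exploit the clean rewritings coming from $X=t_3t_0$, namely $t_0X^{-1}=t_3^{-1}$, $t_3^{-1}X=t_0$, $Xt_0=T_0X-t_3$, and $t_3X=T_3X-t_0$ (the last two obtained from the quadratic relations). Using these to move every $t_0$ and $t_3$ to one side, and Lemma \ref{lem:R4} to gather the powers of $X$, each generator drops out. A slightly slicker variant, which I might use to shorten the bookkeeping, is to first simplify \eqref{eq:R11no1} to $G_0=t_3X^{-1}-t_0^{-1}$ (via $t_0(1-X^{-2})=t_0-t_3^{-1}X^{-1}$ and $T_3-t_3^{-1}=t_3$), and to use $G_0=t_0-Xt_3^{-1}$ straight from Definition \ref{def:G0G2} (since $t_3t_0t_3^{-1}=Xt_3^{-1}$); multiplying these two forms gives $G_0^2=t_0t_3X^{-1}+Xt_3^{-1}t_0^{-1}-2$, after which I would substitute $t_0^{-1}=T_0-t_0$, $t_3^{-1}=T_3-t_3$ and reduce the quadratics. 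The hard part is exactly this cancellation: the individual generator terms are not central, and one must verify that they combine without remainder into the central Laurent polynomial $(X+X^{-1})^2-4$; this is the only place where genuine care is needed.

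Finally, for \eqref{eq:G22} I would apply $\varrho^2$ to \eqref{eq:G02}. Since $\varrho$ cyclically permutes the $t_i$, one has $\varrho^2(G_0)=G_2$, $\varrho^2(T_0)=T_2$, and $\varrho^2(T_3)=T_1$, while Lemma \ref{lem:Z4XY} gives $\varrho^2(X)=q^{-1}X^{-1}$, hence $\varrho^2(X+X^{-1})=qX+q^{-1}X^{-1}$. Applying $\varrho^2$ to both sides of \eqref{eq:G02} therefore converts it into \eqref{eq:G22} directly, with no further computation.
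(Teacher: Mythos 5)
Your proposal is correct, and your handling of \eqref{eq:G22} (apply $\varrho^2$ and Lemma \ref{lem:Z4XY} to \eqref{eq:G02}) coincides exactly with the paper's. For \eqref{eq:G02}, though, you take a genuinely different route. The paper never squares $G_0$ directly: it first verifies the auxiliary identity $X^{-1}-X = t_0X^{-1}G_0 + (T_3X^{-1}-T_0)t_3$ by brute force, multiplies it on the right by $X^{-1}-X$, and uses Lemma \ref{lem:R4} to obtain $(X^{-1}-X)^2 = t_0(1-X^{-2})G_0 + (T_3X^{-1}-T_0)t_3(X^{-1}-X)$; substituting $t_0(1-X^{-2}) = G_0-(T_3X^{-1}-T_0)$ and $t_3(X^{-1}-X) = G_0-(T_3X-T_0)$ from \eqref{eq:R11no1}, \eqref{eq:R11no2} then makes the two terms linear in $G_0$ cancel identically, leaving $(X^{-1}-X)^2 = G_0^2 - (T_3X^{-1}-T_0)(T_3X-T_0)$, which is \eqref{eq:G02}. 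The payoff of that arrangement is that the only brute-force step is an identity of low degree in the generators; no product of two generator-bearing factors is ever expanded. Your approach pays exactly that price: after peeling off the scalar product $(T_3X-T_0)(T_3X^{-1}-T_0)$, you must expand degree-four expressions and watch them collapse to $(X+X^{-1})^2-4$. This does work, and your ``slicker variant'' is the right way to tame it: from $G_0=t_0-Xt_3^{-1}=t_3X^{-1}-t_0^{-1}$ one gets $G_0^2=t_0t_3X^{-1}+Xt_3^{-1}t_0^{-1}-2$ with two immediate cancellations, and then Lemma \ref{lem:titj}(i) in the form $t_0t_3=(X+X^{-1})-t_3^{-1}t_0^{-1}$, together with the quadratics $t_i^2=T_it_i-1$, yields $t_0t_3X^{-1} = X^{-2}+T_3t_3^{-1}-T_0t_3^{-1}X^{-1}$ and $Xt_3^{-1}t_0^{-1} = X^2+T_0^2+T_3t_3-T_0T_3X-T_0t_3X^{-1}$, whose sum is $(X+X^{-1})^2-2+T_0^2+T_3^2-(X+X^{-1})T_0T_3$, as required. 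One small amendment: add Lemma \ref{lem:titj}(i) explicitly to your toolkit. Your four rewriting rules handle $Xt_0$ and $t_3X$ but not $t_0X$, $Xt_3$, $X^{-1}t_0$, $t_3X^{-1}$ directly, so with them alone the bookkeeping becomes circular; with $t_0t_3+(t_0t_3)^{-1}=X+X^{-1}$ the cancellation is a few clean lines.
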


\begin{proof}
We first show \eqref{eq:G02}.
We claim
\begin{equation}   \label{eq:R7aux1}
 X^{-1}-X = t_0 X^{-1}G_0 + (T_3 X^{-1}-T_0) t_3.
\end{equation}
In \eqref{eq:R7aux1},
represent each side  in terms of $t_0$, $t_3$, $T_0$, $T_3$
using $X=t_3t_0$, $X^{-1}=t_0^{-1}t_3^{-1}$, $G_0=t_0-t_3t_0t_3^{-1}$,
$t_0^{-1}=T_0-t_0$, $t_3^{-1}=T_3-t_3$, and simplify the results using the fact that $T_0$, $T_3$
are central.
This gives \eqref{eq:R7aux1}.
In \eqref{eq:R7aux1}, multiply each side on the right by $X^{-1}-X$, and simplify the results
using Lemma \ref{lem:R4} to find
\begin{equation}     \label{eq:R7aux2}
 (X^{-1}-X)^2 = t_0(1-X^{-2})G_0 + (T_3 X^{-1}-T_0)t_3(X^{-1}-X).
\end{equation}
By \eqref{eq:R11no1}
$t_0(1-X^{-2}) = G_0 - (T_3 X^{-1}-T_0)$.
By \eqref{eq:R11no2}
$t_3(X^{-1}-X) = G_0-(T_3X-T_0)$.
In \eqref{eq:R7aux2} eliminate $t_0(1-X^{-2})$ and $t_3(X^{-1}-X)$ using these comments,
and simplify the result to find
\[
 (X^{-1}-X)^2 = G_0^2 - T_0^2 - T_3^2 + (X+X^{-1})T_0T_3.
\]
This implies \eqref{eq:G02}.
Concerning \eqref{eq:G22}, apply $\varrho^2$ to each side of \eqref{eq:G02} and use
Lemma \ref{lem:Z4XY}.
\end{proof}

\section{The action of $G_i$ on the eigenspaces of $X$}
\label{sec:G0G2action}

Let $\V$ denote an XD $\Hq$-module.
In this section we describe how the elements $\{t_i\}_{i \in \I}$,
$G_0$, $G_2$ act on the eigenspaces of $X$.

\begin{lemma} \label{lem:R5} \samepage
\ifDRAFT {\rm lem:R5}. \fi
For $0 \neq \mu \in \F$
\begin{align*}
 G_0 \V_X(\mu) & \subseteq \V_X(\mu^{-1}), &
 G_2 \V_X(\mu) & \subseteq \V_X(q^{-2} \mu^{-1}).
\end{align*}
\end{lemma}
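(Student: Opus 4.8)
The plan is to read off the action on $X$-eigenspaces directly from the commutation relations already established in Lemma \ref{lem:R4}. Since $\V$ is an XD $\Hq$-module, $X$ is diagonalizable, so $\V$ is spanned by the spaces $\V_X(\mu)$. Fix a nonzero $\mu \in \F$ and take any $v \in \V_X(\mu)$, so that $Xv = \mu v$. I want to show that $G_0 v$ and $G_2 v$ again lie in eigenspaces of $X$, with the stated eigenvalues.

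For the first containment, I would apply the relation $XG_0 = G_0 X^{-1}$ from Lemma \ref{lem:R4} to the vector $v$. Computing $X(G_0 v) = (XG_0)v = (G_0 X^{-1})v = G_0(X^{-1}v) = G_0(\mu^{-1} v) = \mu^{-1}(G_0 v)$, where the step $X^{-1}v = \mu^{-1}v$ uses that $v$ is an $X$-eigenvector with nonzero eigenvalue $\mu$ and hence an $X^{-1}$-eigenvector with eigenvalue $\mu^{-1}$. This shows $G_0 v$ is an eigenvector of $X$ with eigenvalue $\mu^{-1}$ (or is zero), whence $G_0 v \in \V_X(\mu^{-1})$. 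Since $v \in \V_X(\mu)$ was arbitrary, $G_0 \V_X(\mu) \subseteq \V_X(\mu^{-1})$.

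For the second containment, I would use the relation $XG_2 = q^{-2} G_2 X^{-1}$ from Lemma \ref{lem:R4} in exactly the same way. For $v \in \V_X(\mu)$ we get $X(G_2 v) = (XG_2)v = q^{-2}(G_2 X^{-1})v = q^{-2}G_2(\mu^{-1}v) = q^{-2}\mu^{-1}(G_2 v)$, so $G_2 v \in \V_X(q^{-2}\mu^{-1})$, giving $G_2 \V_X(\mu) \subseteq \V_X(q^{-2}\mu^{-1})$.

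This is essentially a one-line argument for each part, so there is no real obstacle; the only point requiring a word of care is the convention that $\V_X(\lambda)$ is defined for every $\lambda \in \F$ and equals $0$ when $\lambda$ is not an eigenvalue, so the containments remain valid even when $G_0 v$ or $G_2 v$ happens to vanish or when $\mu^{-1}$, $q^{-2}\mu^{-1}$ are not eigenvalues of $X$. I would simply note that the computations above produce an eigenvector or zero in each case, and that both fit the definition of membership in the corresponding $\V_X(\cdot)$.
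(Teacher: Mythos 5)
Your proof is correct and is essentially identical to the paper's: both apply the relations $XG_0 = G_0X^{-1}$ and $XG_2 = q^{-2}G_2X^{-1}$ from Lemma \ref{lem:R4} to a vector $v \in \V_X(\mu)$ and read off the new eigenvalue (the paper writes out only the $G_0$ case and declares the $G_2$ case similar, which you carried out explicitly). Your closing remark about $\V_X(\lambda)$ being defined for all $\lambda$ and the vanishing cases is a fine point of care that matches the paper's conventions.
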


\begin{proof}
We first show $G_0 \V_X(\mu) \subseteq \V_X(\mu^{-1})$.
Pick any $v \in \V_X(\mu)$.
Then $X^{-1} v = \mu^{-1} v$.
By Lemma \ref{lem:R4} $X G_0 = G_0 X^{-1}$.
Using these comments we argue
$X G_0 v = G_0 X^{-1}v = \mu^{-1} G_0 v$.
So $G_0 v \in \V_X(\mu^{-1})$.
We have shown  $G_0 \V_X(\mu) \subseteq \V_X(\mu^{-1})$.
The proof of $G_2 \V_X(\mu)  \subseteq \V_X(q^{-2} \mu^{-1})$ is similar.
\end{proof}

For indeterminates $\lambda,s,t$ define
\begin{equation}                  \label{eq:defG}
 G(\lambda,s,t) =
\lambda^{-2}
   (\lambda-s t) (\lambda- st^{-1}) (\lambda-s^{-1}t) (\lambda-s^{-1}t^{-1}).
\end{equation}
The following lemma is routinely verified.

\begin{lemma}    \label{lem:G}
\ifDRAFT {\rm lem:G}. \fi
We have
\[
 G(\lambda,s,t) =
 (\lambda + \lambda^{-1})^2  - (\lambda +\lambda^{-1})(s+s^{-1})(t+t^{-1})
   + (s+s^{-1})^2 + (t+t^{-1})^2 - 4.
\]
Moreover $G(\lambda,s,t)$ is equal to each of the following:
\begin{align*}
 & G(\lambda^{-1},s,t),  &&  G(\lambda,s^{-1},t),  && G(\lambda,s,t^{-1}).
\end{align*}
\end{lemma}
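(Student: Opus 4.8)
The statement to prove is Lemma \ref{lem:G}, which asserts an algebraic identity for the polynomial
\[
 G(\lambda,s,t) = \lambda^{-2}(\lambda-st)(\lambda-st^{-1})(\lambda-s^{-1}t)(\lambda-s^{-1}t^{-1}),
\]
together with the three stated symmetries under $\lambda \mapsto \lambda^{-1}$, $s \mapsto s^{-1}$, $t \mapsto t^{-1}$.

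The plan is to verify the main identity by direct expansion of the fourfold product, which is entirely routine. First I would group the four factors into two conjugate pairs, pairing $(\lambda-st)(\lambda-s^{-1}t^{-1})$ and $(\lambda-st^{-1})(\lambda-s^{-1}t)$. Each pair multiplies out to a quadratic in $\lambda$: the first gives $\lambda^2 - \lambda(st+s^{-1}t^{-1}) + 1$ and the second gives $\lambda^2 - \lambda(st^{-1}+s^{-1}t) + 1$, using that the product of the two constant terms in each pair is $1$. Multiplying these two quadratics and then distributing the prefactor $\lambda^{-2}$ yields
\[
 (\lambda+\lambda^{-1})^2 - (\lambda+\lambda^{-1})(st+s^{-1}t^{-1}+st^{-1}+s^{-1}t) + (st+s^{-1}t^{-1})(st^{-1}+s^{-1}t).
\]
The remaining task is to recognize $st+s^{-1}t^{-1}+st^{-1}+s^{-1}t = (s+s^{-1})(t+t^{-1})$ for the linear coefficient, and to check that the constant term $(st+s^{-1}t^{-1})(st^{-1}+s^{-1}t) = s^2 + s^{-2} + t^2 + t^{-2} = (s+s^{-1})^2 + (t+t^{-1})^2 - 4$, after expanding and collecting. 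These are short symmetric-function manipulations.

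For the symmetries, once the expanded form is established there is essentially nothing left to prove, and this is the cleanest route. The expanded expression
\[
 (\lambda+\lambda^{-1})^2 - (\lambda+\lambda^{-1})(s+s^{-1})(t+t^{-1}) + (s+s^{-1})^2 + (t+t^{-1})^2 - 4
\]
depends on $\lambda$, $s$, $t$ only through the combinations $\lambda+\lambda^{-1}$, $s+s^{-1}$, and $t+t^{-1}$, each of which is manifestly invariant under inversion of its own variable. Hence $G(\lambda,s,t)$ equals each of $G(\lambda^{-1},s,t)$, $G(\lambda,s^{-1},t)$, and $G(\lambda,s,t^{-1})$ immediately.

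There is no genuine obstacle here; the lemma is stated as routinely verified, and the only mild bookkeeping is confirming the constant term reduces correctly. I expect the most error-prone step to be that reduction of $(st+s^{-1}t^{-1})(st^{-1}+s^{-1}t)$, where care is needed so that the cross terms $s^2+s^{-2}+t^2+t^{-2}$ are collected and rewritten as $(s+s^{-1})^2+(t+t^{-1})^2-4$ rather than left in raw form. Since everything is a polynomial identity in the indeterminates $\lambda,s,t$, I would treat it as an identity in the Laurent ring $\F[\lambda^{\pm 1},s^{\pm 1},t^{\pm 1}]$, so no convergence or invertibility caveats are needed, and the symmetry claims follow formally from the final symmetric form.
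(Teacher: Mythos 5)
Your proof is correct and matches the paper's intent exactly: the paper offers no written proof, stating only that the lemma is ``routinely verified,'' and your pairing of conjugate factors, expansion, and observation that the resulting expression depends only on $\lambda+\lambda^{-1}$, $s+s^{-1}$, $t+t^{-1}$ is precisely that routine verification carried out cleanly. All the intermediate identities you use (the linear coefficient $(s+s^{-1})(t+t^{-1})$ and the constant term $s^2+s^{-2}+t^2+t^{-2}=(s+s^{-1})^2+(t+t^{-1})^2-4$) check out.
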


The action of $G_0^2$, $G_2^2$ on $\V$ is given as follows.
Let $\{k_i\}_{i \in \I}$ denote a parameter sequence of $\V$.

\begin{lemma} \label{lem:R9pre}
\ifDRAFT {\rm lem:R9pre}. \fi
The following hold on $\V$:
\begin{align}
 G_0^2 &= G(X,k_0,k_3),  & 
 G_2^2 &= G(q X,k_1,k_2).                           \label{eq:R9pre}
\end{align}
\end{lemma}

\begin{proof}
We first show the equation on the left in \eqref{eq:R9pre}.
By Definition \ref{def:param} $T_0 = k_0 + k_0^{-1}$ and $T_3 = k_3 + k_3^{-1}$ on $\V$.
By this and \eqref{eq:G02}
\[
 G_0^2 = (X+X^{-1})^2 - (X+X^{-1}) (k_0 + k_0^{-1})(k_3 + k_3^{-1})
             + (k_0 + k_0^{-1})^2 + (k_3 + k_3^2)^2 - 4
\]
on $\V$.
In the above line, the the right-hand side is equal to
$G(X,k_0,k_3)$ by Lemma \ref{lem:G}.
Thus the equation on the left in \eqref{eq:R9pre} holds on $\V$.
The proof of the equation on the right in \eqref{eq:R9pre} is similar.
\end{proof}

The following corollary is immediate from Lemma \ref{lem:R9pre}.

\begin{corollary} \label{cor:R9} \samepage
\ifDRAFT {\rm cor:R9}. \fi
Let $0 \neq \mu \in \F$.
\begin{itemize}
\item[\rm (i)]
$G_0^2$ acts on $\V_X(\mu)$ as $G(\mu,k_0,k_3)$ times the identity.
\item[\rm (ii)]
$G_2^2$ acts on $\V_X(\mu)$ as $G(q \mu,k_1,k_2)$
times the identity.
\end{itemize}
\end{corollary}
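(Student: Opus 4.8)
The statement to prove is Corollary~\ref{cor:R9}, which I would prove as a direct consequence of Lemma~\ref{lem:R9pre}. The plan is to exploit the fact that each $X$-eigenspace $\V_X(\mu)$ is, by definition, the set of vectors on which $X$ acts as the scalar $\mu$, together with the observation that the right-hand sides of \eqref{eq:R9pre} are Laurent polynomials in $X$ alone.

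First I would fix a nonzero $\mu \in \F$ and pick an arbitrary $v \in \V_X(\mu)$, so that $Xv = \mu v$ and consequently $X^{-1}v = \mu^{-1}v$. By Lemma~\ref{lem:R9pre} the operator $G_0^2$ equals $G(X,k_0,k_3)$ on all of $\V$, and by the definition \eqref{eq:defG} the expression $G(X,k_0,k_3)$ is a Laurent polynomial in $X$ with coefficients built from $k_0$, $k_3$. Since $v$ is an eigenvector of both $X$ and $X^{-1}$, applying this Laurent polynomial to $v$ replaces every occurrence of $X$ by $\mu$ and every occurrence of $X^{-1}$ by $\mu^{-1}$; that is, $G_0^2 v = G(X,k_0,k_3)v = G(\mu,k_0,k_3)\,v$. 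As $v \in \V_X(\mu)$ was arbitrary, $G_0^2$ acts on $\V_X(\mu)$ as $G(\mu,k_0,k_3)$ times the identity, which is part~(i). For part~(ii) the argument is identical, using instead that $G_2^2 = G(qX,k_1,k_2)$ on $\V$; substituting the $X$-eigenvalue $\mu$ (and $X^{-1}$-eigenvalue $\mu^{-1}$) into the Laurent polynomial $G(qX,k_1,k_2)$ yields the scalar $G(q\mu,k_1,k_2)$.

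There is essentially no obstacle here, since the corollary is labelled as immediate from the lemma. The only point requiring a moment's care is the substitution principle: one must note that a Laurent polynomial $p(X,X^{-1})$ acts on an eigenvector $v$ with $Xv=\mu v$ by the scalar $p(\mu,\mu^{-1})$, which follows because $X^{-1}v = \mu^{-1}v$ whenever $\mu \neq 0$ and $X$ is invertible on $\V$. This uses $\mu \neq 0$, which is exactly the hypothesis placed on $\mu$ in the statement. Thus the entire proof reduces to this one-line evaluation for each of $G_0^2$ and $G_2^2$, and I would present it in precisely that compressed form.
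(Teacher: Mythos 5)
Your proof is correct and matches the paper's intent: the paper simply declares the corollary ``immediate from Lemma \ref{lem:R9pre},'' and the substitution argument you spell out (evaluating the Laurent polynomial $G(X,k_0,k_3)$, resp.\ $G(qX,k_1,k_2)$, on an $X$-eigenvector) is precisely the one-line justification being left to the reader.
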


We mention some lemmas for later use.

\begin{lemma}   \label{lem:t0t3action}   \samepage
\ifDRAFT {\rm lem:t0t3action}. \fi
Let $\mu$ denote a vertex of the $X$-diagram of $\V$ that is
not incident to a single loop.
Then $\mu \neq \mu^{-1}$ and the following hold on $\V_X(\mu)$:
\begin{align}
  t_0 &=  \frac{\mu T_0 - T_3 + \mu G_0}
                   {\mu - \mu^{-1}},      
& t_0^{-1} &= \frac{\mu^{-1} T_0 - T_3 + \mu G_0}
                          {\mu^{-1} - \mu},                       \label{eq:t0}
\\
  t_3 &= \frac{T_0 - \mu T_3 + G_0}
                  {\mu^{-1} - \mu},
& t_3^{-1} &= \frac{T_0 - \mu^{-1} T_3 + G_0}
                          {\mu - \mu^{-1}}.                        \label{eq:t3}
\end{align}
\end{lemma}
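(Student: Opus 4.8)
The plan is to specialize the global identities of Lemma \ref{lem:R11} to the eigenspace $\V_X(\mu)$, on which $X$ acts as $\mu$ and $X^{-1}$ as $\mu^{-1}$, and then read off the two inverse formulas from the relation $t_i^{-1} = T_i - t_i$.

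First I would dispose of the claim $\mu \neq \mu^{-1}$. By the definition of the $X$-diagram, a single loop is incident to $\mu$ precisely when $\mu$ is $1$-adjacent to itself, that is $\mu^2 = 1$. Since $\mu$ is assumed not incident to a single loop, we have $\mu^2 \neq 1$, equivalently $\mu \neq \mu^{-1}$. In particular $\mu - \mu^{-1} \neq 0$, so the denominators appearing in the asserted formulas are nonzero and the expressions make sense.

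Next I would derive the formulas for $t_0$ and $t_3$. Fix $v \in \V_X(\mu)$, so $Xv = \mu v$ and $X^{-1} v = \mu^{-1} v$. Applying \eqref{eq:R11no1} to $v$ and recalling that $T_0$, $T_3$ are central gives $G_0 v = (1 - \mu^{-2}) t_0 v + (\mu^{-1} T_3 - T_0) v$. Using the identity $1 - \mu^{-2} = (\mu - \mu^{-1})/\mu$ and solving for $t_0 v$ yields $t_0 = (\mu T_0 - T_3 + \mu G_0)/(\mu - \mu^{-1})$ on $\V_X(\mu)$, as claimed. The formula for $t_3$ is obtained in exactly the same way from \eqref{eq:R11no2}: applying it to $v$ gives $G_0 v = (\mu^{-1} - \mu) t_3 v + (\mu T_3 - T_0) v$, and solving for $t_3 v$ produces $t_3 = (T_0 - \mu T_3 + G_0)/(\mu^{-1} - \mu)$.

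Finally I would obtain the two inverse formulas purely algebraically, with no further appeal to the module structure. Since $t_0^{-1} = T_0 - t_0$ and $t_3^{-1} = T_3 - t_3$ hold identically in $\Hq$, I substitute the expressions for $t_0$ and $t_3$ just derived and clear the common denominator $\mu - \mu^{-1}$; the scalar terms recombine to give $t_0^{-1} = (\mu^{-1} T_0 - T_3 + \mu G_0)/(\mu^{-1} - \mu)$ and $t_3^{-1} = (T_0 - \mu^{-1} T_3 + G_0)/(\mu - \mu^{-1})$. There is no genuine obstacle here: every step is a short linear manipulation, and the only point requiring care is the nonvanishing of $\mu - \mu^{-1}$, which is exactly what the single-loop hypothesis guarantees. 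I would also note that, since $G_0$ need not preserve $\V_X(\mu)$ (indeed $G_0 \V_X(\mu) \subseteq \V_X(\mu^{-1})$ by Lemma \ref{lem:R5}), these are to be read as identities of operators whose common domain is $\V_X(\mu)$.
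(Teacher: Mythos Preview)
Your proof is correct and follows exactly the same approach as the paper: derive $\mu \neq \mu^{-1}$ from the single-loop definition, obtain the formulas for $t_0$ and $t_3$ by specializing \eqref{eq:R11no1} and \eqref{eq:R11no2} to $\V_X(\mu)$, and then use $t_i^{-1} = T_i - t_i$ for the inverses. Your version simply spells out the short linear manipulations that the paper leaves implicit.
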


\begin{proof}
We have $\mu \neq \mu^{-1}$ by the definition of a single loop.
Now the formula for $t_0$ follows from \eqref{eq:R11no1},
and the formula for $t_3$ follows from \eqref{eq:R11no2}.
Concerning $t_0^{-1}$ and $t_3^{-1}$, use $t_0^{-1} = T_0 - t_0$
and $t_3^{-1} = T_3 - t_3$. 
\end{proof}

\begin{lemma}   \label{lem:t1t2action}   \samepage
\ifDRAFT {\rm lem:t1t2action}. \fi
Let $\mu$ denote a vertex of the $X$-diagram of $\V$ that is
not incident to a double loop.
Then $q \mu \neq q^{-1} \mu^{-1}$ and the following hold on $\V_X(\mu)$:
\begin{align}
  t_1 &=  \frac{q^{-1} \mu^{-1} T_1 - T_2 - G_2}
                   {q^{-1} \mu^{-1} - q \mu},     
& t_1^{-1} &= \frac{q \mu T_1 - T_2 - G_2}
                          {q \mu - q^{-1} \mu^{-1} },                \label{eq:t1}
\\
  t_2 &= \frac{T_1 - q^{-1} \mu^{-1} T_2 - q^{-1} \mu^{-1} G_2}
                  { q \mu - q^{-1}\mu^{-1}},
& t_2^{-1} &= \frac{T_1 - q \mu T_2 - q^{-1}\mu^{-1} G_2}
                         {q^{-1} \mu^{-1} - q \mu}.                     \label{eq:t2}
\end{align}
\end{lemma}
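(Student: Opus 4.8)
The plan is to mirror exactly the derivation of Lemma \ref{lem:t0t3action}, exploiting the $\varrho^2$-symmetry that the authors have used repeatedly. First I would observe that the hypothesis ``$\mu$ is not incident to a double loop'' is precisely the condition $q\mu \neq q^{-1}\mu^{-1}$: a double loop at $\mu$ would mean $\mu$ is $q$-adjacent to itself, i.e.\ $\mu^2 = q^{-2}$, which is equivalent to $q\mu = q^{-1}\mu^{-1}$. So the asserted inequality is immediate from the definition, just as $\mu \neq \mu^{-1}$ was immediate in Lemma \ref{lem:t0t3action}.

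For the four displayed formulas, the key inputs are \eqref{eq:R11no3} and \eqref{eq:R11no4} from Lemma \ref{lem:R11}, together with Lemma \ref{lem:R5}. On the eigenspace $\V_X(\mu)$ we may replace $X$ by $\mu$ and $X^{-1}$ by $\mu^{-1}$ as scalars. Then \eqref{eq:R11no3} reads $G_2 = t_2(1-q^2\mu^2) + q T_1 \mu - T_2$ on $\V_X(\mu)$, which I solve for $t_2$; dividing by $1-q^2\mu^2 = -q\mu(q\mu - q^{-1}\mu^{-1})$ and simplifying yields the formula for $t_2$ in \eqref{eq:t2}. Here one must be careful that $G_2$ maps $\V_X(\mu)$ into $\V_X(q^{-2}\mu^{-1})$ by Lemma \ref{lem:R5}, so the equation is read as an identity of operators applied to vectors of $\V_X(\mu)$, with the $G_2$-term genuinely landing in another eigenspace; this is the same phenomenon as with $G_0$ in Lemma \ref{lem:t0t3action} and causes no difficulty. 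Likewise \eqref{eq:R11no4} reads $G_2 = t_1(q\mu - q^{-1}\mu^{-1}) + q^{-1}T_1\mu^{-1} - T_2$ on $\V_X(\mu)$, and solving for $t_1$ gives the formula for $t_1$ in \eqref{eq:t1}.

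For the inverses $t_1^{-1}$ and $t_2^{-1}$, I would use $t_1^{-1} = T_1 - t_1$ and $t_2^{-1} = T_2 - t_2$, exactly as the authors do for $t_0^{-1}$, $t_3^{-1}$. Substituting the just-derived expressions and combining over the common denominator $q\mu - q^{-1}\mu^{-1}$ produces the stated formulas; the only check is that the numerators collapse correctly, e.g.\ $(q\mu - q^{-1}\mu^{-1})T_1 - (q^{-1}\mu^{-1}T_1 - T_2 - G_2)$ should reduce to $q\mu T_1 - T_2 - G_2$ up to the overall sign bookkeeping in the denominators. An alternative, and probably cleaner, route is to apply the automorphism $\varrho^2$ directly to all four formulas of Lemma \ref{lem:t0t3action}: since $\varrho$ sends $t_0 \mapsto t_1$, $t_3 \mapsto t_0 \mapsto t_1$ (so $\varrho^2: t_0 \mapsto t_2$, $t_3 \mapsto t_1$) and by Lemma \ref{lem:Z4XY} sends $X \mapsto q^{-1}X^{-1}$ under $\varrho^2$, the images of $T_0, T_3, G_0$ become $T_2, T_1, G_2$ and $\mu$ is replaced accordingly, transporting Lemma \ref{lem:t0t3action} onto the present statement.

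The main obstacle, such as it is, is purely bookkeeping: tracking the factors of $q$ and the passage $X \mapsto q^{-1}X^{-1}$ under $\varrho^2$ so that the denominators come out as $q\mu - q^{-1}\mu^{-1}$ with the correct signs, and confirming that the $G_2$ terms carry the coefficient $q^{-1}\mu^{-1}$ in \eqref{eq:t2} versus $1$ in \eqref{eq:t1}. Because every ingredient (Lemma \ref{lem:R11}, Lemma \ref{lem:R5}, and the $\varrho^2$-image of Lemma \ref{lem:t0t3action}) is already established, I expect no conceptual difficulty, and the proof will read ``this is similar to Lemma \ref{lem:t0t3action}; apply $\varrho^2$ and simplify,'' with the inequality $q\mu \neq q^{-1}\mu^{-1}$ noted from the absence of a double loop.
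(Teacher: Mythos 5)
Your proposal is correct and follows the paper's own route: the paper proves this lemma by the words ``Similar to the proof of Lemma \ref{lem:t0t3action},'' which unpacks to exactly your primary argument --- solve \eqref{eq:R11no3} for $t_2$ and \eqref{eq:R11no4} for $t_1$ on $\V_X(\mu)$ (where $X$, $X^{-1}$ act as the scalars $\mu$, $\mu^{-1}$ and the $G_2$ term lands in $\V_X(q^{-2}\mu^{-1})$), then obtain $t_1^{-1}$, $t_2^{-1}$ from $t_i^{-1}=T_i-t_i$, with $q\mu \neq q^{-1}\mu^{-1}$ immediate from the absence of a double loop. Your alternative route via $\varrho^2$ is also sound (it is essentially how the paper derives \eqref{eq:R11no3}, \eqref{eq:R11no4} from \eqref{eq:R11no1}, \eqref{eq:R11no2} in the first place), but it is not needed.
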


\begin{proof}
Similar to the proof of Lemma \ref{lem:t0t3action}.
\end{proof}

\section{The $X$-diagram}
\label{sec:diagram}

Let $\V$ denote an XD $\Hq$-module.
In this section we prove that the reduced $X$-diagram of $\V$ is a path.
We then introduce the notion of  a standard ordering
of the eigenvalues of $X$.
We also describe some basic facts concerning these notions.

\begin{lemma}    \label{lem:t0t2inv}   \samepage
\ifDRAFT {\rm lem:t0t2inv}. \fi
Let $\W$ denote a subspace of $\V$ that is invariant under $X$.
\begin{itemize}
\item[\rm (i)]
$\W$ is invariant under $t_0$ if and only if $\W$ is invariant under $t_3$.
\item[\rm (ii)]
$\W$ is invariant under $t_2$ if and only if $\W$ is invariant under $t_1$.
\item[\rm (iii)]
$\W$ is an $\Hq$-submodule of $\V$ if and only if $\W$ is invariant
under each of $t_0$, $t_2$.
\end{itemize}
\end{lemma}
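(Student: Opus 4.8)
The plan is to reduce all three parts to two simple observations. The first is that each $T_i = t_i + t_i^{-1}$ is central, hence by Schur's lemma acts on the irreducible module $\V$ as a scalar (as in Definition \ref{def:param}); consequently $t_i^{-1} = T_i - t_i$ preserves any $t_i$-invariant subspace, so invariance under $t_i$ and invariance under $t_i^{-1}$ are equivalent. The second observation is that the two ``edge products'' $t_3 t_0$ and $t_1 t_2$ are both governed by $X$: we have $X = t_3 t_0$ by \eqref{eq:defXYAB}, while the relation \eqref{eq:defHq3} gives $t_1 t_2 = q^{-1} t_0^{-1} t_3^{-1} = q^{-1} X^{-1}$. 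Since $X$ is invertible and $\V$ is finite-dimensional, the hypothesis that $\W$ is $X$-invariant forces $X^{-1} \W = \W$ as well (the restriction $X|_\W$ is injective, hence bijective), so $\W$ is automatically invariant under both $t_3 t_0$ and $t_1 t_2$.

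First I would handle part (i). Given that $\W$ is invariant under $X$ and $t_0$, it is invariant under $t_0^{-1} = T_0 - t_0$, and therefore under $t_3 = X t_0^{-1}$. Conversely, invariance under $X$ and $t_3$ yields invariance under $t_3^{-1} = T_3 - t_3$ and hence under $t_0 = t_3^{-1} X$. Part (ii) runs through the identical argument with the product $t_1 t_2 = q^{-1} X^{-1}$ replacing $X = t_3 t_0$: from $t_2$-invariance (hence $t_2^{-1}$-invariance) together with $X^{-1}$-invariance one gets invariance under $t_1 = q^{-1} X^{-1} t_2^{-1}$, and symmetrically invariance under $X$ and $t_1$ gives invariance under $t_2 = t_1^{-1} q^{-1} X^{-1}$.

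For part (iii), I would first note that a subspace is an $\Hq$-submodule precisely when it is invariant under every generator $t_i^{\pm 1}$, which by the centrality of the $T_i$ is the same as invariance under $t_0, t_1, t_2, t_3$. The forward direction is then immediate. For the converse, assuming $\W$ is invariant under $X$, $t_0$, and $t_2$, parts (i) and (ii) supply invariance under $t_3$ and $t_1$ respectively, giving invariance under all four $t_i$ and hence an $\Hq$-submodule. The only step that is not purely formal is recognizing the identity $t_1 t_2 = q^{-1} X^{-1}$ (equivalently, invoking the cyclic form $t_2 t_3 t_0 t_1 = q^{-1}$ of Lemma \ref{lem:Z4pre}); once that link between the pair $t_1, t_2$ and $X$ is in hand, parts (ii) and (iii) mirror part (i) exactly, so I expect no real obstacle beyond this observation.
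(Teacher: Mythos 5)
Your proposal is correct and follows essentially the same route as the paper's proof: both rest on the factorizations $t_3 = X t_0^{-1}$ and $t_1 = q^{-1} X^{-1} t_2^{-1}$, with (iii) deduced from (i) and (ii) via Lemma \ref{lem:t0t2inv}'s hypotheses. The only cosmetic difference is how invariance under $t_i^{-1}$ is obtained from invariance under $t_i$: the paper uses finite-dimensionality (so $t_i\W \subseteq \W$ forces $t_i\W = \W$), while you use $t_i^{-1} = T_i - t_i$ with $T_i$ acting as a scalar; both are immediate.
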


\begin{proof}
We have $X \W \subseteq \W$.
This forces $X \W = \W$ since $X$ is invertible and $\W$ is finite-dimensional.

(i):
First assume $t_0 \W \subseteq \W$.
This forces $t_0 \W = \W$, and so $t_0^{-1} \W = \W$.
By this and $t_3 = X t_0^{-1}$ we get $t_3 \W = \W$.
Next assume $t_3 \W \subseteq \W$.
We get $t_0 \W = \W$ in a similar way.

(ii):
Similar to the proof of (i) using $t_1 = q^{-1} X^{-1} t_2^{-1}$.

(iii):
Follows from (i) and (ii).
\end{proof}

\begin{lemma}  \label{lem:Hqinv}  \samepage
\ifDRAFT {\rm lem:Hqinv}. \fi
Let $\mu$, $\nu \in \F$.
\begin{itemize}
\item[\rm (i)]
Assume that $\mu$, $\nu$ are $1$-adjacent.
Then $\V_X(\mu)+\V_X(\nu)$ is invariant under $t_0$.
\item[\rm (ii)]
Assume that $\mu$, $\nu$ are $q$-adjacent.
Then $\V_X(\mu) + \V_X(\nu)$ is invariant under $t_2$.
\end{itemize}
\end{lemma}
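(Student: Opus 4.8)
The plan is to derive each invariance directly from the commutation relations in Lemma~\ref{lem:Xt0}, combined with the diagonalizability of $X$ on the XD module $\V$. The key observation is that \eqref{eq:Xt0} can be rewritten as $Xt_0 = t_0X^{-1} + XT_0 - T_3$, and \eqref{eq:Xt2} as $qXt_2 = q^{-1}t_2X^{-1} + T_1 - q^{-1}X^{-1}T_2$. Since $\V$ is irreducible, each $T_i$ acts as the scalar $k_i+k_i^{-1}$, so when these relations are applied to an $X$-eigenvector the ``correction'' terms become scalar multiples of that eigenvector. From this I would read off that $t_0v$ (resp.\ $t_2v$) is an eigenvector of $X$ up to a vector lying in one prescribed eigenspace, and then conclude using the spectral decomposition $\V = \bigoplus_\lambda \V_X(\lambda)$.

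For (i), set $\nu = \mu^{-1}$ and $\W = \V_X(\mu)+\V_X(\nu)$, and take $v \in \V_X(\mu)$. Applying $Xt_0 = t_0X^{-1}+XT_0-T_3$ to $v$ and using $X^{-1}v=\mu^{-1}v$ gives
\[
  X(t_0 v) = \mu^{-1}(t_0 v) + (\mu T_0 - T_3)v,
\]
where the second term lies in $\V_X(\mu)$. Writing $t_0 v = \sum_\lambda w_\lambda$ with $w_\lambda \in \V_X(\lambda)$ and comparing the $\V_X(\lambda)$-components yields $(\lambda-\mu^{-1})w_\lambda = 0$ for every $\lambda \neq \mu$; hence $w_\lambda = 0$ unless $\lambda \in \{\mu,\mu^{-1}\}$, and so $t_0 v \in \W$. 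The same computation with $\mu$ replaced by $\nu=\mu^{-1}$ handles $v \in \V_X(\nu)$, proving $t_0\W \subseteq \W$.

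Part (ii) is entirely parallel. With $\nu = q^{-2}\mu^{-1}$ and $v \in \V_X(\mu)$, dividing the rewritten \eqref{eq:Xt2} by $q$ and applying it to $v$ gives
\[
  X(t_2 v) = q^{-2}\mu^{-1}(t_2 v) + (q^{-1}T_1 - q^{-2}\mu^{-1}T_2)v = \nu(t_2 v) + (\text{scalar})v,
\]
the scalar term lying in $\V_X(\mu)$. The same eigenspace comparison forces $t_2 v \in \V_X(\mu)+\V_X(\nu)$. For $v \in \V_X(\nu)$ one uses the symmetry $\mu\nu = q^{-2}$, which gives $q^{-2}\nu^{-1}=\mu$, so the roles of $\mu$ and $\nu$ interchange and again $t_2 v \in \W$.

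The step I expect to need the most care is the degenerate case $\mu=\nu$, i.e.\ a single loop ($\mu^2=1$) in (i) or a double loop ($\mu^2=q^{-2}$) in (ii), where $\W$ collapses to the single eigenspace $\V_X(\mu)$. A more computational route through Lemma~\ref{lem:R11} or Lemma~\ref{lem:t0t3action} would divide by $\mu-\mu^{-1}$ (resp.\ $q\mu-q^{-1}\mu^{-1}$) and break down precisely here. The spectral-projection argument above sidesteps this: when $\mu=\mu^{-1}$ the component equation at $\lambda=\mu$ reads $(\mu-\mu^{-1})w_\mu = (\mu T_0-T_3)v = 0$, which determines nothing about $w_\mu$, while the equations at $\lambda \neq \mu$ still give $w_\lambda = 0$, so $t_0 v = w_\mu \in \V_X(\mu)$. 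Thus a single argument covers the generic and loop cases uniformly, and securing this uniformity is the main point to get right.
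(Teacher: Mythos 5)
Your proof is correct and is essentially the paper's own argument: both rest on the two identities of Lemma~\ref{lem:Xt0} together with the diagonalizability of $X$, and your eigencomponent comparison is just another phrasing of the paper's step of multiplying \eqref{eq:Xt0} (resp.\ \eqref{eq:Xt2}) on the left by $X-\mu$ to get $(X-\mu)(X-\nu)t_0w=0$ and invoking diagonalizability. The loop case you worry about is covered the same way in both versions, since for diagonalizable $X$ the kernel of $(X-\mu)^2$ equals that of $X-\mu$.
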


\begin{proof}
(i):
For notational convenience set $\W = \V_X(\mu) + \V_X(\nu)$.
We first show $t_0 \V_X(\mu) \subseteq \W$.
Pick any $w \in \V_X(\mu)$. 
We show $t_0 w \in \W$.
Multiply each side of \eqref{eq:Xt0} on the left by $X-\mu$,
and apply the result to $w$.
Simplifying the result using $Xw=\mu w$, $X^{-1}w = \nu w$ and the fact that
$T_0$, $T_3$ commute with $X$,
we find $(X-\mu)(X-\nu)t_0 w=0$.
By this and since $X$ is diagonalizable we obtain $t_0 w \in \W$.
We have shown $t_0 \V_X(\mu) \subseteq \W$.
Interchanging $\mu$ and $\nu$ in the above arguments,
we obtain $t_0 \V_X(\nu) \subseteq \W$.
Therefore $t_0 \W \subseteq \W$.

(ii):
Similar to the proof of (i), but use \eqref{eq:Xt2} instead of \eqref{eq:Xt0}.
\end{proof}

Consider the $X$-diagram of $\V$,
which we are not assuming is reduced.

\begin{lemma}    \label{lem:G0G2V0}   \samepage
\ifDRAFT {\rm lem:G0G2V0}. \fi
Let $\mu$ denote a vertex of the $X$-diagram.
\begin{itemize}
\item[\rm (i)]
Assume that $\mu$ is not incident to a single bond.
Then $G_0 \V_X(\mu)=0$.
\item[\rm (ii)]
Assume that $\mu$ is not incident to a double bond.
Then $G_2 \V_X(\mu)=0$.
\end{itemize}
\end{lemma}

\begin{proof}
(i):
By Lemma \ref{lem:R5} $G_0 \V_X(\mu) \subseteq \V_X(\mu^{-1})$.
We have $\V_X(\mu^{-1})=0$ since $\mu$ is not incident to a single bond.
By these comments $G_0 \V_X(\mu)=0$.

(ii):
Similar.
\end{proof}

\begin{lemma}  \label{lem:vG0vG2v2}  \samepage
\ifDRAFT {\rm lem:vG0vG2v2}. \fi
Let $\mu$ denote a vertex of the $X$-diagram and let $v \in \V_X(\mu)$.
\begin{itemize}
\item[\rm (i)]
Assume that $\mu$ is not incident to a single loop.
Then $\F v + \F G_0 v$ is invariant under $t_0$.
\item[\rm (ii)]
Assume that $\mu$ is not incident to a double loop.
Then $\F v + \F G_2 v$ is invariant under $t_2$.
\end{itemize}
\end{lemma}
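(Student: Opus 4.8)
The plan is to prove (i) by showing that each of $t_0 v$ and $t_0 G_0 v$ lies in $\F v + \F G_0 v$; part (ii) will be entirely analogous. Since $v \in \V_X(\mu)$ we have $X^{\pm 1} v = \mu^{\pm 1} v$, and by Lemma \ref{lem:R5} we have $G_0 v \in \V_X(\mu^{-1})$, so that $X^{\pm 1} G_0 v = \mu^{\mp 1} G_0 v$. The hypothesis that $\mu$ is not incident to a single loop gives $\mu \neq \mu^{-1}$, equivalently $\mu^2 \neq 1$; this is precisely what makes the scalar coefficients appearing below invertible.

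First I would apply \eqref{eq:R11no1} to $v$. Since $T_0$, $T_3$ are central and so act as scalars on $\V$, and $X^{-1}$, $X^{-2}$ act on $v$ as $\mu^{-1}$, $\mu^{-2}$, this yields $(1-\mu^{-2}) t_0 v = G_0 v + (T_0 - \mu^{-1} T_3) v$. Because $\mu^2 \neq 1$ the factor $1 - \mu^{-2}$ is nonzero, so $t_0 v \in \F v + \F G_0 v$. Next I would apply \eqref{eq:R11no1} to the vector $G_0 v$, where now $X^{-1} G_0 v = \mu G_0 v$ and $X^{-2} G_0 v = \mu^2 G_0 v$, so the left side becomes $(1 - \mu^2) t_0 G_0 v$. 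On the right, $G_0 (G_0 v) = G_0^2 v$, and by Corollary \ref{cor:R9}(i) the element $G_0^2$ acts on $\V_X(\mu)$ as the scalar $G(\mu, k_0, k_3)$, so $G_0^2 v$ is a scalar multiple of $v$. Collecting terms gives $(1 - \mu^2) t_0 G_0 v = G(\mu,k_0,k_3)\, v + (T_0 - \mu T_3) G_0 v$, and again $1 - \mu^2 \neq 0$ lets me solve for $t_0 G_0 v \in \F v + \F G_0 v$. Together these two computations show that $t_0$ stabilizes $\F v + \F G_0 v$. (The degenerate case $G_0 v = 0$ causes no trouble: the first display then simply asserts $t_0 v \in \F v = \F v + \F G_0 v$.)

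For (ii) I would run the identical argument with $t_2$, $G_2$, the identity \eqref{eq:R11no3}, and Corollary \ref{cor:R9}(ii). Here $G_2 v \in \V_X(q^{-2}\mu^{-1})$ by Lemma \ref{lem:R5}, the hypothesis that $\mu$ is not incident to a double loop gives $q\mu \neq q^{-1}\mu^{-1}$, equivalently $q^2\mu^2 \neq 1$, and the two coefficients to be inverted are $1 - q^2\mu^2$ (when \eqref{eq:R11no3} is applied to $v$) and $1 - q^{-2}\mu^{-2}$ (when it is applied to $G_2 v$, using $X G_2 v = q^{-2}\mu^{-1} G_2 v$), both nonzero under that hypothesis, with $G_2^2 v = G(q\mu,k_1,k_2)\, v$ playing the role of $G_0^2 v$.

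The calculation is routine linear algebra once the two relevant $X$-eigenvalue relations and the two scalar identities for $G_i^2$ are in hand; there is no real obstacle. The one point that must be checked carefully is that the scalar divided out in each of the four applications of \eqref{eq:R11no1}/\eqref{eq:R11no3} is nonzero, and this is exactly what the ``not incident to a single (resp.\ double) loop'' hypothesis guarantees.
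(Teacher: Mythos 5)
Your proposal is correct and follows essentially the same route as the paper: the paper also shows that $t_0 v$ and $t_0 G_0 v$ lie in $\F v + \F G_0 v$, invoking Corollary \ref{cor:R9}(i) for the $G_0^2 v$ term, except that it cites the prepackaged formula \eqref{eq:t0} of Lemma \ref{lem:t0t3action} (itself just \eqref{eq:R11no1} solved for $t_0$ on an $X$-eigenspace) where you re-derive that inversion inline. The substance, including the role of the no-loop hypothesis in making the denominators nonzero, is identical.
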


\begin{proof}
(i):
We show each of $t_0 v$ and $t_0 G_0 v$ is contained in $\F v + \F G_0 v$.
Recall that $T_i$ $(i \in \I)$ acts on $\V$ as a scalar.
Applying \eqref{eq:t0} to $v$ we find that $t_0 v$ is contained in $\F v + \F G_0 v$.
Concerning $t_0 G_0 v$, we may assume $G_0 v \neq 0$.
By Lemma \ref{lem:R5} $G_0 v \in \V_X(\mu^{-1})$.
In particular $\V_X(\mu^{-1}) \neq 0$ and so $\mu^{-1}$ is a vertex of the $X$-diagram.
Now applying \eqref{eq:t0} to $G_0 v$ and using Corollary \ref{cor:R9}(i) we find that
$t_0 G_0 v$ is contained in $\F v + \F G_0 v$.
The result follows.

(ii):
Similar to the proof of (i).
\end{proof}

\begin{lemma}   \label{lem:endvertex3}   \samepage
\ifDRAFT {\rm lem:endvertex3}. \fi
Let $\mu$ denote a vertex of the $X$-diagram and let
$v \in \V_X(\mu)$.
\begin{itemize}
\item[\rm (i)]
Assume that $\mu$ is not incident to a single bond.
Then $\F v$ is invariant under $t_0$.
\item[\rm (ii)]
Assume that $\mu$ is not incident to a double bond.
Then $\F v$ is invariant under  $t_2$.
\end{itemize}
\end{lemma}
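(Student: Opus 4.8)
The plan is to combine the two preceding lemmas, Lemma \ref{lem:G0G2V0} and Lemma \ref{lem:vG0vG2v2}, by reducing the two-dimensional invariant subspace $\F v + \F G_0 v$ (resp.\ $\F v + \F G_2 v$) to the line $\F v$. Concretely, Lemma \ref{lem:vG0vG2v2} already provides a subspace invariant under $t_0$ (resp.\ $t_2$), and the point is that under the present hypotheses the extra summand $\F G_0 v$ (resp.\ $\F G_2 v$) vanishes. Parts (i) and (ii) are symmetric under $\varrho^2$, so I would treat them in parallel.

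The first thing I would record is a compatibility observation about the diagram conventions: a single loop at $\mu$ is, by definition, the single bond joining $\mu$ to $\mu$ in the case where $\mu$ is $1$-adjacent to itself. Hence ``$\mu$ not incident to a single bond'' already entails ``$\mu$ not incident to a single loop'' (and likewise with ``double'' throughout). This is what lets me feed the single hypothesis of the statement into \emph{both} cited lemmas at once. With this in hand the argument for (i) is immediate: since $\mu$ is not incident to a single loop, Lemma \ref{lem:vG0vG2v2}(i) shows that $\F v + \F G_0 v$ is invariant under $t_0$; since $\mu$ is not incident to a single bond, Lemma \ref{lem:G0G2V0}(i) shows $G_0 \V_X(\mu)=0$, so in particular $G_0 v = 0$. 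Therefore $\F v + \F G_0 v = \F v$, and this line is invariant under $t_0$. Part (ii) follows the identical pattern, applying Lemma \ref{lem:vG0vG2v2}(ii) and Lemma \ref{lem:G0G2V0}(ii) and using $G_2 v = 0$.

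There is no substantive obstacle here: all the computational content has already been packaged into the earlier results (ultimately into Lemma \ref{lem:R5} and Corollary \ref{cor:R9}), so nothing further needs to be calculated. The only point requiring care is the loop-versus-bond bookkeeping noted above, namely that the hypotheses of the two lemmas line up exactly and that a loop counts as a bond; once that is fixed the proof is a two-line deduction in each part.
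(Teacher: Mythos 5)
Your proof is correct and is essentially identical to the paper's: the paper likewise obtains $G_0 v = 0$ from Lemma \ref{lem:G0G2V0}(i) and then invokes Lemma \ref{lem:vG0vG2v2}(i) to conclude that $\F v = \F v + \F G_0 v$ is invariant under $t_0$, with part (ii) handled symmetrically. Your explicit observation that a loop is a bond with coinciding endpoints (so the hypothesis of the statement feeds both cited lemmas) is left implicit in the paper, but it is exactly the right bookkeeping.
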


\begin{proof}
(i):
By Lemma \ref{lem:G0G2V0}(i) $G_0 v =0$.
By this and Lemma \ref{lem:vG0vG2v2}(i) $\F v$ is invariant under  $t_0$.

(ii):
Similar.
\end{proof}

\begin{lemma}    \label{lem:connected}   \samepage
\ifDRAFT {\rm lem:connected}. \fi
The $X$-diagram of $\V$ is connected.
\end{lemma}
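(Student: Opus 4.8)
The plan is to argue by contradiction using the irreducibility of $\V$. Suppose the $X$-diagram of $\V$ is disconnected. Let $\mathcal{M}$ denote the set of eigenvalues of $X$ on $\V$ (the vertex set), choose a connected component $\mathcal{M}_1$ of the diagram, and set $\mathcal{M}_2 = \mathcal{M} \setminus \mathcal{M}_1$; by assumption both $\mathcal{M}_1$ and $\mathcal{M}_2$ are nonempty. Define $\W = \sum_{\mu \in \mathcal{M}_1} \V_X(\mu)$. Since $X$ is diagonalizable on $\V$, the subspace $\W$ is nonzero and properly contained in $\V$, and it is clearly invariant under $X$. My goal is to show that $\W$ is an $\Hq$-submodule, which contradicts the irreducibility of $\V$ and forces the diagram to be connected. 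By Lemma \ref{lem:t0t2inv}(iii) it suffices to verify that $\W$ is invariant under $t_0$ and under $t_2$.

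For $t_0$-invariance I would check that $t_0 \V_X(\mu) \subseteq \W$ for each $\mu \in \mathcal{M}_1$, treating two cases. If $\mu^{-1}$ is also an eigenvalue of $X$, then $\mu$ and $\mu^{-1}$ are $1$-adjacent, so they are joined by a single bond (or, when $\mu=\mu^{-1}$, a single loop) and hence lie in the same component $\mathcal{M}_1$; by Lemma \ref{lem:Hqinv}(i) the subspace $\V_X(\mu) + \V_X(\mu^{-1})$ is invariant under $t_0$, and this subspace lies in $\W$. If instead $\mu^{-1}$ is not an eigenvalue of $X$, then $\mu$ is incident to no single bond, so Lemma \ref{lem:endvertex3}(i) gives that $\F v$ is $t_0$-invariant for each $v \in \V_X(\mu)$; summing over a basis shows $\V_X(\mu)$ is $t_0$-invariant, hence contained in $\W$. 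In either case $t_0 \V_X(\mu) \subseteq \W$, and summing over $\mu \in \mathcal{M}_1$ yields $t_0 \W \subseteq \W$.

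The argument for $t_2$-invariance is entirely analogous: here $t_2$ can only move $\V_X(\mu)$ into $\V_X(\mu) + \V_X(q^{-2}\mu^{-1})$, where $\mu$ and $q^{-2}\mu^{-1}$ are $q$-adjacent, so one uses Lemma \ref{lem:Hqinv}(ii) when $q^{-2}\mu^{-1}$ is an eigenvalue (again both endpoints of the resulting double bond or double loop lie in $\mathcal{M}_1$) and Lemma \ref{lem:endvertex3}(ii) otherwise. This gives $t_2 \W \subseteq \W$. With $\W$ invariant under $X$, $t_0$, and $t_2$, Lemma \ref{lem:t0t2inv}(iii) shows $\W$ is a proper nonzero $\Hq$-submodule, contradicting the irreducibility of $\V$.

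The point I expect to require the most care is the bookkeeping that each bond stays within a single component: one must observe that the only eigenvalue $\V_X(\mu)$ can reach under $t_0$ is $\mu^{-1}$, and that a single bond (or loop) between $\mu$ and $\mu^{-1}$ forces $\mu^{-1}$ into the same component as $\mu$. Once this and the parallel statement for $q$-bonds are pinned down, the rest is a direct application of the invariance lemmas of this section together with the submodule criterion of Lemma \ref{lem:t0t2inv}(iii).
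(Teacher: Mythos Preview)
Your proof is correct and follows essentially the same approach as the paper: take a connected component, form the sum $\W$ of the corresponding $X$-eigenspaces, use Lemmas \ref{lem:Hqinv} and \ref{lem:endvertex3} to show $\W$ is invariant under $t_0$ and $t_2$, and then apply Lemma \ref{lem:t0t2inv}(iii) and irreducibility. The only cosmetic difference is that the paper phrases this as a direct argument (concluding $\W = \V$) rather than a contradiction, and compresses your two-case analysis into a single citation of the relevant lemmas.
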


\begin{proof}
Let $\cal C$ denote a connected component of the $X$-diagram,
and let $\W$ denote the sum of the subspaces $\V_X(\mu)$ for $\mu \in {\cal C}$.
By Lemmas \ref{lem:Hqinv} and \ref{lem:endvertex3}
$\W$ is invariant under each of $t_0$, $t_2$.
By this and Lemma \ref{lem:t0t2inv}(iii) $\W$ is an $\Hq$-submodule of $\V$,
and this forces $\W= \V$ since $\V$ is irreducible.
Therefore $\cal C$ contains every vertex, so the $X$-diagram is connected.
\end{proof}

\begin{lemma}  \label{lem:diagram}  \samepage
\ifDRAFT {\rm lem:diagram}. \fi
The reduced $X$-diagram of $\V$ is a path.
\end{lemma}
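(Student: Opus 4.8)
The plan is to establish the three standard properties that together characterize a path: the reduced $X$-diagram is connected, every vertex has degree at most two, and it contains no cycle. Once these are in hand, the conclusion is immediate, since a connected graph all of whose vertices have degree at most two is either a path or a cycle. For connectivity I would invoke Lemma \ref{lem:connected}, which gives that the full $X$-diagram is connected; since deleting loops does not affect the existence of walks between distinct vertices, the reduced $X$-diagram is connected as well.

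For the degree bound I would argue as follows. The only eigenvalue of $X$ that is $1$-adjacent to $\mu$ is $\mu^{-1}$, and the only one that is $q$-adjacent to $\mu$ is $q^{-2}\mu^{-1}$; these two potential neighbors are distinct because $q^2 \neq 1$. A single (resp.\ double) bond at $\mu$ survives the reduction only when $\mu^{-1} \neq \mu$ (resp.\ $q^{-2}\mu^{-1} \neq \mu$) and the target is again an eigenvalue of $X$, so each vertex of the reduced diagram is incident to at most one single bond and at most one double bond. Moreover, between two distinct vertices there can be at most one bond, since $\mu\nu = 1$ and $\mu\nu = q^{-2}$ cannot hold simultaneously. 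Hence every vertex of the reduced $X$-diagram has degree at most two.

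The main obstacle is ruling out a cycle, and I expect this to be where the hypothesis that $q$ is not a root of unity enters decisively. Suppose toward a contradiction that the reduced diagram is a cycle. Then every vertex has degree exactly two, hence exactly one incident single bond and exactly one incident double bond; consequently the bonds must alternate single/double as one traverses the cycle, and the cycle length is even, say $2m$ with $m \geq 1$. I would then track the eigenvalue along the cycle: a single bond followed by a double bond sends $\mu \mapsto \mu^{-1} \mapsto q^{-2}(\mu^{-1})^{-1} = q^{-2}\mu$, so each such pair of steps multiplies the eigenvalue by $q^{-2}$. Traversing all $2m$ edges amounts to $m$ such pairs and returns to the starting eigenvalue $\mu_0$, which forces $q^{-2m}\mu_0 = \mu_0$ and hence $q^{2m} = 1$. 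This contradicts the assumption that $q$ is not a root of unity. Therefore no cycle occurs, and combining this with connectivity and the degree bound shows that the reduced $X$-diagram is a path.
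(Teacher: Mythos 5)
Your proof is correct and follows essentially the same route as the paper's: connectivity via Lemma \ref{lem:connected}, the degree bound from the fact that each vertex has at most one $1$-adjacent and at most one $q$-adjacent neighbor, and the exclusion of cycles from $q$ not being a root of unity. The only difference is that you spell out the no-cycle step (alternation of single and double bonds forcing $q^{2m}=1$), which the paper asserts in one line without detail.
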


\begin{proof}
By Lemma \ref{lem:connected} the reduced $X$-diagram is connected.
By the construction, each vertex is $1$-adjacent
 (resp.\ $q$-adjacent) to at most one vertex. 
Moreover there is no cycle in the reduced $X$-diagram since $q$ is not a root of unity.
By these comments the reduced $X$-diagram is a path.
\end{proof}

By Lemma \ref{lem:diagram} the reduced $X$-diagram is one of the types \eqref{eq:diag}.
An ordering $\{\mu_r\}_{r=0}^n$ of the eigenvalues of $X$ is said to be {\em standard}
whenever they are attached to the reduced $X$-diagram as in \eqref{eq:standard}.
Assume for the moment that the reduced $X$-diagram of $\V$ is {\sf DD} or {\sf SS}.
Let $\{\mu_r\}_{r=0}^n$ denote a standard ordering of the eigenvalues of $X$.
Then the ordering  $\{\mu_{n-r}\}_{r=0}^n$ is also standard and no further ordering
is standard.

For the rest of this section, we fix a standard ordering $\{\mu_r\}_{r=0}^n$ of the
eigenvalues of $X$.
By the shape of the diagram the parity of $n$ is as follows:
\begin{equation}   \label{eq:parityn}
\begin{array}{c|c}
\text{$X$-diagram} & \text{Parity of $n$}
\\ \hline
\textup{\sf DS} & \text{even}  \rule{0mm}{4mm}
\\
\textup{\sf DD}, \textup{\sf SS} &  \text{odd}
\end{array}
\end{equation}
By the construction, for $0 \leq r \leq n$ the eigenvalue $\mu_r$ is as follows:
\begin{equation}          \label{eq:mur}
\begin{array}{c|c|c}
\text{$X$-diagram} & \text{$\mu_r$ for even $r$} & \text{$\mu_r$ for odd $r$} 
\\ \hline
\textup{\sf DS}, \textup{\sf DD} & q^r \mu_0 & q^{-r-1}\mu_0^{-1}  \rule{0mm}{4mm}
\\
\textup{\sf SS}                       & q^{-r} \mu_0 & q^{r-1} \mu_0^{-1}
\end{array}
\end{equation}
In particular, $\mu_n$ is as follows:
\begin{equation}   \label{eq:mun}
\begin{array}{c|cccc}
\text{$X$-diagram} & \textup{\sf DS} & \textup{\sf DD} & \textup{\sf SS}
\\ \hline
\mu_n & \;\; q^n \mu_0 \;\; & \; q^{-n-1} \mu_0^{-1} \; 
         & \; q^{n-1} \mu_0^{-1} \; \rule{0mm}{4mm}
\end{array}
\end{equation}

\begin{lemma}  \label{lem:loop}  \samepage
\ifDRAFT {\rm lem:loop}. \fi
Consider the $X$-diagram of $\V$.
\begin{itemize}
\item[\rm (i)]
Assume $n=0$.
Then $\mu_0$ cannot be incident to both a single loop and a double loop.
\item[\rm (ii)]
Assume $n\geq 1$.
Then at most one of $\mu_0$, $\mu_n$ is incident to a loop.
\end{itemize}
\end{lemma}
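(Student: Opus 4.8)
The plan is to handle the two parts separately: part (i) is immediate from the definitions, while part (ii) reduces to a short case analysis driven by the explicit eigenvalue formulas \eqref{eq:mur}--\eqref{eq:mun} together with the hypothesis that $q$ is not a root of unity. For part (i), a single loop at $\mu_0$ means $\mu_0$ is $1$-adjacent to itself, i.e.\ $\mu_0^2=1$, whereas a double loop means $\mu_0$ is $q$-adjacent to itself, i.e.\ $\mu_0^2=q^{-2}$. If both occurred we would get $q^{-2}=1$, forcing $q^2=1$, contradicting the assumption on $q$. This settles (i).

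For part (ii) the key preliminary observation is that each vertex of the $X$-diagram is $1$-adjacent to at most one vertex and $q$-adjacent to at most one vertex (as used in the proof of Lemma \ref{lem:diagram}, since $\mu\nu=1$ and $\mu\nu=q^{-2}$ each determine $\nu$ from $\mu$). Consequently the loop type an endvertex can carry is forced by the type of its incident bond in the path. If the bond at $\mu_0$ is a double bond (cases \textsf{DS}, \textsf{DD}), then the unique $q$-neighbor of $\mu_0$ is $\mu_1\neq\mu_0$, so $\mu_0$ admits no double loop and any loop at $\mu_0$ is a single loop, i.e.\ $\mu_0^2=1$; if the bond at $\mu_0$ is single (case \textsf{SS}), then symmetrically any loop at $\mu_0$ is a double loop, i.e.\ $\mu_0^2=q^{-2}$. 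The same reasoning applied to the bond at $\mu_n$ gives $\mu_n^2=q^{-2}$ in cases \textsf{DS}, \textsf{SS} (last bond single) and $\mu_n^2=1$ in case \textsf{DD} (last bond double).

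I would then assume for contradiction that both $\mu_0$ and $\mu_n$ carry loops, and substitute the relation between $\mu_n$ and $\mu_0$ from \eqref{eq:mun} into the two loop equations. In each of the three cases this collapses to $q^{2n+2}=1$: for \textsf{DS} one has $\mu_n^2=q^{2n}\mu_0^2=q^{2n}$, which must equal $q^{-2}$; for \textsf{DD} one has $\mu_n^2=q^{-2n-2}\mu_0^{-2}=q^{-2n-2}$, which must equal $1$; for \textsf{SS} one has $\mu_n^2=q^{2n-2}\mu_0^{-2}=q^{2n}$, which must equal $q^{-2}$. Since $q$ is not a root of unity, $q^{2n+2}=1$ forces $n=-1$, contradicting $n\geq1$.

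The arithmetic with the eigenvalue formulas is routine; the one step that carries the real content, and which I expect to be the main obstacle, is the combinatorial observation pinning down which loop type each endvertex can carry. Without it the eigenvalue relation alone does not exclude both endvertices carrying loops when $n=1$ (a priori a double loop at each end in \textsf{DD}, or a single loop at each end in \textsf{SS}). It is precisely the ``at most one neighbor per bond type'' fact, combined with $\mu_0\neq\mu_1$ and $\mu_{n-1}\neq\mu_n$, that rules out those configurations and lets the uniform conclusion $q^{2n+2}=1$ go through.
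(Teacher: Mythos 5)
Your proof is correct and takes essentially the same route as the paper: assume both endvertices carry loops, feed the loop equations into the relation between $\mu_n$ and $\mu_0$ from \eqref{eq:mun}, and collapse every case to $q^{2n+2}=1$, contradicting that $q$ is not a root of unity. The one difference is cosmetic: you make explicit the observation that the incident bond type forces which loop an endvertex can carry (e.g.\ in case \textsf{DS} a double loop at $\mu_0$ would force $\mu_1=\mu_0$), a fact the paper uses implicitly when it considers only the configuration ``single loop at $\mu_0$, double loop at $\mu_n$'' and omits the \textsf{DD}, \textsf{SS} cases as similar.
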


\begin{proof}
First consider the case that the $X$-diagram $\V$ is {\sf DS}.
By way of contradiction, assume that $\mu_0$ is incident to a single loop and $\mu_n$ is
incident to a double loop.
So $\mu_0^2 =1$ and $\mu_n^2 = q^{-2}$.
By \eqref{eq:mun} we have $\mu_n = q^n \mu_0$.
By these comments $q^{2n+2}=1$, contradicting the assumption that $q$ is not a
root of unity.
We have shown the assertion for the case {\sf DS}.
For the cases {\sf DD}, {\sf SS} the proof is similar, and omitted.
\end{proof}

\section{The eigenspaces of $X$}
\label{sec:mfree}

Let $\V$ denote an XD $\Hq$-module.
In this section we show that $X$ is multiplicity-free on $\V$.
We then introduce the notion of an $X$-standard basis.
Consider the $X$-diagram of $\V$,
which we are not assuming is reduced.
We use the following term.

\begin{definition}   \label{def:follows}   \samepage
\ifDRAFT {\rm def:follows}. \fi
Let $\mu$, $\nu$ denote distinct vertices of the $X$-diagram that are
$1$-adjacent or $q$-adjacent.
For $u \in \V_X(\mu)$ and $v \in \V_X(\nu)$,
we say that {\em $v$ follows $u$} whenever $v = Gu$,
where $G=G_0$ if $\mu$, $\nu$ are $1$-adjacent and
$G=G_2$ if $\mu$, $\nu$ are $q$-adjacent.
\end{definition}

For the rest of this section,
we fix a standard ordering $\{\mu_r\}_{r=0}^n$ of the eigenvalues of $X$.

\begin{definition}   \label{def:forwardchain}   \samepage
\ifDRAFT {\rm def:forward chain}. \fi
By a {\em forward chain} in $\V$ (with respect to the ordering $\{\mu_r\}_{r=0}^n$)
we mean a sequence of vectors $\{v_r\}_{r=0}^n$ in $\V$ such that
\begin{itemize}
\item[\rm (i)]
$v_r \in \V_X(\mu_r)$ for $0 \leq r \leq n$;
\item[\rm (ii)]
$v_0 \neq 0$;
\item[\rm (iii)]
$v_r$ follows $v_{r-1}$ for $1 \leq r \leq n$.
\end{itemize}
\end{definition}

\begin{definition}   \label{def:backwardchain}   \samepage
\ifDRAFT {\rm def:backwardchain}. \fi
By a {\em backward chain} in $\V$ (with respect to the ordering $\{\mu_r\}_{r=0}^n$)
we mean a sequence of vectors $\{v_r\}_{r=0}^n$ in $\V$ such that
\begin{itemize}
\item[\rm (i)]
$v_r \in \V_X(\mu_r)$ for $0 \leq r \leq n$;
\item[\rm (ii)]
$v_n \neq 0$;
\item[\rm (iii)]
$v_{r-1}$ follows $v_r$ for $1 \leq r \leq n$.
\end{itemize}
\end{definition}

\begin{definition}   \label{def:chain}   \samepage
\ifDRAFT {\rm def:chain}. \fi
A sequence of vectors $\{v_r\}_{r=0}^n$ in $\V$ is called a {\em chain}
whenever it is a forward or backward chain.
\end{definition}

\begin{lemma}    \label{lem:invert} \samepage
\ifDRAFT {\rm lem:invert}. \fi
Assume that the reduced $X$-diagram of $\V$ is of type {\sf DD} or {\sf SS}. 
Let $\{v_r\}_{r=0}^n$ denote a forward chain (resp.\ backward chain)
with respect to the ordering $\{\mu_r\}_{r=0}^n$.
Then $\{v_{n-r}\}_{r=0}^n$ is a backward chain (resp.\ forward chain)
with respect to the ordering $\{\mu_{n-r}\}_{r=0}^n$.
\end{lemma}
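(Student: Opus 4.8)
The plan is to show that the statement reduces to a relabeling of indices, using only the symmetry of the adjacency relation together with the fact (established just before Lemma~\ref{lem:invert}) that for type {\sf DD} or {\sf SS} the reversed ordering $\{\mu_{n-r}\}_{r=0}^n$ is again standard. I would treat the forward-chain case in detail; the backward-chain case is entirely symmetric and follows by running the same argument in the opposite direction.

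First I would fix a forward chain $\{v_r\}_{r=0}^n$ with respect to $\{\mu_r\}_{r=0}^n$ and set $w_r = v_{n-r}$ for $0 \leq r \leq n$. The goal is to verify the three conditions of Definition~\ref{def:backwardchain} for $\{w_r\}_{r=0}^n$ relative to the ordering $\{\mu_{n-r}\}_{r=0}^n$. Condition (i) is immediate: $w_r = v_{n-r} \in \V_X(\mu_{n-r})$, and $\mu_{n-r}$ is exactly the $r$-th entry of the reversed ordering. Condition (ii) is also immediate, since $w_n = v_0 \neq 0$ by the forward-chain condition (ii) on $\{v_r\}_{r=0}^n$.

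The only substantive point is condition (iii), that $w_{r-1}$ follows $w_r$ for $1 \leq r \leq n$. Here I would note that $w_{r-1} = v_{n-r+1}$ and $w_r = v_{n-r}$, so the required relation is exactly $v_{n-r+1} = G v_{n-r}$ for the operator $G$ prescribed by Definition~\ref{def:follows}; as $r$ runs over $1,\ldots,n$ the index $n-r+1$ runs over $1,\ldots,n$, so this is precisely the forward-chain condition (iii) for $\{v_r\}_{r=0}^n$ at that index. The one thing to check carefully is that the operator $G$ agrees whether it is read off from the pair $(\mu_{n-r},\mu_{n-r+1})$ (as in the forward chain) or from the pair $(\mu_{n-r+1},\mu_{n-r})$ (as in the backward chain). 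But both $1$-adjacency (product $1$) and $q$-adjacency (product $q^{-2}$) are symmetric conditions on an unordered pair, so the choice between $G_0$ and $G_2$ does not depend on the order in which the two eigenvalues are listed. Hence the forward-chain relation $v_{n-r+1} = G v_{n-r}$ is literally the relation $w_{r-1} = G w_r$ demanded by condition (iii).

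I do not anticipate a genuine obstacle: this is a bookkeeping statement, and the assumption that the reduced $X$-diagram is of type {\sf DD} or {\sf SS} enters only to guarantee that $\{\mu_{n-r}\}_{r=0}^n$ is itself a standard ordering, so that the phrase ``chain with respect to $\{\mu_{n-r}\}_{r=0}^n$'' is meaningful. The backward-to-forward direction is handled identically, starting from a backward chain and using its nonvanishing condition $v_n \neq 0$ to supply $w_0 \neq 0$ for the resulting forward chain.
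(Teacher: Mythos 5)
Your proof is correct. The paper states this lemma without any proof, evidently regarding it as routine bookkeeping, and your argument is exactly the intended verification: unwind Definitions \ref{def:follows}--\ref{def:backwardchain} under the relabeling $w_r = v_{n-r}$, observe that $1$-adjacency and $q$-adjacency are symmetric so the operator $G$ is unambiguous, and note that the {\sf DD}/{\sf SS} hypothesis is what makes $\{\mu_{n-r}\}_{r=0}^n$ a standard ordering so the conclusion is well posed. You also correctly identified the one point worth checking (order-independence of the choice between $G_0$ and $G_2$), so nothing is missing.
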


\begin{lemma}   \label{lem:chainunique}   \samepage
\ifDRAFT {\rm lem:chainunique}. \fi
The following hold.
\begin{itemize}
\item[\rm (i)]
For a nonzero $v \in \V_X(\mu_0)$,
there exists a unique forward chain $\{v_r\}_{r=0}^n$ such that $v_0 = v$.
\item[\rm (ii)]
For a nonzero $v \in \V_X(\mu_n)$,
there exists a unique backward chain $\{v_r\}_{r=0}^n$ such that $v_n=v$.
\end{itemize}
\end{lemma}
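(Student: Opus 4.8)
The plan is to exploit the fact that the relation ``follows'' is functional: for distinct adjacent vertices $\mu$, $\nu$ and a given $u\in\V_X(\mu)$, the only vector $v\in\V_X(\nu)$ that follows $u$ is $v=Gu$, where $G=G_0$ in the $1$-adjacent case and $G=G_2$ in the $q$-adjacent case. Consequently a forward chain is rigidly determined by its initial term: starting from $v_0=v$, each successive term is forced to be $v_r=Gv_{r-1}$ with $G$ dictated solely by the bond joining $\mu_{r-1}$ and $\mu_r$ in the reduced $X$-diagram. I would first record that along a standard ordering consecutive eigenvalues $\mu_{r-1}$, $\mu_r$ are distinct (they are distinct vertices) and are joined by exactly one bond, whose type is unambiguous: they cannot be simultaneously $1$-adjacent and $q$-adjacent, since $\mu_{r-1}\mu_r=1$ together with $\mu_{r-1}\mu_r=q^{-2}$ would force $q^2=1$, contrary to $q$ not being a root of unity.

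For existence in (i), I would set $v_0=v$ and define $v_r=G_0v_{r-1}$ when $\mu_{r-1}$, $\mu_r$ are $1$-adjacent and $v_r=G_2v_{r-1}$ when they are $q$-adjacent, for $1\le r\le n$. The only thing to check is that each $v_r$ actually lands in $\V_X(\mu_r)$, and this is precisely Lemma \ref{lem:R5}: in the $1$-adjacent case $G_0\V_X(\mu_{r-1})\subseteq\V_X(\mu_{r-1}^{-1})$ and $\mu_{r-1}^{-1}=\mu_r$ because $\mu_{r-1}\mu_r=1$; in the $q$-adjacent case $G_2\V_X(\mu_{r-1})\subseteq\V_X(q^{-2}\mu_{r-1}^{-1})$ and $q^{-2}\mu_{r-1}^{-1}=\mu_r$ because $\mu_{r-1}\mu_r=q^{-2}$. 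By construction $v_r$ follows $v_{r-1}$, and $v_0=v\neq 0$, so $\{v_r\}_{r=0}^n$ is a forward chain with the required initial term.

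For uniqueness, suppose $\{v_r'\}_{r=0}^n$ is any forward chain with $v_0'=v$. An easy induction on $r$ shows $v_r'=v_r$: if $v_{r-1}'=v_{r-1}$, then since $v_r'$ follows $v_{r-1}'$ we have $v_r'=Gv_{r-1}'=Gv_{r-1}=v_r$, with $G$ the operator determined by the unique bond type. This proves (i). Part (ii) is entirely analogous: a backward chain is forced by its final term $v_n$ through the downward recursion $v_{r-1}=Gv_r$, and the same application of Lemma \ref{lem:R5} confirms $v_{r-1}\in\V_X(\mu_{r-1})$, since $G_0$ sends $\V_X(\mu_r)$ into $\V_X(\mu_r^{-1})=\V_X(\mu_{r-1})$ in the $1$-adjacent case and $G_2$ sends it into $\V_X(q^{-2}\mu_r^{-1})=\V_X(\mu_{r-1})$ in the $q$-adjacent case. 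The only point requiring care throughout is this bookkeeping, matching each adjacency condition to the target eigenvalue produced by Lemma \ref{lem:R5}; there is no genuine obstacle, as the recursion is deterministic and the intermediate terms are permitted to vanish.
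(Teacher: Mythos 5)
Your proof is correct and follows essentially the same route as the paper, whose entire proof reads ``Routine using Lemma \ref{lem:R5}''; you have simply spelled out that routine verification (the forced recursion $v_r = Gv_{r-1}$, membership in the right eigenspace via Lemma \ref{lem:R5}, and uniqueness by induction). Your added observation that the bond type is unambiguous because $\mu_{r-1}\mu_r = 1$ and $\mu_{r-1}\mu_r = q^{-2}$ cannot hold simultaneously is a worthwhile detail the paper leaves implicit.
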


\begin{proof}
Routine using Lemma \ref{lem:R5}.
\end{proof}

\begin{lemma}    \label{lem:mu0mun}    \samepage
\ifDRAFT {\rm lem:mu0mun}. \fi
Let $\mu$ denote an endvertex of the $X$-diagram.
\begin{itemize}
\item[\rm (i)]
Assume that $\mu$ is incident to a double bond in the reduced $X$-diagram of $\V$.
Then $\V_X(\mu)$ is invariant under $t_0$.
\item[\rm (ii)]
Assume that $\mu$ is incident to a single bond in the reduced $X$-diagram of $\V$.
Then $\V_X(\mu)$ is invariant under $t_2$.
\end{itemize}
\end{lemma}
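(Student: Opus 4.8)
The plan is to deduce both parts directly from Lemma \ref{lem:Hqinv} by pairing $\mu$ with its unique potential adjacency partner and then showing that partner's eigenspace is absorbed into $\V_X(\mu)$. For part (i), I would set $\nu=\mu^{-1}$, so that $\mu$, $\nu$ are $1$-adjacent; Lemma \ref{lem:Hqinv}(i) then asserts that $\V_X(\mu)+\V_X(\nu)$ is invariant under $t_0$. Hence it suffices to prove $\V_X(\mu^{-1})\subseteq\V_X(\mu)$, i.e.\ that adjoining $\V_X(\mu^{-1})$ contributes nothing new.

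First I would record that by Lemma \ref{lem:diagram} the reduced $X$-diagram is a path, so the endvertex $\mu$ has degree one there; since it is incident to a double bond, it is incident to no single bond in the reduced diagram. I would then split on whether $\mu^{-1}=\mu$. If $\mu^{-1}=\mu$, then $\V_X(\mu^{-1})=\V_X(\mu)$ trivially. If $\mu^{-1}\neq\mu$, I would argue that $\mu^{-1}$ cannot be an eigenvalue of $X$: otherwise $\mu$ and $\mu^{-1}$ would be distinct $1$-adjacent eigenvalues, hence joined by a single bond in the reduced diagram, forcing $\mu$ to have degree at least two and so to be an internal vertex rather than an endvertex incident to a double bond. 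Thus $\V_X(\mu^{-1})=0$. In either case $\V_X(\mu)+\V_X(\mu^{-1})=\V_X(\mu)$, and Lemma \ref{lem:Hqinv}(i) yields the claim.

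Part (ii) is entirely analogous: take $\nu=q^{-2}\mu^{-1}$, the $q$-adjacent partner of $\mu$, and invoke Lemma \ref{lem:Hqinv}(ii) for $t_2$. The same path/degree argument shows that $q^{-2}\mu^{-1}$ is either equal to $\mu$ (a double loop) or not an eigenvalue, so again $\V_X(\mu)+\V_X(q^{-2}\mu^{-1})=\V_X(\mu)$. The step I would be most careful about is precisely this absorption of the partner eigenspace, since one must handle the degenerate loop case $\mu^{-1}=\mu$ (resp.\ $q^{-2}\mu^{-1}=\mu$) on the same footing as the generic case; there the earlier endvertex result Lemma \ref{lem:endvertex3} does not apply, because $\mu$ is then incident to a single (resp.\ double) loop. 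Routing everything through Lemma \ref{lem:Hqinv} sidesteps this gap, as that lemma is valid for arbitrary $\mu,\nu\in\F$ irrespective of whether they are eigenvalues of $X$ or coincide.
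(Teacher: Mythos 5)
Your proposal is correct and follows essentially the same route as the paper: both reduce to Lemma \ref{lem:Hqinv} applied to the pair $\mu$, $\mu^{-1}$ (resp.\ $\mu$, $q^{-2}\mu^{-1}$), and both handle the two cases — loop (partner equals $\mu$) versus partner not an eigenvalue — to conclude $\V_X(\mu)+\V_X(\nu)=\V_X(\mu)$. Your explicit degree argument from Lemma \ref{lem:diagram}, and your remark on why Lemma \ref{lem:endvertex3} cannot be invoked in the loop case, only make explicit what the paper leaves implicit.
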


\begin{proof}
(i):
If $\mu$ is incident to a single loop, then $\mu^{-1} = \mu$.
If $\mu$ is not incident to a single loop, then $\V_X(\mu^{-1}) = 0$.
In either case $\V_X(\mu^{-1}) \subseteq \V_X(\mu)$.
By Lemma \ref{lem:Hqinv}(i) $\V_X(\mu) + \V_X(\mu^{-1})$ is invariant under $t_0$.
By these comments $\V_X(\mu)$ is invariant under $t_0$.

(ii):
Similar.
\end{proof}

\begin{proposition}   \label{prop:mfree}   \samepage
\ifDRAFT {\rm prop:mfree}. \fi
The following hold.
\begin{itemize}
\item[\rm (i)]
$X$ is multiplicity-free on $\V$.
\item[\rm (ii)]
Every chain in $\V$ forms a basis for $\V$.
\end{itemize}
\end{proposition}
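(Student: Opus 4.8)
The plan is to prove both parts together by exhibiting a single chain that spans $\V$ and then using irreducibility to control all the other chains. Fix a standard ordering $\{\mu_r\}_{r=0}^n$ of the eigenvalues of $X$; by Lemma \ref{lem:diagram} the reduced $X$-diagram is the path \eqref{eq:standard}, and since $X$ is diagonalizable we have $\V=\bigoplus_{r=0}^n \V_X(\mu_r)$ with each summand nonzero. To establish (i), I would pick an endvertex and a nonzero vector $v$ in its eigenspace and form the forward (or backward) chain $\{v_r\}_{r=0}^n$ guaranteed by Lemma \ref{lem:chainunique}, arranging matters so that $\F v$ is invariant under both $t_0$ and $t_2$. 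For an internal bond the span of two consecutive chain vectors is invariant under the relevant generator by Lemma \ref{lem:vG0vG2v2}, and at the far endvertex the missing bond type is handled by Lemma \ref{lem:endvertex3}, where the corresponding $G$ annihilates the eigenspace. I would then show $\W=\sum_r \F v_r$ is invariant under $X$, $t_0$, $t_2$, hence an $\Hq$-submodule by Lemma \ref{lem:t0t2inv}(iii); as it contains $v\neq 0$ and $\V$ is irreducible, $\W=\V$. Since $\W$ is spanned by the $n+1$ vectors $v_r$ lying in the $n+1$ nonzero eigenspaces $\V_X(\mu_r)$, a dimension count forces $\dim\V=n+1$, each $\V_X(\mu_r)$ to be one-dimensional, and $\{v_r\}$ to be a basis. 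This gives (i).

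The delicate point in (i) is a possible loop at the starting endvertex: there the formulas \eqref{eq:t0}--\eqref{eq:t2} degenerate and neither Lemma \ref{lem:endvertex3} nor Lemma \ref{lem:vG0vG2v2} applies to the loop-associated generator. I would sidestep this by choosing $v$ to be an eigenvector of that generator (a single loop forces $v$ to be a $t_0$-eigenvector, a double loop a $t_2$-eigenvector), so that $\F v$ is trivially invariant under it, while the other generator is controlled through the incident reduced-diagram bond via Lemma \ref{lem:vG0vG2v2}. By Lemma \ref{lem:loop} at most one endvertex carries a loop, so I can start the chain from that end and this issue never arises twice.

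For (ii), fix any chain; by (i) each eigenspace is one-dimensional, so I only need every chain vector to be nonzero. The key claim is that for each bond the associated map $G$ (namely $G_0$ or $G_2$) is nonzero in both directions between the two one-dimensional eigenspaces it joins, equivalently that the scalar $G^2$ of Corollary \ref{cor:R9} is nonzero there. I would argue by contradiction: if $G$ vanished on the eigenspace at one side of a bond, then one of the half-paths $\bigoplus_{s\le r-1}\V_X(\mu_s)$ or $\bigoplus_{s\ge r}\V_X(\mu_s)$ would be invariant under $X$, $t_0$, $t_2$---the escape across the broken bond being killed by $G=0$ in Lemma \ref{lem:vG0vG2v2}, and any loop end being handled by Lemma \ref{lem:mu0mun} now that the eigenspaces are one-dimensional---hence a proper nonzero $\Hq$-submodule, contradicting irreducibility. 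With $G$ an isomorphism across every bond, an induction along the chain shows all its vectors are nonzero, so the chain consists of $n+1$ nonzero vectors in distinct one-dimensional eigenspaces, i.e.\ a basis.

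I expect the main obstacle to be precisely the interplay between loops and nonvanishing. The degenerate loop case blocks the naive invariance lemmas in (i), and the ``one-directional'' degeneration $G^2=0$ at a bond could a priori make a backward chain collapse while a forward chain survives; both are resolved only by invoking irreducibility together with Lemma \ref{lem:loop}. The fiddliest part will be the bookkeeping at the loop endvertex---identifying the correct generator whose eigenvector must serve as the starting vector, and checking that the remaining generator is still covered by the incident bond.
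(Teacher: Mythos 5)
Your proposal is correct. Part (i) is essentially the paper's own argument: the same chain construction starting from an eigenvector at the (possibly) looped endvertex, with Lemma \ref{lem:loop} guaranteeing the far end is loop-free, Lemmas \ref{lem:vG0vG2v2} and \ref{lem:endvertex3} giving invariance of the chain's span under $t_0$, $t_2$, Lemma \ref{lem:t0t2inv}(iii) plus irreducibility forcing that span to be all of $\V$, and the dimension count finishing. The only detail you leave implicit is that extracting an eigenvector of the loop-associated generator inside $\V_X(\mu_0)$ requires the invariance of that eigenspace, which is Lemma \ref{lem:mu0mun} (you cite it only in part (ii)); the paper states this step explicitly. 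Part (ii) is where you genuinely diverge. The paper packages the submodule argument of (i) as a standing claim (a chain whose two end spans are invariant under $t_0$, resp.\ $t_2$, is a basis), and then observes that once (i) is known every chain automatically satisfies those end conditions, so the claim applies verbatim. You instead prove directly that across every bond the relevant map $G_0$ or $G_2$ is nonzero in both directions, by showing that a vanishing would make one of the two half-path sums of eigenspaces a proper nonzero $\Hq$-submodule (again via Lemma \ref{lem:t0t2inv}(iii) and irreducibility), and then you propagate nonvanishing along the chain. This costs more bookkeeping---you must re-verify invariance of a half-path vertex by vertex, with loops at the true endvertices handled by Lemma \ref{lem:mu0mun} or Lemma \ref{lem:Hqinv}---but it buys, as a by-product, the bidirectional statement that $G_0$, $G_2$ map each bond's two one-dimensional eigenspaces onto one another, which is exactly the paper's subsequent Lemma \ref{lem:G0G2V} (there deduced \emph{from} (ii)); your derivation uses only (i), so there is no circularity. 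Both routes ultimately rest on the same mechanism, irreducibility combined with Lemma \ref{lem:t0t2inv}(iii).
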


\begin{proof}
We assume that the $X$-diagram of $\V$ is of type {\sf DS};
the proof is similar for the other types.
Note that $\V_X(\mu_0)$ (resp.\ $\V_X(\mu_n)$) is invariant under $t_0$ (resp. $t_2$)
by Lemma \ref{lem:mu0mun}.
We claim that a chain $\{v_r\}_{r=0}^n$ in $\V$ forms a basis for $\V$
provided that the following conditions hold:
\begin{align}
  & \text{ $\F v_0$ is invariant under $t_0$};                      \label{eq:condv0}
\\
  & \text{ $\F v_n$ is invariant under $t_2$}.                      \label{eq:condvn}
\end{align}
Let $\W$ denote the subspace of $\V$ spanned by $\{v_r\}_{r=0}^n$.
Note that $\W \neq 0$ by the definition of a chain.
Using Lemma \ref{lem:vG0vG2v2} and the conditions \eqref{eq:condv0}, \eqref{eq:condvn}
one finds that $\W$ is invariant under each of $t_0$, $t_2$.
By this and Lemma \ref{lem:t0t2inv}(iii) $\W$ is an $\Hq$-submodule of $\V$,
and this forces $\W=\V$ since $\V$ is irreducible.
Therefore $\{v_r\}_{r=0}^n$ forms a basis for $\V$.

(i):
We construct a chain $\{v_r\}_{r=0}^n$ that satisfies \eqref{eq:condv0} and \eqref{eq:condvn}.
By Lemma \ref{lem:loop}
either $\mu_0$ is not incident to a single loop or $\mu_n$ is not incident to a double loop.
First assume that $\mu_n$ is not incident to a double loop.
There exists an eigenvector $v$ of $t_0$ that is contained in $\V_X(\mu_0)$ since $\V_X(\mu_0)$
is invariant under $t_0$.
By Lemma \ref{lem:chainunique}(i) there exists a forward chain $\{v_r\}_{r=0}^n$ such that $v_0=v$.
This chain satisfies \eqref{eq:condv0} by the construction,
and satisfies \eqref{eq:condvn} by Lemma \ref{lem:endvertex3}(ii).
Next assume that $\mu_0$ is not incident to a single loop.
We can construct a backward chain $\{v_r\}_{r=0}^n$ that satisfies \eqref{eq:condv0} and \eqref{eq:condvn}
in a similar way as above.
We have constructed a chain $\{v_r\}_{r=0}^n$ that satisfies \eqref{eq:condv0} and \eqref{eq:condvn}.
By the claim this chain forms a basis for $\V$,
and so $\V_X(\mu_r)$ is spanned by $v_r$ for $0 \leq r \leq n$.
Thus $X$ is multiplicity-free on $\V$.

(ii):
Let $\{v_r\}_{r=0}^n$ denote a chain in $\V$.
This chain satisfies \eqref{eq:condv0} and \eqref{eq:condvn} by (i) and since
$\V_X(\mu_0)$ (resp.\ $\V_X(\mu_n)$) is invariant under $t_0$ (resp.\ $t_2$).
By the claim this chain forms a basis for $\V$.
\end{proof}

\begin{lemma}    \label{lem:G0G2V}   \samepage
\ifDRAFT {\rm lem:G0G2V}. \fi
Let $\mu$, $\nu$ denote distinct eigenvalues of $X$.
\begin{itemize}
\item[\rm (i)]
Assume that $\mu$, $\nu$ are $1$-adjacent.
Then $G_0 \V_X(\mu) = \V_X(\nu)$ and $G_0 \V_X(\nu)  = \V_X(\mu)$.
\item[\rm (ii)]
Assume that $\mu$, $\nu$ are $q$-adjacent.
Then $G_2 \V_X(\mu) = \V_X(\nu)$  and $G_2 \V_X(\nu) = \V_X(\mu)$.
\end{itemize}
\end{lemma}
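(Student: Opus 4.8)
The plan is to reduce both parts to two facts already established: that $X$ is multiplicity-free on $\V$, so each eigenspace $\V_X(\cdot)$ is one-dimensional, and that every chain in $\V$ forms a basis (Proposition \ref{prop:mfree}(ii)). I would treat part (i) in detail; part (ii) is identical after replacing $G_0$ by $G_2$, single bonds by double bonds, and $1$-adjacency by $q$-adjacency.

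First I would locate $\mu$ and $\nu$ in the standard ordering. Since $\mu$, $\nu$ are distinct eigenvalues of $X$ that are $1$-adjacent, they are joined by a single bond in the reduced $X$-diagram; as this diagram is a path in which each vertex is $1$-adjacent to at most one other vertex (Lemma \ref{lem:diagram}), the vertices $\mu$, $\nu$ must be consecutive. Hence there is an index $r$ with $1 \leq r \leq n$ such that $\{\mu,\nu\} = \{\mu_{r-1},\mu_r\}$ and $\mu_{r-1}$, $\mu_r$ are $1$-adjacent; say $\mu = \mu_{r-1}$ and $\nu = \mu_r$.

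Next I would run the chain machinery in both directions. By Lemma \ref{lem:chainunique}(i) there is a forward chain $\{v_s\}_{s=0}^n$, and by Proposition \ref{prop:mfree}(ii) it is a basis, so every $v_s$ is nonzero and $\V_X(\mu_s) = \F v_s$. Because $\mu_{r-1}$, $\mu_r$ are $1$-adjacent, the defining relation of a forward chain (Definitions \ref{def:follows}, \ref{def:forwardchain}) gives $v_r = G_0 v_{r-1}$, whence $G_0 \V_X(\mu_{r-1}) = \F v_r = \V_X(\mu_r)$. Symmetrically, Lemma \ref{lem:chainunique}(ii) supplies a backward chain $\{v'_s\}_{s=0}^n$, again a basis, in which $v'_{r-1} = G_0 v'_r$, so $G_0 \V_X(\mu_r) = \V_X(\mu_{r-1})$. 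Translating back to $\mu$, $\nu$ yields $G_0 \V_X(\mu) = \V_X(\nu)$ and $G_0 \V_X(\nu) = \V_X(\mu)$, as claimed.

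I expect no serious obstacle here, since the real work has been done in Proposition \ref{prop:mfree}; the only point requiring care is to obtain both equalities at once. One could instead try to deduce the reverse equality from a single forward chain via $G_0^2 = G(X,k_0,k_3)$ (Corollary \ref{cor:R9}(i)), but that would force me to verify $G(\mu,k_0,k_3) \neq 0$, an extra nonvanishing check. Using a backward chain for the second equality sidesteps this and keeps the argument computation-free.
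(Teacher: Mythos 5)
Your proposal is correct and is essentially the paper's own argument spelled out: the paper's proof reads ``Routine using Lemma \ref{lem:R5} and Proposition \ref{prop:mfree}(ii)'', and your forward/backward chain argument is precisely that routine verification, since chains are built from $G_0$, $G_2$ (Lemma \ref{lem:R5}) and Proposition \ref{prop:mfree}(ii) supplies the nonvanishing that upgrades the inclusions to equalities. Your remark about avoiding Corollary \ref{cor:R9} together with $G(\mu,k_0,k_3)\neq 0$ is well taken, as that nonvanishing statement (Corollary \ref{cor:nonzeropre}) is deduced in the paper \emph{from} this lemma, so using it here would be circular.
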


\begin{proof}
Routine using Lemma \ref{lem:R5} and Proposition \ref{prop:mfree}(ii).
\end{proof}

\begin{definition}   \label{def:Xstandard}
\ifDRAFT {\rm def:Xstandard}. \fi
By an {\em $X$-standard basis} for $\V$ (corresponding to the ordering
$\{\mu_r\}_{r=0}^n$) we mean a basis $\{v_r\}_{r=0}^n$ for $\V$
that is a forward chain.
\end{definition}

We mention two corollaries for later use.
Let $\{k_i\}_{i \in \I}$ denote a parameter sequence of $\V$.

\begin{corollary}   \label{cor:nonzeropre} \samepage
\ifDRAFT {\rm cor:nonzeropre}. \fi
Let $\mu$, $\nu$ denote distinct eigenvalues of $X$.
\begin{itemize}
\item[\rm (i)]
Assume that $\mu$, $\nu$ are $1$-adjacent.
Then
$G(\mu,k_0,k_3) \neq 0$.
\item[\rm (ii)]
Assume that $\mu$, $\nu$ are $q$-adjacent.
Then
$G(q \mu, k_1, k_2) \neq 0$.
\end{itemize}
\end{corollary}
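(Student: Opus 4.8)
The plan is to read off the result by combining the surjectivity statement in Lemma~\ref{lem:G0G2V} with the scalar computation in Corollary~\ref{cor:R9}. The key observation is that the operator $G_0$ (resp.\ $G_2$) sends $\V_X(\mu)$ onto $\V_X(\nu)$ and sends $\V_X(\nu)$ back onto $\V_X(\mu)$, so its square returns $\V_X(\mu)$ to itself and cannot act as zero on $\V_X(\mu)$; meanwhile Corollary~\ref{cor:R9} identifies that square as the scalar $G(\mu,k_0,k_3)$ (resp.\ $G(q\mu,k_1,k_2)$), which therefore must be nonzero.

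First I would treat (i). Since $\mu$, $\nu$ are distinct $1$-adjacent eigenvalues of $X$, Lemma~\ref{lem:G0G2V}(i) gives $G_0 \V_X(\mu) = \V_X(\nu)$ and $G_0 \V_X(\nu) = \V_X(\mu)$. Composing these,
\[
 G_0^2\, \V_X(\mu) = G_0\bigl(\V_X(\nu)\bigr) = \V_X(\mu),
\]
and $\V_X(\mu) \neq 0$ because $\mu$ is an eigenvalue of $X$. Hence $G_0^2$ does not act as zero on $\V_X(\mu)$. On the other hand, by Corollary~\ref{cor:R9}(i) the operator $G_0^2$ acts on $\V_X(\mu)$ as $G(\mu,k_0,k_3)$ times the identity. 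Were $G(\mu,k_0,k_3)=0$, the operator $G_0^2$ would vanish on $\V_X(\mu)$, contradicting the previous sentence. Therefore $G(\mu,k_0,k_3)\neq 0$.

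The proof of (ii) is entirely parallel: with $\mu$, $\nu$ distinct $q$-adjacent eigenvalues of $X$, Lemma~\ref{lem:G0G2V}(ii) yields $G_2 \V_X(\mu) = \V_X(\nu)$ and $G_2 \V_X(\nu) = \V_X(\mu)$, so $G_2^2\,\V_X(\mu) = \V_X(\mu) \neq 0$; then Corollary~\ref{cor:R9}(ii), which says $G_2^2$ acts on $\V_X(\mu)$ as $G(q\mu,k_1,k_2)$ times the identity, forces $G(q\mu,k_1,k_2)\neq 0$. There is no real obstacle here: the only thing one must be careful to invoke is that $\nu$ (hence $\V_X(\nu)$) is genuinely in play as an eigenspace, which is exactly what makes Lemma~\ref{lem:G0G2V} applicable, and that $X$ is multiplicity-free (Proposition~\ref{prop:mfree}) so that these one-dimensional eigenspaces are honestly interchanged by $G_0$ and $G_2$.
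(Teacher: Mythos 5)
Your proof is correct and uses exactly the two ingredients the paper itself cites (Lemma \ref{lem:G0G2V} and Corollary \ref{cor:R9}): the paper's proof is just the one-line remark that the result follows from these, and your argument is precisely the natural fleshing-out of that remark, composing $G_0$ (resp.\ $G_2$) with itself to see that $G_0^2$ (resp.\ $G_2^2$) maps the nonzero space $\V_X(\mu)$ onto itself and hence cannot act as the zero scalar. Nothing further is needed.
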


\begin{proof}
Follows from Corollary \ref{cor:R9} and Lemma \ref{lem:G0G2V}.
\end{proof}

\begin{corollary}   \label{cor:nonzero}   \samepage
\ifDRAFT {\rm cor:nonzero}. \fi
Let $\mu$, $\nu$ denote distinct eigenvalues of $X$.
\begin{itemize}
\item[\rm (i)]
Assume that $\mu$, $\nu$ are $1$-adjacent.
Then $\mu$ is not among
\[
 k_0k_3, \quad k_0k_3^{-1}, \quad k_0^{-1}k_3, \quad k_0^{-1}k_3^{-1}.
\] 
\item[\rm (ii)]
Assume that $\mu$, $\nu$ are $q$-adjacent.
Then $q \mu$ is not among
\[
    k_1k_2,\quad k_1k_2^{-1}, \quad k_1^{-1}k_2, \quad k_1^{-1}k_2^{-1}.
\]
\end{itemize}
\end{corollary}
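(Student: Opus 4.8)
The plan is to obtain the claim directly from Corollary~\ref{cor:nonzeropre} by unfolding the factored expression for $G$ recorded in \eqref{eq:defG}. First I would specialize \eqref{eq:defG} at $\lambda=\mu$, $s=k_0$, $t=k_3$, which gives
\[
 G(\mu,k_0,k_3) = \mu^{-2}(\mu - k_0k_3)(\mu - k_0k_3^{-1})(\mu - k_0^{-1}k_3)(\mu - k_0^{-1}k_3^{-1}).
\]
Since $\mu$ is an eigenvalue of the invertible element $X$ we have $\mu \neq 0$, so the scalar $\mu^{-2}$ is nonzero. Hence $G(\mu,k_0,k_3)$ vanishes precisely when $\mu$ coincides with one of the four products $k_0k_3$, $k_0k_3^{-1}$, $k_0^{-1}k_3$, $k_0^{-1}k_3^{-1}$.

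Next I would invoke Corollary~\ref{cor:nonzeropre}(i): under the hypothesis that $\mu$, $\nu$ are distinct $1$-adjacent eigenvalues of $X$, it asserts $G(\mu,k_0,k_3) \neq 0$. Combining this with the displayed factorization, none of the four linear factors can vanish, which is exactly the statement that $\mu$ is not among the listed scalars. This settles part (i).

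For part (ii) I would run the identical argument with $\lambda=q\mu$, $s=k_1$, $t=k_2$: the formula \eqref{eq:defG} expresses $G(q\mu,k_1,k_2)$ as $(q\mu)^{-2}$ times the product of the four factors $(q\mu - k_1k_2)$, $(q\mu - k_1k_2^{-1})$, $(q\mu - k_1^{-1}k_2)$, $(q\mu - k_1^{-1}k_2^{-1})$, and Corollary~\ref{cor:nonzeropre}(ii) guarantees this product is nonzero. Hence $q\mu$ avoids the four products, giving (ii).

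There is essentially no obstacle here: all the content is carried by Corollary~\ref{cor:nonzeropre}, and the only thing to verify is the elementary bookkeeping that the nonvanishing of $G$ is equivalent to $\mu$ (resp.\ $q\mu$) missing each of its four roots. The single point worth stating explicitly is that $\mu \neq 0$, so that the prefactor $\mu^{-2}$ (resp.\ $(q\mu)^{-2}$) does not interfere; this is automatic because $X$ is invertible, hence has only nonzero eigenvalues.
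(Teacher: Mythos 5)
Your proof is correct and is essentially the paper's own argument: the paper's proof of this corollary reads simply ``Follows from Corollary \ref{cor:nonzeropre} and \eqref{eq:defG},'' which is exactly the factorization-plus-nonvanishing reasoning you spelled out. Your explicit remark that $\mu \neq 0$ (so the prefactor $\mu^{-2}$ is harmless) is a small but worthwhile detail the paper leaves implicit.
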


\begin{proof}
Follows from Corollary \ref{cor:nonzeropre} and \eqref{eq:defG}.
\end{proof}

\section{The $X$-type of an XD $\Hq$-module}
\label{sec:types}

In Section 1 we discussed the five cases
{\sf DS}, {\sf DDa}, {\sf DDb}, {\sf SSa}, {\sf SSb} with reference to some lemmas.
In this section we prove these lemmas.
We then define the $X$-type of an XD $\Hq$-module,
and make a rule concerning the corresponding parameter sequence 
$\{k_i\}_{i \in \I}$.
Let $\V$ denote an XD $\Hq$-module and let $\{\mu_r\}_{r=0}^n$ denote a 
standard ordering of the eigenvalues of $X$.

\begin{lemma}  \label{lem:endvertex}   \samepage 
\ifDRAFT {\rm lem:endvertex}. \fi
The subspaces $\V_X(\mu_0)$ and $\V_X(\mu_n)$
are invariant under the following elements:
\[
\begin{array}{c|c|c}
\text{\rm $X$-diagram}
 & \text{\rm $\V_X(\mu_0)$ is invariant under} 
 & \text{\rm $\V_X(\mu_n)$ is invariant under}
\\ \hline
\textup{\sf DS} \rule{0mm}{4mm}  
  & t_0,\qquad t_3 & t_1,\qquad t_2 \\
\textup{\sf DD}  & t_0,\qquad t_3 & t_0,\qquad t_3 \\
\textup{\sf SS}  & t_1,\qquad t_2 & t_1,\qquad t_2
\end{array}
\]
\end{lemma}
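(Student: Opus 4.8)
The plan is to reduce the entire table to the two previously established results, Lemmas \ref{lem:mu0mun} and \ref{lem:t0t2inv}. The starting observation is that each $\V_X(\mu_r)$ is an eigenspace of $X$, hence invariant under $X$, so Lemma \ref{lem:t0t2inv} applies to it. In particular $\V_X(\mu_0)$ is invariant under $t_0$ if and only if it is invariant under $t_3$, and invariant under $t_2$ if and only if it is invariant under $t_1$; the same equivalences hold for $\V_X(\mu_n)$. Consequently, in each row of the table it suffices to establish invariance under $t_0$ (which then supplies the pair $t_0,t_3$) or invariance under $t_2$ (which then supplies the pair $t_1,t_2$), the remaining generator being automatic.

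Next I would read off from the standard ordering in \eqref{eq:standard} which kind of bond is incident to each endvertex. In case {\sf DS} the first bond $\mu_0$--$\mu_1$ is a double bond and the last bond $\mu_{n-1}$--$\mu_n$ is a single bond, so $\mu_0$ is incident to a double bond and $\mu_n$ to a single bond. In case {\sf DD} both the first and the last bonds are double, so each of $\mu_0,\mu_n$ is incident to a double bond. In case {\sf SS} both are single, so each of $\mu_0,\mu_n$ is incident to a single bond. These are endvertices of the path, so Lemma \ref{lem:mu0mun} is available: part (i) gives $t_0$-invariance of $\V_X(\mu)$ whenever the endvertex $\mu$ is incident to a double bond, and part (ii) gives $t_2$-invariance whenever $\mu$ is incident to a single bond.

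Combining the bond identification with Lemma \ref{lem:mu0mun} and the equivalences from Lemma \ref{lem:t0t2inv} yields exactly the three rows. For {\sf DS}: $\V_X(\mu_0)$ is invariant under $t_0$, hence under $t_3$, while $\V_X(\mu_n)$ is invariant under $t_2$, hence under $t_1$. For {\sf DD}: each of $\V_X(\mu_0)$, $\V_X(\mu_n)$ is invariant under $t_0$, hence under $t_3$. For {\sf SS}: each of $\V_X(\mu_0)$, $\V_X(\mu_n)$ is invariant under $t_2$, hence under $t_1$.

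The computation is short, so the only genuine care lies in the bookkeeping. The point most likely to cause trouble is the degenerate case $n=0$: then the reduced diagram is a single vertex carrying no bond, is interpreted as {\sf DS}, and Lemma \ref{lem:mu0mun} does not apply directly. I would dispose of this case separately by noting that, since $X$ is multiplicity-free (Proposition \ref{prop:mfree}), $\dim\V = n+1 = 1$, whence $\V = \V_X(\mu_0) = \V_X(\mu_n)$ is trivially invariant under every generator. For $n \geq 1$ the argument above applies verbatim, the parity constraint \eqref{eq:parityn} ensuring that {\sf DD} and {\sf SS} occur only when genuine endpoint bonds are present to read off.
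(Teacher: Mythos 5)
Your proof is correct and takes essentially the same route as the paper, whose proof of this lemma is exactly "Follows from Lemmas \ref{lem:t0t2inv} and \ref{lem:mu0mun}"; you have simply spelled out the bond bookkeeping and the $n=0$ degenerate case that the paper leaves implicit.
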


\begin{proof}
Follows from Lemmas \ref{lem:t0t2inv} and \ref{lem:mu0mun}.
\end{proof}

\begin{lemma}   \label{lem:typesDD}  \samepage
\ifDRAFT {\rm lem:typesDD}. \fi
Assume that the reduced $X$-diagram of $\V$ is \textup{\sf DD}.
Then one of the following cases occurs:
\[
\begin{array}{c|c|c}
 \text{\rm Case} 
 & 
   \begin{array}{c}
     \text{\rm Eigenvalues of $t_0$ on} \\
     \text{\rm $\V_X(\mu_0)$ and $\V_X(\mu_n)$}
   \end{array}
 &
   \begin{array}{cc}
     \text{\rm Eigenvalues of $t_3$ on} \\
     \text{\rm $\V_X(\mu_0)$ and $\V_X(\mu_n)$}
   \end{array}
\\ \hline
\textup{\sf DDa} \rule{0mm}{4mm}
  & \text{\rm same} & \text{\rm reciprocals} \\
\textup{\sf DDb} & \text{\rm reciprocals} & \text{\rm same}
\end{array}
\]
\end{lemma}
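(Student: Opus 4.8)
The plan is to turn the abstract ``same/reciprocal'' dichotomy into a purely multiplicative statement about the endpoint eigenvalues and then eliminate two of the four a~priori possibilities by simple contradictions. In the {\sf DD} case both endvertices $\mu_0$, $\mu_n$ are incident to a double bond (and to no single bond), which is exactly the configuration for which Lemma \ref{lem:endvertex} (its {\sf DD} row) asserts that $\V_X(\mu_0)$ and $\V_X(\mu_n)$ are invariant under both $t_0$ and $t_3$. First I would invoke Proposition \ref{prop:mfree}(i) to note that $\V_X(\mu_0)$ and $\V_X(\mu_n)$ are one-dimensional, so on each of these lines $t_0$ and $t_3$ act as scalars. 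Write $\alpha_0,\alpha_n$ for the scalars by which $t_0$ acts on $\V_X(\mu_0)$, $\V_X(\mu_n)$, and $\beta_0,\beta_n$ for the scalars by which $t_3$ acts on them; recall from the remarks following Definition \ref{def:param} that $\alpha_0,\alpha_n\in\{k_0,k_0^{-1}\}$ and $\beta_0,\beta_n\in\{k_3,k_3^{-1}\}$. Since $X=t_3t_0$ acts on $\V_X(\mu_r)$ as $\mu_r$, reading off the product of scalars on each line gives $\mu_0=\beta_0\alpha_0$ and $\mu_n=\beta_n\alpha_n$.

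The next step is to extract the single relation forced by the shape of the diagram. Because $\{k_0,k_0^{-1}\}$ is a two-element set closed under inversion, exactly one of two things happens: either $\alpha_0=\alpha_n$ (the $t_0$-eigenvalues are the \emph{same}) or $\alpha_0=\alpha_n^{-1}$ (they are \emph{reciprocals}); the same applies to $\beta_0,\beta_n$. By the {\sf DD} entry of \eqref{eq:mun} we have $\mu_n=q^{-n-1}\mu_0^{-1}$, hence $\mu_0\mu_n=q^{-n-1}$, which in terms of the four endpoint scalars reads $\alpha_0\alpha_n\beta_0\beta_n=q^{-n-1}$. Note also that $n$ is odd by \eqref{eq:parityn}, so $n\geq 1$ and $\mu_0\neq\mu_n$.

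Finally I would rule out the two combinations absent from the table. If the $t_0$-eigenvalues and the $t_3$-eigenvalues were both \emph{same}, then $\mu_0=\beta_0\alpha_0=\beta_n\alpha_n=\mu_n$, contradicting $\mu_0\neq\mu_n$. If instead both were \emph{reciprocals}, then $\alpha_0\alpha_n=1$ and $\beta_0\beta_n=1$, so the relation $\alpha_0\alpha_n\beta_0\beta_n=q^{-n-1}$ forces $q^{-n-1}=1$, contradicting that $q$ is not a root of unity. The only surviving possibilities are ($t_0$ same, $t_3$ reciprocal) and ($t_0$ reciprocal, $t_3$ same), which are precisely {\sf DDa} and {\sf DDb}.

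The computation is essentially bookkeeping, so the one point that needs care — and the only place I anticipate any friction — is confirming that the case split is genuine, since the labels \emph{same} and \emph{reciprocal} coincide for $t_0$ exactly when $k_0^2=1$ (and for $t_3$ when $k_3^2=1$). I expect this to resolve itself: the two contradictions above already exclude these degeneracies, because if $k_0^2=1$ then $t_0$ is simultaneously same and reciprocal, and then, whichever of the two alternatives $t_3$ realizes, the pair lands in one of the two excluded combinations. Hence $k_0^2\neq 1$ and $k_3^2\neq 1$ automatically, the dichotomy is strict, and exactly one of {\sf DDa}, {\sf DDb} occurs without any separate degeneracy analysis.
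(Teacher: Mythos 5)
Your proof is correct and follows essentially the same route as the paper: both arguments rest on the two facts $\mu_0\neq\mu_n$ and $\mu_0\mu_n=q^{-n-1}$ (with $q$ not a root of unity) to exclude the combinations (same, same) and (reciprocal, reciprocal), and your closing degeneracy discussion ($k_0^2=1$ or $k_3^2=1$) plays exactly the role of the paper's final step showing that {\sf DDa} and {\sf DDb} cannot occur simultaneously.
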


\begin{proof}
We consider the subspaces $\V_X(\mu_0)$ and $\V_X(\mu_n)$.
First assume that the eigenvalues of $t_0$ on the subspaces are the same.
Then the eigenvalues of $t_3$ are reciprocals;
otherwise the eigenvalues of $X=t_3t_0$ are the same, and this forces $\mu_0 = \mu_n$.
Next assume that the eigenvalues of $t_0$ are reciprocals.
Then the eigenvalues of $t_3$ are the same;
otherwise the eigenvalues of $X$ are reciprocals and this forces
$q^{-n-1}=1$ since $\mu_0 \mu_n = q^{-n-1}$ by \eqref{eq:mun}.
Thus at least one of the cases {\sf DDa}, {\sf DDb} occurs.
These cases do not occur at the same time;
otherwise the eigenvalues of $t_0$, $t_3$ on the subspaces
are contained in $\{1, -1\}$ and this forces $q^{-n-1} = \pm 1$ since
$\mu_0 \mu_n = q^{-n-1}$ by \eqref{eq:mun}.
\end{proof}

\begin{lemma}   \label{lem:typesSS}  \samepage
\ifDRAFT {\rm lem:typesSS}. \fi
Assume that the reduced $X$-diagram of $\V$ is \text{\sf SS}.
Then one of the following cases occurs:
\[
\begin{array}{c|c|c}
 \text{\rm Case} 
 & 
   \begin{array}{c}
     \text{\rm Eigenvalues of $t_1$ on} \\
     \text{\rm $\V_X(\mu_0)$ and $\V_X(\mu_n)$}
   \end{array}
 &
   \begin{array}{cc}
     \text{\rm Eigenvalues of $t_2$ on} \\
     \text{\rm $\V_X(\mu_0)$ and $\V_X(\mu_n)$}
   \end{array}
\\ \hline
\textup{\sf SSa} \rule{0mm}{4mm}
 & \text{\rm same} & \text{\rm reciprocals} \\
\textup{\sf SSb} & \text{\rm reciprocals} & \text{\rm same}
\end{array}
\]
\end{lemma}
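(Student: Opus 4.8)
The plan is to mirror the proof of Lemma~\ref{lem:typesDD}, with the factorization $X=t_3t_0$ replaced by a factorization expressing $X^{-1}$ through $t_1,t_2$. From Lemma~\ref{lem:Z4pre} we have $t_2t_3t_0t_1=q^{-1}$, hence $X=t_3t_0=q^{-1}t_2^{-1}t_1^{-1}$ and therefore
\[
 X^{-1}=q\,t_1t_2 .
\]
This is the only new algebraic ingredient; it plays the role for the {\sf SS} case that $X=t_3t_0$ plays for the {\sf DD} case.

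First I would reduce the assertion to a statement about scalars. Since the reduced $X$-diagram is {\sf SS}, both endvertices $\mu_0,\mu_n$ are incident to single bonds, so by Lemma~\ref{lem:endvertex} each of $\V_X(\mu_0)$, $\V_X(\mu_n)$ is invariant under $t_1$ and $t_2$; by Proposition~\ref{prop:mfree}(i) these spaces are one-dimensional, so $t_1$ and $t_2$ act on each of them as scalars. Denote by $\alpha_0,\beta_0$ (resp.\ $\alpha_n,\beta_n$) the eigenvalues of $t_1,t_2$ on $\V_X(\mu_0)$ (resp.\ $\V_X(\mu_n)$). Applying the displayed relation on each end-eigenspace gives $\mu_0^{-1}=q\alpha_0\beta_0$ and $\mu_n^{-1}=q\alpha_n\beta_n$. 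Recall further that any two distinct eigenvalues of a fixed $t_i$ are reciprocals, so on the two end-eigenspaces the eigenvalues of $t_1$ are either equal or mutually inverse, and likewise for $t_2$.

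The core is then a four-case analysis. If the $t_1$-eigenvalues agree and the $t_2$-eigenvalues agree, then $\mu_0^{-1}=\mu_n^{-1}$, forcing $\mu_0=\mu_n$, which is impossible since $n\geq1$ for {\sf SS}. If instead both pairs are mutually inverse, then multiplying the two relations yields $\mu_0^{-1}\mu_n^{-1}=q^2$, i.e.\ $\mu_0\mu_n=q^{-2}$; but $\mu_0\mu_n=q^{n-1}$ by \eqref{eq:mun}, so $q^{n+1}=1$, contradicting that $q$ is not a root of unity. Hence exactly one of the two mixed possibilities holds, and these are precisely {\sf SSa} ($t_1$ same, $t_2$ reciprocal) and {\sf SSb} ($t_1$ reciprocal, $t_2$ same).

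To finish I would exclude the overlap of {\sf SSa} and {\sf SSb}, exactly as in Lemma~\ref{lem:typesDD}: if both held, the eigenvalues of $t_1$ and of $t_2$ on the end-eigenspaces would be self-reciprocal, hence lie in $\{1,-1\}$, so each of $\mu_0^{-1},\mu_n^{-1}=q\alpha\beta$ would equal $\pm q$; then $\mu_0\mu_n=\pm q^{-2}$, and comparison with $\mu_0\mu_n=q^{n-1}$ forces $q^{n+1}=\pm1$, again a contradiction. I do not anticipate any genuine obstacle, since the argument runs parallel to Lemma~\ref{lem:typesDD}; the only points needing care are the precise form $X^{-1}=qt_1t_2$ and the appeal to multiplicity-freeness guaranteeing that $t_1,t_2$ act as honest scalars on the end-eigenspaces. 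One could alternatively deduce the lemma from Lemma~\ref{lem:typesDD} by transporting the {\sf DD} result along the square of the automorphism $\varrho$ of Lemma~\ref{lem:Z4}, which interchanges $G_0$ with $G_2$ and hence single bonds with double bonds, but the direct computation above appears shortest.
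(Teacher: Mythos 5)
Your proposal is correct and takes essentially the same route as the paper: the paper's own proof is the one-line remark ``similar to the proof of Lemma \ref{lem:typesDD} using $X=q^{-1}t_2^{-1}t_1^{-1}$,'' and your argument (via the equivalent identity $X^{-1}=qt_1t_2$, the comparison of $\mu_0\mu_n$ with $q^{n-1}$ from \eqref{eq:mun}, and the exclusion of the overlap case through eigenvalues in $\{1,-1\}$) is precisely that argument written out in full.
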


\begin{proof}
Similar to the proof of Lemma \ref{lem:typesDD} using $X = q^{-1} t_2^{-1} t_1^{-1}$.
\end{proof}

In view of Lemmas \ref{lem:typesDD} and \ref{lem:typesSS} we make some definitions.

\begin{definition}   \label{def:Xtypes}   \samepage
\ifDRAFT {\rm def:Xtypes}. \fi
We define the {\em $X$-type} of $\V$ as follows.
We say that $\V$ has $X$-type {\sf DS} whenever the reduced $X$-diagram of $\V$
is {\sf DS}.
We  say that $\V$ has $X$-type {\sf DDa} (resp.\ {\sf DDb})
whenever the reduced $X$-diagram of $\V$ is {\sf DD} and
case {\sf DDa} (resp.\ {\sf DDb}) occurs in Lemma \ref{lem:typesDD}.
We say that $\V$ has $X$-type {\sf SSa} (resp.\ {\sf SSb})
whenever the reduced $X$-diagram of $\V$ is {\sf SS} and
case {\sf SSa} (resp.\ {\sf SSb}) occurs in Lemma \ref{lem:typesSS}.
\end{definition}

\begin{definition}          \label{def:consistent}   \samepage
\ifDRAFT {\rm def:consistent}. \fi
A parameter sequence $\{k_i\}_{i \in \I}$ of $\V$ is said to be
{\em consistent with the ordering $\{\mu_r\}_{r=0}^n$}
whenever it follows the rule:
\begin{equation}     \label{eq:rule}
\begin{array}{c|c}
\text{$X$-type of $\V$} & \text{Rule}
\\ \hline
\text{\sf DS} \rule{0mm}{6.5mm} &
\begin{array}{l}
\text{$k_0$ (resp.\ $k_3$) is the eigenvalue of $t_0$ (resp.\ $t_3$) on $\V_X(\mu_0)$} \\
\text{$k_1$ (resp.\ $k_2$) is the eigenvalue of $t_1$ (resp.\ $t_2$) on $\V_X(\mu_n)$}
\end{array}
\\ \hline
\text{\sf DDa}, \text{\sf DDb} \rule{0mm}{4.2mm} & 
\begin{array}{l}
\text{$k_0$ (resp.\ $k_3$) is the  eigenvalue of $t_0$ (resp.\ $t_3$) on $\V_X(\mu_0)$}
\end{array}
\\ \hline
\text{\sf SSa}, \text{\sf SSb} \rule{0mm}{4.2mm} &
\begin{array}{l}
\text{$k_1$ (resp. $k_2$) is the eigenvalue of $t_1$ (resp.\ $t_2$) on $\V_X(\mu_0)$}
\end{array}
\end{array}
\end{equation}
\end{definition}

\begin{lemma}    \label{lem:tiv0vn}   \samepage
\ifDRAFT {\rm lem:tiv0vn}. \fi
Assume that the parameter sequence $\{k_i\}_{i \in \I}$ is consistent with
the ordering $\{\mu_r\}_{r=0}^n$.
Pick nonzero vectors $v_0 \in \V_X(\mu_0)$ and $v_n \in \V_X(\mu_n)$.
Then
\begin{equation}                 \label{eq:tiv0vn}
\begin{array}{c|llll}
\textup{\rm $X$-type of $\V$} & \multicolumn{4}{c}{\text{\rm Action}}
\\ \hline \rule{0mm}{4.3mm}
 \textup{\sf DS} &
 t_0 v_0 = k_0 v_0 \;\; & t_3 v_0 = k_3 v_0\;\;  & t_1 v_n = k_1 v_n \;\;& t_2 v_n = k_2 v_n  
\\  \rule{0mm}{4mm}
 \textup{\sf DDa} & 
 t_0 v_0 = k_0 v_0 & t_3 v_0 = k_3 v_0 & t_0 v_n = k_0 v_n & t_3 v_n = k_3^{-1} v_n
\\ \rule{0mm}{4mm}
 \textup{\sf DDb} & 
 t_0 v_0 = k_0 v_0 & t_3 v_0 = k_3 v_0 & t_0 v_n = k_0^{-1} v_n & t_3 v_n = k_3 v_n
\\ \rule{0mm}{4mm}
 \textup{\sf SSa} & 
 t_1 v_0 = k_1 v_0 & t_2 v_0 = k_2 v_0 & t_1 v_n = k_1 v_n & t_2 v_n = k_2^{-1} v_n
\\ \rule{0mm}{4mm}
 \textup{\sf SSb} & t_1 v_0 = k_1 v_0 & t_2 v_0 = k_2 v_0 & t_1 v_n = k_1^{-1} v_n & t_2 v_n = k_2 v_n
\end{array}
\end{equation}
\end{lemma}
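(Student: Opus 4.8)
The plan is to combine multiplicity-freeness of $X$ with the invariance statements of Lemma \ref{lem:endvertex} and the definition of consistency, treating the five $X$-types in turn. The whole argument is bookkeeping: every eigenvalue assertion in \eqref{eq:tiv0vn} is read off directly from an already-established result.

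First I would record the structural fact that makes $v_0$ and $v_n$ eigenvectors in the first place. By Proposition \ref{prop:mfree}(i), $X$ is multiplicity-free on $\V$, so each of $\V_X(\mu_0)$ and $\V_X(\mu_n)$ is one-dimensional. Hence whenever a generator $t_i$ leaves $\V_X(\mu_0)$ (resp.\ $\V_X(\mu_n)$) invariant, the nonzero vector $v_0$ (resp.\ $v_n$) is automatically an eigenvector of $t_i$, and its eigenvalue is precisely the ``eigenvalue of $t_i$ on $\V_X(\mu_0)$'' appearing in \eqref{eq:rule} and in Lemmas \ref{lem:typesDD}, \ref{lem:typesSS}. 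The needed invariance hypotheses come verbatim from Lemma \ref{lem:endvertex}: for $X$-type {\sf DS}, $v_0$ is an eigenvector of $t_0$, $t_3$ and $v_n$ of $t_1$, $t_2$; for {\sf DD}, both $v_0$ and $v_n$ are eigenvectors of $t_0$, $t_3$; for {\sf SS}, both are eigenvectors of $t_1$, $t_2$.

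Next I would read off the eigenvalues case by case. For {\sf DS}, all four equations follow immediately from the consistency rule \eqref{eq:rule}, which directly prescribes that the eigenvalues of $t_0$, $t_3$ on $\V_X(\mu_0)$ are $k_0$, $k_3$ and those of $t_1$, $t_2$ on $\V_X(\mu_n)$ are $k_1$, $k_2$. For the other four types the two equations on $\V_X(\mu_0)$ are again immediate from \eqref{eq:rule}, and the two equations on $\V_X(\mu_n)$ follow by combining the known eigenvalue on $\V_X(\mu_0)$ with the ``same/reciprocals'' dichotomy of Lemma \ref{lem:typesDD} (for {\sf DDa}, {\sf DDb}) or Lemma \ref{lem:typesSS} (for {\sf SSa}, {\sf SSb}). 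For example, in case {\sf DDa} the eigenvalue of $t_0$ on $\V_X(\mu_0)$ is $k_0$ and is the same on $\V_X(\mu_n)$, giving $t_0 v_n = k_0 v_n$; the eigenvalue of $t_3$ is $k_3$ on $\V_X(\mu_0)$ and is its reciprocal on $\V_X(\mu_n)$, giving $t_3 v_n = k_3^{-1} v_n$. The remaining subcases {\sf DDb}, {\sf SSa}, {\sf SSb} are handled identically.

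Since each step is a direct invocation of an already-proved lemma, I do not expect a genuine obstacle. The only point requiring care is the bookkeeping of which generator carries the ``same'' and which the ``reciprocals'' label in each of the four {\sf DD}/{\sf SS} subcases, so that the inverse in \eqref{eq:tiv0vn} lands on the correct $k_i$; apart from this the argument is a routine case check.
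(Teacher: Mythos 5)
Your proof is correct and follows essentially the same route as the paper, whose own proof is just the one-liner ``Immediate from Definitions \ref{def:Xtypes} and \ref{def:consistent}.'' You have simply made explicit the ingredients the paper leaves implicit: multiplicity-freeness of $X$ (Proposition \ref{prop:mfree}) plus the invariance statements of Lemma \ref{lem:endvertex} to justify that $v_0$, $v_n$ are eigenvectors, and then the consistency rule \eqref{eq:rule} together with the same/reciprocals dichotomies of Lemmas \ref{lem:typesDD}, \ref{lem:typesSS} to read off the eigenvalues.
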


\begin{proof}
Immediate from Definitions \ref{def:Xtypes} and \ref{def:consistent}.
\end{proof}

By the construction, among the parameter sequences $\{k_i^{\pm 1} \}_{i \in \I}$ of $\V$,
at least one sequence is consistent with the ordering $\{\mu_r\}_{r=0}^n$.
Fix a parameter sequence $\{k_i\}_{i \in \I}$ of $\V$ that is consistent with
the ordering $\{\mu_r\}_{r=0}^n$.
Then the parameter sequences of $\V$ that are consistent with the 
ordering  $\{\mu_r\}_{r=0}^n$ are as follows:
\[
\begin{array}{c|c}
  \text{\rm $X$-type of $\V$} & \text{\rm Parameter sequences}
\\ \hline
 \rule{0mm}{4mm}
 \textup{\sf DS} & (k_0,k_1,k_2,k_3)
\\
 \textup{\sf DDa}, \textup{\sf DDb} & (k_0, k_1^{\pm 1}, k_2^{\pm 1}, k_3)
\\
 \textup{\sf SSa}, \textup{\sf SSb} & (k_0^{\pm 1}, k_1,k_2, k_3^{\pm 1})
\end{array}
\]
Assume for the moment that  $\V$ has $X$-type among 
{\sf DDa}, {\sf DDb}, {\sf SSa}, {\sf SSb}.
Recall that the ordering $\{\mu_{n-r}\}_{r=0}^n$ is also standard.
The parameter sequences of $\V$ that are consistent with the 
ordering  $\{\mu_{n-r}\}_{r=0}^n$ are as follows:
\[
\begin{array}{c|c}
  \text{\rm $X$-type of $\V$} & \text{\rm Parameter sequences}
\\ \hline
 \rule{0mm}{4.5mm}
 \textup{\sf DDa} & (k_0, k_1^{\pm 1}, k_2^{\pm 1}, k_3^{-1})
\\
 \textup{\sf DDb} & (k_0^{-1}, k_1^{\pm 1}, k_2^{\pm 1}, k_3)
\\
  \textup{\sf SSa} & (k_0^{\pm 1}, k_1,k_2^{-1}, k_3^{\pm 1})
\\
  \textup{\sf SSb} & (k_0^{\pm 1}, k_1^{-1},k_2, k_3^{\pm 1})
\end{array}
\]
By the above observations, the parameter sequences of $\V$ that are 
consistent with a standard ordering  of the eigenvalues of $X$ are as follows:
\[
\begin{array}{c|c}
  \text{\rm $X$-type of $\V$} & \text{\rm Parameter sequences}
\\ \hline
 \rule{0mm}{4mm}
\textup{\sf DS} & (k_0,k_1,k_2,k_3)
\\
 \textup{\sf DDa} & (k_0, k_1^{\pm 1}, k_2^{\pm 1}, k_3^{\pm 1})
\\
 \textup{\sf DDb} & (k_0^{\pm 1}, k_1^{\pm 1}, k_2^{\pm 1}, k_3)
\\
  \textup{\sf SSa} & (k_0^{\pm 1}, k_1,k_2^{\pm 1}, k_3^{\pm 1})
\\
  \textup{\sf SSb} & (k_0^{\pm 1}, k_1^{\pm 1},k_2, k_3^{\pm 1})
\end{array}
\]
Note by this that a parameter sequence of $\V$ may be consistent 
with no standard ordering of the eigenvalues of $X$.

We mention two lemmas for later use.

\begin{lemma}   \label{lem:eigenvalues}  \samepage
\ifDRAFT {\rm lem:eignevalues}. \fi
Let $\{k_i\}_{i \in \I}$ denote a parameter sequence of $\V$ that is
consistent with the ordering $\{\mu_r\}_{r=0}^n$.
\begin{itemize}
\item[\rm (i)]
Assume that $\V$ has $X$-type among \textup{\sf DS}, \textup{\sf DDa}, \textup{\sf DDb}.
Then
\begin{align}       \label{eq:Dmur}
 \mu_r &= 
  \begin{cases}
    k_0k_3 q^{r}            &  \text{ if $r$ is even},  \\
    \frac{1}{k_0k_3q^{r+1} } & \text{ if $r$ is odd}
  \end{cases}
  && (0 \leq r \leq n).
\end{align}
\item[\rm (ii)]
Assume that $\V$ has $X$-type among \textup{\sf SSa}, \textup{\sf SSb}.
Then
\begin{align}       \label{eq:Smur}
 \mu_r &= 
  \begin{cases}
    \frac{1}{k_1k_2 q^{r+1} }  &  \text{ if $r$ is even},  \\
    k_1k_2 q^r                & \text{ if $r$ is odd}
  \end{cases}
  && (0 \leq r \leq n).
\end{align}
\end{itemize}
\end{lemma}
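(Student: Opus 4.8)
The plan is to determine the single eigenvalue $\mu_0$ in terms of the parameter sequence and then read off all the remaining $\mu_r$ from the already-established table \eqref{eq:mur}. Since $X$ is multiplicity-free on $\V$ by Proposition \ref{prop:mfree}(i), the eigenspace $\V_X(\mu_0)$ is one-dimensional; fix a nonzero $v_0 \in \V_X(\mu_0)$, so that $X v_0 = \mu_0 v_0$. Everything then reduces to evaluating a suitable product expression for $X$ on $v_0$.

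For part (i) the $X$-types are DS, DDa, DDb, and for each of these Lemma \ref{lem:endvertex} tells us that $\V_X(\mu_0)$ is invariant under $t_0$ and $t_3$. By Lemma \ref{lem:tiv0vn} (applicable since $\{k_i\}_{i \in \I}$ is consistent with the ordering) we have $t_0 v_0 = k_0 v_0$ and $t_3 v_0 = k_3 v_0$. Using $X = t_3 t_0$ from \eqref{eq:defXYAB} I would compute $X v_0 = t_3 t_0 v_0 = k_0 k_3 v_0$, forcing $\mu_0 = k_0 k_3$. Substituting $\mu_0 = k_0 k_3$ into the DS/DD row of \eqref{eq:mur} gives $\mu_r = q^r k_0 k_3$ for even $r$ and $\mu_r = q^{-r-1}(k_0 k_3)^{-1}$ for odd $r$, which is exactly \eqref{eq:Dmur} after checking the two parities.

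For part (ii) the $X$-types are SSa, SSb, and Lemma \ref{lem:endvertex} gives that $\V_X(\mu_0)$ is invariant under $t_1$ and $t_2$, with $t_1 v_0 = k_1 v_0$ and $t_2 v_0 = k_2 v_0$ by Lemma \ref{lem:tiv0vn}. The key is to replace $X = t_3 t_0$ by the alternative expression $X = q^{-1} t_2^{-1} t_1^{-1}$, which follows from the identity $t_2 t_3 t_0 t_1 = q^{-1}$ of Lemma \ref{lem:Z4pre} (this is exactly the reformulation already used in the proof of Lemma \ref{lem:typesSS}). Evaluating this on $v_0$ yields $X v_0 = q^{-1} k_2^{-1} k_1^{-1} v_0$, so $\mu_0 = (q k_1 k_2)^{-1}$; substituting into the SS row of \eqref{eq:mur} produces $\mu_r = q^{-r}(q k_1 k_2)^{-1}$ for even $r$ and $\mu_r = q^{r-1}(q k_1 k_2)$ for odd $r$, which is \eqref{eq:Smur}.

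I do not anticipate any real obstacle: the argument is a one-line eigenvalue computation in each case, powered by the fact that $\V_X(\mu_0)$ is one-dimensional and simultaneously invariant under the generators appearing in the chosen product expression for $X$. The only points to keep straight are that one must work at the end vertex $\mu_0$ (so that Lemma \ref{lem:endvertex} legitimately supplies the needed invariance) and that one must match the correct parity branch of \eqref{eq:mur} against the claimed formulas.
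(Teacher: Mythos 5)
Your proposal is correct and follows essentially the same route as the paper: the paper also determines $\mu_0$ by evaluating $X=t_3t_0$ (resp.\ $X=q^{-1}t_2^{-1}t_1^{-1}$) on $\V_X(\mu_0)$ using the eigenvalues supplied by the consistency rule \eqref{eq:rule}, obtaining $\mu_0=k_0k_3$ (resp.\ $\mu_0=(qk_1k_2)^{-1}$), and then reads off all $\mu_r$ from \eqref{eq:mur}. Your extra citations of Proposition \ref{prop:mfree} and Lemma \ref{lem:endvertex} are harmless but not needed, since Lemma \ref{lem:tiv0vn} (equivalently, Definition \ref{def:consistent}) already gives the required eigenvector relations directly.
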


\begin{proof}
(i):
By \eqref{eq:rule} $t_0$ (resp.\ $t_3$) has eigenvalue $k_0$ (resp.\ $k_3$)
on $\V_X(\mu_0)$.
By this and $X=t_3t_0$ we get $\mu_0 = k_0k_3$.
Now \eqref{eq:Dmur} follows from \eqref{eq:mur}.

(ii):
Similar to the proof of (i) using $X=q^{-1}t_2^{-1}t_1^{-1}$.
\end{proof}

\begin{lemma}   \label{lem:G20}    \samepage
\ifDRAFT {\rm lem:G20}. \fi
In the reduced $X$-diagram of $\V$, consider an endvertex $\mu$.
\begin{itemize}
\item[\rm (i)]
Assume that $\mu$ is not incident to a double bond.
Then $G_2 \V_X(\mu)=0$.
\item[\rm (ii)]
Assume that $\mu$ is not incident to a single bond.
Then $G_0 \V_X(\mu)=0$.
\end{itemize}
\end{lemma}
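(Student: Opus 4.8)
The plan is to combine two facts proved above — that $X$ is multiplicity-free, so that every eigenspace $\V_X(\mu)$ is one-dimensional (Proposition \ref{prop:mfree}(i)), and that an endvertex eigenspace is invariant under a suitable pair of the generators $t_i$ (Lemma \ref{lem:mu0mun}). The mechanism is that $G_0 = t_0 - t_3 t_0 t_3^{-1}$ and $G_2 = t_2 - t_1 t_2 t_1^{-1}$ are twisted commutators that must vanish on any one-dimensional subspace invariant under the two generators involved. I will carry out part (i); part (ii) is identical after interchanging the roles of $(t_1,t_2,G_2)$ and $(t_0,t_3,G_0)$.

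First I would record the elementary principle: if a one-dimensional subspace $\W$ of $\V$ is invariant under $t_1$ and $t_2$, then $G_2\W = 0$. Indeed, $t_1$ and $t_2$ then act on $\W$ as scalars, hence commute, and since $t_1$ is invertible $\W$ is also invariant under $t_1^{-1}$; thus $t_1 t_2 t_1^{-1} = t_2$ on $\W$, whence $G_2 = t_2 - t_1 t_2 t_1^{-1}$ vanishes on $\W$.

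Next I would supply the required invariance of $\W = \V_X(\mu)$. If $n = 0$ then $\V_X(\mu) = \V$ is the entire module and so is invariant under every $t_i$, and the principle above applies directly. If $n \geq 1$, then by Lemma \ref{lem:diagram} the reduced $X$-diagram is a path with at least two vertices, so the endvertex $\mu$ is incident to exactly one bond. Since by hypothesis $\mu$ is not incident to a double bond, that bond is a single bond, so Lemma \ref{lem:mu0mun}(ii) shows $\V_X(\mu)$ is invariant under $t_2$, and then Lemma \ref{lem:t0t2inv}(ii) shows it is invariant under $t_1$. As $\V_X(\mu)$ is one-dimensional by Proposition \ref{prop:mfree}(i), the principle gives $G_2 \V_X(\mu) = 0$.

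The one point requiring care is that the hypothesis is stated for the \emph{reduced} diagram, so an endvertex carrying a double loop (that is, with $\mu^2 = q^{-2}$, hence $\V_X(q^{-2}\mu^{-1}) = \V_X(\mu) \neq 0$) is allowed even though $G_2 \V_X(\mu) \subseteq \V_X(q^{-2}\mu^{-1})$ need not obviously vanish; this is precisely the situation not covered by the earlier Lemma \ref{lem:G0G2V0}(ii). Routing the argument through Lemma \ref{lem:mu0mun} rather than through the inclusion of Lemma \ref{lem:R5} absorbs this loop case uniformly, so I anticipate no further obstacle.
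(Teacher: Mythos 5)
Your proof is correct and is essentially the paper's own argument: the paper also derives $G_2\V_X(\mu)=0$ from the two facts that $\V_X(\mu)$ is one-dimensional (Proposition \ref{prop:mfree}(i)) and invariant under $t_1,t_2$ (stated there as Lemma \ref{lem:endvertex}, which is exactly the combination of Lemmas \ref{lem:mu0mun} and \ref{lem:t0t2inv} you invoke), so that $t_1,t_2$ act as commuting scalars and the twisted commutator $G_2=t_2-t_1t_2t_1^{-1}$ vanishes there. Your explicit treatment of the $n=0$ case and of the double-loop subtlety (why Lemma \ref{lem:G0G2V0} alone does not suffice) is sound but does not change the route.
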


\begin{proof}
(i):
Pick a nonzero $v \in \V_X(\mu)$.
Note that $\V_X(\mu) = \F v$.
By Lemma \ref{lem:endvertex} $\V_X(\mu)$ is invariant under $t_1$, $t_2$.
So $v$ is an eigenvector of $t_1$ and $t_2$.
By these comments $t_1$ and $t_2$ commute on $\V_X(\mu)$.
By this and Definition \ref{def:G0G2} we get $G_2 \V_X(\mu)=0$.

(ii):
Similar.
\end{proof}

\section{The action of $t_i$ on the $X$-standard basis}
\label{sec:structure}

Let $\V$ denote an XD $\Hq$-module that has parameter sequence 
$\{k_i\}_{i \in \I}$.
Let $\{\mu_r\}_{r=0}^n$ denote a standard ordering of the eigenvalues of $X$,
and let $\{v_r\}_{r=0}^n$ denote a corresponding $X$-standard basis for $\V$.
In this section we display the actions of $\{t_i\}_{i \in \I}$ on $\{v_r\}_{r=0}^n$.

\begin{lemma} \label{lem:R15}    \samepage
\ifDRAFT {\rm lem:R15}. \fi
For $0 \leq r \leq n-1$ such that $\mu_r$, $\mu_{r+1}$ are $1$-adjacent,
the elements $t_0$, $t_3$ act on $\F v_r + \F v_{r+1}$ as
\begin{align}
 t_0 v_r &=
  \frac{\mu_r (k_0+k_0^{-1}) - k_3-k_3^{-1}}
       {\mu_r - \mu_r^{-1}} v_r
 + \frac{\mu_r}
        {\mu_r - \mu_r^{-1}}  v_{r+1},              \label{eq:t0vr}
\\
 t_0 v_{r+1} &=
  \frac{G(\mu_r, k_0,k_3)}
       {\mu_r (\mu_r^{-1} - \mu_r)} v_{r} 
  + \frac{\mu_r^{-1} (k_0+k_0^{-1}) - k_3-k_3^{-1}}
         {\mu_r^{-1} - \mu_r} v_{r+1},              \label{eq:t0vr+1}
\\
 t_3 v_r &=
   \frac{\mu_r (k_3+k_3^{-1}) - k_0-k_0^{-1}}
        {\mu_r - \mu_r^{-1}} v_r
 +  \frac{1}
         {\mu_r^{-1} - \mu_r} v_{r+1},              \label{eq:t3vr}
\\
 t_3 v_{r+1} &=
     \frac{G(\mu_r, k_0,k_3)}
          {\mu_r - \mu_r^{-1}}  v_r
  +  \frac{\mu_r^{-1} (k_3+k_3^{-1}) - k_0-k_0^{-1}}
          {\mu_r^{-1} - \mu_r} v_{r+1}.              \label{eq:t3vr+1}
\end{align}
In \eqref{eq:t0vr}--\eqref{eq:t3vr+1} the denominators are nonzero by Lemma \ref{lem:t0t3action}.
\end{lemma}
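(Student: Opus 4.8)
The plan is to reduce everything to the closed forms for $t_0$ and $t_3$ on a single $X$-eigenspace provided by Lemma~\ref{lem:t0t3action}, combined with the chain relation that defines the $X$-standard basis. Since $\mu_r$, $\mu_{r+1}$ are $1$-adjacent and distinct (recall $X$ is multiplicity-free by Proposition~\ref{prop:mfree}), we have $\mu_{r+1} = \mu_r^{-1}$ with $\mu_r \neq \mu_r^{-1}$. Hence neither $\mu_r$ nor $\mu_{r+1}$ is incident to a single loop, so Lemma~\ref{lem:t0t3action} applies on each of $\V_X(\mu_r)$ and $\V_X(\mu_{r+1})$, and in particular the denominators $\mu_r - \mu_r^{-1}$ are nonzero. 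Throughout we use $T_0 = k_0 + k_0^{-1}$ and $T_3 = k_3 + k_3^{-1}$ on $\V$.

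First I would record the two facts that drive the computation. Because $\{v_r\}_{r=0}^n$ is a forward chain and $\mu_r$, $\mu_{r+1}$ are $1$-adjacent, Definition~\ref{def:follows} gives $v_{r+1} = G_0 v_r$. Applying $G_0$ once more and invoking Corollary~\ref{cor:R9}(i), we obtain $G_0 v_{r+1} = G_0^2 v_r = G(\mu_r,k_0,k_3)\,v_r$. These two relations let us convert every occurrence of $G_0$ produced by Lemma~\ref{lem:t0t3action} back into the basis vectors $v_r$, $v_{r+1}$.

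Next I would substitute. For \eqref{eq:t0vr}, apply Lemma~\ref{lem:t0t3action} with $\mu=\mu_r$ to $v_r$ and replace $G_0 v_r$ by $v_{r+1}$; the stated form results at once. For \eqref{eq:t3vr} one does the same and then uses $1/(\mu_r^{-1}-\mu_r) = -1/(\mu_r-\mu_r^{-1})$ to put the coefficient of $v_r$ into the displayed shape. For \eqref{eq:t0vr+1} and \eqref{eq:t3vr+1}, apply the same formulas with $\mu = \mu_{r+1} = \mu_r^{-1}$ to $v_{r+1}$ and replace $G_0 v_{r+1}$ by $G(\mu_r,k_0,k_3)\,v_r$. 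The only slightly nonroutine point is the coefficient of $v_r$ in \eqref{eq:t0vr+1}, where Lemma~\ref{lem:t0t3action} produces $\mu_r^{-1}G(\mu_r,k_0,k_3)/(\mu_r^{-1}-\mu_r)$; this equals $G(\mu_r,k_0,k_3)/\bigl(\mu_r(\mu_r^{-1}-\mu_r)\bigr)$ since $\mu_r^{-1} = 1/\mu_r$, matching the displayed form.

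There is no real obstacle here: the substantive content lives entirely in Lemma~\ref{lem:t0t3action} and Corollary~\ref{cor:R9}(i), and the argument is bookkeeping of the substitution $\mu_{r+1} = \mu_r^{-1}$ together with the resulting sign conventions. The one point that deserves a moment's care is confirming that Lemma~\ref{lem:t0t3action} is legitimately applicable at both $\mu_r$ and $\mu_{r+1}$, which is exactly the no-single-loop condition verified in the first paragraph, and which also yields the nonvanishing of the denominators asserted at the end of the statement.
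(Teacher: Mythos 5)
Your proposal is correct and follows essentially the same route as the paper: apply the closed formulas of Lemma \ref{lem:t0t3action} at $\mu_r$ and at $\mu_{r+1}=\mu_r^{-1}$, use the chain relation $v_{r+1}=G_0v_r$ from the $X$-standard basis, and convert $G_0v_{r+1}=G_0^2v_r$ via Corollary \ref{cor:R9}(i). Your explicit check that neither vertex carries a single loop (hence the lemma applies and the denominators are nonzero) is a point the paper leaves implicit, but it is the same argument.
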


\begin{proof}
Recall that for $i \in \I$ the element $T_i$ acts on $\V$ as the scalar $k_i + k_i^{-1}$.
Note that $v_{r+1} = G_0 v_r$ by the construction.
Now applying \eqref{eq:t0} to $v_r$ one finds \eqref{eq:t0vr}.
To get \eqref{eq:t0vr+1},
apply \eqref{eq:t0} to $G_0 v_r$ and use Corollary \ref{cor:R9}(i).
The lines \eqref{eq:t3vr} and \eqref{eq:t3vr+1} are similarly
obtained from \eqref{eq:t3}.
\end{proof}

\begin{lemma} \label{lem:R20} \samepage
\ifDRAFT {\rm lem:R20}. \fi
For $0 \leq r \leq n-1$ such that $\mu_r$, $\mu_{r+1}$ are $q$-adjacent,
the elements $t_1$, $t_2$ act on
$\F v_r + \F v_{r+1}$ as
\begin{align}
t_1 v_r &=
   \frac{q^{-1}\mu_r^{-1} (k_1+k_1^{-1}) - k_2-k_2^{-1}}
        {q^{-1}\mu_r^{-1} - q\mu_r} v_r
 + \frac{1}
        {q\mu_r - q^{-1}\mu_r^{-1}} v_{r+1},   \label{eq:t1vr}
\\
t_1 v_{r+1} &=
   \frac{G(q \mu_r, k_1,k_2)}
        {q^{-1}\mu_r^{-1} - q\mu_r} v_r
 + \frac{q\mu_r (k_1+k_1^{-1}) - k_2-k_2^{-1}}
        {q\mu_r - q^{-1}\mu_r^{-1}}  v_{r+1},  \label{eq:t1vr+1}
\\
t_2 v_r &=
   \frac{q^{-1}\mu_r^{-1} (k_2+k_2^{-1}) - k_1-k_1^{-1}}
       {q^{-1}\mu_r^{-1} - q\mu_r} v_r
 + \frac{q^{-1}\mu_r^{-1}}
        {q^{-1}\mu_r^{-1} - q\mu_r} v_{r+1},    \label{eq:t2vr}
\\
t_2 v_{r+1} &=
   \frac{q \mu_r\, G(q \mu_r, k_1,k_2)}
        {q\mu_r - q^{-1}\mu_r^{-1}} v_r 
 + \frac{q\mu_r (k_2+k_2^{-1}) - k_1-k_1^{-1}}
        {q\mu_r - q^{-1}\mu_r^{-1}} v_{r+1}.   \label{eq:t2vr+1}
\end{align}
In \eqref{eq:t1vr}--\eqref{eq:t2vr+1} the denominators are nonzero by Lemma \ref{lem:t1t2action}.
\end{lemma}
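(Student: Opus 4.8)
The plan is to mirror the proof of Lemma \ref{lem:R15}, replacing the pair $t_0,t_3$ by $t_1,t_2$, the element $G_0$ by $G_2$, the operator formulas \eqref{eq:t0}--\eqref{eq:t3} of Lemma \ref{lem:t0t3action} by the formulas \eqref{eq:t1}--\eqref{eq:t2} of Lemma \ref{lem:t1t2action}, and Corollary \ref{cor:R9}(i) by Corollary \ref{cor:R9}(ii). Throughout I would use that each $T_i$ acts on $\V$ as the scalar $k_i+k_i^{-1}$, together with the fact that $v_{r+1}=G_2v_r$ by construction: since $\mu_r,\mu_{r+1}$ are $q$-adjacent they are joined by a double bond, so the relation ``$v_{r+1}$ follows $v_r$'' of Definition \ref{def:follows} is realized by $G_2$ rather than $G_0$.

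First I would obtain \eqref{eq:t1vr} and \eqref{eq:t2vr} by applying the operator identities \eqref{eq:t1} and \eqref{eq:t2} directly to $v_r\in\V_X(\mu_r)$, i.e.\ with $\mu=\mu_r$, and substituting $T_1v_r=(k_1+k_1^{-1})v_r$, $T_2v_r=(k_2+k_2^{-1})v_r$, and $G_2v_r=v_{r+1}$. Once the sign of $1/(q^{-1}\mu_r^{-1}-q\mu_r)$ is tracked, this reproduces the stated coefficients verbatim.

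To obtain \eqref{eq:t1vr+1} and \eqref{eq:t2vr+1} I would instead apply \eqref{eq:t1} and \eqref{eq:t2} on $\V_X(\mu_{r+1})$, i.e.\ with $\mu=\mu_{r+1}$, to the vector $v_{r+1}$. The only genuinely new ingredient is the evaluation of $G_2 v_{r+1}=G_2^2 v_r$, which by Corollary \ref{cor:R9}(ii) equals $G(q\mu_r,k_1,k_2)\,v_r$. The one point demanding care, and the closest thing to an obstacle, is the bookkeeping forced by $q$-adjacency: unlike the $1$-adjacent case of Lemma \ref{lem:R15} where $\mu_{r+1}=\mu_r^{-1}$ is symmetric, here $\mu_{r+1}=q^{-2}\mu_r^{-1}$, so $\mu_{r+1}^{-1}=q^2\mu_r$, giving $q^{-1}\mu_{r+1}^{-1}=q\mu_r$ and $q\mu_{r+1}=q^{-1}\mu_r^{-1}$. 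Substituting these rewrites every occurrence of $\mu_{r+1}$ as an expression in $\mu_r$, collapses the denominator $q^{-1}\mu_{r+1}^{-1}-q\mu_{r+1}$ to $q\mu_r-q^{-1}\mu_r^{-1}$, and matches the claimed coefficients after a sign adjustment. The argument of $G$ is consistent because $q\mu_{r+1}=(q\mu_r)^{-1}$, whence $G(q\mu_{r+1},k_1,k_2)=G(q\mu_r,k_1,k_2)$ by the inversion symmetry recorded in Lemma \ref{lem:G}. Finally, the nonvanishing of the denominators in \eqref{eq:t1vr}--\eqref{eq:t2vr+1} is exactly the conclusion $q\mu\neq q^{-1}\mu^{-1}$ of Lemma \ref{lem:t1t2action}, applied at $\mu=\mu_r$ and $\mu=\mu_{r+1}$.
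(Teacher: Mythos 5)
Your proposal is correct and takes essentially the same route as the paper, whose entire proof reads ``Similar to the proof of Lemma \ref{lem:R15}'': namely, apply \eqref{eq:t1} and \eqref{eq:t2} to $v_r$ and to $v_{r+1}=G_2v_r$, evaluating $G_2v_{r+1}=G_2^2v_r$ via Corollary \ref{cor:R9}(ii). Your explicit handling of the $q$-adjacency bookkeeping ($\mu_{r+1}=q^{-2}\mu_r^{-1}$, hence $q\mu_{r+1}=q^{-1}\mu_r^{-1}$) and of the inversion symmetry $G(q\mu_{r+1},k_1,k_2)=G(q\mu_r,k_1,k_2)$ fills in exactly the details the paper leaves implicit.
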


\begin{proof}
Similar to the proof of Lemma \ref{lem:R15}.
\end{proof}

In the above, we displayed some actions of $\{t_i\}_{i \in \I}$ on $\{v_i\}_{i \in \I}$.
The remaining actions are given in \eqref{eq:tiv0vn}.

\begin{note}   \label{note:iso}    \samepage
\ifDRAFT {\rm note:iso}. \fi
By Lemmas \ref{lem:tiv0vn}, \ref{lem:R15}, \ref{lem:R20}
the action of $\{t_i\}_{i \in \I}$ on an $X$-standard basis is determined
by the parameter sequence and the $X$-type of the $\Hq$-module.
Therefore an XD $\Hq$-module is uniquely determined up to isomorphism
by its dimension,  its parameter sequence, and its $X$-type.
\end{note}

\section{The eigenspaces of $t_0$}
\label{sec:eigent0}

Throughout this section the following notation is in effect.

\begin{notation}   \label{notation}   \samepage
\ifDRAFT {\rm notation}. \fi
Let $\V$ denote an XD $\Hq$-module with dimension $n+1$.
Let $\{\mu_r\}_{r=0}^n$ denote a standard ordering of the eigenvalues of $X$.
Let $\{k_i\}_{i \in \I}$ denote a parameter sequence of $\V$ that is consistent with
the ordering $\{\mu_r\}_{r=0}^n$.
\end{notation}

In this section we obtain the dimensions of the eigenspaces of $t_0$.
Define subspaces of $\V$:
\begin{align}
 \V(k_0) &= \{v \in \V \,|\, t_0 v = k_0 v\},
&
 \V(k_0^{-1}) &= \{ v \in \V \,|\, t_0 v = k_0^{-1} v\}.         \label{eq:defVk0}
\end{align}
Thus $\V(k_0)$ is the eigenspace of $t_0$ associated with eigenvalue $k_0$,
provided that $\V(k_0) \neq 0$.
Similar for $\V(k_0^{-1})$.

\begin{definition}    \label{def:F+F-}   \samepage
\ifDRAFT {\rm def:F+F-}. \fi
Assume $k_0 \neq k_0^{-1}$.
Define elements of $\Hq$:
\begin{align}                     
  F^+ &= \frac{t_0-k_0^{-1}}{k_0-k_0^{-1}},
&
 F^- &= \frac{t_0-k_0}{k_0^{-1}-k_0}.                       \label{eq:defF+F-}
\end{align}
\end{definition}

Assume for the moment that $k_0 \neq k_0^{-1}$.
Recall that $(t_0 - k_0)(t_0-k_0^{-1})\V = 0$.
So
\begin{align*}
  \V &= \V(k_0) + \V(k_0^{-1})  \qquad\qquad   \text{(direct sum)}.
\end{align*}
Observe that $F^+$ (resp.\ $F^-$) acts on $\V$ as the projection onto $\V(k_0)$
(resp. $\V(k_0^{-1})$).

\begin{lemma}    \label{lem:t0t0inv}    \samepage
\ifDRAFT {\rm lem:t0t0inv}. \fi
Let $\mu$, $\nu$  denote a single bond in the reduced $X$-diagram of $\V$.
Then
\begin{align}
 t_0 \V_X(\mu) &\subseteq \V_X(\mu) + \V_X(\nu),  &
 t_0 \V_X(\mu) &\not\subseteq \V_X(\mu),                          \label{eq:t0VXmu}
\\
 t_0^{-1} \V_X(\mu) &\subseteq \V_X(\mu) + \V_X(\nu),  &
 t_0^{-1} \V_X(\mu) &\not\subseteq \V_X(\mu).                 \label{eq:t0invVXmu}
\end{align}
\end{lemma}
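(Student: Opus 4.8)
The plan is to dispose of the two inclusions and the two non-inclusions separately, treating $t_0$ and $t_0^{-1}$ in parallel. Throughout I would use that a single bond in the \emph{reduced} $X$-diagram joins two distinct vertices, so $\mu \neq \nu$; since $\mu,\nu$ are $1$-adjacent this forces $\nu = \mu^{-1}$, hence $\mu \neq \mu^{-1}$. In particular $\mu$ is not incident to a single loop, which is exactly the hypothesis allowing me to invoke Lemma \ref{lem:t0t3action} on $\V_X(\mu)$.

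For the inclusions, the containment $t_0 \V_X(\mu) \subseteq \V_X(\mu) + \V_X(\nu)$ is immediate from Lemma \ref{lem:Hqinv}(i), since $\mu,\nu$ are $1$-adjacent. For $t_0^{-1}$ I would write $t_0^{-1} = T_0 - t_0$ and use that $T_0$ is central, hence acts on $\V$ as a scalar; then $t_0^{-1}\V_X(\mu) = (T_0 - t_0)\V_X(\mu) \subseteq \V_X(\mu) + t_0\V_X(\mu) \subseteq \V_X(\mu) + \V_X(\nu)$, which gives the second inclusion.

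For the non-inclusions the idea is to exhibit a nonzero $\V_X(\nu)$-component. Pick a nonzero $v \in \V_X(\mu)$. By Lemma \ref{lem:G0G2V}(i) we have $G_0 \V_X(\mu) = \V_X(\nu)$; since $\V_X(\nu) \neq 0$ and $\dim \V_X(\mu) = 1$ by Proposition \ref{prop:mfree}(i), it follows that $G_0 v \neq 0$ with $G_0 v \in \V_X(\nu)$. Now I would apply the formulas of Lemma \ref{lem:t0t3action}, which express $t_0 v$ and $t_0^{-1} v$ on $\V_X(\mu)$ as a scalar multiple of $v$ plus $\frac{\mu}{\mu - \mu^{-1}} G_0 v$ and $\frac{\mu}{\mu^{-1} - \mu} G_0 v$ respectively, recalling that $T_0$ and $T_3$ act as scalars. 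Since $\mu \neq 0$ and $\mu \neq \mu^{-1}$, the coefficient of $G_0 v$ is nonzero in each case, so both $t_0 v$ and $t_0^{-1} v$ have a nonzero $\V_X(\nu)$-component. As $\V_X(\mu) \cap \V_X(\nu) = 0$ (distinct $X$-eigenvalues), neither vector lies in $\V_X(\mu)$, giving $t_0 \V_X(\mu) \not\subseteq \V_X(\mu)$ and $t_0^{-1}\V_X(\mu) \not\subseteq \V_X(\mu)$.

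I expect no serious obstacle here; the one point demanding care is the verification that $G_0 v \neq 0$, which is precisely where $\mu \neq \mu^{-1}$ (so that $\mu$ is not a single loop) and the multiplicity-freeness from Proposition \ref{prop:mfree}(i) enter. As an alternative to the explicit $t_0^{-1}$ formula, once $t_0 \V_X(\mu) \not\subseteq \V_X(\mu)$ is in hand I could instead note that if $t_0^{-1}\V_X(\mu) \subseteq \V_X(\mu) = \F v$, then $v$ would be an eigenvector of $t_0^{-1}$, hence of $t_0$, forcing $t_0\V_X(\mu) \subseteq \V_X(\mu)$, a contradiction; but the uniform treatment via Lemma \ref{lem:t0t3action} is cleaner.
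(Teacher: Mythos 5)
Your proposal is correct and follows essentially the same route as the paper: the paper's proof is exactly the combination of Lemma \ref{lem:G0G2V}(i) (giving $G_0\V_X(\mu)=\V_X(\nu)$, hence $G_0 v\neq 0$ for $0\neq v\in\V_X(\mu)$) with the explicit formulas of Lemma \ref{lem:t0t3action}, whose $G_0$-coefficients $\frac{\mu}{\mu-\mu^{-1}}$ and $\frac{\mu}{\mu^{-1}-\mu}$ are nonzero. Your detours through Lemma \ref{lem:Hqinv}(i) and $t_0^{-1}=T_0-t_0$ for the inclusions are harmless minor variants, since those inclusions also drop out of the same formulas.
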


\begin{proof}
By Lemma \ref{lem:G0G2V}(i) $G_0 \V_X(\mu) = \V_X(\nu)$.
The result follows from this and Lemma \ref{lem:t0t3action}.
\end{proof}

\begin{lemma}    \label{lem:F+F-}    \samepage
\ifDRAFT {\rm lem:F+F-}. \fi
Assume $k_0 \neq k_0^{-1}$.
Let $\mu$, $\nu$ denote a single bond in the reduced $X$-diagram of $\V$.
Then for $0 \neq v \in \V_X(\mu)$
each of $F^+ v$ and $F^- v$ is nonzero and contained in $\V_X(\mu) + \V_X(\nu)$.
\end{lemma}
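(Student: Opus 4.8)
The plan is to reduce everything to the single statement describing how $t_0$ moves the eigenspace $\V_X(\mu)$, namely Lemma \ref{lem:t0t0inv}, combined with the fact that $X$ is multiplicity-free (Proposition \ref{prop:mfree}(i)). Since $\mu$, $\nu$ form a single bond we have $\mu \neq \nu = \mu^{-1}$, so Lemma \ref{lem:t0t0inv} applies. The key structural observation is that, by Definition \ref{def:F+F-}, each of $F^+$ and $F^-$ has the form $\alpha t_0 + \beta$ for scalars $\alpha,\beta \in \F$; hence $F^+ v$ and $F^- v$ are each $\F$-linear combinations of the two vectors $v$ and $t_0 v$. The hypothesis $k_0 \neq k_0^{-1}$ is exactly what makes the denominators $k_0 - k_0^{-1}$ in $F^\pm$ nonzero, so these elements are well defined.

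For the containment assertion I would argue as follows. By the first inclusion in \eqref{eq:t0VXmu} we have $t_0 v \in \V_X(\mu) + \V_X(\nu)$, and trivially $v \in \V_X(\mu) \subseteq \V_X(\mu) + \V_X(\nu)$. Since $F^\pm v$ are $\F$-combinations of $v$ and $t_0 v$, both lie in $\V_X(\mu) + \V_X(\nu)$, which settles this half immediately. This is the routine part.

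For the nonvanishing I would invoke multiplicity-freeness: by Proposition \ref{prop:mfree}(i) the eigenspace $\V_X(\mu)$ is one-dimensional, so $\V_X(\mu) = \F v$. The non-inclusion $t_0 \V_X(\mu) \not\subseteq \V_X(\mu)$ recorded in \eqref{eq:t0VXmu} then says precisely that $t_0 v \notin \F v$. Now if $F^+ v = 0$, unwinding Definition \ref{def:F+F-} gives $t_0 v = k_0^{-1} v \in \F v$, contradicting $t_0 v \notin \F v$; hence $F^+ v \neq 0$. Symmetrically, $F^- v = 0$ would force $t_0 v = k_0 v \in \F v$, again a contradiction, so $F^- v \neq 0$. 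The one genuinely substantive point here—the ``main obstacle,'' such as it is for so short a lemma—is the passage from the subspace statement $t_0 \V_X(\mu) \not\subseteq \V_X(\mu)$ to the individual-vector statement $t_0 v \notin \F v$; this is exactly where the one-dimensionality of $\V_X(\mu)$ enters, and it is also why $k_0 \neq k_0^{-1}$ is needed, since it keeps the two conditions $t_0 v = k_0 v$ and $t_0 v = k_0^{-1} v$ from coinciding. Everything else is bookkeeping with the definitions of $F^\pm$.
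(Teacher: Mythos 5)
Your proof is correct and follows essentially the same route as the paper: both arguments rest on Lemma \ref{lem:t0t0inv} combined with the one-dimensionality of the $X$-eigenspaces (Proposition \ref{prop:mfree}(i)), which the paper uses implicitly when it writes $t_0 v = \alpha v + u$ with $0 \neq u \in \V_X(\nu)$. The only difference is cosmetic: the paper gets nonvanishing directly, by noting that $(k_0-k_0^{-1})F^+ v = (\alpha - k_0^{-1})v + u$ has nonzero $\V_X(\nu)$-component, while you argue by contradiction ($F^{\pm}v=0$ would force $t_0 v \in \F v$); these are the same observation read in two directions.
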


\begin{proof}
By Lemma \ref{lem:t0t0inv} there exist $\alpha \in \F$ and $0 \neq u \in \V_X(\nu)$
such that $t_0 v = \alpha v + u$.
Using this and \eqref{eq:defF+F-} we argue
\[
 (k_0 - k_0^{-1}) F^+ v = t_0 v - k_0^{-1} v
    =  (\alpha - k_0^{-1}) v + u.
\]
Thus $F^+ v$ is nonzero and contained in $\V_X(\mu) + \V_X(\nu)$.
The proof is similar for $F^- v$.
\end{proof}

\begin{lemma} \label{lem:WF+WF-W} \samepage
\ifDRAFT {\rm lem:WF+WF-W}. \fi
Assume $k_0 \neq k_0^{-1}$.
Let $\mu, \nu$ denote a single bond in the reduced $X$-diagram of $\V$,
and set $\W = \V_X(\mu) + \V_X(\nu)$. 
Then the following hold:
\begin{itemize}
\item[\rm (i)]
$\W = F^+ \W + F^- \W \;\;$  (direct sum);
\item[\rm (ii)]
each of $F^+ \W$ and $F^- \W$ has dimension $1$;
\item[\rm (iii)]
$F^+ \W = \W \cap \V(k_0)$ and
$F^- \W = \W \cap \V(k_0^{-1})$;
\item[\rm (iv)]
each of $F^+ \V_X(\mu) $ and $F^+ \V_X(\nu)$ is  equal to $F^+ \W$;
\item[\rm (v)]
each of $F^- \V_X(\mu)$ and $F^- \V_X(\nu)$ is equal to $F^- \W$.
\end{itemize}
\end{lemma}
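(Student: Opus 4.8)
The plan is to build on three facts already in place: that $\W$ is invariant under $t_0$, that $F^+ + F^- = 1$ with $F^+$ (resp.\ $F^-$) acting as the projection of $\V$ onto $\V(k_0)$ (resp.\ $\V(k_0^{-1})$), and that $\dim \W = 2$. The last point holds because $X$ is multiplicity-free by Proposition \ref{prop:mfree}, so $\V_X(\mu)$ and $\V_X(\nu)$ are each $1$-dimensional, and a single bond in the reduced diagram joins \emph{distinct} vertices $\mu \neq \nu$ (otherwise it would be a single loop, which the reduced diagram omits), so $\V_X(\mu)$ and $\V_X(\nu)$ are distinct eigenspaces of $X$ and $\W$ is their direct sum. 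First I would record that $\W$ is invariant under $t_0$ by Lemma \ref{lem:Hqinv}(i), since $\mu,\nu$ are $1$-adjacent; because $F^\pm$ are polynomials in $t_0$ by \eqref{eq:defF+F-}, it follows that $F^+\W \subseteq \W$ and $F^-\W \subseteq \W$.

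For (i), since $F^+ + F^- = 1$ on $\V$, every $w \in \W$ satisfies $w = F^+ w + F^- w \in F^+\W + F^-\W$, so $\W = F^+\W + F^-\W$; the sum is direct because $F^+\W \subseteq \V(k_0)$ and $F^-\W \subseteq \V(k_0^{-1})$, and these eigenspaces of $t_0$ for distinct eigenvalues meet only in $0$. For (ii), Lemma \ref{lem:F+F-} gives that $F^+ v$ and $F^- v$ are nonzero for nonzero $v \in \V_X(\mu)$, so each of $F^+\W$ and $F^-\W$ is nonzero; combining this with the direct decomposition from (i) and $\dim \W = 2$ forces each summand to have dimension exactly $1$.

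For (iii), the inclusion $F^+\W \subseteq \W \cap \V(k_0)$ is immediate from $F^+\W \subseteq \W$ together with $F^+\W \subseteq \V(k_0)$; conversely any $w \in \W \cap \V(k_0)$ is fixed by the projection $F^+$, so $w = F^+ w \in F^+\W$, giving $F^+\W = \W \cap \V(k_0)$, and the same argument yields $F^-\W = \W \cap \V(k_0^{-1})$. For (iv) and (v), I would apply Lemma \ref{lem:F+F-} to $\mu$ and, after interchanging the roles of $\mu$ and $\nu$ (legitimate since the single bond is symmetric in $\mu,\nu$), also to $\nu$: each of $F^+\V_X(\mu)$, $F^+\V_X(\nu)$ is then a nonzero subspace of the $1$-dimensional space $F^+\W$, hence equals it, and the identical reasoning with $F^-$ in place of $F^+$ gives (v).

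The computations are routine, and there is no serious obstacle; the only point needing a little care is the dimension bookkeeping in (ii), where one must use \emph{simultaneously} that $\dim\W = 2$, that both images $F^\pm\W$ are nonzero, and that their sum is direct, in order to conclude that each has dimension precisely $1$. Everything else reduces to the fact that $F^+, F^-$ are complementary idempotent projections commuting with the $t_0$-invariant subspace $\W$.
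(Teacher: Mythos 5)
Your proof is correct and follows essentially the same route as the paper's: Lemma \ref{lem:F+F-} for the nonvanishing of $F^{\pm}$ on the eigenspaces, the dimension count $\dim \W = 2$, and the trivial intersection $F^+\W \cap F^-\W = 0$ coming from $k_0 \neq k_0^{-1}$. The only cosmetic differences are that you obtain $F^{\pm}\W \subseteq \W$ via Lemma \ref{lem:Hqinv}(i) together with $F^{\pm}$ being polynomials in $t_0$, where the paper reads the containment directly off Lemma \ref{lem:F+F-}, and that you spell out the derivation of (i)--(v) from these facts, which the paper leaves as "follows from these comments."
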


\begin{proof}
By Lemma \ref{lem:F+F-} each of $F^+ \V_X(\mu)$ and $F^- \V_X(\nu)$ is nonzero
and contained in $\W$.
Similarly, each of $F^- \V_X(\mu)$ and $F^- \V_X(\nu)$ is nonzero
and contained in $\W$.
By these comments, each of $F^+ \W$ and $F^- \W$ is nonzero.
By the construction $\dim \W = 2$ and $F^+ \W \cap F^- \W =0$.
Now (i)--(v) follows from these comments.
\end{proof}

\begin{lemma}    \label{lem:dim}    \samepage
\ifDRAFT {\rm lem:dim}. \fi
Assume $k_0 \neq k_0^{-1}$.
Consider the reduced $X$-diagram of $\V$.
Then the dimension of $\V(k_0)$ (resp.\ $\V(k_0^{-1})$) is equal to the number
of single bonds plus the number of endvertices $\mu$
such that $\mu$ is incident to a double bond and $\V_X(\mu)$ is contained
in $\V(k_0)$ (resp.\ $\V(k_0^{-1})$).
\end{lemma}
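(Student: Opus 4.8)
The plan is to exhibit a direct sum decomposition of $\V$ into $t_0$-invariant ``bond subspaces'' and then to count how much each contributes to $\V(k_0)$. To each single bond $\{\mu,\nu\}$ of the reduced $X$-diagram I attach the subspace $\V_X(\mu)+\V_X(\nu)$, and to each endvertex $\mu$ incident to a double bond I attach the subspace $\V_X(\mu)$. I will show that these subspaces $\{\mathcal U_i\}$ form a direct sum decomposition $\V=\bigoplus_i\mathcal U_i$, that each $\mathcal U_i$ is invariant under $t_0$, and that $\dim(\mathcal U_i\cap\V(k_0))$ equals $1$ for a single-bond subspace and equals $0$ or $1$ for a double-bond endvertex according as $\V_X(\mu)\subseteq\V(k_0)$ or not.

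The combinatorial heart is the claim that these bond subspaces cover every eigenspace $\V_X(\mu_r)$ exactly once. Here I would use that the reduced $X$-diagram is a path (Lemma \ref{lem:diagram}) and that, by the construction, each vertex is $1$-adjacent to at most one vertex and $q$-adjacent to at most one vertex (as in the proof of Lemma \ref{lem:diagram}). Hence no vertex lies on two single bonds nor on two double bonds, so along the path the single and double bonds strictly alternate. Therefore each internal vertex lies on exactly one single bond (and one double bond), each endvertex whose unique bond is single lies on exactly one single bond, and each endvertex whose unique bond is double is precisely a double-bond endvertex lying on no single bond. Thus the single bonds are pairwise vertex-disjoint and, together with the double-bond endvertices, partition the vertex set. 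Since $\V=\bigoplus_{r=0}^n\V_X(\mu_r)$ with each summand one-dimensional (Proposition \ref{prop:mfree}), grouping the summands according to this partition yields the desired direct sum $\V=\bigoplus_i\mathcal U_i$.

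Each single-bond subspace is invariant under $t_0$ by Lemma \ref{lem:Hqinv}(i), and each double-bond-endvertex subspace $\V_X(\mu)$ is invariant under $t_0$ by Lemma \ref{lem:mu0mun}(i). Because $k_0\neq k_0^{-1}$, the relation $(t_0-k_0)(t_0-k_0^{-1})=0$ holds on $\V$ with distinct roots, so $t_0$ is diagonalizable and $\V=\V(k_0)\oplus\V(k_0^{-1})$, with $F^+$ acting as the projection onto $\V(k_0)$ (Definition \ref{def:F+F-}). Since $F^+$ is a polynomial in $t_0$, it preserves each $t_0$-invariant $\mathcal U_i$, and one checks $F^+\mathcal U_i=\mathcal U_i\cap\V(k_0)$; applying $F^+$ to $\V=\bigoplus_i\mathcal U_i$ gives $\V(k_0)=\bigoplus_i(\mathcal U_i\cap\V(k_0))$ and hence $\dim\V(k_0)=\sum_i\dim(\mathcal U_i\cap\V(k_0))$.

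It remains to evaluate the summands. For a single-bond subspace $\W=\V_X(\mu)+\V_X(\nu)$, Lemma \ref{lem:WF+WF-W}(ii),(iii) gives $\dim(\W\cap\V(k_0))=\dim F^+\W=1$. For a double-bond-endvertex subspace $\V_X(\mu)$, which is one-dimensional and $t_0$-invariant, the intersection $\V_X(\mu)\cap\V(k_0)$ has dimension $1$ if $\V_X(\mu)\subseteq\V(k_0)$ and $0$ otherwise. Summing, $\dim\V(k_0)$ equals the number of single bonds plus the number of double-bond endvertices $\mu$ with $\V_X(\mu)\subseteq\V(k_0)$, as required; replacing $F^+$ by $F^-$ gives the statement for $\V(k_0^{-1})$. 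The main obstacle is the covering/partition step of the second paragraph; the degenerate case $n=0$ (a single vertex, interpreted as \textup{\sf DS} and thus treated as a double-bond endvertex carrying no single bond) is checked directly, since then $\V=\V_X(\mu_0)$ and the formula reduces to asserting $\dim\V(k_0)=1$ exactly when $\V_X(\mu_0)\subseteq\V(k_0)$.
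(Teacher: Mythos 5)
Your proof is correct and takes essentially the same route as the paper: the paper's one-line proof (citing Lemmas \ref{lem:endvertex} and \ref{lem:WF+WF-W}) is precisely this decomposition of $\V$ into $t_0$-invariant bond subspaces---one two-dimensional subspace per single bond, one one-dimensional subspace per endvertex incident to a double bond---followed by applying the projection $F^+$ (resp.\ $F^-$) summand by summand and counting dimensions. Your write-up merely makes explicit the alternation/partition argument for the path and the $n=0$ convention, details the paper leaves implicit.
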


\begin{proof}
Follows from Lemmas \ref{lem:endvertex} and \ref{lem:WF+WF-W}.
\end{proof}

\begin{lemma}    \label{lem:F+W0}    \samepage
\ifDRAFT {\rm lem:F+W0}. \fi
The following hold.
\begin{itemize}
\item[\rm (i)]
Assume that  $\V$ has $X$-type among $\text{\sf DS}$, $\text{\sf DDa}$, $\text{\sf DDb}$.
Then $\V_X(\mu_0)$ is contained in $\V(k_0)$.
\item[\rm (ii)]
Assume that $\V$ has $X$-type $\text{\sf DDa}$.
Then $\V_X(\mu_n)$ is contained in $\V(k_0)$.
\item[\rm (iii)]
Assume that $\V$ has $X$-type $\text{\sf DDb}$.
Then $\V_X(\mu_n)$ is contained in $\V(k_0^{-1})$.
\end{itemize}
\end{lemma}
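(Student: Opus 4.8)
The plan is to read off all three assertions directly from the recorded action of $t_0$ on the two extreme $X$-eigenvectors, combined with the multiplicity-freeness of $X$.

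First I would invoke Proposition \ref{prop:mfree}(i): since $X$ is multiplicity-free on $\V$, each eigenspace $\V_X(\mu_r)$ is one-dimensional. Fixing nonzero vectors $v_0 \in \V_X(\mu_0)$ and $v_n \in \V_X(\mu_n)$, this gives $\V_X(\mu_0) = \F v_0$ and $\V_X(\mu_n) = \F v_n$. Because the parameter sequence $\{k_i\}_{i\in\I}$ is consistent with the ordering $\{\mu_r\}_{r=0}^n$ by Notation \ref{notation}, the hypothesis of Lemma \ref{lem:tiv0vn} is met, and that lemma supplies the action of $t_0$ on $v_0$ and on $v_n$ in each relevant $X$-type.

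For (i), in each of the cases \textsf{DS}, \textsf{DDa}, \textsf{DDb}, Lemma \ref{lem:tiv0vn} yields $t_0 v_0 = k_0 v_0$; since $\V_X(\mu_0) = \F v_0$, the element $t_0$ acts on $\V_X(\mu_0)$ as $k_0$, so $\V_X(\mu_0) \subseteq \V(k_0)$ by the definition \eqref{eq:defVk0}. For (ii), in case \textsf{DDa} the lemma gives $t_0 v_n = k_0 v_n$, whence $\V_X(\mu_n) = \F v_n \subseteq \V(k_0)$. For (iii), in case \textsf{DDb} the same lemma gives $t_0 v_n = k_0^{-1} v_n$, whence $\V_X(\mu_n) = \F v_n \subseteq \V(k_0^{-1})$.

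I do not expect any genuine obstacle: essentially all the content has already been packaged into the consistency convention of Definition \ref{def:consistent} and tabulated in Lemma \ref{lem:tiv0vn}, so the proof reduces to transcribing the correct row of that table and upgrading the one-vector statement to a statement about the whole one-dimensional eigenspace. The only point requiring care is the bookkeeping of which endvertex carries the eigenvalue $k_0$ versus $k_0^{-1}$ of $t_0$ in each $X$-type, and this is exactly what the table in Lemma \ref{lem:tiv0vn} records.
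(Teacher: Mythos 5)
Your proposal is correct and is essentially the paper's own proof, which reads in its entirety ``Use Lemma \ref{lem:tiv0vn}''; you have simply written out the transcription of that table. Note that the appeal to Proposition \ref{prop:mfree}(i) is not even needed: Lemma \ref{lem:tiv0vn} holds for an arbitrary nonzero vector of $\V_X(\mu_0)$ or $\V_X(\mu_n)$, so the containment of the whole eigenspace follows at once.
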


\begin{proof}
Use Lemma \ref{lem:tiv0vn}. 
\end{proof}

\begin{lemma}   \label{lem:onlyone}    \samepage
\ifDRAFT {\rm lem:onlyone}. \fi
Assume $n \geq 1$ and $k_0 \neq k_0^{-1}$.
Then the following are equivalent:
\begin{itemize}
\item[\rm (i)]
$t_0$ has only one eigenvalue on $\V$;
\item[\rm (ii)]
$n=1$ and $\V$ has $X$-type {\sf DDa}.
\end{itemize}
\end{lemma}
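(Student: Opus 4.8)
The plan is to reduce the statement to a dimension count. Since $k_0 \neq k_0^{-1}$, we have the direct sum decomposition $\V = \V(k_0) \oplus \V(k_0^{-1})$ recorded just before Lemma~\ref{lem:t0t0inv}, and the eigenvalues of $t_0$ lie among $k_0$, $k_0^{-1}$. Hence condition (i) holds precisely when exactly one of $\V(k_0)$, $\V(k_0^{-1})$ is zero. So I would compute $\dim \V(k_0)$ and $\dim \V(k_0^{-1})$ in each of the five $X$-types via Lemma~\ref{lem:dim}, and determine in which type, and for which $n$, one of the summands vanishes.

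To carry this out, I would first read off from the shapes in \eqref{eq:standard} together with the parity of $n$ in \eqref{eq:parityn} the number of single bonds in the reduced $X$-diagram: this is $n/2$ for \textsf{DS}, $(n-1)/2$ for \textsf{DD}, and $(n+1)/2$ for \textsf{SS}. Next I would count the endvertex contributions in Lemma~\ref{lem:dim}. The endvertices are $\mu_0$ and $\mu_n$, and such a vertex contributes only when it is incident to a double bond: in \textsf{DS} only $\mu_0$ is, in \textsf{DD} both $\mu_0$ and $\mu_n$ are, and in \textsf{SS} neither is. Lemma~\ref{lem:F+W0} then pins down which summand each relevant $\V_X(\mu)$ belongs to: in \textsf{DS}, \textsf{DDa}, \textsf{DDb} we have $\V_X(\mu_0) \subseteq \V(k_0)$; in \textsf{DDa} also $\V_X(\mu_n) \subseteq \V(k_0)$, whereas in \textsf{DDb} we have $\V_X(\mu_n) \subseteq \V(k_0^{-1})$. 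Assembling these, I would obtain $\dim \V(k_0^{-1}) = n/2$ for \textsf{DS}, $(n-1)/2$ for \textsf{DDa}, and $(n+1)/2$ for \textsf{DDb}, \textsf{SSa}, \textsf{SSb}, with $\dim \V(k_0) = (n+1) - \dim \V(k_0^{-1})$; a sanity check is that the two dimensions sum to $n+1$ in every case.

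From these formulas the conclusion is immediate in both directions: for $n \geq 1$ the value $\dim \V(k_0^{-1})$ is strictly positive in every case except \textsf{DDa} with $n = 1$, where it equals $0$, and in that single exceptional case $\V(k_0) = \V$, so $t_0$ has the unique eigenvalue $k_0$. I do not anticipate a genuine obstacle, as each ingredient is supplied by an earlier lemma; the only care required is the bookkeeping, namely matching the parity of $n$ to the shape of each diagram so that the single-bond counts and the incidences of $\mu_0$, $\mu_n$ with double bonds come out correctly, and invoking Lemma~\ref{lem:F+W0} with the right $X$-type so that each double-bond endvertex is routed to the proper eigenspace of $t_0$.
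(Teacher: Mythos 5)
Your proof is correct and takes essentially the same route as the paper: both arguments rest on Lemma~\ref{lem:WF+WF-W} (each single bond contributes one dimension to each of $\V(k_0)$, $\V(k_0^{-1})$, which is exactly the content of Lemma~\ref{lem:dim}) together with Lemma~\ref{lem:F+W0} (routing the double-bond endvertices to the correct eigenspace). The only difference is organizational: the paper argues qualitatively (one eigenvalue forces the reduced diagram to have no single bond, hence type {\sf DD} with $n=1$, hence {\sf DDa} by Lemma~\ref{lem:F+W0}), whereas you tabulate $\dim\V(k_0^{\pm 1})$ for all five types and read off the answer; your bond counts and endvertex incidences are all accurate.
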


\begin{proof}
(i)$\Rightarrow$(ii):
Assume that $t_0$ has only one eigenvalue on $\V$.
Then the reduced $X$-diagram contains no single bond;
otherwise each of $\V(k_0^{\pm 1})$ is nonzero by Lemma \ref{lem:WF+WF-W}.
So the reduced $X$-diagram is {\sf DD} and $n=1$.
By this and Lemma \ref{lem:F+W0} $\V$ has $X$-type {\sf DDa}.

(ii)$\Rightarrow$(i):
Assume that $n=1$ and $\V$ has $X$-type {\sf DDa}.
Note that $\V = \V_X(\mu_0) + \V_X(\mu_1)$.
By Lemma \ref{lem:F+W0}(i), (ii) each of $\V_X(\mu_0)$, $\V_X(\mu_1)$ is contained
in $\V(k_0)$.
Thus $t_0$ has only one eigenvalue $k_0$.
\end{proof}

For the rest of this section the following notation is in effect.

\begin{notation}   \label{notation2}   \samepage
\ifDRAFT {\rm notation2}. \fi
Assume that $t_0$ has two distinct eigenvalues on $\V$.
Define  $d=\dim \V(k_0)-1$ and $d'=\dim \V(k_0^{-1})-1$.
\end{notation}

\begin{lemma}   \label{lem:dimpre}    \samepage
\ifDRAFT {\rm lem:dimpre}. \fi
The $d-d'$ is as follows: 
\[
\begin{array}{c|c}
\textup{\rm $X$-type of $\V$} & d - d' 
\\ \hline    \rule{0mm}{3ex}
 \textup{\sf DS} & 1
\\     \rule{0mm}{2.5ex}
 \textup{\sf DDa} & 2
\\    \rule{0mm}{2.5ex}
 \textup{\sf DDb},  \textup{\sf SSa}, \textup{\sf SSb} & 0
\end{array}
\]
\end{lemma}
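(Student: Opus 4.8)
The plan is to apply Lemma \ref{lem:dim} to each of $\V(k_0)$ and $\V(k_0^{-1})$ and then subtract, exploiting that the contribution coming from single bonds is common to both eigenspaces. Since $t_0$ has two distinct eigenvalues on $\V$, these eigenvalues are $k_0$ and $k_0^{-1}$, so $k_0 \neq k_0^{-1}$ and Lemma \ref{lem:dim} applies. Writing $s$ for the number of single bonds in the reduced $X$-diagram of $\V$, Lemma \ref{lem:dim} gives $\dim \V(k_0) = s + p$ and $\dim \V(k_0^{-1}) = s + p'$, where $p$ (resp.\ $p'$) is the number of endvertices $\mu$ incident to a double bond with $\V_X(\mu) \subseteq \V(k_0)$ (resp.\ $\V(k_0^{-1})$). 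Hence
\[
 d - d' = \dim \V(k_0) - \dim \V(k_0^{-1}) = p - p',
\]
and the problem reduces to counting the double-bond endvertices lying in each eigenspace.

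Next I would identify, for each $X$-type, which of the endvertices $\mu_0$, $\mu_n$ is incident to a double bond, by reading off the bond patterns in \eqref{eq:diag}. In type {\sf DS} the path begins with a double bond and ends with a single bond, so only $\mu_0$ is incident to a double bond. In type {\sf DD} both ends are double bonds, so $\mu_0$ and $\mu_n$ are each incident to a double bond. In type {\sf SS} both ends are single bonds, so neither endvertex is incident to a double bond.

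Finally I would use Lemma \ref{lem:F+W0} to place each double-bond endvertex into $\V(k_0)$ or $\V(k_0^{-1})$. For {\sf DS} and {\sf DDa}, Lemma \ref{lem:F+W0}(i),(ii) put $\V_X(\mu_0)$—and in the {\sf DDa} case also $\V_X(\mu_n)$—into $\V(k_0)$, so $p - p' = 1$ for {\sf DS} and $p - p' = 2$ for {\sf DDa}. For {\sf DDb}, Lemma \ref{lem:F+W0}(i),(iii) put $\V_X(\mu_0)$ into $\V(k_0)$ and $\V_X(\mu_n)$ into $\V(k_0^{-1})$, giving $p - p' = 0$. For {\sf SSa} and {\sf SSb} there are no double-bond endvertices, so $p = p' = 0$ and $p - p' = 0$. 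These values reproduce the table.

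I expect the only delicacy here to be bookkeeping rather than any genuine obstacle: one must correctly track the alternating bond pattern of each reduced diagram to pick out the double-bond endvertices and then invoke the right parts of Lemma \ref{lem:F+W0}. The structural content—that single bonds contribute equally to the two eigenspaces and therefore cancel in $d - d'$—is precisely what Lemma \ref{lem:dim} supplies, so no further computation is needed.
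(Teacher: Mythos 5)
Your proof is correct and follows exactly the paper's route: the paper's own proof is simply ``Follows from Lemmas \ref{lem:dim} and \ref{lem:F+W0},'' and your argument fills in precisely those details---using Lemma \ref{lem:dim} to cancel the common single-bond contribution and Lemma \ref{lem:F+W0} to place the double-bond endvertices. The bookkeeping of which endvertices are incident to double bonds in each reduced diagram is also accurate.
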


\begin{proof}
Follows from Lemmas \ref{lem:dim} and \ref{lem:F+W0}.
\end{proof}

\begin{corollary}            \label{cor:dim}
\ifDRAFT {\rm cor:dim}. \fi
In the table below we express $d$ and $d'$ in terms of $n$:
\[
\begin{array}{c|c|c}
\textup{\rm $X$-type of $\V$} & d  & d' 
\\ \hline    \rule{0mm}{3ex}
 \textup{\sf DS} & n/2 & (n-2)/2
\\     \rule{0mm}{2.5ex}
 \textup{\sf DDa} & (n+1)/2 & (n-3)/2
\\    \rule{0mm}{2.5ex}
 \textup{\sf DDb} & (n-1)/2 & (n-1)/2
\\    \rule{0mm}{2.5ex}
 \textup{\sf SSa}, \textup{\sf SSb} & (n-1)/2 & (n-1)/2
\end{array}
\]
\end{corollary}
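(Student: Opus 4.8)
The plan is to solve a $2\times 2$ linear system for $d$ and $d'$. One equation comes from the direct sum decomposition of $\V$, and the other is supplied directly by Lemma \ref{lem:dimpre}.

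First I would record the sum $d+d'$. Under Notation \ref{notation2} the element $t_0$ has two distinct eigenvalues $k_0$, $k_0^{-1}$ on $\V$, so $k_0\neq k_0^{-1}$ and, as noted earlier in this section, $\V=\V(k_0)+\V(k_0^{-1})$ is a direct sum. Hence
\[
  \dim\V(k_0)+\dim\V(k_0^{-1})=\dim\V=n+1.
\]
Substituting $d=\dim\V(k_0)-1$ and $d'=\dim\V(k_0^{-1})-1$ gives $(d+1)+(d'+1)=n+1$, that is $d+d'=n-1$.

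Next I would invoke Lemma \ref{lem:dimpre} for the difference $d-d'$, which equals $1$, $2$, $0$, $0$, $0$ in the cases \textup{\sf DS}, \textup{\sf DDa}, \textup{\sf DDb}, \textup{\sf SSa}, \textup{\sf SSb}, respectively. Adding and subtracting the two relations yields $d=\tfrac{(n-1)+(d-d')}{2}$ and $d'=\tfrac{(n-1)-(d-d')}{2}$. Plugging in each value of $d-d'$ produces exactly the entries of the table: for \textup{\sf DS} we get $d=n/2$, $d'=(n-2)/2$; for \textup{\sf DDa} we get $d=(n+1)/2$, $d'=(n-3)/2$; and for each of \textup{\sf DDb}, \textup{\sf SSa}, \textup{\sf SSb} we get $d=d'=(n-1)/2$.

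There is no real obstacle here: once the direct sum decomposition is in hand, the result is immediate linear algebra. The only point requiring care is to confirm the hypotheses of Lemma \ref{lem:dimpre} are met, but that lemma is stated under the same standing assumption (Notation \ref{notation2}) that $t_0$ has two distinct eigenvalues, so it applies verbatim.
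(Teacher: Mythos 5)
Your proof is correct and is essentially the paper's own argument: the paper's proof reads simply ``Follows from Lemma \ref{lem:dimpre},'' leaving implicit the relation $d+d'=n-1$ coming from the direct sum $\V=\V(k_0)+\V(k_0^{-1})$, which you have spelled out. Nothing to add.
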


\begin{proof}
Follows from Lemma \ref{lem:dimpre}.
\end{proof}

\section{Proof of Theorem \ref{thm:main1}}
\label{sec:proofmain1}

In this section we prove Theorem \ref{thm:main1}.
Until above Lemma \ref{lem:XDYD},  Notation \ref{notation} is in effect.
Referring to Notation \ref{notation2}, for each $\V(k_0^{\pm 1})$ 
we construct an eigenbasis of $\B$ on which
$\A$ acts in an irreducible tridiagonal manner. 

\begin{lemma}   \label{lem:wr}    \samepage
\ifDRAFT {\rm lem:wr}. \fi
With reference to Notation \ref{notation2},
there exist nonzero vectors $\{w_r\}_{r=0}^d$ in $\V(k_0)$ 
that satisfy the following conditions:
\begin{equation}                                        \label{eq:basiswr}
\begin{array}{c|l}
\text{\rm $X$-type of $\V$}  & \qquad \qquad\qquad  \text{\rm Conditions} 
\\ \hline
\text{\sf DS}  &     \rule{0mm}{3ex}
 w_0 \in \V_X(\mu_0)
\\
  & w_r \in \V_X(\mu_{2r-1}) + \V_X(\mu_{2r}) \qquad (1 \leq r \leq d)
\\ \hline
\text{\sf DDa}  &      \rule{0mm}{3ex}
  w_0 \in \V_X(\mu_0)
\\
 & w_r \in \V_X(\mu_{2r-1}) + \V_X(\mu_{2r}) \qquad (1 \leq r \leq d-1)
\\
 & w_d \in \V_X(\mu_{n})
\\ \hline
\text{\sf DDb}  &     \rule{0mm}{3ex}
  w_0 \in \V_X(\mu_0)
\\
 & w_r  \in \V_X(\mu_{2r-1}) + \V_X(\mu_{2r}) \qquad (1 \leq r \leq d)
\\ \hline
\text{\sf SSa}, \text{\sf SSb}   &     \rule{0mm}{3ex}
 w_r \in \V_X(\mu_{2r}) + \V_X(\mu_{2r+1})   \qquad (0 \leq r \leq d)
\end{array}
\end{equation}
\end{lemma}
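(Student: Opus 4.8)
The plan is to produce each $w_r$ as a nonzero vector in one of the one-dimensional ``bond subspaces'' whose direct sum is $\V(k_0)$, ordering them so that the memberships in \eqref{eq:basiswr} hold. Recall the decomposition used in Lemma \ref{lem:dim}: for each single bond $\mu,\nu$ of the reduced $X$-diagram the subspace $\W=\V_X(\mu)+\V_X(\nu)$ is $t_0$-invariant by Lemma \ref{lem:Hqinv}(i), and by Lemma \ref{lem:WF+WF-W}(ii),(iii) the subspace $F^+\W=\W\cap\V(k_0)$ is one-dimensional and contained in $\W$; moreover, for each endvertex $\mu$ incident to a double bond, $\V_X(\mu)$ is $t_0$-invariant by Lemma \ref{lem:mu0mun}(i), hence is a one-dimensional subspace lying entirely in $\V(k_0)$ or in $\V(k_0^{-1})$. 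Thus it suffices to enumerate the bond subspaces of $\V(k_0)$ for each $X$-type and choose a nonzero vector in each.

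First I would read off the bond pattern from \eqref{eq:diag}, \eqref{eq:standard} together with the parity of $n$ in \eqref{eq:parityn}. For the types \textup{\sf DS}, \textup{\sf DDa}, \textup{\sf DDb} the bond joining $\mu_r$ and $\mu_{r+1}$ is double for even $r$ and single for odd $r$, so the single bonds are exactly the pairs $\{\mu_{2r-1},\mu_{2r}\}$; for \textup{\sf SSa}, \textup{\sf SSb} this is reversed and the single bonds are the pairs $\{\mu_{2r},\mu_{2r+1}\}$. At the ends: in \textup{\sf DS} the vertex $\mu_0$ is incident to a double bond and $\mu_n$ to a single bond; in \textup{\sf DDa} and \textup{\sf DDb} both $\mu_0$ and $\mu_n$ are incident to double bonds; in \textup{\sf SSa} and \textup{\sf SSb} both endvertices are incident to single bonds.

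Next I would assign the vectors case by case. For each single bond $\W$ lying in $\V(k_0)$ I choose a nonzero vector in $F^+\W$; it is nonzero and lies in $\W\cap\V(k_0)$ by Lemma \ref{lem:WF+WF-W}, which is precisely the membership required in \eqref{eq:basiswr}. For the double-bond endvertices that lie in $\V(k_0)$, namely $\mu_0$ in each of \textup{\sf DS}, \textup{\sf DDa}, \textup{\sf DDb} and additionally $\mu_n$ in \textup{\sf DDa} by Lemma \ref{lem:F+W0}(i),(ii), I choose a nonzero vector in $\V_X(\mu_0)$, respectively $\V_X(\mu_n)$; in \textup{\sf DDb} the vertex $\mu_n$ lies in $\V(k_0^{-1})$ by Lemma \ref{lem:F+W0}(iii) and contributes nothing to $\V(k_0)$. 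Matching these vectors to the index ranges in \eqref{eq:basiswr}, the single bonds supply $w_r$ over the stated ranges, while the double-bond endvertices supply $w_0$, and in \textup{\sf DDa} also $w_d$.

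The only real work here is bookkeeping: confirming that the single/double pattern of each diagram shape matches the index ranges in \eqref{eq:basiswr}, and that no further bond subspaces contribute to $\V(k_0)$. There is no analytic difficulty, since existence and nonvanishing of the vectors are immediate from Lemmas \ref{lem:WF+WF-W} and \ref{lem:F+W0}. As a consistency check I would verify that the number of chosen vectors equals $d+1$ in every case, which follows from the dimension formula for $\V(k_0)$ in Corollary \ref{cor:dim}; this guarantees that the listed bond subspaces exhaust $\V(k_0)$ and that the indexing in \eqref{eq:basiswr} is the correct one.
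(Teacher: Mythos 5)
Your proposal is correct and follows essentially the same route as the paper's proof: pick a nonzero vector in the one-dimensional intersection $\W \cap \V(k_0) = F^+\W$ for each single bond $\W = \V_X(\mu_{2r-1})+\V_X(\mu_{2r})$ (via Lemma \ref{lem:WF+WF-W}(ii),(iii)), and a nonzero vector in each double-bond endvertex eigenspace that Lemma \ref{lem:F+W0} places inside $\V(k_0)$. The paper writes out only the \textup{\sf DS} case and declares the rest ``similar''; your explicit enumeration of the bond patterns and the count check against Corollary \ref{cor:dim} is precisely that similar argument carried out in full.
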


\begin{proof}
Consider the reduced $X$-diagram of $\V$.
First assume that $\V$ has $X$-type {\sf DS}.
Clearly there exists a nonzero $w_0$ in $\V_X(\mu_0)$.
By Lemma \ref{lem:F+W0}(i) $w_0$ is contained in $\V(k_0)$.
Pick any $r$ $(1 \leq r \leq d)$.
Observe that $\mu_{2r-1}$, $\mu_{2r}$ is a single bond by the shape of the $X$-diagram.
Define $\W = \V_X(\mu_{2r-1}) + \V_X(\mu_{2r})$.
By Lemma \ref{lem:WF+WF-W}(ii), (iii) $\W \cap \V(k_0)$ has dimension one.
Thus there exists a nonzero $w_r \in \V(k_0)$ that is contained in
$\V_X(\mu_{2r-1}) + \V_X(\mu_{2r})$.
We have shown the result for the case {\sf DS}.
The proof is similar for the other cases.
\end{proof}

\begin{lemma}   \label{lem:wdr}    \samepage
\ifDRAFT {\rm lem:wdr}. \fi
With reference to Notation \ref{notation2},
there exist nonzero vectors $\{w'_r\}_{r=0}^d$ in $\V(k_0^{-1})$ 
that satisfy the following conditions:
\begin{equation}                                        \label{eq:basiswdr}
\begin{array}{c|l}
\text{\rm $X$-type of $\V$}  & \qquad \qquad\qquad  \text{\rm Conditions} 
\\ \hline
\text{\sf DS}  &     \rule{0mm}{3ex}
  w'_r \in \V_X(\mu_{2r+1}) + \V_X(\mu_{2r+2}) \qquad (0 \leq r \leq d')
\\ \hline
\text{\sf DDa}  &      \rule{0mm}{3ex}
 w'_r \in \V_X(\mu_{2r+1}) + \V_X(\mu_{2r+2}) \qquad (0 \leq r \leq d')
\\ \hline
\text{\sf DDb}  &     \rule{0mm}{3ex}
  w'_r  \in \V_X(\mu_{2r+1}) + \V_X(\mu_{2r+2}) \qquad (0 \leq r \leq d'-1)
\\
 & w'_{d'} \in \V_X(\mu_n)
\\ \hline
\text{\sf SSa}, \text{\sf SSb}   &     \rule{0mm}{3ex}
 w'_r \in \V_X(\mu_{2r}) + \V_X(\mu_{2r+1})   \qquad (0 \leq r \leq d')
\end{array}
\end{equation}
\end{lemma}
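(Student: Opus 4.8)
The plan is to mirror the proof of Lemma \ref{lem:wr}, replacing the eigenspace $\V(k_0)$ by $\V(k_0^{-1})$ throughout and proceeding case by case on the $X$-type. The two ingredients are the same as before: Lemma \ref{lem:WF+WF-W}, which controls how a single bond splits between $\V(k_0)$ and $\V(k_0^{-1})$, and Lemma \ref{lem:F+W0}, which locates the endvertices incident to a double bond.

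First I would treat the single-bond contributions. For each single bond $\mu$, $\nu$ of the reduced $X$-diagram, set $\W = \V_X(\mu) + \V_X(\nu)$. By Lemma \ref{lem:WF+WF-W}(ii),(iii) the subspace $\W \cap \V(k_0^{-1}) = F^- \W$ has dimension one, so I can choose a nonzero vector of $\V(k_0^{-1})$ lying in $\W$. Reading off the positions of the single bonds from the diagrams in \eqref{eq:standard} produces exactly the sums $\V_X(\mu_{2r+1}) + \V_X(\mu_{2r+2})$ (types \textup{\sf DS}, \textup{\sf DDa}, \textup{\sf DDb}) and $\V_X(\mu_{2r}) + \V_X(\mu_{2r+1})$ (types \textup{\sf SSa}, \textup{\sf SSb}) appearing in the table, and these account for all of $\V(k_0^{-1})$ except possibly one endvertex.

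Next I would account for the endvertices, where the only difference from Lemma \ref{lem:wr} occurs. For types \textup{\sf DS} and \textup{\sf DDa}, Lemma \ref{lem:F+W0}(i),(ii) places $\V_X(\mu_0)$ (and, for \textup{\sf DDa}, also $\V_X(\mu_n)$) inside $\V(k_0)$, so $\V(k_0^{-1})$ receives no endvertex and is spanned purely by single-bond vectors; this is why the index in these rows runs over single bonds alone and is shifted relative to the $\V(k_0)$ case. For type \textup{\sf DDb}, by contrast, Lemma \ref{lem:F+W0}(iii) places $\V_X(\mu_n)$ inside $\V(k_0^{-1})$, which furnishes the distinguished vector $w'_{d'} \in \V_X(\mu_n)$. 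In every case the number of vectors produced equals $d' + 1 = \dim \V(k_0^{-1})$ by Corollary \ref{cor:dim}.

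The main (and essentially only) obstacle is the bookkeeping: verifying type by type that the positions of the single bonds --- and, for \textup{\sf DDb}, the endvertex $\mu_n$ --- line up with the index ranges stated in the table. This uses the parity of $n$ recorded in \eqref{eq:parityn} together with the explicit values of $d$ and $d'$ from Corollary \ref{cor:dim}. Since the whole argument is entirely parallel to the verification already carried out for Lemma \ref{lem:wr}, no genuinely new difficulty arises, and the proof may in fact be abbreviated to ``similar to the proof of Lemma \ref{lem:wr}.''
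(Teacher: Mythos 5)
Your proposal is correct and takes essentially the same approach as the paper: the paper disposes of this lemma with the remark ``Similar to the proof of Lemma \ref{lem:wr}'', and your argument is exactly that mirroring --- Lemma \ref{lem:WF+WF-W}(ii),(iii) supplies a nonzero vector of $\V(k_0^{-1})$ in each single-bond subspace, while Lemma \ref{lem:F+W0} sorts the endvertices, with only the \textup{\sf DDb} endvertex $\mu_n$ landing in $\V(k_0^{-1})$ via Lemma \ref{lem:F+W0}(iii). Your type-by-type bookkeeping of bond positions and index ranges against Corollary \ref{cor:dim} checks out, so nothing further is needed.
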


\begin{proof}
Similar to the proof of Lemma \ref{lem:wr}.
\end{proof}

\begin{lemma}   \label{lem:basiswr}    \samepage
\ifDRAFT {\rm lem:basiswr}. \fi
With reference to Lemmas \ref{lem:wr} and \ref{lem:wdr},
the vectors $\{w_r\}_{r=0}^d$ (resp.\ $\{w'_r\}_{r=0}^{d'}$) form a basis for
$\V(k_0)$ (resp.\ $\V(k_0^{-1})$).
\end{lemma}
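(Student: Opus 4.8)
The plan is to prove both assertions by a single linear-independence-plus-dimension-count argument, the essential input being that $X$ is multiplicity-free. By Proposition \ref{prop:mfree}(i) the transformation $X$ is multiplicity-free on $\V$, so we have the direct sum decomposition $\V = \bigoplus_{r=0}^n \V_X(\mu_r)$ into one-dimensional eigenspaces. First I would attach to each $w_r$ the set $S_r \subseteq \{0,1,\ldots,n\}$ of indices $j$ for which $\V_X(\mu_j)$ appears in the prescribed subspace for $w_r$ in \eqref{eq:basiswr}. Reading \eqref{eq:basiswr} case by case, one finds, for example in the \textsf{DS} case, $S_0 = \{0\}$ and $S_r = \{2r-1,2r\}$ for $1 \leq r \leq d$, and analogous descriptions in the other four $X$-types. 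Inspection of \eqref{eq:basiswr} together with the values of $d$ in terms of $n$ from Corollary \ref{cor:dim} shows that in every case the sets $S_r$ are pairwise disjoint; this disjointness is the only structural fact I need to extract from \eqref{eq:basiswr}.

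Next I would establish linear independence. Suppose $\sum_{r=0}^d c_r w_r = 0$ with $c_r \in \F$. Since each $w_r$ lies in $\bigoplus_{j \in S_r} \V_X(\mu_j)$ and the sets $S_r$ are pairwise disjoint, projecting this relation onto $\bigoplus_{j \in S_r} \V_X(\mu_j)$ along the remaining eigenspaces of $X$ isolates the single term $c_r w_r$, yielding $c_r w_r = 0$ for each $r$. As each $w_r$ is nonzero by Lemma \ref{lem:wr}, we conclude $c_r = 0$ for all $r$, so $\{w_r\}_{r=0}^d$ is linearly independent.

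Finally I would invoke the dimension count. By Notation \ref{notation2} we have $\dim \V(k_0) = d+1$, while $\{w_r\}_{r=0}^d$ consists of exactly $d+1$ vectors, each lying in $\V(k_0)$ by Lemma \ref{lem:wr}. A linearly independent set of $\dim \V(k_0)$ vectors in $\V(k_0)$ is a basis, so $\{w_r\}_{r=0}^d$ is a basis for $\V(k_0)$. The argument for $\{w'_r\}_{r=0}^{d'}$ and $\V(k_0^{-1})$ is identical, using Lemma \ref{lem:wdr} in place of Lemma \ref{lem:wr} and $\dim \V(k_0^{-1}) = d'+1$. There is no genuine obstacle here: multiplicity-freeness of $X$ forces the prescribed subspaces for distinct $w_r$ to meet only in $0$, so nonzero vectors supported on disjoint index sets are automatically independent. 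The only care required is the case-by-case verification that the sets $S_r$ are pairwise disjoint, which follows from the shape of the reduced $X$-diagram and Corollary \ref{cor:dim}.
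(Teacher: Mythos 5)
Your proof is correct. It differs from the paper's argument in how the spanning step is obtained, and the difference is worth noting. The paper decomposes $\V$ into the bond subspaces $\W_0=\V_X(\mu_0)$, $\W_r=\V_X(\mu_{2r-1})+\V_X(\mu_{2r})$ (in the \textup{\sf DS} case, and analogously in the others), then shows via Lemmas \ref{lem:F+W0} and \ref{lem:WF+WF-W}(iii) that $w_r$ lies in $\W_r\cap\V(k_0)=F^+\W_r$, and concludes from the direct sum $F^+\V=\sum_r F^+\W_r$ that the $w_r$ form a basis; spanning comes structurally from the fact that each $F^+\W_r$ is one-dimensional and spanned by $w_r$. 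You instead prove linear independence directly --- the prescribed supports $S_r$ in the $X$-eigenspace decomposition $\V=\bigoplus_{j=0}^n\V_X(\mu_j)$ are pairwise disjoint, so projecting a dependence relation onto $\bigoplus_{j\in S_r}\V_X(\mu_j)$ isolates $c_rw_r$ --- and then get spanning for free by counting, since Notation \ref{notation2} \emph{defines} $d=\dim\V(k_0)-1$, so $d+1$ independent vectors in $\V(k_0)$ must be a basis. Your route is more elementary: it bypasses the $F^\pm$ projection machinery (Lemmas \ref{lem:WF+WF-W}, \ref{lem:F+W0}) entirely within this lemma, needing only diagonalizability of $X$, the disjointness of supports (which, as you say, requires the values of $d$ from Corollary \ref{cor:dim} in the \textup{\sf DDa}/\textup{\sf DDb} cases to place $\V_X(\mu_n)$ outside the other supports), and the nonvanishing of the $w_r$ from Lemma \ref{lem:wr}. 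What the paper's approach buys in exchange is the sharper structural statement that $w_r$ spans $\W_r\cap\V(k_0)$, which is reused later (for instance in the proof of Lemma \ref{lem:Aaction}, where membership in the one-dimensional pieces $F^+\W_r$ drives the tridiagonality computation); your counting argument establishes the basis property without yielding that identification.
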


\begin{proof}
First assume that $\V$ has $X$-type {\sf DS}.
To simplify notation, set $\W_0 = \V_X(\mu_0)$ and
$\W_r = \V_X(\mu_{r-1}) + \V_X(\mu)$ for $1 \leq r \leq d$.
Observe that $\V = \sum_{r=0}^d \W_r$ (direct sum).
By the conditions in Lemma \ref{lem:wr} $w_r \in \W_r \cap \V(k_0)$
for $0 \leq r \leq d$.
By Lemma \ref{lem:F+W0}(i) and 
Lemma \ref{lem:WF+WF-W}(iii) $\W_r \cap \V(k_0) = F^+ \W_r$
for $0 \leq r \leq d$. 
By these comments $w_r \in F^+ \W_r$ for $0 \leq r \leq d$.
By this and $F^+ \V = \sum_{r=0}^d F^+ \W_r$ (direct sum),
the vectors $\{w_r\}_{r=0}^d$ form a basis for $F^+ \V = \V(k_0)$.
Similarly $\{w'_r\}_{r=0}^{d'}$ form a basis for $\V(k_0^{-1})$.
We have shown the result for the case {\sf DS}.
The proof is similar for the other cases.
\end{proof}

Note that each of $\A$ and $\B$ commutes with $t_0$ by
Lemma \ref{lem:titj}(ii), and so
each of $\V(k_0)$ and $\V(k_0^{-1})$ is invariant under $\A$, $\B$.
We first consider the action of $\B$ on $\V(k_0^{\pm 1})$.

\begin{lemma}    \label{lem:ABw}    \samepage
\ifDRAFT {\rm lem:ABw}. \fi
The following hold.
\begin{itemize}
\item[\rm (i)]
Let $\mu$ denote a vertex in the $X$-diagram of $\V$.
Then $\V_X(\mu)$ is contained in the eigenspace of $\B$ with eigenvalue $\mu + \mu^{-1}$.
\item[\rm (ii)]
Let $\mu, \nu$ denote a single bond in the reduced $X$-diagram of $\V$.
Then $\V_X(\mu) + \V_X(\nu)$ is contained in the eigenspace of $\B$
with eigenvalue $\mu + \mu^{-1}$.
\end{itemize}
\end{lemma}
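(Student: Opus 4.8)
The plan is to unwind the definition $\B = X + X^{-1}$ from \eqref{eq:defXYAB} and use the fact that $X$ is invertible, so every eigenvalue $\mu$ of $X$ is nonzero. This makes part (i) a one-line computation, and part (ii) an immediate consequence of the definition of a single bond.

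For part (i), I would pick any $v \in \V_X(\mu)$, so $X v = \mu v$ with $\mu \neq 0$. Since $X$ is invertible, applying $X^{-1}$ to $Xv = \mu v$ gives $X^{-1} v = \mu^{-1} v$. Then
\[
 \B v = (X + X^{-1}) v = (\mu + \mu^{-1}) v,
\]
so $v$ lies in the eigenspace of $\B$ with eigenvalue $\mu + \mu^{-1}$. As $v \in \V_X(\mu)$ was arbitrary, the whole subspace $\V_X(\mu)$ is contained in that eigenspace.

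For part (ii), recall that a single bond means $\mu$, $\nu$ are $1$-adjacent, which by definition means $\mu \nu = 1$, i.e.\ $\nu = \mu^{-1}$. Hence $\nu + \nu^{-1} = \mu^{-1} + \mu = \mu + \mu^{-1}$. Applying part (i) to each of $\mu$ and $\nu$, the subspace $\V_X(\mu)$ is contained in the eigenspace of $\B$ with eigenvalue $\mu + \mu^{-1}$, and $\V_X(\nu)$ is contained in the eigenspace of $\B$ with eigenvalue $\nu + \nu^{-1} = \mu + \mu^{-1}$; since these two eigenvalues coincide, the sum $\V_X(\mu) + \V_X(\nu)$ lies in the single eigenspace of $\B$ with eigenvalue $\mu + \mu^{-1}$.

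There is essentially no hard step here; this lemma simply records observation (C) from the proof summary in a precise form. The only point requiring a word of care is the passage from $X v = \mu v$ to $X^{-1} v = \mu^{-1} v$, which relies on $X$ being invertible (so that $\mu \neq 0$ and $X^{-1}$ acts as $\mu^{-1}$ on $\V_X(\mu)$); this is automatic since $X = t_3 t_0$ is a product of invertible generators of $\Hq$.
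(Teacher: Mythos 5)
Your proof is correct and is essentially the paper's own argument: part (i) is the direct computation $\B v = (X+X^{-1})v = (\mu+\mu^{-1})v$ on $\V_X(\mu)$, and part (ii) uses $\nu = \mu^{-1}$ (from $1$-adjacency) to conclude both eigenspaces of $X$ sit in the same eigenspace of $\B$. The extra care you take about invertibility of $X$ is fine but is exactly what the paper treats as "clear."
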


\begin{proof}
(i):
Clear by $\B = X + X^{-1}$.

(ii):
We have $\nu = \mu^{-1}$ since $\mu$, $\nu$ are $1$-adjacent.
So each of $\V_X(\mu)$ and $\V_X(\nu)$ is contained in the eigenspace of $\B$
for the eigenvalue $\mu + \mu^{-1}$.
\end{proof}

\begin{lemma}   \label{lem:Baction}    \samepage
\ifDRAFT {\rm lem:Baction}. \fi	
With reference to Notation \ref{notation2}, let the bases
 $\{w_r\}_{r=0}^d$ and $\{w'_r\}_{r=0}^{d'}$ be from Lemma \ref{lem:basiswr}.
Then the vectors $\{w_r\}_{r=0}^d$ and $\{w'_r\}_{r=0}^{d'}$ 
are eigenvectors of $\B$ with the following eigenvalues:
\[
 \begin{array}{c|c|c}
   \textup{\rm $X$-type of $\V$} 
   & \text{\rm Eigenvalue of $\B$ for $w_r$}
   & \text{\rm Eigenvalue of $\B$ for $w'_r$}
\\ \hline
  \textup{\sf DS}, \textup{\sf DDa}, \textup{\sf DDb}   \rule{0mm}{3ex}
 & k_0 k_3 q^{2r} + \frac{1}{k_0 k_3 q^{2r}}
 & k_0 k_3 q^{2r+2} + \frac{1}{k_0 k_3 q^{2r+2}}
\\
  \rule{0mm}{3ex}
 \textup{\sf SSa}, \textup{\sf SSb}
 & k_1 k_2 q^{2r+1} + \frac{1}{k_1 k_2 q^{2r+1}}
 & k_1 k_2 q^{2r+1} + \frac{1}{k_1 k_2 q^{2r+1}}
\end{array}
\]
\end{lemma}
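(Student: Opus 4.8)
The plan is to read off, for each basis vector, which eigenspace of $X$ it inhabits, and then invoke Lemma \ref{lem:ABw} together with the explicit eigenvalues from Lemma \ref{lem:eigenvalues}. First I would recall from Lemmas \ref{lem:wr} and \ref{lem:wdr} the precise location of each $w_r$ and $w'_r$. In the generic situation each such vector lies in a subspace $\V_X(\mu_i) + \V_X(\mu_{i+1})$ where $\mu_i, \mu_{i+1}$ form a single bond; in the two exceptional endvertex cases ($w_d \in \V_X(\mu_n)$ for $X$-type {\sf DDa} and $w'_{d'} \in \V_X(\mu_n)$ for $X$-type {\sf DDb}) the vector lies in a single eigenspace of $X$. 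In the single-bond case $\mu_i \mu_{i+1} = 1$, so $\mu_i + \mu_i^{-1} = \mu_{i+1} + \mu_{i+1}^{-1}$, and Lemma \ref{lem:ABw}(ii) shows that $w_r$ is an eigenvector of $\B$ with eigenvalue $\mu_i + \mu_i^{-1}$; in the single-eigenspace case Lemma \ref{lem:ABw}(i) gives the eigenvalue $\mu_n + \mu_n^{-1}$ directly.

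The remaining work is to substitute the closed forms for the $\mu_r$ from Lemma \ref{lem:eigenvalues} and simplify. For $X$-types among {\sf DS}, {\sf DDa}, {\sf DDb} I would use \eqref{eq:Dmur}: for $w_r$ with $1 \leq r \leq d$ the relevant even-indexed eigenvalue is $\mu_{2r} = k_0 k_3 q^{2r}$ (and for $w_0 \in \V_X(\mu_0)$ one has $\mu_0 = k_0 k_3$), giving the eigenvalue $k_0 k_3 q^{2r} + (k_0 k_3 q^{2r})^{-1}$; for $w'_r$ the even-indexed eigenvalue is $\mu_{2r+2} = k_0 k_3 q^{2r+2}$, giving $k_0 k_3 q^{2r+2} + (k_0 k_3 q^{2r+2})^{-1}$. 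For $X$-types {\sf SSa}, {\sf SSb} I would instead use \eqref{eq:Smur}: both $w_r$ and $w'_r$ lie in $\V_X(\mu_{2r}) + \V_X(\mu_{2r+1})$ with odd-indexed eigenvalue $\mu_{2r+1} = k_1 k_2 q^{2r+1}$, giving $k_1 k_2 q^{2r+1} + (k_1 k_2 q^{2r+1})^{-1}$. This matches the table row by row.

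The only place requiring extra care, and hence the main obstacle, is the two endvertex cases. For these I would combine the endvertex eigenvalue $\mu_n$, which by \eqref{eq:Dmur} (with $n$ odd) equals $(k_0 k_3 q^{n+1})^{-1}$, with the dimension relations from Corollary \ref{cor:dim}: for {\sf DDa} one has $d = (n+1)/2$, so $n+1 = 2d$ and $\mu_n = (k_0 k_3 q^{2d})^{-1}$, whence $\mu_n + \mu_n^{-1} = k_0 k_3 q^{2d} + (k_0 k_3 q^{2d})^{-1}$, exactly the claimed $w_d$-eigenvalue; for {\sf DDb} one has $d' = (n-1)/2$, so $n+1 = 2d'+2$ and $\mu_n = (k_0 k_3 q^{2d'+2})^{-1}$, matching the claimed $w'_{d'}$-eigenvalue. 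Finally, to justify applying Lemma \ref{lem:ABw}(ii) I would confirm that the bonds invoked are genuinely single: from the shapes in \eqref{eq:standard} the bond between $\mu_i$ and $\mu_{i+1}$ is single precisely when (for {\sf DS}, {\sf DD}) $i$ is odd and (for {\sf SS}) $i$ is even, which is exactly the parity arising in each row of Lemmas \ref{lem:wr} and \ref{lem:wdr}. With these parity checks in place, the substitution completes the proof.
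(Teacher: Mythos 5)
Your proposal is correct and follows exactly the route the paper takes: the paper's proof is the one-line citation of Lemmas \ref{lem:eigenvalues} and \ref{lem:ABw}, and your argument simply fills in the details of that citation (locating each $w_r$, $w'_r$ via Lemmas \ref{lem:wr} and \ref{lem:wdr}, checking the relevant bonds are single, and handling the two endvertex cases via Corollary \ref{cor:dim}). All the parity checks and substitutions you carry out are accurate.
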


\begin{proof}
Follows from Lemmas \ref{lem:eigenvalues} and \ref{lem:ABw}.
\end{proof}

Next we consider the action of $\A$ on $\V(k_0^{\pm 1})$.

\begin{lemma} \label{lem:AF+v}  \samepage
\ifDRAFT {\rm lem:AF+v}. \fi
Assume $k_0 \neq k_0^{-1}$. Then the following hold on $\V$:
\begin{align}
 \A F^+  
 &= (k_0-k_0^{-1})F^+ t_1 F^+  + k_0^{-1}(k_1+k_1^{-1}) F^+,    \label{eq:AF+v} \\
 \A F^- 
 &= (k_0^{-1}-k_0)F^- t_1 F^-  + k_0 (k_1+k_1^{-1}) F^- .       \label{eq:AF-v}
\end{align}
\end{lemma}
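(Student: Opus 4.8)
The plan is to work entirely on $\V$, where each $T_i$ acts as the scalar $k_i+k_i^{-1}$, and to exploit that $F^+$, $F^-$ are the spectral projections of $t_0$. Since $F^+$ is a polynomial in $t_0$, it commutes with $t_0$, and its image is $\V(k_0)$; hence $t_0 F^+ = k_0 F^+$ and $t_0^{-1} F^+ = k_0^{-1} F^+$, and likewise $t_0 F^- = k_0^{-1} F^-$ and $t_0^{-1} F^- = k_0 F^-$. I would also record the spectral decomposition $t_0 = k_0 F^+ + k_0^{-1} F^-$, which together with $F^+ + F^- = 1$ rewrites as $t_0 = k_0^{-1} + (k_0-k_0^{-1})F^+$ and equivalently as $t_0 = k_0 + (k_0^{-1}-k_0)F^-$. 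Finally I would use $\A = Y+Y^{-1} = t_0 t_1 + t_1^{-1}t_0^{-1}$ and the relation $t_1^{-1} = (k_1+k_1^{-1}) - t_1$ valid on $\V$.

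First I would establish \eqref{eq:AF+v}. Multiplying $\A = t_0 t_1 + t_1^{-1} t_0^{-1}$ on the right by $F^+$ gives $\A F^+ = t_0 t_1 F^+ + t_1^{-1} t_0^{-1} F^+$. In the second summand, replace $t_0^{-1}F^+$ by $k_0^{-1}F^+$ and eliminate $t_1^{-1}$ via $t_1^{-1} = (k_1+k_1^{-1}) - t_1$, turning it into $k_0^{-1}(k_1+k_1^{-1})F^+ - k_0^{-1} t_1 F^+$. In the first summand, substitute $t_0 = k_0^{-1} + (k_0-k_0^{-1})F^+$ to obtain $t_0 t_1 F^+ = k_0^{-1} t_1 F^+ + (k_0-k_0^{-1})F^+ t_1 F^+$. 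Adding the two summands, the terms $\pm\, k_0^{-1} t_1 F^+$ cancel, leaving exactly the right-hand side of \eqref{eq:AF+v}. The proof of \eqref{eq:AF-v} runs in parallel: multiply $\A$ on the right by $F^-$, use $t_0^{-1}F^- = k_0 F^-$ and the same rewriting of $t_1^{-1}$, and substitute $t_0 = k_0 + (k_0^{-1}-k_0)F^-$; the $t_1 F^-$ contributions again cancel, yielding \eqref{eq:AF-v}.

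Because $\Hq$ is noncommutative, the one point to watch throughout is the ordering of factors (in particular keeping each $F^\pm$ on the correct side and not commuting $t_1$ past $t_0$ or $F^\pm$); apart from this the computation is purely mechanical, and I do not anticipate a genuine obstacle. One could alternatively phrase the whole argument symmetrically by noting that the two identities are exchanged under $k_0 \leftrightarrow k_0^{-1}$ and $F^+ \leftrightarrow F^-$, so that only \eqref{eq:AF+v} needs to be checked in full.
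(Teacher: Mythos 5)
Your proposal is correct and follows essentially the same route as the paper: both decompose $\A F^{\pm}=t_0t_1F^{\pm}+t_1^{-1}t_0^{-1}F^{\pm}$, use $t_0^{-1}F^{\pm}=k_0^{\mp 1}F^{\pm}$ together with $t_1^{-1}=(k_1+k_1^{-1})-t_1$ on $\V$, and then invoke the defining relation $t_0-k_0^{-1}=(k_0-k_0^{-1})F^+$ (which you phrase as the substitution $t_0=k_0^{-1}+(k_0-k_0^{-1})F^+$) to produce the term $F^{\pm}t_1F^{\pm}$. The only cosmetic difference is that you substitute for $t_0$ before combining terms, whereas the paper factors $(t_0-k_0^{-1})t_1F^+$ at the end; the cancellation is identical.
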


\begin{proof}
We first show \eqref{eq:AF+v}.
We have $t_0^{-1} F^+  = k_0^{-1} F^+$ on $\V$ since $F^+$ acts on $\V$
as the projection onto $\V(k_0)$.
By \eqref{eq:ki} we have $T_1 = k_1 + k_1^{-1}$ on $\V$.
Using these comments  and $t_1^{-1}=T_1-t_1$ we argue on $\V$
\begin{align*}
 \A F^+ 
 &= (t_0t_1 + t_1^{-1}t_0^{-1}) F^+   \\
 &= t_0t_1 F^+  + k_0^{-1} (T_1 - t_1)F^+  \\
 &= t_0t_1 F^+  + k_0^{-1} (k_1+k_1^{-1}) F^+  - k_0^{-1} t_1 F^+  \\
 &= (t_0 - k_0^{-1}) t_1 F^+  + k_0^{-1} (k_1+k_1^{-1}) F^+.
\end{align*}
Now \eqref{eq:AF+v} follows since $t_0-k_0^{-1} = (k_0-k_0^{-1}) F^+$.
The proof of \eqref{eq:AF-v} is similar.
\end{proof}

\begin{lemma}    \label{lem:F+v}    \samepage
\ifDRAFT {\rm lem:F+v}. \fi
Assume $k_0 \neq k_0^{-1}$.
Let $\mu$, $\nu$ denote a single bond in the reduced $X$-diagram of $\V$.
Then for $0 \neq v \in \V_X(\mu)$ the following hold.
\begin{itemize}
\item[\rm (i)]
There exist nonzero $a$, $b \in \F$ such that
$F^+ v = a v + b G_0 v$.
\item[\rm (ii)]
There exist nonzero $a$, $b \in \F$ such that
$F^- v = a v + b G_0 v$.
\end{itemize}
\end{lemma}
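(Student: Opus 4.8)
The plan is to work entirely inside the two-dimensional subspace $\W = \V_X(\mu) + \V_X(\nu)$ and to write $F^+ v$ and $F^- v$ explicitly in a convenient basis. First I would record the setup: since $\mu,\nu$ is a single bond in the reduced $X$-diagram, we have $\nu = \mu^{-1}$ and $\mu \neq \mu^{-1}$, so $\mu$ is not incident to a single loop. By Lemma \ref{lem:G0G2V}(i) together with the multiplicity-freeness of $X$ (Proposition \ref{prop:mfree}), the vector $G_0 v$ is nonzero and spans $\V_X(\nu)$. Hence $\{v, G_0 v\}$ is a basis for $\W$, and by Lemma \ref{lem:F+F-} each of $F^+ v$, $F^- v$ is a nonzero element of $\W$. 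Consequently each can be expressed uniquely as $a v + b G_0 v$ with $a,b \in \F$, and the whole content of the lemma is to show that both coefficients are nonzero.

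For part (i) I would compute $t_0 v$ from \eqref{eq:t0vr} (equivalently from the formula for $t_0$ in Lemma \ref{lem:t0t3action}), substituting $T_0 = k_0 + k_0^{-1}$ and $T_3 = k_3 + k_3^{-1}$, and then apply $F^+ = (t_0 - k_0^{-1})/(k_0 - k_0^{-1})$. This produces the coefficient of $G_0 v$ as $\mu\bigl((k_0 - k_0^{-1})(\mu - \mu^{-1})\bigr)^{-1}$, which is manifestly nonzero since $\mu \neq 0$, $k_0 \neq k_0^{-1}$ by hypothesis, and $\mu \neq \mu^{-1}$; so $b \neq 0$ is immediate. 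After simplification the coefficient of $v$ is a nonzero scalar multiple of $\mu k_0 + \mu^{-1} k_0^{-1} - k_3 - k_3^{-1}$.

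The step I expect to be the crux is showing this coefficient of $v$ is nonzero. Here I would use the elementary fact that $s + s^{-1} = t + t^{-1}$ forces $s \in \{t, t^{-1}\}$: writing the expression as $\mu k_0 + (\mu k_0)^{-1} - (k_3 + k_3^{-1})$, it vanishes precisely when $\mu k_0 \in \{k_3, k_3^{-1}\}$, i.e.\ when $\mu = k_0^{-1} k_3$ or $\mu = k_0^{-1} k_3^{-1}$. Both of these are excluded by Corollary \ref{cor:nonzero}(i), so $a \neq 0$. For part (ii) the computation is identical with $F^- = (t_0 - k_0)/(k_0^{-1} - k_0)$: the coefficient of $G_0 v$ is again visibly nonzero, while the coefficient of $v$ is a nonzero multiple of $\mu k_0^{-1} + \mu^{-1} k_0 - k_3 - k_3^{-1}$, which now vanishes only if $\mu = k_0 k_3$ or $\mu = k_0 k_3^{-1}$ --- again forbidden by Corollary \ref{cor:nonzero}(i). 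Thus all four exclusions of Corollary \ref{cor:nonzero}(i) are consumed, two for each part, which is exactly the role that corollary plays. The only genuine subtlety is recognizing that the non-vanishing of $a$ is not a formal linear-algebra fact but requires these eigenvalue restrictions; everything else is a direct substitution.
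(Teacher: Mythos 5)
Your proof is correct, but it follows a genuinely different route from the paper's. The paper disposes of both coefficients by a soft projection argument that never computes anything: writing $F^+ v = a v + b G_0 v$, if $b = 0$ then $F^+ v$ spans $\V_X(\mu)$, so $\V_X(\mu) \subseteq \V(k_0)$ and hence $F^- v = 0$, contradicting Lemma \ref{lem:F+F-}; if $a = 0$ then $G_0 v \in \V(k_0)$, so $\V_X(\nu) \subseteq \V(k_0)$ and $F^- G_0 v = 0$, contradicting Lemma \ref{lem:F+F-} applied to the bond with $\mu$, $\nu$ swapped. In particular the paper never invokes Corollary \ref{cor:nonzero}; its only inputs are Lemma \ref{lem:F+F-} and the idempotency of $F^{\pm}$. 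You instead compute $a$ and $b$ explicitly from Lemma \ref{lem:t0t3action}: your coefficient of $G_0 v$ and the vanishing conditions $\mu \in \{k_0^{-1}k_3, k_0^{-1}k_3^{-1}\}$ for part (i) and $\mu \in \{k_0 k_3, k_0 k_3^{-1}\}$ for part (ii) all check out, and Corollary \ref{cor:nonzero}(i) does exclude exactly these four values. What your approach buys is the explicit formulas for the coefficients (in the same spirit as Lemmas \ref{lem:DSFur}--\ref{lem:SSbFur} later in the paper), and a transparent accounting of where the eigenvalue restrictions enter; what the paper's buys is brevity and economy of hypotheses. One remark of yours is overstated: that the nonvanishing of $a$ ``is not a formal linear-algebra fact but requires these eigenvalue restrictions.'' The paper shows it \emph{is} obtainable by a formal argument from Lemma \ref{lem:F+F-}; the eigenvalue restrictions are not avoided so much as repackaged, since both Corollary \ref{cor:nonzero} and Lemma \ref{lem:F+F-} ultimately rest on Lemma \ref{lem:G0G2V}, i.e.\ on $G_0$ mapping each of the two eigenspaces onto the other.
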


\begin{proof}
(i):
Note that $v$ is a basis for $\V_X(\mu)$.
By Lemma \ref{lem:G0G2V}(i) $G_0 v$ is a basis for $\V_X(\nu)$.
By Lemma \ref{lem:F+F-} $F^+ v$ is nonzero and contained in $\V_X(\mu) + \V_X(\nu)$.
By these comments there exist  $a$, $b \in \F$ such that
$F^+ v = a v + b G_0 v$.
We show $b \neq 0$.
By way of contradiction, assume $b=0$.
Then $F^+ v$ is contained in $\V_X(\mu)$, and so
$F^+ v$ is a basis for $\V_X(\mu)$.
This forces $F^- \V_X(\mu)=0$;
this is a contradiction since $F^- v \neq 0$ by Lemma \ref{lem:F+F-}.
Thus $b \neq 0$.
We can show $a \neq 0$ in a similar way.

(ii): Similar.
\end{proof}

\begin{lemma}    \label{lem:t1v}    \samepage
\ifDRAFT {\rm lem:t1v}. \fi
Let $\mu$ denote an eigenvalue of $X$ and let  $0 \neq v \in \V_X(\mu)$.
Then there exist $e$, $f \in \F$ with $f \neq 0$ such that
\[
  t_1 v = e v + f G_2 v.
\]
\end{lemma}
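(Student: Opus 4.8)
The plan is to reduce the statement to the one-dimensional eigenspace $\V_X(\mu)$ and then read off the action of $t_1$ from formulas already in hand. First I would invoke Proposition~\ref{prop:mfree}(i): since $X$ is multiplicity-free, $\V_X(\mu) = \F v$, so it suffices to exhibit a single pair $e,f$ with $f \neq 0$. The generic case is when $\mu$ is not incident to a double loop, i.e.\ $q\mu \neq q^{-1}\mu^{-1}$. There I would simply apply the formula for $t_1$ on $\V_X(\mu)$ from Lemma~\ref{lem:t1t2action} to the vector $v$, using that $T_1$ and $T_2$ act as the scalars $k_1+k_1^{-1}$ and $k_2+k_2^{-1}$ by Definition~\ref{def:param}. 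This yields $t_1 v = e v + f G_2 v$ with $e = (q^{-1}\mu^{-1}(k_1+k_1^{-1}) - (k_2+k_2^{-1}))/(q^{-1}\mu^{-1} - q\mu)$ and $f = 1/(q\mu - q^{-1}\mu^{-1})$; the latter is nonzero because its denominator is nonzero in this case.

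The main obstacle will be the exceptional case $\mu^2 = q^{-2}$ (the double-loop case), where the denominator above vanishes and Lemma~\ref{lem:t1t2action} gives no information about $t_1 v$; here I would argue structurally instead of computationally. I would first observe that $\mu^2 = q^{-2}$ forces $\mu$ to be an endvertex of the reduced $X$-diagram incident to a single bond: indeed $\mu$ cannot be joined by a double bond to a distinct vertex $\mu'$, since $\mu\mu' = q^{-2} = \mu^2$ would give $\mu' = \mu$, whereas every interior vertex of each diagram type \eqref{eq:standard} is incident to a double bond, so $\mu$ cannot be interior (the case $n=0$ being trivial, as then $\V_X(\mu) = \V$). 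Consequently $\mu$ is an endvertex whose unique bond is single, so Lemma~\ref{lem:mu0mun}(ii) shows $\V_X(\mu)$ is invariant under $t_2$, and Lemma~\ref{lem:t0t2inv}(ii) converts this to invariance under $t_1$.

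With $t_1$-invariance established, the proof in the exceptional case finishes cleanly. By Lemma~\ref{lem:R5} we have $G_2 v \in \V_X(q^{-2}\mu^{-1}) = \V_X(\mu) = \F v$, so $G_2 v = \lambda v$ for some $\lambda \in \F$, while $t_1 v \in \V_X(\mu) = \F v$ is nonzero because $t_1$ is invertible; writing $t_1 v = c v$, the choice $f = 1$ and $e = c - \lambda$ gives $t_1 v = e v + f G_2 v$ with $f \neq 0$. Thus both cases produce the required scalars, and the two-case split—generic versus double loop—is the only real subtlety, since the hard part is recognizing that the computational formula degenerates exactly at the double-loop vertices and supplying the invariance argument there.
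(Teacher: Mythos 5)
Your proof is correct, and it rests on the same two ingredients as the paper's own argument: the formula of Lemma \ref{lem:t1t2action}, and $t_1$-invariance of $\V_X(\mu)$ at degenerate vertices. The difference is where the case boundary is drawn. The paper splits on whether $\mu$ is incident to a double bond in the reduced $X$-diagram: if so, Lemma \ref{lem:t1t2action} applies (a double bond to a distinct vertex automatically rules out a double loop); if not, then $\mu$ is an endvertex incident to a single bond (or $n=0$), where Lemma \ref{lem:endvertex} gives $t_1$-invariance and Lemma \ref{lem:G20}(i) gives $G_2 v = 0$, so any $f \neq 0$ works. You instead split exactly at the failure locus of Lemma \ref{lem:t1t2action}, namely $\mu^2 = q^{-2}$: your generic case is strictly larger than the paper's (it also covers single-bond endvertices without double loops, which the paper handles by the invariance argument), and your exceptional case is strictly smaller. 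Your treatment of the exceptional case is also slightly more economical: you deduce $t_1$-invariance from Lemmas \ref{lem:mu0mun}(ii) and \ref{lem:t0t2inv}(ii) (which is precisely how the paper proves Lemma \ref{lem:endvertex}), and instead of needing $G_2 v = 0$ you only use $G_2 v \in \V_X(q^{-2}\mu^{-1}) = \V_X(\mu) = \F v$ and absorb the resulting scalar into $e$. Both routes are sound; yours isolates the unique genuine degeneracy and handles everything else uniformly by the formula, at the cost of the short structural argument identifying $\mu$ as an endvertex whose unique bond is single.
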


\begin{proof}
We may assume $n \geq 1$; otherwise the result follows since
$G_2 v = 0$ by Lemma \ref{lem:G20}(i).
First assume that $\mu$ is not incident to a double bond in the reduced $X$-diagram of $\V$.
Then $\mu$ is an endvertex of the reduced $X$-diagram of $\V$ that is incident to a single bond.
By Lemma \ref{lem:endvertex} $\V_X(\mu)$ is invariant under $t_1$.
By Lemma \ref{lem:G20}(i) $G_2 v=0$.
The result follows from these comments.
Next assume that $\mu$ is incident to a double bond in the reduced $X$-diagram of $\V$.
Then the result follows from Lemma \ref{lem:t1t2action}.
\end{proof}

\begin{lemma} \label{lem:F+t1F+v} \samepage
\ifDRAFT {\rm lem:F+t1F+v}. \fi
Assume $k_0 \neq k_0^{-1}$.
Let $\mu$ denote an eigenvalue of $X$ and let $0 \neq v \in \V_X(\mu)$.
\begin{itemize}
\item[\rm (i)]
Assume $F^+ v \neq 0$.
Then there exist  $\alpha$, $\beta$, $\gamma \in \F$ 
with $\beta \gamma \neq 0$ such that
\[
  F^+ t_1 F^+ v = \alpha F^+ v + \beta F^+ G_2 v + \gamma F^+ G_2G_0v.
\]
\item[\rm (ii)]
Assume $F^- v \neq 0$.
Then there exist $\alpha$, $\beta$, $\gamma \in \F$ 
with $\beta \gamma \neq 0$ such that
\[
  F^- t_1 F^- v = \alpha F^- v + \beta F^- G_2 v + \gamma F^- G_2G_0v.
\]
\end{itemize}
\end{lemma}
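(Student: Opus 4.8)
The plan is to reduce the claim to three facts already in hand: the expansion of $t_1$ on an $X$-eigenvector (Lemma \ref{lem:t1v}), the expansion of the projection $F^+$ on a single-bond eigenvector (Lemma \ref{lem:F+v}), and the one-dimensionality of $F^+\W$ for a single-bond subspace (Lemma \ref{lem:WF+WF-W}). I would prove (i) in detail and obtain (ii) by the verbatim analogue, replacing $F^+$ by $F^-$ and invoking Lemma \ref{lem:F+v}(ii) and Lemma \ref{lem:WF+WF-W}(v). The argument splits according to whether the vertex $\mu$ is incident to a single bond in the reduced $X$-diagram of $\V$.

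In the main case $\mu$ is incident to a single bond $\{\mu,\nu\}$, so $\nu=\mu^{-1}$, the vector $G_0 v$ is nonzero and spans $\V_X(\nu)$, and Lemma \ref{lem:F+v}(i) gives $F^+ v = a v + b G_0 v$ with $a,b\neq 0$. I would then apply Lemma \ref{lem:t1v} twice: to $v$, obtaining $t_1 v = e v + f G_2 v$ with $f\neq 0$, and to the nonzero vector $G_0 v\in\V_X(\nu)$, obtaining $t_1 G_0 v = e' G_0 v + f' G_2 G_0 v$ with $f'\neq 0$. Substituting and collecting terms yields $t_1 F^+ v = ae\,v + be'\,G_0 v + af\,G_2 v + bf'\,G_2 G_0 v$. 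Applying $F^+$, the crucial step is to remove the stray $F^+ G_0 v$ term: setting $\W=\V_X(\mu)+\V_X(\nu)$, the space $F^+\W=F^+\V_X(\mu)=F^+\V_X(\nu)$ is one-dimensional and spanned by $F^+ v\neq 0$ (Lemma \ref{lem:WF+WF-W}(ii),(iv)), so $F^+ G_0 v = \lambda F^+ v$ for some $\lambda\in\F$. This produces the required three-term expansion with $\alpha=ae+be'\lambda$, $\beta=af$, and $\gamma=bf'$, and then $\beta\gamma=abff'\neq 0$ because each of $a,b,f,f'$ is nonzero.

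In the remaining case $\mu$ is not incident to a single bond, hence is a double-bond endvertex; then $\V_X(\mu)$ is invariant under $t_0$ (Lemma \ref{lem:endvertex}) and $G_0 v=0$ (Lemma \ref{lem:G20}(ii)). Since $F^+$ is the projection onto $\V(k_0)$ and $F^+ v\neq 0$, this invariance forces $\V_X(\mu)\subseteq\V(k_0)$ and $F^+ v=v$. Here $F^+ t_1 F^+ v = F^+ t_1 v = e F^+ v + f F^+ G_2 v$ with $f\neq 0$ by Lemma \ref{lem:t1v}, while $F^+ G_2 G_0 v=0$. I would take $\alpha=e$, $\beta=f$, and $\gamma$ equal to any nonzero scalar: the displayed identity holds because its last term vanishes, and $\beta\gamma\neq 0$.

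The only delicate point is the main case, where three coefficients must be nonzero simultaneously; this is automatic once one observes that each of the two uses of Lemma \ref{lem:t1v} supplies a guaranteed-nonzero coefficient ($f$ and $f'$) while Lemma \ref{lem:F+v} supplies the nonzero $a,b$, forcing $\beta\gamma$ to be the nonzero product $abff'$. The one-dimensionality of $F^+\W$ is exactly what keeps the expansion to the three terms demanded by the statement.
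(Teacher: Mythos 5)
Your proposal is correct and follows essentially the same route as the paper: the same case split on whether $\mu$ is incident to a single bond, the same three ingredients (Lemma \ref{lem:t1v} applied to $v$ and to $G_0v$, Lemma \ref{lem:F+v}, and the one-dimensionality statement Lemma \ref{lem:WF+WF-W}(iv) to absorb the stray $F^+G_0v$ term), and the same coefficient bookkeeping giving $\beta\gamma = abff' \neq 0$. The only cosmetic difference is that the paper disposes of $n=0$ at the outset, whereas in your write-up that degenerate case is silently covered by your second case (where the phrase ``double-bond endvertex'' is not literally accurate, though the argument still goes through).
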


\begin{proof}
(i):
We assume  $n \geq 1$;
otherwise the assertion holds since $G_0 v=0$ and $G_2 v=0$ by Lemma \ref{lem:G20}.
By Lemma \ref{lem:t1v} there exist $e$, $f \in \F$ 
with $f \neq 0 $ such that
\[
  t_1 v = e v + f G_2 v.
\]
In this equation, apply $F^+$ to each side to get
\begin{equation}   \label{eq:F+t1F+vaux1}
 F^+ t_1 v = e F^+ v + f F^+ G_2 v.
\end{equation}
Similarly, there exist  $g$, $h \in \F$ with $h \neq 0 $ such that
\begin{equation}   \label{eq:F+t1F+vaux2}
 F^+ t_1 G_0 v = g F^+ G_0 v + h F^+ G_2 G_0 v.
\end{equation}
First assume that $\mu$ is incident to a single bond in the reduced $X$-diagram of $\V$.
By Lemma \ref{lem:F+v}(i) there exist nonzero $a$, $b\in \F$
such that
\[
  F^+ v = a v + b G_0 v.
\]
In this equation, apply $F^+ t_1$ to each side to get
\begin{equation}   \label{eq:F+t1F+vaux3}
  F^+ t_1 F^+ v = a F^+ t_1 v + b F^+ t_1 G_0 v.
\end{equation}
Combining \eqref{eq:F+t1F+vaux1}--\eqref{eq:F+t1F+vaux3}
\[
 F^+ t_1 F^+ v
  = a e F^+ v + b g F^+ G_0 v + af F^+ G_2 v + b h F^+ G_2G_0 v.
\]
The result follows since $af \neq 0$, $bh \neq 0$, and 
$F^+ G_0 v \in \text{Span}\{F^+ v\}$ by 
Lemma \ref{lem:WF+WF-W}(iv).
Next assume that $\mu$ is not incident to a single bond in the reduced $X$-diagram of $\V$.
Then $\mu$ is an endvertex that is incident to a double bond.
By Lemma \ref{lem:G20}(ii) $G_0 v = 0$.
By Lemma \ref{lem:endvertex} $v$ is an eigenvector for $t_0$.
So either $t_0 v = k_0 v$ or $t_0 v = k_0^{-1} v$.
By this and $F^+ v \neq 0$ we get $F^+ v = v$.
By this and \eqref{eq:F+t1F+vaux1},
\[
    F^+ t_1 F^+ v = e F^+ v + f F^+ G_2 v.
\]
The result follows since $G_0 v=0$ and $f \neq 0$.

(ii):
Similar.
\end{proof}

\begin{lemma} \label{lem:AF+vAF-v} \samepage
\ifDRAFT {\rm lem:AF+vAF-v}. \fi
Assume $k_0 \neq k_0^{-1}$.
Let $\mu$ denote an eigenvalue of $X$ and let $0 \neq v \in \V_X(\mu)$.
\begin{itemize}
\item[\rm (i)]
Assume $F^+ v \neq 0$.
Then there exist  $\alpha$, $\beta$, $\gamma \in \F$ 
with $\beta \gamma \neq 0$ such that
\[
 \A F^+ v = \alpha F^+ v + \beta F^+ G_2 v + \gamma F^+ G_2G_0v.
\]
\item[\rm (ii)]
Assume $F^- v \neq 0$.
Then there exist  $\alpha$, $\beta$, $\gamma \in \F$ 
with $\beta \gamma \neq 0$ such that
\[
 \A F^- v = \alpha F^- v + \beta F^- G_2 v + \gamma F^- G_2G_0v.
\]
\end{itemize}
\end{lemma}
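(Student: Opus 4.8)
The plan is to obtain both parts as a direct combination of the two immediately preceding results, Lemma \ref{lem:AF+v} and Lemma \ref{lem:F+t1F+v}. The point is that Lemma \ref{lem:AF+v} expresses $\A F^+$ (and $\A F^-$) as a scalar multiple of $F^+ t_1 F^+$ (resp.\ $F^- t_1 F^-$) plus a scalar multiple of $F^+$ (resp.\ $F^-$), while Lemma \ref{lem:F+t1F+v} already provides exactly the three-term expansion of $F^+ t_1 F^+ v$ (resp.\ $F^- t_1 F^- v$) in terms of $F^+ v$, $F^+ G_2 v$, $F^+ G_2 G_0 v$ (resp.\ the $F^-$ versions), with the crucial nonvanishing of the two relevant coefficients. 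So essentially all the work has been done, and what remains is to substitute and track how the scalars combine.

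For part (i), I would first apply \eqref{eq:AF+v} of Lemma \ref{lem:AF+v} to the vector $v$, giving
\[
 \A F^+ v = (k_0-k_0^{-1}) F^+ t_1 F^+ v + k_0^{-1}(k_1+k_1^{-1}) F^+ v .
\]
Since $F^+ v \neq 0$ by hypothesis, Lemma \ref{lem:F+t1F+v}(i) supplies scalars $\alpha'$, $\beta'$, $\gamma' \in \F$ with $\beta'\gamma' \neq 0$ and
\[
 F^+ t_1 F^+ v = \alpha' F^+ v + \beta' F^+ G_2 v + \gamma' F^+ G_2 G_0 v .
\]
Substituting this into the previous display and collecting the coefficient of $F^+ v$, I obtain the desired expression with
\[
 \alpha = (k_0-k_0^{-1})\alpha' + k_0^{-1}(k_1+k_1^{-1}), \qquad
 \beta = (k_0-k_0^{-1})\beta', \qquad
 \gamma = (k_0-k_0^{-1})\gamma'.
\]
The condition $\beta\gamma \neq 0$ then follows because $k_0 \neq k_0^{-1}$ forces $k_0-k_0^{-1} \neq 0$, and $\beta'\gamma' \neq 0$ by Lemma \ref{lem:F+t1F+v}(i); hence $\beta\gamma = (k_0-k_0^{-1})^2 \beta'\gamma' \neq 0$.

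For part (ii), the argument is entirely analogous: apply \eqref{eq:AF-v} of Lemma \ref{lem:AF+v} to $v$, then invoke Lemma \ref{lem:F+t1F+v}(ii) to expand $F^- t_1 F^- v$, and collect terms in the same way. There is no genuine obstacle here; the only thing to verify beyond routine bookkeeping is that the overall scalar $k_0-k_0^{-1}$ (resp.\ $k_0^{-1}-k_0$) multiplying the two split coefficients is nonzero, which is exactly guaranteed by the standing hypothesis $k_0 \neq k_0^{-1}$. Thus the lemma is a formal consequence of the two preceding ones, and the proof should be short.
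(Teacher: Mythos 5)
Your proposal is correct and follows exactly the paper's own proof, which simply states that the lemma ``follows from Lemmas \ref{lem:AF+v} and \ref{lem:F+t1F+v}''; you have merely spelled out the substitution and the scalar bookkeeping that the paper leaves implicit. The key nonvanishing check, $\beta\gamma=(k_0-k_0^{-1})^2\beta'\gamma'\neq 0$, is handled correctly using the hypothesis $k_0\neq k_0^{-1}$ together with $\beta'\gamma'\neq 0$ from Lemma \ref{lem:F+t1F+v}.
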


\begin{proof}
Follows from Lemmas  \ref{lem:AF+v} and \ref{lem:F+t1F+v}.
\end{proof}

\begin{lemma}   \label{lem:Aaction}    \samepage
\ifDRAFT {\rm lem:Aaction}. \fi
With reference to Notation \ref{notation2},
let the bases $\{w_r\}_{r=0}^d$ and $\{w'_r\}_{r=0}^{d'}$ be from
Lemma \ref{lem:basiswr}.
Then with respect to the basis $\{w_r\}_{r=0}^d$ (resp.\ $\{w'_r\}_{r=0}^{d'}$) 
the matrix representing $\A$ is irreducible tridiagonal.
\end{lemma}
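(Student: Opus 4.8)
The plan is to compute the action of $\A$ on each basis vector using the expansion from Lemma \ref{lem:AF+vAF-v}, and then to locate the resulting terms among the bond subspaces making up $\V(k_0)$ (resp.\ $\V(k_0^{-1})$). I will treat the basis $\{w_r\}_{r=0}^d$ for $\V(k_0)$; the argument for $\{w'_r\}_{r=0}^{d'}$ is identical, with $F^+$ replaced by $F^-$ and part (ii) of Lemma \ref{lem:AF+vAF-v} in place of part (i). First I would realize each $w_r$ as $F^+ v$ for a suitable eigenvector $v \in \V_X(\mu)$. Using a forward chain $\{v_r\}_{r=0}^n$ (Lemma \ref{lem:chainunique}) together with Lemma \ref{lem:WF+WF-W}(iv), the image $F^+ v$, where $v$ is the chain vector at the lower endpoint of the single bond defining $w_r$ (or at the endvertex itself when that bond subspace equals a single $\V_X(\mu)$), is a nonzero scalar multiple of $w_r$. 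Applying Lemma \ref{lem:AF+vAF-v}(i) and absorbing that scalar yields $\alpha,\beta,\gamma \in \F$ with $\beta\gamma \neq 0$ such that
\[
 \A w_r = \alpha\, w_r + \beta\, F^+ G_2 v + \gamma\, F^+ G_2 G_0 v.
\]

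The heart of the matter is to show that $F^+ G_2 v$ and $F^+ G_2 G_0 v$ are scalar multiples of $w_{r-1}$ and $w_{r+1}$ respectively, so that the matrix of $\A$ is tridiagonal. By Lemma \ref{lem:R5}, $G_2$ sends $\V_X(\lambda)$ into $\V_X(q^{-2}\lambda^{-1})$, and along a double bond the vertex $q^{-2}\lambda^{-1}$ is exactly the adjacent vertex. Tracing the alternating bond pattern, $G_2 v$ lands in the eigenspace at the vertex governing the previous bond subspace, while $G_0 v$ crosses the defining single bond and $G_2$ then carries it to the eigenspace governing the next bond subspace; applying $F^+$ and invoking Lemma \ref{lem:WF+WF-W}(iv) identifies these with $\F w_{r-1}$ and $\F w_{r+1}$. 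For irreducibility I would verify that the off-diagonal entries are nonzero: the relevant $G_0$, $G_2$ are bijections between adjacent one-dimensional eigenspaces (Lemma \ref{lem:G0G2V}), $F^+$ is injective on an eigenspace incident to a single bond (Lemma \ref{lem:F+F-}), and $\beta,\gamma \neq 0$; this produces nonzero subdiagonal and superdiagonal entries throughout the interior of the appropriate index ranges.

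The step I expect to be the main obstacle is the bookkeeping at the two ends of the diagram, which varies with the $X$-type and must be shown not to disturb the tridiagonal shape. At an endvertex incident only to a double bond (the extreme bond subspaces in {\sf DS}, {\sf DDa}, {\sf DDb}) one has $G_0 v = 0$ by Lemma \ref{lem:G20}(ii), which deletes the $w_{r+1}$ or $w_{r-1}$ term; at an endvertex incident only to a single bond (the ends in {\sf SSa}, {\sf SSb}) one has $G_2 v = 0$ by Lemma \ref{lem:G20}(i); and in the {\sf DDb} case the vertex $\mu_n$ lies in $\V(k_0^{-1})$, so $F^+$ annihilates the boundary image $G_2 G_0 v$. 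In each situation the vanishing term would correspond to an index outside $0 \le r \le d$, so no spurious off-diagonal entry appears and the sub/super-diagonal entries that do occur remain nonzero. I would carry out the index computation in full detail for $X$-type {\sf DS} and then note that the remaining types follow by the same reasoning, as is done for the companion Lemma \ref{lem:basiswr}.
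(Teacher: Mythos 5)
Your proposal is correct and follows essentially the same route as the paper's proof: realize each basis vector as $F^+v$ (resp.\ $F^-v$) via Lemma \ref{lem:WF+WF-W}(iv), expand $\A F^{\pm}v$ by Lemma \ref{lem:AF+vAF-v}, locate the terms in adjacent bond subspaces using Lemma \ref{lem:G0G2V}, and kill the boundary terms with Lemma \ref{lem:G20} (the paper, detailing only type {\sf DS}, does exactly this). One small bookkeeping slip: an endvertex incident only to a single bond also occurs in type {\sf DS} (at $\mu_n$), not just in {\sf SSa}, {\sf SSb}, so Lemma \ref{lem:G20}(i) is needed there as well — but the mechanism you describe handles it verbatim.
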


\begin{proof}
First assume that $\V$ has $X$-type \textup{\sf DS}.
Note that $d = n/2$ by Corollary \ref{cor:dim}.
We assume $d \geq 1$; otherwise the assertion is obviously true.
We abbreviate $\V_r = \V_X(\mu_r)$ for $0 \leq r \leq n$.
Define subspaces
\begin{align*}
  \W_0 &= \V_0,   &
  \W_r &= \V_{2r-1} + \V_{2r}   \qquad (1 \leq r \leq d).
\end{align*}
By the construction, $w_r \in F^+ \W_r$ for $0 \leq r \leq d$.
First we consider the action of $\A$ on $w_0$.
By the construction $F^+ w_0 = w_0 \neq 0$.
By Lemma \ref{lem:G20}(ii) $G_0 w_0 = 0$.
By these comments and Lemma \ref{lem:AF+vAF-v}(i),
there exist  $\alpha$, $\beta \in \F$ such that
\begin{align}
  \A w_0 &= \alpha w_0 + \beta F^+ G_2 w_0,
  & \beta & \neq 0.                                               \label{eq:Aw0no1}
\end{align}
By Lemma \ref{lem:G0G2V}(ii) $G_2 \V_0 = \V_1$,
and so $0 \neq G_2 w_0 \in \V_1$.
Applying Lemma \ref{lem:WF+WF-W}(iv)
for $\mu = \mu_1$ and $\nu = \mu_2$,
we find that
$0 \neq F^+ G_2 w_0 \in \F w_1$.
So there exists a nonzero $\beta' \in \F$ such that
$F^+ G_2 w_0 = \beta' w_1$.
By this and \eqref{eq:Aw0no1}
\begin{align}
  \A w_0 &= \alpha w_0 + \beta \beta' w_1,    
           & \beta \beta' &\neq 0.                                    \label{eq:Aw0no2}
\end{align}
Next we consider the action of $\A$ on $w_r$ for $1 \leq r \leq d-1$.
Pick any $r$ such that $1 \leq r \leq d-1$.
By Lemma \ref{lem:WF+WF-W}(iv) there exists $v \in \V_{2r-1}$ such that
$w_r = F^+ v$.
By Lemma \ref{lem:AF+vAF-v}(i)
there exist  $\alpha$, $\beta$, $\gamma \in \F$ such that
\begin{align}
 \A F^+ v &= \alpha F^+ v + \beta F^+ G_2 v + \gamma F^+ G_2 G_0 v,
 && \beta \gamma \neq 0.                                        \label{eq:Awrno1}
\end{align}
By Lemma \ref{lem:G0G2V}
\begin{align*}
  G_2 \V_{2r-1} &= \V_{2r-2},  &
  G_2 G_0 \V_{2r-1} &= \V_{2r+1}.
\end{align*}
So
$0 \neq G_2 v \in \V_{2r-2}$ and $0 \neq G_2 G_0 v \in \V_{2r+1}$.
By Lemma \ref{lem:WF+WF-W}(iv) (or by $F^+ \W_0 = \V_0$ if $r=1$)
$F^+ \W_{r-1} = F^+ \V_{2r-2}$.
So $0 \neq F^+ G_2 v \in F^+ \W_{r-1}$.
Thus $F^+ G_2 v = \beta' w_{r-1}$ for some nonzero $\beta' \in \F$.
Similarly $F^+ \W_{r+1} = F^+ \V_{2r+1}$, and so
$0 \neq F^+ G_2 G_0 v \in F^+ \W_{r+1}$.
Thus $F^+ G_2 G_0 v = \gamma' w_{r+1}$ for some nonzero $\gamma' \in \F$.
By these comments and \eqref{eq:Awrno1} 
\begin{align}
 \A w_r &= \beta \beta' w_{r-1} + \alpha w_r + \gamma \gamma' w_{r+1},
 && \beta \beta' \neq 0, \quad \gamma \gamma' \neq 0.                                        \label{eq:Awrno2}
\end{align}
In a similar way as above, we can show that there exist
 $\alpha$, $\beta \in \F$ such that
\begin{align}
 \A w_d &= \beta w_{r-1} + \alpha w_r,   && \beta \neq 0.                        \label{eq:Awd}
\end{align}
By \eqref{eq:Aw0no2}, \eqref{eq:Awrno2}, \eqref{eq:Awd}
we find that the matrix representing $\A$ with respect to $\{w_r\}_{r=0}^d$
is irreducible tridiagonal.
In a similar way, 
we find that  the matrix representing $\A$ with respect to $\{w'_r\}_{r=0}^{d'}$
is irreducible tridiagonal.
We have shown the result when $\V$ has $X$-type \textup{\sf DS}.
The proof is similar for the other types.
\end{proof}

\begin{proposition}    \label{prop:ABaction}    \samepage
\ifDRAFT {\rm prop:ABaction}. \fi
With reference to Notation \ref{notation2},
the following hold.
\begin{itemize}
\item[\rm (i)]
There exists a basis for $\V(k_0)$
with respect to which the matrix representing $\A$ is irreducible tridiagonal
and the matrix representing $\B$ is diagonal with the following $(r,r)$-entry
for $0 \leq r \leq d$:
\[
 \begin{array}{c|c}
  \textup{\rm $X$-type of $\V$} & \text{\rm $(r,r)$-entry for $\B$}
\\ \hline
  \textup{\sf DS}, \textup{\sf DDa}, \textup{\sf DDb} &   \rule{0mm}{3ex}
  k_0 k_3 q^{2r} + \frac{1}{k_0 k_3 q^{2r} }
\\
  \textup{\sf SSa}, \textup{\sf SSb} &   \rule{0mm}{2.5ex}
  k_1 k_2 q^{2r+1} + \frac{1}{k_1 k_2 q^{2r+1} }
 \end{array}
\]
\item[\rm (ii)]
There exists a basis for $\V(k_0^{-1})$ with respect to which the matrix
representing $\A$ is irreducible tridiagonal
and the matrix representing $\B$ is diagonal with the following $(r,r)$-entry
for $0 \leq r \leq d'$:
\[
 \begin{array}{c|c}
  \textup{\rm $X$-type of $\V$} & \text{\rm $(r,r)$-entry for $\B$}
\\ \hline
  \textup{\sf DS}, \textup{\sf DDa}, \textup{\sf DDb} &   \rule{0mm}{3ex}
  k_0 k_3 q^{2r+2} + \frac{1}{k_0 k_3 q^{2r+2} }
\\
  \textup{\sf SSa}, \textup{\sf SSb} &  \rule{0mm}{2.5ex}
  k_1 k_2 q^{2r+1} + \frac{1}{k_1 k_2 q^{2r+1} }
 \end{array}
\]
\end{itemize}
\end{proposition}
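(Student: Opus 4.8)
The plan is to assemble this proposition directly from the three preceding lemmas, which have already done all the substantive work; what remains is to check that the bases produced there simultaneously diagonalize $\B$ and tridiagonalize $\A$, with the diagonal entries matching those claimed.

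For part (i), I would take the vectors $\{w_r\}_{r=0}^d$ in $\V(k_0)$ constructed in Lemma \ref{lem:wr}, which form a basis for $\V(k_0)$ by Lemma \ref{lem:basiswr}. By Lemma \ref{lem:Baction} each $w_r$ is an eigenvector of $\B$, with eigenvalue $k_0 k_3 q^{2r} + (k_0 k_3 q^{2r})^{-1}$ when $\V$ has $X$-type among \textup{\sf DS}, \textup{\sf DDa}, \textup{\sf DDb}, and eigenvalue $k_1 k_2 q^{2r+1} + (k_1 k_2 q^{2r+1})^{-1}$ when $\V$ has $X$-type among \textup{\sf SSa}, \textup{\sf SSb}. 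Since every basis vector is a $\B$-eigenvector, the matrix representing $\B$ with respect to $\{w_r\}_{r=0}^d$ is diagonal, and its $(r,r)$-entry is exactly the eigenvalue just named, which is precisely the entry displayed in (i). Finally, by Lemma \ref{lem:Aaction} the matrix representing $\A$ with respect to this same basis is irreducible tridiagonal. This gives (i).

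For part (ii) I would argue identically, using instead the vectors $\{w'_r\}_{r=0}^{d'}$ in $\V(k_0^{-1})$ from Lemma \ref{lem:wdr}, which form a basis for $\V(k_0^{-1})$ by Lemma \ref{lem:basiswr}. Lemma \ref{lem:Baction} supplies the $\B$-eigenvalue on each $w'_r$, yielding the displayed diagonal entries for (ii), and Lemma \ref{lem:Aaction} shows that $\A$ is irreducible tridiagonal with respect to $\{w'_r\}_{r=0}^{d'}$. Since every ingredient is already in place, there is no genuine obstacle; the only point demanding care is the bookkeeping of matching the $\B$-eigenvalues from Lemma \ref{lem:Baction} against the case-by-case diagonal entries in the statement, in particular the shift between the $k_0 k_3 q^{2r}$ pattern for $\V(k_0)$ and the $k_0 k_3 q^{2r+2}$ pattern for $\V(k_0^{-1})$ in the \textup{\sf DS}, \textup{\sf DDa}, \textup{\sf DDb} cases.
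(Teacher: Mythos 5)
Your proposal is correct and is exactly the paper's own argument: the paper proves this proposition by citing Lemmas \ref{lem:Baction} and \ref{lem:Aaction}, which are stated precisely for the bases $\{w_r\}_{r=0}^d$ and $\{w'_r\}_{r=0}^{d'}$ of Lemmas \ref{lem:wr}--\ref{lem:basiswr}, just as you do. Your bookkeeping of the eigenvalues (including the $q^{2r}$ versus $q^{2r+2}$ shift between $\V(k_0)$ and $\V(k_0^{-1})$ in the \textup{\sf DS}, \textup{\sf DDa}, \textup{\sf DDb} cases) matches Lemma \ref{lem:Baction} exactly.
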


\begin{proof}
Follows from Lemmas \ref{lem:Baction} and \ref{lem:Aaction}.
\end{proof}

\begin{lemma}    \label{lem:XDYD}    \samepage
\ifDRAFT {\rm lem:XDYD}. \fi
Let $\V$ denote a finite dimensional irreducible $\Hq$-module with parameter sequence
$\{k_i\}_{i \in \I}$.
Assume that $t_0$ has two distinct eigenvalues on $\V$.
\begin{itemize}
\item[\rm (i)]
Assume that $\V$ is XD.
Then for each of $\V(k_0^{\pm 1})$ there exists a basis
with respect to which the matrix representing $\A$ is irreducible tridiagonal and the matrix
representing $\B$ is diagonal whose diagonal entries form
a $q$-Racah sequence.
\item[\rm (ii)]
Assume that $\V$ is YD.
Then  for each of $\V(k_0^{\pm 1})$ there exists a basis
with respect to which the matrix representing $\B$ is irreducible tridiagonal and the matrix
representing $\A$ is diagonal whose diagonal entries form a $q$-Racah sequence.
\end{itemize}
\end{lemma}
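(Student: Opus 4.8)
The plan is to obtain part (i) almost immediately from Proposition~\ref{prop:ABaction}, the only extra work being to recognize the listed diagonal entries of $\B$ as $q$-Racah sequences, and then to deduce part (ii) from part (i) by passing to the twisted module $\V^\sigma$, where $\sigma$ is the automorphism of Lemma~\ref{lem:sigma} that fixes $t_0$ and interchanges $\A$ and $\B$.

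For part (i), first I would fix a standard ordering $\{\mu_r\}_{r=0}^n$ of the eigenvalues of $X$, which exists because the reduced $X$-diagram is a path by Lemma~\ref{lem:diagram}. Among the sixteen parameter sequences $\{k_i^{\pm 1}\}_{i \in \I}$ of $\V$, at least one is consistent with this ordering (as noted after Lemma~\ref{lem:tiv0vn}); replacing the given sequence by such a one if necessary, I may assume $\{k_i\}_{i \in \I}$ is itself consistent. This replacement at most interchanges the two eigenspaces $\V(k_0)$ and $\V(k_0^{-1})$, and since the assertion concerns both eigenspaces of $t_0$ simultaneously it is harmless. Since $t_0$ has two distinct eigenvalues, Notation~\ref{notation2} is in force, so Proposition~\ref{prop:ABaction} produces, for each of $\V(k_0^{\pm 1})$, a basis with respect to which $\A$ is irreducible tridiagonal and $\B$ is diagonal with the listed diagonal entries. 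It then remains to exhibit, for each such sequence of entries, a scalar $\alpha$ realizing the form $\th_r = \alpha q^{2r-d} + \alpha^{-1} q^{d-2r}$ of Definition~\ref{def:qRacah}. For $\V(k_0)$ in cases $\textup{\sf DS}$, $\textup{\sf DDa}$, $\textup{\sf DDb}$ the $r$-th entry is $\th_r = k_0 k_3 q^{2r} + (k_0 k_3 q^{2r})^{-1}$, which is $q$-Racah with $\alpha = k_0 k_3 q^{d}$; in cases $\textup{\sf SSa}$, $\textup{\sf SSb}$ it is $k_1 k_2 q^{2r+1} + (k_1 k_2 q^{2r+1})^{-1}$, which is $q$-Racah with $\alpha = k_1 k_2 q^{d+1}$. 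The computation for $\V(k_0^{-1})$ is identical, with $\alpha = k_0 k_3 q^{d'+2}$ (resp.\ $\alpha = k_1 k_2 q^{d'+1}$). This establishes part (i).

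For part (ii) I would pass to the twisted $\Hq$-module $\V^\sigma$, on which each $h \in \Hq$ acts as $\sigma(h)$ acts on $\V$. By Lemma~\ref{lem:sigmaXY} the automorphism $\sigma$ fixes $t_0$, $T_0$, $T_2$, interchanges $T_1$ and $T_3$, and sends $Y \mapsto X$, $X \mapsto t_0^{-1} Y t_0$, $\A \mapsto \B$, $\B \mapsto \A$. Consequently $\V^\sigma$ has parameter sequence $(k_0,k_3,k_2,k_1)$, its action of $t_0$ agrees with that on $\V$, so its $t_0$-eigenspaces coincide as subspaces with $\V(k_0^{\pm 1})$ and $t_0$ still has two distinct eigenvalues; and $X$ acts on $\V^\sigma$ as $t_0^{-1} Y t_0$ acts on $\V$, which is conjugate to $Y$ and hence diagonalizable since $\V$ is YD, so $\V^\sigma$ is an XD $\Hq$-module (and is finite-dimensional and irreducible). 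Applying part (i) to $\V^\sigma$ gives, for each of $\V(k_0^{\pm 1})$, a basis with respect to which the action of $\A$ on $\V^\sigma$ is irreducible tridiagonal and the action of $\B$ on $\V^\sigma$ is diagonal with $q$-Racah diagonal entries. Since the action of $\A$ (resp.\ $\B$) on $\V^\sigma$ is precisely the action of $\sigma(\A)=\B$ (resp.\ $\sigma(\B)=\A$) on $\V$, this is exactly the conclusion of part (ii).

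I do not anticipate a genuine obstacle, since the hard analytic content already resides in Proposition~\ref{prop:ABaction}. The two points needing care are purely organizational: verifying that swapping the given parameter sequence for a consistent one merely permutes the pair $\V(k_0^{\pm 1})$, and confirming that the twist by $\sigma$ faithfully converts the XD-statement for $\V^\sigma$ into the YD-statement for $\V$. The essential inputs for the latter are that $\sigma$ fixes $t_0$ (so the eigenspaces are preserved) and interchanges $\A$ with $\B$ (so the roles of the tridiagonal and diagonal matrices are exchanged).
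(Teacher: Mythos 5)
Your proposal is correct and follows essentially the same route as the paper: part (i) by replacing the parameter sequence with one consistent with a standard ordering and invoking Proposition~\ref{prop:ABaction} (your explicit identification of the parameters $\alpha = k_0k_3q^{d}$, $k_1k_2q^{d+1}$, etc.\ just fills in what the paper leaves implicit), and part (ii) by twisting with the automorphism $\sigma$ of Lemma~\ref{lem:sigma} and applying (i) to $\V^\sigma$. The extra details you supply (that swapping parameter sequences merely permutes $\V(k_0^{\pm 1})$, and that $\V^\sigma$ is XD with the same $t_0$-eigenspaces) are exactly the observations the paper's terser proof relies on.
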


\begin{proof}
(i):
We may assume that $\{k_i\}_{i \in \I}$ is consistent with a standard ordering
of the eigenvalues of $X$ on $\V$, by replacing any of $\{k_i\}_{i \in \I}$ with its inverse if necessary.
Now the result follows from Proposition \ref{prop:ABaction}.

(ii):
Recall the automorphism $\sigma$ of $\Hq$ from Lemma \ref{lem:sigma}.
By the definition of $\sigma$, $t_0$ is fixed by $\sigma$.
By Lemma \ref{lem:sigmaXY} $\sigma$ sends  $Y \mapsto X$ and swaps $\A$, $\B$.
Consider the $\Hq$-module $\V^\sigma$, where $\V^\sigma= \V$ as sets,
and the action of $x \in \Hq$ in $\V^\sigma$ is equal to the action of  $x^\sigma$ in $\V$.
Observe that the $\Hq$-module $\V^\sigma$ is XD.
Applying (i) to $\V^\sigma$ we obtain the result.
\end{proof} 

\begin{proofof}{Theorem \ref{thm:main1}}
Follows from Lemma \ref{lem:XDYD}.
\end{proofof}

\section{The parameter sequence of an XD $\Hq$-module}
\label{sec:restrictions}

Throughout this section Notation \ref{notation} is in effect.
In this section we investigate the parameter sequence of $\V$.

\begin{lemma}    \label{lem:typekr0}    \samepage
\ifDRAFT {\rm lem:typekr0}. \fi
The parameters $\{k_i\}_{i \in \I}$ satisfy the following equation:
\[
\begin{array}{c|c}
\text{\rm $X$-type of $\V$}  & \text{\rm Equation}
\\ \hline 
\textup{\sf DS}  & k_0 k_1 k_2 k_3 = q^{-n-1}   \rule{0mm}{3ex}
\\
 \textup{\sf DDa} &   k_0^2 = q^{-n-1}   \rule{0mm}{2.5ex}
\\
 \textup{\sf DDb} &   k_3^2 = q^{-n-1}  \rule{0mm}{2.5ex}
\\
 \textup{\sf SSa} &   k_1^2 = q^{-n-1}   \rule{0mm}{2.5ex}
\\
 \textup{\sf SSb} &   k_2^2 = q^{-n-1}  \rule{0mm}{2.5ex}
\end{array}
\]
\end{lemma}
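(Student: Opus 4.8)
The plan is to compute the two extreme eigenvalues $\mu_0$ and $\mu_n$ of $X$ in two different ways and equate the results. First I would fix nonzero vectors $v_0 \in \V_X(\mu_0)$ and $v_n \in \V_X(\mu_n)$; by Proposition \ref{prop:mfree}(i) these span the one-dimensional spaces $\V_X(\mu_0)$ and $\V_X(\mu_n)$. Since $\{k_i\}_{i \in \I}$ is consistent with the ordering $\{\mu_r\}_{r=0}^n$, Lemma \ref{lem:tiv0vn} tells us precisely which of the $t_i$ act diagonally on $v_0$ and $v_n$ and with which eigenvalues. The key point is that for each $X$-type both $v_0$ and $v_n$ are simultaneous eigenvectors of a suitable pair of generators, so $X$---being a product of two such generators or their inverses---acts on each as a scalar I can read off.

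For the types \textup{\sf DS}, \textup{\sf DDa}, \textup{\sf DDb} I would use $X = t_3 t_0$. On $v_0$ Lemma \ref{lem:tiv0vn} gives $t_0 v_0 = k_0 v_0$ and $t_3 v_0 = k_3 v_0$, so $\mu_0 = k_0 k_3$. For the type \textup{\sf DS} the generators $t_1$, $t_2$ act diagonally on $v_n$; here I would instead invoke the identity $X = q^{-1} t_2^{-1} t_1^{-1}$, which follows from $t_2 t_3 t_0 t_1 = q^{-1}$ (Lemma \ref{lem:Z4pre}), to get $\mu_n = q^{-1}(k_1 k_2)^{-1}$. For \textup{\sf DDa} (resp.\ \textup{\sf DDb}) the generators $t_0$, $t_3$ act diagonally on $v_n$, giving $\mu_n = k_0 k_3^{-1}$ (resp.\ $\mu_n = k_0^{-1} k_3$). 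For the types \textup{\sf SSa}, \textup{\sf SSb} the generators $t_1$, $t_2$ act diagonally on both $v_0$ and $v_n$, so throughout I would use $X = q^{-1} t_2^{-1} t_1^{-1}$; this yields $\mu_0 = q^{-1}(k_1 k_2)^{-1}$, together with $\mu_n = q^{-1} k_2 k_1^{-1}$ for \textup{\sf SSa} and $\mu_n = q^{-1} k_1 k_2^{-1}$ for \textup{\sf SSb}.

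Finally I would equate these values with the ones recorded in \eqref{eq:mun}. For \textup{\sf DS} that relation reads $\mu_n = q^n \mu_0$, which gives $q^{-1}(k_1 k_2)^{-1} = q^n k_0 k_3$ and hence $k_0 k_1 k_2 k_3 = q^{-n-1}$. For the remaining types it is cleaner to use the product: \eqref{eq:mun} gives $\mu_0 \mu_n = q^{-n-1}$ in the \textup{\sf DD} cases and $\mu_0 \mu_n = q^{n-1}$ in the \textup{\sf SS} cases. Substituting the expressions above, the \textup{\sf DDa} case gives $k_0^2 = q^{-n-1}$, the \textup{\sf DDb} case gives $k_3^2 = q^{-n-1}$, and in the \textup{\sf SS} cases the factors of $q^{-2}$ combine with $q^{n-1}$ to produce $k_1^2 = q^{-n-1}$ (type \textup{\sf SSa}) and $k_2^2 = q^{-n-1}$ (type \textup{\sf SSb}). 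There is no genuine obstacle here beyond organizing the five cases; the only thing to watch is to select, in each case, the expression for $X$ ($t_3 t_0$ versus $q^{-1} t_2^{-1} t_1^{-1}$) adapted to which generators act diagonally on the relevant endvector, as dictated by Lemma \ref{lem:tiv0vn}.
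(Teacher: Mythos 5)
Your proposal is correct and follows essentially the same route as the paper: both compute the eigenvalue of $X$ on the endvertex eigenspaces via Lemma \ref{lem:tiv0vn} together with $X=t_3t_0$ or $X=q^{-1}t_2^{-1}t_1^{-1}$, and then compare against the expressions forced by the shape of the reduced $X$-diagram. The only cosmetic difference is that the paper packages the diagram side through Lemma \ref{lem:eigenvalues} (computing $\mu_n$ in terms of the $k_i$), whereas you invoke \eqref{eq:mun} to relate $\mu_0$ and $\mu_n$ directly; these amount to the same comparison.
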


\begin{proof}
Pick nonzero vectors $v_0 \in \V_X(\mu_0)$ and $v_n \in \V_X(\mu_n)$.
First assume that $\V$ has $X$-type \textup{\sf DS}.
By Lemma \ref{lem:tiv0vn} $t_1 v_n = k_1 v_n$ and $t_2 v_n = k_2 v_n$.
By this and $X = q^{-1} t_2^{-1} t_1^{-1}$ we obtain
$X v_n = q^{-1} k_1^{-1} k_2^{-1} v_n$.
So $\mu_n =  q^{-1} k_1^{-1} k_2^{-1}$.
We have $\mu_n = k_0 k_3 q^n$ by Lemma \ref{lem:eigenvalues}(i).
Comparing these two values of $\mu_n$, we obtain
$k_0 k_1 k_2 k_3 = q^{-n-1}$.
Next assume that $\V$ has $X$-type \textup{\sf DDa}.
By Lemma \ref{lem:tiv0vn} $t_0 v_n = k_0 v_n$ and $t_3 v_n = k_3^{-1} v_n$.
So $X v_n = k_0 k_3^{-1} v_n$, and hence $\mu_n = k_0 k_3^{-1}$.
By Lemma \ref{lem:eigenvalues}(i) $\mu_n = (k_0 k_3 q^{n+1})^{-1}$.
Comparing these two values of $\mu_n$,
we obtain $k_0^2 = q^{-n-1}$.
Next assume that $\V$ has $X$-type \textup{\sf DDb}.
We obtain $k_3^2 = q^{-n-1}$ in a similar way.
Next assume that $\V$ has $X$-type \textup{\sf SSa}.
By Lemma \ref{lem:tiv0vn} $t_1 v_n = k_1 v_n$ and $t_2 v_n = k_2^{-1} v_n$.
By this and $X = q^{-1} t_2^{-1} t_1^{-1}$ we obtain $X v_n = q^{-1} k_1^{-1} k_2 v_n$,
and hence $\mu_n = q^{-1} k_1^{-1} k_2$.
By Lemma \ref{lem:eigenvalues}(ii) $\mu_n = k_1 k_2 q^n$.
Comparing these two values of $\mu_n$ we obtain $k_1^2 = q^{-n-1}$.
Next assume that $\V$ has $X$-type \textup{\sf SSb}.
We obtain $k_2^2 = q^{-n-1}$ in a similar way.
\end{proof}

\begin{lemma}    \label{lem:typekrpre1}   \samepage
\ifDRAFT {\rm lem:typekrpre1}. \fi
The parameters $\{k_i\}_{i \in \I}$ satisfy the following inequalities:
\[
\begin{array}{c|l}
\text{\rm $X$-type of $\V$} & \hspace{4cm} \text{\rm Inequalities}
\\ \hline  
\textup{\sf DS}, \textup{\sf DDa}, \textup{DDb} &
    \text{\rm $k_0^2 k_3^2$ is not among $q^{-2},q^{-4}, q^{-6},\ldots, q^{-2n}$}    \rule{0mm}{2.8ex}
\\ \hline
\textup{\sf SSa}, \textup{SSb}  &
    \text{\rm $k_1^2 k_2^2$ is not among $q^{-2},q^{-4},q^{-6},\ldots, q^{-2n}$}   \rule{0mm}{2.8ex}
\end{array}
\]
\end{lemma}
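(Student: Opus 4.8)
The plan is to exploit the fact that the eigenvalues $\{\mu_r\}_{r=0}^n$ of $X$ are mutually distinct, which holds because $X$ is multiplicity-free on $\V$ by Proposition \ref{prop:mfree}(i). I would combine this with the explicit descriptions of the $\mu_r$ in terms of $k_0k_3$ (for $X$-types {\sf DS}, {\sf DDa}, {\sf DDb}) or $k_1k_2$ (for $X$-types {\sf SSa}, {\sf SSb}) recorded in Lemma \ref{lem:eigenvalues}. Each claimed inequality will be obtained by showing that its failure forces two of the $\mu_r$ to coincide.

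First consider the $X$-types {\sf DS}, {\sf DDa}, {\sf DDb}, and abbreviate $\alpha = k_0k_3$, so that by Lemma \ref{lem:eigenvalues}(i) we have $\mu_r = \alpha q^r$ for even $r$ and $\mu_r = \alpha^{-1}q^{-r-1}$ for odd $r$. Arguing by contradiction, I would assume $\alpha^2 = q^{-2j}$ for some integer $j$ with $1 \leq j \leq n$, and then produce indices $r \neq s$ with $\mu_r = \mu_s$. Since $q$ is not a root of unity, two eigenvalues $\mu_r$, $\mu_s$ with $r,s$ of the same parity are automatically distinct, so the relevant case is $r$ even and $s$ odd; for such a pair one computes that $\mu_r = \mu_s$ holds precisely when $\alpha^2 = q^{-(r+s+1)}$. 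Hence it suffices to find $r$ even and $s$ odd in $\{0,1,\ldots,n\}$ with $r+s+1 = 2j$, i.e.\ $r+s = 2j-1$.

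The key step is to verify that such a pair exists for every $j \in \{1,\ldots,n\}$, using the parity of $n$ from \eqref{eq:parityn} ($n$ even for {\sf DS}, $n$ odd for {\sf DD}). Writing $T = 2j-1$ (odd, with $1 \leq T \leq 2n-1$), I would seek an even $r$ in the interval $[\max(0,T-n),\ \min(n,T)]$, after which $s = T-r$ is automatically odd and lies in $[0,n]$. When $T \leq n$ one takes $r=0$; when $T > n$ one takes $r = n$ in the {\sf DS} case ($n$ even) and $r = n-1$ in the {\sf DD} case ($n$ odd), and a short check confirms $r$ is even and in range and that $s$ then lies in $[0,n]$. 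The resulting coincidence $\mu_r = \mu_s$ with $r \neq s$ contradicts Proposition \ref{prop:mfree}(i), giving the inequality. The $X$-types {\sf SSa}, {\sf SSb} are handled analogously with $\beta = k_1k_2$ and Lemma \ref{lem:eigenvalues}(ii): for $r$ even and $s$ odd one has $\mu_r = \beta^{-1}q^{-r-1}$ and $\mu_s = \beta q^s$, so $\mu_r = \mu_s$ forces $\beta^2 = q^{-(r+s+1)}$, and the same index-counting applies since $n$ is odd.

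I expect the main obstacle to be purely combinatorial bookkeeping: confirming that every exponent $q^{-2j}$ with $1 \leq j \leq n$ is realized by an admissible pair $(r,s)$ for the appropriate parity of $n$, which is exactly what the case split $T \leq n$ versus $T > n$ accomplishes. Once an even $r$ is located in the relevant interval, the contradiction with multiplicity-freeness is immediate, and no converse (no claim that these are the \emph{only} forbidden values) is needed for the statement as given.
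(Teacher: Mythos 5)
Your proposal is correct and follows essentially the same route as the paper: both arguments rest on the mutual distinctness of the eigenvalues $\{\mu_r\}_{r=0}^n$ together with the explicit formulas of Lemma \ref{lem:eigenvalues}, and both reduce to the observation that mixed-parity index pairs realize every exponent $q^{-2j}$ with $1 \leq j \leq n$. The only difference is presentational — you argue contrapositively (assume $k_0^2k_3^2 = q^{-2j}$ and exhibit a coinciding pair), whereas the paper directly computes the differences $\mu_{2r} - \mu_{2s-1} \neq 0$ over the appropriate index ranges — but the combinatorial content is identical.
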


\begin{proof}
First assume that $\V$ has $X$-type \textup{\sf DS}.
Since $\{\mu_r\}_{r=0}^n$ are mutually distinct, 
$\mu_{2r} - \mu_{2s-1} \neq 0$ for $0 \leq r \leq n/2$ and $1 \leq s \leq n/2$.
By Lemma \ref{lem:eigenvalues}(i) 
\[
  \mu_{2r} - \mu_{2s-1} = k_0 k_3 q^{2r} - \frac{1}{k_0 k_3 q^{2s}}
   = \frac{k_0^2 k_3^2 q^{2(r+s)} -1}{k_0 k_3 q^{2s}}.
\]
By these comments $k_0^2 k_3^2 q^{2(r+s)} \neq 1$ for $0 \leq r \leq n/2$ and $1 \leq s \leq n/2$.
So $k_0^2 k_3^2$ is not among $q^{-2}$, $q^{-4}$, \ldots, $q^{-2n}$.
Next assume that $\V$ has $X$-type \textup{\sf DDa} or \textup{\sf DDb}.
We have
$\mu_{2r} - \mu_{2s-1} \neq 0$ for $0 \leq r \leq (n-1)/2$ and $1 \leq s \leq (n+1)/2$.
In a similar way as above, we find that $k_0^2 k_3^2$ is not among $q^{-2}$, $q^{-4}$, \ldots, $q^{-2n}$.
Next assume that $\V$ has $X$-type \textup{\sf SSa} or \textup{SSb}.
We have
$\mu_{2r} - \mu_{2s-1} \neq 0$ for $0 \leq r \leq (n-1)/2$ and $1 \leq s \leq (n+1)/2$.
By Lemma \ref{lem:eigenvalues}(ii) 
\[
  \mu_{2r} - \mu_{2s-1} = \frac{1}{k_1 k_2 q^{2r+1}} - k_1 k_2 q^{2s-1}
   = \frac{1 - k_1^2 k_2^2 q^{2(r+s)}}{k_1 k_2 q^{2r+1}}.
\]
By these comments $k_1^2 k_2^2 q^{2(r+s)} \neq 1$ for $0 \leq r \leq (n-1)/2$ and $1 \leq s \leq (n+1)/2$.
So $k_0^2 k_3^2$ is not among $q^{-2}$, $q^{-4}$, \ldots, $q^{-2n}$.
\end{proof}

\begin{lemma}  \label{lem:typekrpre2}   \samepage
\ifDRAFT {\rm lem:typekrpre2}. \fi
The parameters $\{k_i\}_{i \in \I}$ satisfy the following inequalities:
\[
\begin{array}{c|l}
\text{\rm $X$-type of $\V$} & \hspace{4cm} \text{\rm Inequalities}
\\ \hline
\textup{\sf DS} &
  \begin{array}{l}
    \rule{0mm}{4.3mm}
    \text{\rm Neither of  $k_0^2$, $k_3^2$ is among $q^{-2},q^{-4},q^{-6},\ldots,q^{-n}$ } \\
    \text{\rm None of  $k_0 k_3 k_1^{\pm 1} k_2 ^{\pm 1}$ is among $q^{-1},q^{-3},q^{-5},\ldots,q^{1-n}$}
  \end{array}
\\ \hline
\textup{\sf DDa}, \textup{\sf DDb} &
 \begin{array}{l}
   \rule{0mm}{4.3mm}
   \text{\rm None of $k_0k_3k_1^{\pm 1}k_2^{\pm 1}$ is among $q^{-1},q^{-3},q^{-5},\ldots,q^{-n}$}
 \end{array}
\\ \hline
\textup{\sf SSa}, \textup{\sf SSb} &
 \begin{array}{l}
   \rule{0mm}{4.3mm}
   \text{\rm None of $k_1k_2k_0^{\pm 1}k_3^{\pm 1}$ is among $q^{-1},q^{-3},q^{-5},\ldots,q^{-n}$}
 \end{array}
\end{array}
\]
\end{lemma}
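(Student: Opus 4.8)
The plan is to extract every inequality from Corollary~\ref{cor:nonzero} by running through the bonds of the reduced $X$-diagram one at a time, substituting the explicit eigenvalues from Lemma~\ref{lem:eigenvalues}. The governing principle is that a single bond $\mu_r,\mu_{r+1}$ controls the pair $k_0,k_3$ through part~(i) of that corollary, while a double bond controls the pair $k_1,k_2$ through part~(ii). So I would split the argument according to the $X$-type, and within each type according to the parity of the bonding index $r$.

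Consider first the cases \textup{\sf DS}, \textup{\sf DDa}, \textup{\sf DDb}, where the eigenvalues are given by \eqref{eq:Dmur}: one has $\mu_r = k_0k_3q^r$ for even $r$ and $\mu_r = (k_0k_3q^{r+1})^{-1}$ for odd $r$. Here the double bonds sit at even $r$, say $r=2t$, and the single bonds at odd $r$, say $r=2s-1$. For each double bond I would apply Corollary~\ref{cor:nonzero}(ii) to $\mu_{2t}=k_0k_3q^{2t}$; since $q\mu_{2t}=k_0k_3q^{2t+1}$ must avoid $k_1^{\pm1}k_2^{\pm1}$, this rearranges to $k_0k_3k_1^{\pm1}k_2^{\pm1}\neq q^{-2t-1}$. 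For each single bond I would apply Corollary~\ref{cor:nonzero}(i) to $\mu_{2s}=k_0k_3q^{2s}$, which must avoid $k_0^{\pm1}k_3^{\pm1}$; the offending values $k_0k_3^{-1}$ and $k_0^{-1}k_3$ give exactly $k_3^2\neq q^{-2s}$ and $k_0^2\neq q^{-2s}$. It then remains to read the index ranges off the diagram shapes: in \textup{\sf DS} ($n$ even) the double bonds run over $t=0,\ldots,(n-2)/2$ and the single bonds over $s=1,\ldots,n/2$, producing the lists $q^{-1},q^{-3},\ldots,q^{1-n}$ and $q^{-2},q^{-4},\ldots,q^{-n}$; in \textup{\sf DDa}, \textup{\sf DDb} ($n$ odd) the double bonds run over $t=0,\ldots,(n-1)/2$, producing $q^{-1},q^{-3},\ldots,q^{-n}$ (the single-bond inequalities are not asserted in these cases and may simply be discarded).

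The cases \textup{\sf SSa}, \textup{\sf SSb} are handled symmetrically, except that the bond parities are reversed: by \eqref{eq:Smur} the single bonds now sit at even $r=2t$ and the double bonds at odd $r$. The inequalities asserted here involve $k_1k_2$ together with $k_0^{\pm1}k_3^{\pm1}$, so they arise from the \emph{single} bonds via Corollary~\ref{cor:nonzero}(i): applied to $\mu_{2t}=(k_1k_2q^{2t+1})^{-1}$, which must avoid $k_0^{\pm1}k_3^{\pm1}$, it yields $k_1k_2k_0^{\pm1}k_3^{\pm1}\neq q^{-2t-1}$, and letting $t$ range over $0,\ldots,(n-1)/2$ gives the list $q^{-1},q^{-3},\ldots,q^{-n}$.

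Since each step is a direct substitution into Corollary~\ref{cor:nonzero} followed by an elementary rearrangement, I do not anticipate a genuine obstacle; the real care lies in the bookkeeping — pairing each bond parity with the correct part of the corollary, selecting Lemma~\ref{lem:eigenvalues}(i) for the D-types and (ii) for the S-types, and confirming that the index ranges forced by the diagram shapes reproduce precisely the progressions in the statement. One small verification is that the two remaining single-bond choices $k_0k_3$ and $k_0^{-1}k_3^{-1}$ contribute only $q^{2s}\neq1$ (automatic since $q$ is not a root of unity) and $k_0^2k_3^2\neq q^{-2s}$, the latter already supplied by Lemma~\ref{lem:typekrpre1} and hence not recorded anew.
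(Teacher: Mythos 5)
Your proposal is correct and follows essentially the same route as the paper's proof: apply Corollary \ref{cor:nonzero}(i) to the single bonds and Corollary \ref{cor:nonzero}(ii) to the double bonds, substitute the eigenvalue formulas of Lemma \ref{lem:eigenvalues}, and read off the index ranges from the diagram shape. The only (immaterial) difference is that for the {\sf DS} single bonds the paper evaluates the corollary at the odd-indexed endpoint $\mu_{2r-1}=(k_0k_3q^{2r})^{-1}$ while you use the even-indexed endpoint $\mu_{2s}=k_0k_3q^{2s}$; since the excluded set $k_0^{\pm 1}k_3^{\pm 1}$ is closed under inversion, both choices yield the identical inequalities.
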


\begin{proof}
First assume that $\V$ has $X$-type \textup{\sf DS}.
By Corollary \ref{cor:nonzero}(i) $\mu_{2r-1}$ is not among $k_0^{-1}k_3$, $k_0 k_3^{-1}$ for $1 \leq r \leq n/2$.
By Lemma \ref{lem:eigenvalues}(i) $\mu_{2r-1} = (k_0 k_3 q^{2r})^{-1}$.
By these comments neither of  $k_0^2$, $k_3^2$ is among $q^{-2}$, $q^{-4}$, \ldots, $q^{-n}$.
By Corollary \ref{cor:nonzero}(ii) $q \mu_{2r}$ is not among $k_1^{\pm 1} k_2^{\pm 1}$ for $0 \leq r \leq (n-2)/2$.
By Lemma \ref{lem:eigenvalues}(i) $\mu_{2r} = k_0 k_3 q^{2r}$.
By these comments none of  $k_0 k_3 k_1^{\pm 1} k_2^{\pm 1}$ is among 
$q^{-1}$, $q^{-3}$, \ldots, $q^{1-n}$.
Next assume that $\V$ has $X$-type \textup{\sf DDa} or \textup{\sf DDb}.
By Corollary \ref{cor:nonzero}(ii) $q \mu_{2r}$ is not among $k_1^{\pm 1} k_2^{\pm 1}$ for $0 \leq r \leq (n-1)/2$.
By Lemma \ref{lem:eigenvalues}(i) $\mu_{2r} = k_0 k_3 q^{2r}$.
By these comments none of $k_0 k_3 k_1^{\pm 1} k_2^{\pm 1}$ is among 
$q^{-1}$, $q^{-3}$, \ldots, $q^{-n}$.
Next assume that $\V$ has $X$-type \textup{\sf SSa} or \textup{\sf SSb}.
By Corollary \ref{cor:nonzero}(i) $\mu_{2r}$ is not among $k_0^{\pm 1} k_3^{\pm 1}$ for $0 \leq r \leq (n-1)/2$.
By Lemma \ref{lem:eigenvalues}(ii) $\mu_{2r} = (k_1 k_2 q^{2r+1})^{-1}$.
By these comments none of $k_1 k_2 k_0^{\pm 1} k_3^{\pm 1}$ is among
$q^{-1}$, $q^{-3}$, \ldots, $q^{-n}$.
\end{proof}

\begin{lemma}  \label{lem:typekr}   \samepage
\ifDRAFT {\rm lem:typekr}. \fi
The parameters $\{k_i\}_{i \in \I}$ satisfy the following inequalities:
\[
\begin{array}{c|l}
\text{\rm $X$-type of $\V$} & \hspace{4cm} \text{\rm Inequalities}
\\ \hline
\textup{\sf DS} &
  \begin{array}{l}
    \rule{0mm}{4.3mm}
    \text{\rm Neither of $\pm k_0 k_3$ is among $q^{-1},q^{-2},q^{-3},\ldots, q^{-n}$} \\
    \text{\rm None of $\pm k_0$, $\pm k_1$, $\pm k_2$, $\pm k_3$ is among $q^{-1},q^{-2},q^{-3},\ldots,q^{-n/2}$}
  \end{array}
\\ \hline
\textup{\sf DDa} &
 \begin{array}{l}
   \rule{0mm}{4.3mm}
   \text{\rm None of $\pm k_3^{\pm 1}$ is among $1,q,q^2,\ldots, q^{(n-1)/2}$} \\
   \text{\rm None of $k_0k_3k_1^{\pm 1}k_2^{\pm 1}$ is among $q^{-1},q^{-3},q^{-5},\ldots,q^{-n}$}
 \end{array}
\\ \hline
\textup{\sf DDb} &
 \begin{array}{l}
   \rule{0mm}{4.3mm}
   \text{\rm None of $\pm k_0^{\pm 1}$ is among $1,q,q^2,\ldots, q^{(n-1)/2}$} \\
   \text{\rm None of $k_0k_3k_1^{\pm 1}k_2^{\pm 1}$ is among $q^{-1},q^{-3},q^{-5},\ldots,q^{-n}$}
 \end{array}
\\ \hline 
\textup{\sf SSa} &
 \begin{array}{l}
   \rule{0mm}{4.3mm}
   \text{\rm None of $\pm k_2^{\pm 1}$ is among $1,q,q^2,\ldots, q^{(n-1)/2}$} \\
   \text{\rm None of $k_1k_2k_0^{\pm 1}k_3^{\pm 1}$ is among $q^{-1},q^{-3},q^{-5},\ldots,q^{-n}$}
 \end{array}
\\ \hline
\textup{\sf SSb} &
 \begin{array}{l}
   \rule{0mm}{4.3mm}
   \text{\rm None of $\pm k_1^{\pm 1}$ is among $1,q,q^2,\ldots, q^{(n-1)/2}$} \\
   \text{\rm None of $k_1k_2k_0^{\pm 1}k_3^{\pm 1}$ is among $q^{-1},q^{-3},q^{-5},\ldots,q^{-n}$}
 \end{array}
\end{array}
\]
\end{lemma}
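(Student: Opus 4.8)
The plan is to derive all of the listed inequalities from the three preceding results: the equations of Lemma \ref{lem:typekr0} together with the inequalities of Lemmas \ref{lem:typekrpre1} and \ref{lem:typekrpre2}. Each inequality in the statement is a non-vanishing condition on a monomial in the $k_i$, so for the most part the proof amounts to rewriting the earlier conditions and matching exponent ranges. The one case that needs a genuinely new idea is the pair $k_1,k_2$ in type \textup{\sf DS}, where I would use a symmetry argument, since Corollary \ref{cor:nonzero} (which feeds Lemma \ref{lem:typekrpre2}) only ever produces constraints on \emph{products} $k_1^{\pm1}k_2^{\pm1}$ at the double-bond vertices and cannot isolate $k_1^2$ or $k_2^2$.

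First consider the types \textup{\sf DDa}, \textup{\sf DDb}, \textup{\sf SSa}, \textup{\sf SSb}. In each of these the \emph{second} displayed inequality (``None of $k_0k_3k_1^{\pm1}k_2^{\pm1}$\dots'', resp.\ ``None of $k_1k_2k_0^{\pm1}k_3^{\pm1}$\dots'') is literally the corresponding line of Lemma \ref{lem:typekrpre2}, so nothing is left to do there. For the \emph{first} displayed inequality I would combine Lemma \ref{lem:typekr0} with Lemma \ref{lem:typekrpre1}. Take \textup{\sf DDa}: here $n$ is odd and $k_0^2=q^{-n-1}$, while Lemma \ref{lem:typekrpre1} gives $k_0^2k_3^2\neq q^{-2j}$ for $1\leq j\leq n$. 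Substituting $k_0^2=q^{-n-1}$ yields $k_3^2\neq q^{\,n+1-2j}$ for $1\leq j\leq n$; as $j$ runs over $1,\dots,n$ the exponent $n+1-2j$ runs over all even integers $m$ with $-(n-1)\leq m\leq n-1$ (including $m=0$, attained at $j=(n+1)/2$). This is exactly the assertion that neither of $\pm k_3^{\pm1}$ lies in $\{1,q,\dots,q^{(n-1)/2}\}$. The types \textup{\sf DDb}, \textup{\sf SSa}, \textup{\sf SSb} are handled identically, using $k_3^2=q^{-n-1}$, $k_1^2=q^{-n-1}$, $k_2^2=q^{-n-1}$ respectively and the symmetry of Lemma \ref{lem:typekrpre1} in the relevant pair.

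Now consider type \textup{\sf DS}. The first displayed inequality ($\pm k_0k_3\notin\{q^{-1},\dots,q^{-n}\}$) is just a restatement of Lemma \ref{lem:typekrpre1}, since $\pm k_0k_3=q^{-j}$ is equivalent to $(k_0k_3)^2=q^{-2j}$. In the second displayed inequality, the statements for $k_0$ and $k_3$ are precisely the first line of Lemma \ref{lem:typekrpre2} for type \textup{\sf DS}. It remains to treat $k_1$ and $k_2$, and here I would invoke the automorphism $\varrho$ of Lemma \ref{lem:Z4}. Since $\varrho^2$ sends $X\mapsto q^{-1}X^{-1}$ by Lemma \ref{lem:Z4XY}, the twisted module $\V^{\varrho^2}$ is again a finite-dimensional irreducible XD $\Hq$-module of dimension $n+1$, with $X$-eigenvalues $q^{-1}\mu_r^{-1}$. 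Because $\varrho^2$ interchanges $1$-adjacency with $q$-adjacency it swaps single and double bonds, and one checks that the reduced $X$-diagram of $\V^{\varrho^2}$ is again \textup{\sf DS}, with standard ordering $\mu'_r=q^{-1}\mu_{n-r}^{-1}$. Tracking $\varrho^2:t_0\mapsto t_2,\ t_3\mapsto t_1$ through the consistency rule \eqref{eq:rule} (using $\V^{\varrho^2}_X(\mu'_0)=\V_X(\mu_n)$), I find that the parameter sequence of $\V^{\varrho^2}$ consistent with $\{\mu'_r\}$ is $(k_2,k_3,k_0,k_1)$; in particular its ``$k_0$'' and ``$k_3$'' are $k_2$ and $k_1$. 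Applying the $k_0,k_3$ statements of Lemma \ref{lem:typekrpre2} to $\V^{\varrho^2}$ then gives $k_2^2,k_1^2\notin\{q^{-2},\dots,q^{-n}\}$, i.e.\ neither of $\pm k_1,\pm k_2$ lies in $\{q^{-1},\dots,q^{-n/2}\}$, completing type \textup{\sf DS}.

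The main obstacle is the bookkeeping in this \textup{\sf DS} symmetry step: one must verify that $\V^{\varrho^2}$ really has type \textup{\sf DS} (and not some other configuration), identify its standard ordering after the simultaneous bond-swap and order-reversal, and correctly read off which of $\V$'s parameters play the roles of $k_0,k_3$ for the twisted module under the convention \eqref{eq:rule}. Once this identification is pinned down, the desired $k_1,k_2$ inequalities follow at once from the $k_0,k_3$ inequalities already established for arbitrary \textup{\sf DS} modules, and every remaining step is an exponent-matching computation of the kind carried out above.
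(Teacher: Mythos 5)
Your proposal is correct, and for most of the table it coincides with the paper's own proof: the second displayed inequality in each of \textup{\sf DDa}--\textup{\sf SSb} is quoted verbatim from Lemma \ref{lem:typekrpre2}, and the first is obtained exactly as you do, by substituting the equation of Lemma \ref{lem:typekr0} into Lemma \ref{lem:typekrpre1} and matching exponents; the $k_0k_3$ and $k_0,k_3$ statements in type \textup{\sf DS} are likewise the same. Where you diverge is the $k_1,k_2$ inequalities in type \textup{\sf DS}, and here your premise is mistaken: it is \emph{not} true that the earlier lemmas cannot isolate $k_1^2$ and $k_2^2$. The paper does it directly: by Lemma \ref{lem:typekrpre2}, $k_0k_3k_1^{-1}k_2$ is not among $q^{-1},q^{-3},\ldots,q^{1-n}$, and by Lemma \ref{lem:typekr0}, $k_0k_1k_2k_3=q^{-n-1}$, so $k_0k_3k_1^{-1}k_2=q^{-n-1}k_1^{-2}$; hence $k_1^2$ is not among $q^{-2},q^{-4},\ldots,q^{-n}$, which is the desired statement for $\pm k_1$, and symmetrically $k_0k_3k_1k_2^{-1}=q^{-n-1}k_2^{-2}$ handles $\pm k_2$. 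Your alternative via the twisted module $\V^{\varrho^2}$ is nevertheless valid: $\varrho^2$ sends $X\mapsto q^{-1}X^{-1}$ and $t_0\mapsto t_2$, $t_1\mapsto t_3$, $t_2\mapsto t_0$, $t_3\mapsto t_1$, the bond types swap, the reversed ordering $q^{-1}\mu_{n-r}^{-1}$ is standard of type \textup{\sf DS}, and under the rule \eqref{eq:rule} the consistent parameter sequence of $\V^{\varrho^2}$ is indeed $(k_2,k_3,k_0,k_1)$, so applying Lemma \ref{lem:typekrpre2} to $\V^{\varrho^2}$ gives the claim. What this buys is a structural explanation of why the $(k_1,k_2)$ constraints mirror the $(k_0,k_3)$ ones, and the twist is a tool reusable elsewhere; what it costs is all the bookkeeping you acknowledge (XD-ness of the twist, identification of its type, ordering, and consistent parameters), none of which is needed in the paper's one-line algebraic substitution.
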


\begin{proof}
First assume that $\V$ has $X$-type \textup{\sf DS}.
By Lemma \ref{lem:typekrpre1} $k_0^2 k_3^2$ is not among 
$q^{-2}$, $q^{-4}$, \ldots, $q^{-2n}$.
So neither of $\pm k_0 k_3$ is among $q^{-1},q^{-2},\ldots,q^{-n}$.
By Lemma \ref{lem:typekrpre2} neither of $k_0^2$, $k_3^2$ is among
$q^{-2}$, $q^{-4}$, \ldots, $q^{-n}$.
So none of $\pm k_0$, $\pm k_3$ is among $q^{-1},q^{-2},\ldots,q^{-n/2}$.
By Lemma \ref{lem:typekrpre2} $k_0 k_3 k_1^{-1} k_2$ is not among 
$q^{-1}$, $q^{-3}$, \ldots, $q^{1-n}$.
By Lemma \ref{lem:typekr0} $k_0 k_1 k_2 k_3 = q^{-n-1}$.
By these comments $k_1^{-2} q^{-n-1}$ is not among 
$q^{-1}$, $q^{-3}$, \ldots, $q^{1-n}$.
So neither of $\pm k_1$ is among $q^{-1}$, $q^{-2}$, \ldots, $q^{-n/2}$.
Similarly, neither of $\pm k_2$ is  among $q^{-1}$, $q^{-2}$, \ldots, $q^{-n/2}$.
Next assume that $\V$ has $X$-type \textup{\sf DDa}.
By Lemma \ref{lem:typekr0} $k_0^2 = q^{-n-1}$.
By Lemma \ref{lem:typekrpre1} $k_0^2 k_3^2$ is not among 
$q^{-2}$, $q^{-4}$, \ldots, $q^{-2n}$.
By these comments $k_3^2 q^{-n-1}$ is not among $q^{-2}$, $q^{-4}$, \ldots, $q^{-2n}$.
So neither of $\pm k_3$ is among 
$q^{(n-1)/2}$, $q^{(n-3)/2}$, \ldots, $q$, $1$, $q^{-1}$, \ldots, $q^{(1-n)/2}$.
So none of $\pm k_3^{\pm 1}$ is among $1$, $q$, $q^2$, \ldots, $q^{(n-1)/2}$.
By Lemma \ref{lem:typekrpre2} none of $k_0k_3 k_1^{\pm 1} k_2^{\pm 1}$ is
among $q^{-1}$, $q^{-3}$, \ldots, $q^{-n}$.
For the remaining types, we can show the result in a similar way as the case \textup{\sf DDa}.
\end{proof}

\section{A basis $\{u_r\}_{r=0}^n$ for $\V$ on which $X$ is upper tridiagonal and
$Y$ is lower tridiagonal}
\label{sec:ur}

Throughout this section Notation \ref{notation} is in effect.
In this section we introduce a certain basis $\{u_r\}_{r=0}^n$ for $\V$.
A square matrix is said to be {\em upper tridiagonal} whenever each nonzero entry
lies on the diagonal, the superdiagonal, or immediately above the superdiagonal.
A square matrix is said to be {\em lower tridiagonal} whenever its transpose is upper tridiagonal.
In later sections we show that with respect to the basis $\{u_r\}_{r=0}^n$
the matrix representing $X$ is upper tridiagonal and the matrix
representing $Y$ is lower tridiagonal.
To define the basis $\{u_r\}_{r=0}^n$, we consider the following scalars.

\begin{definition}     \label{def:be}    \samepage
\ifDRAFT {\rm def:be}. \fi
For $r=0,1,2,\ldots$ we define $\beta_r \in \F$ as follows:
\[
\begin{array}{c|l}
\textup{\rm $X$-type of $\V$} & \qquad\qquad\qquad \text{Definition of $\beta_r$} 
\\ \hline
\textup{\sf DS},\; \textup{\sf DDa}  \rule{0mm}{9mm} & \quad
\beta_r =
  \begin{cases}
  k_0 k_1 q^r      & \text{ if $r$ is even}  \\
  k_0 k_1 q^{r+1} & \text{ if $r$ is odd}
 \end{cases}
\\
\textup{\sf DDb}   \rule{0mm}{9mm} & \quad
\beta_r =
 \begin{cases}
   (k_2 k_3 q^{r+1})^{-1}   & \text{ if $r$ is even}   \\
   (k_2 k_3 q^r)^{-1}  & \text{ if $r$ is odd}
 \end{cases}
\\
\textup{\sf SSa}   \rule{0mm}{9mm} & \quad
\beta_r =
 \begin{cases}
  (k_0 k_1 q^r)^{-1}     & \text{ if $r$ is even}   \\
  (k_0 k_1 q^{r+1})^{-1}     & \text{ if $r$ is odd}
 \end{cases}
\\
\textup{\sf SSb}   \rule{0mm}{9mm}  & \quad
\beta_r =
 \begin{cases}
  k_2 k_3 q^{r+1}           & \text{ if $r$ is even}   \\
  k_2 k_3 q^r              & \text{ if $r$ is odd}
 \end{cases}
\end{array}
\]
\end{definition}

\begin{definition}     \label{def:ur}    \samepage
\ifDRAFT {\rm def:ur}. \fi
Pick $0 \neq u_0 \in \V_X(\mu_0)$.
We define vectors $u_1,u_2,\ldots$ in $\V$ inductively as follows:
\[
u_r =
  \begin{cases}
  u_{r-1} - \beta_{r-1} Y u_{r-1}      & \text{ if $\mu_{r-1}$ and $\mu_r$ are $1$-adjacent}, \\
  u_{r-1} - \beta_{r-1} Y^{-1} u_{r-1} & \text{ if $\mu_{r-1}$ and $\mu_r$ are $q$-adjacent}.
 \end{cases}
\]
\end{definition}

\begin{lemma}    \label{lem:urbasis}   \samepage
\ifDRAFT {\rm lem:urbasis}. \fi
With reference to Definition \ref{def:ur}, the vectors $\{u_r\}_{r=0}^n$ form a basis for $\V$.
\end{lemma}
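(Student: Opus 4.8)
The plan is to work in the $X$-standard basis $\{v_r\}_{r=0}^n$ from Proposition \ref{prop:mfree} and to realize $\{u_r\}_{r=0}^n$ as the image, under a triangular change of coordinates, of an auxiliary basis consisting of powers of $Y$. Since $X$ is multiplicity-free, $\V_X(\mu_0)$ is one-dimensional, so the chosen $u_0$ is a nonzero multiple of $v_0$; take $u_0=v_0$. Each step in Definition \ref{def:ur} applies the operator $1-\beta_{r-1}Y^{e_{r-1}}$, with $e_{r-1}=+1$ for a $1$-adjacent step and $e_{r-1}=-1$ for a $q$-adjacent step. These operators are Laurent polynomials in $Y$, hence commute, and telescoping gives $u_r=Q_r(Y)v_0$ with $Q_r=\prod_{j=0}^{r-1}(1-\beta_j Y^{e_j})$. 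Because the bonds along the path alternate in type (Lemma \ref{lem:eigenvalues} together with \eqref{eq:parityn}), the signs $e_j$ alternate, so $Q_r$ is supported on an interval $I_r\subseteq\Z$ of $r+1$ consecutive integers, obtained from $I_{r-1}$ by adjoining one new extreme endpoint.

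The key step is to prove that the $n+1$ vectors $\{Y^k v_0 : k\in I_n\}$ form a basis of $\V$. For this I would track, for each such vector, its \emph{top index}, namely the largest $s$ with nonzero $\V_X(\mu_s)$-component. Writing $Y=t_0t_1$ and using that $t_0,t_3$ act within $1$-adjacent pairs while $t_1,t_2$ act within $q$-adjacent pairs (Lemmas \ref{lem:R15} and \ref{lem:R20}), together with the explicit ordering of the $\mu_r$ in Lemma \ref{lem:eigenvalues}, one checks that $Y$ raises the top index by $2$ or $1$ according to the parity of that index, and $Y^{-1}$ does the opposite. The relevant off-diagonal entries are nonzero: their numerators are of the form $\mu_r$ or $G(\mu_r,k_0,k_3)$, nonzero by Corollary \ref{cor:nonzeropre}, and their denominators are nonzero by Lemmas \ref{lem:t0t3action} and \ref{lem:t1t2action}. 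Since the parity of the top index is locked by the sign of $k$ (even top indices come from the nonnegative powers and odd ones from the negative powers in the \textsf{DS}, \textsf{DD} cases, and the reverse in the \textsf{SS} cases), the top indices of $\{Y^k v_0 : k\in I_n\}$ are distinct, hence exhaust $\{0,1,\ldots,n\}$. A family of vectors with distinct top indices relative to $\{v_s\}_{s=0}^n$ is linearly independent, so these $n+1$ vectors form a basis of $\V$.

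Finally I would express the $u_r$ in this new basis. From $u_r=Q_r(Y)v_0$ and the product formula, the coefficient of the monomial at the newly adjoined extreme endpoint of $I_r$ is $\prod_{0\le j<r,\;e_j=e_{r-1}}(-\beta_j)$, which is nonzero since each $\beta_j\neq0$; moreover this monomial does not occur in $u_0,\ldots,u_{r-1}$, whose supports lie in $I_{r-1}$. Ordering the basis $\{Y^k v_0\}$ by the step at which each extreme endpoint is introduced, the coordinate matrix of $u_0,\ldots,u_n$ is lower triangular with nonzero diagonal, hence invertible, so $\{u_r\}_{r=0}^n$ is a basis. I expect the main obstacle to be the top-index computation of the middle step: one must verify uniformly across all five $X$-types that each application of $Y^{\pm1}$ shifts the top index by the claimed amount with a nonzero leading coefficient, which relies on the parity bookkeeping of Lemma \ref{lem:eigenvalues} and on the nonvanishing of the off-diagonal $t_i$-entries. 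Once this is in place, the triangularity used in the first and last steps is routine.
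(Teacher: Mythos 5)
Your argument is sound in its overall architecture and reaches the lemma by a genuinely different organization than the paper's proof. The paper establishes the same raising mechanism (via Lemmas \ref{lem:tiv0vn}, \ref{lem:R15}, \ref{lem:R20} it shows, e.g.\ for type \textup{\sf DS}, that $Y$ applied to a vector whose top nonzero $\V_X(\mu_s)$-component has even index $s$ yields top index $s+2$, and similarly for $Y^{-1}$ on odd indices; see \eqref{eq:Yvr}), but then inducts directly on Definition \ref{def:ur}: combining two consecutive steps of the recursion gives $u_r=u_{r-1}+\beta_{r-1}\beta_{r-2}u_{r-2}-\beta_{r-1}Y^{\pm 1}u_{r-2}$, whence $u_r$ has top index exactly $r$, and independence follows from a single triangularity argument. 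You instead interpose the auxiliary family $\{Y^kv_0 : k\in I_n\}$, prove it is a basis by the same top-index mechanism, and recover the $u_r$ by triangularity in the exponent lattice. What your route buys is the pleasant byproduct that $v_0$ is a cyclic vector for $Y$, with an explicit window of $n+1$ powers forming a basis, and it avoids the paper's two-step recombination trick; the cost is that the auxiliary basis claim is exactly as hard as the paper's induction, so the total work is comparable.

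One intermediate claim must be weakened for your proof to be watertight. As stated, ``$Y$ raises the top index by $2$ or $1$ according to the parity of that index, and $Y^{-1}$ does the opposite'' is false in the interior of the diagram: in type \textup{\sf DS}, for odd $r$ the coefficient of $v_r$ in $t_1v_r$ equals
$\bigl(q\mu_{r-1}(k_1+k_1^{-1})-k_2-k_2^{-1}\bigr)\big/\bigl(q\mu_{r-1}-q^{-1}\mu_{r-1}^{-1}\bigr)$
by Lemma \ref{lem:R20}, and the constraints on $\{k_i\}_{i\in\I}$ do not rule out its vanishing; when it vanishes, $Yv_r$ has top index at most $r-1$, not $r+1$. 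The ``shift by $1$'' assertions are valid only where they are actually needed, namely at the vertex $\mu_0$, and there they come from the endpoint scalar actions $t_0v_0=k_0v_0$, $t_1v_0=k_1v_0$, etc., i.e.\ from Lemma \ref{lem:tiv0vn}, which you should cite: Lemmas \ref{lem:R15} and \ref{lem:R20} govern only the bonded pairs and say nothing about the action of the remaining generators at $\mu_0$. Your parity-locking observation already confines the argument to exactly these instances---every other step along the chain is a shift-by-$2$ step, whose leading coefficients are of the form $\mu/(\mu-\mu^{-1})$ or $1/(q\mu-q^{-1}\mu^{-1})$ and are automatically nonzero; in particular the $G$-values and Corollary \ref{cor:nonzeropre} are not needed at all. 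So the repair is purely a matter of restating the top-index claim along the chain issuing from $v_0$ rather than for arbitrary vectors, after which your two triangularity steps go through as written.
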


\begin{proof}
Let $\{v_r\}_{r=0}^n$ denote the $X$-standard basis for $\V$ corresponding to the
ordering $\{\mu_r\}_{r=0}^n$.
For notational convenience set $\V_r = \V_X(\mu_r)$ for $0 \leq r \leq n$ and $\V_{-1} = 0$.
First assume that $\V$ has $X$-type \textup{\sf DS}.
By Lemma \ref{lem:tiv0vn} $t_0 v_0 \in \V_0$ and $t_0^{-1} v_0 \in \V_0$.
By Lemma \ref{lem:R15} 
\begin{align*}    
 t_0 v_r &\in 
   \begin{cases}
     \V_{r-1} + \V_r  &  \text{ if $r$ is even},
  \\
    (\V_r + \V_{r+1}) \setminus \V_r   &  \text{ if $r$ is odd}
   \end{cases}
  &&  (1 \leq r \leq n).
\end{align*}
By this and \eqref{eq:ki}
\begin{align*}
 t_0^{-1} v_r &\in 
   \begin{cases}
     \V_{r-1} + \V_r  &  \text{ if $r$ is even},
  \\
    (\V_r + \V_{r+1}) \setminus \V_r   &  \text{ if $r$ is odd}
   \end{cases}
  &&  (1 \leq r \leq n).
\end{align*}
Similarly, $t_1 v_n \in \V_n$, $t_1^{-1} v_n \in \V_n$, and
\begin{align*}
 t_1 v_r &\in
   \begin{cases}
     (\V_r + \V_{r+1}) \setminus \V_r  &  \text{ if $r$ is even},
  \\
    \V_{r-1} + \V_r   &  \text{ if $r$ is odd}
   \end{cases}
  &&  (0 \leq r \leq n-1),
\\
 t_1^{-1} v_r &\in
   \begin{cases}
     (\V_r + \V_{r+1}) \setminus \V_r  &  \text{ if $r$ is even},
  \\
    \V_{r-1} + \V_r    &  \text{ if $r$ is odd}
   \end{cases}
  &&  (0 \leq r \leq n-1).
\end{align*}
Using these comments one routinely finds that for $0 \leq r \leq n-2$
\begin{align}
 Y v_r &\in (\V_{r-1} + \V_r + \V_{r+1} + \V_{r+2}) \setminus (\V_{r-1} + \V_r + \V_{r+1})
   && \text{ if $r$ is even},                          \label{eq:Yvr}
\\
 Y^{-1} v_r &\in (\V_{r-1} + \V_r + \V_{r+1} + \V_{r+2}) \setminus (\V_{r-1} + \V_r + \V_{r+1})
   && \text{ if $r$ is odd}.                 \notag  
\end{align}
We now show that $\{u_r\}_{r=0}^n$ are linearly independent.
To this end we show
\begin{equation}
 u_r \in \big( \V_0 + \V_1 + \cdots + \V_r \big)
       \setminus   \big( \V_0 + \V_1 + \cdots + \V_{r-1} \big)   \label{eq:uraux1}
\end{equation}
using induction on $r=0,1,\ldots,n$.
For $r=0$ the line \eqref{eq:uraux1} obviously holds.
For $r=1$ we argue as follows. 
By Definition \ref{def:ur} and the shape of the $X$-diagram, 
 $u_1 = u_0 - \beta_0 Y^{-1} u_0$.
We have $u_0 \in \V_0$ and $t_0^{-1} u_0 \in \V_0$. 
We have $t_1^{-1} v_0 \in (\V_0 + \V_1) \setminus \V_0$.
So $Y^{-1} v_0 \in  (\V_0 + \V_1) \setminus \V_0$.
By these comments $u_1 \in  (\V_0 + \V_1) \setminus \V_0$,
and so \eqref{eq:uraux1} holds for $r=1$.
Now assume $2 \leq r \leq n$.
First assume that $r$ is even.
By  Definition \ref{def:ur}  and the shape of the $X$-diagram, 
\begin{align*}
  u_r &= u_{r-1} - \beta_{r-1} Y u_{r-1},
&
 u_{r-1} &= u_{r-2} - \beta_{r-2}  Y^{-1} u_{r-2}.
\end{align*}
Combining these two equations,
\begin{equation*}
 u_r = u_{r-1} + \beta_{r-1} \beta_{r-2} u_{r-2} - \beta_{r-1} Y u_{r-2}.  
\end{equation*}
By induction
\begin{align*}
  u_{r-1} &\in \V_0 + \cdots + \V_{r-1},
\\
  u_{r-2} &\in (\V_0 +  \cdots + \V_{r-2})
               \setminus            (\V_0 + \cdots + \V_{r-3}). 
\end{align*}
By \eqref{eq:Yvr}
\[
  Y v_{r-2} \in (\V_{r-3} + \V_{r-2} + \V_{r-1} + \V_r) 
                       \setminus   (\V_{r-3} + \V_{r-2} + \V_{r-1} ).
\]
By these comments \eqref{eq:uraux1} holds for even $r$.
Next assume that $r$ is odd.
We can show \eqref{eq:uraux1} in a similar way.
Thus the vectors $\{u_r\}_{r=0}^n$ are linearly independent, and so form a basis for $\V$.
We have shown the result when $\V$ has $X$-type \textup{\sf DS}.
The proof is similar for the other types.
\end{proof}

\begin{lemma}     \label{lem:zero}    \samepage
\ifDRAFT {\rm lem:zero}. \fi
With reference to Definition \ref{def:ur},
$u_r=0$ for $r > n$.
\end{lemma}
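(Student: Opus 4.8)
The plan is to reduce the statement to the single equality $u_{n+1}=0$. The recursion in Definition~\ref{def:ur} extends to all $r$ once we extend $\{\mu_r\}$ and $\{\beta_r\}$ past $r=n$ by the same formulas (Lemma~\ref{lem:eigenvalues}, Definition~\ref{def:be}); each $u_r$ is obtained from $u_{r-1}$ by applying the operator $1-\beta_{r-1}Y^{e_{r-1}}$, where $e_{r-1}=1$ (resp.\ $e_{r-1}=-1$) when $\mu_{r-1},\mu_r$ are $1$-adjacent (resp.\ $q$-adjacent). Since this operator sends $0$ to $0$, once $u_{n+1}=0$ is established the equalities $u_r=0$ for $r>n$ follow by induction on $r$.

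Next I would reformulate $u_{n+1}=0$ as a polynomial identity in $Y$. By construction $u_{n+1}=\prod_{r=0}^{n}(1-\beta_r Y^{e_r})\,u_0$, and because all factors are Laurent polynomials in the single invertible element $Y$, they commute and the order is irrelevant. Setting $\lambda_r=\beta_r^{-e_r}$, each factor $1-\beta_r Y^{e_r}$ equals a unit of $\F[Y^{\pm 1}]$ (a nonzero scalar when $e_r=1$, the element $Y^{-1}$ when $e_r=-1$) times $Y-\lambda_r$. Hence $u_{n+1}=0$ is equivalent to $\prod_{r=0}^{n}(Y-\lambda_r)\,u_0=0$, where $\prod_{r=0}^{n}(Y-\lambda_r)$ is monic of degree $n+1=\dim\V$ by Notation~\ref{notation}.

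The key step is to identify $\prod_{r=0}^{n}(Y-\lambda_r)$ with the characteristic polynomial of $Y$ acting on $\V$; the Cayley--Hamilton theorem then gives $\prod_{r=0}^{n}(Y-\lambda_r)\,\V=0$, and in particular $\prod_{r=0}^{n}(Y-\lambda_r)\,u_0=0$, as needed. I emphasize that this requires no diagonalizability of $Y$: we assume only that $\V$ is XD, and the characteristic polynomial annihilates $\V$ irrespective of the Jordan structure (coincidences among the $\lambda_r$ correspond precisely to $Y$ failing to be semisimple, i.e.\ to $\V$ failing to be YD). To locate the roots I would first use the automorphism $\varrho$ of Lemma~\ref{lem:Z4}, which by Lemma~\ref{lem:Z4XY} sends $X\mapsto Y$: the characteristic polynomial of $Y$ on $\V$ equals that of $X$ on the twisted module $\V^{\varrho}$, whose parameter sequence is the cyclic shift $\{k_{i+1}\}_{i\in\I}$. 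This shows heuristically that the $\lambda_r$ are the $\varrho$-images of the $X$-eigenvalues $\mu_r$ and match the formula of Lemma~\ref{lem:eigenvalues} with $k_3$ replaced by $k_1$ (with the analogous substitutions in the remaining types).

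The main obstacle is carrying out this characteristic-polynomial computation rigorously under the XD hypothesis alone. Because $Y$ need not be diagonalizable, I cannot simply invoke the $X$-eigenspace theory for $\V^{\varrho}$ (that module need not be XD); instead I would compute $\det(\lambda I-Y)$ directly from the explicit block actions of $t_0$ and $t_1$ on the $X$-standard basis (Lemmas~\ref{lem:R15}, \ref{lem:R20}, \ref{lem:tiv0vn}), in which $t_0$ is block diagonal along the single bonds and $t_1$ along the double bonds, so that $Y=t_0t_1$ is a banded, transfer-matrix-type product whose determinant telescopes to $\prod_{r=0}^{n}(\lambda-\lambda_r)$. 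The genuine labor is the bookkeeping across the five $X$-types and the parity-dependent alternation between $Y$ and $Y^{-1}$ (hence between the two shapes of $\lambda_r$); I expect the case \textup{\sf DS} to be representative, with the types \textup{\sf DDa}, \textup{\sf DDb}, \textup{\sf SSa}, \textup{\sf SSb} following by the same computation.
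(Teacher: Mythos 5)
Your proposal is correct, and its core mechanism is genuinely different from the paper's. The paper proves the lemma by brute force in the $X$-standard basis: writing $v_0=u_0$, it uses the explicit actions of $\{t_i\}_{i\in\I}$ from Lemmas \ref{lem:tiv0vn}, \ref{lem:R15}, \ref{lem:R20} to expand $u_0,u_1,\ldots,u_{n+1}$ inductively as linear combinations of $\{v_r\}_{r=0}^n$, then invokes the parameter constraint of Lemma \ref{lem:typekr0} to conclude that $u_{n+1}=0$, and finishes with the same observation as your first paragraph (the recursion propagates $0$); the precise computations are omitted there, just as you defer yours. Your reduction to $u_{n+1}=0$ and the factorization $u_{n+1}=(\mathrm{unit})\cdot\prod_{r=0}^{n}(Y-\lambda_r)\,u_0$ with $\lambda_r=\beta_r^{-e_r}$, valid because all factors lie in the commutative algebra $\F[Y^{\pm 1}]$, are both correct. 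Your key step --- identifying $\prod_{r=0}^{n}(\lambda-\lambda_r)$ with the characteristic polynomial of $Y$ and finishing by Cayley--Hamilton --- is sound and, as you arrange it, non-circular: you rightly decline to use the lower-tridiagonal shape of $Y$ on $\{u_r\}$ (Lemma \ref{lem:Yeigen}, whose proof needs the present lemma) or the twisted module $\V^{\varrho}$ (which need not be XD), and instead compute $\det(\lambda I-Y)$ directly from the same explicit matrices of $t_0$, $t_1$ that the paper's induction uses; the root list you predict is indeed the true one (confirmed a posteriori by Lemma \ref{lem:Yeigen} and Proposition \ref{prop:Yeigen}), and the relation of Lemma \ref{lem:typekr0} will necessarily enter your determinant simplification just as it enters the paper's induction. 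What your route buys: a stronger conclusion, namely that $\prod_{r=0}^{n}(Y-\lambda_r)$ annihilates all of $\V$, which essentially pins down the characteristic polynomial of $Y$ and would render Proposition \ref{prop:Yeigen} and Corollary \ref{cor:Ydiagonalizable} nearly automatic. What it costs: the telescoping of $\det(\lambda I-t_0t_1)$ across the five $X$-types and two parities is a computation of the same order of labor as the one the paper omits, so neither argument is complete as written; yours trades vector bookkeeping for determinant bookkeeping, with a structurally cleaner finish.
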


\begin{proof}
Let $\{v_r\}_{r=0}^n$ denote the $X$-standard basis for $\V$ 
such that $v_0 = u_0$.
By Lemmas \ref{lem:tiv0vn}, \ref{lem:R15}, \ref{lem:R20} we have the actions 
of $\{t_i\}_{i \in \I}$ on $\{v_r\}_{r=0}^n$.
Using these actions and Definition \ref{def:ur},
we represent $u_r$ inductively for $r=0,1,\ldots,n+1$
as a linear combination of $\{v_r\}_{r=0}^n$.
Here we omit the precise computations.
Now using Lemma \ref{lem:typekr0}
we find that $u_{n+1}=0$.
The result follows from this and Definition \ref{def:ur}.
\end{proof}

\section{The action of $Y^{\pm 1}$ on the basis $\{u_r\}_{r=0}^n$}
\label{sec:eigenY}

Throughout this section Notation \ref{notation} is in effect.
Let the scalars $\beta_0, \beta_1, \ldots$ be from Definition \ref{def:be} and
the vectors $u_0,u_1,\ldots$ be from Definition \ref{def:ur}.
By Lemma \ref{lem:urbasis} $\{u_r\}_{r=0}^n$ form a basis for $\V$.
In this section we obtain the action of $Y^{\pm 1}$ on the basis $\{u_r\}_{r=0}^n$,
and show that with respect to the basis $\{u_r\}_{r=0}^n$ the matrix representing
$Y^{\pm 1}$ is lower tridiagonal.
As a corollary, we obtain the condition for that $Y$ is diagonalizable on $\V$.
Note that $u_r = 0$ for $r > n$ by Lemma \ref{lem:zero}.

\begin{lemma}   \label{lem:DSYaction}   \samepage
\ifDRAFT {\rm lem:DSYaction}. \fi
Assume that $\V$ has $X$-type among \textup{\sf DS}, \textup{\sf DDa}.
Then for $0 \leq r \leq n$
\begin{align}
Y u_r &=
 \begin{cases}
  \beta_r u_r +  \beta_{r+2}^{-1} (u_{r+1} - u_{r+2})   & \text{if $r$ is even},
  \\
  \beta_r^{-1} (u_r - u_{r+1})    & \text{if $r$ is odd},
 \end{cases}                                                              \label{eq:DSYur}
\\
Y^{-1} u_r &=
 \begin{cases}
  \beta_r^{-1} (u_{r} - u_{r+1})   & \text{if $r$ is even},
  \\
  \beta_r u_r + \beta_r^{-1} (u_{r+1} - u_{r+2})    & \text{if $r$ is odd}.
 \end{cases}                                                              \label{eq:DSYinvur}
\end{align}
\end{lemma}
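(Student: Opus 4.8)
The plan is to derive both displayed formulas directly from the recurrence in Definition \ref{def:ur}, with essentially no induction beyond what is already packaged in Lemma \ref{lem:zero}. First I would record the bond structure of the reduced $X$-diagram in the cases \textup{\sf DS}, \textup{\sf DDa}. Using the row \textup{\sf DS}, \textup{\sf DD} of \eqref{eq:mur}, for even $r$ one computes $\mu_r \mu_{r+1} = q^r \mu_0 \cdot q^{-r-2}\mu_0^{-1} = q^{-2}$, so $\mu_r$, $\mu_{r+1}$ are $q$-adjacent, while for odd $r$ one gets $\mu_r \mu_{r+1} = 1$, so they are $1$-adjacent. Hence Definition \ref{def:ur} reads
\begin{align*}
 u_r &= u_{r-1} - \beta_{r-1} Y u_{r-1}      && (r \text{ even}), \\
 u_r &= u_{r-1} - \beta_{r-1} Y^{-1} u_{r-1} && (r \text{ odd}),
\end{align*}
and by Lemma \ref{lem:zero} these persist for all $r$ under the convention $u_r = 0$ for $r > n$.

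Next I would read off the two ``easy'' cases, namely those involving only consecutive $u$'s, by applying the recurrence at index $r+1$. For odd $r$ the index $r+1$ is even, so $u_{r+1} = u_r - \beta_r Y u_r$, giving $Y u_r = \beta_r^{-1}(u_r - u_{r+1})$; for even $r$ the index $r+1$ is odd, so $u_{r+1} = u_r - \beta_r Y^{-1} u_r$, giving $Y^{-1} u_r = \beta_r^{-1}(u_r - u_{r+1})$. These are precisely the odd case of \eqref{eq:DSYur} and the even case of \eqref{eq:DSYinvur}.

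The two remaining cases I would obtain by inverting. For even $r$, apply $Y$ to $Y^{-1} u_r = \beta_r^{-1}(u_r - u_{r+1})$ to get $Y u_r = \beta_r u_r + Y u_{r+1}$; since $r+1$ is odd, substitute the formula for $Y u_{r+1}$ just established to reach $Y u_r = \beta_r u_r + \beta_{r+1}^{-1}(u_{r+1} - u_{r+2})$. Symmetrically, for odd $r$ apply $Y^{-1}$ to $Y u_r = \beta_r^{-1}(u_r - u_{r+1})$ to get $Y^{-1} u_r = \beta_r u_r + Y^{-1} u_{r+1}$, then substitute the even case of $Y^{-1} u_{r+1}$ to reach $Y^{-1} u_r = \beta_r u_r + \beta_{r+1}^{-1}(u_{r+1} - u_{r+2})$. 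Matching these with \eqref{eq:DSYur} and \eqref{eq:DSYinvur} then reduces to the two elementary identities $\beta_{r+1} = \beta_{r+2}$ for even $r$ and $\beta_{r+1} = \beta_r$ for odd $r$, both of which follow at once from Definition \ref{def:be}: for \textup{\sf DS}, \textup{\sf DDa} one has $\beta_{r+1} = \beta_{r+2} = k_0 k_1 q^{r+2}$ when $r$ is even and $\beta_r = \beta_{r+1} = k_0 k_1 q^{r+1}$ when $r$ is odd.

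There is no serious obstacle here; the only care required is at the top end $r = n$, where by \eqref{eq:parityn} the integer $n$ is even for \textup{\sf DS} and odd for \textup{\sf DDa}. In both situations Lemma \ref{lem:zero} gives $u_{n+1} = u_{n+2} = 0$, so the inverting step applies verbatim (with $Y u_{n+1} = 0$, resp.\ $Y^{-1} u_{n+1} = 0$) and the stated formulas collapse correctly to $Y u_n = \beta_n u_n$, resp.\ $Y^{-1} u_n = \beta_n u_n$. Thus the entire argument is a short direct computation, the mildly delicate points being the parity bookkeeping of the adjacencies and the two $\beta$-identities.
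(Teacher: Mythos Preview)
Your proposal is correct and follows essentially the same approach as the paper's proof: both read off the two ``easy'' cases directly from the recurrence in Definition~\ref{def:ur}, then obtain each ``hard'' case by applying $Y$ (resp.\ $Y^{-1}$) to the appropriate relation and substituting the already-established easy case at index $r+1$. The only cosmetic difference is that the paper works with the explicit values $k_0k_1q^{r}$, $k_0k_1q^{r+1}$ throughout, whereas you keep everything in terms of $\beta_r$ and close with the identities $\beta_{r+1}=\beta_{r+2}$ (for even $r$) and $\beta_{r+1}=\beta_r$ (for odd $r$); your explicit handling of the boundary $r=n$ via Lemma~\ref{lem:zero} is also a nice touch that the paper leaves implicit.
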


\begin{proof}
Note by Definition \ref{def:be} that  
\begin{align*}
\beta_r &=
  \begin{cases}
  k_0 k_1 q^r      & \text{ if $r$ is even},  \\
  k_0 k_1 q^{r+1} & \text{ if $r$ is odd}
 \end{cases}
&& (r=0,1,2,\ldots).
\end{align*}
Pick any integer $r$ such that $0 \leq r \leq n$.
By Definition \ref{def:ur}
\begin{align}
 u_{r+1} &= u_r - k_0 k_1 q^{r+1} Y u_r   && \text{if $r$ is odd}.    \label{eq:Yur1}
\end{align}
So \eqref{eq:DSYur} holds for odd $r$.
By Definition \ref{def:ur}
\begin{align}
 u_{r+1} &= u_r - k_0 k_1 q^r Y^{-1} u_r   && \text{if $r$ is even}.   \label{eq:Yur2}
\end{align}
So \eqref{eq:DSYinvur} holds for even $r$.
Applying $Y^{-1}$ to \eqref{eq:Yur1}
\begin{align*}
  Y^{-1} u_{r+1} &= Y^{-1} u_r - k_0 k_1 q^{r+1} u_r      && \text{if $r$ is odd}.
\end{align*}
By \eqref{eq:DSYinvur}
\begin{align*}
 Y^{-1} u_{r+1} &= (k_0 k_1 q^{r+1})^{-1} (u_{r+1} - u_{r+2})   && \text{if $r$ is odd}.
\end{align*}
Combining the above two equations, we obtain \eqref{eq:DSYinvur} for odd $r$.
Applying $Y$ to \eqref{eq:Yur2}
\begin{align*}
 Y u_{r+1} &= Y u_r - k_0 k_1 q^r u_r  &&  \text{if $r$ is even}.
\end{align*}
By \eqref{eq:DSYur}
\begin{align*}
 Y u_{r+1} &= (k_0 k_1 q^{r+2})^{-1}  (u_{r+1} - u_{r+2})  && \text{if $r$ is even}.
\end{align*}
Combining the above two equations we obtain \eqref{eq:DSYur} for even $r$.
\end{proof}

The following three lemmas can be shown in a similar way.

\begin{lemma}   \label{lem:DDbYaction}   \samepage
\ifDRAFT {\rm lem:DDbYaction}. \fi
Assume that $\V$ has $X$-type \textup{\sf DDb}.
Then for $0 \leq r \leq n$
\begin{align*}
 Y u_r &= 
 \begin{cases}
 \beta_r u_r + \beta_r^{-1} (u_{r+1} - u_{r+2})     & \text{ if $r$ is even},
 \\
 \beta_r^{-1}  (u_r - u_{r+1})        & \text{ if $r$ is odd},
 \end{cases}
\\
Y^{-1} u_r &=
 \begin{cases}
  \beta_r^{-1} (u_r - u_{r+1})      & \text{if $r$ is even},
  \\
 \beta_r u_r + \beta_{r+2}^{-1} (u_{r+1} - u_{r+2})     & \text{if $r$ is odd}.
 \end{cases}
\end{align*}
\end{lemma}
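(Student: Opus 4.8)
The plan is to reproduce the argument of Lemma~\ref{lem:DSYaction}, the only change being the index bookkeeping of the sequence $\{\beta_r\}$, which is paired differently in type \textup{\sf DDb}. I would first record two structural facts. Since type \textup{\sf DDb} has reduced $X$-diagram \textup{\sf DD}, its shape (see \eqref{eq:standard}) makes $\mu_r,\mu_{r+1}$ $q$-adjacent for even $r$ and $1$-adjacent for odd $r$, so Definition~\ref{def:ur} reads
\[
 u_{r+1} = u_r - \beta_r Y^{-1} u_r \ \ (r\ \text{even}), \qquad
 u_{r+1} = u_r - \beta_r Y u_r \ \ (r\ \text{odd}).
\]
Next, from Definition~\ref{def:be} one checks directly that for type \textup{\sf DDb} the equalities $\beta_r=\beta_{r+1}$ (for even $r$, both equal $(k_2k_3q^{r+1})^{-1}$) and $\beta_{r+1}=\beta_{r+2}$ (for odd $r$, both equal $(k_2k_3q^{r+2})^{-1}$) hold. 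These two identities are exactly the reverse of the pairing in the \textup{\sf DS} case, and they are what produce the coefficients $\beta_r^{-1}$ and $\beta_{r+2}^{-1}$ in the asserted formulas.

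Two of the four formulas then come straight from the recurrence: solving the even-$r$ relation for $Y^{-1}u_r$ gives $Y^{-1}u_r=\beta_r^{-1}(u_r-u_{r+1})$, and solving the odd-$r$ relation for $Yu_r$ gives $Yu_r=\beta_r^{-1}(u_r-u_{r+1})$. For the remaining two I would apply $Y$ and $Y^{-1}$ to the recurrence. For even $r$, applying $Y$ to $u_{r+1}=u_r-\beta_r Y^{-1}u_r$ yields $Yu_{r+1}=Yu_r-\beta_r u_r$; since $r+1$ is odd, substitute the just-proved odd formula $Yu_{r+1}=\beta_{r+1}^{-1}(u_{r+1}-u_{r+2})$ and use $\beta_{r+1}=\beta_r$ to obtain $Yu_r=\beta_r u_r+\beta_r^{-1}(u_{r+1}-u_{r+2})$. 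Symmetrically, for odd $r$, applying $Y^{-1}$ to $u_{r+1}=u_r-\beta_r Y u_r$ and substituting the even formula $Y^{-1}u_{r+1}=\beta_{r+1}^{-1}(u_{r+1}-u_{r+2})$, then using $\beta_{r+1}=\beta_{r+2}$, gives $Y^{-1}u_r=\beta_r u_r+\beta_{r+2}^{-1}(u_{r+1}-u_{r+2})$. At the boundary $r\in\{n-1,n\}$ these manipulations remain valid under the convention $u_r=0$ for $r>n$, which holds by Lemma~\ref{lem:zero}.

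The step I expect to be the only real pitfall—rather than a genuine difficulty—is tracking these $\beta$-index shifts: the swap of $\beta_r=\beta_{r+1}$ and $\beta_{r+1}=\beta_{r+2}$ relative to the \textup{\sf DS}/\textup{\sf DDa} case is precisely what exchanges the positions of $\beta_r^{-1}$ and $\beta_{r+2}^{-1}$ between Lemmas~\ref{lem:DSYaction} and~\ref{lem:DDbYaction}, so the computation must be carried out with that reversal in mind.
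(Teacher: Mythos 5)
Your proposal is correct and follows essentially the same route as the paper, which proves this lemma by remarking that it "can be shown in a similar way" to Lemma \ref{lem:DSYaction}: two formulas read off directly from the recurrence in Definition \ref{def:ur}, and the other two obtained by applying $Y^{\pm 1}$ to that recurrence and substituting back. Your identification of the reversed pairings $\beta_r=\beta_{r+1}$ (even $r$) and $\beta_{r+1}=\beta_{r+2}$ (odd $r$) for type \textup{\sf DDb} is exactly the bookkeeping that distinguishes this case from Lemma \ref{lem:DSYaction}, and your boundary treatment via Lemma \ref{lem:zero} is sound.
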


\begin{lemma}   \label{lem:SSaYaction}   \samepage
\ifDRAFT {\rm lem:SSaYaction}. \fi
Assume that $\V$ has $X$-type \textup{\sf SSa}.
Then for $0 \leq r \leq n$
\begin{align*}
 Y u_r &=
 \begin{cases}
  \beta_r^{-1} (u_r - u_{r+1})     & \text{ if $r$ is even},
 \\
  \beta_r u_r + \beta_r^{-1} (u_{r+1} - u_{r+2})    &  \text{ if $r$ is odd},
 \end{cases}
\\
Y^{-1} u_r &=
 \begin{cases}
 \beta_r u_r + \beta_{r+2}^{-1} (u_{r+1} - u_{r+2})      & \text{if $r$ is even},
  \\
  \beta_r^{-1} (u_r - u_{r+1})         & \text{if $r$ is odd}.
 \end{cases}
\end{align*}
\end{lemma}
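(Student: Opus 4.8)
The plan is to follow the proof of Lemma \ref{lem:DSYaction} essentially verbatim, exploiting that Definition \ref{def:ur} already encodes half of the desired formulas, and that the remaining half is obtained by applying the opposite operator to these and re-indexing. First I would record the combinatorics of the {\sf SS} diagram: along the reduced $X$-diagram of type {\sf SS} the bond joining $\mu_r$ and $\mu_{r+1}$ is a single bond (so $\mu_r,\mu_{r+1}$ are $1$-adjacent) when $r$ is even, and a double bond (so $\mu_r,\mu_{r+1}$ are $q$-adjacent) when $r$ is odd; recall that $n$ is odd by \eqref{eq:parityn}. I would also write out from Definition \ref{def:be} the explicit values $\beta_r=(k_0k_1q^r)^{-1}$ for even $r$ and $\beta_r=(k_0k_1q^{r+1})^{-1}$ for odd $r$.

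Reading Definition \ref{def:ur} in this case, the recursion becomes $u_{r+1}=u_r-\beta_r Y u_r$ when $r$ is even and $u_{r+1}=u_r-\beta_r Y^{-1}u_r$ when $r$ is odd. Solving each relation for the operator applied to $u_r$ produces at once the two ``bidiagonal'' formulas $Y u_r=\beta_r^{-1}(u_r-u_{r+1})$ for even $r$ and $Y^{-1}u_r=\beta_r^{-1}(u_r-u_{r+1})$ for odd $r$; these are exactly the first case of the $Y$-display and the second case of the $Y^{-1}$-display. Throughout I adopt the convention $u_r=0$ for $r>n$ furnished by Lemma \ref{lem:zero}, so that these identities are valid up to $r=n$.

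For the remaining two cases I would apply the opposite operator to a bidiagonal identity and substitute the already-known value at the shifted index. Applying $Y$ to $Y^{-1}u_r=\beta_r^{-1}(u_r-u_{r+1})$ (for odd $r$) gives $Y u_r=\beta_r u_r+Y u_{r+1}$, and since $r+1$ is even the established formula gives $Y u_{r+1}=\beta_{r+1}^{-1}(u_{r+1}-u_{r+2})$; similarly, applying $Y^{-1}$ to the even-$r$ bidiagonal formula yields $Y^{-1}u_r=\beta_r u_r+\beta_{r+1}^{-1}(u_{r+1}-u_{r+2})$ for even $r$. The last step is the bookkeeping of the $\beta$-coefficients: from the explicit values one checks $\beta_{r+1}=\beta_r$ when $r$ is odd and $\beta_{r+1}=\beta_{r+2}$ when $r$ is even, which converts the coefficient $\beta_{r+1}^{-1}$ into $\beta_r^{-1}$ in the $Y$-formula (odd $r$) and into $\beta_{r+2}^{-1}$ in the $Y^{-1}$-formula (even $r$), exactly matching the statement.

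The computations are entirely routine; the only point requiring genuine care — and the sole feature distinguishing this argument from Lemma \ref{lem:DSYaction} — is the parity bookkeeping. In the {\sf SS} case the single bonds occupy the even indices rather than the odd ones, so the roles of ``even'' and ``odd'' in the two halves of the display are interchanged relative to the {\sf DS} case; correspondingly it is $\beta_{r+2}^{-1}$ that surfaces in the $Y^{-1}$-formula (even $r$) and $\beta_r^{-1}$ in the $Y$-formula (odd $r$). I would close by observing that the boundary terms $u_{n+1}=u_{n+2}=0$ render all four formulas consistent at $r=n$.
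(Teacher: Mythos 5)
Your proposal is correct and is essentially the paper's own proof: the paper proves this lemma by declaring it "similar" to Lemma \ref{lem:DSYaction}, whose argument is exactly what you carry out — read the two bidiagonal identities off Definition \ref{def:ur}, apply the opposite operator and substitute the established formula at the shifted index, and finish with the $\beta$-identities ($\beta_{r+1}=\beta_r$ for odd $r$, $\beta_{r+1}=\beta_{r+2}$ for even $r$), using Lemma \ref{lem:zero} at the boundary. Your parity bookkeeping for the {\sf SS} diagram and the SSa values of $\beta_r$ checks out, so nothing is missing.
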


\begin{lemma}   \label{lem:SSbYaction}   \samepage
\ifDRAFT {\rm lem:SSbYaction}. \fi
Assume that $\V$ has $X$-type \textup{\sf SSb}.
Then for $0 \leq r \leq n$
\begin{align*}
 Y u_r &=
 \begin{cases}
    \beta_r^{-1}  (u_r - u_{r+1})     & \text{ if $r$ is even},
 \\
   \beta_r u_r + \beta_{r+2}^{-1} (u_{r+1} - u_{r+2})    &  \text{ if $r$ is odd},
 \end{cases}
\\
Y^{-1} u_r &=
 \begin{cases}
 \beta_r u_r + \beta_r^{-1} (u_{r+1} - u_{r+2})    & \text{if $r$ is even},
  \\
 \beta_r^{-1} (u_r - u_{r+1})        & \text{if $r$ is odd}.
 \end{cases}
\end{align*}
\end{lemma}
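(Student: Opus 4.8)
The plan is to mimic the proof of Lemma~\ref{lem:DSYaction} essentially verbatim, adjusting only for the bond pattern of type \textup{\sf SS} and the scalars $\beta_r$ of type \textup{\sf SSb}. First I would record the combinatorial input: for the \textup{\sf SS} diagram in \eqref{eq:standard} the eigenvalues $\mu_r$, $\mu_{r+1}$ are $1$-adjacent when $r$ is even and $q$-adjacent when $r$ is odd, while for type \textup{\sf SSb} Definition~\ref{def:be} gives $\beta_r=k_2k_3q^{r+1}$ for even $r$ and $\beta_r=k_2k_3q^{r}$ for odd $r$. From these two facts Definition~\ref{def:ur} reads $u_{r+1}=u_r-\beta_r Yu_r$ for even $r$ and $u_{r+1}=u_r-\beta_r Y^{-1}u_r$ for odd $r$. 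Throughout I would use $u_r=0$ for $r>n$ (Lemma~\ref{lem:zero}), so that the displayed formulas also hold at the boundary under the usual convention.

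Next I would obtain the two ``direct'' cases by solving the defining recursion. For even $r$ the relation $u_{r+1}=u_r-\beta_r Yu_r$ gives at once $Yu_r=\beta_r^{-1}(u_r-u_{r+1})$, which is the even case of the asserted formula for $Yu_r$. For odd $r$ the relation $u_{r+1}=u_r-\beta_r Y^{-1}u_r$ gives $Y^{-1}u_r=\beta_r^{-1}(u_r-u_{r+1})$, which is the odd case for $Y^{-1}u_r$.

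Then I would bootstrap to the remaining two cases exactly as in the model proof. Applying $Y^{-1}$ to the even-$r$ identity $u_{r+1}=u_r-\beta_r Yu_r$ yields $Y^{-1}u_{r+1}=Y^{-1}u_r-\beta_r u_r$, hence $Y^{-1}u_r=\beta_r u_r+Y^{-1}u_{r+1}$; since $r$ is even, $r+1$ is odd and the previous step gives $Y^{-1}u_{r+1}=\beta_{r+1}^{-1}(u_{r+1}-u_{r+2})$, and the index identity $\beta_{r+1}=\beta_r$ (both equal $k_2k_3q^{r+1}$) produces the even case of $Y^{-1}u_r$. Symmetrically, applying $Y$ to the odd-$r$ identity $u_{r+1}=u_r-\beta_r Y^{-1}u_r$ gives $Yu_r=\beta_r u_r+Yu_{r+1}$ with $r+1$ even, so $Yu_{r+1}=\beta_{r+1}^{-1}(u_{r+1}-u_{r+2})$, and the identity $\beta_{r+1}=\beta_{r+2}$ (both equal $k_2k_3q^{r+2}$) yields the odd case of $Yu_r$.

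The argument is pure bookkeeping, so there is no substantive obstacle; the only place demanding care is the parity tracking of the $\beta$-indices. The whole computation rests on the two shift identities $\beta_{r+1}=\beta_r$ for even $r$ and $\beta_{r+1}=\beta_{r+2}$ for odd $r$, which hold precisely because the \textup{\sf SSb} scalars alternate in the pattern $(q^{r+1},q^{r})$; I would verify these two identities explicitly and check consistency at the boundary $r=n$ (with $n$ odd by \eqref{eq:parityn}) using $u_{n+1}=u_{n+2}=0$, for instance confirming $Y^{-1}Yu_n=u_n$.
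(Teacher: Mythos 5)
Your proposal is correct and is essentially the paper's own argument: the paper proves Lemma \ref{lem:DSYaction} by exactly this two-step scheme (the two ``direct'' cases read off from the recursion in Definition \ref{def:ur}, then the remaining two cases obtained by applying $Y^{\mp 1}$ to the recursion and invoking the already-established case at index $r+1$), and it disposes of Lemma \ref{lem:SSbYaction} by stating it can be shown ``in a similar way.'' Your write-up is precisely that similar argument carried out for type \textup{\sf SSb}, with the parity of the adjacencies, the shift identities $\beta_{r+1}=\beta_r$ (for $r$ even) and $\beta_{r+1}=\beta_{r+2}$ (for $r$ odd), and the boundary case $r=n$ all handled correctly.
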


By Lemmas \ref{lem:DSYaction}--\ref{lem:SSbYaction} we obtain the following result.

\begin{lemma}    \label{lem:Yeigen}   \samepage
\ifDRAFT {\rm lem:Yeigen}. \fi
With respect to the basis $\{u_r\}_{r=0}^n$ the matrix representing $Y$
is lower tridiagonal with the following diagonal entries:
\begin{equation}                                        \label{eq:Yeigen}
\begin{array}{c|c}
\textup{\rm $X$-type of $\V$} & \text{\rm Diagonal entries}
\\ \hline
\textup{\sf DS} &                     \rule{0mm}{3ex}
\beta_0, \beta_1^{-1}, \beta_2, \beta_3^{-1}, \ldots, \beta_{n-1}^{-1}, \beta_n
\\
\textup{\sf DDa}, \textup{\sf DDb} &                   \rule{0mm}{3ex}
\beta_0, \beta_1^{-1}, \beta_2, \beta_3^{-1}, \ldots, \beta_{n-1}, \beta_n^{-1}
\\
\textup{\sf SSa}, \textup{\sf SSb} &                   \rule{0mm}{3ex}
\beta_0^{-1}, \beta_1, \beta_2^{-1}, \beta_3, \ldots, \beta_{n-1}^{-1}, \beta_n
\end{array}
\end{equation}
\end{lemma}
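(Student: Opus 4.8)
The plan is to read the conclusion off directly from the explicit formulas in Lemmas \ref{lem:DSYaction}--\ref{lem:SSbYaction}. This is legitimate because $\{u_r\}_{r=0}^n$ is a basis for $\V$ by Lemma \ref{lem:urbasis}, and $u_r=0$ for $r>n$ by Lemma \ref{lem:zero}, so the right-hand sides of those formulas are genuine expansions in the basis once the out-of-range terms are dropped. Recall that the matrix $M$ representing $Y$ is defined by $Y u_s = \sum_{r=0}^n M_{r,s}\, u_r$, and that $M$ is lower tridiagonal precisely when $M_{r,s}=0$ unless $0 \leq r-s \leq 2$; equivalently, each $Y u_s$ must be a linear combination of $u_s$, $u_{s+1}$, $u_{s+2}$.

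First I would settle the shape. In every case the relevant lemma expresses $Y u_r$ as a linear combination of $u_r$, $u_{r+1}$, $u_{r+2}$ only (with $u_{r+1}=u_{r+2}=0$ once the index exceeds $n$). Hence for each column index $s$ the only possibly nonzero entries $M_{r,s}$ occur at $r=s,\,s+1,\,s+2$, so $M$ is lower tridiagonal for every $X$-type.

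Next I would record the diagonal entries, namely the coefficient of $u_r$ in $Y u_r$. For $X$-type among \textup{\sf DS} and \textup{\sf DDa}, Lemma \ref{lem:DSYaction} gives this coefficient as $\beta_r$ when $r$ is even and $\beta_r^{-1}$ when $r$ is odd; the same pattern holds for \textup{\sf DDb} by Lemma \ref{lem:DDbYaction}. For \textup{\sf SSa} and \textup{\sf SSb} the parities are exchanged, by Lemmas \ref{lem:SSaYaction} and \ref{lem:SSbYaction}: the coefficient is $\beta_r^{-1}$ for even $r$ and $\beta_r$ for odd $r$. This yields the diagonal patterns $\beta_0,\beta_1^{-1},\beta_2,\dots$ in the first two rows of the table and $\beta_0^{-1},\beta_1,\beta_2^{-1},\dots$ in the last.

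The only point requiring any care — and the closest thing to an obstacle — is fixing the terminal entry of each sequence, which depends on the parity of $n$ recorded in \eqref{eq:parityn}: $n$ is even for \textup{\sf DS} and odd for the \textup{\sf DD} and \textup{\sf SS} types. Thus the \textup{\sf DS} sequence ends at the even index $n$ with $\beta_n$ (the preceding odd index $n-1$ giving $\beta_{n-1}^{-1}$); for \textup{\sf DDa}, \textup{\sf DDb} the odd index $n$ gives $\beta_n^{-1}$ (with $\beta_{n-1}$ before it); and for \textup{\sf SSa}, \textup{\sf SSb} the odd index $n$ gives $\beta_n$ (with $\beta_{n-1}^{-1}$ before it). Substituting these endpoints into the three diagonal patterns above reproduces \eqref{eq:Yeigen} exactly, completing the proof.
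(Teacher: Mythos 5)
Your proposal is correct and follows exactly the paper's route: the paper proves this lemma simply by citing Lemmas \ref{lem:DSYaction}--\ref{lem:SSbYaction}, and your reading of the diagonal coefficients (with the parity of $n$ from \eqref{eq:parityn} fixing the terminal entries) is precisely the omitted verification. The only difference is that you spell out the bookkeeping the paper leaves implicit.
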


\begin{proposition}    \label{prop:Yeigen}   \samepage
\ifDRAFT {\rm prop:Yeigen}. \fi
There exists a basis for $\V$ with respect to which the matrix
representing $Y$ is lower tridiagonal with the following diagonal entries:
\[
\begin{array}{c|c}
\textup{\rm $X$-type of $\V$} & \text{\rm Diagonal entries}
\\ \hline
\textup{\sf DS} &                     \rule{0mm}{3ex}
k_0 k_1, \;
\frac{1}{k_0 k_1 q^2}, \;
k_0 k_1 q^2, \;
\frac{1}{k_0 k_1 q^4}, \;
\ldots, \;
\frac{1}{k_0 k_1 q^n}, \;
k_0 k_1 q^n
\\
\textup{\sf DDa}, \textup{\sf SSa} &                   \rule{0mm}{3ex}
k_0 k_1, \;
\frac{1}{k_0 k_1 q^2}, \;
k_0 k_1 q^2, \;
\frac{1}{k_0 k_1 q^4}, \;
\ldots, \;
k_0 k_1 q^{n-1}, \;
\frac{1}{k_0 k_1 q^{n+1}}
\\
\textup{\sf DDb}, \textup{\sf SSb} &                   \rule{0mm}{3ex}
\frac{1}{k_2 k_3 q}, \;
k_2 k_3 q, \;
\frac{1}{k_2 k_3 q^3}, \;
k_2 k_3 q^3, \;
\ldots, \;
\frac{1}{k_2 k_3 q^n}, \;
k_2 k_3 q^n
\end{array}
\]
\end{proposition}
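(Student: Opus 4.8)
The plan is to obtain this basis directly from the basis $\{u_r\}_{r=0}^n$ of Definition \ref{def:ur}, which by Lemma \ref{lem:urbasis} is indeed a basis for $\V$. Lemma \ref{lem:Yeigen} already tells us that with respect to $\{u_r\}_{r=0}^n$ the matrix representing $Y$ is lower tridiagonal, with diagonal entries listed in \eqref{eq:Yeigen} as an alternating sequence of the shape $\beta_r^{\pm 1}$ whose sign pattern is dictated by the $X$-type of $\V$. So the only remaining task is to rewrite those diagonal entries explicitly, by substituting the closed forms for the scalars $\beta_r$ supplied in Definition \ref{def:be}.

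Concretely, I would proceed $X$-type by $X$-type. For types \textup{\sf DS} and \textup{\sf DDa} one has $\beta_r = k_0 k_1 q^r$ for even $r$ and $\beta_r = k_0 k_1 q^{r+1}$ for odd $r$; inserting these into the diagonal sequence from \eqref{eq:Yeigen} and invoking the parity of $n$ recorded in \eqref{eq:parityn} ($n$ even for \textup{\sf DS}, $n$ odd for \textup{\sf DD}) produces the claimed entries $k_0 k_1,\ (k_0 k_1 q^2)^{-1},\ k_0 k_1 q^2,\ \ldots$ in each case. I would carry out the analogous substitution for \textup{\sf DDb} using $\beta_r = (k_2 k_3 q^{r+1})^{-1}$ (even $r$) and $(k_2 k_3 q^r)^{-1}$ (odd $r$); for \textup{\sf SSa} using $\beta_r = (k_0 k_1 q^r)^{-1}$ (even $r$) and $(k_0 k_1 q^{r+1})^{-1}$ (odd $r$); and for \textup{\sf SSb} using $\beta_r = k_2 k_3 q^{r+1}$ (even $r$) and $k_2 k_3 q^r$ (odd $r$). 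In every case the inverse pattern $\beta_0^{\pm 1}, \beta_1^{\mp 1}, \ldots$ coming from \eqref{eq:Yeigen} cancels against the inverse built into the definition of $\beta_r$, yielding the advertised alternating list; recording the first and last entries, which are controlled by the parity of $n$, then confirms agreement with the table.

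Because this is purely a substitution into the formula already established in Lemma \ref{lem:Yeigen}, there is no genuine obstacle here: the work is bookkeeping. The one point that demands care is the interaction between the parity convention for $n$ (from \eqref{eq:parityn}) and the even/odd case split in Definition \ref{def:be}, so that the index $r = 0, 1, \ldots, n$ labeling the diagonal entries is tracked correctly all the way out to the two endpoints of each sequence.
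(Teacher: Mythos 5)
Your proposal is correct and is exactly the paper's own argument: the paper's proof of this proposition reads ``Follows from Definition \ref{def:be} and Lemma \ref{lem:Yeigen},'' i.e.\ precisely the substitution of the closed forms of $\beta_r$ into the alternating diagonal pattern of \eqref{eq:Yeigen}, with the parity of $n$ from \eqref{eq:parityn} fixing the endpoint entries. Your write-up just makes the bookkeeping explicit; no gap.
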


\begin{proof}
Follows from Definition \ref{def:be} and Lemma \ref{lem:Yeigen}.
\end{proof}

\begin{lemma}   \label{lem:Ymfree}  \samepage
\ifDRAFT {\rm prop:Ymfree}. \fi
$Y$ is multiplicity-free on any YD $\Hq$-module.
\end{lemma}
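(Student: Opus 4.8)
The plan is to deduce the statement from Proposition \ref{prop:mfree}(i) by means of the duality automorphism $\sigma$ of $\Hq$ from Lemma \ref{lem:sigma}, in exactly the manner already used in the proof of Lemma \ref{lem:XDYD}(ii). Let $\V$ denote a YD $\Hq$-module, so $Y$ is diagonalizable on $\V$. First I would form the twisted $\Hq$-module $\V^\sigma$, where $\V^\sigma = \V$ as a set and each $x \in \Hq$ acts on $\V^\sigma$ as $\sigma(x)$ acts on $\V$. Since $\sigma$ is an algebra automorphism, $\V^\sigma$ is again a finite-dimensional irreducible $\Hq$-module.

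Next I would check that $\V^\sigma$ is XD. By Lemma \ref{lem:sigmaXY} we have $\sigma(X) = t_0^{-1} Y t_0$, so the action of $X$ on $\V^\sigma$ coincides with the action of $t_0^{-1} Y t_0$ on $\V$. Because $t_0$ is invertible and $Y$ is diagonalizable on $\V$, the conjugate $t_0^{-1} Y t_0$ is diagonalizable on $\V$, with the same eigenvalue multiplicities as $Y$. Hence $X$ is diagonalizable on $\V^\sigma$; that is, $\V^\sigma$ is XD.

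Now I would apply Proposition \ref{prop:mfree}(i) to the XD module $\V^\sigma$, obtaining that $X$ is multiplicity-free on $\V^\sigma$. Translating back through the definition of the twist, this asserts that $t_0^{-1} Y t_0$ is multiplicity-free on $\V$. Since conjugation by the invertible element $t_0$ preserves eigenspace dimensions, it follows that $Y$ itself is multiplicity-free on $\V$, which is the desired conclusion.

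I do not anticipate a genuine obstacle here: the argument is a one-step reduction to the already-established multiplicity-freeness of $X$ on XD modules. The only points that require any care are that twisting by $\sigma$ preserves irreducibility and finite-dimensionality (immediate, as $\sigma$ is an automorphism) and that conjugation by $t_0$ preserves both diagonalizability and eigenvalue multiplicities (routine linear algebra). If desired, one could instead invoke $\sigma^{-1}$ and argue symmetrically, but the form above reuses verbatim the observation already made in the proof of Lemma \ref{lem:XDYD}(ii).
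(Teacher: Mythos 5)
Your proof is correct and takes essentially the same route as the paper: the paper's own (terser) proof also reduces the claim to Proposition \ref{prop:mfree}(i) via the automorphism $\sigma$ of Lemma \ref{lem:sigma}. The only cosmetic difference is that you twist so that $X$ acts on $\V^{\sigma}$ as $\sigma(X)=t_0^{-1}Yt_0$ and then undo the conjugation, whereas the paper invokes $\sigma(Y)=X$ directly; both are routine bookkeeping around the same idea.
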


\begin{proof}
By Proposition \ref{prop:mfree} $X$ is multiplicity-free on any XD $\Hq$-module.
Consider the automorphism $\sigma$ from Lemma \ref{lem:sigma}.
By Lemma \ref{lem:sigmaXY} $\sigma$ sends $Y \mapsto X$.
The result follows.
\end{proof}

\begin{corollary} \label{cor:Ydiagonalizable}  \samepage
\ifDRAFT {\rm cor:Ydiagonalizable}. \fi
The following {\rm (i)--(iii)} are equivalent:
\begin{itemize}
\item[\rm (i)]
$Y$ is diagonalizable on $\V$;
\item[\rm (ii)]
$Y$ is multiplicity-free on $\V$;
\item[\rm (iii)]
the following inequalities hold:
\begin{equation}          \label{eq:condYmfree}
\begin{array}{c|c}
\textup{\rm $X$-type of $\V$}  &  \text{\rm Inequalities}  
\\ \hline
\textup{\sf DS}, \textup{\sf DDa}, \textup{\sf SSa}  \rule{0mm}{5mm}
 & \text{\rm Neither of $\pm k_0k_1$ is among $q^{-1},q^{-2},q^{-3},\ldots,q^{-n}$}
\\
\textup{\sf DDb}, \textup{\sf SSb}  \rule{0mm}{5mm}  
& \text{\rm Neither of $\pm k_2k_3$ is among $q^{-1},q^{-2},q^{-3},\ldots,q^{-n}$}
\end{array}
\end{equation}
\end{itemize}
\end{corollary}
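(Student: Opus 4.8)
The plan is to prove the cycle of implications (ii)$\Rightarrow$(i)$\Rightarrow$(ii) together with the genuinely substantive equivalence (ii)$\Leftrightarrow$(iii), the latter carrying essentially all the content.

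First, the equivalence of (i) and (ii) is nearly formal. By the definition of multiplicity-free given in Section \ref{sec:intro}, a multiplicity-free transformation is in particular diagonalizable, so (ii)$\Rightarrow$(i) is immediate. For the converse, if $Y$ is diagonalizable on $\V$ then $\V$ is by definition a YD $\Hq$-module, and Lemma \ref{lem:Ymfree} asserts that $Y$ is multiplicity-free on any YD module; this gives (i)$\Rightarrow$(ii). So these two steps require no computation beyond invoking the stated results.

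The substance lies in (ii)$\Leftrightarrow$(iii), and I would start from Proposition \ref{prop:Yeigen}, which supplies a basis of $\V$ in which $Y$ is represented by a lower tridiagonal matrix with explicitly listed diagonal entries. Since a lower tridiagonal matrix is in fact lower triangular, its eigenvalues, counted with algebraic multiplicity, are exactly these diagonal entries; hence $Y$ is multiplicity-free precisely when the listed entries are mutually distinct. The task thus reduces to determining when this distinctness holds. Writing $\kappa = k_0 k_1$ (for the types \textup{\sf DS}, \textup{\sf DDa}, \textup{\sf SSa}) or $\lambda = k_2 k_3$ (for \textup{\sf DDb}, \textup{\sf SSb}), the diagonal entries split into a ``positive-power'' family of the form $\kappa q^{2s}$ and a ``negative-power'' family $\kappa^{-1} q^{-2s}$ (and the analogous $\lambda q^{2s+1}$, $\lambda^{-1} q^{-(2s+1)}$ in the remaining cases). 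Because $q$ is not a root of unity, any two entries within the same family are automatically distinct, so a collision can occur only across the two families, and such a collision is equivalent to an equation $\kappa^2 = q^{-2m}$ (resp.\ $\lambda^2 = q^{-2m}$) where the integer $m$ equals $s+s'$ or $s+s'+1$ and ranges over a set I would read off from the index bounds in Proposition \ref{prop:Yeigen}. In each case that set works out to be exactly $\{1,2,\ldots,n\}$, so avoiding every collision is equivalent to $\kappa^2$ (resp.\ $\lambda^2$) not lying in $\{q^{-2},q^{-4},\ldots,q^{-2n}\}$, i.e.\ to neither of $\pm k_0 k_1$ (resp.\ $\pm k_2 k_3$) lying in $\{q^{-1},q^{-2},\ldots,q^{-n}\}$, which is precisely the table in (iii).

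The main obstacle, modest as it is, is the index bookkeeping: one must track the parity of $n$ (even for \textup{\sf DS}, odd for \textup{\sf DD} and \textup{\sf SS}, by \eqref{eq:parityn}) and pin down the exact lower and upper limits of the summation indices so as to confirm that the product-index $m$ sweeps out precisely $\{1,\ldots,n\}$ and nothing beyond. Getting these ranges exactly right across all five cases is what makes the stated inequalities both necessary and sufficient rather than merely necessary; once that combinatorial check is done, the equivalence (ii)$\Leftrightarrow$(iii) follows and the corollary is complete.
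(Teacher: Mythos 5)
Your proposal is correct and follows essentially the same route as the paper: (i)$\Leftrightarrow$(ii) via Lemma \ref{lem:Ymfree} and the definition of multiplicity-free, then (ii)$\Leftrightarrow$(iii) by checking when the diagonal entries of the lower triangular matrix in Proposition \ref{prop:Yeigen} are mutually distinct. Your explicit collision analysis (distinctness within each geometric family since $q$ is not a root of unity, cross-family collisions giving exactly $\kappa^2 = q^{-2m}$ with $m \in \{1,\ldots,n\}$) is precisely the verification the paper leaves to the reader, and your index bookkeeping in all five cases is accurate.
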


\begin{proof}
(i)$\Rightarrow$(ii):
Follows from Lemma \ref{lem:Ymfree}.

(ii)$\Rightarrow$(i): By the definition.

(ii)$\Leftrightarrow$(iii):
One verifies that the eigenvalues of $Y$ given in Proposition \ref{prop:Yeigen}
are mutually distinct if and only if \eqref{eq:condYmfree} holds.
\end{proof}

\begin{note}   \label{note:Aactonur}   \samepage
\ifDRAFT {\rm note:Aactonur}. \fi
The action of $\A$ on $\{u_r\}_{r=0}^n$ is immediately obtained from
the action of $Y^{\pm 1}$ given in Lemmas \ref{lem:DSYaction}--\ref{lem:SSbYaction}.
Observe that with respect to the basis $\{u_r\}_{r=0}^n$
the matrix representing $\A$ is lower tridiagonal.
Moreover it is not lower bidiagonal when $n \geq 2$.
\end{note}

\section{The action of $\{t_i\}_{i \in \I}$ on the basis $\{u_r\}_{r=0}^n$}
\label{sec:tiaction}

Throughout this section Notation \ref{notation} is in effect.
Let the vectors $u_0,u_1,\ldots$ be from Definition \ref{def:ur}.
By Lemma \ref{lem:urbasis} $\{u_r\}_{r=0}^n$ form a basis for $\V$.
Note that $u_r = 0$ for $r>n$ by Lemma \ref{lem:zero}.
For notational convenience, define $u_r=0$ for $r<0$.
In this section we obtain the action of $\{t_i\}_{i \in \I}$ on $\{u_r\}_{r=0}^n$.

\begin{lemma}   \label{lem:DSt0t1action}    \samepage
\ifDRAFT {\rm lem:DSt0t1action}. \fi
Assume that $\V$ has $X$-type \textup{\sf DS}.
Then for $0 \leq r \leq n$
\begin{align}
t_0 u_{r} &=
 \begin{cases}
  k_0 q^r u_{r-1}
  + \big(\frac{1}{k_0q^r} - k_0 - k_0^{-1} \big) \, (u_{r-1} - u_r)
    & \text{ if $r$ is even},
  \\
  \frac{1}{k_0 q^{r+1}} \, ( u_r - u_{r+1} )
    & \text{ if $r$ is odd},
 \end{cases}                                 \label{eq:DSt0ur}
\\
t_1 u_{r} &=
 \begin{cases}
  - k_1 (1-q^r)(1-k_0^2 q^r) \, u_{r-1}
  + k_1 u_{r} + k_1^{-1} u_{r+1}
   &  \text{ if $r$ is even},
 \\
  k_1^{-1} u_r
   & \text{ if $r$ is odd}.
 \end{cases}                                 \label{eq:DSt1ur}
\end{align}
\end{lemma}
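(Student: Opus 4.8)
The plan is to prove the two formulas \eqref{eq:DSt0ur} and \eqref{eq:DSt1ur} simultaneously by induction on $r$, feeding off the already-established action of $Y^{\pm 1}$ on $\{u_r\}_{r=0}^n$ from Lemma \ref{lem:DSYaction} and the recursion of Definition \ref{def:ur}. The engine of the proof is four ``ladder'' identities that are immediate from $Y=t_0t_1$ (see \eqref{eq:defXYAB}): $t_0^{-1}Y=t_1$, $t_1Y^{-1}=t_0^{-1}$, $Yt_1^{-1}=t_0$, and $Y^{-1}t_0=t_1^{-1}$. Each of these lets one move a single generator past $Y^{\pm1}$ and land on another single generator, which is exactly what prevents the induction from becoming circular. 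Throughout I use that $T_0=k_0+k_0^{-1}$ and $T_1=k_1+k_1^{-1}$ act as scalars on $\V$ by \eqref{eq:ki}, so $t_0^{-1}=T_0-t_0$ and $t_1^{-1}=T_1-t_1$; thus knowing $t_0u_r$ (resp.\ $t_1u_r$) is the same as knowing $t_0^{-1}u_r$ (resp.\ $t_1^{-1}u_r$). Recall also that for $X$-type \textup{\sf DS} the integer $n$ is even, the recursion reads $u_{r+1}=u_r-\beta_rY^{-1}u_r$ for even $r$ and $u_{r+1}=u_r-\beta_rYu_r$ for odd $r$ (since $\mu_r,\mu_{r+1}$ are $q$-adjacent for even $r$ and $1$-adjacent for odd $r$), and $\beta_r=k_0k_1q^r$ for even $r$, $\beta_r=k_0k_1q^{r+1}$ for odd $r$ by Definition \ref{def:be}.

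For the base case $r=0$, the vertex $\mu_0$ is incident to a double bond, so by Lemma \ref{lem:endvertex} and the consistency of $\{k_i\}_{i\in\I}$ the vector $u_0\in\V_X(\mu_0)$ satisfies $t_0u_0=k_0u_0$; substituting $u_{-1}=0$ shows this agrees with \eqref{eq:DSt0ur}. For $t_1$, from $u_1=u_0-k_0k_1Y^{-1}u_0$ and $t_0^{-1}u_0=k_0^{-1}u_0$ one gets $Y^{-1}u_0=t_1^{-1}t_0^{-1}u_0=k_0^{-1}t_1^{-1}u_0$, whence $t_1^{-1}u_0=k_1^{-1}(u_0-u_1)$ and $t_1u_0=T_1u_0-t_1^{-1}u_0=k_1u_0+k_1^{-1}u_1$, matching \eqref{eq:DSt1ur}.

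For the inductive step, assume \eqref{eq:DSt0ur}, \eqref{eq:DSt1ur} hold at index $r$ and pass to $r+1$, splitting on the parity of $r$. When $r$ is even, apply $t_1$ to the recursion and use $t_1Y^{-1}=t_0^{-1}$ to get $t_1u_{r+1}=t_1u_r-\beta_rt_0^{-1}u_r$; inserting the known values and simplifying (the coefficient of $u_{r-1}$ cancels, and the coefficient of $u_r$ cancels) yields $t_1u_{r+1}=k_1^{-1}u_{r+1}$, i.e.\ \eqref{eq:DSt1ur} at the odd index $r+1$. Then $t_1^{-1}u_{r+1}=k_1u_{r+1}$, and the identity $t_0=Yt_1^{-1}$ gives $t_0u_{r+1}=k_1\,Yu_{r+1}$; feeding in the $Y$-action at the odd index $r+1$ from Lemma \ref{lem:DSYaction} reproduces \eqref{eq:DSt0ur}. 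When $r$ is odd, apply $t_0^{-1}$ to the recursion and use $t_0^{-1}Y=t_1$ to get $t_0^{-1}u_{r+1}=t_0^{-1}u_r-\beta_rt_1u_r$, whose right side is known; this gives $t_0u_{r+1}=T_0u_{r+1}-t_0^{-1}u_{r+1}$, matching \eqref{eq:DSt0ur} at the even index $r+1$. Finally $t_1^{-1}u_{r+1}=Y^{-1}t_0u_{r+1}$ (from $Y^{-1}t_0=t_1^{-1}$), and applying the known $Y^{-1}$-action to the just-computed $t_0u_{r+1}$ produces $t_1^{-1}u_{r+1}$, hence $t_1u_{r+1}$, matching \eqref{eq:DSt1ur}.

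The only real work is the bookkeeping: verifying that each substitution collapses to the stated closed form. I expect the main obstacle to be the cancellation in the even-to-odd step for $t_1$, where the coefficient of $u_{r-1}$ must vanish after pairing $-k_1(1-q^r)(1-k_0^2q^r)$ against the $u_{r-1}$-coefficient of $\beta_rt_0^{-1}u_r$; I would record this identity explicitly, noting the useful factorization $k_0^2q^{2r}+1-k_0^2q^r-q^r=(q^r-1)(k_0^2q^r-1)$, which also governs the odd-to-even verifications. A secondary point worth stating carefully is the non-circular logic of the coupled induction: at each parity one of $t_0u_{r+1},t_1u_{r+1}$ is obtained purely from index-$r$ data through a ladder identity landing on a single generator, and the other is then obtained from it via a ladder identity followed by the (known) $Y^{\pm1}$-action. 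This ordering is exactly what lets the two intertwined formulas propagate together up to $r=n$.
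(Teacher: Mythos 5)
Your proposal is correct and follows essentially the same route as the paper's proof: a coupled induction on $r$ split by parity, using the recursion of Definition \ref{def:ur}, the $Y^{\pm 1}$-action from Lemma \ref{lem:DSYaction}, and rearrangements of $Y=t_0t_1$ together with $t_i^{-1}=T_i-t_i$, establishing $t_1$ first at odd indices and $t_0$ first at even indices exactly as the paper does. The only cosmetic difference is in the odd-to-even step for $t_0$, where you apply $t_0^{-1}$ to the recursion via $t_0^{-1}Y=t_1$ while the paper substitutes the inductive value of $t_1u_{r-1}$ and then applies $t_0$ using $t_0^2=T_0t_0-1$; both are the same manipulation in different order.
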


\begin{proof}
We show \eqref{eq:DSt0ur} and \eqref{eq:DSt1ur} using induction on $r=0,1,\ldots,n$.
By Definition \ref{def:consistent} $t_0 u_0 = k_0 u_0$. So \eqref{eq:DSt0ur} holds for $r=0$.
By Definition \ref{def:ur} 
\[
  u_1 = u_0 - k_0 k_1 t_1^{-1} t_0^{-1} u_0.
\]
In this line, use $t_0^{-1} u_0 = k_0^{-1} u_0$, $t_1^{-1} = T_1 - t_1$ and \eqref{eq:ki} to find
\[
  t_1 u_0 = k_1 u_0 + k_1^{-1} u_1.
\]
So \eqref{eq:DSt1ur} holds for $r=0$.
Now pick any integer $r$ such that $1 \leq r \leq n$.
First assume that $r$ is odd.
By Definition \ref{def:ur}
\[
  u_r = u_{r-1} - k_0 k_1 q^{r-1} t_1^{-1} t_0^{-1} u_{r-1}.
\]
In this line, apply $t_1$ to each side, use $t_0^{-1} = T_0 - t_0$ and \eqref{eq:ki},
and evaluate $t_1 u_{r-1}$ and $t_0 u_{r-1}$ by induction.
This yields \eqref{eq:DSt1ur} for odd $r$.
By $t_0 = Y t_1^{-1}$, $t_1^{-1} = T_1 - t_1$ and \eqref{eq:ki}
\[
  t_0 u_r = (k_1 + k_1^{-1}) Y u_r - Y t_1 u_r.
\]
In this line, use $t_1 u_r = k_1^{-1} u_r$, and evaluate $Y u_r$
by \eqref{eq:DSYur}.
This yields \eqref{eq:DSt0ur} for odd $r$.
Next assume that $r$ is even.
By Definition \ref{def:ur}
\[
  u_r = u_{r-1} - k_0 k_1 q^r t_0 t_1 u_{r-1}.
\]
In this line, evaluate $t_1 u_{r-1}$ by induction to find
\[
 u_r = u_{r-1} - k_0 q^r t_0 u_{r-1}.
\]
In this line, apply $t_0$ to each side, and use $t_0^2 = t_0 T_0 -1$ and \eqref{eq:ki}
to find
\[
 t_0 u_r = t_0 u_{r-1} - k_0 q^r (k_0 + k_0^{-1}) t_0 u_{r-1} + k_0 q^r u_{r-1}. 
\]
In this line, evaluate $t_0 u_{r-1}$ by induction.
This yields \eqref{eq:DSt0ur} for even  $r$.
By $t_1 = T_1 - t_1^{-1}$, $t_1^{-1} = Y^{-1} t_0$ and \eqref{eq:ki}
\[
  t_1 u_r = (k_1 + k_1^{-1}) u_r - Y^{-1} t_0 u_r.
\]
In this line, use \eqref{eq:DSt0ur} for even $r$ and \eqref{eq:DSYinvur}.
This yields \eqref{eq:DSt1ur} for even $r$.
\end{proof}

\begin{lemma}   \label{lem:DSt2t3action}    \samepage
\ifDRAFT {\rm lem:DSt2t3action}. \fi
Assume that $\V$ has $X$-type \textup{\sf DS}.
Then for $0 \leq r \leq n$
\begin{align}
t_2 u_{r} &=
 \begin{cases}
  k_2q^{n-r}  (u_r - u_{r+1})
  & \text{ if $r$ is even},
 \\
  \frac{(1-q^{n-r+1})(1-k_2^2 q^{n-r+1}) }
       {k_2 q^{n-r+1} }  \, u_{r-1}       
  + ( k_2+k_2^{-1} -  k_2 q^{n-r+1} ) \, u_r
  & \text{ if $r$ is odd},
 \end{cases}                               \label{eq:DSt2ur}
\\
t_3 u_{r} &= 
 \begin{cases}
  k_3 u_r   & \text{ if $r$ is even},
 \\
  - \frac{(1-q^{n-r+1})(1-k_2^2 q^{n-r+1}) }
            {k_2^2 k_3 q^{2(n-r+1)} } \, u_{r-1}
  + k_3^{-1} u_r + k_3 u_{r+1}
  & \text{ if $r$ is odd}.
 \end{cases}                                \label{eq:DSt3ur}
\end{align}
\end{lemma}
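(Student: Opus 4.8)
The plan is to run a forward induction on the basis $\{u_r\}_{r=0}^n$ exactly parallel to the proof of Lemma~\ref{lem:DSt0t1action}, but driven by relations that tie $t_2,t_3$ to $Y$ rather than $t_0,t_1$ to $Y$. First I would assemble the needed identities in $\Hq$. From Lemma~\ref{lem:Z4pre} and $Y=t_0t_1$ one has $t_2t_3=q^{-1}Y^{-1}$. Applying the automorphism $\varrho$ of Lemma~\ref{lem:Z4} to \eqref{eq:Xt2} and using Lemma~\ref{lem:Z4XY} (so that $X\mapsto Y$ and $t_2\mapsto t_3$, $T_1\mapsto T_2$, $T_2\mapsto T_3$) yields the linear relation
\[
 q Y t_3 - q^{-1} t_3 Y^{-1} = T_2 - q^{-1} T_3 Y^{-1}.
\]
From these two relations, together with $t_i^2=T_it_i-1$ and $t_i^{-1}=T_i-t_i$, I would extract the single-sided forms
\[
 t_3 Y = q^{-1}(T_2-t_2), \qquad t_2 Y^{-1} = T_2 Y^{-1} - q t_3, \qquad t_2 = q^{-1} Y^{-1}(T_3-t_3),
\]
and use $T_2=k_2+k_2^{-1}$, $T_3=k_3+k_3^{-1}$ on $\V$ (by \eqref{eq:ki}) and the identity $k_0k_1k_2k_3=q^{-n-1}$ from Lemma~\ref{lem:typekr0}.

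Next I would determine $t_3u_r$ by induction on $r$, run simultaneously with $t_2u_r$. The base case is $r=0$: since $u_0\in\V_X(\mu_0)$ (Definition~\ref{def:ur}), Lemma~\ref{lem:tiv0vn} gives $t_3u_0=k_3u_0$, matching the even-$r$ formula. For the inductive step I would feed the recursive definition of $u_{r+1}$ into the appropriate product relation. For even $r$, where $u_{r+1}=u_r-\beta_rY^{-1}u_r$, applying $t_3$ and substituting the relation rewritten as $t_3Y^{-1}=q^2Yt_3-qT_2+T_3Y^{-1}$ expresses $t_3u_{r+1}$ through $t_3u_r$ and the known action of $Y^{\pm1}$ (Lemma~\ref{lem:DSYaction}); likewise applying $t_2$ and using $t_2Y^{-1}=T_2Y^{-1}-qt_3$ gives $t_2u_{r+1}$ from $t_2u_r$ and $t_3u_r$. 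For odd $r$, where $u_{r+1}=u_r-\beta_rYu_r$, applying $t_3$ and using $t_3Y=q^{-1}(T_2-t_2)$ gives $t_3u_{r+1}$ from $t_3u_r$ and $t_2u_r$. In every case the central scalars $T_2,T_3$ and the known $Y^{\pm1}$-action reduce the right-hand side to the two inductive hypotheses, and the even/odd alternation reproduces the two displayed cases.

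To see that the inductively produced expressions collapse to the stated closed forms, it is cleanest to use $t_2=q^{-1}Y^{-1}(T_3-t_3)$, i.e. $t_2u_r=q^{-1}T_3Y^{-1}u_r-q^{-1}Y^{-1}(t_3u_r)$, again evaluating $Y^{-1}$ by Lemma~\ref{lem:DSYaction}. The one nontrivial point is that the powers of $q$ appearing here are anchored at the $0$-end: they enter through $\beta_r=k_0k_1q^r$ and so come out as $q^r$, whereas the statement is indexed from the $n$-end as $q^{n-r}$. Converting between them is exactly what $k_0k_1k_2k_3=q^{-n-1}$ accomplishes. For example, for even $r$ one finds $t_3u_r=k_3u_r$, whence $t_2u_r=q^{-1}k_3^{-1}Y^{-1}u_r=q^{-1}k_3^{-1}(k_0k_1q^r)^{-1}(u_r-u_{r+1})$, and substituting $(k_0k_1k_3)^{-1}=k_2q^{\,n+1}$ turns this into $k_2q^{\,n-r}(u_r-u_{r+1})$, precisely as claimed.

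The main obstacle is organizational rather than conceptual: the relations unavoidably couple $t_2$ and $t_3$ at each odd step, so the two inductions must be carried out together, and one must keep careful track of how $k_0,k_1$ (entering through $\beta_r$ and the $Y$-action) interact with $k_2,k_3$ (entering through $T_2,T_3$), repeatedly invoking $k_0k_1k_2k_3=q^{-n-1}$ to recast $0$-end powers of $q$ as $n$-end powers. Once the DS case is done, the remaining $X$-types follow in the same way after the analogous substitutions, so I would state them as ``shown similarly,'' as in Lemma~\ref{lem:DSt0t1action}.
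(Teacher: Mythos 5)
Your proposal is correct and follows essentially the same route as the paper's proof: a simultaneous induction on $r$ for the $t_2$- and $t_3$-actions on $\{u_r\}_{r=0}^n$, driven by the relations $Y^{-1}=qt_2t_3$ and $Y=q^{-1}t_3^{-1}t_2^{-1}$ (your single-sided forms are equivalent rearrangements of these), the known $Y^{\pm 1}$-action from Lemma \ref{lem:DSYaction}, the base case $t_3u_0=k_3u_0$ from the consistency convention, and the identity $k_0k_1k_2k_3=q^{-n-1}$ of Lemma \ref{lem:typekr0} to convert the $0$-end powers $q^r$ coming from $\beta_r$ into the $n$-end powers $q^{n-r}$ of the statement. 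The only cosmetic differences are that you step from $r$ to $r+1$ rather than from $r-1$ to $r$, and that you import one extra (valid but unneeded) commutation relation via $\varrho$ applied to \eqref{eq:Xt2}, where the paper instead pivots through $t_2u_r$ using $t_2^{-1}=T_2-t_2$ and $t_3^{-1}=qYt_2$.
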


\begin{proof}
Note that $k_0 k_1 k_2 k_3 = q^{-n-1}$ by Lemma \ref{lem:typekr0}.
Also note by \eqref{eq:defHq3} that $Y=q^{-1} t_3^{-1} t_2^{-1}$ and $Y^{-1} = q t_2 t_3$.
We show \eqref{eq:DSt2ur} and \eqref{eq:DSt3ur} using induction on $r=0,1,\ldots,n$.
By Definition \ref{def:consistent} $t_3 u_0 = k_3 u_0$.
So \eqref{eq:DSt3ur} holds for $r=0$.
By Definition \ref{def:ur}
\[
  u_1 = u_0 - k_0 k_1 Y^{-1} u_0.
\]
In this line, use $Y^{-1} = q t_2 t_3$, $t_3 u_0 = k_3 u_0$ and 
$k_0 k_1 k_3 q = k_2^{-1} q^{-n}$ to find
\[
  t_2 u_0 = k_2 q^n (u_0 - u_1).
\]
So \eqref{eq:DSt2ur} holds for $r=0$.
Now pick any integer $r$  such that $1 \leq r \leq n$.
First assume that $r$ is odd.
By Definition \ref{def:ur}
\[
  u_r = u_{r-1} - k_0 k_1 q^{r-1} Y^{-1} u_{r-1}.
\]
In this line, use $Y^{-1} = q t_2 t_3$, and evaluate $t_3 u_{r-1}$ by induction to find
\[
 u_r = u_{r-1} - k_0 k_1 k_3 q^r t_2 u_{r-1}.
\]
In this line, apply $t_2^{-1}$ to each side to find
\[
 t_2^{-1} u_r = t_2^{-1} u_{r-1} - k_0 k_1 k_3 q^r u_{r-1}.
\]
In this line, use $t_2^{-1} = T_2 - t_2$ and \eqref{eq:ki} to find
\[
 t_2 u_r = t_2 u_{r-1} + (k_2 + k_2^{-1})(u_r - u_{r+1}) + k_0 k_1 k_3 q^r u_{r-1}.
\]
In this line, evaluate $t_2 u_{r-1}$ by induction, and use $k_0 k_1 k_3 = q^{-n-1} k_2^{-1}$.
This yields \eqref{eq:DSt2ur} for odd $r$.
By $t_3 = T_3 - t_3^{-1}$, $t_3^{-1} = q Y t_2$ and \eqref{eq:ki}
\[
  t_3 u_r = (k_3 + k_3^{-1}) u_r - q Y t_2 u_r.
\]
In this line, use \eqref{eq:DSt2ur} for odd $r$, \eqref{eq:DSYur}, and $k_3 = q^{-n-1} (k_0 k_1 k_2)^{-1}$.
This yields \eqref{eq:DSt3ur} for odd $r$.
Next assume that $r$ is even.
By Definition \ref{def:ur}
\[
  u_r = u_{r-1} - k_0 k_1 q^r Y u_{r-1}.
\]
In this line, apply $t_3$ to each side, and use $Y=q^{-1} t_3^{-1} t_2^{-1}$,
$t_2^{-1} = T_2 - t_2$ and \eqref{eq:ki} to find
\[
 t_3 u_r = t_3 u_{r-1} - k_0 k_1 q^{r-1} (k_2 + k_2^{-1}) u_{r-1} + k_0 k_1 q^{r-1} t_2 u_{r-1}.
\]
In this line, evaluate $t_3 u_{r-1}$ and $t_2 u_{r-1}$ by induction.
This yields \eqref{eq:DSt3ur} for even $r$.
By $t_2 = q^{-1} Y^{-1} t_3^{-1}$, $t_3^{-1} = T_3 - t_3$ and \eqref{eq:ki}
\[
  t_2 u_r = q^{-1} Y^{-1} (k_3 + k_3^{-1}) u_r - q^{-1} Y^{-1} t_3 u_r.
\]
In this line, use \eqref{eq:DSt3ur} for even $r$, \eqref{eq:DSYinvur}, 
and $k_0 k_1 k_3 = q^{-n-1} k_2^{-1}$.
This yields \eqref{eq:DSt2ur} holds for even $r$.
\end{proof}

We have obtained the action of $\{t_i\}_{i \in \I}$ when $\V$ has $X$-type \textup{\sf DS}.
In a similar way, we obtain the following four lemmas.

\begin{lemma}   \label{lem:DDat0action}    \samepage
\ifDRAFT {\rm lem:DDat0action}. \fi
Assume that $\V$ has $X$-type \textup{\sf DDa}.
Then for $0 \leq r \leq n$
\begin{align*}
t_0 u_{r} &=
 \begin{cases}
 - \frac{(1-q^r)(1-q^{n-r+1}) }
           {k_0q^{n+1} } \, u_{r-1}
  + \big( k_0+k_0^{-1}- \frac{1}{k_0q^r} \big) \, u_r
    & \text{ if $r$ is even},
  \\
   \frac{1}{k_0 q^{r+1}} \, (u_r - u_{r+1})
    & \text{ if $r$ is odd},
 \end{cases} 
\\
t_1 u_{r} &=
 \begin{cases}
  \frac{(1-q^r)(1-q^{n-r+1}) }
         {k_1^{-1}q^{n-r+1} } \, u_{r-1}
  + k_1 u_{r} + k_1^{-1} u_{r+1}
   &  \text{ if $r$ is even},
 \\
  k_1^{-1} u_r
   & \text{ if $r$ is odd},
 \end{cases}
\\
t_2 u_{r} &=
 \begin{cases}
  \frac{1}{k_0k_1k_3 q^{r+1}} ( u_r - u_{r+1})
  & \text{ if $r$ is even},
 \\
  \frac{(1-k_0k_1k_2k_3 q^r)(1-k_0k_1k_2^{-1}k_3 q^r)}
       {k_0 k_1 k_3 q^r}  \, u_{r-1}       
  + \big( k_2+k_2^{-1} -  \frac{1}{k_0k_1k_3 q^r} \big) \, u_r
  & \text{ if $r$ is odd},
 \end{cases} 
\\
t_3 u_{r} &= 
 \begin{cases}
  k_3 u_r   & \text{ if $r$ is even},
 \\
  - k_3^{-1} (1-k_0k_1k_2k_3q^r)(1-k_0k_1k_2^{-1}k_3 q^r) u_{r-1}
  + k_3^{-1} u_r + k_3 u_{r+1}
  & \text{ if $r$ is odd}.
 \end{cases}
\end{align*}
\end{lemma}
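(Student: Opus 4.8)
The plan is to prove Lemma \ref{lem:DDat0action} by induction on $r=0,1,\ldots,n$, following the exact template of the proofs of Lemmas \ref{lem:DSt0t1action} and \ref{lem:DSt2t3action} for the \textup{\sf DS} case. The standing tools are the relations $Y=t_0t_1$ (hence $Y^{-1}=t_1^{-1}t_0^{-1}$) and $Y^{-1}=qt_2t_3$ coming from $t_0t_1t_2t_3=q^{-1}$, together with $t_i^{-1}=T_i-t_i$, $t_0^2=t_0T_0-1$, and the scalar values $T_i=k_i+k_i^{-1}$ from \eqref{eq:ki}. Since $\V$ has $X$-type \textup{\sf DDa}, the bond between $\mu_{r-1}$ and $\mu_r$ is a double bond when $r$ is odd and a single bond when $r$ is even, so Definition \ref{def:ur} reads $u_r=u_{r-1}-k_0k_1q^{r-1}Y^{-1}u_{r-1}$ for odd $r$ and $u_r=u_{r-1}-k_0k_1q^{r}Yu_{r-1}$ for even $r$, exactly as in the \textup{\sf DS} case; the $Y^{\pm1}$-action needed at each step is supplied by \eqref{eq:DSYur} and \eqref{eq:DSYinvur}, which hold for \textup{\sf DDa}.

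For the base case $r=0$, Lemma \ref{lem:tiv0vn} gives $t_0u_0=k_0u_0$ and $t_3u_0=k_3u_0$, which are the $t_0$- and $t_3$-formulas at $r=0$. Writing the recursion as $u_1=u_0-k_0k_1t_1^{-1}t_0^{-1}u_0$ and using $t_0^{-1}u_0=k_0^{-1}u_0$ with $t_1^{-1}=T_1-t_1$ gives $t_1u_0=k_1u_0+k_1^{-1}u_1$, while writing it instead as $u_1=u_0-k_0k_1qt_2t_3u_0$ and using $t_3u_0=k_3u_0$ gives the $t_2$-formula at $r=0$. For the inductive step I would treat odd and even $r$ separately. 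When $r$ is odd, apply $t_1$ to the recursion and evaluate $t_0u_{r-1}$, $t_1u_{r-1}$ by induction to obtain $t_1u_r$, then recover $t_0u_r$ from $t_0=Yt_1^{-1}$ and \eqref{eq:DSYur}; the pair $t_2u_r$, $t_3u_r$ is handled analogously via $Y^{-1}=qt_2t_3$ and $t_3=T_3-t_3^{-1}$ with $t_3^{-1}=qYt_2$. When $r$ is even, start from $u_r=u_{r-1}-k_0k_1q^{r}t_0t_1u_{r-1}$, apply $t_0$ (resp.\ $t_3$), use $t_0^2=t_0T_0-1$, and evaluate by induction to get $t_0u_r$ (resp.\ $t_3u_r$); the companions $t_1u_r$, $t_2u_r$ then follow from $t_1=T_1-t_1^{-1}$ with $t_1^{-1}=Y^{-1}t_0$ and from $t_2=q^{-1}Y^{-1}t_3^{-1}$ with $t_3^{-1}=T_3-t_3$.

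The one ingredient specific to type \textup{\sf DDa} is the identity $k_0^2=q^{-n-1}$ from Lemma \ref{lem:typekr0}, which here plays the role that $k_0k_1k_2k_3=q^{-n-1}$ played for \textup{\sf DS}. Applying it is what collapses the raw scalars produced by the recursion into the factored forms $(1-q^r)(1-q^{n-r+1})$ and $(1-k_0k_1k_2k_3q^r)(1-k_0k_1k_2^{-1}k_3q^r)$ displayed in the statement; for instance $-k_1(1-q^r)(1-k_0^2q^r)$ becomes $k_1q^{r-n-1}(1-q^r)(1-q^{n-r+1})$ once $k_0^2=q^{-n-1}$ is substituted. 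Note that for the $t_2$- and $t_3$-coefficients the simplification genuinely differs from the \textup{\sf DS} case, since there the extra constraint $k_0k_1k_2k_3=q^{-n-1}$ was available to eliminate $k_0,k_1$, whereas in \textup{\sf DDa} the product $k_0k_1k_2k_3$ is left intact.

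I expect the main obstacle to be precisely this coefficient bookkeeping: at each parity one must verify that the scalars emerging from the induction simplify to the displayed expressions, keeping track of which intermediate factor is rewritten using $k_0^2=q^{-n-1}$. The computation is routine but lengthy, so I would record only the diagonal and off-diagonal coefficients at each step and omit the detailed arithmetic, exactly as is done in the proofs of Lemmas \ref{lem:DSt0t1action} and \ref{lem:DSt2t3action}, and the remaining three lemmas would be treated in the same way.
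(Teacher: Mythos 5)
Your proposal is correct and is essentially the paper's own proof: the paper gives no separate argument for the \textup{\sf DDa} case, stating only that it is obtained ``in a similar way'' from the proofs of Lemmas \ref{lem:DSt0t1action} and \ref{lem:DSt2t3action}, and your plan is exactly that adaptation (same recursion from Definition \ref{def:ur}, same base case from Lemma \ref{lem:tiv0vn}, same identities $t_i^{-1}=T_i-t_i$, $t_0=Yt_1^{-1}$, $Y^{-1}=qt_2t_3$, $t_3^{-1}=qYt_2$, and the $Y^{\pm1}$-action from Lemma \ref{lem:DSYaction}, which indeed covers \textup{\sf DDa}). You also correctly identify the one genuine difference, namely that the constraint $k_0^2=q^{-n-1}$ from Lemma \ref{lem:typekr0} replaces $k_0k_1k_2k_3=q^{-n-1}$ in collapsing the coefficients, and your sample verification $-k_1(1-q^r)(1-k_0^2q^r)=k_1q^{r-n-1}(1-q^r)(1-q^{n-r+1})$ checks out.
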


\begin{lemma} \label{lem:DDbt0action} \samepage
\ifDRAFT {\rm lem:DDbt0action}. \fi
Assume that $\V$ has $X$-type \textup{\sf DDb}. 
Then for $0 \leq r \leq n$
\begin{align*}
t_0 u_{r} &=
 \begin{cases}
  k_0 u_r
   &  \text{ if $r$ is even},
  \\
  \frac{(1-k_0k_1k_2k_3q^r)(1-k_0k_1^{-1}k_2k_3q^r)}
       {k_0k_2^2 q^{2r-n-1}}    \, u_{r-1}
    + k_0^{-1} (u_r - u_{r+1})
    & \text{ if $r$ is odd},
  \end{cases}
\\
t_1 u_{r} &=
 \begin{cases}
   (k_1+k_1^{-1}-k_0k_2k_3q^{r+1}) u_r 
    + k_0k_2k_3 q^{r+1} u_{r+1}
  & \text{ if $r$ is even},
 \\
  \big( k_1+k_1^{-1}- k_0k_2k_3 q^r -\frac{1}{k_0k_2k_3q^r} \big) \, u_{r-1}
   + k_0k_2k_3 q^r u_r
  & \text{ if $r$ is odd},
 \end{cases} 
\\
t_2 u_{r} &=
 \begin{cases}
   - \frac{(1-q^r)(1-q^{n-r+1})
           }{k_2 q^{r} } u_{r-1}
    + k_2 (u_r - u_{r+1} )
   & \text{ if $r$ is even},
  \\
  k_2^{-1} u_r
   & \text{ if $r$ is odd},
 \end{cases} 
\\
t_3 u_{r} &= 
 \begin{cases}
  \big( k_3+k_3^{-1} - k_3 q^r - \frac{1}{k_3 q^r} \big) \, u_{r-1}
   + k_3 q^r u_r
  & \text{ if $r$ is even},
 \\
  (k_3+k_3^{-1}-k_3 q^{r+1}) u_r + k_3 q^{r+1} u_{r+1}
  & \text{ if $r$ is odd}.
 \end{cases}
\end{align*}
\end{lemma}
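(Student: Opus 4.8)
The plan is to prove Lemma \ref{lem:DDbt0action} by induction on $r=0,1,\ldots,n$, following the same scheme as the proofs of Lemmas \ref{lem:DSt0t1action} and \ref{lem:DSt2t3action}. First I would record the data specific to the \textup{\sf DDb} case. By Definition \ref{def:be} we have $\beta_r=(k_2k_3q^{r+1})^{-1}$ for even $r$ and $\beta_r=(k_2k_3q^r)^{-1}$ for odd $r$. Since the reduced $X$-diagram is \textup{\sf DD}, the vertices $\mu_{r-1},\mu_r$ are $q$-adjacent exactly when $r$ is odd and $1$-adjacent when $r$ is even, so by Definition \ref{def:ur} the recurrence reads $u_r=u_{r-1}-\beta_{r-1}Y^{-1}u_{r-1}$ for odd $r$ and $u_r=u_{r-1}-\beta_{r-1}Yu_{r-1}$ for even $r$. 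I would also keep at hand the action of $Y^{\pm 1}$ from Lemma \ref{lem:DDbYaction} and the governing identity $k_3^2=q^{-n-1}$ from Lemma \ref{lem:typekr0}, which plays here the role that $k_0k_1k_2k_3=q^{-n-1}$ played in the \textup{\sf DS} case.

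For the base case, Definition \ref{def:consistent} gives $t_0u_0=k_0u_0$ and $t_3u_0=k_3u_0$, and these agree with the even-$r$ formulas once $u_{-1}=0$ is used. To obtain $t_1u_0$ and $t_2u_0$ I would feed these two eigenvalue equations into the factorizations $Y=t_0t_1$ and $Y^{-1}=qt_2t_3$ (the latter from \eqref{eq:defHq3}): writing $t_1=T_1-t_1^{-1}$ with $t_1^{-1}=Y^{-1}t_0$ and using $Y^{-1}u_0=k_2k_3q(u_0-u_1)$ from the $r=1$ instance of the recurrence yields the stated $t_1u_0$, and an analogous manipulation with $t_2=q^{-1}Y^{-1}t_3^{-1}$ gives $t_2u_0$.

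For the inductive step I would, at each parity, apply the relevant generator to the defining recurrence for $u_r$, then eliminate inverses and squares using the central relations $t_i^{-1}=T_i-t_i$ and $t_i^2=t_iT_i-1$ together with the factorizations $Y=t_0t_1=q^{-1}t_3^{-1}t_2^{-1}$ and $X=t_3t_0$; the terms $Y^{\pm 1}u_r$ that arise are replaced via Lemma \ref{lem:DDbYaction}, and the coefficients are collapsed using $k_3^2=q^{-n-1}$. The natural pairing mirrors the two \textup{\sf DS} lemmas: $t_0$ and $t_1$ are handled together through $Y=t_0t_1$ (using $t_1=T_1-Y^{-1}t_0$ to pass from one to the other), while $t_2$ and $t_3$ are handled together through $Y^{-1}=qt_2t_3$ (using $t_2=q^{-1}Y^{-1}t_3^{-1}$ and $t_3=T_3-t_3^{-1}$).

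The main obstacle will be the coefficient bookkeeping rather than any conceptual difficulty. Because the form of the $Y^{\pm 1}$ action itself alternates with the parity of $r$, each generator must be treated in two interlocking subcases, and the raw expressions produced this way are unwieldy; the real work is checking that the cross terms assemble into the stated factored quantities such as $(1-k_0k_1k_2k_3q^r)(1-k_0k_1^{-1}k_2k_3q^r)$ and that the substitution $k_3^2=q^{-n-1}$ converts the surviving $q$-powers into the displayed $k_i$-expressions. As in the analogous lemmas, these computations are routine but lengthy, and I would suppress their details once the recursive scheme and the governing identities have been set up.
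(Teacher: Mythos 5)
Your proposal is correct and follows essentially the same route as the paper: the paper proves the \textup{\sf DS} case in detail (Lemmas \ref{lem:DSt0t1action}, \ref{lem:DSt2t3action}) by induction on $r$ using Definition \ref{def:ur}, the $Y^{\pm 1}$ action, and Lemma \ref{lem:typekr0}, and then obtains the \textup{\sf DDb} case ``in a similar way,'' which is exactly the adaptation you spell out (the \textup{\sf DDb} values of $\beta_r$, the recurrence parities, the pairing of $t_0,t_1$ via $Y=t_0t_1$ and of $t_2,t_3$ via $Y^{-1}=qt_2t_3$, and the governing relation $k_3^2=q^{-n-1}$ in place of $k_0k_1k_2k_3=q^{-n-1}$). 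Your base-case computations for $t_1u_0$ and $t_2u_0$ check out against the stated formulas, and the inductive scheme you describe does reproduce the factored coefficients, so the proposal matches the paper's intended argument.
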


\begin{lemma} \label{lem:SSat0action} \samepage
\ifDRAFT {\rm lem:SSat0action}. \fi
Assume that $\V$ has $X$-type  \textup{\sf SSa}. 
Then for $0 \leq r \leq n$
\begin{align*}
t_0 u_{r} &=
 \begin{cases}
  - \frac{(1-q^r)(1-q^{n-r+1})}
           {k_0 q^{r} } \, u_{r-1}
   + k_0 (u_r - u_{r+1})
   &  \text{ if $r$ is even},
  \\
   k_0^{-1} u_r
    &  \text{ if $r$ is odd},
  \end{cases}
\\
t_1 u_{r} &=
 \begin{cases}
  \big( k_1+k_1^{-1}- k_1 q^r- \frac{1}{k_1 q^r} \big) \, u_{r-1}
  + k_1 q^r u_r
  & \text{ if $r$ is even},
 \\
  (k_1+k_1^{-1}-k_1 q^{r+1}) u_r + k_1 q^{r+1} u_{r+1}
  & \text{ if $r$ is odd},
 \end{cases} 
\\
t_2 u_{r} &=
 \begin{cases}
  k_2 u_r
  & \text{ if $r$ is even},
 \\
  \frac{(1-k_0k_1k_2k_3 q^r)(1-k_0k_1k_2k_3^{-1}q^r)}
         {k_0^2 k_2 q^{2r-n-1}}\, u_{r-1}
  + k_2^{-1} (u_r -  u_{r+1} )
  & \text{ if $r$ is odd},
 \end{cases} 
\\
t_3 u_{r} &= 
 \begin{cases}
  ( k_3+k_3^{-1} - k_0k_1k_2 q^{r+1}) \, u_{r}
  + k_0 k_1k_2 q^{r+1} u_{r+1}
  & \text{ if $r$ is even},
 \\
 \big( k_3+k_3^{-1}-k_0k_1k_2q^r - \frac{1}{k_0k_1k_2q^r} \big) \, u_{r-1}
  + k_0k_1k_2 q^r u_r
  & \text{ if $r$ is odd}.
 \end{cases}
\end{align*}
\end{lemma}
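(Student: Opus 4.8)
The plan is to establish all four formulas simultaneously by induction on $r=0,1,\ldots,n$, following closely the strategy of the $X$-type \textup{\sf DS} proofs (Lemmas \ref{lem:DSt0t1action} and \ref{lem:DSt2t3action}). Throughout I would use that $T_i$ acts on $\V$ as $k_i+k_i^{-1}$ (see \eqref{eq:ki}), the identities $t_i^{-1}=T_i-t_i$ and $t_i^2=t_iT_i-1$, and the two factorizations of $q^{-1}$ from Lemma \ref{lem:Z4pre}, namely $Y=t_0t_1$ and $Y^{-1}=qt_2t_3$. From the latter two I extract the conversion identities $t_0=Yt_1^{-1}$, $t_1^{-1}=Y^{-1}t_0$, $t_2=q^{-1}Y^{-1}t_3^{-1}$, and $t_3^{-1}=qYt_2$, each of which expresses one generator through $Y^{\pm1}$ and a single other generator. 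The action of $Y^{\pm1}$ on $\{u_r\}_{r=0}^n$ is then read off from Lemma \ref{lem:SSaYaction}, while the eigenvalue data on $\V_X(\mu_0)$ come from Definition \ref{def:consistent}.

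For the base case I would note that, for $X$-type \textup{\sf SSa}, Definition \ref{def:consistent} gives $t_1u_0=k_1u_0$ and $t_2u_0=k_2u_0$, which are exactly the $r=0$ instances of the $t_1$- and $t_2$-formulas. Since $\mu_0,\mu_1$ form a single bond in the \textup{\sf SS} diagram, Definition \ref{def:ur} yields $u_1=u_0-\beta_0Yu_0$; substituting $Y=t_0t_1$ and $t_1u_0=k_1u_0$ lets me solve for $t_0u_0$, while the companion relation $Y^{-1}=qt_2t_3$ together with $t_2u_0=k_2u_0$ produces $t_3u_0$. This recovers the $r=0$ rows of all four displays.

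For the inductive step I would fix $r$ with $1\le r\le n$ and split on the parity of $r$, because the recursion in Definition \ref{def:ur} alternates between $Y$ on single bonds and $Y^{-1}$ on double bonds as $r$ runs through odd and even values. In each parity I apply the appropriate generator to the defining relation for $u_r$, rewrite that generator by the conversion identities above, evaluate the resulting $t_ju_{r-1}$ by the induction hypothesis and the resulting $Y^{\pm1}u_r$ by Lemma \ref{lem:SSaYaction}, and finally simplify the scalar coefficients using the values of $\beta_r$ from Definition \ref{def:be} and the constraint $k_1^2=q^{-n-1}$ from Lemma \ref{lem:typekr0}. It is precisely this last substitution that turns the raw coefficients into the displayed closed forms, in particular making the off-diagonal numerators factor as $(1-q^r)(1-q^{n-r+1})$ and $(1-k_0k_1k_2k_3q^r)(1-k_0k_1k_2k_3^{-1}q^r)$.

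The main obstacle is bookkeeping rather than any conceptual difficulty: the hard part will be checking that the coefficients generated by the two-term recursion telescope correctly into the stated products, and that the powers of $q$ and the monomials in $k_0,k_1,k_2,k_3$ balance across the two parities. Because $Y$ is controlled by $k_0k_1$ while $Y^{-1}$ brings in $k_2k_3$ (compare Proposition \ref{prop:Yeigen}), one must keep the even- and odd-index specializations of $\beta_r$ scrupulously aligned; this is the single point at which the \textup{\sf SSa} calculation departs in its details from the \textup{\sf DS} calculation, even though the inductive skeleton is identical.
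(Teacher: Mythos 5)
Your proposal is correct and takes essentially the same route as the paper: the paper proves the \textup{\sf DS} case in detail (Lemmas \ref{lem:DSt0t1action}, \ref{lem:DSt2t3action}) by parity-split induction on $r$ using the recursion of Definition \ref{def:ur}, the conversions between the $t_i^{\pm 1}$ and $Y^{\pm 1}$ coming from Lemma \ref{lem:Z4pre}, the $Y^{\pm 1}$-action lemmas, and the constraint from Lemma \ref{lem:typekr0}, and then obtains the \textup{\sf SSa} lemma ``in a similar way.'' Your adaptation --- base case from Definition \ref{def:consistent} together with $u_1 = u_0 - \beta_0 Y u_0$, inductive step alternating $Y$/$Y^{-1}$ with the parity of $r$, Lemma \ref{lem:SSaYaction} for the $Y^{\pm 1}$ action, and $k_1^2 = q^{-n-1}$ to bring the coefficients to the displayed closed forms --- is precisely that adaptation.
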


\begin{lemma} \label{lem:SSbtiaction} \samepage
\ifDRAFT {\rm lem:SSbtiaction}. \fi
Assume that $\V$ has $X$-type \textup{\sf SSb}. 
Then for $0 \leq r \leq n$
\begin{align*}
t_0 u_{r} &=
 \begin{cases}
    \frac{1}{k_1k_2k_3q^{r+1}} \, (u_r - u_{r+1}) 
   & \text{ if $r$ is even},
  \\
   \big( k_1k_2k_3q^r + \frac{1}{k_1k_2k_3q^r} - k_0-k_0^{-1} \big) \, u_{r-1}
   + \big( k_0 + k_0^{-1} - \frac{1}{k_1k_2k_3q^r} \big) \, u_r
    & \text{ if $r$ is odd},
  \end{cases}
\\
t_1 u_{r} &=
 \begin{cases}
  k_1 u_r
  & \text{ if $r$ is even},
 \\
  - k_1^{-1}(1-k_0k_1k_2k_3q^r)(1-k_0^{-1}k_1k_2k_3q^r) u_{r-1}
  + k_1^{-1} u_r + k_1 u_{r+1}
  & \text{ if $r$ is odd},
 \end{cases} 
\\
t_2 u_{r} &=
 \begin{cases}
 - \frac{(1-q^r)(1-q^{n-r+1})}
           {k_2 q^{n+1} } \, u_{r-1}
  + \big( k_2+ k_2^{-1} - \frac{1}{k_2 q^r} \big) \, u_r
  & \text{ if $r$ is even},
 \\
  \frac{1}{k_2 q^{r+1}} (u_r - u_{r+1})
  & \text{ if $r$ is odd},
 \end{cases} 
\\
t_3 u_{r} &= 
 \begin{cases}
    \frac{(1-q^r)(1-q^{n-r+1}) }
           {k_3^{-1}q^{n-r+1} } \, u_{r-1}
   + k_3 u_r + k_3^{-1} u_{r+1}
  & \text{ if $r$ is even},
 \\
  k_3^{-1} u_r
  & \text{ if $r$ is odd}.
 \end{cases}
\end{align*}
\end{lemma}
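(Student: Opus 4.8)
The plan is to prove the four displayed formulas simultaneously by induction on $r=0,1,\ldots,n$, following exactly the scheme used for the case \textup{\sf DS} in Lemmas \ref{lem:DSt0t1action} and \ref{lem:DSt2t3action}. Throughout I will use: the relations $T_i = k_i + k_i^{-1}$ on $\V$ from \eqref{eq:ki}; the identity $t_i^{-1} = T_i - t_i$; the factorizations $Y = t_0 t_1$ and $Y^{-1} = q\, t_2 t_3$ coming from \eqref{eq:defHq3}; the action of $Y^{\pm 1}$ on $\{u_r\}_{r=0}^n$ from Lemma \ref{lem:SSbYaction}; the values of $\beta_r$ from Definition \ref{def:be}; and the equation $k_2^2 = q^{-n-1}$ from Lemma \ref{lem:typekr0}. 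Recall that for $X$-type \textup{\sf SSb} the reduced $X$-diagram is \textup{\sf SS}, so $\mu_{r-1},\mu_r$ is a single bond ($1$-adjacent) for odd $r$ and a double bond ($q$-adjacent) for even $r$; hence Definition \ref{def:ur} reads $u_r = u_{r-1} - \beta_{r-1} Y u_{r-1}$ for odd $r$ and $u_r = u_{r-1} - \beta_{r-1} Y^{-1} u_{r-1}$ for even $r$.

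For the base case $r=0$, Definition \ref{def:consistent} gives $t_1 u_0 = k_1 u_0$ and $t_2 u_0 = k_2 u_0$, which are the $r=0$ instances of the formulas for $t_1$ and $t_2$. To obtain $t_0 u_0$ I would write $t_0 = Y t_1^{-1}$, use $t_1^{-1} u_0 = k_1^{-1} u_0$ together with $Y u_0 = \beta_0^{-1}(u_0 - u_1)$ from Lemma \ref{lem:SSbYaction}, and substitute $\beta_0 = k_2 k_3 q$; this yields $t_0 u_0 = \tfrac{1}{k_1 k_2 k_3 q}(u_0 - u_1)$. To obtain $t_3 u_0$ I would write $t_3 = T_3 - t_3^{-1}$ with $t_3^{-1} = q\, Y t_2$, and again insert $t_2 u_0 = k_2 u_0$ and the value of $Y u_0$.

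For the inductive step I would, at each $r$, first compute $t_1 u_r$ (for odd $r$) or $t_2 u_r$ (for even $r$) directly from the appropriate recurrence in Definition \ref{def:ur}: apply $t_1$ or $t_2$, clear inverses with $t_i^{-1} = T_i - t_i$, and substitute the inductive expressions for $t_i u_{r-1}$ and the $Y^{\pm 1}$-action of Lemma \ref{lem:SSbYaction}. The remaining two actions then follow mechanically: from $t_0 = Y t_1^{-1} = (k_1 + k_1^{-1}) Y - Y t_1$ I would read off $t_0 u_r$ using the freshly computed $t_1 u_r$, and from $t_3 = T_3 - q\, Y t_2$ I would read off $t_3 u_r$ using $t_2 u_r$. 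The two parities of $r$ must be handled separately, and the partner relations ($t_1 = T_1 - t_1^{-1}$ with $t_1^{-1} = Y^{-1} t_0$, and similarly $t_2 = q^{-1} Y^{-1} t_3^{-1}$) are used to cross between even and odd steps exactly as in the \textup{\sf DS} proof.

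The main obstacle is computational bookkeeping rather than any conceptual difficulty: one must track the parity of $r$ through every substitution and repeatedly invoke $k_2^2 = q^{-n-1}$ to convert stray powers $q^{\pm(n+1)}$ into $k_2^{\mp 2}$, so that the coefficients collapse to the stated closed forms. The most delicate points are recognizing the quadratic factorizations such as $(1 - k_0 k_1 k_2 k_3 q^r)(1 - k_0^{-1} k_1 k_2 k_3 q^r)$ appearing in $t_1 u_r$ and $(1 - q^r)(1 - q^{n-r+1})$ appearing in $t_2 u_r$ and $t_3 u_r$; these emerge only after combining the inductive coefficient of $u_{r-1}$ with the contribution of the $Y^{\pm 1}$-action and then simplifying via $k_2^2 = q^{-n-1}$. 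Matching every coefficient with the displayed entries is the only place where genuine care is needed.
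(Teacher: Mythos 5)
Your overall strategy---proving all four formulas simultaneously by induction on $r$, mimicking the proofs of Lemmas \ref{lem:DSt0t1action} and \ref{lem:DSt2t3action}, with the toolkit $Y=t_0t_1=q^{-1}t_3^{-1}t_2^{-1}$, $Y^{-1}=t_1^{-1}t_0^{-1}=qt_2t_3$, $t_i^{-1}=T_i-t_i$, Lemma \ref{lem:SSbYaction}, Definition \ref{def:be}, and $k_2^2=q^{-n-1}$---is exactly the paper's intended proof, and your base case $r=0$ is correct. But your inductive step contains a step that fails: you have the parity assignment inverted. In \textup{\sf SSb} the bond pattern is the parity-reverse of \textup{\sf DS}: for \emph{odd} $r$ the pair $\mu_{r-1},\mu_r$ is $1$-adjacent, so the recurrence is $u_r=u_{r-1}-\beta_{r-1}Yu_{r-1}$ with $Y=t_0t_1$. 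Applying $t_1$ to this recurrence and substituting the inductive data, as you propose, yields nothing: using $t_1u_{r-1}=k_1u_{r-1}$ and $t_0u_{r-1}=\tfrac{1}{k_1k_2k_3q^{r}}(u_{r-1}-u_r)$ one gets
$t_1u_r=k_1u_{r-1}-\beta_{r-1}k_1\cdot\tfrac{1}{k_1k_2k_3q^{r}}\,(k_1u_{r-1}-t_1u_r)$,
and since $\beta_{r-1}=k_2k_3q^{r}$ the coefficient in front of the parenthesis is exactly $1$, so the equation collapses to the tautology $t_1u_r=t_1u_r$. The unknown $t_1u_r$ re-enters the right-hand side (through the $u_r$-component of $Yu_{r-1}$) with unit coefficient, so no information about $t_1u_r$ can be extracted this way.

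The correct transfer of the \textup{\sf DS} scheme swaps the parities relative to what you wrote. At odd $r$ (the $Y$-recurrence) one must first compute $t_0u_r$ and $t_3u_r$: use the eigenvector relations $t_1u_{r-1}=k_1u_{r-1}$ and $t_3u_{r-1}=k_3^{-1}u_{r-1}$ from the (even) step $r-1$ to reduce the recurrence to $u_r=u_{r-1}-\beta_{r-1}k_1t_0u_{r-1}$, then apply $t_0$ and use $t_0^2=T_0t_0-1$ (and argue similarly for $t_3$ via $Y=q^{-1}t_3^{-1}t_2^{-1}$); only afterwards recover $t_1u_r=(k_1+k_1^{-1})u_r-Y^{-1}t_0u_r$ and $t_2u_r=q^{-1}Y^{-1}t_3^{-1}u_r$, using Lemma \ref{lem:SSbYaction}. (One can check this reproduces the displayed odd-$r$ formula for $t_0u_r$ exactly.) At even $r$ (the $Y^{-1}$-recurrence) your order is the right one: applying $t_1$ kills the $t_1^{-1}$ in $Y^{-1}=t_1^{-1}t_0^{-1}$, and $t_2$ is obtained analogously; then $t_0,t_3$ follow from $t_0=(k_1+k_1^{-1})Y-Yt_1$ and $t_3=T_3-qYt_2$. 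With this parity correction the induction closes and the rest of your plan (the coefficient bookkeeping via $k_2^2=q^{-n-1}$) goes through.
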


\section{The action of $X^{\pm 1}$ on the basis $\{u_r\}_{r=0}^n$}
\label{sec:Xaction}

Throughout this section Notation \ref{notation} is in effect.
Consider the basis $\{u_r\}_{r=0}^n$ from Definition \ref{def:ur}.
In this section we obtain the action of $X^{\pm 1}$ on $\{u_r\}_{r=0}^n$,
and show that with respect to the basis $\{u_r\}_{r=0}^n$
the matrix representing $X^{\pm 1}$ is upper tridiagonal.

To simplify the action of $X^{\pm 1}$, we normalize the vectors $\{u_r\}_{r=0}^n$
using the following scalars.

\begin{definition}   \label{def:er}    \samepage
\ifDRAFT {\rm def:er}. \fi
Set $e_0 = 1$. 
For $1 \leq r \leq n$ define $e_r \in \F$ as follows:
\[
\begin{array}{c|l}
 \text{$X$-type of $\V$}  &  \qquad\qquad\qquad e_r
\\ \hline
 \textup{\sf DS}, \textup{\sf DDa} &    \rule{0mm}{2.5em}
  \begin{cases}
     \frac{1}{(1- q^r)(1-k_0^2 q^r)}  &  \text{ if $r$ is even}
   \\
   \frac{1}{(1-k_0 k_1 k_2 k_3 q^r)(1-k_0 k_1 k_2^{-1} k_3 q^r)}  & \text{ if $r$ is odd}
 \end{cases}
\\
 \textup{\sf DDb}  &       \rule{0mm}{2.5em}
  \begin{cases}
    - \frac{1}{ k_0^2 (1-q^r)(1-k_3^2 q^r)}  & \text{ if $r$ is even}
 \\
  - \frac{k_2^2} {(1-k_0 k_1 k_2 k_3 q^r)(1-k_0 k_1^{-1} k_2 k_3 q^r)}
                                                        & \text{ if $r$ is odd}
 \end{cases}
\\
 \textup{\sf SSa}  &             \rule{0mm}{2.5em}
  \begin{cases}
    - \frac{1}{ k_2^2 (1-q^r)(1-k_1^2 q^r)}  & \text{ if $r$ is even}
 \\
  - \frac{k_0^2}{(1- k_0 k_1 k_2 k_3 q^r)(1-k_0 k_1 k_2 k_3^{-1}q^r)}  & \text{ if $r$ is odd}
 \end{cases}
\\
 \textup{\sf SSb}  &             \rule{0mm}{2.5em}
  \begin{cases}
     \frac{1}{ (1-q^r)(1-k_2^2 q^r)}  & \text{ if $r$ is even}
 \\
   \frac{1}{(1- k_0 k_1 k_2 k_3 q^r)(1- k_0^{-1} k_1 k_2 k_3 q^r)}  & \text{ if $r$ is odd}
 \end{cases}
\end{array}
\]
In the above, all the denominators are nonzero by Lemmas \ref{lem:typekr0} and \ref{lem:typekr}.
\end{definition}

\begin{definition}   \label{def:udr}    \samepage
\ifDRAFT {\rm def:udr}. \fi
For $0 \leq r \leq n$ define 
\[
  u'_r = e_0 e_1 \cdots e_r \, u_r.
\]
\end{definition}

Below we give formulas for the action of $X^{\pm 1}$ on the basis $\{u'_r\}_{r=0}^n$.
For notational convenience, set $u'_r = 0$ for $r < 0$.
These formulas can be routinely obtained using Lemmas 
\ref{lem:DSt0t1action}--\ref{lem:SSbtiaction}.

\begin{lemma}   \label{lem:DSDDa}    \samepage
\ifDRAFT {\rm lem:DSDDa}. \fi
Assume that $\V$ has $X$-type among \textup{\sf DS}, \textup{\sf DDa}.
Then for $0 \leq r \leq n$
\begin{align*}
 X u'_r &=
  \begin{cases}
    k_0 k_3 q^r u'_r + (k_0 k_3 q^r)^{-1} (u'_{r-1} - u'_{r-2}) & \text{ if $r$ is even},
  \\
   (k_0 k_3 q^{r+1})^{-1}  (u'_r - u'_{r-1})     & \text{ if $r$ is odd},
  \end{cases}
\\
 X^{-1} u'_r &=
 \begin{cases}
  (k_0 k_3 q^r)^{-1}  (u'_r - u'_{r-1})  & \text{ if $r$ is even},
 \\
  k_0 k_3 q^{r+1}u'_r  + (k_0 k_3 q^{r-1})^{-1} (u'_{r-1} - u'_{r-2})  & \text{ if $r$ is odd}.
 \end{cases}
\end{align*}
\end{lemma}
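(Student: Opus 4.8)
The plan is to prove both formulas by a direct computation, using the factorizations $X = t_3 t_0$ and $X^{-1} = t_0^{-1} t_3^{-1}$ together with the explicit actions of $t_0$ and $t_3$ on the basis $\{u_r\}_{r=0}^n$. For $X$-type \textup{\sf DS} these actions are recorded in Lemmas \ref{lem:DSt0t1action} and \ref{lem:DSt2t3action}, and for $X$-type \textup{\sf DDa} in Lemma \ref{lem:DDat0action}; I would treat the two cases in parallel, since the scalars $e_r$ of Definition \ref{def:er} have the same even/odd shape for both. To reach $X^{-1}$ I would first convert $t_0^{-1}$ and $t_3^{-1}$ into the known actions via $t_0^{-1}=T_0-t_0$ and $t_3^{-1}=T_3-t_3$, where $T_0=k_0+k_0^{-1}$ and $T_3=k_3+k_3^{-1}$ act as scalars by \eqref{eq:ki}. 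Throughout I would split into the cases $r$ even and $r$ odd, as the $t_i$-actions take different shapes in the two parities.

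For the $X$-action I would compute $X u_r = t_3(t_0 u_r)$. When $r$ is even, applying $t_0$ produces a combination of $u_{r-1}$ and $u_r$; applying $t_3$ then uses $t_3 u_r = k_3 u_r$ on the even index and the three-term formula on the odd index $u_{r-1}$, so that $X u_r$ lands in the span of $u_{r-2},u_{r-1},u_r$. A short cancellation shows the coefficient of $u_r$ collapses to $k_0 k_3 q^r$; writing $A=k_0 q^r$, the coefficient of $u_{r-1}$ simplifies to
\[
 k_3^{-1}(A+A^{-1}-k_0-k_0^{-1}) = (k_0 k_3 q^r)^{-1}(1-q^r)(1-k_0^2 q^r),
\]
and a longer reduction handles the $u_{r-2}$ coefficient. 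When $r$ is odd the computation is shorter: $t_0 u_r$ is supported on $u_r,u_{r+1}$, and after applying $t_3$ the $u_{r+1}$-contributions cancel, leaving $X u_r$ supported on $u_{r-1},u_r$ with diagonal coefficient $(k_0 k_3 q^{r+1})^{-1}$. The $X^{-1}$-action is obtained identically after replacing $t_0^{-1},t_3^{-1}$ as above.

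The final step is the normalization. Substituting $u_s = u'_s/(e_0\cdots e_s)$ and multiplying through by $e_0\cdots e_r$, the diagonal coefficient is unaffected, while each off-diagonal coefficient is rescaled by a telescoping ratio of $e$-products. The scalars $e_r$ in Definition \ref{def:er} are tailored precisely so that the even-index factor $(1-q^r)(1-k_0^2 q^r)$ and the odd-index factor $(1-k_0 k_1 k_2 k_3 q^r)(1-k_0 k_1 k_2^{-1} k_3 q^r)$ appearing above are exactly the reciprocals of $e_r$; after these cancellations the off-diagonal entries collapse to $\pm(k_0 k_3 q^r)^{-1}$ and $\pm(k_0 k_3 q^{r-1})^{-1}$ as claimed. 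I expect the main obstacle to be the bookkeeping in the three-term cases, namely even $r$ for $X$ and odd $r$ for $X^{-1}$, where the three-term $t_3$-action combines with the odd-index value of $e_r$. Here one must invoke the identity $k_0 k_1 k_2 k_3 = q^{-n-1}$ from Lemma \ref{lem:typekr0} to rewrite the $k_2^2$- and $q^{n-r}$-dependent factors coming from Lemma \ref{lem:DSt2t3action} (respectively Lemma \ref{lem:DDat0action}) into the form matching Definition \ref{def:er}, and to confirm, via Lemma \ref{lem:typekr}, that the denominators there are nonzero so that the $u'_r$ are well defined.
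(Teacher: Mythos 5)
Your proposal is correct and takes essentially the same route as the paper, which gives no proof for this lemma beyond the remark that the formulas are ``routinely obtained'' from the $t_i$-actions in Lemmas \ref{lem:DSt0t1action}--\ref{lem:SSbtiaction}; your computation of $Xu_r=t_3(t_0u_r)$ and $X^{-1}u_r=t_0^{-1}(t_3^{-1}u_r)$ followed by the $e_r$-normalization is exactly that argument, and the cancellations you describe (diagonal entry $k_0k_3q^r$, off-diagonal entries collapsing to $\pm(k_0k_3q^r)^{-1}$ via the telescoping $e$-ratios) do check out. One small correction: in the \textup{\sf DDa} case the identity to invoke from Lemma \ref{lem:typekr0} is $k_0^2=q^{-n-1}$ rather than $k_0k_1k_2k_3=q^{-n-1}$ (the latter holds only for type \textup{\sf DS}); with that substitution the $q^{n-r}$-dependent factors from Lemma \ref{lem:DDat0action} convert to the form of Definition \ref{def:er} exactly as in your \textup{\sf DS} computation.
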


\begin{lemma}   \label{lem:DDb}    \samepage
\ifDRAFT {\rm lem:DDb}. \fi
Assume that $\V$ has $X$-type  \textup{\sf DDb}.
Then for $0 \leq r \leq n$
\begin{align*}
 X u'_r &=
  \begin{cases}
   k_0 k_3 q^r u'_r +(k_0 k_3 q^r)^{-1} u'_{r-1}  & \text{ if $r$ is even},
  \\
   (k_0 k_3 q^{r+1})^{-1}  (u'_r - u'_{r-1}) -(k_0^3 k_3^3 q^{3r-1})^{-1} u'_{r-2}    & \text{ if $r$ is odd},
  \end{cases}
\\
 X^{-1} u'_r &=
 \begin{cases}
 (k_0 k_3 q^r)^{-1} (u'_r - u'_{r-1}) - (k_0^3 k_3^3 q^{3r-2})^{-1}  u'_{r-2} & \text{ if $r$ is even},
 \\
 k_0 k_3 q^{r+1} u'_r + (k_0 k_3 q^{r-1})^{-1} u'_{r-1}  & \text{ if $r$ is odd}.
 \end{cases}
\end{align*}
\end{lemma}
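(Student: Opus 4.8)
The plan is to compute $X u_r = t_3 t_0 u_r$ and $X^{-1} u_r = t_0^{-1} t_3^{-1} u_r$ directly on the unnormalized basis $\{u_r\}_{r=0}^n$ from Definition \ref{def:ur}, and then pass to $\{u'_r\}_{r=0}^n$ via the rescaling $u'_r = e_0 e_1 \cdots e_r u_r$ of Definition \ref{def:udr}. Since $X = t_3 t_0$, I would first substitute the action of $t_0$ on $u_r$ from Lemma \ref{lem:DDbt0action}, obtaining a combination of $u_{r-1}, u_r, u_{r+1}$, and then apply $t_3$ to each of these using the same lemma. For even $r$ we have $t_0 u_r = k_0 u_r$, so a single application of $t_3$ suffices; for odd $r$ three terms appear and the compositions shift indices so that a $u_{r-2}$ term survives. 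To handle $X^{-1}$ I would use $t_i^{-1} = T_i - t_i$ together with $T_i = k_i + k_i^{-1}$ on $\V$ (see \eqref{eq:ki}) to convert the actions of $t_0, t_3$ in Lemma \ref{lem:DDbt0action} into actions of $t_0^{-1}, t_3^{-1}$, and then compose; alternatively, once $X u'_r$ is in hand, I would verify the claimed formula for $X^{-1}$ by checking directly that it inverts the matrix of $X$ on the basis.

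The key simplification comes in two places. First, several coefficients factor as $k_3 + k_3^{-1} - k_3 q^m - k_3^{-1} q^{-m} = -k_3^{-1} q^{-m}(1-q^m)(1-k_3^2 q^m)$, and these products are exactly cancelled by the denominators of the scalars $e_r$ in Definition \ref{def:er}. This is the reason the normalization was chosen as it was, and after the cancellation the surviving coefficients are pure Laurent monomials in $k_0, k_3, q$. Second, and crucially, I would invoke Lemma \ref{lem:typekr0}, which gives $k_3^2 = q^{-n-1}$ for $X$-type $\textup{\sf DDb}$; this relation is what collapses the remaining factors, for instance turning a coefficient $-k_3 q^{n-r}/k_0$ into $-(k_0 k_3 q^{r+1})^{-1}$, and the analogous $u'_{r-2}$ coefficient into $-(k_0^3 k_3^3 q^{3r-1})^{-1}$.

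The computation naturally splits into four cases according to the parity of $r$ and the choice of $X$ versus $X^{-1}$, and each is a finite, self-contained manipulation; I would also record the boundary conventions $u'_r = 0$ for $r < 0$ and for $r > n$ (Lemma \ref{lem:zero}) so that the formulas hold uniformly at the ends. The main obstacle is purely the bookkeeping in the cases where a $u'_{r-2}$ term appears: there one must carry two distinct rescaling factors $e_{r-1} e_r$ through the composition and check that, after the factorizations above and the substitution $k_3^2 = q^{-n-1}$, the coefficient reduces to the stated monomial. I expect no conceptual difficulty beyond this algebra, since every ingredient — the actions of $t_0, t_3$, the normalization scalars, and the constraint on $k_3$ — is already available from earlier results.
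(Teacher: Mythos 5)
Your proposal is correct and follows essentially the same route as the paper, which dismisses the verification as routine from the $t_i$-actions of Lemma \ref{lem:DDbt0action} together with the normalization of Definitions \ref{def:er}, \ref{def:udr}. You have correctly identified the two points that make the computation collapse: the factorization $k_3+k_3^{-1}-k_3q^m-k_3^{-1}q^{-m}=-k_3^{-1}q^{-m}(1-q^m)(1-k_3^2q^m)$ cancelling against the $e_r$, and the relation $k_3^2=q^{-n-1}$ from Lemma \ref{lem:typekr0} eliminating $n$ (e.g.\ turning $-k_3q^{n-r}/k_0$ into $-(k_0k_3q^{r+1})^{-1}$ and $-q^{n+2-3r}/(k_0^3k_3)$ into $-(k_0^3k_3^3q^{3r-1})^{-1}$), exactly as needed.
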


\begin{lemma}   \label{lem:SSa}    \samepage
\ifDRAFT {\rm lem:SSa}. \fi
Assume that $\V$ has $X$-type  \textup{\sf SSa}.
Then for $0 \leq r \leq n$
\begin{align*}
 X u'_r &=
  \begin{cases}
    (k_1 k_2 q^{r+1})^{-1} (u'_r - u'_{r-1}) - (k_1^3 k_2^3 q^{3r-1})^{-1} u'_{r-2}  & \text{ if $r$ is even},
  \\
   k_1 k_2 q^r u'_r +  (k_1 k_2 q^r)^{-1} u'_{r-1}   & \text{ if $r$ is odd},
  \end{cases}
\\
 X^{-1} u'_r &=
 \begin{cases}
  k_1 k_2 q^{r+1}  u'_r + (k_1 k_2 q^{r-1})^{-1} u'_{r-1}  & \text{ if $r$ is even},
 \\
 (k_1 k_2 q^r)^{-1}  (u'_r - u'_{r-1}) - (k_1^3 k_2^3 q^{3r-2})^{-1}  u'_{r-2}  & \text{ if $r$ is odd}.
 \end{cases}
\end{align*}
\end{lemma}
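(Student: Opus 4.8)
The plan is to compute the action of $X$ and $X^{-1}$ directly from the factorizations $X=t_3t_0$ and $X^{-1}=t_0^{-1}t_3^{-1}$, feeding in the explicit formulas for the action of $t_0,t_3$ on $\{u_r\}_{r=0}^n$ recorded in Lemma \ref{lem:SSat0action} for the type \textup{\sf SSa}. Since each $T_i$ acts on $\V$ as the scalar $k_i+k_i^{-1}$ by \eqref{eq:ki}, I would obtain the action of $t_0^{-1}$ and $t_3^{-1}$ from that of $t_0,t_3$ via $t_i^{-1}=(k_i+k_i^{-1})-t_i$. Thus for each $r$ the vectors $Xu_r$ and $X^{-1}u_r$ come out as explicit linear combinations of $u_{r-2},u_{r-1},u_r,u_{r+1}$; I would then pass to the normalized vectors $u'_r=e_0e_1\cdots e_r\,u_r$ of Definitions \ref{def:er} and \ref{def:udr} and simplify, noting that a coefficient of $u_j$ in $Xu_r$ turns into a coefficient of $u'_j$ in $Xu'_r$ after multiplication by $e_{j+1}e_{j+2}\cdots e_r$.

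Taking $X=t_3t_0$ first, one applies $t_0$ to $u_r$ and then $t_3$ term by term. The crucial bookkeeping fact is that the resulting coefficient of $u_{r+1}$ cancels, so that $Xu_r$ is supported on $u_{r-2},u_{r-1},u_r$ when $r$ is even and on $u_{r-1},u_r$ when $r$ is odd; this already establishes that the matrix of $X$ is upper tridiagonal. The diagonal coefficient is untouched by the normalization and is produced by a direct cancellation of the $k_3+k_3^{-1}$ terms, yielding $(k_1k_2q^{r+1})^{-1}$ for even $r$ and $k_1k_2q^r$ for odd $r$. For the off-diagonal coefficients the decisive point is that $e_r$ (together with $e_{r-1}$ for the $u_{r-2}$-entry) is built precisely to cancel the quadratic factors occurring in Lemma \ref{lem:SSat0action}: here I would use the factorization $k_3+k_3^{-1}-A-A^{-1}=-(1-k_0k_1k_2k_3q^{r})(1-k_0k_1k_2k_3^{-1}q^{r})/(k_0k_1k_2q^{r})$ with $A=k_0k_1k_2q^{r}$, which matches the denominator of $e_r$ for odd $r$, together with the identity $1-q^{n-r+1}=-q^{n-r+1}(1-k_1^2q^r)$, which is exactly equivalent to the type constraint $k_1^2=q^{-n-1}$ from Lemma \ref{lem:typekr0}. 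After these substitutions the surviving subdiagonal and sub-subdiagonal coefficients collapse to the monomials displayed in the statement. The computation of $X^{-1}=t_0^{-1}t_3^{-1}$ is entirely parallel, with the roles of even and odd $r$ interchanged, so that $X^{-1}u'_r$ has two terms for even $r$ and three for odd $r$.

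The only genuine work is organizational: there are four cases (even/odd $r$, and $X$ versus $X^{-1}$), each needing one composition of two $t_i$-actions, the cancellation of the spurious $u_{r\pm1}$-term, and a reduction that repeatedly invokes $k_1^2=q^{-n-1}$ to rewrite $1-q^{n-r+1}$ and $1-q^r$ in terms of the factors hidden inside $e_r$. I expect the main obstacle to be purely this bookkeeping, namely keeping the product $e_0e_1\cdots e_r$ aligned with the correct shift index so that the off-diagonal coefficients simplify to the clean powers of $q$ and of $k_1k_2$ claimed. No conceptual difficulty should arise beyond these reductions, since once Lemma \ref{lem:SSat0action} and the definition of $e_r$ are in hand every coefficient is forced.
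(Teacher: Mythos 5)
Your proposal is correct and is essentially the paper's own route: the paper states that Lemma \ref{lem:SSa} is ``routinely obtained'' from Lemmas \ref{lem:DSt0t1action}--\ref{lem:SSbtiaction}, i.e.\ precisely by composing the $t_0,t_3$-actions of Lemma \ref{lem:SSat0action} via $X=t_3t_0$, $X^{-1}=t_0^{-1}t_3^{-1}$, $t_i^{-1}=T_i-t_i$, and then renormalizing with the scalars $e_r$ of Definitions \ref{def:er} and \ref{def:udr}. Your key simplifications (cancellation of the $u_{r+1}$-term, the factorization of $k_3+k_3^{-1}-A-A^{-1}$ matching the denominators of $e_r$, and the use of $k_1^2=q^{-n-1}$ to collapse $1-q^{n-r+1}$) are exactly the identities that make the paper's ``routine'' computation work.
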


\begin{lemma}   \label{lem:SSb}    \samepage
\ifDRAFT {\rm lem:SSb}. \fi
Assume that $\V$ has $X$-type  \textup{\sf SSb}.
Then for $0 \leq r \leq n$
\begin{align*}
 X u'_r &=
  \begin{cases}
    (k_1 k_2 q^{r+1})^{-1}  (u'_r - u'_{r-1})      & \text{ if $r$ is even},
  \\
   k_1 k_2 q^r u'_r + (k_1 k_2 q^r)^{-1}  (u'_{r-1} - u'_{r-2})   & \text{ if $r$ is odd},
  \end{cases}
\\
 X^{-1} u'_r &=
 \begin{cases}
  k_1 k_2 q^{r+1} u'_r + (k_1 k_2 q^{r-1})^{-1}  (u'_{r-1} - u'_{r-2})   & \text{ if $r$ is even},
 \\
 (k_1 k_2 q^r)^{-1} (u'_r - u'_{r-1})   & \text{ if $r$ is odd}.
 \end{cases}
\end{align*}
\end{lemma}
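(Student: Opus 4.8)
The plan is to compute $X u_r$ and $X^{-1} u_r$ directly from the action of the generators $\{t_i\}_{i \in \I}$ on the basis $\{u_r\}_{r=0}^n$ recorded in Lemma \ref{lem:SSbtiaction}, and then pass to the normalized basis $\{u'_r\}_{r=0}^n$ of Definition \ref{def:udr}. The crucial preliminary point is to factor $X^{\pm 1}$ in a way that avoids inverses of the generators, since Lemma \ref{lem:SSbtiaction} records only the action of $t_i$ and not of $t_i^{-1}$. From $X = t_3 t_0$ and from $t_1 t_2 t_3 t_0 = q^{-1}$ (Lemma \ref{lem:Z4pre}) we obtain $X^{-1} = q\, t_1 t_2$. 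So I would compute $X u_r = t_3(t_0 u_r)$ and $X^{-1} u_r = q\, t_1(t_2 u_r)$, in each case first substituting the appropriate parity-dependent formula from Lemma \ref{lem:SSbtiaction} for the inner generator and then for the outer one, splitting into the four cases according to the parity of $r$.

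Carrying out the two-step substitution expresses $X u_r$ and $X^{-1}u_r$ as linear combinations of at most $u_{r-2}, u_{r-1}, u_r, u_{r+1}$ with coefficients rational in the $k_i$ and in powers of $q$. Two algebraic simplifications then do all the work. First, the relation $k_2^2 = q^{-n-1}$ from Lemma \ref{lem:typekr0} collapses the factor $(1-q^{n-r+1})$ into the $k_i$; this is exactly what forces the $u_{r+1}$-terms to cancel and what turns the diagonal coefficient into $k_1 k_2 q^r$ (for odd $r$) or $(k_1 k_2 q^{r+1})^{-1}$ (for even $r$). Second, the ``completing the square'' identity $(1 - k_0 k_1 k_2 k_3 q^r)(1 - k_0^{-1} k_1 k_2 k_3 q^r) = k_1 k_2 k_3 q^r\bigl(k_1 k_2 k_3 q^r + (k_1 k_2 k_3 q^r)^{-1} - k_0 - k_0^{-1}\bigr)$, an instance of the factorization in Lemma \ref{lem:G}, shows that the odd-index normalizer $e_r$ of Definition \ref{def:er} is precisely the reciprocal of the off-diagonal coefficient that appears in $t_0 u_r$; the analogous elementary identity $(1-q^r)(1-k_2^2 q^r)$ matches the even-index $e_r$.

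Finally I would convert to $\{u'_r\}$ using $u'_r = e_0 e_1 \cdots e_r\, u_r$, so that a term $u_{r-j}$ acquires the factor $e_r e_{r-1}\cdots e_{r-j+1}$ relative to $u'_{r-j}$. Since the diagonal coefficient is unaffected by the rescaling, the passage to $\{u'_r\}$ amounts to multiplying the coefficient of $u_{r-1}$ by $e_r$ and that of $u_{r-2}$ by $e_r e_{r-1}$; by the two identities above these products telescope to the values $k_1 k_2 q^r$, $\pm(k_1 k_2 q^r)^{-1}$, $\pm(k_1 k_2 q^{r+1})^{-1}$, $\pm(k_1 k_2 q^{r-1})^{-1}$ displayed in the statement, and in particular $X^{\pm 1}$ comes out upper tridiagonal. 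The argument is a parity-split direct calculation requiring no induction, so the only genuine obstacle is bookkeeping: keeping the four parity cases straight and confirming that the product $e_r e_{r-1}$ of consecutive normalizers (which mixes the even and odd formulas of Definition \ref{def:er}) cancels correctly against the degree-two polynomial coefficient in the $u_{r-2}$-term. I expect the case of $X^{-1}$ with $r$ even, where the numerator $(1-q^r)(1-q^{n-r+1})$ and the quadratic normalizer interact, to be the most delicate to verify.
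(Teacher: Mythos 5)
Your proposal is correct and takes essentially the same route as the paper, which simply states that these formulas are ``routinely obtained'' from Lemma \ref{lem:SSbtiaction}: namely, the two-step substitution $X = t_3t_0$ and $X^{-1} = q\,t_1t_2$ into the parity-split formulas for the $t_i$, followed by the rescaling $u'_r = e_0e_1\cdots e_r\,u_r$ of Definitions \ref{def:er}, \ref{def:udr}, with $k_2^2 = q^{-n-1}$ (Lemma \ref{lem:typekr0}) used to simplify the resulting coefficients. One minor inaccuracy in your narration: the $u_{r+1}$-terms cancel automatically, because the outer generator ($t_3$, resp.\ $t_1$) acts as a scalar on the highest-index basis vector produced by the inner one ($t_0$, resp.\ $t_2$), so the relation $k_2^2 = q^{-n-1}$ is not what forces that cancellation --- it is only needed to convert factors such as $1-q^{n-r+1}$ into $-k_2^{-2}q^{-r}(1-k_2^2q^r)$ and thereby produce the displayed diagonal and off-diagonal entries, which does not affect the validity of your computation.
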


\begin{corollary}    \label{cor:Xact}   \samepage
\ifDRAFT {\rm cor:Xact}. \fi
With respect to the basis $\{u_r\}_{r=0}^n$ the matrix representing $X^{\pm 1}$
is upper tridiagonal.
\end{corollary}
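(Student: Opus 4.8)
The plan is to read the result directly off the four preceding lemmas and then transfer it from the normalized basis $\{u'_r\}_{r=0}^n$ back to the basis $\{u_r\}_{r=0}^n$ by a diagonal change of basis. Recall from Section~\ref{sec:ur} that a square matrix is upper tridiagonal precisely when each nonzero entry lies on the diagonal, the superdiagonal, or immediately above the superdiagonal; equivalently, writing $M$ for the matrix of an operator with respect to a basis $\{b_r\}$ via $X b_s = \sum_r M_{r,s} b_r$, this says $M_{r,s}=0$ unless $r \in \{s, s-1, s-2\}$. So it suffices to show that for each $r$ the vectors $X u_r$ and $X^{-1} u_r$ lie in $\mathrm{span}\{u_{r-2}, u_{r-1}, u_r\}$.

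First I would handle the normalized basis. Lemmas~\ref{lem:DSDDa}, \ref{lem:DDb}, \ref{lem:SSa}, and~\ref{lem:SSb} together cover all five $X$-types \textup{\sf DS}, \textup{\sf DDa}, \textup{\sf DDb}, \textup{\sf SSa}, \textup{\sf SSb}, and in every case each of $X u'_r$ and $X^{-1} u'_r$ is exhibited as an explicit linear combination of $u'_{r-2}$, $u'_{r-1}$, $u'_r$ (with the convention $u'_r=0$ for $r<0$). Hence, with respect to the basis $\{u'_r\}_{r=0}^n$, the only possibly nonzero entries of the matrix of $X^{\pm1}$ in column $r$ occur in rows $r$, $r-1$, $r-2$, which is exactly the upper tridiagonal pattern. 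Thus the matrices representing $X$ and $X^{-1}$ with respect to $\{u'_r\}_{r=0}^n$ are upper tridiagonal.

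It remains to transfer this to $\{u_r\}_{r=0}^n$. By Definition~\ref{def:udr} we have $u'_r = (e_0 e_1 \cdots e_r)\,u_r$, and by Definition~\ref{def:er} each $e_r$ is nonzero (its numerator is a nonzero constant and its denominator is nonzero by Lemmas~\ref{lem:typekr0} and~\ref{lem:typekr}). Therefore $u'_r$ is a nonzero scalar multiple of $u_r$, so the two bases are related by an invertible diagonal change of basis. Such a change of basis rescales each matrix entry by a nonzero factor but does not move it to a different position; in particular it preserves the set of positions of nonzero entries, and hence preserves upper tridiagonality. Applying this to the matrices of $X^{\pm1}$ obtained in the previous step gives the claim.

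The argument involves no genuine obstacle: the content is entirely in Lemmas~\ref{lem:DSDDa}--\ref{lem:SSb}, which were established by the explicit computations of Section~\ref{sec:tiaction}. The only points requiring care are checking that these four lemmas are exhaustive over the $X$-types and that the three-term window $\{u'_{r-2},u'_{r-1},u'_r\}$ matches the definition of upper tridiagonal; both are immediate.
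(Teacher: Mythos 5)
Your proof is correct and follows exactly the paper's route: read off upper tridiagonality for $X^{\pm 1}$ with respect to the normalized basis $\{u'_r\}_{r=0}^n$ from Lemmas \ref{lem:DSDDa}--\ref{lem:SSb}, then transfer to $\{u_r\}_{r=0}^n$ since the two bases differ by nonzero scalar factors. You merely make explicit the two points the paper leaves implicit (nonvanishing of the $e_r$ and invariance of the zero pattern under a diagonal change of basis), which is fine.
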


\begin{proof}
By lemmas \ref{lem:DSDDa}--\ref{lem:SSb}
the matrix representing $X^{\pm 1}$ with respect to $\{u'_r\}_{r=0}^n$ is
upper tridiagonal.
The result follows since $\{u'_r\}_{r=0}^n$ is a normalization of $\{u_r\}_{r=0}^n$.
\end{proof}

\begin{note}   \label{note:Bactonur}   \samepage
\ifDRAFT {\rm note:Bactonur}. \fi
The action of $\B$ on $\{u'_r\}_{r=0}^n$ is immediately obtained from
the action of $X^{\pm 1}$ given in Lemmas \ref{lem:DSDDa}--\ref{lem:SSb}.
Observe that 
with respect to the basis $\{u_r\}_{r=0}^n$ the matrix representing $\B$
is upper tridiagonal.
Moreover it is not upper bidiagonal when $n \geq 2$.
\end{note}

\section{A basis for $\V(k_0^{\pm 1})$}
\label{sec:basisVpm}

Throughout this section Notation \ref{notation} is in effect.
Assume $k_0 \neq k_0^{-1}$.
Let the subspaces $\V(k_0^{\pm 1})$ be from \eqref{eq:defVk0},
and the elements $F^{\pm}$ be from \eqref{eq:defF+F-}.
In this section, we construct a basis for $\V(k_0^{\pm 1})$
with respect to which the matrix representing $\A$ is lower bidiagonal
and the matrix representing $\B$ is upper bidiagonal.
Let the basis $\{u_r\}_{r=0}^n$ be from Definition \ref{def:ur}.
The following five Lemmas can be routinely obtained
using the action of $t_0$ given in Lemmas \ref{lem:DSt0t1action}--\ref{lem:SSbtiaction}.

\begin{lemma}   \label{lem:DSFur}  \samepage
\ifDRAFT {\rm lem:DSFur}. \fi
Assume that $\V$ has $X$-type \textup{\sf DS}.
Then for $0 \leq r \leq n$
\begin{align}
F^+ u_r &=
 \begin{cases}
  - \frac{1-k_0^2 q^r}
            {q^r (1- k_0^2)} \big( (1-q^r)  u_{r-1} - u_r \big)
  & \text{ if $r$ is even},
 \\
  - \frac{1}
           {q^{r+1}(1-k_0^2)} \big( (1-q^{r+1}) u_r - u_{r+1} \big)
  & \text{ if $r$ is odd},
 \end{cases}                                                  \label{eq:DSF+ur}
\\
F^- u_r &=
 \begin{cases}
  \frac{(1-q^r)}
         {q^r(1-k_0^2)} \big( (1- k_0^2 q^r)  u_{r-1} - u_r \big)
  & \text{ if $r$ is even},
 \\
  \frac{1}
         {q^{r+1}(1-k_0^2)} \big( (1-k_0^2 q^{r+1})  u_r - u_{r+1} \big)
    & \text{ if $r$ is odd}.
 \end{cases}                                              \label{eq:DSF-ur}
\end{align}
\end{lemma}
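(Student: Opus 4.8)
The plan is to derive both formulas directly from the definitions of $F^{\pm}$ in \eqref{eq:defF+F-} together with the explicit action of $t_0$ on $\{u_r\}_{r=0}^n$ recorded in Lemma~\ref{lem:DSt0t1action}. First I would rewrite that action by collecting coefficients: since $\V$ has $X$-type $\textup{\sf DS}$, for even $r$ Lemma~\ref{lem:DSt0t1action} gives
\[
 t_0 u_r = \Bigl(k_0 q^r + \tfrac{1}{k_0 q^r} - k_0 - k_0^{-1}\Bigr) u_{r-1} - \Bigl(\tfrac{1}{k_0 q^r} - k_0 - k_0^{-1}\Bigr) u_r,
\]
while for odd $r$ it gives $t_0 u_r = (k_0 q^{r+1})^{-1}(u_r - u_{r+1})$. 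Throughout I would use the convention $u_r = 0$ for $r<0$ and, by Lemma~\ref{lem:zero}, for $r>n$, so the endpoints $r=0$ and $r=n$ require no separate treatment.

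The whole computation hinges on one factorization, namely
\[
 k_0 q^r + \frac{1}{k_0 q^r} - k_0 - k_0^{-1} = (q^r - 1)\Bigl(k_0 - \frac{1}{k_0 q^r}\Bigr) = \frac{(1-q^r)(1-k_0^2 q^r)}{k_0 q^r},
\]
verified by expanding. For even $r$ I would then subtract $k_0^{-1} u_r$ from $t_0 u_r$: the coefficient of $u_{r-1}$ is exactly the quantity just factored, and the coefficient of $u_r$ collapses to $(k_0^2 q^r - 1)/(k_0 q^r)$, so that $t_0 u_r - k_0^{-1} u_r = \frac{1-k_0^2 q^r}{k_0 q^r}\bigl((1-q^r) u_{r-1} - u_r\bigr)$. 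Dividing by $k_0 - k_0^{-1}$ and using $1/(k_0-k_0^{-1}) = -k_0/(1-k_0^2)$ produces \eqref{eq:DSF+ur} for even $r$. Subtracting $k_0 u_r$ instead leaves the $u_{r-1}$-coefficient unchanged while the $u_r$-coefficient becomes $-(1-q^r)/(k_0 q^r)$; dividing by $k_0^{-1}-k_0$ then yields \eqref{eq:DSF-ur} for even $r$. The odd case is shorter, since $t_0 u_r$ involves only $u_r$ and $u_{r+1}$: I obtain $t_0 u_r - k_0^{-1} u_r = (k_0 q^{r+1})^{-1}\bigl((1-q^{r+1}) u_r - u_{r+1}\bigr)$ and $t_0 u_r - k_0 u_r = (k_0 q^{r+1})^{-1}\bigl((1-k_0^2 q^{r+1}) u_r - u_{r+1}\bigr)$, and the two divisions give the odd-$r$ parts of \eqref{eq:DSF+ur} and \eqref{eq:DSF-ur}.

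There is no conceptual obstacle: the result is immediate from Lemma~\ref{lem:DSt0t1action}, and the only thing to watch is the sign bookkeeping, the factorization above, and the correct value of the reciprocal $1/(k_0-k_0^{-1})$. As a built-in sanity check I would confirm $F^+ u_r + F^- u_r = u_r$ in each parity (forced by $F^+ + F^- = 1$ on $\V$) and note that at $r=0$ the formulas reduce to $F^+ u_0 = u_0$, $F^- u_0 = 0$, matching $t_0 u_0 = k_0 u_0$. The remaining four lemmas are proved by the identical pattern, substituting the $t_0$-action from Lemmas~\ref{lem:DDat0action}--\ref{lem:SSbtiaction}.
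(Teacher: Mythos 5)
Your proposal is correct and follows exactly the route the paper indicates: the paper states that Lemmas \ref{lem:DSFur}--\ref{lem:SSbFur} are ``routinely obtained'' from the definition \eqref{eq:defF+F-} of $F^{\pm}$ and the $t_0$-action in Lemma \ref{lem:DSt0t1action}, which is precisely the computation you carry out (including the key factorization $k_0q^r + (k_0q^r)^{-1} - k_0 - k_0^{-1} = (1-q^r)(1-k_0^2q^r)/(k_0q^r)$ and the endpoint conventions $u_r=0$ for $r<0$ and $r>n$).
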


\begin{lemma}   \label{lem:DDaFur}  \samepage
\ifDRAFT {\rm lem:DDaFur}. \fi
Assume that $\V$ has $X$-type \textup{\sf DDa}.
Then for $0 \leq r \leq n$
\begin{align}
F^+ u_r &=
 \begin{cases}
  - \frac{1-q^{n-r+1}}
            { 1-q^{n+1} } 
    \big(  (1-q^r) u_{r-1} - u_r \big)
  & \text{ if $r$ is even},
 \\
  \frac{q^{n-r}}
         {1-q^{n+1}} 
  \big( (1- q^{r+1}) u_r - u_{r+1} \big)
  & \text{ if $r$ is odd},
 \end{cases}                                              \label{eq:DDaF+ur}
\\
F^- u_r &=
 \begin{cases}
 \frac{1-q^r}
        {1-q^{n+1}} 
 \big( (1-q^{n-r+1}) u_{r-1} + q^{n-r+1} u_r \big)
  & \text{ if $r$ is even},
 \\
  \frac{1 }
         {1-q^{n+1} } 
 \big( (1-q^{n-r}) u_r + q^{n-r} u_{r+1} \big)
    & \text{ if $r$ is odd}.
 \end{cases}                                              \label{eq:DDaF-ur}
\end{align}
\end{lemma}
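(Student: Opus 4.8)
The plan is to derive both formulas directly from the known action of $t_0$ on the basis $\{u_r\}_{r=0}^n$, which for $X$-type \textup{\sf DDa} is recorded in Lemma \ref{lem:DDat0action}. By Definition \ref{def:F+F-} we have $F^+ = (t_0 - k_0^{-1})/(k_0 - k_0^{-1})$ and $F^- = (t_0 - k_0)/(k_0^{-1} - k_0)$, so each of $F^+ u_r$ and $F^- u_r$ is an explicit affine combination of $t_0 u_r$ and $u_r$. Since Lemma \ref{lem:DDat0action} expresses $t_0 u_r$ as a linear combination of $u_{r-1}, u_r, u_{r+1}$, substituting and collecting terms yields a linear combination of the same vectors; the whole proof then reduces to simplifying scalar coefficients.

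The key simplification comes from the relation $k_0^2 = q^{-n-1}$, valid for $X$-type \textup{\sf DDa} by Lemma \ref{lem:typekr0}, equivalently $k_0^{-1} = k_0 q^{n+1}$. First I would use this to rewrite the denominator $k_0 - k_0^{-1} = k_0(1 - q^{n+1})$, which is precisely the source of the common factor $1/(1 - q^{n+1})$ appearing throughout the statement. The same substitution turns the coefficient $1/(k_0 q^{n+1})$ in the even-$r$ line for $t_0 u_r$ into $k_0$, and turns $1/(k_0 q^{r+1})$ in the odd-$r$ line into $k_0 q^{n-r}$; after these replacements every coefficient becomes $k_0$ times a polynomial in $q$, and the factor $k_0$ cancels against the $k_0$ in the denominator. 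Concretely, for even $r$ the coefficient of $u_{r-1}$ in $F^+ u_r$ collapses to $-(1-q^r)(1-q^{n-r+1})/(1-q^{n+1})$ and that of $u_r$ to $(1-q^{n-r+1})/(1-q^{n+1})$, giving \eqref{eq:DDaF+ur}; the odd-$r$ case is handled identically using $1/(k_0 q^{r+1}) = k_0 q^{n-r}$.

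For $F^-$ the most economical route is to invoke $F^+ + F^- = 1$ (immediate from Definition \ref{def:F+F-}), so that $F^- u_r = u_r - F^+ u_r$; subtracting \eqref{eq:DDaF+ur} from $u_r$ and regrouping recovers \eqref{eq:DDaF-ur} after using $(1-q^{n+1}) - (1-q^{n-r+1}) = q^{n-r+1}(1-q^r)$ in the even case and $(1-q^{n+1}) - q^{n-r}(1-q^{r+1}) = 1 - q^{n-r}$ in the odd case. Alternatively one may repeat the computation with the explicit form of $F^-$. There is no genuine obstacle here: the result is purely computational and the underlying induction has already been carried out in establishing the $t_0$-action. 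The only point requiring care is the consistent bookkeeping of the even/odd parity split together with the uniform substitution $k_0^{-1} = k_0 q^{n+1}$; a sign slip or a misplaced power of $q$ in one of the parity cases is the most likely source of error, and the identity $F^+ u_r + F^- u_r = u_r$ furnishes a convenient sanity check at each $r$.
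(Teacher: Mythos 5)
Your proposal is correct and is exactly the route the paper intends: the paper states that Lemma \ref{lem:DDaFur} is ``routinely obtained using the action of $t_0$'' from Lemma \ref{lem:DDat0action}, and your computation (substituting $k_0^{-1}=k_0q^{n+1}$ from Lemma \ref{lem:typekr0}, pulling out the common factor $k_0(1-q^{n+1})=k_0-k_0^{-1}$, and recovering $F^-$ via $F^++F^-=1$) is precisely that routine verification, with all stated coefficient identities checking out.
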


\begin{lemma}   \label{lem:DDbFur}  \samepage
\ifDRAFT {\rm lem:DDbFur}. \fi
Assume that $\V$ has $X$-type \textup{\sf DDb}.
 Then for $0 \leq r \leq n$
\begin{align*}
F^+ u_r &=
 \begin{cases}
  u_r
  & \text{ if $r$ is even},
 \\
  - \frac{(1-k_0k_1k_2k_3q^r)(1-k_0k_1^{-1}k_2k_3q^r)}
           {(1-k_0^2)k_2^2 q^{2r-n-1}} \, u_{r-1}
  + \frac{1}
            {1-k_0^2} \, u_{r+1}
  & \text{ if $r$ is odd},
 \end{cases}
\\
F^- u_r &=
 \begin{cases}
   0
  & \text{ if $r$ is even},
 \\
   \frac{(1-k_0k_1k_2k_3q^r)(1-k_0k_1^{-1}k_2k_3q^r)}
         {(1-k_0^2)k_2^2 q^{2r-n-1}} \, u_{r-1}
  + u_r 
  - \frac{1}
           {1-k_0^2} \, u_{r+1}
    & \text{ if $r$ is odd}.
 \end{cases}
\end{align*}
\end{lemma}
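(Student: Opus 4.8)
The plan is to read off the action of $t_0$ on the basis $\{u_r\}_{r=0}^n$ from Lemma \ref{lem:DDbt0action} and substitute it directly into the definitions $F^+ = (t_0-k_0^{-1})/(k_0-k_0^{-1})$ and $F^- = (t_0-k_0)/(k_0^{-1}-k_0)$ from \eqref{eq:defF+F-}. Since the formula for $t_0 u_r$ in Lemma \ref{lem:DDbt0action} splits according to the parity of $r$, the computation naturally splits into the same two cases, and in each case it amounts to an elementary linear-algebra simplification with no structural input beyond what is already established.

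First I would treat even $r$. Here Lemma \ref{lem:DDbt0action} gives $t_0 u_r = k_0 u_r$, so $u_r$ is an eigenvector of $t_0$ with eigenvalue $k_0$. The definitions then yield $F^+ u_r = (k_0-k_0^{-1})u_r/(k_0-k_0^{-1}) = u_r$ and $F^- u_r = (k_0-k_0)u_r/(k_0^{-1}-k_0) = 0$, which are exactly the claimed values.

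Next I would handle odd $r$. Write $t_0 u_r = C_r\,u_{r-1} + k_0^{-1}(u_r - u_{r+1})$, where $C_r = (1-k_0k_1k_2k_3q^r)(1-k_0k_1^{-1}k_2k_3q^r)/(k_0 k_2^2 q^{2r-n-1})$ is the coefficient supplied by Lemma \ref{lem:DDbt0action}. Substituting into $F^+ u_r = (t_0 u_r - k_0^{-1}u_r)/(k_0-k_0^{-1})$, the two $k_0^{-1}u_r$ terms cancel, leaving $F^+ u_r = (C_r u_{r-1} - k_0^{-1}u_{r+1})/(k_0-k_0^{-1})$. The coefficient of $u_{r+1}$ reduces to $1/(1-k_0^2)$ upon multiplying numerator and denominator by $k_0$ and using $k_0^2-1 = -(1-k_0^2)$; the coefficient of $u_{r-1}$ reduces to the stated expression by the same identity $k_0(k_0-k_0^{-1}) = k_0^2-1 = -(1-k_0^2)$, which absorbs the stray factor $k_0$ in the denominator of $C_r$ and produces the leading minus sign. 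The computation of $F^- u_r = (t_0 u_r - k_0 u_r)/(k_0^{-1}-k_0)$ is identical in structure, now leaving an extra surviving term $u_r$ and with all signs governed by $k_0^{-1}-k_0 = -(k_0-k_0^{-1})$.

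The only real obstacle is the sign-and-factor bookkeeping in the $u_{r-1}$ coefficient for odd $r$; every other term is immediate. As a built-in consistency check I would verify termwise that $F^+ u_r + F^- u_r = u_r$, which must hold because $F^+ + F^- = 1$ on $\V$ (indeed $F^++F^- = \bigl((t_0-k_0^{-1})-(t_0-k_0)\bigr)/(k_0-k_0^{-1}) = 1$); this forces the two $u_{r-1}$ coefficients to be negatives of each other and likewise the two $u_{r+1}$ coefficients, which is exactly what the displayed formulas exhibit.
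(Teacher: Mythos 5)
Your proposal is correct and matches the paper's intent exactly: the paper states that this lemma is ``routinely obtained using the action of $t_0$'' from Lemma \ref{lem:DDbt0action}, which is precisely your substitution of that action into the definitions \eqref{eq:defF+F-} of $F^{\pm}$, and your sign/factor bookkeeping via $k_0(k_0-k_0^{-1})=-(1-k_0^2)$ checks out (the boundary case $r=n$ being covered by the convention $u_{n+1}=0$ from Lemma \ref{lem:zero}). The added verification that $F^+u_r+F^-u_r=u_r$ is a sensible consistency check consistent with the displayed formulas.
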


\begin{lemma}   \label{lem:SSaFur}  \samepage
\ifDRAFT {\rm lem:SSaFur}. \fi
Assume that $\V$ has $X$-type \textup{\sf SSa}.
 Then for $0 \leq r \leq n$
\begin{align*}
F^+ u_r &=
 \begin{cases}
    \frac{(1-q^r)(1-q^{n-r+1})}
         {(1-k_0^2) q^{r}} \, u_{r-1}
  + u_r 
  + \frac{k_0^2}
           {1-k_0^2} \, u_{r+1}    
  & \text{ if $r$ is even},
 \\
   0
  & \text{ if $r$ is odd},
 \end{cases}
\\
F^- u_r &=
 \begin{cases}
   - \frac{(1-q^r)(1-q^{n-r+1})}
          {(1-k_0^2) q^{r}} \, u_{r-1}
   - \frac{k_0^2}
            {1-k_0^2} \, u_{r+1}
  & \text{ if $r$ is even},
 \\
    u_r
    & \text{ if $r$ is odd}.
 \end{cases}
\end{align*}
\end{lemma}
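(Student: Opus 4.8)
The plan is to compute $F^{\pm}u_r$ directly from their definitions in Definition~\ref{def:F+F-}, feeding in the known action of $t_0$ on the basis $\{u_r\}_{r=0}^n$. Recall that $F^+ = (t_0-k_0^{-1})/(k_0-k_0^{-1})$ and $F^- = (t_0-k_0)/(k_0^{-1}-k_0)$, which are well-defined since $k_0 \neq k_0^{-1}$ is assumed throughout this section. So for each $r$ the problem reduces to knowing $t_0 u_r$, and this is exactly what Lemma~\ref{lem:SSat0action} supplies for $X$-type \textup{\sf SSa}. The whole argument is therefore a substitution followed by an algebraic simplification, organized according to the parity of $r$.

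First I would dispose of the odd case. By Lemma~\ref{lem:SSat0action}, for odd $r$ we have $t_0 u_r = k_0^{-1}u_r$; that is, $u_r$ is already a $t_0$-eigenvector with eigenvalue $k_0^{-1}$, so it lies in $\V(k_0^{-1})$. Consequently $F^+ u_r = 0$ and $F^- u_r = u_r$ at once, matching the claimed formulas.

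Next I would treat the even case, which is the only one requiring computation. Here Lemma~\ref{lem:SSat0action} gives $t_0 u_r = -\frac{(1-q^r)(1-q^{n-r+1})}{k_0 q^r}\,u_{r-1} + k_0(u_r - u_{r+1})$. Substituting into $F^+ u_r = (t_0 u_r - k_0^{-1}u_r)/(k_0-k_0^{-1})$, the diagonal contributions combine as $(k_0-k_0^{-1})u_r$, leaving coefficient $1$ on $u_r$; for $F^- u_r = (t_0 u_r - k_0 u_r)/(k_0^{-1}-k_0)$ the two $k_0 u_r$ terms cancel, so $u_r$ drops out entirely, exactly as claimed. The off-diagonal $u_{r-1}$ and $u_{r+1}$ coefficients are then simplified by rewriting $k_0-k_0^{-1} = (k_0^2-1)/k_0$, which converts $\frac{1}{k_0(k_0-k_0^{-1})}$ into $-\frac{1}{1-k_0^2}$ and $\frac{k_0}{k_0-k_0^{-1}}$ into $-\frac{k_0^2}{1-k_0^2}$; this produces precisely the stated $(1-k_0^2)$ denominators, with the sign flip in the $F^-$ denominator (from $k_0^{-1}-k_0 = -(k_0-k_0^{-1})$) accounting for the opposite signs in the two formulas.

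The computation is entirely routine, so there is no conceptual obstacle; the only thing demanding care is the bookkeeping of the even/odd cases and the sign flips coming from the factor $k_0^{-1}-k_0$ in the denominator of $F^-$. I would also note that all the denominators $1-k_0^2$ appearing here are nonzero, again because $k_0 \neq k_0^{-1}$, so every expression is well-defined. The same template — substitute the $t_0$-action, read off the eigenvector case at the parity where $t_0$ acts as a scalar, and simplify the three-term case — is what underlies the neighboring Lemmas~\ref{lem:DSFur}--\ref{lem:DDbFur}, so the argument here is one instance of a uniform routine.
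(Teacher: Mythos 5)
Your proposal is correct and is exactly the paper's approach: the paper states that Lemmas \ref{lem:DSFur}--\ref{lem:SSbFur} "can be routinely obtained using the action of $t_0$ given in Lemmas \ref{lem:DSt0t1action}--\ref{lem:SSbtiaction}," and your argument carries out precisely that substitution of the \textup{\sf SSa} action of $t_0$ from Lemma \ref{lem:SSat0action} into the definitions of $F^{\pm}$, with the algebra checking out (including the sign conventions from $k_0^{-1}-k_0$).
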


\begin{lemma}   \label{lem:SSbFur}  \samepage
\ifDRAFT {\rm lem:SSbFur}. \fi
Assume that $\V$ has $X$-type \textup{\sf SSb}.
Then for $0 \leq r \leq n$
\begin{align*}
F^+ u_r &=
 \begin{cases}
  - \frac{1} {(1-k_0^2)k_0^{-1}k_1k_2k_3 q^{r+1}} 
        \big( (1-k_0^{-1} k_1 k_2 k_3 q^{r+1})u_r - u_{r+1} \big)
  & \text{ if $r$ is even},
 \\
  - \frac{1-k_0k_1k_2k_3q^r} {(1-k_0^2)k_0^{-1}k_1k_2k_3q^r} 
       \big( (1-k_0^{-1}k_1 k_2 k_3 q^r) u_{r-1} - u_r \big)
  & \text{ if $r$ is odd},
 \end{cases}
\\
F^- u_r &=
 \begin{cases}
    \frac{1} {(1-k_0^2)k_0^{-1}k_1k_2k_3q^{r+1}} 
       \big( (1-k_0 k_1 k_2 k_3 q^{r+1} ) u_r - u_{r+1} \big)
  & \text{ if $r$ is even},
 \\
   \frac{1-k_0^{-1}k_1k_2k_3 q^r} {(1-k_0^2)k_0^{-1}k_1k_2k_3q^r} 
     \big( (1-k_0 k_1 k_2 k_3 q^r) u_{r-1} - u_r \big)
    & \text{ if $r$ is odd}.
 \end{cases}
\end{align*}
\end{lemma}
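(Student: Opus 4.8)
The plan is to obtain both formulas by a direct substitution, exactly as the sentence preceding the five lemmas indicates. Recall from Definition~\ref{def:F+F-} that, under the standing hypothesis $k_0 \neq k_0^{-1}$, we have $F^+ = (t_0 - k_0^{-1})/(k_0 - k_0^{-1})$ and $F^- = (t_0 - k_0)/(k_0^{-1} - k_0)$. Hence $F^{\pm} u_r$ is computed entirely from the action of $t_0$ on $u_r$, which for $X$-type \textup{\sf SSb} is supplied by Lemma~\ref{lem:SSbtiaction}. First I would fix an integer $r$ with $0 \le r \le n$ and split into the two parities, mirroring the case split in the formula for $t_0 u_r$.

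For even $r$, the formula $t_0 u_r = (k_1 k_2 k_3 q^{r+1})^{-1}(u_r - u_{r+1})$ expresses $t_0 u_r$ as a combination of $u_r$ and $u_{r+1}$ only; subtracting $k_0^{-1} u_r$ (resp.\ $k_0 u_r$) and dividing by $k_0 - k_0^{-1}$ (resp.\ $k_0^{-1} - k_0$) yields $F^+ u_r$ (resp.\ $F^- u_r$) as a combination of $u_r$ and $u_{r+1}$, matching the claimed shape. For odd $r$, the formula $t_0 u_r = (k_1 k_2 k_3 q^r + (k_1 k_2 k_3 q^r)^{-1} - k_0 - k_0^{-1}) u_{r-1} + (k_0 + k_0^{-1} - (k_1 k_2 k_3 q^r)^{-1}) u_r$ involves $u_{r-1}$ and $u_r$, and the same subtract-and-divide procedure produces combinations of $u_{r-1}$ and $u_r$. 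The remaining work is to simplify the coefficients. Writing $P = k_1 k_2 k_3 q^r$ (or $q^{r+1}$ as appropriate) and using $1/(k_0 - k_0^{-1}) = -k_0/(1 - k_0^2)$ together with $1/(k_0^{-1} - k_0) = k_0/(1 - k_0^2)$, the coefficients factor into the quantities $1 - k_0^{\pm 1} k_1 k_2 k_3 q^r$ and $1 - k_0 k_1 k_2 k_3 q^r$ displayed in the lemma; for instance, in the odd-$r$ computation the $u_{r-1}$ coefficient $(P + P^{-1} - k_0 - k_0^{-1})$ factors as $(1 - k_0 P)(1 - k_0^{-1} P)/P$, which is precisely what appears after clearing denominators.

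Since every step is an explicit rational-function manipulation, there is no genuine obstacle; the only care needed is bookkeeping of signs, of the normalizing factors $(1 - k_0^2)^{-1}$, and of the even/odd index shifts. A useful internal check, which I would carry out to guard against algebra slips, is to verify that the computed $F^+ u_r$ and $F^- u_r$ sum to $u_r$ (since $F^+ + F^- = 1$ on $\V$, as $F^{\pm}$ are complementary projections) and that $t_0$ acts as $k_0$ on $F^+ u_r$ and as $k_0^{-1}$ on $F^- u_r$. Both checks follow immediately from the resulting formulas and confirm the factored forms. This same routine, applied with the corresponding $t_0$-actions from Lemmas~\ref{lem:DSt0t1action}--\ref{lem:SSbtiaction}, is what establishes the preceding four lemmas as well.
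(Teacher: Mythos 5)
Your proposal is correct and is exactly the paper's intended argument: the paper disposes of this lemma (and its four companions) by remarking that they follow routinely from the $t_0$-action in Lemmas \ref{lem:DSt0t1action}--\ref{lem:SSbtiaction} together with Definition \ref{def:F+F-}, which is precisely the subtract-and-divide computation you carry out, including the key factorization $P+P^{-1}-k_0-k_0^{-1}=(1-k_0P)(1-k_0^{-1}P)/P$. Your added consistency checks ($F^++F^-=1$ and the $t_0$-eigenvalue verification) are sound and a reasonable safeguard, though not needed.
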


We obtain a basis for $\V(k_0^{\pm 1})$ by applying $F^{\pm}$ to an appropriate 
subset of $\{u_r\}_{r=0}^d$ as follows.

\begin{lemma}    \label{lem:F+VF-Vbasis}    \samepage
\ifDRAFT {\rm lem:F+VF-Vbasis}. \fi
The subspaces $\V(k_0^{\pm 1})$ have the following bases:
\begin{equation}       \label{eq:basisF+VF-V}
\begin{array}{c|l|l}
\textup{\rm $X$-type of $\V$} & \qquad \text{\rm Basis for $\V(k_0)$} & \text{\rm Basis for $\V(k_0^{-1})$}
\\ \hline
\textup{\sf DS} &        \rule{0mm}{3ex}
 \{ F^+ u_{2r}\}_{r=0}^{n/2} & \{ F^- u_{2r} \}_{r=1}^{n/2}
\\
\textup{\sf DDa} & 
\{F^+ u_{2r}\}_{r=0}^{(n-1)/2} \cup \{F^+ u_n\}   &   \{ F^- u_{2r} \}_{r=1}^{(n-1)/2}
\\
\textup{\sf DDb} &
\{F^+ u_{2r}\}_{r=0}^{(n-1)/2}   &   \{ F^- u_{2r+1} \}_{r=0}^{(n-1)/2}
\\
\textup{\sf SSa} &
\{F^+ u_{2r}\}_{r=0}^{(n-1)/2}  &  \{ F^- u_{2r+1} \}_{r=0}^{(n-1)/2}
\\
\textup{\sf SSb} &
\{F^+ u_{2r}\}_{r=0}^{(n-1)/2}  &  \{ F^- u_{2r} \}_{r=0}^{(n-1)/2}
\end{array}
\end{equation}
\end{lemma}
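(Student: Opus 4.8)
The plan is to reduce Lemma \ref{lem:F+VF-Vbasis} to two facts for each of the five $X$-types: that the listed vectors lie in the correct eigenspace, and that they are linearly independent; a dimension count then promotes independence to the basis property. Membership is immediate, since $F^+$ (resp.\ $F^-$) acts on $\V$ as the projection onto $\V(k_0)$ (resp.\ $\V(k_0^{-1})$), so every vector $F^+u_r$ lies in $\V(k_0)$ and every vector $F^-u_r$ lies in $\V(k_0^{-1})$. For the count I would invoke Corollary \ref{cor:dim}: in each case the number of listed vectors matches $\dim\V(k_0)=d+1$ or $\dim\V(k_0^{-1})=d'+1$. For instance in type \textup{\sf DS} the set $\{F^+u_{2r}\}_{r=0}^{n/2}$ has $n/2+1=d+1$ members while $\{F^-u_{2r}\}_{r=1}^{n/2}$ has $n/2=d'+1$ members, and similarly for the remaining types; this is routine arithmetic using the parities recorded in \eqref{eq:parityn}.

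The heart of the argument is linear independence, which I would obtain by a triangularity argument against the basis $\{u_r\}_{r=0}^n$ of Lemma \ref{lem:urbasis}. Using the explicit expansions of $F^{\pm}u_r$ supplied by Lemmas \ref{lem:DSFur}--\ref{lem:SSbFur}, I would attach to each listed vector its highest-index $u$-term carrying a nonzero coefficient, which I will call its pivot, adopting the conventions $u_{-1}=0$ and, crucially, $u_{n+1}=0$ coming from Lemma \ref{lem:zero}. A short inspection then shows the pivots within each family are pairwise distinct: in \textup{\sf DS} the vector $F^+u_{2r}$ has pivot $u_{2r}$ by \eqref{eq:DSF+ur}, so the pivots run through $u_0,u_2,\dots,u_n$; in \textup{\sf DDa} one has $F^+u_n=u_n$ because the term $u_{n+1}$ vanishes, so the pivots of $\{F^+u_{2r}\}\cup\{F^+u_n\}$ are $u_0,u_2,\dots,u_{n-1},u_n$; and in \textup{\sf SSa} the odd-index vectors collapse to $F^-u_{2r+1}=u_{2r+1}$ by Lemma \ref{lem:SSaFur}. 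Ordering each family by pivot index makes the matrix expressing it in terms of $\{u_r\}_{r=0}^n$ triangular with nonzero diagonal entries, whence the listed vectors are linearly independent.

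It remains to confirm that the pivot coefficients are genuinely nonzero. These coefficients are exactly the leading scalars appearing in Lemmas \ref{lem:DSFur}--\ref{lem:SSbFur}, and they are built from the same factors whose nonvanishing is recorded in the closing remark of Definition \ref{def:er}, itself a consequence of Lemma \ref{lem:typekr}; the remaining factors of the form $1-k_0^2$ are nonzero by the standing assumption $k_0\neq k_0^{-1}$. Combining distinct, nonzero pivots (giving independence) with the matching dimension count from Corollary \ref{cor:dim} shows that each listed family is a basis for the corresponding eigenspace, which is the assertion of the lemma.

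The step I expect to be the main obstacle is the bookkeeping in the triangularity argument rather than any conceptual difficulty: the pivot is not always the ``self-index'' term (in \textup{\sf SSb}, for example, $F^+u_{2r}$ has pivot $u_{2r+1}$ rather than $u_{2r}$), the relevant parity alternates between the $X$-types, and the endpoint vectors, where $u_{n+1}=0$ or $u_{-1}=0$ forces a formula in Lemmas \ref{lem:DSFur}--\ref{lem:SSbFur} to degenerate, must be examined separately to verify that the pivots really are distinct. Once the pivot pattern is tabulated type-by-type, each verification is mechanical.
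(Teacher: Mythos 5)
Your proposal is correct and follows essentially the same route as the paper: membership via the projection property of $F^{\pm}$, a dimension count against Corollary \ref{cor:dim}, and linear independence read off from the explicit expansions in Lemmas \ref{lem:DSFur}--\ref{lem:SSbFur}, with nonvanishing of the key coefficients supplied by Lemma \ref{lem:typekr} and the assumption $k_0^2\neq 1$. Your pivot/triangularity bookkeeping (including the endpoint degenerations $u_{n+1}=0$, e.g.\ $F^+u_n=u_n$ in type {\sf DDa}) just makes explicit the independence step that the paper states tersely, so the two arguments coincide in substance.
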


\begin{proof}
First assume that $\V$ has $X$-type \textup{\sf DS}.
We first show that $\{F^+ u_{2r}\}_{r=0}^{n/2}$ is a basis for $\V(k_0)$.
By Corollary \ref{cor:dim} the dimension of $\V(k_0)$ is $n/2 + 1$.
So it suffices to show that $\{F^+ u_{2r}\}_{r=0}^{n/2}$ are linearly independent.
By Lemma \ref{lem:typekr} neither of $\pm k_0$ is among $q^{-1}, q^{-2}, \ldots, q^{-n/2}$.
So $k_0^2 q^{2r} \neq 1$ for $1 \leq r \leq n/2$.
By the assumption, $k_0^2 \neq 1$.
By these comments $1 - k_0^2 q^{2r}$ is nonzero for $0 \leq r \leq n/2$.
By this and \eqref{eq:DSF+ur}
$F^+ u_{2r}$ is nonzero for $0 \leq r \leq n/2$.
By this and \eqref{eq:DSF+ur} the vectors $\{F^+ u_{2r}\}_{r=0}^{n/2}$ are linearly independent.
Next we show that $\{F^- u_{2r}\}_{r=1}^{n/2}$ is a basis for $\V(k_0^{-1})$.
By Corollary \ref{cor:dim} the dimension of $\V(k_0^{-1})$ is $n/2$.
So it suffices to show that $\{F^- u_{2r}\}_{r=1}^{n/2}$ are linearly independent.
Observe by \eqref{eq:DSF-ur} that $F^- u_{2r}$ is nonzero for $1 \leq r \leq n/2$.
By this and \eqref{eq:DSF-ur} the vectors $\{F^- u_{2r}\}_{r=1}^{n/2}$
are linearly independent.
We have shown the result when $\V$ has $X$-type \textup{\sf DS}.
The proof is similar for the other types.
\end{proof}

\section{The action of $\A$ on $\V(k_0^{\pm 1})$}
\label{sec:Aaction}

Throughout this section Notation \ref{notation} is in effect.
Assume $k_0 \neq k_0^{-1}$.
Let the subspaces $\V(k_0^{\pm 1})$ be from \eqref{eq:defVk0},
and the elements $F^{\pm}$ be from \eqref{eq:defF+F-}.
Consider the basis \eqref{eq:basisF+VF-V} for $\V(k_0^{\pm 1})$.
In this section, we obtain the action of $\A$ on this basis.
The following five Lemmas can be routinely obtained
sing Lemmas \ref{lem:DSYaction}--\ref{lem:SSbYaction} and
Lemmas \ref{lem:DSFur}--\ref{lem:SSbFur}.

\begin{lemma}    \label{lem:DSAFur}    \samepage
\ifDRAFT {\rm lem:DSAFur}. \fi
Assume that $\V$ has $X$-type \textup{\sf DS}.
Then for $0 \leq r \leq n/2$
\begin{align*}
 \A F^+ u_{2r} &=
  \left(  k_0 k_1 q^{2r} + \frac{1}{k_0 k_1 q^{2r}} \right) F^+ u_{2r}
 - \frac{q^2 (1-k_0^2 q^{2r}) }{ k_0 k_1 q^{2r+2} (1-k_0^2 q^{2r+2}) } F^+ u_{2r+2},
\end{align*}
and for $1 \leq r \leq n/2$
\begin{align*}
 \A F^- u_{2r} &=
  \left(  k_0 k_1 q^{2r} + \frac{1}{k_0 k_1 q^{2r}} \right) F^- u_{2r}
 - \frac{q^2 (1- q^{2r}) }{ k_0 k_1 q^{2r+2} (1- q^{2r+2}) } F^- u_{2r+2}.
\end{align*}
\end{lemma}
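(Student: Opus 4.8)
The plan is to reduce everything to the already-computed action of $Y^{\pm 1}$ on $\{u_r\}_{r=0}^n$ together with the explicit formulas for $F^{\pm}u_r$. The starting observation is that $\A$ commutes with $F^{\pm}$: by Lemma \ref{lem:titj}(ii) the element $\A = t_0t_1 + (t_0t_1)^{-1}$ commutes with $t_0$, and by \eqref{eq:defF+F-} each of $F^{\pm}$ is linear in $t_0$, so $\A F^{\pm} = F^{\pm}\A$. Hence I would write $\A F^{\pm} u_{2r} = F^{\pm}\A u_{2r}$ and first compute the single vector $\A u_{2r} = Y u_{2r} + Y^{-1} u_{2r}$. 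Since $2r$ is even, Lemma \ref{lem:DSYaction} gives $Y u_{2r} = \beta_{2r} u_{2r} + \beta_{2r+2}^{-1}(u_{2r+1}-u_{2r+2})$ and $Y^{-1} u_{2r} = \beta_{2r}^{-1}(u_{2r}-u_{2r+1})$, whence
\[
 \A u_{2r} = (\beta_{2r}+\beta_{2r}^{-1})\,u_{2r}
   + (\beta_{2r+2}^{-1}-\beta_{2r}^{-1})\,u_{2r+1}
   - \beta_{2r+2}^{-1}\,u_{2r+2},
\]
where, for $X$-type \textup{\sf DS}, Definition \ref{def:be} gives $\beta_{2r}=k_0k_1q^{2r}$ and $\beta_{2r+2}=k_0k_1q^{2r+2}$. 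Applying $F^{+}$ (resp.\ $F^{-}$) termwise produces a combination of $F^{\pm}u_{2r}$, $F^{\pm}u_{2r+1}$ and $F^{\pm}u_{2r+2}$; the coefficient of $F^{\pm}u_{2r}$ is already $\beta_{2r}+\beta_{2r}^{-1}=k_0k_1q^{2r}+(k_0k_1q^{2r})^{-1}$, matching the claim.

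The key step is then to eliminate the $F^{\pm}u_{2r+1}$ term, and this is where Lemma \ref{lem:DSFur} does the work. Comparing the formula for $F^{+}u_{2r+1}$ (odd index) with that for $F^{+}u_{2r+2}$ (even index), both are scalar multiples of the same vector $(1-q^{2r+2})u_{2r+1}-u_{2r+2}$, so I would read off $F^{+}u_{2r+1} = (1-k_0^2q^{2r+2})^{-1}F^{+}u_{2r+2}$. The analogous comparison of the $F^{-}$ formulas gives $F^{-}u_{2r+1} = (1-q^{2r+2})^{-1}F^{-}u_{2r+2}$. Substituting these back, the net coefficient of $F^{+}u_{2r+2}$ becomes
\[
 \frac{\beta_{2r+2}^{-1}-\beta_{2r}^{-1}}{1-k_0^2q^{2r+2}} - \beta_{2r+2}^{-1}
 = \frac{1}{k_0k_1q^{2r+2}}\cdot\frac{(1-q^2)-(1-k_0^2q^{2r+2})}{1-k_0^2q^{2r+2}}
 = -\frac{q^2(1-k_0^2q^{2r})}{k_0k_1q^{2r+2}(1-k_0^2q^{2r+2})},
\]
which is the asserted coefficient for $\A F^{+}u_{2r}$; the identical manipulation with $1-k_0^2q^{2r+2}$ replaced by $1-q^{2r+2}$ yields $-\,q^2(1-q^{2r})\big/\big(k_0k_1q^{2r+2}(1-q^{2r+2})\big)$ for $\A F^{-}u_{2r}$.

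I would finish by checking the range endpoints: at $r=n/2$ one has $2r=n$, and $u_{n+1}=u_{n+2}=0$ by Lemma \ref{lem:zero}, so the $F^{\pm}u_{2r+2}$ term simply drops out, consistent with the stated formulas; the $F^{-}$ case starts at $r=1$ because $F^{-}u_0=0$ (the factor $1-q^0$ vanishes in Lemma \ref{lem:DSFur}), matching the basis in Lemma \ref{lem:F+VF-Vbasis}. Throughout, all denominators $1-k_0^2$, $1-k_0^2q^{2r+2}$, $1-q^{2r+2}$ are nonzero by Lemmas \ref{lem:typekr0} and \ref{lem:typekr} (the same nonvanishing already invoked in Definition \ref{def:er} and Lemma \ref{lem:F+VF-Vbasis}), so the division reducing $F^{\pm}u_{2r+1}$ to a multiple of $F^{\pm}u_{2r+2}$ is legitimate. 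There is no real obstacle here beyond careful bookkeeping; the only point requiring attention is the collinearity observation that makes $F^{\pm}u_{2r+1}$ proportional to $F^{\pm}u_{2r+2}$, which is exactly the one-dimensionality of the bond subspace $F^{\pm}(\V_X(\mu_{2r+1})+\V_X(\mu_{2r+2}))$ guaranteed by Lemma \ref{lem:WF+WF-W}.
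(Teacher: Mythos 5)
Your proposal is correct and follows exactly the route the paper intends: the paper dismisses these five lemmas as "routinely obtained using Lemmas \ref{lem:DSYaction}--\ref{lem:SSbYaction} and Lemmas \ref{lem:DSFur}--\ref{lem:SSbFur}," and your argument (commute $\A$ past $F^{\pm}$ via Lemma \ref{lem:titj}(ii), expand $\A u_{2r}=Yu_{2r}+Y^{-1}u_{2r}$ by Lemma \ref{lem:DSYaction}, then collapse $F^{\pm}u_{2r+1}$ onto $F^{\pm}u_{2r+2}$ via the collinearity visible in Lemma \ref{lem:DSFur}) is precisely that routine computation, carried out correctly including the endpoint $r=n/2$ where $u_{n+1}=u_{n+2}=0$.
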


\begin{lemma}   \label{lem:DDaAFur}    \samepage
\ifDRAFT {\rm lem:DDaAFur}. \fi
Assume that $\V$ has $X$-type \textup{\sf DDa}.
Then for $0 \leq r \leq (n-3)/2$
{\small
\begin{align*}
\A F^+ u_{2r} &=
 \left( k_0 k_1 q^{2r} + \frac{1}{k_0 k_1 q^{2r}} \right) F^+ u_{2r}
 - \frac{1- q^{n-2r+1} } {k_0 k_1 q^{2r+2} (1-q^{n-2r-1}) } F^+ u_{2r+2},
\\
\A F^+ u_{n-1} &=
 \left( k_0 k_1 q^{n-1} + \frac{1}{k_0 k_1 q^{n-1}} \right) F^+ u_{n-1}
 + \frac{1-q^2}{k_0 k_1 q^{n+1} } F^+ u_n,
\\
\A F^+ u_n &=
  \left(  k_0 k_1 q^{n+1} + \frac{1}{ k_0 k_1 q^{n+1} } \right) F^+ u_n,
\\
\intertext{and for $1 \leq r \leq (n-1)/2$}
 \A F^- u_{2r} &=
 \left( k_0 k_1 q^{2r} + \frac{1}{ k_0 k_1 q^{2r} } \right) F^- u_{2r}
 - \frac{ 1-q^{2r} }{ k_0 k_1 q^{2r} (1-q^{2r+2}) } F^- u_{2r+2}
\end{align*}
}
\end{lemma}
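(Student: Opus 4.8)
The plan is to exploit that $\A$ commutes with $t_0$ by Lemma~\ref{lem:titj}(ii); since the projections $F^{\pm}$ of \eqref{eq:defF+F-} are polynomials in $t_0$, the operator $\A$ commutes with each of $F^{+}$, $F^{-}$. Hence $\A F^{+} u_{2r} = F^{+} \A u_{2r}$ and $\A F^{-} u_{2r} = F^{-} \A u_{2r}$, which reduces the whole lemma to computing $\A u_{2r}$ and then projecting. Writing $\A = Y + Y^{-1}$ and invoking Lemma~\ref{lem:DSYaction} (which covers the types \textup{\sf DS} and \textup{\sf DDa}), for an even index $2r$ I obtain the three-term expression $\A u_{2r} = (\beta_{2r} + \beta_{2r}^{-1}) u_{2r} + (\beta_{2r+2}^{-1} - \beta_{2r}^{-1}) u_{2r+1} - \beta_{2r+2}^{-1} u_{2r+2}$, where the odd-index vector $u_{2r+1}$ is the only term not indexed by the basis of Lemma~\ref{lem:F+VF-Vbasis}.

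The heart of the argument is then to apply $F^{+}$ and read off $F^{+} u_{2r}$, $F^{+} u_{2r+1}$, $F^{+} u_{2r+2}$ from Lemma~\ref{lem:DDaFur}. The key observation is that the odd-index value $F^{+} u_{2r+1}$ and the even-index value $F^{+} u_{2r+2}$ are each a scalar multiple of the single vector $(1-q^{2r+2}) u_{2r+1} - u_{2r+2}$; this reflects that $\V_X(\mu_{2r+1}) + \V_X(\mu_{2r+2})$ is a bond subspace whose intersection with $\V(k_0)$ is one-dimensional (Lemma~\ref{lem:WF+WF-W}). Thus $F^{+} u_{2r+1}$ equals an explicit scalar times $F^{+} u_{2r+2}$, and the three-term combination collapses to a two-term combination of the basis vectors $F^{+} u_{2r}$ and $F^{+} u_{2r+2}$. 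The identical proportionality between $F^{-} u_{2r+1}$ and $F^{-} u_{2r+2}$ handles the action on $\V(k_0^{-1})$. The two boundary rows are treated directly: for $\A F^{+} u_n$ I use that the odd-index formula of Lemma~\ref{lem:DDaFur} with $r=n$ and $u_{n+1}=0$ gives $F^{+} u_n = u_n$, together with $Y u_n = \beta_n^{-1} u_n$ and $Y^{-1} u_n = \beta_n u_n$ from Lemma~\ref{lem:DSYaction}, yielding the pure eigenvalue $\beta_n + \beta_n^{-1} = k_0 k_1 q^{n+1} + (k_0 k_1 q^{n+1})^{-1}$; the row $\A F^{+} u_{n-1}$ is just the general even-index computation specialized by $u_{n+1}=0$, so that no collapse is needed and $F^{+} u_n$ appears as a basis vector.

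The main obstacle is purely coefficient bookkeeping. After substituting $\beta_{2r} = k_0 k_1 q^{2r}$, $\beta_{2r+2} = k_0 k_1 q^{2r+2}$ and the proportionality constants, one simplifies the off-diagonal coefficient to the asserted closed form. In the $F^{+}$ case the numerator collapses via the identity $(1-q^2)q^{n-2r-1} - (q^{n-2r-1}-1) = 1 - q^{n-2r+1}$, producing the coefficient $-(1-q^{n-2r+1})\big/\big(k_0 k_1 q^{2r+2}(1-q^{n-2r-1})\big)$; in the $F^{-}$ case the analogous cancellation $1-q^2-(1-q^{2r+2}) = q^2(q^{2r}-1)$ yields $-(1-q^{2r})\big/\big(k_0 k_1 q^{2r}(1-q^{2r+2})\big)$. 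These agree with the stated entries. All denominators occurring in Lemma~\ref{lem:DDaFur} are nonzero by the (in)equalities of Lemmas~\ref{lem:typekr0} and \ref{lem:typekr}, so every division is legitimate, and I expect no conceptual difficulty beyond tracking parities and the three edge rows.
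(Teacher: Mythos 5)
Your proposal is correct and takes essentially the same route as the paper: the paper's proof of this lemma is exactly a routine computation from Lemma \ref{lem:DSYaction} and Lemma \ref{lem:DDaFur}, which are the two ingredients you use, and your coefficient computations (including the two boundary rows and both collapse identities) check out. Your explicit use of the commutation $\A F^{\pm}=F^{\pm}\A$ (a consequence of Lemma \ref{lem:titj}(ii), since $F^{\pm}$ are polynomials in $t_0$) to compute $\A u_{2r}$ once and then project is merely a tidy reorganization of the same bookkeeping.
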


\begin{lemma}   \label{lem:DDbAFur}    \samepage
\ifDRAFT {\rm lem:DDbAFur}. \fi
Assume that $\V$ has $X$-type \textup{\sf DDb}.
Then for $0 \leq r \leq (n-1)/2$
{\small
\begin{align*}
\A F^+ u_{2r} &=
 \left( k_2 k_3 q^{2r+1} + \frac{1}{k_2 k_3 q^{2r+1}} \right) F^+ u_{2r}
 - k_2 k_3 q^{2r+1} F^+ u_{2r+2},
\\
 \A F^- u_{2r+1} &=
 \left( k_2 k_3 q^{2r+1} + \frac{1}{ k_2 k_3 q^{2r+1} } \right) F^- u_{2r+1}
 - k_2 k_3 q^{2r+3} F^- u_{2r+3}.
\end{align*}
}
\end{lemma}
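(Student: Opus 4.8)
The plan is to exploit the fact that $\A$ commutes with $F^{\pm}$, which reduces the whole computation to the already-established action of $Y^{\pm 1}$ on $\{u_r\}_{r=0}^n$ together with a single vanishing fact about $F^-$. First I would record that $\A = Y + Y^{-1} = t_0t_1 + (t_0t_1)^{-1}$ commutes with $t_0$ by Lemma \ref{lem:titj}(ii), and that $F^{\pm}$ from \eqref{eq:defF+F-} are linear polynomials in $t_0$; hence $\A F^{\pm} = F^{\pm}\A$. Consequently $\A F^+ u_{2r} = F^+(\A u_{2r})$ and $\A F^- u_{2r+1} = F^-(\A u_{2r+1})$, so it suffices to compute $\A u_r$ in the basis $\{u_r\}_{r=0}^n$ and then apply the appropriate projection, reading off the result against the basis \eqref{eq:basisF+VF-V}.

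For the $\V(k_0)$ identity I would take $r$ even and add the two lines of Lemma \ref{lem:DDbYaction}; the $u_{r+1}$ contributions from $Y$ and $Y^{-1}$ cancel, leaving $\A u_r = (\beta_r + \beta_r^{-1})u_r - \beta_r^{-1}u_{r+2}$. Substituting $\beta_{2r} = (k_2k_3q^{2r+1})^{-1}$ from Definition \ref{def:be} and applying $F^+$ (which carries the even-indexed $u_{2r}$, $u_{2r+2}$ to the basis vectors $F^+ u_{2r}$, $F^+ u_{2r+2}$) yields the first displayed formula. For the $\V(k_0^{-1})$ identity I would take $r$ odd; the sum of the two lines of Lemma \ref{lem:DDbYaction} is $\A u_r = (\beta_r + \beta_r^{-1})u_r + (\beta_{r+2}^{-1} - \beta_r^{-1})u_{r+1} - \beta_{r+2}^{-1}u_{r+2}$. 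The crucial simplification here is that $u_{r+1}$ has even index, so $F^- u_{r+1} = 0$ by Lemma \ref{lem:DDbFur}, which kills the middle term. Substituting $\beta_{2r+1} = (k_2k_3q^{2r+1})^{-1}$ and $\beta_{2r+3} = (k_2k_3q^{2r+3})^{-1}$ and applying $F^-$ then gives the second displayed formula, with off-diagonal coefficient $-\beta_{2r+3}^{-1} = -k_2k_3q^{2r+3}$.

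Two bookkeeping points will need attention rather than any genuine difficulty. First, at the top of the range $r = (n-1)/2$ the terms $F^+ u_{2r+2}$ and $F^- u_{2r+3}$ involve $u_{n+1}$ and $u_{n+2}$, which vanish by Lemma \ref{lem:zero}; thus the stated off-diagonal terms disappear automatically there and no separate boundary case is needed in the statement. Second, I must track the parity-dependent definition of $\beta_r$ in type \textup{\sf DDb} carefully, since it is precisely the pairing of $\beta_{2r+1}$ (odd) with $\beta_{2r+3}$ (odd) that produces the clean coefficients. I expect the only error-prone step to be checking that the two $u_{r+1}$-cancellations occur exactly as claimed—in the $F^+$ case by direct cancellation inside $\A u_r$, and in the $F^-$ case through $F^- u_{r+1} = 0$—so I would verify those explicitly and treat the remaining substitution of $\beta_r$-values as routine.
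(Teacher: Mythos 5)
Your proposal is correct and is essentially the paper's own proof: the paper declares this lemma to be ``routinely obtained'' from Lemmas \ref{lem:DDbYaction} and \ref{lem:DDbFur}, and your computation is exactly that routine derivation, with all coefficients checking out ($\beta_{2r}=\beta_{2r+1}=(k_2k_3q^{2r+1})^{-1}$, $\beta_{2r+3}=(k_2k_3q^{2r+3})^{-1}$ in Definition \ref{def:be}, type \textup{\sf DDb}) and the boundary terms at $r=(n-1)/2$ vanishing via Lemma \ref{lem:zero}. Your use of the commutation $\A F^{\pm}=F^{\pm}\A$ (valid since $\A$ commutes with $t_0$ by Lemma \ref{lem:titj}(ii) and $F^{\pm}$ are polynomials in $t_0$) is a mild but genuine streamlining: it lets you invoke only the vanishing $F^-u_{2r+2}=0$ from Lemma \ref{lem:DDbFur}, rather than the full expansion of $F^{\pm}u_s$ in the $u$-basis that a direct computation would require.
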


\begin{lemma}   \label{lem:SSaAFur}    \samepage
\ifDRAFT {\rm lem:SSaAFur}. \fi
Assume that $\V$ has $X$-type \textup{\sf SSa}.
Then for $0 \leq r \leq (n-1)/2$
{\small
\begin{align*}
\A F^+ u_{2r} &=
 \left( k_0 k_1 q^{2r} + \frac{1}{k_0 k_1 q^{2r}} \right) F^+ u_{2r}
 - k_0 k_1 q^{2r+2} F^+ u_{2r+2},
\\
 \A F^- u_{2r+1} &=
 \left( k_0 k_1 q^{2r+2} + \frac{1}{ k_0 k_1 q^{2r+2} } \right) F^- u_{2r+1}
 - k_0 k_1 q^{2r+2} F^- u_{2r+3}.
\end{align*}
}
\end{lemma}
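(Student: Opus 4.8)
The plan is to exploit the fact that $\A=Y+Y^{-1}$ commutes with $t_0$, which follows from Lemma~\ref{lem:titj}(ii) since $\A=t_0t_1+(t_0t_1)^{-1}$. Because each of $F^+$, $F^-$ is a linear polynomial in $t_0$ by \eqref{eq:defF+F-}, the element $\A$ commutes with $F^{\pm}$. Consequently $\A F^{\pm}u_r=F^{\pm}\A u_r$, and the computation reduces to first expressing $\A u_r$ as a linear combination of the basis $\{u_r\}_{r=0}^n$ and then pushing $F^{\pm}$ through that combination. I would treat the two displayed formulas separately: the first by applying $F^+$ to $\A u_{2r}$, the second by applying $F^-$ to $\A u_{2r+1}$.

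For the first formula I would compute $\A u_{2r}=Yu_{2r}+Y^{-1}u_{2r}$ from the $X$-type \textup{\sf SSa} action of $Y^{\pm1}$ in Lemma~\ref{lem:SSaYaction}, with the scalars $\beta_r$ read off from Definition~\ref{def:be}. The even-index cases present $\A u_{2r}$ as a combination of $u_{2r}$, $u_{2r+1}$, $u_{2r+2}$, whose diagonal coefficient collapses to $k_0k_1q^{2r}+(k_0k_1q^{2r})^{-1}$. The crucial point is that applying $F^+$ annihilates the $u_{2r+1}$ term, since $F^+u_{2r+1}=0$ by the odd case of Lemma~\ref{lem:SSaFur}; what survives are the basis vectors $F^+u_{2r}$ and $F^+u_{2r+2}$, producing the asserted lower bidiagonal shape with off-diagonal coefficient $\beta_{2r+2}^{-1}=k_0k_1q^{2r+2}$. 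For the second formula, the same input shows that the intermediate vector $u_{2r+2}$ already cancels at the level of $\{u_r\}$, giving $\A u_{2r+1}=(\beta_{2r+1}+\beta_{2r+1}^{-1})u_{2r+1}-\beta_{2r+1}^{-1}u_{2r+3}$; then applying $F^-$ and using $F^-u_{2r+1}=u_{2r+1}$, $F^-u_{2r+3}=u_{2r+3}$ from the odd case of Lemma~\ref{lem:SSaFur} returns the stated expression with $\beta_{2r+1}^{-1}=k_0k_1q^{2r+2}$.

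The boundary cases are handled uniformly by the convention $u_{n+1}=u_{n+2}=0$ from Lemma~\ref{lem:zero}: at $r=(n-1)/2$ the off-diagonal vectors $F^+u_{n+1}$ and $F^-u_{n+2}$ vanish, so the terminal equations acquire no spurious terms, and the index ranges match the bases for $\V(k_0^{\pm1})$ recorded in Lemma~\ref{lem:F+VF-Vbasis}. I expect no conceptual obstacle; the only real care is bookkeeping — correctly substituting the parity-dependent values of $\beta_r$ and keeping track of which of the two cancellation mechanisms (annihilation by $F^+$ in the even case versus intrinsic cancellation in the $u$-basis in the odd case) is responsible for the bidiagonal form in each formula.
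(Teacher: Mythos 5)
Your proposal is correct and is essentially the paper's own proof: the paper obtains this lemma as a routine computation from Lemmas \ref{lem:SSaYaction} and \ref{lem:SSaFur}, which is exactly what you carry out, and your coefficients ($\beta_{2r}+\beta_{2r}^{-1}$ and $-\beta_{2r+2}^{-1}$ in the even case, $\beta_{2r+1}+\beta_{2r+1}^{-1}$ and $-\beta_{2r+1}^{-1}$ in the odd case, with the parity-dependent values of $\beta_r$ from Definition \ref{def:be}) all agree with the stated formulas, including the boundary behavior via $u_r=0$ for $r>n$. Your use of the commutation $\A F^{\pm}=F^{\pm}\A$ (valid since $\A$ commutes with $t_0$ by Lemma \ref{lem:titj}(ii) and $F^{\pm}$ are polynomials in $t_0$) is a tidy way to organize the same computation rather than a genuinely different argument.
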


\begin{lemma}   \label{lem:SSbAFur}    \samepage
\ifDRAFT {\rm lem:SSbAFur}. \fi
Assume that $\V$ has $X$-type \textup{\sf SSb}.
Then for $0 \leq r \leq (n-1)/2$
{\small
\begin{align*}
\A F^+ u_{2r} &=
 \left( k_2 k_3 q^{2r+1} + \frac{1}{ k_2 k_3 q^{2r+1} } \right) F^+ u_{2r}
 - \frac{1}{ k_2 k_3 q^{2r+1} } F^+ u_{2r+2},
\\
\A F^- u_{2r} &=
 \left( k_2 k_3 q^{2r+1} + \frac{1}{ k_2 k_3 q^{2r+1} } \right) F^- u_{2r}
 - \frac{1}{ k_2 k_3 q^{2r+1} } F^- u_{2r+2}.
\end{align*}
}
\end{lemma}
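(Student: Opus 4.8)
The plan is to exploit the fact that $\A$ commutes with the projections $F^{\pm}$, which reduces the whole computation to the action of $\A$ on the vectors $u_{2r}$ alone. Recall from \eqref{eq:defF+F-} that $F^{+}$ and $F^{-}$ are degree-one polynomials in $t_0$, and that $\A = Y + Y^{-1} = t_0 t_1 + (t_0 t_1)^{-1}$ commutes with $t_0$ by Lemma \ref{lem:titj}(ii). Consequently $\A$ commutes with each of $F^{+}$, $F^{-}$, so that $\A F^{+} u_{2r} = F^{+} \A u_{2r}$ and $\A F^{-} u_{2r} = F^{-} \A u_{2r}$. Thus it suffices to compute $\A u_{2r}$ and then apply $F^{\pm}$.

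First I would compute $\A u_{2r}$ from Lemma \ref{lem:SSbYaction}. For the even index $2r$ that lemma gives $Y u_{2r} = \beta_{2r}^{-1}(u_{2r} - u_{2r+1})$ and $Y^{-1} u_{2r} = \beta_{2r} u_{2r} + \beta_{2r}^{-1}(u_{2r+1} - u_{2r+2})$. Adding these, the contributions to $u_{2r+1}$ cancel, leaving
\[
 \A u_{2r} = (\beta_{2r} + \beta_{2r}^{-1})\, u_{2r} - \beta_{2r}^{-1}\, u_{2r+2}.
\]
By Definition \ref{def:be} for $X$-type \textup{\sf SSb} we have $\beta_{2r} = k_2 k_3 q^{2r+1}$, so
\[
 \A u_{2r} = \Big( k_2 k_3 q^{2r+1} + \frac{1}{k_2 k_3 q^{2r+1}} \Big) u_{2r} - \frac{1}{k_2 k_3 q^{2r+1}}\, u_{2r+2}.
\]
Applying $F^{+}$ (resp.\ $F^{-}$) to this identity and using the commutativity noted above yields exactly the two displayed formulas, since $F^{\pm} u_{2r}$ and $F^{\pm} u_{2r+2}$ are precisely the relevant basis vectors for $\V(k_0^{\pm 1})$ by Lemma \ref{lem:F+VF-Vbasis}. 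For the boundary case $r = (n-1)/2$ one has $2r+2 = n+1$, whence $u_{2r+2} = 0$ by Lemma \ref{lem:zero} and the off-diagonal term simply drops out; the stated formula remains valid under the convention $u_s = 0$ for $s > n$.

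There is no genuine obstacle here: the entire computation hinges on the single cancellation of the $u_{2r+1}$ terms in $Y u_{2r} + Y^{-1} u_{2r}$, which is forced by the shape of the formulas in Lemma \ref{lem:SSbYaction}. The only care needed is the bookkeeping of the parity-dependent definition of $\beta_r$ in Definition \ref{def:be}; since only even indices occur, one consistently uses $\beta_{2r} = k_2 k_3 q^{2r+1}$ and no parity ambiguity arises. I note that the explicit expansions in Lemma \ref{lem:SSbFur} furnish an alternative, more computational route — expand each $F^{\pm} u_{2r}$ into the $u_s$, apply $Y + Y^{-1}$ via Lemma \ref{lem:SSbYaction}, and re-collect into the $F^{\pm} u_s$ — but the commutativity argument is shorter and sidesteps having to invert those expansions.
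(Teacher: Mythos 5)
Your proof is correct, but it takes a genuinely different route from the paper. The paper obtains this lemma by brute force: it expands $F^{\pm}u_{2r}$ into the basis $\{u_s\}$ via Lemma \ref{lem:SSbFur}, applies $Y+Y^{-1}$ term by term using Lemma \ref{lem:SSbYaction}, and recollects the result into the vectors $F^{\pm}u_s$. You instead observe that $\A=t_0t_1+(t_0t_1)^{-1}$ commutes with $t_0$ by Lemma \ref{lem:titj}(ii), hence with the projections $F^{\pm}$, which are polynomials in $t_0$; this reduces everything to computing $\A u_{2r}$, where the $u_{2r+1}$ contributions of $Yu_{2r}$ and $Y^{-1}u_{2r}$ cancel (both carry coefficient $\beta_{2r}^{-1}$ with opposite signs in type \textup{\sf SSb}), giving $\A u_{2r}=(\beta_{2r}+\beta_{2r}^{-1})u_{2r}-\beta_{2r}^{-1}u_{2r+2}$ with $\beta_{2r}=k_2k_3q^{2r+1}$; applying $F^{\pm}$ finishes the proof, with the boundary case $2r+2=n+1$ handled by $u_{n+1}=0$ (Lemma \ref{lem:zero}). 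Your argument is shorter, bypasses Lemma \ref{lem:SSbFur} entirely, and makes the bidiagonal shape transparent. What the paper's heavier method buys is uniformity across all five $X$-types: in type \textup{\sf DS}, for instance, the analogous sum $\A u_{2r}$ retains a $u_{2r+1}$ term with coefficient $\beta_{2r+2}^{-1}-\beta_{2r}^{-1}\neq 0$, so pushing $F^{\pm}$ through is not enough there and one genuinely needs the $F^{\pm}u_r$ expansions (or Lemma \ref{lem:WF+WF-W}(iv)) to absorb that term; your cancellation is a special feature of \textup{\sf SSb} (and \textup{\sf DDb}), which is exactly why it works so cleanly for the lemma at hand.
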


By Lemmas \ref{lem:DSAFur}--\ref{lem:SSbAFur} we obtain the
following corollary.

\begin{corollary}   \label{cor:Aeigen}    \samepage
\ifDRAFT {\rm cor:Aeigen}. \fi
With reference to Notation \ref{notation2}, the following hold.
\begin{itemize}
\item[\rm (i)]
Consider the basis for $\V(k_0)$ from \eqref{eq:basisF+VF-V}.
With respect to this basis the matrix representing $\A$ is lower
bidiagonal with the following $(r,r)$-entry for $0 \leq r \leq d$:
\begin{equation}                                           \label{eq:Aeigen+}
\begin{array}{c|c}
\textup{\rm $X$-type of $\V$} & \text{\rm $(r,r)$-entry for $\A$} 
\\ \hline
\textup{\sf DS}, \textup{\sf DDa}, \textup{\sf SSa} &         \rule{0mm}{3ex}
k_0 k_1 q^{2r} + \frac{1}{k_0 k_1 q^{2r}}  
\\
\textup{\sf DDb}, \textup{\sf SSb} &         \rule{0mm}{3ex}
k_2 k_3 q^{2r+1} + \frac{1}{k_2 k_3 q^{2r+1} } 
\end{array}
\end{equation}
\item[\rm (ii)]
Consider the basis for $\V(k_0^{-1})$ from \eqref{eq:basisF+VF-V}.
With respect to this basis the matrix representing $\A$ is lower
bidiagonal with the following $(r,r)$-entry for $0 \leq r \leq d'$:
\begin{equation}                                           \label{eq:Aeigen-}
\begin{array}{c|c}
\textup{\rm $X$-type of $\V$} & \text{\rm $(r,r)$-entry for $\A$} 
\\ \hline
\textup{\sf DS}, \textup{\sf DDa} &         \rule{0mm}{3ex}
 k_0 k_1 q^{2r+2} + \frac{1}{k_0 k_1 q^{2r+2} }   
\\
\textup{\sf DDb}, \textup{\sf SSb}   &         \rule{0mm}{3ex}
 k_2 k_3 q^{2r+1} + \frac{1}{k_2 k_3 q^{2r+1} }  
\\
\textup{\sf SSa} &         \rule{0mm}{3ex}
 k_0 k_1 q^{2r} + \frac{1}{k_0 k_1 q^{2r} }  
\end{array}
\end{equation}
\end{itemize}
\end{corollary}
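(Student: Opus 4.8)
The plan is to obtain the claim by reading off the explicit action formulas in Lemmas \ref{lem:DSAFur}--\ref{lem:SSbAFur}, treating each of the five $X$-types \textup{\sf DS}, \textup{\sf DDa}, \textup{\sf DDb}, \textup{\sf SSa}, \textup{\sf SSb} in turn. For a fixed $X$-type I would first recall from \eqref{eq:basisF+VF-V} the prescribed basis for $\V(k_0)$ and for $\V(k_0^{-1})$. In every case this basis is an explicitly ordered list of vectors of the form $F^{\pm} u_s$ with $s$ running through an arithmetic progression, and I would index these as the $0$th through $d$th (resp.\ $0$th through $d'$th) basis vectors in the order in which $s$ increases.

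The key observation is that each of Lemmas \ref{lem:DSAFur}--\ref{lem:SSbAFur} expresses $\A$ applied to a basis vector as a scalar multiple of that same vector plus a scalar multiple of the \emph{next} basis vector in the list. For example, in the case \textup{\sf DS} (Lemma \ref{lem:DSAFur}) the formula for $\A F^+ u_{2r}$ involves only $F^+ u_{2r}$ and $F^+ u_{2r+2}$, which are consecutive members of the basis for $\V(k_0)$, and likewise $\A F^- u_{2r}$ involves only $F^- u_{2r}$ and $F^- u_{2r+2}$. Hence the coefficient of the $r$th basis vector lands on the diagonal and the coefficient of the $(r+1)$st lands on the subdiagonal, so the matrix representing $\A$ has nonzero entries only on the diagonal and subdiagonal and is lower bidiagonal. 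The diagonal $(r,r)$-entry is then just the scalar coefficient of the $r$th basis vector, which I would extract and compare with the tables \eqref{eq:Aeigen+} and \eqref{eq:Aeigen-}.

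The only points needing care are bookkeeping rather than substance, and this is where I expect the minor friction to lie. First, the starting index of the list must be tracked: for instance the basis for $\V(k_0^{-1})$ in case \textup{\sf DS} is $\{F^- u_{2r}\}_{r=1}^{n/2}$, which begins at $F^- u_2$, so the table's row index $r$ corresponds to the lemma's parameter $r+1$, producing the shifted entry $k_0k_1q^{2r+2}+\tfrac{1}{k_0k_1q^{2r+2}}$ of \eqref{eq:Aeigen-}. Second, I would verify the behavior at the top of each chain, where the index of the ``next'' basis vector would exceed the range of the basis. For \textup{\sf DS}, \textup{\sf DDb}, \textup{\sf SSa}, \textup{\sf SSb} this off-diagonal term is killed by $u_s=0$ for $s>n$ (Lemma \ref{lem:zero}) applied after $F^{\pm}$, so the last basis vector is sent to a pure scalar multiple of itself; for \textup{\sf DDa} the extra basis vector $F^+ u_n$ is handled by the separate boundary formulas in Lemma \ref{lem:DDaAFur}, which connect $F^+ u_{n-1}$ to $F^+ u_n$ and terminate the chain at $F^+ u_n$. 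Assembling these checks over all five types gives both (i) and (ii).
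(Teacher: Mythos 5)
Your approach is exactly the paper's: the paper offers no separate argument for this corollary, deriving it immediately from Lemmas \ref{lem:DSAFur}--\ref{lem:SSbAFur}, and the bookkeeping you single out (the index shift when a basis starts beyond $u_0$, the vanishing $u_s=0$ for $s>n$ at the top of each chain, and the separate boundary formulas for \textup{\sf DDa}) is precisely what is needed to read the tables off the lemmas.

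There is, however, one entry where your final step --- ``extract the diagonal coefficient and compare with the tables'' --- does not go through, and your write-up asserts agreement where there is none. For $X$-type \textup{\sf SSa}, the basis for $\V(k_0^{-1})$ is $\{F^-u_{2r+1}\}_{r=0}^{(n-1)/2}$, which starts at $r=0$, so no index shift occurs; Lemma \ref{lem:SSaAFur} then gives the $(r,r)$-entry $k_0k_1q^{2r+2}+\frac{1}{k_0k_1q^{2r+2}}$, whereas the \textup{\sf SSa} row of \eqref{eq:Aeigen-} prints $k_0k_1q^{2r}+\frac{1}{k_0k_1q^{2r}}$. This is an internal inconsistency of the paper, and the lemma, not the table, is the correct side: for odd $r$ one computes directly from Lemma \ref{lem:SSaYaction} that $\A u_r=(\beta_r+\beta_r^{-1})u_r-\beta_r^{-1}u_{r+2}$ with $\beta_r^{-1}=k_0k_1q^{r+1}$ by Definition \ref{def:be}, and $F^-u_r=u_r$ by Lemma \ref{lem:SSaFur}, giving $q^{2r+2}$; moreover $q^{2r+2}$ is the value compatible with Proposition \ref{prop:Huangdata}(ii) (where $a'=k_0q$ for \textup{\sf SSa}) and with case (iii) of Theorem \ref{thm:main} (where $a'/a=q^2$), while the printed $q^{2r}$ would force $a'=a$. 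So the \textup{\sf SSa} row of \eqref{eq:Aeigen-} should coincide with the \textup{\sf DS}, \textup{\sf DDa} row; your method actually establishes that corrected statement, and you should flag the discrepancy rather than claim the extracted entries match the table as printed.
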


\section{The action of $\B$ on $\V(k_0^{\pm 1})$}
\label{sec:Baction}

Throughout this section Notation \ref{notation} is in effect.
Assume $k_0 \neq k_0^{-1}$.
Let the subspaces $\V(k_0^{\pm 1})$ be from \eqref{eq:defVk0},
and the elements $F^{\pm}$ be from \eqref{eq:defF+F-}.
Consider the basis \eqref{eq:basisF+VF-V} for $\V(k_0^{\pm 1})$.
In this section
we obtain the action of $\B$ on this basis.
We use the normalized basis $\{u'_r\}_{r=0}^n$ from Definition \ref{def:udr}.
The following five Lemmas can be routinely obtained
using Lemmas \ref{lem:DSDDa}--\ref{lem:SSb} and \ref{lem:DSFur}--\ref{lem:SSbFur}.
For notational convenience, set $u'_r =0$ for $r < 0$.

\begin{lemma}    \label{lem:DSBFur}    \samepage
\ifDRAFT {\rm lem:DSBFur}. \fi
Assume that $\V$ has $X$-type \textup{\sf DS}.
Then for $0 \leq r \leq n/2$
\begin{align*}
 \B F^+ u'_{2r} &=
  \left(  k_0 k_3 q^{2r} + \frac{1}{k_0 k_3 q^{2r}} \right) F^+ u'_{2r}
 - \frac{1}{ k_0 k_3 q^{2r} } F^+ u'_{2r-2},
\end{align*}
and for $1 \leq r \leq n/2$
\begin{align*}
 \B F^- u'_{2r} &=
  \left(  k_0 k_3 q^{2r} + \frac{1}{k_0 k_3 q^{2r}} \right) F^- u'_{2r}
 - \frac{1}{ k_0 k_3 q^{2r} } F^- u'_{2r-2}.
\end{align*}
\end{lemma}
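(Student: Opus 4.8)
The plan is to exploit the fact that $\B = X + X^{-1}$ commutes with the projections $F^{\pm}$, so that the action of $\B$ on the basis vectors $F^{\pm} u'_{2r}$ can be read off directly from the already-established action of $X^{\pm 1}$ on $\{u'_r\}_{r=0}^n$. First I would recall from Lemma \ref{lem:titj}(ii) that $\B$ commutes with $t_0$. Since $F^+$ and $F^-$ are, by \eqref{eq:defF+F-}, degree-one polynomials in $t_0$, each of them commutes with $\B$ on $\V$. Hence $\B F^{\pm} = F^{\pm} \B$, and it suffices to compute $F^{\pm}$ applied to $\B u'_{2r}$.

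Next I would compute $\B u'_{2r}$ for even index using the even-case formulas of Lemma \ref{lem:DSDDa}. Adding
\[
 X u'_{2r} = k_0 k_3 q^{2r} u'_{2r} + \tfrac{1}{k_0 k_3 q^{2r}} (u'_{2r-1} - u'_{2r-2}), \qquad X^{-1} u'_{2r} = \tfrac{1}{k_0 k_3 q^{2r}} (u'_{2r} - u'_{2r-1}),
\]
the two $u'_{2r-1}$ terms cancel, leaving
\[
 \B u'_{2r} = \Big( k_0 k_3 q^{2r} + \tfrac{1}{k_0 k_3 q^{2r}} \Big) u'_{2r} - \tfrac{1}{k_0 k_3 q^{2r}} u'_{2r-2},
\]
where the convention $u'_r = 0$ for $r < 0$ disposes of the boundary term at $r=0$. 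Applying $F^+$ (resp.\ $F^-$) and using $\B F^{\pm} = F^{\pm} \B$ yields exactly the two displayed formulas, since $F^{\pm} u'_{2r}$ and $F^{\pm} u'_{2r-2}$ are nonzero scalar multiples of the basis vectors of \eqref{eq:basisF+VF-V}.

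For the \textup{\sf DS} case the computation is essentially bookkeeping, and the only point requiring care is the cancellation of the odd-index cross terms, which is immediate. The genuine subtlety — which is why the companion statements also invoke the formulas of Lemmas \ref{lem:DSFur}--\ref{lem:SSbFur} — arises in the cases \textup{\sf DDb}, \textup{\sf SSa}, \textup{\sf SSb}: there $\B u'_r$ produces a term whose index has the opposite parity to that of the chosen basis, and one must use the vanishing of $F^+$ (or $F^-$) on those wrong-parity vectors to discard it. In the present \textup{\sf DS} case, however, both bases for $\V(k_0^{\pm 1})$ are indexed by even $r$ and $\B u'_{2r}$ already lies in the even-index span, so no such vanishing is needed; the result follows from the commutation $\B F^{\pm} = F^{\pm} \B$ together with Lemma \ref{lem:DSDDa}.
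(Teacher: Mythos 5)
Your proof is correct, and it takes a genuinely cleaner route than the paper's. The commutation you invoke does hold: by Lemma \ref{lem:titj}(ii) the element $\B = t_3t_0 + (t_3t_0)^{-1}$ commutes with $t_0$, hence with $F^{\pm}$, which by \eqref{eq:defF+F-} are polynomials in $t_0$; and your computation of $\B u'_{2r}$ from the even-index formulas of Lemma \ref{lem:DSDDa} (the $u'_{2r-1}$ cross terms cancel) is exactly right, with the convention $u'_r=0$ for $r<0$ handling $r=0$. The paper instead proves Lemma \ref{lem:DSBFur} by direct computation using \emph{both} Lemma \ref{lem:DSDDa} and the projection formulas of Lemma \ref{lem:DSFur}: one expands $F^{\pm}u'_{2r}$ explicitly in the basis $\{u'_r\}$, applies $X^{\pm1}$ termwise, and regroups into $F^{\pm}u'$-terms. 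Your argument bypasses Lemma \ref{lem:DSFur} entirely in the \textup{\sf DS} case, reducing everything to the single identity for $\B u'_{2r}$, which already lies in the even-index span; this is shorter and explains structurally why the matrix of $\B$ is upper bidiagonal with the stated entries. What the paper's heavier method buys is uniformity across all five $X$-types: as you correctly note, in types \textup{\sf DDb}, \textup{\sf SSa}, \textup{\sf SSb} the vector $\B u'_r$ acquires a wrong-parity term, and there one genuinely needs the vanishing statements from Lemmas \ref{lem:DDbFur}--\ref{lem:SSbFur} (e.g.\ $F^-u_r=0$ for even $r$ in type \textup{\sf DDb}), so the projection formulas cannot be avoided in general. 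One cosmetic point: your remark that $F^{\pm}u'_{2r-2}$ are nonzero multiples of basis vectors fails at the boundary, since $F^-u'_0=0$ in type \textup{\sf DS} by Lemma \ref{lem:DSFur}; this is harmless, as the displayed identity holds verbatim with that term vanishing, consistent with the upper bidiagonal shape in Corollary \ref{cor:Beigen}(ii).
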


\begin{lemma}   \label{lem:DDaBFur}    \samepage
\ifDRAFT {\rm lem:DDaBFur}. \fi
Assume that $\V$ has $X$-type \textup{\sf DDa}.
Then for $0 \leq r \leq (n-1)/2$
{\small
\begin{align*}
\B F^+ u'_{2r} &=
 \left( k_0 k_3 q^{2r} + \frac{1}{k_0 k_3 q^{2r}} \right) F^+ u'_{2r}
 - \frac{1} {k_0 k_3 q^{2r} } F^+ u'_{2r-2},
\\
\B F^+ u'_n &=
  \left(  k_0 k_3 q^{n+1} + \frac{1}{ k_0 k_3 q^{n+1} } \right) F^+ u'_n
 - \frac{1-q^{n+1}}{k_0 k_3 q^{n+1}} F^+ u'_{n-1}
\\
\intertext{and for $1 \leq r \leq (n-1)/2$}
 \B F^- u'_{2r} &=
 \left( k_0 k_3 q^{2r} + \frac{1}{ k_0 k_3 q^{2r} } \right) F^- u'_{2r}
 - \frac{ 1 }{ k_0 k_3 q^{2r} } F^- u'_{2r-2}
\end{align*}
}
\end{lemma}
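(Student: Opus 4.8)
The plan is to reduce the computation of $\B$ on $\V(k_0^{\pm1})$ to the already-established action of $X^{\pm1}$ on the normalized basis, exploiting that $\B$ commutes with the projections $F^{+}$, $F^{-}$. First I would record this commutativity: since $\B=t_3t_0+(t_3t_0)^{-1}$ commutes with $t_0$ by Lemma \ref{lem:titj}(ii), and $F^{\pm}$ are polynomials in $t_0$ by \eqref{eq:defF+F-}, we have $\B F^{\pm}u'_r=F^{\pm}\B u'_r$ for all $r$. Thus it suffices to compute $\B u'_r=Xu'_r+X^{-1}u'_r$ using Lemma \ref{lem:DSDDa} (which covers $X$-type \textup{\sf DDa}) and then apply $F^{\pm}$, re-expressing the result in the basis \eqref{eq:basisF+VF-V}.

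For the generic formulas I would note that for even $r$ the two contributions from Lemma \ref{lem:DSDDa} combine, with the $u'_{r-1}$ terms cancelling, to give
\[
 \B u'_r=\Bigl(k_0k_3q^r+\tfrac{1}{k_0k_3q^r}\Bigr)u'_r-\tfrac{1}{k_0k_3q^r}\,u'_{r-2},
\]
which involves only even-indexed vectors. Applying $F^{+}$ (resp.\ $F^{-}$) then yields the stated formulas for $\B F^{+}u'_{2r}$ and $\B F^{-}u'_{2r}$ directly, because $F^{\pm}u'_{2r}$ and $F^{\pm}u'_{2r-2}$ are exactly the basis vectors of \eqref{eq:basisF+VF-V}; the endpoint conventions $u'_{r}=0$ for $r<0$ and $F^{-}u'_0=0$ (from Lemma \ref{lem:DDaFur}) handle the extremal indices automatically.

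The one case needing extra work is $\B F^{+}u'_n$, where $n$ is odd. Here $\B u'_n$ is a combination of $u'_n$, $u'_{n-1}$, $u'_{n-2}$, and after applying $F^{+}$ the term $F^{+}u'_{n-2}$ is \emph{not} a basis vector (its index is odd). To eliminate it I would invoke Lemma \ref{lem:DDaFur}: for $r=n-2$ (odd) and $r=n-1$ (even) the formulas there exhibit both $F^{+}u_{n-2}$ and $F^{+}u_{n-1}$ as scalar multiples of the single vector $(1-q^{n-1})u_{n-2}-u_{n-1}$, so $F^{+}u'_{n-2}$ is proportional to $F^{+}u'_{n-1}$. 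The hard part will be the bookkeeping in evaluating this proportionality constant: one must carry the cumulative normalizations $e_0\cdots e_r$ from Definition \ref{def:er} and substitute $k_0^2=q^{-n-1}$ (Lemma \ref{lem:typekr0}), which makes $1-k_0^2q^{n-1}=1-q^{-2}$ and collapses the constant to $F^{+}u'_{n-2}=(1-q^{n-1})\,F^{+}u'_{n-1}$. Feeding this back into $F^{+}\B u'_n$, the coefficient of $F^{+}u'_{n-1}$ simplifies (the $u'_{n-2}$ and $u'_{n-1}$ contributions merging) to $-(1-q^{n+1})/(k_0k_3q^{n+1})$, giving the claimed two-term formula; the diagonal entry $k_0k_3q^{n+1}+(k_0k_3q^{n+1})^{-1}$ is read off immediately. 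With the endpoint settled, the remaining cases are routine.
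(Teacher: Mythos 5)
Your proposal is correct and follows essentially the paper's route: the paper obtains this lemma "routinely" from Lemmas \ref{lem:DSDDa} and \ref{lem:DDaFur}, and your computation is a valid instantiation of exactly that, with the commutation $\B F^{\pm}=F^{\pm}\B$ (from Lemma \ref{lem:titj}(ii) and \eqref{eq:defF+F-}) serving as a clean organizing device. The key steps all check out: the cancellation of the $u'_{r-1}$ terms for even $r$, the proportionality $F^{+}u'_{n-2}=(1-q^{n-1})F^{+}u'_{n-1}$ obtained from Lemma \ref{lem:DDaFur} together with $e_{n-1}$ and $k_0^2=q^{-n-1}$, and the resulting off-diagonal coefficient $-(1-q^{n+1})/(k_0k_3q^{n+1})$ in the endpoint formula.
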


\begin{lemma}   \label{lem:DDbBFur}    \samepage
\ifDRAFT {\rm lem:DDbBFur}. \fi
Assume that $\V$ has $X$-type \textup{\sf DDb}.
Then for $0 \leq r \leq (n-1)/2$
{\small
\begin{align*}
\B F^+ u'_{2r} &=
 \left( k_0 k_3 q^{2r} + \frac{1}{k_0 k_3 q^{2r}} \right) F^+ u'_{2r}
 - \frac{1}{ k_0^3 k_3^3 q^{6r-2}} F^+ u'_{2r-2},
\\
 \B F^- u'_{2r+1} &=
 \left( k_0 k_3 q^{2r+2} + \frac{1}{ k_0 k_3 q^{2r+2} } \right) F^- u'_{2r+1}
 -\frac{1}{k_0^3 k_3^3 q^{6r+2}} F^- u'_{2r-1}.
\end{align*}
}
\end{lemma}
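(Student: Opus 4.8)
The plan is to reduce everything to the action of $\B$ on the normalized basis $\{u'_r\}_{r=0}^n$ by exploiting that $\B$ commutes with each of $F^+$, $F^-$. Indeed, by \eqref{eq:defF+F-} each of $F^\pm$ is a polynomial in $t_0$, while $\B = t_3 t_0 + (t_3 t_0)^{-1}$ commutes with $t_0$ by Lemma \ref{lem:titj}(ii); hence $\B F^\pm = F^\pm \B$. Thus I would first compute $\B u'_r = (X + X^{-1}) u'_r$ using the DDb formulas of Lemma \ref{lem:DDb}, and then simply apply $F^\pm$.

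For the even index, adding the two expressions for $X u'_{2r}$ and $X^{-1} u'_{2r}$ from Lemma \ref{lem:DDb}, the two occurrences of $u'_{2r-1}$ cancel, leaving
\[
 \B u'_{2r} = \Bigl( k_0 k_3 q^{2r} + \frac{1}{k_0 k_3 q^{2r}} \Bigr) u'_{2r} - \frac{1}{k_0^3 k_3^3 q^{6r-2}} u'_{2r-2}.
\]
Applying $F^+$ and using $\B F^+ = F^+ \B$ gives the first displayed identity immediately, since $F^+ u'_{2r}$ and $F^+ u'_{2r-2}$ are (scalar multiples of) the basis vectors in \eqref{eq:basisF+VF-V}; the boundary case $r=0$ is covered by the convention $u'_{-2}=0$.

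For the odd index, summing the DDb formulas for $X u'_{2r+1}$ and $X^{-1} u'_{2r+1}$ yields the diagonal coefficient $k_0 k_3 q^{2r+2} + (k_0 k_3 q^{2r+2})^{-1}$, the term $-(k_0^3 k_3^3 q^{6r+2})^{-1} u'_{2r-1}$, and in addition a nonzero multiple of $u'_{2r}$ arising from the mismatch of the off-diagonal contributions of $X$ and $X^{-1}$. The point is that this extra even-indexed term is annihilated by $F^-$: by Lemma \ref{lem:DDbFur} we have $F^- u_r = 0$ for even $r$, so $F^- u'_{2r} = 0$. Hence applying $F^-$ leaves only the $u'_{2r+1}$ and $u'_{2r-1}$ terms, which is the second displayed identity.

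The argument is mechanical once the commutation $\B F^\pm = F^\pm \B$ is in hand, so there is no essential obstacle; the only thing demanding care is tracking the two cross-term phenomena, namely the cancellation of the odd-indexed term in $\B u'_{2r}$ and the $F^-$-annihilation of the even-indexed term in $\B u'_{2r+1}$, together with the bookkeeping of the powers of $q$ and the factors $k_0 k_3$.
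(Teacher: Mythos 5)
Your proof is correct. The key commutation $\B F^{\pm}=F^{\pm}\B$ is legitimate: by \eqref{eq:defF+F-} each $F^{\pm}$ is a polynomial in $t_0$, and $\B=t_3t_0+(t_3t_0)^{-1}$ commutes with $t_0$ by Lemma \ref{lem:titj}(ii), a fact the paper itself records at the start of Section \ref{sec:proofmain1}. Your two computations from Lemma \ref{lem:DDb} check out: for even index the $u'_{2r-1}$ contributions of $X$ and $X^{-1}$ cancel exactly, leaving the stated diagonal coefficient and the $-(k_0^3k_3^3q^{6r-2})^{-1}u'_{2r-2}$ term, and for odd index the leftover multiple of $u'_{2r}$ is annihilated by $F^-$ since $F^-u_s=0$ for even $s$ by Lemma \ref{lem:DDbFur} (and $u'_s$ is a scalar multiple of $u_s$). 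The boundary cases are covered by the convention $u'_s=0$ for $s<0$.

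This is a mild but genuine reorganization of the paper's argument. The paper treats the lemma as a routine computation from Lemmas \ref{lem:DSDDa}--\ref{lem:SSb} together with Lemmas \ref{lem:DSFur}--\ref{lem:SSbFur}: one expands $F^{\pm}u'_r$ in the basis $\{u'_s\}$, applies $X+X^{-1}$, and recollects the result in terms of the basis \eqref{eq:basisF+VF-V}. Pushing $F^{\pm}$ through $\B$ instead lets you apply $\B$ to $u'_r$ first and project afterwards, so the even-index identity needs no expansion of $F^+u'_{2r}$ at all, and the odd-index identity needs only the single vanishing fact $F^-u'_{2r}=0$ rather than the full expansions from Lemma \ref{lem:DDbFur}. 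The cost is nil, so your version is, if anything, the cleaner way to carry out the paper's ``routine'' verification.
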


\begin{lemma}   \label{lem:SSaBFur}    \samepage
\ifDRAFT {\rm lem:SSaBFur}. \fi
Assume that $\V$ has $X$-type \textup{\sf SSa}.
Then for $0 \leq r \leq (n-1)/2$
{\small
\begin{align*}
\B F^+ u'_{2r} &=
 \left( k_1 k_2 q^{2r+1} + \frac{1}{k_1 k_2 q^{2r+1}} \right) F^+ u'_{2r}
 - \frac{1}{k_1^3 k_2^3 q^{6r-1}} F^+ u'_{2r-2},
\\
 \B F^- u'_{2r+1} &=
 \left( k_1 k_2 q^{2r+1} + \frac{1}{ k_1 k_2 q^{2r+1} } \right) F^- u'_{2r+1}
 - \frac{1}{k_1^3 k_2^3 q^{6r+1}} F^- u'_{2r-1}.
\end{align*}
}
\end{lemma}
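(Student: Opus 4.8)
The plan is to exploit the fact that $\B$ commutes with the projectors $F^{\pm}$ and then reduce everything to the already-computed actions of $X^{\pm 1}$ and $F^{\pm}$ on the normalized basis $\{u'_r\}_{r=0}^n$ of Definition \ref{def:udr}. Since $\B = X + X^{-1} = t_3 t_0 + (t_3 t_0)^{-1}$, Lemma \ref{lem:titj}(ii) shows that $\B$ commutes with $t_0$, hence with the elements $F^{\pm}$ from \eqref{eq:defF+F-}. Therefore $\B F^{\pm} u'_r = F^{\pm} \B u'_r = F^{\pm}(X + X^{-1}) u'_r$, and the whole computation amounts to substituting the SSa formulas of Lemma \ref{lem:SSa} for $X u'_r$ and $X^{-1} u'_r$ and then applying $F^{\pm}$ termwise by linearity, using Lemma \ref{lem:SSaFur}. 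Recall that by Lemma \ref{lem:F+VF-Vbasis} the relevant bases are $\{F^+ u'_{2r}\}$ for $\V(k_0)$ and $\{F^- u'_{2r+1}\}$ for $\V(k_0^{-1})$, so the target expressions \eqref{eq:basisF+VF-V} are exactly what these substitutions produce.

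First I would treat $\B F^+ u'_{2r}$, where the index $2r$ is even. Lemma \ref{lem:SSa} gives $X u'_{2r}$ and $X^{-1} u'_{2r}$ as combinations of $u'_{2r}$, $u'_{2r-1}$, $u'_{2r-2}$. Adding them, the coefficient of $u'_{2r}$ collapses to $k_1 k_2 q^{2r+1} + (k_1 k_2 q^{2r+1})^{-1}$, the coefficient of $u'_{2r-2}$ becomes $-(k_1^3 k_2^3 q^{6r-1})^{-1}$, and there is a residual term in $u'_{2r-1}$. The key point is that $2r-1$ is odd, so Lemma \ref{lem:SSaFur} gives $F^+ u'_{2r-1} = 0$; applying $F^+$ therefore annihilates the residual term while sending the surviving even-indexed vectors to the basis vectors $F^+ u'_{2r}$ and $F^+ u'_{2r-2}$, which yields the first displayed formula (the boundary case $r=0$ being consistent with the convention $u'_{-1}=u'_{-2}=0$).

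Next I would treat $\B F^- u'_{2r+1}$, where the index $2r+1$ is odd. Now Lemma \ref{lem:SSa} expresses $X u'_{2r+1}$ and $X^{-1} u'_{2r+1}$ through $u'_{2r+1}$, $u'_{2r}$, $u'_{2r-1}$. Upon adding, the two contributions to $u'_{2r}$ cancel exactly, the coefficient of $u'_{2r+1}$ is $k_1 k_2 q^{2r+1} + (k_1 k_2 q^{2r+1})^{-1}$, and the coefficient of $u'_{2r-1}$ is $-(k_1^3 k_2^3 q^{6r+1})^{-1}$. Since $2r+1$ and $2r-1$ are odd, Lemma \ref{lem:SSaFur} shows that $F^-$ fixes these odd-indexed vectors, so $F^- u'_{2r+1}$ and $F^- u'_{2r-1}$ are the intended basis vectors and applying $F^-$ gives the second displayed formula directly.

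The computation is entirely elementary; the only place demanding care is the bookkeeping of the powers of $q$ and of the cube factors $k_1^3 k_2^3 q^{6r\mp 1}$, together with verifying that the intermediate-index term drops out in each case. That vanishing is not accidental and is the main thing to check: for the $F^+$ formula it is forced by the parity rule $F^+ u'_{s} = 0$ for odd $s$ in Lemma \ref{lem:SSaFur}, while for the $F^-$ formula it comes from the exact cancellation of the even-indexed $u'_{2r}$ term between the $X$ and $X^{-1}$ contributions, after which Lemma \ref{lem:SSaFur} guarantees that $F^-$ fixes the surviving odd-indexed vectors. Once these parity facts and the single cancellation are in hand, no further obstacle remains.
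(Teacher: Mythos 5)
Your proposal is correct and follows essentially the same route as the paper, which obtains this lemma as a routine computation from the $X^{\pm 1}$-action in Lemma \ref{lem:SSa} and the $F^{\pm}$-formulas in Lemma \ref{lem:SSaFur}; your observation that $\B$ commutes with $F^{\pm}$ (via Lemma \ref{lem:titj}(ii)) is a clean way to organize exactly that computation, and your bookkeeping of the coefficients, the parity-based vanishing $F^+u'_{\rm odd}=0$, and the cancellation of the $u'_{2r}$ term in the $F^-$ case all check out.
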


\begin{lemma}   \label{lem:SSbBFur}    \samepage
\ifDRAFT {\rm lem:SSbBFur}. \fi
Assume that $\V$ has $X$-type \textup{\sf SSb}.
Then for $0 \leq r \leq (n-1)/2$
{\small
\begin{align*}
\B F^+ u'_{2r} &=
 \left( k_1 k_2 q^{2r+1} + \frac{1}{ k_1 k_2 q^{2r+1} } \right) F^+ u'_{2r}
 - \frac{1 - k_0^{-1} k_1 k_2 k_3 q^{2r+1}}{ k_1 k_2 q^{2r+1}(1-k_0^{-1}k_1k_2k_3 q^{2r-1}) } F^+ u'_{2r-2},
\\
\B F^- u'_{2r} &=
 \left( k_1 k_2 q^{2r+1} + \frac{1}{ k_1 k_2 q^{2r+1} } \right) F^- u'_{2r}
 - \frac{1-k_0k_1k_2k_3q^{2r+1}}{ k_1 k_2 q^{2r+1} (1 - k_0k_1k_2k_3 q^{2r-1}) } F^- u'_{2r-2}.
\end{align*}
}
\end{lemma}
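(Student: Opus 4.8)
The plan is to exploit that $\B=X+X^{-1}$ commutes with $t_0$ (Lemma \ref{lem:titj}(ii)), and hence commutes with the projections $F^\pm$, since by \eqref{eq:defF+F-} each of $F^\pm$ is a linear polynomial in $t_0$. Consequently $\B F^\pm u'_{2r}=F^\pm\B u'_{2r}=F^\pm(X+X^{-1})u'_{2r}$, so the whole computation reduces to evaluating $(X+X^{-1})u'_{2r}$ on the normalized basis and then pushing $F^\pm$ through. This commutativity also explains at the outset why $\B$ preserves each of $\V(k_0^{\pm1})$, so the result is an honest statement about operators on those subspaces.

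First I would compute $\B u'_{2r}$ using the SSb formulas for $X^{\pm1}$ in Lemma \ref{lem:SSb}. Since $2r$ is even, adding the two displayed lines and combining the two off-diagonal contributions $(k_1k_2q^{2r-1})^{-1}$ and $-(k_1k_2q^{2r+1})^{-1}$ gives
\[
 \B u'_{2r}=\Big(k_1k_2q^{2r+1}+\tfrac{1}{k_1k_2q^{2r+1}}\Big)u'_{2r}
 +\tfrac{q^{-2r}(q-q^{-1})}{k_1k_2}\,u'_{2r-1}-\tfrac{1}{k_1k_2q^{2r-1}}\,u'_{2r-2}.
\]
Applying $F^\pm$ term by term then produces a combination of $F^\pm u'_{2r}$, $F^\pm u'_{2r-1}$ and $F^\pm u'_{2r-2}$, matching the a priori upper tridiagonal shape recorded in Note \ref{note:Bactonur}.

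Second I would eliminate the intermediate odd-indexed term. The SSb formulas of Lemma \ref{lem:SSbFur} exhibit both $F^\pm u_{2r-1}$ and $F^\pm u_{2r-2}$ as scalar multiples of the single vector $(1-k_0^{\mp1}k_1k_2k_3q^{2r-1})u_{2r-2}-u_{2r-1}$; this is the algebraic shadow of the fact that $\mu_{2r-2},\mu_{2r-1}$ form a single bond, so $F^\pm$ collapses $\V_X(\mu_{2r-2})+\V_X(\mu_{2r-1})$ onto a one-dimensional space (Lemma \ref{lem:WF+WF-W}). Hence $F^\pm u'_{2r-1}$ is a known scalar times $F^\pm u'_{2r-2}$, once the normalization ratio $e_{2r-1}$ of Definitions \ref{def:er} and \ref{def:udr} is taken into account. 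Substituting this relation removes $F^\pm u'_{2r-1}$ from the expression above, leaving only the diagonal term $F^\pm u'_{2r}$ together with a multiple of $F^\pm u'_{2r-2}$, which is the asserted upper bidiagonal shape. The remaining task is to check that the collected coefficient of $F^\pm u'_{2r-2}$ simplifies to $-\tfrac{1-k_0^{\mp1}k_1k_2k_3q^{2r+1}}{k_1k_2q^{2r+1}(1-k_0^{\mp1}k_1k_2k_3q^{2r-1})}$, which is a direct algebraic simplification using the definition of the $e_r$.

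The main obstacle I anticipate is purely bookkeeping: correctly tracking the normalization ratios $e_r$ when passing between the $u_r$-form of Lemma \ref{lem:SSbFur} and the $u'_r$-form used in Lemma \ref{lem:SSb}, and verifying the cancellation that collapses the tridiagonal action to bidiagonal. No conceptual difficulty arises, since commutativity of $\B$ with $F^\pm$ guarantees $\B$ preserves $\V(k_0^{\pm1})$, and the single-bond structure forces the odd-indexed projections into the even-indexed basis; once these two facts are in place the identity is determined by matching coefficients, and the parallel $F^-$ computation differs only by replacing each $k_0$ with $k_0^{-1}$ in the single vector above.
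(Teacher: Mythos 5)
Your proposal is correct and is in substance the paper's own proof: the paper gives no details for Lemma \ref{lem:SSbBFur}, saying only that it ``can be routinely obtained using Lemmas \ref{lem:DSDDa}--\ref{lem:SSb} and \ref{lem:DSFur}--\ref{lem:SSbFur},'' and your computation is exactly such a routine derivation from the \textup{\sf SSb} instances of those lemmas, with the commutation $\B F^{\pm}=F^{\pm}\B$ (a consequence of Lemma \ref{lem:titj}(ii), also invoked by the paper in Section \ref{sec:proofmain1}) serving as a clean way to organize it. I checked your intermediate steps: the expansion of $\B u'_{2r}$ from Lemma \ref{lem:SSb}, the collapse relation $F^{\pm}u'_{2r-1}=\bigl(1-k_0^{\mp 1}k_1k_2k_3q^{2r-1}\bigr)^{-1}F^{\pm}u'_{2r-2}$ obtained from Lemma \ref{lem:SSbFur} together with $e_{2r-1}$ of Definition \ref{def:er}, and the final coefficient simplification all hold, so the argument goes through as written.
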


By Lemmas \ref{lem:DSBFur}--\ref{lem:SSbBFur} we obtain the following
corollary.

\begin{corollary}   \label{cor:Beigen}    \samepage
\ifDRAFT {\rm cor:Beigen}. \fi
With reference to Notation \ref{notation2},
the following hold.
\begin{itemize}
\item[\rm (i)]
Consider the basis for $\V(k_0)$ from \eqref{eq:basisF+VF-V}.
With respect to this basis the matrix representing $\B$ is upper
bidiagonal with the following $(r,r)$-entry for $0 \leq r \leq d$:
\begin{equation}                  \label{eq:Beigen+}
 \begin{array}{c|c}
  \textup{\rm $X$-type of $\V$} & \text{\rm $(r,r)$-entry for $\B$}
\\ \hline
  \textup{\sf DS}, \textup{\sf DDa}, \textup{\sf DDb} &   \rule{0mm}{3ex}
  k_0 k_3 q^{2r} + \frac{1}{k_0 k_3 q^{2r} }
\\
  \textup{\sf SSa}, \textup{\sf SSb} &   \rule{0mm}{2.5ex}
  k_1 k_2 q^{2r+1} + \frac{1}{k_1 k_2 q^{2r+1} }
 \end{array}
\end{equation}
\item[\rm (ii)]
Consider the basis for $\V(k_0^{-1})$ from \eqref{eq:basisF+VF-V}.
With respect to this basis the matrix representing $\B$ is upper
bidiagonal with the following $(r,r)$-entry for $0 \leq r \leq d'$:
\begin{equation}                      \label{eq:Beigen-}
 \begin{array}{c|c}
  \textup{\rm $X$-type of $\V$} & \text{\rm $(r,r)$-entry for $\B$}
\\ \hline
  \textup{\sf DS}, \textup{\sf DDa}, \textup{\sf DDb} &   \rule{0mm}{3ex}
  k_0 k_3 q^{2r+2} + \frac{1}{k_0 k_3 q^{2r+2} }
\\
  \textup{\sf SSa}, \textup{\sf SSb} &  \rule{0mm}{2.5ex}
  k_1 k_2 q^{2r+1} + \frac{1}{k_1 k_2 q^{2r+1} }
 \end{array}
\end{equation}
\end{itemize}
\end{corollary}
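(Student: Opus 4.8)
The plan is to obtain Corollary \ref{cor:Beigen} as a direct consequence of the five preceding Lemmas \ref{lem:DSBFur}--\ref{lem:SSbBFur}, which record the action of $\B$ on the relevant basis vectors for each $X$-type. For a fixed $X$-type, the corresponding lemma expresses $\B$ applied to a basis vector (such as $F^+ u'_{2r}$) as a scalar multiple of that same vector plus a scalar multiple of the basis vector with index lowered by one step (such as $F^+ u'_{2r-2}$). With respect to the ordered basis for $\V(k_0)$ (resp.\ $\V(k_0^{-1})$), this says that the column of the matrix representing $\B$ indexed by a given basis vector has nonzero entries only in the diagonal position and the position immediately above it, which is exactly the statement that the matrix is upper bidiagonal.

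First I would read off, for each of the cases \textup{\sf DS}, \textup{\sf DDa}, \textup{\sf DDb}, \textup{\sf SSa}, \textup{\sf SSb}, the coefficient of the ``fixed'' basis vector in the displayed formula; these coefficients are precisely the diagonal entries recorded in \eqref{eq:Beigen+} and \eqref{eq:Beigen-}. For the $\V(k_0)$ basis the diagonal entries come out as $k_0 k_3 q^{2r} + (k_0 k_3 q^{2r})^{-1}$ in the \textup{\sf D}-cases and $k_1 k_2 q^{2r+1} + (k_1 k_2 q^{2r+1})^{-1}$ in the \textup{\sf S}-cases, matching the two lines of the table; the $\V(k_0^{-1})$ entries are read off analogously, with the index shift in \textup{\sf DS}, \textup{\sf DDa} producing $q^{2r+2}$ in place of $q^{2r}$.

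The one point that requires attention is the normalization. The basis for $\V(k_0^{\pm 1})$ in \eqref{eq:basisF+VF-V} is built from the unnormalized vectors $\{u_r\}_{r=0}^n$ of Definition \ref{def:ur}, whereas Lemmas \ref{lem:DSBFur}--\ref{lem:SSbBFur} are phrased in terms of the rescaled vectors $\{u'_r\}_{r=0}^n$ of Definition \ref{def:udr}, where $u'_r = e_0 e_1 \cdots e_r\, u_r$ and each $e_r$ is nonzero. Since $F^{\pm}$ is linear, $F^{\pm} u'_r$ is a nonzero scalar multiple of $F^{\pm} u_r$, so the two bases differ only by a diagonal rescaling. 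A diagonal change of basis preserves the upper bidiagonal shape and leaves the diagonal entries unchanged; hence the conclusion stated for $\{u_r\}$ follows from the lemmas stated for $\{u'_r\}$.

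I do not anticipate any genuine obstacle here: the full content of the corollary is already contained in the five lemmas, and the only nonroutine checks are verifying that the index of the off-diagonal term always \emph{decreases} (so that the nonzero off-diagonal entry sits on the superdiagonal rather than the subdiagonal) and carrying out the harmless normalization bookkeeping described above.
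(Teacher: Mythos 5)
Your proposal is correct and follows essentially the same route as the paper, which deduces the corollary directly from Lemmas \ref{lem:DSBFur}--\ref{lem:SSbBFur}. Your extra care about the normalization $u'_r = e_0 e_1 \cdots e_r\, u_r$ and the placement of the off-diagonal entry on the superdiagonal is exactly the routine bookkeeping the paper leaves implicit, and your diagonal-rescaling argument for why the shape and diagonal entries are unaffected is sound.
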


\section{The equitable Askey-Wilson relations}
\label{sec:AW}

In this section we recall some relations concerning a Leonard
pair of $q$-Racah type.

\begin{lemma}  {\rm (See \cite[Theorem 10.1]{Hu}.)} \label{lem:Z4AW} \samepage
\ifDRAFT {\rm lem:Z4AW}. \fi
Let $V$ denote a vector space over $\F$ with dimension $d+1$, $d \geq 0$.
Let $A,A^*$ denote a Leonard pair on $V$ that has $q$-Racah type.
Let $(a,b,c,d)$ denote a Huang data of $A,A^*$.
Then there exists a unique $\F$-linear transformation $A^\ve : V \to V$ such that
\begin{align}
 A + \frac{q A^* A^\ve - q^{-1}A^\ve A^*}{q^2-q^{-2}} &= 
      \frac{(q^{d+1}+q^{-d-1})(a+a^{-1}) + (b+b^{-1}) (c+c^{-1})}{q+q^{-1}} I,     \label{eq:AW1} \\
 A^* + \frac{q A^\ve A - q^{-1}A A^\ve}{q^2-q^{-2}} &=
      \frac{(q^{d+1}+q^{-d-1}) (b+b^{-1}) + (c+c^{-1})(a+a^{-1})} {q+q^{-1}} I,       \label{eq:AW2} \\
 A^\ve + \frac{q A A^* - q^{-1}A^* A}{q^2-q^{-2}} &=
       \frac{(q^{d+1}+q^{-d-1})(c+c^{-1}) + (a+a^{-1})(b+b^{-1})}{q+q^{-1}} I.        \label{eq:AW3}
\end{align}
\end{lemma}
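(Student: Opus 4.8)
The plan is to treat relation \eqref{eq:AW3} as the \emph{definition} of $A^\ve$ and then show that the resulting operator automatically satisfies \eqref{eq:AW1} and \eqref{eq:AW2}. Write $\omega_1$, $\omega_2$, $\omega_3$ for the scalars on the right-hand sides of \eqref{eq:AW1}, \eqref{eq:AW2}, \eqref{eq:AW3}. Solving \eqref{eq:AW3} for $A^\ve$ forces
\[
 A^\ve = \omega_3 I - \frac{q A A^* - q^{-1} A^* A}{q^2-q^{-2}},
\]
so any $A^\ve$ obeying \eqref{eq:AW3} must equal this expression; this gives uniqueness immediately, and it gives existence provided this particular $A^\ve$ also satisfies \eqref{eq:AW1} and \eqref{eq:AW2}. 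Thus the entire lemma reduces to checking those two relations for the operator just displayed.

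Next I would substitute the displayed formula for $A^\ve$ into the left-hand side of \eqref{eq:AW1} and simplify. The scalar part $\omega_3 I$ contributes a multiple of $A^*$, and the commutator part contributes the triple products $A^* A A^*$, $A^{*2} A$, $A A^{*2}$; a short computation collapses the left-hand side into
\[
 A + \frac{(q-q^{-1})\omega_3}{q^2-q^{-2}}A^* - \frac{(q^2+q^{-2}) A^* A A^* - A^{*2}A - A A^{*2}}{(q^2-q^{-2})^2}.
\]
Setting this equal to $\omega_1 I$ and clearing denominators turns \eqref{eq:AW1} into an identity purely among $A$ and $A^*$, namely the Askey--Wilson relation
\[
 A^{*2} A - (q^2+q^{-2}) A^* A A^* + A A^{*2} = \delta A^* + \eta A + \zeta I
\]
for explicit scalars $\delta,\eta,\zeta$ built from $\omega_1,\omega_3$ and $q$. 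In the same way \eqref{eq:AW2} becomes the companion Askey--Wilson relation with the roles of $A$ and $A^*$ interchanged (the outer squared factor now being $A$). So it remains only to prove that a Leonard pair of $q$-Racah type with Huang data $(a,b,c,d)$ satisfies this pair of Askey--Wilson relations with the predicted coefficients.

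To verify these two relations I would pass to the split basis of Lemma \ref{lem:split}, in which $A$ is lower bidiagonal and $A^*$ is upper bidiagonal, with diagonal entries $\th_r$, $\th^*_r$ and off-diagonal entries $1$, $\vphi_r$. By Lemma \ref{lem:qRacah} these entries are explicit Laurent expressions in $q^r$ and $a,b,c$. Each side of an Askey--Wilson relation is then a tridiagonal matrix, so it suffices to match the sub-, super-, and main-diagonal entries; each match reduces to a polynomial identity in the single quantity $q^r$ (with $a,b,c,q,d$ as parameters), checkable by comparing coefficients. Moreover the two relations are interchanged by the evident symmetry $A\leftrightarrow A^*$ of the split-basis setup, under which $\th_r\leftrightarrow\th^*_r$ and $\omega_1\leftrightarrow\omega_2$ (while $\omega_3$ is fixed, since the scalars in \eqref{eq:AW1}--\eqref{eq:AW3} are symmetric in $a$ and $b$ in exactly this pattern); hence only one relation need be checked by hand.

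The main obstacle is the entrywise verification in this last step: because $\vphi_r$ and $\phi_r$ are products of four linear factors in $q^r$, the triple products $A^{*2}A$, $A^* A A^*$, $A A^{*2}$ produce long expressions, and confirming that their $(r-1,r)$-, $(r,r)$- and $(r+1,r)$-entries reduce exactly to the stated scalars $\delta,\eta,\zeta$ is where the real work lies. The $A\leftrightarrow A^*$ symmetry halves the labor, and the bidiagonality of $A$, $A^*$ keeps the number of nonzero entries small, so the identities to be checked form a manageable finite list of polynomial equations in $q^r$; once these are confirmed, the operator defined at the outset satisfies all three relations, completing both existence and uniqueness.
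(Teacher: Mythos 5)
The paper never proves this lemma: it is imported from Huang \cite[Theorem 10.1]{Hu}, and no argument for it appears anywhere in the text. So your proposal is not competing with an internal proof; it is a plan for re-proving the cited result, and its skeleton is correct. Using \eqref{eq:AW3} as the definition of $A^\ve$ disposes of uniqueness at once, and your substitution into \eqref{eq:AW1} and \eqref{eq:AW2} is algebraically right: the lemma becomes the assertion that $A,A^*$ satisfy the two Zhedanov-type Askey--Wilson relations $A^{*2}A-(q^2+q^{-2})A^*AA^*+AA^{*2}=\delta A^*+\eta A+\zeta I$ and the companion with $A$ and $A^*$ interchanged, with coefficients determined by the right-hand scalars of \eqref{eq:AW1}--\eqref{eq:AW3}. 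Checking these in the split basis of Lemma \ref{lem:split}, with entries given by \eqref{eq:thrthsr} and \eqref{eq:vphir}, is a finite polynomial verification of exactly the kind carried out in \cite{Hu} and \cite{TV}; what the citation buys the paper is the avoidance of that computation, and what your route buys is self-containedness.

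Two details in your last step need repair. First, in the split basis the triple products $A^{*2}A$, $A^*AA^*$, $AA^{*2}$ are not tridiagonal: each has a nonzero second superdiagonal, so the left side of your Askey--Wilson relation a priori occupies four bands while the right side occupies three. You must therefore also check that the $(r,r+2)$-entries cancel; they do, because that entry equals $\vphi_{r+1}\vphi_{r+2}\bigl(\th_r-(q^2+q^{-2})\th_{r+1}+\th_{r+2}\bigr)$ and the $q$-Racah eigenvalues \eqref{eq:thrthsr} satisfy this three-term recurrence --- but the check must be part of the proof. Second, the $A\leftrightarrow A^*$ symmetry you use to halve the work is not ``evident'' from the split basis, which treats $A$ and $A^*$ asymmetrically (subdiagonal $1$'s versus superdiagonal $\vphi_r$'s); what it actually requires is that $(b,a,c,d)$ be a Huang data of the Leonard pair $A^*,A$. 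That is true --- the expression \eqref{eq:vphir} for $\vphi_r$ is invariant under $a\leftrightarrow b$, and the second split sequence transforms compatibly --- but it needs to be stated and justified, or else you should simply verify both relations directly.
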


The relations \eqref{eq:AW1}--\eqref{eq:AW3} are called the 
{\em equitable Askey-Wilson relations}.

\begin{note}
In \eqref{eq:AW1}--\eqref{eq:AW3} the right-hand side is invariant
when we replace any of $a$, $b$, $c$ by its inverse.
Therefore $A^\ve$ does not depend on the choice of Huang data.
\end{note}

\begin{lemma}    \label{lem:Z4AW2}     \samepage
\ifDRAFT {\rm lem:Z4AW2}. \fi
With reference to Lemma \ref{lem:Z4AW},
assume $d \geq 1$. 
Then $A^\ve$ is the unique $\F$-linear transformation such that
each of the following  is a scalar multiple of the identity:
\begin{align}
 A + \frac{q A^* A^\ve - q^{-1}A^\ve A^*}{q^2-q^{-2}}, & &
 A^* + \frac{q A^\ve A - q^{-1}A A^\ve}{q^2-q^{-2}}, & &
 A^\ve + \frac{q A A^* - q^{-1}A^* A}{q^2-q^{-2}}.
\end{align}
\end{lemma}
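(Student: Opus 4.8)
The plan is to separate the statement into existence and uniqueness. Existence is immediate: the transformation $A^\ve$ produced by Lemma~\ref{lem:Z4AW} satisfies \eqref{eq:AW1}--\eqref{eq:AW3}, whose right-hand sides are in particular scalar multiples of the identity, so $A^\ve$ certainly satisfies the weaker requirement of the present lemma. All the content is therefore in uniqueness, which I would establish as follows.

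Suppose $B:V\to V$ is any linear transformation such that each of the three displayed expressions, with $A^\ve$ replaced by $B$, is a scalar multiple of the identity. First I would exploit the third expression. Since $B+\frac{qAA^*-q^{-1}A^*A}{q^2-q^{-2}}$ is a scalar multiple of $I$, and the same holds for $A^\ve$ by \eqref{eq:AW3}, subtracting these two shows that $B-A^\ve$ is a scalar multiple of the identity; thus I may write $B=A^\ve+\lambda I$ for some $\lambda\in\F$. In other words the third relation pins down $B$ up to an additive scalar, and the remaining task is to force $\lambda=0$.

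Next I would substitute $B=A^\ve+\lambda I$ into the first expression. A short computation, using $q^2-q^{-2}=(q-q^{-1})(q+q^{-1})$ so that $\frac{q-q^{-1}}{q^2-q^{-2}}=\frac{1}{q+q^{-1}}$, yields
\[
A+\frac{qA^*B-q^{-1}BA^*}{q^2-q^{-2}}
=\left(A+\frac{qA^*A^\ve-q^{-1}A^\ve A^*}{q^2-q^{-2}}\right)+\frac{\lambda}{q+q^{-1}}\,A^*.
\]
By \eqref{eq:AW1} the parenthesized term is a scalar multiple of $I$, so the hypothesis that the left-hand side is a scalar multiple of $I$ forces $\frac{\lambda}{q+q^{-1}}A^*$ to be a scalar multiple of $I$ as well. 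Here $q+q^{-1}\neq 0$ because $q$ is not a root of unity, and $A^*$ is not a scalar multiple of $I$ because $A,A^*$ is a Leonard pair of diameter $d\geq 1$, hence $A^*$ is multiplicity-free with $d+1\geq 2$ distinct eigenvalues. Therefore $\lambda=0$ and $B=A^\ve$, which gives uniqueness.

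The step I expect to be the crux---though it is really a careful check rather than a genuine difficulty---is the final one: ruling out that the stray term $\frac{\lambda}{q+q^{-1}}A^*$ could be a nonzero scalar. This is precisely where the hypothesis $d\geq 1$ enters, since for $d=0$ the element $A^*$ is itself a scalar and $\lambda$ would be unconstrained, in agreement with the observation following Lemma~\ref{lem:qRacah} that $c$, and hence $A^\ve$, is not unique when $d=0$.
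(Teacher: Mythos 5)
Your proof is correct and follows essentially the same route as the paper: use the third relation to conclude that any competitor differs from $A^\ve$ by a scalar multiple of $I$, substitute into the first relation to isolate the term $\frac{\lambda}{q+q^{-1}}A^*$, and invoke $d\geq 1$ (so $A^*$ is not a scalar) to force $\lambda=0$. Your additional remarks on existence and on why the hypothesis $d\geq 1$ is essential are accurate but not needed beyond what the paper does.
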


\begin{proof}
Let $\alpha$, $\beta$, $\gamma \in \F$ denote the scalars on the right in
\eqref{eq:AW1}, \eqref{eq:AW2}, \eqref{eq:AW3}, respectively.
Assume that there exists an $\F$-linear transformation $\tilde{A}^\ve : V \to V$
and scalars $\tilde{\alpha}$, $\tilde{\beta}$, $\tilde{\gamma} \in \F$ such that
\begin{align}
 A + \frac{q A^* \tilde{A}^\ve - q^{-1} \tilde{A}^\ve A^*}{q^2-q^{-2}} 
                                                         &= \tilde{\alpha} I,              \label{eq:AW1b}    \\
 A^* + \frac{q \tilde{A}^\ve A - q^{-1}A \tilde{A}^\ve}{q^2-q^{-2}} 
                                                        &= \tilde{\beta} I,               \label{eq:AW2b}   \\
 \tilde{A}^\ve + \frac{q A A^* - q^{-1}A^* A}{q^2-q^{-2}} 
                                                    &= \tilde{\gamma} I.                  \label{eq:AW3b}
\end{align}
We show that $A^\ve = \tilde{A}^\ve$.
By \eqref{eq:AW3} and \eqref{eq:AW3b}
\begin{equation}
  \tilde{A}^\ve = A^\ve + (\tilde{\gamma} - \gamma) I.                           \label{eq:AWaux}
\end{equation}
In \eqref{eq:AW1b} eliminate $\tilde{A}^\ve$ using this, and simplify the result to find
\[
 A + \frac{q A^* A^\ve - q^{-1} A^\ve A^*}{q^2-q^{-2}} 
  + \frac{(\tilde{\gamma}-\gamma) A^*}{q+q^{-1}} = \tilde{\alpha} I.
\]
By this and \eqref{eq:AW1} 
\[
   \alpha I +  \frac{(\tilde{\gamma}-\gamma) A^*}{q+q^{-1}} = \tilde{\alpha} I,
\]
and this becomes
\[
   (\tilde{\gamma} - \gamma) A^* = (q+q^{-1})(\tilde{\alpha} - \alpha) I.         \label{eq:AWaux1}
\]
This forces $\tilde{\gamma} = \gamma$ by our assumption $d \geq 1$.
Now $A^\ve = \tilde{A}^\ve$ follows from \eqref{eq:AWaux}.
The result follows.
\end{proof}

\section{Huang data for the Leonard pairs obtained from a feasible $\Hq$-module}
\label{sec:Huang}

Throughout this section  Notation \ref{notation} is in effect.
Assume that  $\V$ is feasible.
By Theorem \ref{thm:main1} we have a Leonard pair $\A,\B$ on $\V(k_0)$
(resp.\ $\V(k_0^{-1})$) that has  $q$-Racah type.
Our target in this section is to prove the following result.

\begin{proposition}    \label{prop:Huangdata}    \samepage
\ifDRAFT {\rm prop:Huangdata}. \fi
The following hold.
\item[\rm (i)]
The Leonard pair $\A,\B$ on $\V(k_0)$ has 
a Huang data $(a,b,c,d)$ such that
\begin{equation}     \label{eq:Huangdata+}
\begin{array}{c|cccc}
 \textup{\rm $X$-type of $\V$} & a & b & c & d
\\ \hline
\textup{\sf DS} &
 k_0k_1q^{n/2} &  k_0k_3q^{n/2} &  k_0k_2 q^{n/2}  & n/2 \\
\textup{\sf DDa} &
  k_1 & k_3 & k_2 & (n+1)/2 \\
\textup{\sf DDb} &
  k_2 &  k_0 q^{-1} & k_1 &  (n-1)/2 \\
\textup{\sf SSa} &
 k_0 q^{-1} & k_2 & k_3 & (n-1)/2  \\
\textup{\sf SSb} &
 k_3 & k_1 & k_0 q^{-1} & (n-1)/2 
\end{array}
\end{equation}
\item[\rm (ii)]
The Leonard pair $\A,\B$ on $\V(k_0^{-1})$ 
has a Huang data $(a',b',c',d')$ such that
\begin{equation}     \label{eq:Huangdata-}
\begin{array}{c|cccc}
 \textup{\rm $X$-type of $\V$} & a' & b' & c' & d'
\\ \hline
\textup{\sf DS} &
 k_0k_1q^{(n+2)/2} & k_0k_3q^{(n+2)/2} & k_0k_2q^{(n+2)/2} & (n-2)/2  \\
\textup{\sf DDa} &
 k_1 & k_3 & k_2 & (n-3)/2  \\
\textup{\sf DDb} &
  k_2 &  k_0 q & k_1  & (n-1)/2  \\
\textup{\sf SSa} &
  k_0 q & k_2 & k_3 & (n-1)/2  \\
\textup{\sf SSb} &
  k_3 & k_1 & k_0 q & (n-1)/2
\end{array}
\end{equation}
\end{proposition}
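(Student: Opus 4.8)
The plan is to establish parts (i) and (ii) in parallel, treating the five $X$-types separately, and to produce the four Huang-data entries $a$, $b$, $c$, $d$ one at a time. Throughout I work on $\V(k_0)$ for part (i); the argument for $\V(k_0^{-1})$ in part (ii) is identical, with $t_0$ acting as $k_0^{-1}$ in place of $k_0$ and with $d$ replaced by $d'$. The diameter $d=\dim\V(k_0)-1$ (resp.\ $d'=\dim\V(k_0^{-1})-1$) is read off directly from Corollary \ref{cor:dim}, which already matches the last columns of \eqref{eq:Huangdata+} and \eqref{eq:Huangdata-}.

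First I would pin down $a$ and $b$. Corollary \ref{cor:Aeigen}(i) produces a basis for $\V(k_0)$ on which $\A$ is lower bidiagonal with prescribed diagonal, and Corollary \ref{cor:Beigen}(i) produces a basis on which $\B$ is upper bidiagonal with prescribed diagonal. These two bases differ only by the diagonal rescaling $u'_r=e_0\cdots e_r\,u_r$ of Definition \ref{def:udr}, which preserves both bidiagonal shapes and all diagonal entries; hence there is a single basis for $\V(k_0)$ making $\A$ lower bidiagonal and $\B$ upper bidiagonal simultaneously, with the tabulated diagonals. By Lemma \ref{lem:LBUB} the $\A$-diagonal (resp.\ $\B$-diagonal) is then a \emph{standard} ordering $\{\th_r\}_{r=0}^d$ (resp.\ $\{\th^*_r\}_{r=0}^d$) of the eigenvalues. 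Matching each entry against the $q$-Racah normal form $\alpha q^{2r-d}+\alpha^{-1}q^{d-2r}$ of Definition \ref{def:qRacah} reads off the parameters: for instance in type {\sf DS} the entry $k_0k_1q^{2r}+(k_0k_1q^{2r})^{-1}$ forces $\alpha q^{-d}=k_0k_1$, giving $a=k_0k_1q^{n/2}$, and likewise $b=k_0k_3q^{n/2}$. In the types carrying a square-root constraint ($k_0^2=q^{-n-1}$ for {\sf DDa}, $k_3^2=q^{-n-1}$ for {\sf DDb}, and so on, by Lemma \ref{lem:typekr0}) the same matching collapses to the single-variable entries of \eqref{eq:Huangdata+} once the relevant root is fixed, which is harmless since parameter sequences and Huang data are defined only up to inverting the $k_i$. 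This is a routine per-type verification.

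The heart of the matter is the value of $c$, for which I would use the equitable Askey--Wilson relations. Introduce the third element $\mathcal{C}=t_0t_2+(t_0t_2)^{-1}$ of $\Hq$: by Lemma \ref{lem:titj}(ii) it commutes with $t_0$, so it preserves $\V(k_0)$, exactly as $\A$ and $\B$ do. Assuming $d\geq1$, I would define $c$ to be the entry claimed in \eqref{eq:Huangdata+} and verify that $\A,\B,\mathcal{C}$, restricted to $\V(k_0)$, satisfy the three relations \eqref{eq:AW1}--\eqref{eq:AW3} of Lemma \ref{lem:Z4AW} with $A=\A$, $A^*=\B$, $A^\ve=\mathcal{C}$, and with right-hand sides the scalars determined by this $(a,b,c,d)$. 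Concretely it suffices to show that each of
\[
 \mathcal{C}+\frac{q\A\B-q^{-1}\B\A}{q^2-q^{-2}}, \qquad
 \A+\frac{q\B\mathcal{C}-q^{-1}\mathcal{C}\B}{q^2-q^{-2}}, \qquad
 \B+\frac{q\mathcal{C}\A-q^{-1}\A\mathcal{C}}{q^2-q^{-2}}
\]
acts on $\V(k_0)$ as a scalar, and that these scalars equal the right-hand sides of \eqref{eq:AW3}, \eqref{eq:AW1}, \eqref{eq:AW2} respectively for the tabulated $(a,b,c,d)$. By the uniqueness in Lemma \ref{lem:Z4AW2}, this simultaneously identifies $\mathcal{C}|_{\V(k_0)}$ with the canonical transformation $A^\ve$ of the Leonard pair and forces $(a,b,c,d)$ to be a Huang data in the sense of Definition \ref{def:Huangdata}. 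The degenerate case $d=0$ is immediate, since then $\V(k_0)$ is one-dimensional and any nonzero scalar is a valid $c$.

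The main obstacle is precisely this last scalar verification. The clean route is to establish the three Askey--Wilson identities at the level of $\Hq$ itself, from Definition \ref{def:Hq} and the commutation identities of Section \ref{sec:auto}, with right-hand sides that are \emph{polynomial in the central quantities $T_1,T_2,T_3$ together with $t_0$ itself}. The explicit occurrence of $t_0$ (not merely $T_0=t_0+t_0^{-1}$) is essential: it is what lets the restricted scalars, and hence the Huang data, differ between $\V(k_0)$ (where $t_0=k_0$) and $\V(k_0^{-1})$ (where $t_0=k_0^{-1}$), thereby reproducing the shift from \eqref{eq:Huangdata+} to \eqref{eq:Huangdata-}. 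Once such identities are in hand, I would substitute $t_0\mapsto k_0$ and $T_i\mapsto k_i+k_i^{-1}$, simplify the resulting scalar against the already-known values of $a$, $b$ and $d$, and solve \eqref{eq:AW3} for $c+c^{-1}$; the per-type simplification, using the pivot relation of Lemma \ref{lem:typekr0}, yields $c$ up to inverse equal to the entry in \eqref{eq:Huangdata+}, and the analogous computation with $d'$ from Corollary \ref{cor:dim} gives the entries of \eqref{eq:Huangdata-}. I expect the bookkeeping of these scalar matchings across all five types to be the most laborious, though conceptually straightforward, step.
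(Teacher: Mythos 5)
Your proposal is correct and takes essentially the same route as the paper: the diameters come from Corollary \ref{cor:dim}, the parameters $a$, $b$ come from the lower/upper-bidiagonal bases of Corollaries \ref{cor:Aeigen} and \ref{cor:Beigen} together with Lemma \ref{lem:LBUB}, and $c$ is pinned down via the element $t_0t_2+(t_0t_2)^{-1}$ acting on $\V(k_0^{\pm 1})$ and the uniqueness statement of Lemma \ref{lem:Z4AW2}. The $\Hq$-level Askey--Wilson identities you propose to derive (with $t_0$ itself, not merely $T_0$, on the right-hand side) are precisely the paper's Lemma \ref{lem:ABC}, quoted from \cite{T:AWDAHA}, and your scalar-matching step for $c+c^{-1}$ is the paper's Lemma \ref{lem:L3Ld3} combined with the uniqueness clause of Lemma \ref{lem:qRacah}.
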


\begin{lemma}    \label{lem:thr}   \samepage
\ifDRAFT {\rm lem:thr}. \fi
Define $d = \dim \V(k_0)-1$ and $d' = \dim \V(k_0^{-1})-1$.
\begin{itemize}
\item[\rm (i)]
Consider the Leonard pair $\A,\B$ on $\V(k_0)$.
Define scalars $\{\th_r\}_{r=0}^d$ as follows:
\begin{equation}   \label{eq:defthr}
\begin{array}{c|c}
\textup{\rm $X$-type of $\V$} & \text{\rm Definition of $\th_r$}
\\ \hline
\textup{\sf DS}, \textup{\sf DDa}, \textup{\sf SSa}   \rule{0mm}{3ex}
  &   k_0 k_1 q^{2r} + \frac{1}{k_0 k_1 q^{2r} }  
\\
\textup{\sf DDb}, \textup{\sf SSb}   \rule{0mm}{2.5ex}
& k_2 k_3 q^{2r+1} + \frac{1}{k_2 k_3 q^{2r+1} }
\end{array}
\end{equation}
Then $\{\th_r\}_{r=0}^d$ is a standard ordering of the eigenvalues of $\A$.
\item[\rm (i)]
Consider the Leonard pair $\A,\B$ on $\V(k_0^{-1})$.
Define scalars $\{\th'_r\}_{r=0}^d$ as follows:
\begin{equation}   \label{eq:defthdr}
\begin{array}{c|c}
\textup{\rm $X$-type of $\V$} & \text{\rm Definition of $\th'_r$}
\\ \hline
\textup{\sf DS}, \textup{\sf DDa}     \rule{0mm}{3ex}
  &   k_0 k_1 q^{2r+2} + \frac{1}{k_0 k_1 q^{2r+2} }  
\\
\textup{\sf DDb}, \textup{\sf SSb}      \rule{0mm}{2.5ex}
& k_2 k_3 q^{2r+1} + \frac{1}{k_2 k_3 q^{2r+1} }
\\
\textup{\sf SSa}                       \rule{0mm}{2.5ex}
 & k_0 k_1 q^{2r} + \frac{1}{k_0 k_1 q^{2r} }
\end{array}
\end{equation}
Then $\{\th'_r\}_{r=0}^d$ is a standard ordering of the eigenvalues of $\A$.
\end{itemize}
\end{lemma}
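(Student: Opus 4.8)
The plan is to recognize Lemma \ref{lem:thr} as an immediate consequence of the bidiagonal actions computed in Sections \ref{sec:Aaction} and \ref{sec:Baction}, combined with the characterization of standard orderings in Lemma \ref{lem:LBUB}. First I would recall that since $\V$ is feasible, Theorem \ref{thm:main1} guarantees that $\A,\B$ is a Leonard pair of $q$-Racah type on each of $\V(k_0)$ and $\V(k_0^{-1})$, so the notion of a standard ordering of the eigenvalues of $\A$ makes sense on these subspaces.

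Next I would observe that the scalars $\{\th_r\}_{r=0}^d$ defined in \eqref{eq:defthr} are \emph{by construction} exactly the diagonal entries \eqref{eq:Aeigen+} produced in Corollary \ref{cor:Aeigen}(i): with respect to the basis of $\V(k_0)$ from \eqref{eq:basisF+VF-V}, the matrix representing $\A$ is lower bidiagonal with $(r,r)$-entry $\th_r$. Likewise the scalars in \eqref{eq:defthdr} match \eqref{eq:Aeigen-} of Corollary \ref{cor:Aeigen}(ii) for $\V(k_0^{-1})$. To invoke Lemma \ref{lem:LBUB} I need in addition a single basis with respect to which $\B$ is \emph{upper} bidiagonal, and this is supplied by Corollary \ref{cor:Beigen}, which records the upper bidiagonal action of $\B$ on the same family of subspaces.

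The one point requiring care, and the step I expect to be the main (though minor) obstacle, is that Corollary \ref{cor:Aeigen} is phrased using the unnormalized vectors $F^{\pm} u_{2r}$ while the computation behind Corollary \ref{cor:Beigen} uses the normalized vectors $F^{\pm} u'_{2r}$ from Definition \ref{def:udr}. Since $u'_r = e_0 e_1 \cdots e_r\, u_r$ is a nonzero scalar multiple of $u_r$, the two bases of $\V(k_0^{\pm 1})$ differ only by a diagonal rescaling of the basis vectors. Such a rescaling sends the $(r,s)$-entry of a matrix to a nonzero scalar multiple of itself, hence preserves both the lower bidiagonal shape of $\A$ and the upper bidiagonal shape of $\B$, and leaves every diagonal entry unchanged. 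I would record this remark explicitly so that the two corollaries may be read in one common basis.

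Therefore, fixing the single basis of $\V(k_0)$ (resp.\ $\V(k_0^{-1})$) given in \eqref{eq:basisF+VF-V}, the matrix representing $\A$ is lower bidiagonal with $(r,r)$-entry $\th_r$ (resp.\ $\th'_r$) and the matrix representing $\B$ is upper bidiagonal. Applying Lemma \ref{lem:LBUB} with $A = \A$ and $A^* = \B$ then yields that $\{\th_r\}_{r=0}^d$ (resp.\ $\{\th'_r\}_{r=0}^{d'}$) is a standard ordering of the eigenvalues of $\A$, handling every $X$-type case uniformly; the degenerate case $d=0$ (resp.\ $d'=0$) is trivial since any ordering of a single eigenvalue is standard.
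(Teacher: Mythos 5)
Your proof is correct and follows essentially the same route as the paper's: combine Corollary \ref{cor:Aeigen} (lower bidiagonal $\A$ with diagonal entries \eqref{eq:Aeigen+}, \eqref{eq:Aeigen-}) and Corollary \ref{cor:Beigen} (upper bidiagonal $\B$) on the basis \eqref{eq:basisF+VF-V}, then apply Lemma \ref{lem:LBUB}. Your explicit remark that the normalization $u'_r = e_0e_1\cdots e_r\,u_r$ only rescales basis vectors, hence preserves the bidiagonal shapes and diagonal entries, is a point the paper leaves implicit, and it is handled correctly.
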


\begin{proof}
(i):
By Corollaries \ref{cor:Aeigen} and \ref{cor:Beigen} there exists a basis for $\V(k_0)$
with respect to which the matrix representing $\A$ is lower bidiagonal
and the matrix representing $\B$ is upper bidiagonal.
Moreover, the diagonal entries of the matrix representing $\A$ are given
in \eqref{eq:Aeigen+}.
Now the result follows by Lemma \ref{lem:LBUB}.

(ii): 
Similar.
\end{proof}

\begin{lemma}    \label{lem:thsr}   \samepage
\ifDRAFT {\rm lem:thsr}. \fi
Define $d = \dim \V(k_0)-1$ and $d' = \dim \V(k_0^{-1})-1$.
\begin{itemize}
\item[\rm (i)]
Consider the Leonard pair $\A,\B$ on $\V(k_0)$.
Define scalars $\{\th^*_r\}_{r=0}^d$ as follows:
\begin{equation}   \label{eq:defthsr}
\begin{array}{c|c}
  \textup{\rm $X$-type of $\V$} & \text{\rm Definition of $\th^*_r$}
\\ \hline
  \textup{\sf DS}, \textup{\sf DDa}, \textup{\sf DDb} &   \rule{0mm}{3ex}
  k_0 k_3 q^{2r} + \frac{1}{k_0 k_3 q^{2r} }
\\
  \textup{\sf SSa}, \textup{\sf SSb} &   \rule{0mm}{2.5ex}
  k_1 k_2 q^{2r+1} + \frac{1}{k_1 k_2 q^{2r+1} }
 \end{array}
\end{equation}
Then $\{\th^*_r\}_{r=0}^d$ is a standard ordering of the eigenvalues of $\B$.
\item[\rm (i)]
Consider the Leonard pair $\A,\B$ on $\V(k_0^{-1})$.
Define scalars $\{\th^{* \prime}_r\}_{r=0}^d$ as follows:
\begin{equation}   \label{eq:defthsdr}
 \begin{array}{c|c}
  \textup{\rm $X$-type of $\V$} & \text{\rm Definition of $\th^{* \prime}_r$}
\\ \hline
  \textup{\sf DS}, \textup{\sf DDa}, \textup{\sf DDb} &   \rule{0mm}{3ex}
  k_0 k_3 q^{2r+2} + \frac{1}{k_0 k_3 q^{2r+2} }
\\
  \textup{\sf SSa}, \textup{\sf SSb} &  \rule{0mm}{2.5ex}
  k_1 k_2 q^{2r+1} + \frac{1}{k_1 k_2 q^{2r+1} }
 \end{array}
\end{equation}
Then $\{\th^{* \prime}_r\}_{r=0}^d$ is a standard ordering of the eigenvalues of $\B$.
\end{itemize}
\end{lemma}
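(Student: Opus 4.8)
The plan is to mirror the argument just used for Lemma \ref{lem:thr}, with $\B$ now playing the role that $\A$ played there. The essential observation is that the basis for $\V(k_0)$ (resp.\ $\V(k_0^{-1})$) recorded in \eqref{eq:basisF+VF-V} simultaneously puts $\A$ in lower bidiagonal form by Corollary \ref{cor:Aeigen} and $\B$ in upper bidiagonal form by Corollary \ref{cor:Beigen}. Because both corollaries are phrased with respect to the single basis \eqref{eq:basisF+VF-V}, no reconciliation between two different bases is needed, and the hypotheses of Lemma \ref{lem:LBUB} are met verbatim.

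First I would invoke Corollaries \ref{cor:Aeigen} and \ref{cor:Beigen} to fix, for $\V(k_0)$, the basis from \eqref{eq:basisF+VF-V} relative to which $\A$ is lower bidiagonal and $\B$ is upper bidiagonal. Next I would read off the $(r,r)$-entries of $\B$ from \eqref{eq:Beigen+} and check, case by case over the five $X$-types, that they coincide with the scalars $\th^*_r$ of \eqref{eq:defthsr}: for \textup{\sf DS}, \textup{\sf DDa}, \textup{\sf DDb} both equal $k_0 k_3 q^{2r} + (k_0 k_3 q^{2r})^{-1}$, and for \textup{\sf SSa}, \textup{\sf SSb} both equal $k_1 k_2 q^{2r+1} + (k_1 k_2 q^{2r+1})^{-1}$. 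Applying the ``$A^*$'' half of Lemma \ref{lem:LBUB} then yields that $\{\th^*_r\}_{r=0}^d$ is a standard ordering of the eigenvalues of $\B$, settling part (i). For part (ii) the reasoning is identical with $\V(k_0^{-1})$ in place of $\V(k_0)$, matching the entries from \eqref{eq:Beigen-} against \eqref{eq:defthsdr}; the tables again agree entry by entry across all $X$-types.

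I do not expect a genuine obstacle here. The substantive analytic work was already discharged in Sections \ref{sec:Aaction} and \ref{sec:Baction}, where the bidiagonal shapes and explicit diagonal entries were computed, and in Theorem \ref{thm:main1}, which guarantees that $\A,\B$ is an honest Leonard pair on each eigenspace so that Lemma \ref{lem:LBUB} is applicable. The single point meriting care is to confirm that Corollaries \ref{cor:Aeigen} and \ref{cor:Beigen} refer to the \emph{same} basis \eqref{eq:basisF+VF-V}; once that is noted, the conclusion follows immediately, exactly as in the proof of Lemma \ref{lem:thr}.
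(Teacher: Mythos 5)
Your proposal is correct and matches the paper's own argument: the paper proves this lemma by the same route as Lemma \ref{lem:thr}, namely combining Corollaries \ref{cor:Aeigen} and \ref{cor:Beigen} to exhibit the basis \eqref{eq:basisF+VF-V} in which $\A$ is lower bidiagonal and $\B$ is upper bidiagonal, reading off the diagonal entries of $\B$ from \eqref{eq:Beigen+} and \eqref{eq:Beigen-}, and then invoking Lemma \ref{lem:LBUB}. Your attention to the fact that both corollaries refer to the same basis, and that Theorem \ref{thm:main1} supplies the Leonard-pair hypothesis needed for Lemma \ref{lem:LBUB}, is exactly the right justification.
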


\begin{proof}
Similar to the proof of Lemma \ref{lem:thr}.
\end{proof}

\begin{lemma} {\rm (See \cite[Proposition 6.6]{T:AWDAHA}.)}
\label{lem:ABC}   \samepage
\ifDRAFT {\rm lem:ABC}. \fi
Define $\C = t_0 t_2 + (t_0 t_2)^{-1}$.
Then the elements $\A,\B,\C$ are related as follows:
\begin{align}
  \A + \frac{q \B \C - q^{-1} \C \B}{q^2 - q^{-2}}
 &= \frac{(q^{-1}t_0 + q t_0^{-1}) T_1 + T_2 T_3}
            {q+q^{-1}},                                              \label{eq:A}
\\
   \B + \frac{q \C \A - q^{-1} \A \C}{q^2 - q^{-2}}
 &= \frac{(q^{-1}t_0 + q t_0^{-1}) T_3 + T_1 T_2}
            {q+q^{-1}},                                               \label{eq:B}
\\
   \C + \frac{q \A \B - q^{-1} \B \A}{q^2 - q^{-2}}
 &= \frac{(q^{-1}t_0 + q t_0^{-1}) T_2 + T_3 T_1}
            {q+q^{-1}}.                                              \label{eq:C}
\end{align}
\end{lemma}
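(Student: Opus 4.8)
The relations in the statement are precisely the equitable Askey-Wilson relations, recorded as \cite[Proposition 6.6]{T:AWDAHA}, so one option is simply to cite that result. For a self-contained argument the plan is to verify the three identities \eqref{eq:A}--\eqref{eq:C} by direct computation inside $\Hq$. Writing $X=t_3t_0$, $Y=t_0t_1$ and $W=t_0t_2$, so that $\A=Y+Y^{-1}$, $\B=X+X^{-1}$ and $\C=W+W^{-1}$, I would reduce each side to a normally ordered expression in the $t_i^{\pm1}$, treating the central elements $T_i$ as scalars that may be moved past any factor.

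Concretely, for \eqref{eq:A} I would expand $q\B\C-q^{-1}\C\B$ into the eight monomials $X^{\pm1}W^{\pm1}$ and $W^{\pm1}X^{\pm1}$. Several of these collapse immediately via the quartic relation $t_0t_1t_2t_3=q^{-1}$ and its cyclic variants (Lemma~\ref{lem:Z4pre}); for instance $t_2t_3=q^{-1}Y^{-1}$ gives $WX=q^{-1}t_0t_1^{-1}$ and hence $X^{-1}W^{-1}=q\,t_1t_0^{-1}$. The remaining products, such as $XW=t_3t_0^2t_2$, I would reduce using the quadratic relation $t_0^2=T_0t_0-1$ (a consequence of the centrality of $T_0$). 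After collecting terms, adding $(q^2-q^{-2})\A$ to the $q$-commutator, and invoking $T_0=t_0+t_0^{-1}$, the expression should collapse to $(q^2-q^{-2})(q+q^{-1})^{-1}\bigl[(q^{-1}t_0+qt_0^{-1})T_1+T_2T_3\bigr]$, which is the asserted value. The identities \eqref{eq:B} and \eqref{eq:C} are handled in the same way with the pairs $\C,\A$ and $\A,\B$ respectively.

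I would also check whether the available automorphisms cut down the work, but they do not seem to trivialize it: the map $\sigma$ of Lemma~\ref{lem:sigma} interchanges $\A$ and $\B$ (Lemma~\ref{lem:sigmaXY}) yet sends $\C=t_0t_2+(t_0t_2)^{-1}$ to $q\,t_1t_3+q^{-1}(t_1t_3)^{-1}\neq\C$, while $\varrho$ and $\varrho^2$ (Lemma~\ref{lem:Z4}) introduce $q$-shifts, e.g.\ $\varrho(\A)=qX+q^{-1}X^{-1}$. Consequently each of \eqref{eq:A}--\eqref{eq:C} essentially needs its own computation. The main obstacle is therefore the noncommutative bookkeeping: deciding which products collapse through the quartic relation and which require the quadratic relation, and then checking that the many terms of the $q$-commutator cancel exactly against $(q^2-q^{-2})\A$. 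This sign-and-weight tracking is where errors are most likely, and is the step I would carry out most carefully, organizing it around the reduced products $WX=q^{-1}t_0t_1^{-1}$ and $X^{-1}W^{-1}=q\,t_1t_0^{-1}$ so as to keep the number of genuinely hard reductions small.
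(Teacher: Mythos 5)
Your proposal matches the paper: the paper offers no proof of this lemma beyond the citation to \cite[Proposition 6.6]{T:AWDAHA}, which is exactly your first option. Your back-up computational sketch is also sound as far as it goes---the reductions $WX=q^{-1}t_0t_1^{-1}$ and $X^{-1}W^{-1}=q\,t_1t_0^{-1}$ via the quartic relation, the use of $t_0^2=T_0t_0-1$, and your observation that $\sigma$ sends $\C$ to $q\,t_1t_3+q^{-1}(t_1t_3)^{-1}$ (so the automorphisms do not reduce the three identities to one) all check out---but the paper never carries out such a computation, relying entirely on the cited reference.
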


\begin{lemma} \label{lem:L3Ld3} \samepage
\ifDRAFT {\rm lem:L3Ld3}. \fi
The following hold.
\begin{itemize}
\item[\rm (i)]
Let $(a,b,c,d)$ denote a Huang data for the Leonard pair $\A,\B$ on $\V(k_0)$.
Assume $d \geq 1$. 
Then
{\small
\[
 c+c^{-1} = 
  \frac{(q^{-1} k_0 + q k_0^{-1}) (k_2+k_2^{-1}) + (k_1+k_1^{-1})(k_3+k_3^{-1}) - (a+a^{-1})(b+b^{-1}) }
       {q^{d+1}+q^{-d-1}}.  
\]
}
\item[\rm (i)]
Let $(a',b',c',d')$ denote a Huang data for the Leonard pair $\A,\B$ on $\V(k_0^{-1})$.
Assume $d' \geq 1$. 
Then
{\small 
\[
 c' + {c'}^{-1} = 
  \frac{(q k_0 + q^{-1} k_0^{-1}) (k_2+k_2^{-1}) + (k_1+k_1^{-1})(k_3+k_3^{-1}) - (a'+{a'}^{-1})(b'+{b'}^{-1}) }
       {q^{d'+1}+q^{-d'-1}}. 
\]
}
\end{itemize}
\end{lemma}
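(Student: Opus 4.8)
The plan is to identify the restriction of $\C = t_0t_2 + (t_0t_2)^{-1}$ to $\V(k_0)$ with the transformation $A^\ve$ produced by the equitable Askey-Wilson relations, and then to read off $c+c^{-1}$ by comparing two expressions for one and the same scalar.

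First I would note that each of $\A$, $\B$, $\C$ commutes with $t_0$: indeed $\A = t_0t_1 + (t_0t_1)^{-1}$, $\B = t_3t_0 + (t_3t_0)^{-1}$, and $\C = t_0t_2 + (t_0t_2)^{-1}$ all commute with $t_0$ by Lemma \ref{lem:titj}(ii). Hence $\V(k_0)$ is invariant under $\A$, $\B$, $\C$, and each may be viewed as an $\F$-linear transformation of $\V(k_0)$. On $\V(k_0)$ we have $t_0 = k_0$ and $T_i = k_i + k_i^{-1}$ for $i \in \I$, so the right-hand sides of the three relations \eqref{eq:A}--\eqref{eq:C} of Lemma \ref{lem:ABC} are scalar multiples of the identity. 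Thus these relations say precisely that $\C|_{\V(k_0)}$ makes each of
\[
 \A + \frac{q\B\C - q^{-1}\C\B}{q^2-q^{-2}}, \qquad
 \B + \frac{q\C\A - q^{-1}\A\C}{q^2-q^{-2}}, \qquad
 \C + \frac{q\A\B - q^{-1}\B\A}{q^2-q^{-2}}
\]
a scalar multiple of the identity on $\V(k_0)$.

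Since $d \geq 1$ by hypothesis, I would invoke the uniqueness assertion of Lemma \ref{lem:Z4AW2}, which characterizes $A^\ve$ as the unique transformation with exactly this property; this gives $\C|_{\V(k_0)} = A^\ve$, where $A^\ve$ is attached by Lemma \ref{lem:Z4AW} to the Leonard pair $\A,\B$ on $\V(k_0)$ and its Huang data $(a,b,c,d)$. Then I would compare relation \eqref{eq:C}, read on $\V(k_0)$ as
\[
 \C + \frac{q\A\B - q^{-1}\B\A}{q^2-q^{-2}}
  = \frac{(q^{-1}k_0 + qk_0^{-1})(k_2+k_2^{-1}) + (k_1+k_1^{-1})(k_3+k_3^{-1})}{q+q^{-1}}\,I,
\]
with the Askey-Wilson relation \eqref{eq:AW3},
\[
 A^\ve + \frac{q\A\B - q^{-1}\B\A}{q^2-q^{-2}}
  = \frac{(q^{d+1}+q^{-d-1})(c+c^{-1}) + (a+a^{-1})(b+b^{-1})}{q+q^{-1}}\,I.
\]
As $\C|_{\V(k_0)} = A^\ve$, the left-hand sides agree, so the two scalar right-hand sides are equal; clearing the factor $q+q^{-1}$ and solving for $c+c^{-1}$ (legitimate since $q^{d+1}+q^{-d-1} \neq 0$, as $q$ is not a root of unity) produces the formula in (i).

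For (ii) the argument runs verbatim with $\V(k_0)$ replaced by $\V(k_0^{-1})$; the only change is that $t_0$ now acts as $k_0^{-1}$, so the coefficient $q^{-1}t_0 + qt_0^{-1}$ specializes to $qk_0 + q^{-1}k_0^{-1}$ rather than $q^{-1}k_0 + qk_0^{-1}$, which is exactly the discrepancy between the two displayed formulas. I expect the one genuinely careful point to be the appeal to Lemma \ref{lem:Z4AW2}: its uniqueness conclusion requires that all three relations \eqref{eq:A}--\eqref{eq:C}, not merely \eqref{eq:C}, descend to scalar identities on $\V(k_0)$, so I would make sure to record that the whole triple of relations restricts correctly before concluding $\C|_{\V(k_0)} = A^\ve$.
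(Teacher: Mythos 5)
Your proposal is correct and follows essentially the same route as the paper: the paper's proof likewise restricts the relations of Lemma \ref{lem:ABC} to $\V(k_0)$, observes that their right-hand sides become scalars there, and then invokes Lemmas \ref{lem:Z4AW} and \ref{lem:Z4AW2} to equate the scalar on the right of \eqref{eq:C} with the scalar on the right of \eqref{eq:AW3}. Your write-up merely makes explicit the steps the paper leaves implicit (the invariance of $\V(k_0)$ under $\A$, $\B$, $\C$, the identification $\C|_{\V(k_0)}=A^\ve$ via the uniqueness in Lemma \ref{lem:Z4AW2}, and the need for all three relations to descend), which is exactly the right level of care.
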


\begin{proof}
(i):
Note that $t_0$ acts on $\V(k_0)$ as $k_0$ times the identity.
By this and \eqref{eq:ki}, in each \eqref{eq:A}--\eqref{eq:C} the right-hand side acts on $\V(k_0)$
as a scalar multiple of the identity.
By this and Lemmas \ref{lem:Z4AW}, \ref{lem:Z4AW2} the action of the right-hand side
of \eqref{eq:C} is equal to the action of the right-hand side of \eqref{eq:AW3}.
From this we obtain the result.

(ii):
Similar.
\end{proof}

\begin{proofof}{Proposition \ref{prop:Huangdata}}
(i):
First assume that $\V$ has $X$-type \textup{\sf DS}.
For notational convenience, 
Set $V = \V(k_0)$,
and let $A : V \to V$ (resp.\ $A^* : V \to V$) denote the 
$\F$-linear transformation that is induced by the action of $\A$ (resp.\ $\B$).
By Corollary \ref{cor:dim} the Leonard pair $A,A^*$ has diameter $d=n/2$.
Define scalars $\{\th_r\}_{r=0}^d$ and $\{\th^*_r\}_{r=0}^d$ as
\begin{align}
  \th_r &= k_0 k_1 q^{2r} + \frac{1}{k_0 k_1 q^{2r}},
&
  \th^*_r &= k_0 k_3 q^{2r} + \frac{1}{k_0 k_3 q^{2r}}  &&  (0 \leq r \leq d).     \label{eq:DSthrthsr}
\end{align}
By Lemmas \ref{lem:thr} (resp.\ Lemma \ref{lem:thsr})  the sequence $\{\th_r\}_{r=0}^d$
(resp.\ $\{\th^*_r\}_{r=0}^d$)
is a standard ordering of the eigenvalues of $A$ (resp.\ $A^*$).
Now define scalars
\begin{align*}
  a &= k_0 k_1 q^{d}, &
  b &= k_0 k_3 q^{d}, &
  c &= k_0 k_2 q^{d}. 
\end{align*}
By \eqref{eq:DSthrthsr}
\begin{align*}
  \th_r &= a q^{2r-d} + a^{-1} q^{d-2r},
&
  \th^*_r &= b q^{2r-d} + b^{-1} q^{d-2r}  && (0 \leq r \leq d). 
\end{align*}
By Lemma \ref{lem:typekr0}
$k_0k_1k_2k_3 = q^{-2d-1}$.
Using this, one checks that $a,b,c$ satisfy
the displayed equation in Lemma \ref{lem:L3Ld3}(i).
By this and the last sentence in Lemma \ref{lem:qRacah},
$(a,b,c,d)$ is a Huang data of $A,A^*$.
We have shown the result when $\V$ has $X$-type \textup{\sf DS}.
The proof is similar for the other types.

(ii):
Similar.
\end{proofof}

\begin{corollary}    \label{cor:ki}   \samepage
\ifDRAFT {\rm cor:ki}. \fi
The following hold.
\begin{itemize}
\item[\rm (i)]
For the Huang data $(a,b,c,d)$ from Proposition \ref{prop:Huangdata}(i) 
the parameters $\{k_i\}_{i \in \I}$ satisfy
\begin{equation*}
\begin{array}{c|ccccc}
\text{\rm Case} & k_0 & k_1 & k_2 & k_3 
\\ \hline
\rule{0mm}{4.5mm}
\textup{\sf DS}  
 & (abcq^{1-d})^{1/2} & aq^{-d}k_0^{-1} & cq^{-d}k_0^{-1} & bq^{-d}k_0^{-1}
\\
\rule{0mm}{4mm}
\textup{\sf DDa}  & q^{-d} &  a & c & b
\\
\rule{0mm}{4mm}
\textup{\sf DDb}  &  bq & c & a & q^{-d-1}
\\
\rule{0mm}{4mm}
\textup{\sf SSa}  & aq & q^{-d-1} & b & c
\\
\textup{\sf SSb}  & cq & b & q^{-d-1} & a
\end{array}
\end{equation*}
\item[\rm (ii)]
For the Huang data $(a',b',c',d')$ from Proposition \ref{prop:Huangdata}(ii)
the parameters $\{k_i\}_{i \in \I}$ satisfy
\begin{equation*}
\begin{array}{c|ccccc}
\text{\rm Case} & k_0 & k_1 & k_2 & k_3 
\\ \hline
\rule{0mm}{4.5mm}
\textup{\sf DS}  
 & (a' b' c' q^{-d'-3})^{1/2} & a' q^{-d'-2 }k_0^{-1} & c' q^{-d'-2} k_0^{-1} & b' q^{-d'-2}k_0^{-1}
\\
\rule{0mm}{4mm}
\textup{\sf DDa}  & q^{-d'-2} &  a' & c' & b'
\\
\rule{0mm}{4mm}
\textup{\sf DDb}  &  b' q^{-1} & c' & a' & q^{-d'-1}
\\
\rule{0mm}{4mm}
\textup{\sf SSa}  & a' q^{-1} & q^{-d'-1} & b' & c'
\\
\textup{\sf SSb}  & c' q^{-1} & b' & q^{-d'-1} & a'
\end{array}
\end{equation*}
\end{itemize}
\end{corollary}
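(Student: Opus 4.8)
The plan is to invert the formulas of Proposition \ref{prop:Huangdata}, treating the Huang data $(a,b,c,d)$ (resp.\ $(a',b',c',d')$) as known and solving for $\{k_i\}_{i\in\I}$. Throughout I would use Corollary \ref{cor:dim} to pass between $n$ and $d$ (resp.\ $d'$): in the DS case $n=2d$, in the DDa case $n=2d-1$, and in each of the DDb, SSa, SSb cases $n=2d+1$, with the analogous relations $n=2d'+2$, $n=2d'+3$, $n=2d'+1$ for the primed data. Once these substitutions are in hand, the corollary is a case-by-case algebraic inversion.

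First I would prove part (i), proceeding on the $X$-type. In each case three of the four parameters are recovered by a direct rearrangement of the entries of \eqref{eq:Huangdata+}. For instance, in the DS case the relations $a=k_0k_1q^{n/2}$, $b=k_0k_3q^{n/2}$, $c=k_0k_2q^{n/2}$ together with $n/2=d$ give at once $k_1=aq^{-d}k_0^{-1}$, $k_3=bq^{-d}k_0^{-1}$, $k_2=cq^{-d}k_0^{-1}$; in the DDa case $k_1=a$, $k_2=c$, $k_3=b$ are read off immediately; and similarly $k_0=bq$, $k_2=a$, $k_1=c$ in DDb, and so on for SSa and SSb.

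The one parameter not directly supplied by the Huang data --- the distinguished one, namely $k_0$ for DS and DDa, $k_3$ for DDb, $k_1$ for SSa, and $k_2$ for SSb --- I would obtain from the relevant equation of Lemma \ref{lem:typekr0}. In the DDa, DDb, SSa, SSb cases this equation is a single square relation ($k_0^2=q^{-n-1}$, $k_3^2=q^{-n-1}$, etc.), which upon substituting $n$ in terms of $d$ yields the asserted power of $q$; for example in DDa, $n+1=2d$ gives $k_0^2=q^{-2d}$, hence $k_0=q^{-d}$. In the DS case the equation $k_0k_1k_2k_3=q^{-n-1}$ combines with the three expressions just found to give $k_0^2=abc\,q^{1-d}$, whence $k_0=(abc\,q^{1-d})^{1/2}$, which is precisely the choice of square root already fixed in \eqref{eq:defki0}. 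Part (ii) is structurally identical, using \eqref{eq:Huangdata-} and the primed dimension relations of Corollary \ref{cor:dim}; I would simply repeat the computation with the appropriate expression for $n$ in terms of $d'$.

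The closest thing to an obstacle is bookkeeping: the exponents of $q$ that relate $n$, $d$, and $d'$ must be tracked exactly, since a single off-by-one would corrupt every entry of the tables. A second, more conceptual point is that each square relation determines the distinguished parameter only up to sign, and likewise $k_0=(abc\,q^{1-d})^{1/2}$ is ambiguous up to a sign; I would note that this is consistent with the fact that a parameter sequence is defined only up to inverse (cf.\ the remark after Definition \ref{def:param} and the choice of square root in \eqref{eq:defki0}), so the recorded values are exactly the representatives attached to the chosen consistent parameter sequence. No genuine difficulty arises beyond these elementary substitutions, which completes the proof.
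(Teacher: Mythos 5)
Your proposal is correct and is essentially the paper's own proof: the paper disposes of this corollary with the single line ``Use Lemma \ref{lem:typekr0},'' meaning exactly the inversion you carry out --- read off the directly available parameters from each row of Proposition \ref{prop:Huangdata}, convert $n$ into $d$ or $d'$ via Corollary \ref{cor:dim}, and pin down the remaining distinguished parameter with the relevant equation of Lemma \ref{lem:typekr0}. One small correction to your closing remark: the residual sign ambiguity (e.g.\ $k_0=\pm q^{-d}$ in case \textup{\sf DDa}) is \emph{not} absorbed by the ``up to inverse'' freedom of parameter sequences, since replacing $k_0$ by $k_0^{-1}$ does not change its sign; the paper leaves this sign convention implicit, and its Note following the corollary addresses only the square-root ambiguity in the \textup{\sf DS} entry, so your derivation matches the paper's level of precision but your stated justification for the sign choice is not the right one.
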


\begin{proof}
Use Lemma \ref{lem:typekr0}.
\end{proof}

\begin{note}
In Corollary \ref{cor:ki}(i) the assertion $k_0 = (a b c q^{1-d})^{1/2}$ means
that $k_0$ is equal to one of the square roots of $a b c q^{1-d}$.
Similar for (ii).
\end{note}

\section{Proof of Theorem \ref{thm:main}; ``only if'' direction}
\label{sec:onlyif}

In this section we prove the ``only if'' direction of Theorem \ref{thm:main}.

\begin{lemma}  \label{lem:a2b2}  \samepage
\ifDRAFT {\rm lem:a2b2}. \fi
With reference to Notation \ref{notation}, assume that 
$\V$ is feasible and has $X$-type \textup{\sf DS}.
Define scalars $a$, $b$, $c$, $d$ as in \eqref{eq:Huangdata+}.
Then $a^2 \neq q^{-2d}$ and $b^2 \neq q^{-2d}$.
\end{lemma}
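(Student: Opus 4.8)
The plan is to translate the two desired inequalities into statements about the parameter sequence $\{k_i\}_{i \in \I}$ and then read them off from the restrictions already established for an XD, YD module. Since $\V$ has $X$-type \textup{\sf DS}, we have $d = n/2$ by \eqref{eq:Huangdata+}, so that $q^{-2d} = q^{-n}$. With $a = k_0 k_1 q^{n/2}$ and $b = k_0 k_3 q^{n/2}$ taken from \eqref{eq:Huangdata+}, we compute $a^2 = k_0^2 k_1^2 q^n$ and $b^2 = k_0^2 k_3^2 q^n$. Hence $a^2 = q^{-2d}$ is equivalent to $(k_0 k_1)^2 = q^{-2n}$, i.e.\ to $k_0 k_1 = \pm q^{-n}$, and likewise $b^2 = q^{-2d}$ is equivalent to $k_0 k_3 = \pm q^{-n}$. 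So it suffices to prove $k_0 k_1 \neq \pm q^{-n}$ and $k_0 k_3 \neq \pm q^{-n}$.

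For the first inequality I would invoke the hypothesis that $\V$ is feasible, hence YD, so $Y$ is diagonalizable on $\V$. By Corollary \ref{cor:Ydiagonalizable} (the implication (i)$\Rightarrow$(iii)), applied in the \textup{\sf DS} case, neither of $\pm k_0 k_1$ is among $q^{-1}, q^{-2}, \ldots, q^{-n}$; in particular $k_0 k_1 \neq \pm q^{-n}$, which yields $a^2 \neq q^{-2d}$.

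For the second inequality I would use that $\V$ is XD. Since Notation \ref{notation} is in force, Lemma \ref{lem:typekr} applies, and its \textup{\sf DS} row states that neither of $\pm k_0 k_3$ is among $q^{-1}, q^{-2}, \ldots, q^{-n}$; in particular $k_0 k_3 \neq \pm q^{-n}$, which yields $b^2 \neq q^{-2d}$.

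There is no serious obstacle here: the content lies entirely in matching $a$ and $b$ against the $k_i$ and then invoking the appropriate prior restriction. The only point to keep straight is the source of each bound. The inequality $a^2 \neq q^{-2d}$ is precisely the extreme ($j=n$) case of the diagonalizability criterion for $Y$ in Corollary \ref{cor:Ydiagonalizable}, whereas $b^2 \neq q^{-2d}$ is the corresponding extreme case of the multiplicity-freeness restrictions on $X$ recorded in Lemma \ref{lem:typekr}; this matches the roles of the two columns of inequalities described in Remark \ref{rem:thmmain}.
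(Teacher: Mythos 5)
Your proposal is correct and follows essentially the same route as the paper's own proof: the paper likewise obtains $k_0^2k_1^2 \neq q^{-2n}$ from Corollary \ref{cor:Ydiagonalizable} (via feasibility, hence YD) and $k_0^2k_3^2 \neq q^{-2n}$ from Lemma \ref{lem:typekr}, then concludes via the formulas for $a$, $b$, $d$ in \eqref{eq:Huangdata+}. Your write-up just makes explicit the translation $q^{-2d}=q^{-n}$ and the attribution of each inequality to $Y$-diagonalizability versus the $X$-side restrictions, which the paper leaves implicit.
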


\begin{proof}
By Corollary \ref{cor:Ydiagonalizable} $k_0^2 k_1^2 \neq q^{-2n}$.
By Lemma \ref{lem:typekr} $k_0^2 k_3^2 \neq q^{-2n}$.
The result follows from these comments and \eqref{eq:Huangdata+}.
\end{proof}

\begin{lemma}   \label{lem:a2b2c2}  \samepage
\ifDRAFT {\rm lem:a2b2c2}. \fi
With reference to Notation \ref{notation}, assume that $\V$ is feasible
and has $X$-type among $\textup{\sf DDb}$, $\textup{\sf SSa}$, $\textup{\sf SSb}$.
Define scalars $a$, $b$, $c$, $d$ as in \eqref{eq:Huangdata+}.
Then the following inequalities hold:
\[
\begin{array}{c|ccc}
\textup{\rm $X$-type of $\V$} & \multicolumn{3}{c}{\text{\rm Inequalities}}
\\ \hline 
\rule{0mm}{4.3mm}
\textup{\sf DDb} & b^2 \neq q^{-2}  &  a^2 \neq q^{\pm 2d}
\\
\rule{0mm}{4mm}
\textup{\sf SSa} & a^2 \neq q^{-2} &  b^2 \neq q^{\pm 2d}
\\
\rule{0mm}{4mm}
\textup{\sf SSb} & \;\; c^2 \neq q^{-2} \;\; & \;\; a^2 \neq q^{\pm 2d} \;\; & \;\; b^2 \neq q^{\pm 2d}
\end{array}
\]
\end{lemma}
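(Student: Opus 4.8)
The plan is to handle the three $X$-types $\textup{\sf DDb}$, $\textup{\sf SSa}$, $\textup{\sf SSb}$ separately and, within each, to verify the listed inequalities one at a time. The constraints on $\{k_i\}_{i \in \I}$ come from three sources: the feasibility hypothesis (which forces $k_0^2 \neq 1$ and, since a feasible module is YD, supplies the inequalities of Corollary \ref{cor:Ydiagonalizable}), the equation of Lemma \ref{lem:typekr0}, and the inequalities of Lemma \ref{lem:typekr}. Throughout I will use Corollary \ref{cor:dim}, which gives $d = (n-1)/2$ and hence $q^{2d} = q^{n-1}$ in all three cases.

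First I would dispose of the single inequality involving $q^{-2}$ in each row. Reading off \eqref{eq:Huangdata+}, the parameter in question is $k_0 q^{-1}$ every time: it is $b$ for $\textup{\sf DDb}$, $a$ for $\textup{\sf SSa}$, and $c$ for $\textup{\sf SSb}$. Squaring, the desired inequality becomes $k_0^2 q^{-2} \neq q^{-2}$, that is $k_0^2 \neq 1$, which holds because $\V$ is feasible.

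The remaining inequalities all have the shape $(\,\cdot\,)^2 \neq q^{\pm 2d}$ and fall into two subfamilies. The first is settled by the ``$\pm k_i^{\pm 1}$'' line of Lemma \ref{lem:typekr}. For $\textup{\sf SSa}$ the relevant parameter is $b = k_2$, and that line says no $\pm k_2^{\pm 1}$ lies in $\{1, q, \ldots, q^{(n-1)/2}\}$; hence $k_2^2 \neq q^{2j}$ for every integer $j$ with $|j| \leq (n-1)/2$, and the extreme values $j = \pm(n-1)/2$ give $k_2^2 \neq q^{\pm(n-1)} = q^{\pm 2d}$. For $\textup{\sf SSb}$ the parameter is $b = k_1$, and the identical argument applied to the ``$\pm k_1^{\pm 1}$'' line yields $k_1^2 \neq q^{\pm 2d}$.

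The second subfamily combines $Y$-diagonalizability with Lemma \ref{lem:typekr0}. For $\textup{\sf DDb}$ the parameter is $a = k_2$: Corollary \ref{cor:Ydiagonalizable} gives that neither of $\pm k_2 k_3$ lies in $\{q^{-1}, \ldots, q^{-n}\}$, so $k_2^2 k_3^2 \neq q^{-2j}$ for $1 \leq j \leq n$; substituting $k_3^2 = q^{-n-1}$ from Lemma \ref{lem:typekr0} gives $k_2^2 \neq q^{n+1-2j}$, whose endpoint values $j = 1$ and $j = n$ are exactly $k_2^2 \neq q^{n-1}$ and $k_2^2 \neq q^{-(n-1)}$, i.e.\ $a^2 \neq q^{\pm 2d}$. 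For $\textup{\sf SSb}$ the parameter is $a = k_3$, and the same computation with $k_2^2 = q^{-n-1}$ delivers $k_3^2 \neq q^{\pm 2d}$. The calculations are all short; the only thing demanding care is the bookkeeping of matching each listed inequality to the role ($a$, $b$, or $c$) played by the appropriate $k_i$, and of checking that the extreme exponents from Lemma \ref{lem:typekr} or from Corollary \ref{cor:Ydiagonalizable} together with Lemma \ref{lem:typekr0} land precisely on $q^{\pm 2d}$---the one place where $d = (n-1)/2$ is invoked. I anticipate no obstacle beyond this.
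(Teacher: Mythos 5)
Your proposal is correct and follows essentially the same route as the paper's proof: both arguments combine $k_0^2\neq 1$ (feasibility), the equation of Lemma \ref{lem:typekr0}, the $Y$-diagonalizability inequalities of Corollary \ref{cor:Ydiagonalizable}, and the $\pm k_i^{\pm 1}$ inequalities of Lemma \ref{lem:typekr}, matched to the parameters via \eqref{eq:Huangdata+} exactly as you describe. The only (harmless) difference is that for \textup{\sf SSa} the paper derives $k_0^2\neq 1$ from Corollary \ref{cor:Ydiagonalizable} together with $k_1^2=q^{-n-1}$, whereas you invoke feasibility directly, which is equally valid and slightly more uniform.
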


\begin{proof}
First assume that $\V$ has $X$-type \textup{\sf DDb}.
We have $k_0^2 \neq 1$ since $\V$ is feasible.
By Lemma \ref{lem:typekr0} $k_3^2 = q^{-n-1}$.
By Corollary \ref{cor:Ydiagonalizable} $k_2^2 k_3^2$ is not among
$q^{-2}$, $q^{-2n}$.
The result follows from these comments.
Next assume that $\V$ has $X$-type \textup{\sf SSa}.
By Lemma \ref{lem:typekr0} $k_1^2 = q^{-n-1}$.
By Corollary \ref{cor:Ydiagonalizable} $k_0^2 k_1^2 \neq q^{-n-1}$.
By Lemma \ref{lem:typekr} $k_2^2$ is not among  $q^{n-1}$, $q^{1-n}$.
The result follows from these comments.
Next assume that $\V$ has $X$-type \textup{\sf SSb}.
We have $k_0^2 \neq 1$ since $\V$ is feasible.
By Lemma \ref{lem:typekr0} $k_2^2 = q^{-n-1}$.
By Corollary \ref{cor:Ydiagonalizable} $k_2^2 k_3^2$ is not among $q^{-2}$, $q^{-2n}$.
By Lemma \ref{lem:typekr} $k_1^2$ is not among $q^{n-1}$, $q^{1-n}$.
The result follows from these comments.
\end{proof}

\begin{corollary}  \label{cor:Huangdata}  \samepage
\ifDRAFT {\rm cor:Huangdata}. \fi
With reference to Notation \ref{notation}, assume that $\V$ is feasible.
Then there exist a Huang data $(a,b,c,d)$ of $\A,\B$ on $\V(k_0)$ and a Huang data
$(a',b',c',d')$ of $\A,\B$ on $\V(k_0^{-1})$ such that the following hold:
\begin{equation}                 \label{eq:Huangdata5}
\begin{array}{c|c|c|c|c|ccc}
\textup{\rm $X$-type of $\V$} & d'-d & a'/a & b'/b & c'/c &  & \text{\rm Inequalities} &  
\\ \hline 
\rule{0mm}{4.3mm}
\textup{\sf DS} & -1 & q & q & q & \quad a^2 \neq q^{-2d} & \quad  b^2 \neq q^{-2d}
\\
\rule{0mm}{4mm}
\textup{\sf DDa} & -2 & 1 & 1 & 1
\\
\rule{0mm}{4mm}
\textup{\sf DDb} & 0 & 1 & q^2 & 1  & \quad a^2 \neq q^{\pm 2d} & & \quad b^2 \neq q^{-2}
\\
\rule{0mm}{4mm}
\textup{\sf SSa} & 0 & q^2 & 1 & 1 & & \quad b^2 \neq q^{\pm 2d} & \quad a^2 \neq q^{-2}
\\
\rule{0mm}{4mm}
\textup{\sf SSb} & 0 & 1 & 1 & q^2 & \quad a^2 \neq q^{\pm 2d} & \quad b^2 \neq q^{\pm 2d } & \quad c^2 \neq q^{-2} 
\end{array}
\end{equation}
\end{corollary}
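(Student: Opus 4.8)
The plan is to read everything off Proposition \ref{prop:Huangdata} and then invoke the two preparatory inequality lemmas; the corollary is essentially an assembly of facts already in hand. Since $\V$ is feasible, $t_0$ has two distinct eigenvalues, so $k_0\neq k_0^{-1}$ and both $\V(k_0)$ and $\V(k_0^{-1})$ are nonzero; by Theorem \ref{thm:main1} the pair $\A,\B$ acts on each as a Leonard pair of $q$-Racah type. Because Notation \ref{notation} fixes a single parameter sequence $\{k_i\}_{i\in\I}$ consistent with the chosen standard ordering, the two Huang data $(a,b,c,d)$ and $(a',b',c',d')$ supplied by Proposition \ref{prop:Huangdata}(i),(ii) are both expressed in these same $k_i$. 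Hence the ratios $a'/a$, $b'/b$, $c'/c$ and the difference $d'-d$ are unambiguous and can be computed directly.

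Next I would go case by case through the five $X$-types, reading the values from \eqref{eq:Huangdata+} and \eqref{eq:Huangdata-}. For example, in type \textup{\sf DS} one has $d'-d=(n-2)/2-n/2=-1$ and $a'/a=k_0k_1q^{(n+2)/2}/(k_0k_1q^{n/2})=q$, and likewise $b'/b=c'/c=q$; in type \textup{\sf DDa} every entry of the two Huang data agrees except the diameter, giving $d'-d=-2$ and $a'/a=b'/b=c'/c=1$; in each of the remaining types the diameters coincide and exactly one ratio equals $q^2$, namely $b'/b=q^2$ for \textup{\sf DDb} (since $b=k_0q^{-1}$, $b'=k_0q$), $a'/a=q^2$ for \textup{\sf SSa}, and $c'/c=q^2$ for \textup{\sf SSb}. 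Collecting these reproduces the columns $d'-d$, $a'/a$, $b'/b$, $c'/c$ of \eqref{eq:Huangdata5}.

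For the inequalities I would simply quote the two lemmas proved just above. Lemma \ref{lem:a2b2} gives $a^2\neq q^{-2d}$ and $b^2\neq q^{-2d}$ in type \textup{\sf DS}, while Lemma \ref{lem:a2b2c2} supplies precisely the listed inequalities in types \textup{\sf DDb}, \textup{\sf SSa}, \textup{\sf SSb}; type \textup{\sf DDa} carries no inequalities, so nothing is required there. This fills in the last three columns of \eqref{eq:Huangdata5} and completes the proof.

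The work is bookkeeping, so there is no deep obstacle; the only place demanding care is matching the diameter parities and the inequality symbols exactly. For \textup{\sf DS} the diameter drops by one because $n$ is even by \eqref{eq:parityn}, whereas in the \textup{\sf DD} and \textup{\sf SS} cases $n$ is odd and the two diameters either coincide or differ by $2$; one must check these against Corollary \ref{cor:dim}. Likewise one must verify that the inequalities produced by Lemmas \ref{lem:a2b2} and \ref{lem:a2b2c2}, after the substitutions from \eqref{eq:Huangdata+}, agree symbol for symbol with the entries displayed in \eqref{eq:Huangdata5}.
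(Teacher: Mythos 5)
Your proposal is correct and follows essentially the same route as the paper, whose proof is precisely that the corollary is immediate from Proposition \ref{prop:Huangdata} together with Lemmas \ref{lem:a2b2} and \ref{lem:a2b2c2}; your case-by-case ratio computations from \eqref{eq:Huangdata+} and \eqref{eq:Huangdata-} simply make explicit the bookkeeping the paper leaves to the reader. All the ratios, diameter differences, and inequality attributions you list check out against the tables in the proposition and the two lemmas.
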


\begin{proof}
Immediate from Proposition \ref{prop:Huangdata} and
Lemmas \ref{lem:a2b2}, \ref{lem:a2b2c2}.
\end{proof}

\begin{proofof}{Theorem \ref{thm:main}; ``only if'' direction}
Let $A,A^*$ (resp.\ $A',A^{*\prime}$) denote a Leonard pair on $V$ (resp.\ $V'$)
that has $q$-Racah type. 
Assume that these Leonard pairs are linked, so there exists a feasible $\Hq$-module
structure on $\V := V \oplus V'$ such that $V$, $V'$ are the eigenspaces of $t_0$
and \eqref{eq:linked} holds.
Let $\{k_i\}_{i \in \I}$ denote a parameter sequence of $\V$
that is consistent with a standard ordering of the eigenvalues of $X$.
First assume $V = \V(k_0)$ and $V' = \V(k_0^{-1})$.
By Corollary \ref{cor:Huangdata}
there exist a Huang data $(a,b,c,d)$ of $\A,\B$ on $\V(k_0)$ and a Huang data
$(a',b',c',d')$ of $\A,\B$ on $\V(k_0^{-1})$ that satisfy \eqref{eq:Huangdata5}.
Note by \eqref{eq:linked} that $(a,b,c,d)$ is a Huang data of $A,A^*$ and
$(a',b',c',d')$ is a Huang data of $A',A^{*\prime}$.
By \eqref{eq:Huangdata5}, for each $X$-type of $\V$,
these Huang data satisfy the following case in Theorem \ref{thm:main}:
\[
\begin{array}{c|ccccc}
 \textup{\rm $X$-type of $\V$} & 
 \textup{\sf DS} & \textup{\sf DDa} & \textup{\sf DDb} & \textup{\sf SSa} &  \textup{\sf SSb}
\\ \hline
 \text{Case} & \text{(ii)} & \text{(i)} & \text{(iv)} & \text{(iii)} & \text{(v)}
\end{array}
\]
Next assume $V=\V(k_0^{-1})$ and $V'=\V(k_0)$.
By Corollary \ref{cor:Huangdata} in which $(a,b,c,d)$ and $(a',b',c',d')$ are exchanged,
for each $X$-type of $\V$
there exist a Huang data $(a',b',c',d')$ of $\A,\B$ on $\V(k_0)$ and a Huang data
$(a,b,c,d)$ of $\A,\B$ on $\V(k_0^{-1})$ that satisfy
\[
\begin{array}{c|c|c|c|c|ccc}
\textup{\rm $X$-type of $\V$} & d'-d & a'/a & b'/b & c'/c &  & \text{\rm Inequalities} &  
\\ \hline 
\rule{0mm}{4.3mm}
\textup{\sf DS} & 1 & q^{-1} & q^{-1} & q^{-1} & \quad a^{\prime 2} \neq q^{-2d'} 
                                & \quad b^{\prime 2} \neq q^{-2d'}
\\
\rule{0mm}{4mm}
\textup{\sf DDa} & 2 & 1 & 1 & 1
\\
\rule{0mm}{4mm}
\textup{\sf DDb} & 0 & 1 & q^{-2} & 1  & \quad a^{\prime 2} \neq q^{\pm 2d'} & & \quad b^{\prime 2} \neq q^{-2}
\\
\rule{0mm}{4mm}
\textup{\sf SSa} & 0 & q^{-2} & 1 & 1 & & \quad b^{\prime 2} \neq q^{\pm 2d'} & \quad a^{\prime 2} \neq q^{-2}
\\
\rule{0mm}{4mm}
\textup{\sf SSb} & 0 & 1 & 1 & q^{-2} & \quad a^{\prime 2} \neq q^{\pm 2d'} 
                               & \quad b^{\prime 2} \neq q^{\pm 2d' } & \quad c^{\prime 2} \neq q^{-2} 
\end{array}
\]
In this table we rewrite the inequalities in terms of $a$, $b$, $c$, $d$:
\[
\begin{array}{c|c|c|c|c|ccc}
\textup{\rm $X$-type of $\V$} & d'-d & a'/a & b'/b & c'/c &  & \text{\rm Inequalities} &  
\\ \hline 
\rule{0mm}{4.3mm}
\textup{\sf DS} & 1 & q^{-1} & q^{-1} & q^{-1} 
                  & \quad a^2 \neq q^{-2d} & \quad b^2 \neq q^{-2d}
\\
\rule{0mm}{4mm}
\textup{\sf DDa} & 2 & 1 & 1 & 1
\\
\rule{0mm}{4mm}
\textup{\sf DDb} & 0 & 1 & q^{-2} & 1  & \quad a^2 \neq q^{\pm 2d} & & \quad b^2 \neq q^{2}
\\
\rule{0mm}{4mm}
\textup{\sf SSa} & 0 & q^{-2} & 1 & 1 & & \quad b^2 \neq q^{\pm 2d} & \quad a^2 \neq q^{2}
\\
\rule{0mm}{4mm}
\textup{\sf SSb} & 0 & 1 & 1 & q^{-2} & \quad a^2 \neq q^{\pm 2d} 
                               & \quad b^2 \neq q^{\pm 2d} & \quad c^2 \neq q^{2} 
\end{array}
\]
Thus case (vi) holds if $\V$ has $X$-type {\sf DS}, and case (vii) holds if $\V$ has $X$-type {\sf DDa}.
If $\V$ has $X$-type  among {\sf DDb}, {\sf SSa}, {\sf SSb},
we replace each of $a$, $b$, $c$, $a'$, $b'$, $c'$ with its inverse.
This gives
\[
\begin{array}{c|c|c|c|c|ccc}
\textup{\rm $X$-type of $\V$} & d'-d & a'/a & b'/b & c'/c &  & \text{\rm Inequalities} &  
\\ \hline 
\rule{0mm}{4mm}
\textup{\sf DDb} & 0 & 1 & q^2 & 1  & \quad a^{-2} \neq q^{\pm 2d} & & \quad b^{-2} \neq q^{2}
\\
\rule{0mm}{4mm}
\textup{\sf SSa} & 0 & q^2 & 1 & 1 & & \quad b^{-2} \neq q^{\pm 2d} & \quad a^{-2} \neq q^{2}
\\
\rule{0mm}{4mm}
\textup{\sf SSb} & 0 & 1 & 1 & q^2 & \quad a^{-2} \neq q^{\pm 2d} & \quad b^{-2} \neq q^{\pm 2d } & \quad c^{-2} \neq q^{2} 
\end{array}
\]
For each $X$-type among {\sf DDb}, {\sf SSa}, {\sf SSb} 
these Huang data satisfy the following case in Theorem \ref{thm:main}:
\[
\begin{array}{c|ccc}
 \textup{\rm $X$-type of $\V$} &   \textup{\sf DDb} & \textup{\sf SSa} &  \textup{\sf SSb}
\\ \hline
 \text{Case} & \text{(iv)} & \text{(iii)} & \text{(v)}
\end{array}
\]
The result follows.
\end{proofof}

\section{Construction of an XD $\Hq$-module}
\label{sec:const}

With reference to Notation \ref{notation},
let \textup{\sf T} denote the $X$-type of $\V$.
By Lemmas \ref{lem:typekr0} and \ref{lem:typekr}
the parameters $\{k_i\}_{i \in \I}$ satisfy the following conditions:
\begin{equation}    \label{eq:constraints}
\begin{array}{c|l}
\textup{\sf T} & \hspace{4cm} \text{\rm Conditions}
\\ \hline
\textup{\sf DS} &
  \begin{array}{l}
    \rule{0mm}{4.3mm}
     k_0 k_1 k_2 k_3 = q^{-n-1} \\
    \text{\rm Neither of $\pm k_0 k_3$ is among $q^{-1},q^{-2},q^{-3},\ldots, q^{-n}$} \\
    \text{\rm None of $\pm k_0$, $\pm k_1$, $\pm k_2$, $\pm k_3$ is among $q^{-1},q^{-2},q^{-3},\ldots,q^{-n/2}$}
  \end{array}
\\ \hline
\textup{\sf DDa} &
 \begin{array}{l}
   \rule{0mm}{4.3mm}
    k_0^2 = q^{-n-1} \\
   \text{\rm None of $\pm k_3^{\pm 1}$ is among $1,q,q^2,\ldots, q^{(n-1)/2}$} \\
   \text{\rm None of $k_0k_3k_1^{\pm 1}k_2^{\pm 1}$ is among $q^{-1},q^{-3},q^{-5},\ldots,q^{-n}$}
 \end{array}
\\ \hline
\textup{\sf DDb} &
 \begin{array}{l}
   \rule{0mm}{4.3mm}
   k_3^2 = q^{-n-1}  \\
   \text{\rm None of $\pm k_0^{\pm 1}$ is among $1,q,q^2,\ldots, q^{(n-1)/2}$} \\
   \text{\rm None of $k_0k_3k_1^{\pm 1}k_2^{\pm 1}$ is among $q^{-1},q^{-3},q^{-5},\ldots,q^{-n}$}
 \end{array}
\\ \hline 
\textup{\sf SSa} &
 \begin{array}{l}
   \rule{0mm}{4.3mm}
   k_1^2 = q^{-n-1}  \\
   \text{\rm None of $\pm k_2^{\pm 1}$ is among $1,q,q^2,\ldots, q^{(n-1)/2}$} \\
   \text{\rm None of $k_1k_2k_0^{\pm 1}k_3^{\pm 1}$ is among $q^{-1},q^{-3},q^{-5},\ldots,q^{-n}$}
 \end{array}
\\ \hline
\textup{\sf SSb} &
 \begin{array}{l}
   \rule{0mm}{4.3mm}
   k_2^2 = q^{-n-1}  \\
   \text{\rm None of $\pm k_1^{\pm 1}$ is among $1,q,q^2,\ldots, q^{(n-1)/2}$} \\
   \text{\rm None of $k_1k_2k_0^{\pm 1}k_3^{\pm 1}$ is among $q^{-1},q^{-3},q^{-5},\ldots,q^{-n}$}
 \end{array}
\end{array}
\end{equation}

In this section we prove the following result.

\begin{proposition}     \label{prop:const}   \samepage
\ifDRAFT {\rm prop:const}. \fi
Let $n \geq 0$ denote an integer and let $\{k_i\}_{i \in \I}$ denote 
nonzero scalars in $\F$.
Let  \textup{\sf T} be among \textup{\sf DS}, \textup{\sf DDa}, 
\textup{\sf DDb}, \textup{\sf SSa}, \textup{\sf SSb}.
Assume that $n$ is even if \textup{\sf T} is \textup{\sf DS} and odd otherwise.
Assume that $\{k_i\}_{i \in \I}$ satisfy the conditions \eqref{eq:constraints}.
Then there exists an XD $\Hq$-module $\V$ with dimension $n+1$ that has
$X$-type \textup{\sf T} and parameter sequence $\{k_i\}_{i \in \I}$
that is consistent with a standard ordering of the eigenvalues of $X$.
\end{proposition}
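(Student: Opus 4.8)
The plan is to construct $\V$ explicitly as a vector space with a prescribed basis $\{v_r\}_{r=0}^n$ and to define the action of the generators $\{t_i^{\pm 1}\}_{i \in \I}$ by the formulas that were forced upon us in Sections~\ref{sec:types} and~\ref{sec:structure}; then to verify that these formulas actually satisfy the defining relations \eqref{eq:defHq1}--\eqref{eq:defHq3} of $\Hq$, and finally to check irreducibility together with the claimed $X$-type and parameter sequence. Concretely, given $\{k_i\}_{i \in \I}$ and $\textup{\sf T}$, I would first define the scalars $\{\mu_r\}_{r=0}^n$ using \eqref{eq:Dmur} or \eqref{eq:Smur} (whichever matches $\textup{\sf T}$), so that by design $\mu_0 = k_0 k_3$ in the {\sf D}-cases and $\mu_0 = (k_1 k_2 q)^{-1}$ in the {\sf S}-cases. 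The conditions \eqref{eq:constraints} are exactly what guarantee that the $\{\mu_r\}_{r=0}^n$ are mutually distinct and that the reduced diagram of $\{\mu_r\}_{r=0}^n$ has the shape \eqref{eq:standard} matching $\textup{\sf T}$; I would record this as a preliminary lemma.

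Next I would define the generator actions. For each pair $\mu_r,\mu_{r+1}$ that is $1$-adjacent, I define $t_0,t_3$ on $\F v_r + \F v_{r+1}$ by the formulas \eqref{eq:t0vr}--\eqref{eq:t3vr+1}; for each $q$-adjacent pair I define $t_1,t_2$ on $\F v_r + \F v_{r+1}$ by \eqref{eq:t1vr}--\eqref{eq:t2vr+1}; and the remaining actions on $\V_X(\mu_0)=\F v_0$ and $\V_X(\mu_n)=\F v_n$ are defined by \eqref{eq:tiv0vn} (using $\textup{\sf T}$ to decide which $t_i$ act diagonally at the endvertices). The crucial point is that since the reduced diagram is a path, each $v_r$ with $0<r<n$ lies in exactly one $1$-bond and one $q$-bond, so every generator is defined on all of $\V$ without conflict; the formulas are designed so that $G_0 v_r$ and $G_2 v_r$ land in the correct adjacent eigenspaces. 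I would then check that each defined matrix for $t_i$ is invertible (so $t_i^{-1}$ is well defined and \eqref{eq:defHq1} holds), that $T_i = t_i + t_i^{-1}$ acts as the scalar $k_i + k_i^{-1}$ on all of $\V$ (giving \eqref{eq:defHq2} and the claimed parameter sequence), and that $t_0 t_1 t_2 t_3 = q^{-1}$ on each basis vector (giving \eqref{eq:defHq3}). The centrality of $T_i$ is the delicate part: one must confirm that the scalar coming from the $1$-bond involving $v_r$ agrees with the scalar coming from the adjacent $1$-bond involving $v_{r-1}$, and this is precisely where the definitions of $\mu_r$ via the $k_i$ force consistency.

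The hard part will be verifying relation \eqref{eq:defHq3}, namely $t_0 t_1 t_2 t_3 = q^{-1}$ on each $v_r$. This is a genuine four-fold composition crossing several bonds, and it is where the global constraint $k_0 k_1 k_2 k_3 = q^{-n-1}$ (in the {\sf DS} case) or the analogous equation from \eqref{eq:constraints} must be invoked; the computation is essentially the reverse of Lemma~\ref{lem:zero}, where we already found $u_{n+1}=0$ using exactly this equation. Rather than grind through it directly, I would exploit the bookkeeping already established: the actions \eqref{eq:t0vr}--\eqref{eq:t2vr+1} are algebraic consequences of the identities \eqref{eq:R11no1}--\eqref{eq:R11no4} and Corollary~\ref{cor:R9}, so one can verify \eqref{eq:defHq3} locally bond-by-bond and then patch using the diagram shape and the constraint on the product of the $k_i$. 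Once the relations are verified, irreducibility follows by the argument of Lemma~\ref{lem:connected} and Proposition~\ref{prop:mfree}: the path shape of the diagram together with Lemma~\ref{lem:t0t2inv}(iii) shows any nonzero submodule invariant under $t_0,t_2$ must be all of $\V$, and diagonalizability of $X$ is immediate since by construction $X = t_3 t_0$ (resp.\ $q^{-1} t_2^{-1} t_1^{-1}$) has the distinct eigenvalues $\{\mu_r\}_{r=0}^n$. Finally, reading off the eigenvalues of $t_0,t_3$ (resp.\ $t_1,t_2$) at the two endvertices from \eqref{eq:tiv0vn} confirms both that the $X$-type is $\textup{\sf T}$ and that $\{k_i\}_{i\in\I}$ is consistent with the standard ordering, completing the proof.
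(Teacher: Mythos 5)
Your construction is, step for step, the paper's own: the same scalars $\{\mu_r\}_{r=0}^n$ defined from $\{k_i\}_{i\in\I}$ and {\sf T}, the same preliminary facts (distinctness of the $\mu_r$ and the shape of the reduced diagram, both extracted from \eqref{eq:constraints}), the same action formulas \eqref{eq:t0vr}--\eqref{eq:t3vr+1} and \eqref{eq:t1vr}--\eqref{eq:t2vr+1} on the bonds, the same endvertex rule \eqref{eq:tiv0vn}, and the same plan of verifying \eqref{eq:defHq1}--\eqref{eq:defHq3} directly on the basis (the paper's Lemma \ref{lem:Hqmodule} declares this verification routine; you flag \eqref{eq:defHq3} as the delicate point, which is a fair description of the same computation). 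The final readout of the $X$-type and of the consistency of $\{k_i\}_{i\in\I}$ with the standard ordering also matches the paper.

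The genuine gap is your irreducibility step. You propose that irreducibility ``follows by the argument of Lemma \ref{lem:connected} and Proposition \ref{prop:mfree}'' together with Lemma \ref{lem:t0t2inv}(iii). This is circular: Lemma \ref{lem:connected} and Proposition \ref{prop:mfree} are statements about XD $\Hq$-modules, which are irreducible by definition, and their proofs end with ``this forces $\W=\V$ since $\V$ is irreducible'' --- precisely what you are trying to prove. Moreover, Lemma \ref{lem:t0t2inv}(iii) points the wrong way: it certifies that an $X$-invariant subspace closed under $t_0,t_2$ \emph{is} a submodule; it gives no reason why a nonzero submodule must be all of $\V$. What is actually needed (and what the paper does in Lemma \ref{lem:irred}) is: (a) show that a nonzero submodule $\W$ contains some basis vector $v_s$ --- the paper takes $w\in\W$ supported on $\sum_{r=s}^n \F v_r$ with $s$ maximal and uses $\mu_s w - Xw$ together with the distinctness of the $\mu_r$; and (b) propagate from $v_s$ to every $v_r$ using $G_0,G_2\in\Hq$: forward one has $G_0 v_r = v_{r+1}$ (resp.\ $G_2 v_r = v_{r+1}$), and backward one needs $G_0 v_{r+1} = G(\mu_r,k_0,k_3)\,v_r$ (resp.\ $G_2 v_{r+1} = G(q\mu_r,k_1,k_2)\,v_r$) with \emph{nonzero} coefficient. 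That nonvanishing is the paper's Lemma \ref{lem:G02G22}, and it is a second place where the hypotheses \eqref{eq:constraints} must be invoked; your sketch never secures it. With (a) and (b) inserted, your argument becomes exactly the paper's proof.
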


Let $n \geq 0$ denote an integer and let $\{k_i\}_{i \in \I}$ denote nonzero scalars in $\F$.
Fix {\sf T} among {\sf DS}, {\sf DDa}, {\sf DDb}, {\sf SSa}, {\sf SSb}.
Assume that $n$ is even if \textup{\sf T} is \textup{\sf DS} and odd otherwise.
Assume that $\{k_i\}_{i \in \I}$ satisfy the conditions \eqref{eq:constraints}.
Define scalars $\{\mu_r\}_{r=0}^n$ as follows.
If {\sf T} is among {\sf DS}, {\sf DDa}, {\sf DDb},
\begin{align}       \label{eq:Dmur2}
 \mu_r &= 
  \begin{cases}
    k_0k_3 q^{r}            &  \text{ if $r$ is even},  \\
    \frac{1}{k_0k_3q^{r+1} } & \text{ if $r$ is odd}
  \end{cases}
  && (0 \leq r \leq n).
\end{align}
If {\sf T} is among {\sf SSa}, {\sf SSb},
\begin{align}       \label{eq:Smur2}
 \mu_r &= 
  \begin{cases}
    \frac{1}{k_1k_2 q^{r+1} }  &  \text{ if $r$ is even},  \\
    k_1k_2 q^r                & \text{ if $r$ is odd}
  \end{cases}
  && (0 \leq r \leq n).
\end{align}

\begin{lemma}   \label{lem:murdistinct}  \samepage
\ifDRAFT {\rm lem:murdistinct}. \fi
The scalars $\{\mu_r\}_{r=0}^n$ are mutually distinct.
\end{lemma}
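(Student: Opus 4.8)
The plan is to show that the sequence $\{\mu_r\}_{r=0}^n$ defined in \eqref{eq:Dmur2} or \eqref{eq:Smur2} has no repeated values, by treating the two cases \textup{\sf DS}/\textup{\sf DDa}/\textup{\sf DDb} and \textup{\sf SSa}/\textup{\sf SSb} in parallel. First I would fix the type \textup{\sf T} and suppose for contradiction that $\mu_r = \mu_s$ for some $0 \leq r < s \leq n$. The key structural observation is that the formula for $\mu_r$ depends only on the parity of $r$, so the analysis naturally splits into three subcases according to the parities of $r$ and $s$: both even, both odd, and opposite parities. In each subcase I would substitute the explicit formula and reduce the equality $\mu_r=\mu_s$ to a statement that some power of $q$ (possibly times a power of $k_0 k_3$ or $k_1 k_2$) equals $1$, then contradict one of the constraints in \eqref{eq:constraints} or the hypothesis that $q$ is not a root of unity.

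Concretely, for the cases \textup{\sf DS}, \textup{\sf DDa}, \textup{\sf DDb}, using \eqref{eq:Dmur2}: if $r$, $s$ are both even then $\mu_r = \mu_s$ forces $q^{r-s}=1$, impossible since $0 < s-r \leq n$ and $q$ is not a root of unity; the both-odd case is identical after clearing the reciprocal. In the opposite-parity case, say $r$ even and $s$ odd, the equation $k_0 k_3 q^r = (k_0 k_3 q^{s+1})^{-1}$ rearranges to $k_0^2 k_3^2 q^{r+s+1} = 1$, i.e.\ $(k_0 k_3)^2$ equals a specific negative power of $q$; since $r+s+1$ is even and lies in the range forcing $k_0^2 k_3^2 \in \{q^{-2}, q^{-4}, \ldots, q^{-2n}\}$, this contradicts the constraint that neither of $\pm k_0 k_3$ is among $q^{-1}, q^{-2}, \ldots, q^{-n}$ (equivalently $k_0^2 k_3^2 \notin \{q^{-2},\ldots,q^{-2n}\}$). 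The symmetric subcase ($r$ odd, $s$ even) gives the same conclusion. For the cases \textup{\sf SSa}, \textup{\sf SSb} the argument is verbatim the same with $k_0 k_3$ replaced by $k_1 k_2$ throughout, using \eqref{eq:Smur2} and the corresponding constraint on $k_1^2 k_2^2$.

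I would note that this is essentially a repackaging of the inequalities already extracted in Lemma \ref{lem:typekrpre1}, whose proof ran the same computation in the forward direction (deriving $k_0^2 k_3^2 \neq q^{-2(r+s)}$ precisely from $\mu_{2r} \neq \mu_{2s-1}$). The main \emph{technical} point to get right is the range bookkeeping: I must verify that when $r+s+1$ ranges over the possible values under $0 \leq r < s \leq n$, the exponent $r+s+1$ always lands in $\{2, 4, \ldots, 2n\}$ as an even integer, so that the relevant hypothesis in \eqref{eq:constraints} actually applies. This is the only place where parity and range interact, and I expect it to be the sole obstacle; everything else is immediate arithmetic. Since the statement is symmetric in the two families of types and the computation is short, I would write out the \textup{\sf DS}/\textup{\sf DD} case in full and remark that the \textup{\sf SS} case is analogous, as is done elsewhere in the paper.
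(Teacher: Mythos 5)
Your overall strategy is the intended one (the paper's own proof is simply ``Routine using \eqref{eq:constraints}''), and your parity and range bookkeeping is correct: if $r$ and $s$ have the same parity then $\mu_r=\mu_s$ forces $q^{s-r}=1$, impossible since $q$ is not a root of unity, while opposite parities force $k_0^2k_3^2=q^{-(r+s+1)}$ (resp.\ $k_1^2k_2^2=q^{-(r+s+1)}$) with $r+s+1$ even and $2\le r+s+1\le 2n$.

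However, there is a genuine (though fillable) gap in your justification of the opposite-parity case for four of the five types. The constraint you cite --- ``neither of $\pm k_0k_3$ is among $q^{-1},q^{-2},\ldots,q^{-n}$,'' equivalently $k_0^2k_3^2\notin\{q^{-2},q^{-4},\ldots,q^{-2n}\}$ --- appears in \eqref{eq:constraints} only in the row for type \textup{\sf DS}. For types \textup{\sf DDa} and \textup{\sf DDb} no condition on the product $k_0k_3$ is listed, and for \textup{\sf SSa}, \textup{\sf SSb} there is no ``corresponding constraint on $k_1^2k_2^2$'' at all; in these cases the needed inequality must be derived from the two constraints that \emph{are} listed. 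For \textup{\sf DDa}, say: the equation $k_0^2=q^{-n-1}$ shows that $k_0^2k_3^2=q^{-2j}$ with $1\le j\le n$ would force $k_3^2=q^{n+1-2j}$; since $n$ is odd, $n+1-2j$ runs over the even integers $2m$ with $|m|\le(n-1)/2$, and $k_3^2=q^{2m}$ for such $m$ is exactly what the condition ``none of $\pm k_3^{\pm1}$ is among $1,q,q^2,\ldots,q^{(n-1)/2}$'' rules out. The same two-line argument, with the roles of $k_0,k_3$ (resp.\ $k_1,k_2$) permuted, handles \textup{\sf DDb}, \textup{\sf SSa}, \textup{\sf SSb}. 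So your proof becomes correct once this substitution is made, but as written it appeals to hypotheses that \eqref{eq:constraints} does not provide for those types --- and contrary to your closing remark, this (not the exponent range) is the one point where the argument is genuinely type-dependent.
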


\begin{proof}
Routine using \eqref{eq:constraints}.
\end{proof}

\begin{lemma}  \label{lem:murmur+1}
\ifDRAFT {\rm lem:murmur+1}. \fi
The reduced diagram of $\{\mu_r\}_{r=0}^n$ is as follows:
\begin{equation}      \label{eq:diagrams}
\begin{array}{c|c}
\textup{\sf T} & \text{\rm Diagram} 
\\ \hline
\textup{\sf DS} &
$\begin{xy}
  \ar @{=} (-10,0) *++!D{\mu_0} *\cir<2pt>{}; 
            (0,0)   *++!D{\mu_1} *\cir<2pt>{}="B"
  \ar @{-} "B" ; (10,0) *++!D{\mu_2} *\cir<2pt>{}="C"
  \ar @{=} "C" ; (20,0) *++!D{\mu_3} *\cir<2pt>{}="D"
  \ar @{-} "D" ; (30,0) *{\cdots\cdots}="E"
  \ar @{=} "E" ; (40,0) *++!D{\mu_{n-1}} *\cir<2pt>{}="F"
  \ar @{-} "F" ; (50,0) *++!D{\mu_n}  *\cir<2pt>{};
 \end{xy}$
\\
\textup{\sf DDa}, \textup{\sf DDb} & 
$ \begin{xy}
  \ar @{=} (-10,0) *++!D{\mu_0} *\cir<2pt>{}; 
            (0,0)   *++!D{\mu_1} *\cir<2pt>{}="B"
  \ar @{-} "B" ; (10,0) *++!D{\mu_2} *\cir<2pt>{}="C"
  \ar @{=} "C" ; (20,0) *++!D{\mu_3} *\cir<2pt>{}="D"
  \ar @{-} "D" ; (30,0) *{\cdots\cdots}="E"
  \ar @{-} "E" ; (40,0) *++!D{\mu_{n-1}} *\cir<2pt>{}="F"
  \ar @{=} "F" ; (50,0) *++!D{\mu_n}  *\cir<2pt>{};
 \end{xy}$
\\
\textup{\sf SSa}, \textup{\sf SSb} &
$ \begin{xy}
  \ar @{-} (-10,0) *++!D{\mu_0} *\cir<2pt>{}; 
            (0,0)   *++!D{\mu_1} *\cir<2pt>{}="B"
  \ar @{=} "B" ; (10,0) *++!D{\mu_2} *\cir<2pt>{}="C"
  \ar @{-} "C" ; (20,0) *++!D{\mu_3} *\cir<2pt>{}="D"
  \ar @{=} "D" ; (30,0) *{\cdots\cdots}="E"
  \ar @{=} "E" ; (40,0) *++!D{\mu_{n-1}} *\cir<2pt>{}="F"
  \ar @{-} "F" ; (50,0) *++!D{\mu_n}  *\cir<2pt>{};
 \end{xy}$
\end{array}
\end{equation}
\end{lemma}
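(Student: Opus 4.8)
The plan is to read off the reduced diagram directly from the adjacency relations. By Lemma~\ref{lem:murdistinct} the scalars $\{\mu_r\}_{r=0}^n$ are mutually distinct, so they constitute $n+1$ distinct vertices. Recall that two scalars $\mu,\nu$ are joined by a single bond exactly when $\mu\nu=1$ and by a double bond exactly when $\mu\nu=q^{-2}$, and that a bond becomes a loop precisely when $\mu=\nu$. Since the reduced diagram is obtained by deleting loops, it suffices to locate all bonds joining \emph{distinct} vertices. The key structural fact I would use is that each vertex $\mu$ has at most one $1$-adjacent partner, namely $\mu^{-1}$, and at most one $q$-adjacent partner, namely $q^{-2}\mu^{-1}$.

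First I would establish the consecutive bonds. Using \eqref{eq:Dmur2}, a direct computation gives, for the types \textsf{DS}, \textsf{DDa}, \textsf{DDb}, that $\mu_r\mu_{r+1}=q^{-2}$ when $r$ is even and $\mu_r\mu_{r+1}=1$ when $r$ is odd; using \eqref{eq:Smur2}, for \textsf{SSa}, \textsf{SSb} the two cases are interchanged. Since the $\mu_r$ are distinct we have $\mu_r\neq\mu_{r+1}$, so each such relation is a genuine bond rather than a loop. This yields exactly the alternating pattern of double and single bonds displayed in \eqref{eq:diagrams}, a path starting with a double bond in the \textsf{DS}, \textsf{DD} cases and with a single bond in the \textsf{SS} cases, the parity of $n$ making the terminal bond the correct type.

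It then remains to rule out all further bonds. For an interior vertex $\mu_r$ with $0<r<n$ the two consecutive neighbors $\mu_{r-1}$ and $\mu_{r+1}$ are, by the alternating pattern, one $1$-adjacent and one $q$-adjacent to $\mu_r$; these exhaust the unique $1$-adjacent and unique $q$-adjacent partners of $\mu_r$, so no additional bond can emanate from $\mu_r$ and no inequality is needed here. For the two endpoints $\mu_0$ and $\mu_n$ only one partner slot is filled by a consecutive neighbor, and I would show that the remaining candidate partner either fails to lie in $\{\mu_r\}_{r=0}^n$ or coincides with the endpoint itself, in which case it contributes only a loop and is deleted. Concretely, matching the unoccupied partner of each endpoint against the two forms in \eqref{eq:Dmur2} (resp.\ \eqref{eq:Smur2}) reduces to equations of the shape $(k_0k_3)^2=q^{-j}$ or $q^{m}=1$ (resp.\ $(k_1k_2)^2=q^{-j}$ or $q^{m}=1$).

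This endpoint analysis is the main obstacle, and it is precisely where the inequalities \eqref{eq:constraints} enter: the forbidden products of the $k_i$ listed there are exactly those that would turn an endpoint's remaining candidate partner into a genuine distinct vertex, thereby creating a spurious long-range bond or a fork. The delicate point is to check that the borderline excluded exponents correspond exactly to the loop cases $\mu=\mu^{-1}$ or $q^{-2}\mu^{-1}=\mu$ at the endpoints, which are harmlessly removed in passing to the reduced diagram. Carrying out this bookkeeping for each of the five types, using the relevant block of \eqref{eq:constraints} together with the hypothesis that $q$ is not a root of unity, then shows that the surviving bonds are precisely the consecutive ones, completing the proof.
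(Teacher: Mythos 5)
Your proposal is correct and takes essentially the same route as the paper: the paper's proof is the one-line remark that the diagram can be read off from \eqref{eq:Dmur2} and \eqref{eq:Smur2}, and your argument is exactly that verification carried out in detail (consecutive products give the alternating bonds, uniqueness of the $1$-adjacent and $q$-adjacent partners settles interior vertices, and the constraints \eqref{eq:constraints} exclude spurious endpoint bonds, the borderline cases being loops that vanish in the reduced diagram). The endpoint bookkeeping you outline does check out in all five types, so your write-up simply makes explicit what the paper leaves implicit.
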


\begin{proof}
Follows from \eqref{eq:Dmur2} and \eqref{eq:Smur2}.
\end{proof}

We now construct an $\Hq$-module.
Let $\V$ denote a vector space over $\F$ with dimension $n+1$,
and let  $\{v_r\}_{r=0}^n$ denote a basis for $\V$.
We define the action of $\{t_i\}_{i \in \I}$ on $\{v_r\}_{r=0}^n$ as follows.
Recall the function $G$ from \eqref{eq:defG}.
For $0 \leq r \leq n-1$ such that $\mu_r$, $\mu_{r+1}$ are $1$-adjacent,
we define the action of $t_0$, $t_3$ by
\begin{align}
 t_0 v_r &=
  \frac{\mu_r (k_0+k_0^{-1}) - k_3-k_3^{-1}}
       {\mu_r - \mu_r^{-1}} v_r
 + \frac{\mu_r}
        {\mu_r - \mu_r^{-1}}  v_{r+1},              \label{eq:t0vr2}
\\
 t_0 v_{r+1} &=
  \frac{G(\mu_r, k_0,k_3)}
       {\mu_r (\mu_r^{-1} - \mu_r)} v_{r} 
  + \frac{\mu_r^{-1} (k_0+k_0^{-1}) - k_3-k_3^{-1}}
         {\mu_r^{-1} - \mu_r} v_{r+1},              \label{eq:t0vr+12}
\\
 t_3 v_r &=
   \frac{\mu_r (k_3+k_3^{-1}) - k_0-k_0^{-1}}
        {\mu_r - \mu_r^{-1}} v_r
 +  \frac{1}
         {\mu_r^{-1} - \mu_r} v_{r+1},              \label{eq:t3vr2}
\\
 t_3 v_{r+1} &=
     \frac{G(\mu_r, k_0,k_3)}
          {\mu_r - \mu_r^{-1}}  v_r
  +  \frac{\mu_r^{-1} (k_3+k_3^{-1}) - k_0-k_0^{-1}}
          {\mu_r^{-1} - \mu_r} v_{r+1}.              \label{eq:t3vr+12}
\end{align}
For $0 \leq r \leq n-1$ such that $\mu_r$, $\mu_{r+1}$ are $q$-adjacent,
we define the action of $t_1$, $t_2$ by
\begin{align}
t_1 v_r &=
   \frac{q^{-1}\mu_r^{-1} (k_1+k_1^{-1}) - k_2-k_2^{-1}}
        {q^{-1}\mu_r^{-1} - q\mu_r} v_r
 + \frac{1}
        {q\mu_r - q^{-1}\mu_r^{-1}} v_{r+1},   \label{eq:t1vr2}
\\
t_1 v_{r+1} &=
   \frac{G(q \mu_r, k_1,k_2)}
        {q^{-1}\mu_r^{-1} - q\mu_r} v_r
 + \frac{q\mu_r (k_1+k_1^{-1}) - k_2-k_2^{-1}}
        {q\mu_r - q^{-1}\mu_r^{-1}}  v_{r+1},  \label{eq:t1vr+12}
\\
t_2 v_r &=
   \frac{q^{-1}\mu_r^{-1} (k_2+k_2^{-1}) - k_1-k_1^{-1}}
       {q^{-1}\mu_r^{-1} - q\mu_r} v_r
 + \frac{q^{-1}\mu_r^{-1}}
        {q^{-1}\mu_r^{-1} - q\mu_r} v_{r+1},    \label{eq:t2vr2}
\\
t_2 v_{r+1} &=
   \frac{q \mu_r\, G(q \mu_r, k_1,k_2)}
        {q\mu_r - q^{-1}\mu_r^{-1}} v_r 
 + \frac{q\mu_r (k_2+k_2^{-1}) - k_1-k_1^{-1}}
        {q\mu_r - q^{-1}\mu_r^{-1}} v_{r+1}.   \label{eq:t2vr+12}
\end{align}
In \eqref{eq:t0vr2}--\eqref{eq:t2vr+12} the denominators are nonzero 
by Lemmas  \ref{lem:murdistinct} and \ref{lem:murmur+1}.
We have defined some actions of $\{t_i\}_{i \in \I}$.
The remaining actions are defined as follows:
\begin{equation}          \label{eq:defonv0vn}
\begin{array}{c|llll}
\textup{\sf T} & \multicolumn{4}{c}{\text{\rm Action}}
\\ \hline \rule{0mm}{4.3mm}
 \textup{\sf DS} &
 t_0 v_0 = k_0 v_0 \;\; & t_3 v_0 = k_3 v_0\;\;  & t_1 v_n = k_1 v_n \;\;& t_2 v_n = k_2 v_n  
\\  \rule{0mm}{4mm}
 \textup{\sf DDa} & 
 t_0 v_0 = k_0 v_0 & t_3 v_0 = k_3 v_0 & t_0 v_n = k_0 v_n & t_3 v_n = k_3^{-1} v_n
\\ \rule{0mm}{4mm}
 \textup{\sf DDb} & 
 t_0 v_0 = k_0 v_0 & t_3 v_0 = k_3 v_0 & t_0 v_n = k_0^{-1} v_n & t_3 v_n = k_3 v_n
\\ \rule{0mm}{4mm}
 \textup{\sf SSa} & 
 t_1 v_0 = k_1 v_0 & t_2 v_0 = k_2 v_0 & t_1 v_n = k_1 v_n & t_2 v_n = k_2^{-1} v_n
\\ \rule{0mm}{4mm}
 \textup{\sf SSb} & t_1 v_0 = k_1 v_0 & t_2 v_0 = k_2 v_0 & t_1 v_n = k_1^{-1} v_n & t_2 v_n = k_2 v_n
\end{array}
\end{equation}
For $i \in \I$ we define the action of $t_i^{-1}$ on $\{v_r\}_{r=0}^n$ by 
\begin{align}   \label{eq:deftiinv}
  t_i^{-1}v_r &= (k_i+k_i^{-1})v_r - t_i v_r
   && (0 \leq r \leq n).
\end{align}
We have defined the action of \{$t_i^{\pm 1}\}_{i \in \I}$ on $\{v_r\}_{r=0}^n$.

\begin{lemma}  \label{lem:Hqmodule} \samepage
\ifDRAFT {\rm lem:Hqmodule}. \fi
The above actions of $\{t_i^{\pm 1}\}_{i \in \I}$ on $\{v_r\}_{r=0}^n$ give an $\Hq$-module structure
on $\V$.
\end{lemma}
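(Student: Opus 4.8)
The plan is to verify that the operators defined by \eqref{eq:t0vr2}--\eqref{eq:deftiinv} satisfy the three defining relations \eqref{eq:defHq1}--\eqref{eq:defHq3} of $\Hq$. Before touching the relations I would record that each $t_i^{\pm 1}$ is in fact unambiguously defined on all of $\V$. By the bond structure of Lemma \ref{lem:murmur+1}, the single bonds (resp.\ double bonds) occur in alternating positions, so that every basis vector $v_r$ lies in exactly one block on which $t_0,t_3$ (resp.\ $t_1,t_2$) act by a $2\times 2$ formula, while the two endvertices $v_0,v_n$ receive the complementary diagonal actions in \eqref{eq:defonv0vn}. Thus there is neither a gap nor an overlap conflict, and $t_i^{-1}$ is then defined uniformly by \eqref{eq:deftiinv}.

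Relation \eqref{eq:defHq2} is immediate: by \eqref{eq:deftiinv} we have $t_i+t_i^{-1}=(k_i+k_i^{-1})I$, a scalar operator, hence central. Relation \eqref{eq:defHq1} reduces, via $t_i^{-1}=(k_i+k_i^{-1})I-t_i$, to the single quadratic $(t_i-k_i)(t_i-k_i^{-1})=0$, because then both $t_it_i^{-1}$ and $t_i^{-1}t_i$ equal $(k_i+k_i^{-1})t_i-t_i^2=I$. On the diagonal pieces this is clear, and on each $2\times 2$ bond block it follows from Cayley--Hamilton once I check the block has trace $k_i+k_i^{-1}$ and determinant $1$. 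The trace is immediate by cancellation. For the determinant I would write $G$ in the symmetric form of Lemma \ref{lem:G}: with $L=\mu_r+\mu_r^{-1}$, $S=k_0+k_0^{-1}$, $T=k_3+k_3^{-1}$ the cross term $-LST$ cancels against the off-diagonal product and the numerator collapses to $L^2-4=(\mu_r-\mu_r^{-1})^2$, cancelling the denominator $(\mu_r-\mu_r^{-1})^2$; the same computation with $\lambda=q\mu_r$, $\lambda^{-1}=q^{-1}\mu_r^{-1}$, $s=k_1$, $t=k_2$ handles the $t_1,t_2$ blocks.

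The real work, and the main obstacle, is relation \eqref{eq:defHq3}, namely $t_0t_1t_2t_3=q^{-1}I$. I would verify this by computing the action of the composite on each $v_r$ and showing it returns $q^{-1}v_r$. Since each $t_i$ is supported on blocks of width at most two, the composite is a banded operator and the verification splits into a small number of cases according to the parity of $r$ and whether $v_r$ is interior or an endvertex; each case is a routine product of the explicit $2\times 2$ matrices in \eqref{eq:t0vr2}--\eqref{eq:t2vr+12}. The content of the computation is that all off-diagonal contributions cancel and the diagonal entry is constantly $q^{-1}$.

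At the two endvertices the vanishing of \eqref{eq:defHq3}'s off-diagonal terms forces a compatibility between the diagonal actions \eqref{eq:defonv0vn} and the neighbouring bond actions, and this is exactly what the product constraint in \eqref{eq:constraints} supplies: for type \textup{\sf DS}, say, the identity $k_0k_1k_2k_3=q^{-n-1}$ makes the $X$-eigenvalue at $v_n$ computed from $t_3t_0$ agree with the value $q^{-1}k_1^{-1}k_2^{-1}$ forced by \eqref{eq:defHq3}. I expect to carry out the computation in full for the case \textup{\sf DS}, and to treat the remaining four types by the same method, in each case the corresponding product relation in \eqref{eq:constraints} playing the analogous role. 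Once \eqref{eq:defHq1}--\eqref{eq:defHq3} are established, the assignment $t_i^{\pm 1}\mapsto\rho(t_i^{\pm1})$ extends to an algebra homomorphism $\Hq\to\mathrm{End}(\V)$, giving the desired $\Hq$-module structure.
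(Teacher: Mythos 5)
Your proposal is correct and takes essentially the same route as the paper: the paper's proof of Lemma \ref{lem:Hqmodule} consists of the single assertion that one routinely checks the defining relations \eqref{eq:defHq1}--\eqref{eq:defHq3} on the basis $\{v_r\}_{r=0}^n$, which is precisely the verification you outline. Your packaging of that routine check---the scalar identity $t_i+t_i^{-1}=(k_i+k_i^{-1})I$ for \eqref{eq:defHq2}, Cayley--Hamilton with trace $k_i+k_i^{-1}$ and determinant $1$ on each $2\times 2$ block for \eqref{eq:defHq1} (your determinant cancellation against $G$ is exactly right), and the identification of the product equation in \eqref{eq:constraints} as the compatibility that makes \eqref{eq:defHq3} hold at the endvertices---is a faithful expansion of the computation the paper leaves implicit.
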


\begin{proof}
One routinely checks that the defining relations \eqref{eq:defHq1}--\eqref{eq:defHq3} of $\Hq$
hold on $\{v_r\}_{r=0}^n$.
\end{proof}

\begin{lemma}  \label{lem:Xeigen}  \samepage
\ifDRAFT {\rm lem:Xeigen}. \fi
For $0 \leq r \leq n$ the vector $v_r$ is an eigenvector of $X$
with eigenvalue $\mu_r$.
Moreover $X$ is diagonalizable on $\V$.
\end{lemma}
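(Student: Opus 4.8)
The plan is to compute the action of $X = t_3 t_0$ directly on each basis vector $v_r$, using the explicit actions of $t_0$, $t_3$ defined in \eqref{eq:t0vr2}--\eqref{eq:t3vr+12} and \eqref{eq:defonv0vn}. The first observation is that to evaluate $X v_r$ we never need $t_1$ or $t_2$, so the $q$-adjacent (double) bonds play no role here; only the $1$-adjacent (single) bonds and the scalar actions in \eqref{eq:defonv0vn} matter. Inspecting the diagrams in \eqref{eq:diagrams}, every vertex $\mu_r$ is either an endvertex incident to a double bond, on which $t_0$, $t_3$ act as scalars by \eqref{eq:defonv0vn}, or it is incident to exactly one single bond, on which $t_0$, $t_3$ act through the closed $2$-dimensional formulas \eqref{eq:t0vr2}--\eqref{eq:t3vr+12}. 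I would organize the proof along these two cases.

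For the single-bond case, let $\mu_r$, $\mu_{r+1}$ be a single bond, so $\mu_{r+1} = \mu_r^{-1}$; abbreviate $\mu = \mu_r$, $s = \mu - \mu^{-1}$, $P = k_0 + k_0^{-1}$, $Q = k_3 + k_3^{-1}$. Applying $t_3$ to $t_0 v_r$ via \eqref{eq:t0vr2}, \eqref{eq:t3vr2}, \eqref{eq:t3vr+12}, one finds that the coefficient of $v_{r+1}$ in $X v_r$ collapses to $0$ after an immediate cancellation, while the coefficient of $v_r$ equals $s^{-2}\bigl((\mu P - Q)(\mu Q - P) + \mu\,G(\mu,k_0,k_3)\bigr)$. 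Using the expansion of $G(\mu,k_0,k_3)$ from Lemma \ref{lem:G}, the terms in $P^2 + Q^2$ and in $PQ$ cancel, leaving $\mu\bigl((\mu+\mu^{-1})^2 - 4\bigr) = \mu s^2$, so the coefficient is $\mu = \mu_r$. Hence $X v_r = \mu_r v_r$. The symmetric computation, exploiting the $\mu \leftrightarrow \mu^{-1}$ symmetry of $G$ recorded in Lemma \ref{lem:G}, gives $X v_{r+1} = \mu_r^{-1} v_{r+1} = \mu_{r+1} v_{r+1}$. This single calculation covers every vertex incident to a single bond.

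For the double-bond endvertices, $t_0$ and $t_3$ act as scalars by \eqref{eq:defonv0vn}, so $X v = t_3 t_0 v$ is simply the product of those scalars times $v$; one then checks this product equals the value $\mu_r$ prescribed by \eqref{eq:Dmur2} or \eqref{eq:Smur2}. For instance, at $\mu_0$ in each of \textsf{DS}, \textsf{DDa}, \textsf{DDb} one gets $k_0 k_3 = \mu_0$ directly, while at $\mu_n$ in \textsf{DDa} one gets $k_0 k_3^{-1}$, which matches $\mu_n = (k_0 k_3 q^{n+1})^{-1}$ precisely because $k_0^2 = q^{-n-1}$ from \eqref{eq:constraints}; the remaining endvertices are analogous and likewise invoke the appropriate equality in \eqref{eq:constraints}. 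Combining the two cases shows $X v_r = \mu_r v_r$ for all $0 \le r \le n$. Finally, since $\{v_r\}_{r=0}^n$ is a basis of $\V$ consisting of eigenvectors of $X$, the space $\V$ is spanned by the eigenspaces of $X$, so $X$ is diagonalizable.

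I anticipate no serious obstacle: the only delicate point is the bookkeeping of which bond governs $t_0$, $t_3$ at each vertex, together with keeping track of the constraints from \eqref{eq:constraints} needed at the endvertices. The core algebraic content is the clean cancellation driven by the formula for $G$ in Lemma \ref{lem:G}.
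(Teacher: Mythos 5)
Your proposal is correct and matches the paper's own argument: the paper's proof simply says ``one checks $t_3t_0v_r=\mu_r v_r$'' and then concludes diagonalizability, and your computation (the vanishing of the $v_{r+1}$-coefficient, the cancellation via Lemma \ref{lem:G} leaving $\mu_r(\mu_r-\mu_r^{-1})^2$, and the scalar check at the double-bond endvertices using \eqref{eq:constraints}) is exactly the verification being left to the reader there. The only cosmetic difference is that you deduce diagonalizability from having a basis of eigenvectors, while the paper cites the distinctness of the $\mu_r$ from Lemma \ref{lem:murdistinct}; both are immediate.
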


\begin{proof}
Pick any integer $r$ such that $0 \leq r \leq n$.
One checks $t_3t_0 v_r = \mu_r v_r$.
Therefore $v_r$ is an eigenvector of $X$ with eigenvalue $\mu_r$.
Now $X$ has $n+1$ mutually distinct eigenvalues on $\V$ by
Lemma \ref{lem:murdistinct}. So $X$ is diagonalizable on $\V$.
\end{proof}

\begin{lemma}  \label{lem:G0vrG2vr}  \samepage
\ifDRAFT {\rm lem:G0vrG2vr}. \fi
For $0 \leq r \leq n-1$ the following hold.
\begin{itemize}
\item[\rm (i)]
Assume that $\mu_r$, $\mu_{r+1}$ are $1$-adjacent.
Then $G_0 v_r = v_{r+1}$ and $G_0 v_{r+1} = G(\mu_r,k_0,k_3) v_r$.
\item[\rm (ii)]
Assume that $\mu_r$, $\mu_{r+1}$ are $q$-adjacent.
Then $G_2 v_r = v_{r+1}$ and  $G_2 v_{r+1} = G(q\mu_r,k_1,k_2) v_r$.
\end{itemize}
\end{lemma}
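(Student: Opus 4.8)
The plan is to recall the definitions of $G_0$ and $G_2$ from Definition \ref{def:G0G2} and compute their action directly on the basis vectors $v_r$ using the explicit formulas \eqref{eq:t0vr2}--\eqref{eq:t2vr+12} that define the $\Hq$-module structure. Recall that $G_0 = t_0 - t_3 t_0 t_3^{-1}$ and $G_2 = t_2 - t_1 t_2 t_1^{-1}$, so each $G_i$ is a commutator-type expression in the $t_j$. First I would treat part (i): for a $1$-adjacent pair $\mu_r$, $\mu_{r+1}$, the element $G_0$ preserves the two-dimensional space $\F v_r + \F v_{r+1}$ since by \eqref{eq:t0vr2}--\eqref{eq:t3vr+12} both $t_0$ and $t_3$ act on it, and $t_3^{-1}$ acts via \eqref{eq:deftiinv}. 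So $G_0 v_r$ and $G_0 v_{r+1}$ are explicit linear combinations of $v_r$, $v_{r+1}$ with coefficients that are rational functions of $\mu_r$, $k_0$, $k_3$.

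The key computational step is to assemble $G_0 v_r = t_0 v_r - t_3 t_0 t_3^{-1} v_r$ by composing the four matrix entries in \eqref{eq:t0vr2}--\eqref{eq:t3vr+12} together with the expression for $t_3^{-1}$ coming from \eqref{eq:deftiinv}, namely $t_3^{-1} = (k_3+k_3^{-1}) - t_3$. One checks that the diagonal contributions cancel so that $G_0 v_r$ is a scalar multiple of $v_{r+1}$; normalizing the definitions shows that multiple is exactly $1$, giving $G_0 v_r = v_{r+1}$. Running the same computation on $v_{r+1}$, the off-diagonal terms conspire to produce $G(\mu_r,k_0,k_3) v_r$, where the factor $G(\mu_r,k_0,k_3)$ is precisely the numerator that already appears in \eqref{eq:t0vr+12} and \eqref{eq:t3vr+12}. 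An alternative, cleaner route is to invoke Lemma \ref{lem:R5}, which gives $G_0 \V_X(\mu_r) \subseteq \V_X(\mu_r^{-1}) = \V_X(\mu_{r+1})$ (using $\mu_r \mu_{r+1} = 1$ from $1$-adjacency and Lemma \ref{lem:Xeigen}), so $G_0 v_r$ is automatically a scalar multiple of $v_{r+1}$; then Corollary \ref{cor:R9}(i) gives $G_0^2 v_r = G(\mu_r,k_0,k_3) v_r$, which pins down the product of the two scalars, while matching one coefficient from the explicit action \eqref{eq:t0vr2}--\eqref{eq:t3vr+12} fixes the normalization $G_0 v_r = v_{r+1}$.

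For part (ii), the argument is entirely parallel using $G_2 = t_2 - t_1 t_2 t_1^{-1}$ and the $q$-adjacency relation $\mu_r \mu_{r+1} = q^{-2}$, together with the action formulas \eqref{eq:t1vr2}--\eqref{eq:t2vr+12}; here Lemma \ref{lem:R5} gives $G_2 \V_X(\mu_r) \subseteq \V_X(q^{-2}\mu_r^{-1}) = \V_X(\mu_{r+1})$ and Corollary \ref{cor:R9}(ii) supplies $G_2^2 v_r = G(q\mu_r,k_1,k_2) v_r$. I would state that this case is obtained in the same manner and omit the repetition.

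The main obstacle will be the bookkeeping in verifying that the scalar multiple in $G_0 v_r = v_{r+1}$ is exactly $1$ rather than some other nonzero constant: this depends on the particular normalization chosen in the defining actions \eqref{eq:t0vr2}--\eqref{eq:t3vr+12}, and one must carefully track which of the two basis vectors carries the $G(\cdot)$ factor. Using the $\V_X(\mu)$-containment from Lemma \ref{lem:R5} reduces this to checking a single matrix entry, so the cleanest proof combines the abstract containment with one explicit coefficient comparison and with Corollary \ref{cor:R9} to recover the second identity; this avoids grinding through all four composite entries by hand.
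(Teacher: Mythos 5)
Your primary route---computing $G_0 v_r = t_0 v_r - t_3 t_0 t_3^{-1} v_r$ directly from the defining formulas \eqref{eq:t0vr2}--\eqref{eq:t3vr+12} together with $t_3^{-1} = (k_3+k_3^{-1})-t_3$ from \eqref{eq:deftiinv}---is exactly what the paper does; its entire proof of this lemma reads ``Routine verification.'' That part of your proposal is correct and matches the intended argument.

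However, the route you single out at the end as ``the cleanest proof'' has a circularity that must be addressed before it is legitimate. Lemma \ref{lem:R5} and Corollary \ref{cor:R9} are stated for XD $\Hq$-modules, and by definition an XD module is a finite-dimensional \emph{irreducible} $\Hq$-module on which $X$ is diagonalizable. At this point in Section \ref{sec:const} the constructed $\V$ is known to be an $\Hq$-module (Lemma \ref{lem:Hqmodule}) on which $X$ is diagonalizable (Lemma \ref{lem:Xeigen}), but irreducibility is precisely what remains to be shown: it is Lemma \ref{lem:irred}, whose proof cites the present lemma together with Lemma \ref{lem:G02G22}. So invoking Lemma \ref{lem:R5} and Corollary \ref{cor:R9} as black boxes here assumes what will later be proved from this very statement. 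The repair is short but must be made explicit: the proof of Lemma \ref{lem:R5} uses only the relation $XG_0 = G_0 X^{-1}$ (and its $G_2$ analogue) from Lemma \ref{lem:R4}, which is an identity in $\Hq$ and therefore holds on any $\Hq$-module; and the proof of Corollary \ref{cor:R9}, via Lemma \ref{lem:R9pre} and Proposition \ref{prop:R7}, needs only that each $T_i$ acts on $\V$ as the scalar $k_i+k_i^{-1}$, which for the constructed module holds by fiat from \eqref{eq:deftiinv} rather than by Schur's lemma. With that remark inserted, your combined argument (eigenspace containment, product of the two scalars equal to $G(\mu_r,k_0,k_3)$, and one explicit coefficient comparison to pin down $G_0 v_r = v_{r+1}$) is a valid and genuinely lighter alternative to computing all four entries of the $2\times 2$ matrix of $G_0$; note only that the ``one coefficient'' still requires composing $t_3 t_0 t_3^{-1}$ on $v_r$, so it is a partial matrix product rather than a single entry read off from the definitions.
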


\begin{proof}
Routine verification.
\end{proof}

\begin{lemma}  \label{lem:G02G22}  \samepage
\ifDRAFT {\rm lem:G02G22}. \fi
For $0 \leq r \leq n-1$ the following hold.
\begin{itemize}
\item[\rm (i)]
Assume that $\mu_r$, $\mu_{r+1}$ are $1$-adjacent.
Then $G(\mu_r,k_0,k_3) \neq 0$.
\item[\rm (ii)]
Assume that $\mu_r$, $\mu_{r+1}$ are $q$-adjacent.
Then $G(q\mu_r,k_1,k_2) \neq 0$.
\end{itemize}
\end{lemma}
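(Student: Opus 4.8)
The plan is to reduce each assertion to a statement about scalars and then play the explicit formulas \eqref{eq:Dmur2}, \eqref{eq:Smur2} for $\mu_r$ off against the constraints \eqref{eq:constraints}. First I would record, from the factored form \eqref{eq:defG} together with $\mu_r\neq 0$ and $q\neq 0$, that $G(\mu_r,k_0,k_3)=0$ holds precisely when $\mu_r$ equals one of the four scalars $k_0k_3$, $k_0k_3^{-1}$, $k_0^{-1}k_3$, $k_0^{-1}k_3^{-1}$, and likewise that $G(q\mu_r,k_1,k_2)=0$ holds precisely when $q\mu_r$ equals one of $k_1k_2$, $k_1k_2^{-1}$, $k_1^{-1}k_2$, $k_1^{-1}k_2^{-1}$. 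Thus it suffices to show that, under \eqref{eq:constraints}, none of these four equalities can occur. (This is the converse of the line of reasoning behind Corollary~\ref{cor:nonzero}, but it must be carried out directly from the definitions, since the irreducibility of $\V$ is not yet available.)

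Next I would use Lemma~\ref{lem:murmur+1} to pin down, for each $X$-type, the parity of $r$ for which $\mu_r,\mu_{r+1}$ is $1$-adjacent (single bond) versus $q$-adjacent (double bond): for \textup{\sf DS}, \textup{\sf DDa}, \textup{\sf DDb} these occur for odd and even $r$ respectively, while for \textup{\sf SSa}, \textup{\sf SSb} they occur for even and odd $r$ respectively. In each situation I substitute the relevant branch of \eqref{eq:Dmur2} or \eqref{eq:Smur2}, so that $\mu_r$ (or $q\mu_r$) becomes an integer power of $q$ times $(k_0k_3)^{\pm 1}$ or $(k_1k_2)^{\pm 1}$. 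Each of the four candidate equalities then rearranges either to a degenerate relation $q^{m}=1$, which is excluded because $q$ is not a root of unity once one checks $m\neq 0$ from the index range, or to one of the forms $k_0^2=q^{-m}$, $k_3^2=q^{-m}$, $(k_0k_3)^2=q^{-m}$, $k_0k_3k_1^{\pm1}k_2^{\pm1}=q^{-m}$ (and their $k_0\leftrightarrow k_1$, $k_3\leftrightarrow k_2$ analogues for the \textup{\sf SS} types). Whenever the directly available constraint in \eqref{eq:constraints} is a four-product but the relation is a square (or vice versa), I would multiply or divide by the defining equation in the top line of \eqref{eq:constraints}, namely $k_0k_1k_2k_3=q^{-n-1}$ for \textup{\sf DS} or the appropriate $k_i^2=q^{-n-1}$ for the other types, to convert between the two forms.

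The remaining work is bookkeeping: for each resulting relation one checks that, as $r$ ranges over the admissible indices $0\le r\le n-1$ of the prescribed parity, the exponent $m$ lands exactly inside the forbidden range recorded in \eqref{eq:constraints}, giving a contradiction. For instance, in type \textup{\sf DDa} with $r$ odd the equality $\mu_r=k_0k_3$ gives $(k_0k_3)^2=q^{-(r+1)}$; using $k_0^2=q^{-n-1}$ this becomes $k_3^2=q^{\,n-r}$ with $n-r$ even and $2\le n-r\le n-1$, so $k_3=\pm q^{(n-r)/2}$ with $1\le (n-r)/2\le (n-1)/2$, contradicting that none of $\pm k_3^{\pm1}$ lies among $1,q,\ldots,q^{(n-1)/2}$. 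I would carry out type \textup{\sf DS} in full and then remark that the other four types are handled in the same way, the only changes being the substitution \eqref{eq:Dmur2} versus \eqref{eq:Smur2} and the particular defining equation invoked. The main obstacle is exactly this matching of parities and index ranges to the forbidden sets: the bounds in \eqref{eq:constraints} are sharp, so each of the several subcases must be tracked carefully to confirm that the offending value genuinely falls in the listed range rather than just outside it.
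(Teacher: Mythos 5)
Your proposal is correct and follows essentially the same route as the paper's proof: reduce via the factored form \eqref{eq:defG} to showing $\mu_r$ (resp.\ $q\mu_r$) avoids the four scalars $k_0^{\pm1}k_3^{\pm1}$ (resp.\ $k_1^{\pm1}k_2^{\pm1}$), pin down the parity of $r$ from the diagram, substitute \eqref{eq:Dmur2}/\eqref{eq:Smur2}, and rule out each equality using \eqref{eq:constraints} together with the fact that $q$ is not a root of unity, doing one or two types in detail and treating the rest as similar. Your remark that Corollary \ref{cor:nonzero} cannot be invoked here (since irreducibility of $\V$ is established only afterwards, in Lemma \ref{lem:irred}, whose proof uses this very lemma) is exactly the right observation, and your sample computation for type \textup{\sf DDa} matches the paper's.
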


\begin{proof}
(i):
We claim that
$\mu_r$ is not among  $k_0k_3$, $k_0k_3^{-1}$, $k_0^{-1}k_3$, $k_0^{-1}k_3^{-1}$.
First assume that {\sf T} is \textup{\sf DS}.
Note that $r$ is odd by \eqref{eq:diagrams} and since $\mu_r$, $\mu_{r+1}$ are
$1$-adjacent.
By \eqref{eq:Dmur2}  $\mu_r = (k_0k_3q^{r+1})^{-1}$.
By \eqref{eq:constraints} $k_0^2 k_3^2$ is not among $q^{-2}$, $q^{-4}$, \ldots, $q^{-2n}$,
and neither of $k_0^2$, $k_3^2$ is among $q^{-2}$, $q^{-4}$, \ldots, $q^{-n}$.
By these comments $\mu_r$ is not among $k_0 k_3$, $k_0 k_3^{-1}$, $k_0^{-1} k_3$.
Moreover $\mu_r$ is not equal to $k_0^{-1} k_3^{-1}$ since $q$ is not a root of unity.
Thus the claim holds when \textup{\sf T} is \textup{\sf DS}.
Next assume that \textup{\sf T} is \textup{\sf DDa}.
Note that $n$ is odd by the construction,
and $r$ is odd by \eqref{eq:diagrams} and since $\mu_r$, $\mu_{r+1}$ are
$1$-adjacent.
By \eqref{eq:Dmur2}  $\mu_r = (k_0k_3q^{r+1})^{-1}$.
By \eqref{eq:constraints} $k_0^2 = q^{-n-1}$,
and $k_3^2$ is not among $1$, $q^2$, $q^4$, \ldots, $q^{n-1}$.
By these comments $\mu_r$ is not among $k_0 k_3$, $k_0^{-1} k_3$.
Moreover $\mu_r$ is not among $k_0 k_3^{-1}$, $k_0^{-1} k_3^{-1}$ since
$q$ is not a root of unity.
Thus the claim holds when \textup{\sf T} is \textup{\sf DDa}.
In a similar way we can show the claim when \textup{\sf T} is among 
\textup{\sf DDb}, \textup{\sf SSa}, \textup{\sf SSb}.
Now $G(\mu_r,k_0,k_3) \neq 0$ by \eqref{eq:defG} and the claim.

(ii):
Similar.
\end{proof}

\begin{lemma}   \label{lem:irred} \samepage
\ifDRAFT {\rm lem:irred}. \fi
The $\Hq$-module $\V$ is irreducible and has parameter sequence 
$\{k_i\}_{i \in \I}$.
\end{lemma}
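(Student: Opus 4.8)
The plan is to prove two assertions about the $\Hq$-module $\V$ constructed in this section: that it is irreducible, and that $\{k_i\}_{i\in\I}$ is a parameter sequence. The parameter-sequence claim is the easier of the two. By Definition \ref{def:param}, I must show that each central element $T_i=t_i+t_i^{-1}$ acts on $\V$ as the scalar $k_i+k_i^{-1}$. But this is immediate from the definition \eqref{eq:deftiinv} of the action of $t_i^{-1}$: rearranging gives $t_iv_r+t_i^{-1}v_r=(k_i+k_i^{-1})v_r$ for all $r$, so $T_i=k_i+k_i^{-1}$ on the spanning set $\{v_r\}_{r=0}^n$, hence on all of $\V$. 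Thus $\{k_i\}_{i\in\I}$ is a parameter sequence of $\V$.

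For irreducibility, the strategy is to show that any nonzero $\Hq$-submodule $\W$ equals $\V$. First I would invoke Lemma \ref{lem:Xeigen}: $X$ is diagonalizable on $\V$ with the mutually distinct eigenvalues $\{\mu_r\}_{r=0}^n$, and $v_r$ spans $\V_X(\mu_r)$. Since $\W$ is invariant under $X$, it is a direct sum of its intersections with the one-dimensional eigenspaces $\V_X(\mu_r)$, so $\W$ contains $v_r$ for at least one $r$ (after rescaling). The key propagation step uses Lemma \ref{lem:G0vrG2vr} together with Lemma \ref{lem:G02G22}: if $v_r\in\W$ and $\mu_r,\mu_{r+1}$ are $1$-adjacent (resp.\ $q$-adjacent), then $G_0v_r=v_{r+1}$ (resp.\ $G_2v_r=v_{r+1}$) lies in $\W$ because $G_0,G_2$ are elements of $\Hq$; and conversely $G_0v_{r+1}=G(\mu_r,k_0,k_3)v_r$ with $G(\mu_r,k_0,k_3)\neq0$ by Lemma \ref{lem:G02G22}, so $v_r\in\W$ whenever $v_{r+1}\in\W$. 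Since consecutive eigenvalues $\mu_r,\mu_{r+1}$ are always $1$-adjacent or $q$-adjacent by the shape of the diagram in Lemma \ref{lem:murmur+1}, and the diagram is a single path connecting all of $\mu_0,\dots,\mu_n$, this lets me move from any one $v_r\in\W$ to every $v_s$. Therefore $\W$ contains the basis $\{v_r\}_{r=0}^n$, forcing $\W=\V$.

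The main obstacle, modest as it is here, is making the propagation argument airtight at both ends and in both directions: I must confirm that $G_0$ and $G_2$ genuinely act as claimed on the constructed module (Lemma \ref{lem:G0vrG2vr} supplies exactly this), and that the nonvanishing of the relevant $G$-value (Lemma \ref{lem:G02G22}) guarantees the backward step $v_{r+1}\rightsquigarrow v_r$ as well as the forward step. Once these two lemmas are in hand the connectivity of the path diagram does the rest, with no case analysis on the $X$-type needed beyond noting that each edge of the path is labeled $1$-adjacent or $q$-adjacent. I would write the proof compactly, citing Lemmas \ref{lem:Xeigen}, \ref{lem:murmur+1}, \ref{lem:G0vrG2vr}, and \ref{lem:G02G22}, and closing with the parameter-sequence observation from \eqref{eq:deftiinv}.

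\begin{proof}
For $i\in\I$, the relation \eqref{eq:deftiinv} gives
$(t_i+t_i^{-1})v_r=(k_i+k_i^{-1})v_r$ for $0\leq r\leq n$,
so $T_i=k_i+k_i^{-1}$ on $\V$. Thus $\{k_i\}_{i\in\I}$ is a parameter sequence of $\V$.

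To show that $\V$ is irreducible, let $\W$ denote a nonzero $\Hq$-submodule of $\V$;
we show $\W=\V$. By Lemma \ref{lem:Xeigen} the vectors $\{v_r\}_{r=0}^n$ are eigenvectors
of $X$ with mutually distinct eigenvalues $\{\mu_r\}_{r=0}^n$, and $X$ is diagonalizable on $\V$.
Since $\W$ is invariant under $X$, it is spanned by those $v_r$ it contains,
so there exists an integer $r$ with $v_r\in\W$.
By Lemma \ref{lem:murmur+1}, for $0\leq s\leq n-1$ the scalars $\mu_s$, $\mu_{s+1}$
are $1$-adjacent or $q$-adjacent.
Suppose $v_s\in\W$ with $0\leq s\leq n-1$. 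If $\mu_s$, $\mu_{s+1}$ are $1$-adjacent,
then $G_0 v_s=v_{s+1}\in\W$ by Lemma \ref{lem:G0vrG2vr}(i);
if they are $q$-adjacent, then $G_2 v_s=v_{s+1}\in\W$ by Lemma \ref{lem:G0vrG2vr}(ii).
Conversely, suppose $v_{s+1}\in\W$ with $0\leq s\leq n-1$.
If $\mu_s$, $\mu_{s+1}$ are $1$-adjacent, then $G_0 v_{s+1}=G(\mu_s,k_0,k_3)v_s\in\W$,
and $G(\mu_s,k_0,k_3)\neq0$ by Lemma \ref{lem:G02G22}(i), so $v_s\in\W$;
if they are $q$-adjacent, then $G_2 v_{s+1}=G(q\mu_s,k_1,k_2)v_s\in\W$,
and $G(q\mu_s,k_1,k_2)\neq0$ by Lemma \ref{lem:G02G22}(ii), so $v_s\in\W$.
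Starting from the index $r$ with $v_r\in\W$ and applying these steps repeatedly,
we obtain $v_s\in\W$ for all $0\leq s\leq n$.
Therefore $\W$ contains the basis $\{v_r\}_{r=0}^n$, so $\W=\V$.
Thus $\V$ is irreducible.
\end{proof}
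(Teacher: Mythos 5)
Your proof is correct and follows essentially the same route as the paper: locate one basis vector $v_r$ inside the submodule $\W$ using the $X$-action, propagate along the path to all of $\{v_r\}_{r=0}^n$ via Lemmas \ref{lem:G0vrG2vr} and \ref{lem:G02G22}, and read off the parameter sequence from \eqref{eq:deftiinv}. The only (cosmetic) difference is in the first step: the paper extracts an eigenvector by a maximal-index computation, whereas you invoke the standard fact that a subspace invariant under a diagonalizable operator with distinct eigenvalues is spanned by the eigenvectors it contains — both are valid.
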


\begin{proof}
Let $\W$ denote a nonzero $\Hq$-submodule of $\V$.
We claim that $v_s \in \W$ for some $s$ $(0 \leq s \leq n)$.
Let $s$ $(0 \leq s \leq n)$ denote the maximal integer such that
$\W \cap \sum_{r=s}^n \F v_r$ is nonzero.
Pick a nonzero vector $w$ in $\W \cap \sum_{r=s}^n \F v_r$, and
write
$w = \sum_{r=s}^n \alpha_r v_r$.
Note that $\alpha_s \neq 0$ by the maximality of $s$.
By Lemma \ref{lem:Xeigen}
$X w = \sum_{r=s}^n \alpha_r \mu_r v_r$.
By these comments 
\[
  \mu_s w - X w = \sum_{r=s+1}^n (\mu_s - \mu_r) \alpha_r v_r.
\]
By the maximality of $s$ we must have $\mu_s w - X w = 0$.
By Lemma \ref{lem:murdistinct} $\mu_s - \mu_r \neq 0$ for $s+1 \leq r \leq n$.
By these comments $\alpha_r =0$ for $s+1 \leq r \leq n$.
Therefore $w = \alpha_s v_s$ and the claim follows.
By the claim and Lemmas  \ref{lem:G0vrG2vr}, \ref{lem:G02G22} we find that
$v_r \in \W$ for all $r$ $(0 \leq r \leq n)$.
So $\W = \V$.
We have shown that $\V$ is an irreducible $\Hq$-module.
By \eqref{eq:deftiinv} $\{k_i\}_{i \in \I}$ is a parameter sequence of $\V$.
\end{proof}

\begin{proofof}{Proposition \ref{prop:const}} 
In the above we have constructed an $\Hq$-module $\V$.
By Lemmas  \ref{lem:Xeigen} and \ref{lem:irred} the $\Hq$-module $\V$ is XD,
and $\{k_i\}_{i \in \I}$ is a parameter sequence of $\V$.
By \eqref{eq:defonv0vn} and Lemmas \ref{lem:murmur+1}, \ref{lem:Xeigen},
$\V$ has $X$-type {\sf T} and $\{\mu_r\}_{r=0}^n$ is a standard
ordering of the eigenvalues of $X$.
Moreover 
the parameter sequence $\{k_i\}_{i \in \I}$ is consistent with the ordering $\{\mu_r\}_{r=0}^n$.
The result follows.
\end{proofof}

\section{Proof of Theorem \ref{thm:main}; ``if'' direction}
\label{sec:if}

In this section we prove the ``if'' direction of Theorem \ref{thm:main}.
Let $A,A^*$ denote a Leonard pair on $V$
and let $A',A^{* \prime}$ denote a Leonard pair on $V'$.
Assume that these Leonard pairs have $q$-Racah type,
and let $(a,b,c,d)$ (resp.\ $(a',b',c',d')$) denote a Huang data of
$A,A^*$ (resp.\ $A', A^{* \prime}$).
Assume that these Huang data satisfy one of 
the conditions (i)--(vii) in Theorem \ref{thm:main}.
We show that there exists a feasible $\Hq$-module structure on $V \oplus V'$
such that $V$, $V'$ are the eigenspaces of $t_0$ and \eqref{eq:linked} holds.
We may assume that the Huang data satisfy one of (i)--(v) by exchanging our two 
Leonard pairs if necessary (see Remark \ref{rem:exchange}).
For notational convenience,
we rename the cases (i)--(v) to increase the compatibility with Corollary \ref{cor:Huangdata};
thus we assume that one of the following cases occurs:
\begin{equation}               \label{eq:cases}
\begin{array}{c|c|c|c|c|ccc}
\text{Case} & d'-d & a'/a & b'/b & c'/c &  & \text{\rm Inequalities} &  
\\ \hline 
\rule{0mm}{4.3mm}
\textup{\sf DS} & -1 & q & q & q & \quad a^2 \neq q^{-2d} & \quad  b^2 \neq q^{-2d}
\\
\rule{0mm}{4mm}
\textup{\sf DDa} & -2 & 1 & 1 & 1
\\
\rule{0mm}{4mm}
\textup{\sf DDb} & 0 & 1 & q^2 & 1  & \quad a^2 \neq q^{\pm 2d} & & \quad b^2 \neq q^{-2}
\\
\rule{0mm}{4mm}
\textup{\sf SSa} & 0 & q^2 & 1 & 1 & & \quad b^2 \neq q^{\pm 2d} & \quad a^2 \neq q^{-2}
\\
\rule{0mm}{4mm}
\textup{\sf SSb} & 0 & 1 & 1 & q^2 & \quad a^2 \neq q^{\pm 2d} & \quad b^2 \neq q^{\pm 2d } & \quad c^2 \neq q^{-2} 
\end{array}
\end{equation}
For each of the above cases
we define $\{k_i\}_{i \in \I}$ as follows:
\begin{equation}                   \label{eq:defki}
\begin{array}{c|ccccc}
\text{Case} & k_0 & k_1 & k_2 & k_3 
\\ \hline
\rule{0mm}{4.5mm}
\textup{\sf DS}  
 & (abcq^{1-d})^{1/2} & aq^{-d}k_0^{-1} & cq^{-d}k_0^{-1} & bq^{-d}k_0^{-1}
\\
\rule{0mm}{4mm}
\textup{\sf DDa}  & q^{-d} &  a & c & b
\\
\rule{0mm}{4mm}
\textup{\sf DDb}  &  bq & c & a & q^{-d-1}
\\
\rule{0mm}{4mm}
\textup{\sf SSa}  & aq & q^{-d-1} & b & c
\\
\textup{\sf SSb}  & cq & b & q^{-d-1} & a
\end{array}
\end{equation}
In case {\sf DS} we may take either square root as the value of $k_0$;
see Remark \ref{rem:k0} below.
Set $n = d+d'+1$.
So $V \oplus V'$ has dimension $n+1$.
Note that $n$ is even in case {\sf DS}, and odd in the other cases.
The following two lemmas are immediate from \eqref{eq:defki}.

\begin{lemma}  \label{lem:restrictionki}  \samepage
\ifDRAFT {\rm lem:restrictionki}. \fi
The scalars $\{k_i\}_{i \in \I}$ satisfy the following equation:
\begin{equation}    \label{eq:restrictionki}
\begin{array}{c|c}
 \text{\rm Case} &   \text{\rm Equation}
\\ \hline \rule{0mm}{4.3mm}
 \textup{\sf DS} & \quad k_0k_1k_2k_3 = q^{-n-1}
\\ \rule{0mm}{4mm}
 \textup{\sf DDa} &   k_0^2 = q^{-n-1} 
\\  \rule{0mm}{4mm}
 \textup{\sf DDb} &  k_3^2 = q^{-n-1} 
\\ \rule{0mm}{4mm}
 \textup{\sf SSa} &  k_1^2 = q^{-n-1} 
\\ \rule{0mm}{4mm}
 \textup{\sf SSb} &   k_2^2 = q^{-n-1}
\end{array}
\end{equation}
\end{lemma}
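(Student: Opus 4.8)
The statement is a direct case-by-case verification, using the definitions of $\{k_i\}_{i \in \I}$ in \eqref{eq:defki} together with the relation $n=d+d'+1$ and the value of $d'-d$ recorded in \eqref{eq:cases}. The plan is to dispose of the four cases \textup{\sf DDa}, \textup{\sf DDb}, \textup{\sf SSa}, \textup{\sf SSb} first, since in each of these the relevant $k_i$ is given in \eqref{eq:defki} as a pure power of $q$, and then to treat \textup{\sf DS} separately because there $k_0$ is defined only up to a square root.

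First I would translate each value of $d'-d$ into an expression for $n$ in terms of $d$. For instance, in case \textup{\sf DDa} we have $d'-d=-2$, hence $d'=d-2$ and $n=d+d'+1=2d-1$; in cases \textup{\sf DDb}, \textup{\sf SSa}, \textup{\sf SSb} we have $d'-d=0$, hence $d'=d$ and $n=2d+1$; and in case \textup{\sf DS} we have $d'-d=-1$, hence $d'=d-1$ and $n=2d$. With this in hand the four single-variable cases are immediate: for \textup{\sf DDa}, $k_0=q^{-d}$ gives $k_0^2=q^{-2d}=q^{-n-1}$; for \textup{\sf DDb}, $k_3=q^{-d-1}$ gives $k_3^2=q^{-2d-2}=q^{-n-1}$; and likewise $k_1^2=q^{-2d-2}=q^{-n-1}$ in \textup{\sf SSa} and $k_2^2=q^{-2d-2}=q^{-n-1}$ in \textup{\sf SSb}.

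For the remaining case \textup{\sf DS}, I would form the product directly from \eqref{eq:defki}:
\[
 k_0 k_1 k_2 k_3 = k_0\,(a q^{-d}k_0^{-1})(c q^{-d}k_0^{-1})(b q^{-d}k_0^{-1}) = abc\,q^{-3d}\,k_0^{-2}.
\]
The square-root ambiguity in $k_0$ is harmless here, since only $k_0^{-2}$ enters; from $k_0^2=abc\,q^{1-d}$ we get $k_0^{-2}=(abc)^{-1}q^{d-1}$, and substituting yields $k_0 k_1 k_2 k_3 = q^{-3d}\,q^{d-1}=q^{-2d-1}=q^{-n-1}$, as required by \eqref{eq:restrictionki}.

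Since every step is an elementary manipulation of powers of $q$, there is no genuine obstacle. The only point requiring a moment's care is confirming that the square root in the definition of $k_0$ for case \textup{\sf DS} does not affect the product; this is precisely why the product is arranged so that $k_0$ enters only through $k_0^{-2}$, which is single-valued.
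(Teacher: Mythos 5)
Your proposal is correct and matches the paper, which simply declares this lemma (together with Lemma \ref{lem:abcdinki}) to be immediate from \eqref{eq:defki}; your case-by-case computation, using $n=d+d'+1$ and the values of $d'-d$ from \eqref{eq:cases}, is exactly the verification being left to the reader. Your observation that the square-root ambiguity in $k_0$ for case \textup{\sf DS} is harmless because $k_0$ enters the product only through $k_0^{-2}$ is also consistent with the paper's later Remark \ref{rem:k0}.
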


\begin{lemma}   \label{lem:abcdinki}
\ifDRAFT {\rm lem:abcdinki}. \fi
The Huang data $(a,b,c,d)$ and $(a',b',c',d')$ are represented in terms of $\{k_i\}_{i \in \I}$
as follows:
\begin{equation}             \label{eq:abcdinki2}
\begin{array}{c|cccc}
 \text{\rm Case} &
  \begin{array}{c}
    a \\
    a'
  \end{array}
&
  \begin{array}{c}
    b \\
    b'
  \end{array}
&
  \begin{array}{c}
   c \\
   c' 
  \end{array}
&
 \begin{array}{c}
   d \\
   d'
 \end{array}
\\ \hline
 \rule{0mm}{6.5mm}
 \textup{\sf DS} 
& 
  \begin{array}{c}
    k_0 k_1 q^{n/2}  \\
    k_0 k_1 q^{(n+2)/2}
 \end{array}
&
  \begin{array}{c}
    k_0 k_3 q^{n/2}  \\
    k_0 k_3 q^{(n+2)/2}
  \end{array}
& 
  \begin{array}{c}
   k_0 k_2 q^{n/2}  \\
   k_0 k_2 q^{(n+2)/2}
 \end{array}
&
  \begin{array}{c}
   n/2 \\
  (n-2)/2
  \end{array}
\\ \hline
 \rule{0mm}{6.5mm}
 \textup{\sf DDa}
& 
 \begin{array}{c}
   k_1 \\
   k_1
 \end{array}
&
 \begin{array}{c}
  k_3 \\
  k_3
 \end{array}
&
 \begin{array}{c}
  k_2 \\
  k_2
 \end{array}
&
 \begin{array}{c}
  (n+1)/2 \\
  (n-3)/2
 \end{array}
\\ \hline
 \rule{0mm}{6.5mm}
 \textup{\sf DDb} 
&
 \begin{array}{c}
   k_2 \\
   k_2
 \end{array}
&
 \begin{array}{c}
   k_0 q^{-1} \\
   k_0 q
 \end{array}
&
 \begin{array}{c}
   k_1 \\
   k_1
 \end{array}
&
 \begin{array}{c}
  (n-1)/2 \\
  (n-1)/2
 \end{array}
\\ \hline
 \rule{0mm}{6.5mm}
 \textup{\sf SSa}
&
 \begin{array}{c}
  k_0 q^{-1} \\
  k_0 q
 \end{array}
&
 \begin{array}{c}
   k_2 \\
   k_2
 \end{array}
&
 \begin{array}{c}
  k_3 \\
  k_3
 \end{array}
&
 \begin{array}{c}
  (n-1)/2 \\
  (n-1)/2
 \end{array}
\\ \hline
  \rule{0mm}{6.5mm}
 \textup{\sf SSb} 
&
 \begin{array}{c}
   k_3 \\ 
   k_3
 \end{array}
&
 \begin{array}{c}
  k_1 \\
  k_1
 \end{array}
&
 \begin{array}{c}
   k_0 q^{-1} \\
   k_0 q
 \end{array}
&
 \begin{array}{c}
  (n-1)/2 \\
  (n-1)/2
 \end{array}
\end{array}
\end{equation}
\end{lemma}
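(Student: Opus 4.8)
The plan is to verify each entry of \eqref{eq:abcdinki2} by directly inverting the defining relations \eqref{eq:defki}, working through the five cases one at a time; since the $\{k_i\}_{i \in \I}$ are defined as explicit monomials in $a$, $b$, $c$ and powers of $q$, this amounts to elementary bookkeeping. First I would pin down $d$ and $d'$ in terms of $n$. Combining the relation $n = d+d'+1$ with the value of $d'-d$ recorded in \eqref{eq:cases}, I would solve the resulting linear system in each case; for instance in case \textup{\sf DS} we have $d'-d=-1$, whence $n=2d$ and so $d=n/2$, $d'=(n-2)/2$, and the analogous computation in the remaining four cases reproduces the last column of \eqref{eq:abcdinki2}.

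Next I would recover $a$, $b$, $c$ from \eqref{eq:defki}. In each case the system is triangular, in the sense that two or three of the $k_i$ are given directly as monomials in $a,b,c,q^{-d}$, so each of $a,b,c$ is obtained by a single algebraic inversion. For example in case \textup{\sf DS}, the relation $k_1 = aq^{-d}k_0^{-1}$ yields $a = k_0 k_1 q^{d} = k_0 k_1 q^{n/2}$, and likewise $b = k_0 k_3 q^{n/2}$ and $c = k_0 k_2 q^{n/2}$; in case \textup{\sf DDb} the relations read off $a = k_2$, $b = k_0 q^{-1}$, $c = k_1$; and similarly for \textup{\sf DDa}, \textup{\sf SSa}, \textup{\sf SSb}. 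Substituting the value of $d$ found in the first step places these in exactly the form shown in the unprimed rows of \eqref{eq:abcdinki2}. To then recover $a'$, $b'$, $c'$ I would use the ratios $a'/a$, $b'/b$, $c'/c$ tabulated in \eqref{eq:cases}, multiplying the expressions just obtained by the appropriate power of $q$. For instance, in case \textup{\sf DDb} we have $b'/b=q^2$, so $b' = q^2 b = q^2 k_0 q^{-1} = k_0 q$, while $a'=a=k_2$ and $c'=c=k_1$; this produces the primed rows.

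I do not anticipate a genuine obstacle, as every step is a routine inversion of \eqref{eq:defki}; the computation is essentially the reverse of Corollary \ref{cor:ki}(i). The only point requiring a word of care is the square-root ambiguity in $k_0 = (abcq^{1-d})^{1/2}$ in case \textup{\sf DS}. This causes no difficulty, however, because in that case $a,b,c$ enter the recovered formulas only through products such as $k_0 k_1 = aq^{-d}$, which are independent of the choice of square root; hence the representation in \eqref{eq:abcdinki2} is well defined regardless of which root is taken for $k_0$ (see Remark \ref{rem:k0}).
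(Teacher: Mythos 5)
Your proposal is correct and takes essentially the same route as the paper, which presents this lemma (together with Lemma \ref{lem:restrictionki}) as immediate from \eqref{eq:defki}. Your verification---solving for $d$, $d'$ from $n=d+d'+1$ and the value of $d'-d$ in \eqref{eq:cases}, inverting \eqref{eq:defki} to recover $a$, $b$, $c$, and applying the ratios $a'/a$, $b'/b$, $c'/c$ to obtain the primed row---is exactly that routine computation, including the correct observation that the sign ambiguity in $k_0$ in case \textup{\sf DS} is harmless since only products such as $k_0k_1=aq^{-d}$ enter.
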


By Lemma \ref{lem:condabc}
the Huang data $(a,b,c,d)$ and $(a',b',c',d')$ satisfy the following inequalities:
\begin{align}
 & \text{Neither of  $a^2$, $b^2$ is among $q^{2d-2}, q^{2d-4},\ldots,q^{2-2d}$}.    \label{eq:condabc1}  \\
 & \text{None of  $abc$, $a^{-1}bc$, $ab^{-1}c$, $abc^{-1}$ is among 
               $q^{d-1},q^{d-3},\ldots,q^{1-d}$}.           \label{eq:condabc2}  \\
 & \text{Neither of  ${a'}^2$, ${b'}^2$ is among  $q^{2d'-2}, q^{2d'-4},\ldots,q^{2-2d'}$}.
                                                              \label{eq:condadbdcd1} \\
 & \text{None of  $a'b'c'$, ${a'}^{-1}b'c'$, $a'{b'}^{-1}c'$, $a'b'{c'}^{-1}$ is among
              $q^{d'-1},q^{d'-3},\ldots,q^{1-d'}$}.          \label{eq:condadbdcd2}
\end{align}
The inequalities \eqref{eq:condabc1}--\eqref{eq:condadbdcd2} have the following consequence.

\begin{lemma}  \label{lem:restrictions}  \samepage
\ifDRAFT {\rm lem:restrictions}. \fi
The scalars $\{k_i\}_{i \in \I}$ satisfy the following inequalities:
\[
\begin{array}{c|l}
\text{\rm Case} & \hspace{4cm} \text{\rm Inequalities}
\\ \hline
\textup{\sf DS} &
  \begin{array}{l}
    \rule{0mm}{4.3mm}
    \text{\rm None of $\pm k_0k_3$, $\pm k_0k_1$ is among $q^{-1},q^{-2},q^{-3},\ldots,q^{-n}$} \\
    \text{\rm None of $\pm k_0$, $\pm k_1$, $\pm k_2$, $\pm k_3$ is among $q^{-1},q^{-2},q^{-3},\ldots,q^{-n/2}$}
  \end{array}
\\ \hline
\textup{\sf DDa} &
 \begin{array}{l}
   \rule{0mm}{4.3mm}
   \text{\rm None of $\pm k_1^{\pm 1}$, $\pm k_3^{\pm 1}$ is among $1,q,q^2,\ldots,q^{(n-1)/2}$} \\
   \text{\rm None of $k_0k_3k_1^{\pm 1}k_2^{\pm 1}$ is among $q^{-1},q^{-3},q^{-5},\ldots,q^{-n}$}
 \end{array}
\\ \hline
\textup{\sf DDb} &
 \begin{array}{l}
   \rule{0mm}{4.3mm}
   \text{\rm None of $\pm k_0^{\pm 1}$, $\pm k_2^{\pm 1}$ is among $1,q,q^2,\ldots,q^{(n-1)/2}$} \\
   \text{\rm None of $k_0k_3k_1^{\pm 1}k_2^{\pm 1}$ is among $q^{-1},q^{-3},q^{-5},\ldots,q^{-n}$}
 \end{array}
\\ \hline 
\textup{\sf SSa} &
 \begin{array}{l}
   \rule{0mm}{4.3mm}
   \text{\rm None of $\pm k_0^{\pm 1}$, $\pm k_2^{\pm 1}$ is among $1,q,q^2,\ldots,q^{(n-1)/2}$} \\
   \text{\rm None of $k_1k_2k_0^{\pm 1}k_3^{\pm 1}$ is among $q^{-1},q^{-3},q^{-5},\ldots,q^{-n}$}
 \end{array}
\\ \hline
\textup{\sf SSb} &
 \begin{array}{l}
   \rule{0mm}{4.3mm}
   \text{\rm None of $\pm k_1^{\pm 1}$, $\pm k_3^{\pm 1}$ is among $1,q,q^2,\ldots,q^{(n-1)/2}$} \\
   \text{\rm None of $k_1k_2k_0^{\pm 1}k_3^{\pm 1}$ is among $q^{-1},q^{-3},q^{-5},\ldots,q^{-n}$}
 \end{array}
\end{array}
\]
\end{lemma}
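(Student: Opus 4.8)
The plan is to prove the lemma one case at a time, substituting the expressions for $a,b,c,d$ and $a',b',c',d'$ from Lemma \ref{lem:abcdinki} into the Huang-data constraints \eqref{eq:condabc1}--\eqref{eq:condadbdcd2} together with the case-specific inequalities recorded in \eqref{eq:cases}, and then simplifying with the aid of the product relation \eqref{eq:restrictionki} and the standing hypothesis that $q$ is not a root of unity. Concretely, I would fix one of the five cases and process the four inequality families of \eqref{eq:condabc1}--\eqref{eq:condadbdcd2} separately. Each forbidden value for $a^2$, $b^2$, ${a'}^2$, ${b'}^2$ in \eqref{eq:condabc1}, \eqref{eq:condadbdcd1} becomes, after the substitution from \eqref{eq:abcdinki2}, a forbidden value for a product of two of the $k_i$ (such as $k_0^2 k_1^2$ or $k_1^2$); each forbidden value for the four triple products in \eqref{eq:condabc2}, \eqref{eq:condadbdcd2} becomes, after using \eqref{eq:restrictionki} to eliminate the remaining parameter, a forbidden value for a single $k_i^2$ or for a product of the shape $k_0 k_3 k_1^{\pm1} k_2^{\pm1}$.

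The key combinatorial step is to recognize that the unprimed constraints (governed by the diameter $d$) and the primed constraints (governed by $d'$, where $d+d'+1=n$) each exclude an arithmetic progression of forbidden exponents, and that these two progressions overlap or abut so that their union is a single progression whose length is controlled by $n$. For instance, in case \textup{\sf DS} the condition \eqref{eq:condabc1} applied to $a^2 = k_0^2 k_1^2 q^n$ excludes $k_0^2 k_1^2$ from $\{q^{-2}, q^{-4}, \ldots, q^{2-2n}\}$, while the case inequality $a^2 \neq q^{-2d}$ from \eqref{eq:cases} supplies the missing endpoint $k_0^2 k_1^2 \neq q^{-2n}$; together these give that $\pm k_0 k_1$ avoids $q^{-1}, q^{-2}, \ldots, q^{-n}$, exactly as asserted, and the analogous argument with $b^2$ handles $\pm k_0 k_3$. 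The single-parameter bounds (such as $\pm k_0$ avoiding $q^{-1}, \ldots, q^{-n/2}$) instead arise by combining the unprimed triple-product constraint \eqref{eq:condabc2} with its primed counterpart \eqref{eq:condadbdcd2}: after eliminating one parameter via \eqref{eq:restrictionki}, the triple products reduce to shifted powers of a single $k_i^2$, and the two forbidden progressions again union to the full stated range.

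I expect the substitutions themselves to be routine; the main obstacle will be the bookkeeping of these arithmetic progressions, namely tracking the precise endpoints of each forbidden range, checking that the primed and unprimed ranges (together with the endpoint values furnished by the inequalities of \eqref{eq:cases}) dovetail exactly into the claimed ranges, and carrying this out uniformly across the $\pm$ ambiguity in the square roots $\pm k_i$ and in the signs of the exponents. Once case \textup{\sf DS} is carried out in full, the cases \textup{\sf DDa}, \textup{\sf DDb}, \textup{\sf SSa}, \textup{\sf SSb} follow by the same scheme, using the appropriate relation from \eqref{eq:restrictionki} (for example $k_0^2 = q^{-n-1}$ in \textup{\sf DDa}) and observing that in \textup{\sf DDa} no case inequality is required because the unprimed condition \eqref{eq:condabc1} already covers the full asserted range for $\pm k_1^{\pm1}$ and $\pm k_3^{\pm1}$.
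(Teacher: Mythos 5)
Your proposal is correct and follows essentially the same route as the paper, whose proof of this lemma is simply ``routine verification using \eqref{eq:cases}, \eqref{eq:defki}, \eqref{eq:condabc1}--\eqref{eq:condadbdcd2}'' -- exactly the ingredients you identify (your use of Lemma \ref{lem:abcdinki} and \eqref{eq:restrictionki} is equivalent, since both follow from \eqref{eq:defki}). Your expanded description of the mechanism -- reducing the $\pm$ claims to statements about squares, and checking that the unprimed progression (governed by $d$), the primed progression (governed by $d'$), and the endpoint values supplied by the inequalities in \eqref{eq:cases} union to the asserted forbidden ranges -- is an accurate account of what that routine verification amounts to.
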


\begin{proof}
Routine verification using \eqref{eq:cases}, \eqref{eq:defki}, \eqref{eq:condabc1}--\eqref{eq:condadbdcd2}.
\end{proof}

Let {\sf T} be among {\sf DS}, {\sf DDa}, {\sf DDb}, {\sf SSa}, {\sf SSb}.
By Lemmas \ref{lem:restrictionki}, \ref{lem:restrictions} and Proposition \ref{prop:const}
there exists an XD $\Hq$-module $\V$ with dimension $n+1$
that has $X$-type {\sf T} and parameter sequence $\{k_i\}_{i \in \I}$ 
that is consistent with a standard ordering
of the eigenvalues of $X$.

\begin{lemma}  \label{lem:k02not1pre}
\ifDRAFT {\rm lem:k02not1pre}. \fi
$k_0^2 \neq 1$.
\end{lemma}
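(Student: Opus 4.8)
The plan is to argue by cases on the type \textup{\sf T} among \textup{\sf DS}, \textup{\sf DDa}, \textup{\sf DDb}, \textup{\sf SSa}, \textup{\sf SSb}, in each case reading off $k_0^2$ directly from the definition \eqref{eq:defki} and then ruling out $k_0^2=1$ using either the inequalities recorded in \eqref{eq:cases} or the Huang-data constraints \eqref{eq:condabc1}--\eqref{eq:condadbdcd2}. Concretely, \eqref{eq:defki} gives $k_0^2=abcq^{1-d}$ in case \textup{\sf DS}, $k_0^2=q^{-2d}$ in case \textup{\sf DDa}, $k_0^2=b^2q^2$ in case \textup{\sf DDb}, $k_0^2=a^2q^2$ in case \textup{\sf SSa}, and $k_0^2=c^2q^2$ in case \textup{\sf SSb}. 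So the whole statement reduces to five scalar non-equalities.

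Three of these are immediate from the third column of inequalities in \eqref{eq:cases}. In case \textup{\sf DDb} the table gives $b^2\neq q^{-2}$, so $k_0^2=b^2q^2\neq 1$; in case \textup{\sf SSa} it gives $a^2\neq q^{-2}$, so $k_0^2=a^2q^2\neq 1$; and in case \textup{\sf SSb} it gives $c^2\neq q^{-2}$, so $k_0^2=c^2q^2\neq 1$. Case \textup{\sf DDa} is handled by a dimension count rather than by an inequality: there $d'-d=-2$, and since $d'=\dim V'-1\geq 0$ we get $d\geq 2$; as $q$ is not a root of unity this forces $k_0^2=q^{-2d}\neq 1$.

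The remaining case \textup{\sf DS} is the one requiring a little more, and I expect it to be the main (mild) obstacle, since the listed table inequalities $a^2\neq q^{-2d}$, $b^2\neq q^{-2d}$ do not bear on $abc$. Here $k_0^2=abcq^{1-d}$, so $k_0^2=1$ is equivalent to $abc=q^{d-1}$. I would rule this out using the Huang-data condition \eqref{eq:condabc2} coming from Lemma \ref{lem:condabc}(ii), which asserts that $abc$ is not among $q^{d-1},q^{d-3},\ldots,q^{1-d}$; in particular $abc\neq q^{d-1}$. To see the list is nonempty and does contain the top exponent, note that in case \textup{\sf DS} we have $d'-d=-1$, whence $d=d'+1\geq 1$, so $q^{d-1}$ is genuinely the first term. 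This yields $k_0^2=abcq^{1-d}\neq 1$, completing the case analysis and hence the proof.
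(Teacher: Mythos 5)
Your proposal is correct and follows essentially the same route as the paper's proof: read off $k_0^2$ from \eqref{eq:defki} in each of the five cases, then use \eqref{eq:condabc2} for \textup{\sf DS}, the bound $d=d'+2\geq 2$ for \textup{\sf DDa}, and the inequalities $b^2\neq q^{-2}$, $a^2\neq q^{-2}$, $c^2\neq q^{-2}$ from \eqref{eq:cases} for \textup{\sf DDb}, \textup{\sf SSa}, \textup{\sf SSb}. Your extra observation that $d\geq 1$ in case \textup{\sf DS} (so that the list in \eqref{eq:condabc2} actually contains $q^{d-1}$) is a small point of care that the paper leaves implicit.
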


\begin{proof}
Note that the value of $k_0$ is given in \eqref{eq:defki}.
In case {\sf DS}, $k_0^2 = abcq^{1-d}$ and so $k_0^2 \neq 1$ by \eqref{eq:condabc2}.
In case {\sf DDa}, $k_0^2 = q^{-2d}$ and so $k_0^2 \neq 1$ by $d = d'+2 \geq 2$ and since
$q$ is not a root of unity.
In case {\sf DDb}, $k_0^2 = b^2 q^2$, so $k_0^2 \neq 1$ since
$b^2 \neq q^{-2}$ by \eqref{eq:cases}.
The proof is similar for the cases {\sf SSa}, {\sf SSb}.
\end{proof}

\begin{lemma}  \label{lem:k0not1}
\ifDRAFT {\rm lem:k0not1}. \fi
$t_0$ has two distinct eigenvalues $k_0$, $k_0^{-1}$ on $\V$.
\end{lemma}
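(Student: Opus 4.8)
The plan is to reduce the statement to a dichotomy already resolved in Section~\ref{sec:eigent0}, namely Lemma~\ref{lem:onlyone}. First I would record that the eigenvalues of $t_0$ on $\V$ lie among $k_0$, $k_0^{-1}$: since $\{k_i\}_{i\in\I}$ is a parameter sequence of $\V$, equation~\eqref{eq:ki} gives $(t_0-k_0)(t_0-k_0^{-1})\V=0$. By Lemma~\ref{lem:k02not1pre} we have $k_0^2\neq 1$, so $k_0\neq k_0^{-1}$. Hence the two candidate eigenvalues are distinct, and it suffices to prove that $t_0$ actually takes both of them, i.e.\ that $t_0$ has two distinct eigenvalues on $\V$; once this is known, those eigenvalues must be exactly $k_0$ and $k_0^{-1}$.

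Next I would invoke Lemma~\ref{lem:onlyone}. Recall $n=d+d'+1$; since $d,d'\geq 0$ we have $n\geq 1$, and we have just seen $k_0\neq k_0^{-1}$, so the hypotheses of Lemma~\ref{lem:onlyone} are met. That lemma asserts that $t_0$ has only one eigenvalue on $\V$ precisely when $n=1$ and $\V$ has $X$-type~\textup{\sf DDa}. Thus the whole claim comes down to ruling out this single exceptional configuration.

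Finally I would exclude that configuration by a dimension count. By Proposition~\ref{prop:const} the module $\V$ has $X$-type~\textup{\sf T}, the case we are in; so if $\V$ has $X$-type~\textup{\sf DDa}, then we are in row \textup{\sf DDa} of the table~\eqref{eq:cases}, where $d'-d=-2$. This forces $d=d'+2\geq 2$, whence $n=d+d'+1=2d-1\geq 3>1$. Therefore the combination ``$n=1$ and $X$-type~\textup{\sf DDa}'' cannot occur, and Lemma~\ref{lem:onlyone} gives that $t_0$ has two distinct eigenvalues, necessarily $k_0$ and $k_0^{-1}$. The argument is entirely formal: there is no computation to perform, and the only point requiring care---the ``main obstacle'' such as it is---is checking that the excluded case \textup{\sf DDa} indeed forces $n\geq 3$ rather than $n=1$, which the relation $d'-d=-2$ settles immediately.
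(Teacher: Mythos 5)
Your proof is correct. It differs from the paper's in organization rather than substance: you invoke Lemma \ref{lem:onlyone} as the key dichotomy, whereas the paper (somewhat redundantly) redoes that case analysis from scratch inside the proof --- splitting on whether the reduced $X$-diagram has a single bond, handling the single-bond case via Lemma \ref{lem:WF+WF-W}(ii), and handling the remaining $n=1$ double-bond case by ruling out \textup{\sf DDa} (same dimension count $d=d'+2\geq 2$ as yours) and then treating \textup{\sf DDb} explicitly via Lemma \ref{lem:typesDD} together with Lemma \ref{lem:k02not1pre}. Since the proof of Lemma \ref{lem:onlyone} rests on exactly those ingredients (Lemmas \ref{lem:WF+WF-W} and \ref{lem:F+W0}), the two arguments are mathematically the same; yours is shorter and avoids duplication, while the paper's is self-contained at this point and makes the \textup{\sf DDb} configuration visible. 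Your prerequisite checks are all in order: Proposition \ref{prop:const} supplies the standing hypotheses of Notation \ref{notation} (in particular consistency of $\{k_i\}_{i\in\I}$ with a standard ordering), $n=d+d'+1\geq 1$, and $k_0\neq k_0^{-1}$ comes from Lemma \ref{lem:k02not1pre}; and your final observation that the two eigenvalues must then be exactly $k_0$, $k_0^{-1}$ follows from $(t_0-k_0)(t_0-k_0^{-1})\V=0$, as you note.
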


\begin{proof}
Note that $t_0$ has two distinct eigenvalues if and only if each of $F^+ \V$ and $F^- \V$
is nonzero.
First assume that the reduced $X$-diagram of $\V$ has a single bond.
Then the result follows from Lemma \ref{lem:WF+WF-W}(ii).
Next assume that the reduced $X$-diagram has no single bond.
Then $n=1$ and the reduced $X$-diagram of $\V$ is a double bond.
So $\V$ has $X$-type \textup{\sf DDa} or \textup{\sf DDb}.
If $\V$ has $X$-type \textup{\sf DDa} then $n \geq 3$ since $d = d'+2 \geq  2$,
contradicting $n=1$.
Thus $\V$ has  $X$-type \textup{\sf DDb}.
By Lemma \ref{lem:typesDD}
the eigenvalues of $t_0$ on $\V_X(\mu_0)$ and $\V_X(\mu_1)$ are reciprocals.
By Lemma \ref{lem:k02not1pre} $k_0 \neq k_0^{-1}$.
By these comments $t_0$ has two distinct eigenvalues on $\V$.
\end{proof}

\begin{lemma}   \label{lem:feasible}    \samepage
\ifDRAFT {\rm lem:feasible}. \fi
The $\Hq$-module $\V$ is feasible.
\end{lemma}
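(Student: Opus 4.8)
The plan is to verify the two defining conditions of feasibility directly. Recall that an $\Hq$-module is feasible whenever it is both XD and YD and $t_0$ has two distinct eigenvalues. The module $\V$ constructed above (via Proposition \ref{prop:const}) is XD by construction, and Lemma \ref{lem:k0not1} already supplies the second condition: $t_0$ has the two distinct eigenvalues $k_0$, $k_0^{-1}$ on $\V$. So the only remaining task is to show that $\V$ is YD, i.e.\ that $Y$ is diagonalizable on $\V$.

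To establish that $Y$ is diagonalizable, I would invoke Corollary \ref{cor:Ydiagonalizable}, according to which $Y$ is diagonalizable on $\V$ if and only if the inequalities \eqref{eq:condYmfree} hold. Thus the proof reduces to checking that the inequalities of Lemma \ref{lem:restrictions} force \eqref{eq:condYmfree} in each case {\sf DS}, {\sf DDa}, {\sf DDb}, {\sf SSa}, {\sf SSb}. In case {\sf DS} this is immediate, since Lemma \ref{lem:restrictions} states outright that neither of $\pm k_0 k_1$ is among $q^{-1},q^{-2},\ldots,q^{-n}$, which is exactly \eqref{eq:condYmfree} for that $X$-type.

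The remaining four cases are where the real (but routine) work lies, and this per-case translation is the main point to get right. Here \eqref{eq:condYmfree} is phrased in terms of the product $k_0 k_1$ (cases {\sf DDa}, {\sf SSa}) or $k_2 k_3$ (cases {\sf DDb}, {\sf SSb}), whereas Lemma \ref{lem:restrictions} supplies inequalities on the individual parameters $k_i^{\pm 1}$. The bridge is Lemma \ref{lem:restrictionki}, which pins down one parameter: in {\sf DDa} one has $k_0^2=q^{-n-1}$, so with $k_0=q^{-d}=q^{-(n+1)/2}$ the relation $\pm k_0 k_1=q^{-j}$ for some $j$ with $1\le j\le n$ is equivalent to $k_1=\pm q^{(n+1)/2-j}$, that is $k_1=\pm q^m$ with $-(n-1)/2\le m\le (n-1)/2$. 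This is precisely excluded by the Lemma \ref{lem:restrictions} condition that none of $\pm k_1^{\pm 1}$ lie among $1,q,\ldots,q^{(n-1)/2}$. I would carry out the analogous computation in {\sf SSa} (using $k_1^2=q^{-n-1}$ to express the constraint through $k_0$), and in {\sf DDb}, {\sf SSb} (using $k_3^2=q^{-n-1}$ and $k_2^2=q^{-n-1}$ respectively to rewrite the $\pm k_2 k_3$ condition through $k_2$ or $k_3$). In each case the exclusion furnished by Lemma \ref{lem:restrictions} matches the required range exactly.

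Having checked \eqref{eq:condYmfree} in all five cases, Corollary \ref{cor:Ydiagonalizable} gives that $Y$ is diagonalizable, so $\V$ is YD. Combined with the fact that $\V$ is XD and that $t_0$ has two distinct eigenvalues by Lemma \ref{lem:k0not1}, all conditions in the definition of feasibility are met, and the conclusion follows. The only subtlety I anticipate is bookkeeping the signs and the exact endpoints of the excluded exponent ranges in the four product cases; apart from that the argument is a short assembly of Lemmas \ref{lem:restrictionki}, \ref{lem:restrictions}, \ref{lem:k0not1} and Corollary \ref{cor:Ydiagonalizable}.
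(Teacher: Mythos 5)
Your proposal is correct and follows essentially the same route as the paper's proof: XD by construction, $Y$ diagonalizable via Corollary \ref{cor:Ydiagonalizable} together with Lemmas \ref{lem:restrictionki} and \ref{lem:restrictions}, and the two distinct eigenvalues of $t_0$ from Lemma \ref{lem:k0not1}. The per-case exponent bookkeeping you spell out (and which checks out in all five cases) is exactly the verification the paper leaves implicit.
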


\begin{proof}
By the construction $\V$ is XD.
By Corollary \ref{cor:Ydiagonalizable} and Lemmas \ref{lem:restrictionki},
\ref{lem:restrictions},
$Y$ is diagonalizable on $\V$, so $\V$ is YD.
By Lemma \ref{lem:k0not1} $t_0$ has two distinct eigenvalues on $\V$.
Thus $\V$ is feasible.
\end{proof}

\begin{proofof}{Theorem \ref{thm:main}; ``if'' direction}
By Theorem \ref{thm:main1} and Lemma \ref{lem:feasible} 
the pair $\A,\B$ acts on $\V(k_0)$ (resp.\ $\V(k_0^{-1})$)
as a Leonard pair of $q$-Racah type.
Moreover comparing \eqref{eq:Huangdata+}, \eqref{eq:Huangdata-} with \eqref{eq:abcdinki2}
we find that the Huang data of $\A,\B$ on $\V(k_0)$ (resp.\ $\V(k_0^{-1})$)
coincides with the Huang data of $A,A^*$ (resp.\ $A',A^{*\prime}$).
By this and Lemma \ref{lem:qRacahunique}
the Leonard pair $\A,\B$ on $\V(k_0)$ (resp.\ $\V(k_0^{-1})$) is isomorphic to
the Leonard pair $A,A^*$ on $V$ (resp.\ $A',A^{*\prime}$ on $V'$).
Let $f: \V(k_0) \to V$ (resp.\ $f': \V(k_0^{-1}) \to V'$) denote an isomorphism of Leonard pairs.
Recall that $\V = \V(k_0) + \V(k_0^{-1})$ (direct sum).
So we have the $\F$-linear bijection $f \oplus f' : \V \to V \oplus V'$.
We define the $\Hq$-module structure on $V \oplus V'$ so that $f \oplus f'$
is an isomorphism of $\Hq$-modules.
In this $\Hq$-module the spaces $V$, $V'$ are the eigenspaces of $t_0$,
and \eqref{eq:linked} holds by the construction.
Therefore the Leonard pairs $A,A^*$ and $A',A^{*\prime}$ are linked.
\end{proofof}

\begin{remark}  \label{rem:k0}  \samepage
\ifDRAFT {\rm rem:k0}. \fi
We defined the integer $n$ and the scalars $\{k_i\}_{i \in \I}$ in \eqref{eq:defki}, 
and constructed a feasible $\Hq$-module $\V$ that has dimension $n+1$ and 
parameter sequence $\{k_i\}_{i \in \I}$.
In the definition of $k_0$ in case {\sf DS}, there appears a square root, so the value of $k_0$ is determined up to sign.
By our construction we obtain an $\Hq$-module from each of two values of $k_0$.
These two $\Hq$-modules are not isomorphic; otherwise these two $\Hq$-modules must have the
same parameters up to reciprocal.
\end{remark}

\bigskip

{
\small

}

\bigskip\bigskip\noindent
Kazumasa Nomura\\
Tokyo Medical and Dental University\\
Kohnodai, Ichikawa, 272-0827 Japan\\
email: knomura@pop11.odn.ne.jp

\bigskip\noindent
Paul Terwilliger\\
Department of Mathematics\\
University of Wisconsin\\
480 Lincoln Drive\\ 
Madison, Wisconsin, 53706 USA\\
email: terwilli@math.wisc.edu

\medskip\noindent
{\small
{\bf Keywords.} DAHA, Askey-Wilson polynomial, Leonard pair, tridiagonal pair
\\
\noindent
{\bf 2010 Mathematics Subject Classification.} 33D80, 33D45
}

\end{document}